\newtheorem*{palis-conjecture}{\bf Conjecture (Palis)}
\newtheorem*{palis-conjecture-recast}{\bf Recasting Palis's conjecture}
\newtheorem*{main-theorem}{\bf Main theorem}
\newtheorem*{theo*}{\bf Theorem}
\newtheorem*{theorem*}{\bf Theorem}
\newtheorem*{theorem-extremal}{\bf Theorem \ref{t.extremal}'}
\newtheorem*{proposition*}{\bf Proposition}
\newtheorem*{corollary*}{\bf Corollary}
\newtheorem*{claim*}{\bf Claim}
\newtheorem{teo}{\bf Theorem}
\newtheorem{theo}[teo]{\bf Theorem}
\newtheorem{theorem}[teo]{\bf Theorem}
\newtheorem{lema}{\bf Lemma}[section]
\newtheorem{lemma}[lema]{\bf Lemma}
\newtheorem{propo}[lema]{\bf Proposition}
\newtheorem{prop}[lema]{\bf Proposition}
\newtheorem{proposition}[lema]{\bf Proposition}
\newtheorem{coro}[lema]{\bf Corollary}
\newtheorem{corollary}[lema]{\bf Corollary}
\newtheorem{claim}[lema]{\bf Claim}
\newtheorem{addendum}[lema]{\bf Addendum}
\theoremstyle{definition}
\newtheorem*{definition*}{\bf Definition}
\newtheorem{defi}{\bf Definition}
\newtheorem{definition}[defi]{\bf Definition}
\theoremstyle{remark}
\newtheorem*{question*}{\bf Question}
\newtheorem{obs}{\bf Remark}[section]
\newtheorem{remark}[obs]{\bf Remark}
\newtheorem{remarks}[obs]{\bf Remarks}
\numberwithin{equation}{section}
 \def\NN{{\mathbb N}}  
 \def\RR{{\mathbb R}}  
 \def\ZZ{{\mathbb Z}}
\def\cA{\mathcal{A}}    
\def\cB{\mathcal{B}}    
\def\cC{\mathcal{C}}    \def\cU{\mathcal{U}}
\def\cD{\mathcal{D}}   \def\cP{\mathcal{P}} \def\cV{\mathcal{V}}
    \def\cW{\mathcal{W}}
\def\cF{\mathcal{F}}   \def\cR{\mathcal{R}}
\newcommand{\Tang}{\operatorname{Tang}}
\newcommand{\Cycl}{\operatorname{Cycl}}
\newcommand{\diff}{\operatorname{Diff}}
\newcommand{\id}{\operatorname{Id}}
\newcommand{\La}{\Lambda}
\newcommand{\trans}{\mbox{~$|$\hspace{ -.46em}$\cap$}~}
\newcommand{\Lip}{\operatorname{Lip}}
\newcommand{\la}{\lambda}
\newcommand{\al}{\alpha}
\newcommand{\ga}{\gamma}
\newcommand{\po}{\Pi^{ss}}
\newcommand{\noi}{\noindent}
\renewcommand{\l@section}{\@dottedtocline{2}{3.8em}{3.2em}}
\renewcommand{\l@subsection}{\@dottedtocline{3}{3.8em}{3.2em}}
\newcommand{\subsectionruninhead}{\@startsection{subsection}{2}{0mm}{-\baselineskip}{-0mm}{\bf\large}}
\newcommand{\subsubsectionruninhead}{\@startsection{subsubsection}{3}{0mm}{-\baselineskip}{-0mm}{\bf\normalsize}}
\begin{document}

\title{Essential hyperbolicity and homoclinic bifurcations:\\
a dichotomy phenomenon/mechanism for diffeomorphisms}
\author{Sylvain Crovisier\,\,\,\,\,\,\,\,Enrique R. Pujals}
\maketitle

\emph{\begin{flushright}
To Jacob for his 70$^\text{th}$ birthday. \hspace{2cm}\mbox{}
\end{flushright}}

\begin{abstract}
We prove that any diffeomorphism of a compact manifold can be approximated in topology $C^1$
by another diffeomorphism exhibiting a homoclinic bifurcation (a homoclinic tangency or a heterodimensional cycle) or by one which is essentially hyperbolic (it has a finite number of transitive hyperbolic attractors with open and dense basin of attraction).
\end{abstract}

\tableofcontents

\section{Introduction}
\subsection{Mechanisms classifying the dynamics}
In the direction to describe the long range behavior of trajectories
for ``most" systems  (i.e. in a subset of the space of dynamics
which is residual, dense, etc.), a crucial goal is to identify  any generic dynamical behavior.
It was briefly thought in the sixties that this could be realized
by the property of \emph{uniform hyperbolicity}.  Under this
assumption, the limit set decomposes into a
finite number of disjoint (hyperbolic) transitive sets and the asymptotic
behavior of any orbit is described by the dynamics in those finitely many transitive sets (see \cite{S}). Moreover, under 
the assumption of hyperbolicity one obtains a satisfactory (complete)
description of the dynamics of the system from a topological and statistical point of view.

Hyperbolicity was soon realized to be a less universal
property than what one initially thought: the space of dynamics
contains open sets of non-hyperbolic systems. We are now aimed to understand
how the space of systems can be organized according to the different kinds of
dynamical behavior they exhibit.

\paragraph{a- Characterization of non-hyperbolic systems.}
Dynamicists were lead to look for obstructions to hyperbolicity.
For instance any non-hyperbolic diffeomorphism can be approximated in the $C^1$-topology
by a system having a non-hyperbolic periodic orbit (see~\cite{M}, \cite{Aoki}, \cite{H1}).
Since Poincar\'e we know that some very simple configurations (such that the existence of a homoclinic orbit) could be the source of a widely complex behavior.
One has identified two simple obstructions for hyperbolicity which generate rich dynamical phenomena and they have played a crucial role in the study of generic non-hyperbolic behavior:
\begin{enumerate}
\item  {\it heterodimensional cycle}: the presence of two
periodic orbits of different stable dimension linked through the
intersection of their stable and unstable manifolds (see \cite{AS}, \cite{Sh}, \cite{D1});
\item  {\it homoclinic tangency}: the existence of a non-transversal
intersection between the stable and unstable manifolds of a periodic orbit (see \cite{N1}, \cite{N2}, \cite{PT}, \cite{PV}, \cite{BD}).
\end{enumerate}
These obstructions are relevant due to several dynamical consequences that they involve:
the first one is related to the existence of {non-hyperbolic robustly transitive systems} (see \cite{D1}, \cite{BDPR}, \cite{BDP});
the second one generates cascade of bifurcations, is related to
the existence of residual subsets of diffeomorphisms displaying infinitely many periodic attractors (see \cite {N3}) and to the local variations of entropy for surface diffeomorphisms (see~\cite{PS2}).

Another important property is that these obstructions are not isolated in the $C^1-$topology, and 
sometimes, there are not isolated  in a strong way: $i)$ among $C^2$-surface diffeomorphisms, any system with a homoclinic tangency is limit of an open set of diffeomorphisms having homoclinic
tangencies associated to hyperbolic sets (see \cite{N3}); $ii)$ among $C^1$-diffeomorphisms, any system with a heterodimensional cycle  is limit of an open set of diffeomorphisms having heterodimensional cycles  associated to hyperbolic sets of different indexes (see \cite{BDKS} and section \ref{ss.consequences}).
\medskip

In the 80's Palis conjectured (see \cite{P}, \cite{PT}) that these two bifurcations are the main
obstructions to hyperbolicity:

\begin{palis-conjecture}
Every $C^r$ diffeomorphism of a compact manifold can be $C^r$
approximated by one which is hyperbolic or by one exhibiting a
heterodimensional cycle or a homoclinic tangency.
\end{palis-conjecture}

This conjecture may be considered as a starting point to obtain a generic description
of $C^r$-diffeomorphisms. If it turns out to be true, we may focus on the two bifurcations
mentioned above in order to understand the dynamics.

\paragraph{b- Mechanisms \emph{versus} phenomena.}
To elaborate the significance of this conjecture, we would like to recast it in terms of mechanisms and dynamical phenomena. 
\smallskip

By a \emph{mechanism}, we mean a simple dynamical configuration for one diffeomorphism
(involving for instance few periodic points and their invariant manifolds)
that has the following properties:
\begin{itemize}
\item[--] it {\em ``generates itself''}: the system exhibiting this configuration is not isolated.
In general the mechanism is a co-dimensional bifurcation, but it produces a cascade of diffeomorphisms sharing the same configuration;
\item[--] it {\em ``creates or destroys''} rich and different dynamics for nearby systems
(for instance horseshoes, cascade of bifurcations, entropy's variations).
\end{itemize}
Following this definition, homoclinic tangencies and heterodimensional cycles are mechanisms in any $C^r-$topology for $r\geq 1.$
\medskip

In our context a {\em dynamical phenomenon} is any dynamical property
which provides a good global description of the system
(like hyperbolicity, transitivity, minimality, zero entropy, spectral decomposition) and which occurs on a ``rather large" subset of systems. 

We relate these notions and say that
{\em a mechanism is a complete obstruction to a dynamical phenomenon} when:
\begin{itemize}
\item[--] it is an {\em obstruction}: the presence of the mechanism prevents the phenomenon to happen;
\item[--] it is {\em complete}: each system that does not exhibit the dynamical phenomenon is approximated by another displaying the mechanism. 
\end{itemize}
In other words, a mechanism (or a dynamical configuration) is a complete obstruction to a dynamical phenomena, if it not only prevents the phenomenon to happen but it also generates itself creating rich dynamics and it is common  in the complement of the prescribed dynamical phenomenon. Following this approach, Palis's conjeture can be recasted:

\begin{palis-conjecture-recast}
Heterodimensional cycles and homoclinic tangencies are a complete obstruction to hyperbolicity.
\end{palis-conjecture-recast}

Let us give some examples where a dichotomy mechanism / phenomenon has been
proved or conjectured.
\begin{itemize}
\item[--] \emph{Homoclinic bifurcations / hyperbolicity}. This corresponds to the previous
conjecture and is known in dimensions 1 and 2 for the $C^1$-topology, see~\cite{PS1}.
\item[--] \emph{Transverse homoclinic intersection / robust zero topological entropy}.
It has been proved in any dimension for the $C^1$-topology, see~\cite{BGW}, \cite{C1}.
\item[--] \emph{Trapping region / residual transitivity}. Any $C^1$-generic diffeomorphism
$f$ is either transitive or sends a compact set into its interior, see~\cite{BoCr}.
\item[--] \emph{Homoclinic tangency / global dominated splitting}.
After a $C^1$-perturbation any diffeomorphism exhibits a homoclinic tangency or its limit dynamics holds a (robust) dominated splitting with one-dimensional central bundles, see~\cite{CSY}.
\end{itemize}

\paragraph{c- Main result.}
In the present paper, we prove the mentioned conjecture in the $C^1-$topology for
a weaker notion of hyperbolicity.

\begin{definition*}
A diffeomorphism is {\em essentially hyperbolic} if it has
a finite number of transitive hyperbolic attractors and if the union of their basins of attraction is open and dense in the manifold.
\end{definition*}

The essential hyperbolicity recovers the notion of Axiom A: most of the trajectories (in the Baire category) converge to a finite number of transitive attractors that are well described from a both topological and statistical point of view. Moreover, the dynamics in those hyperbolic attractors, govern the  dynamics of the trajectories that converge to them.
In fact, in an open and dense subset the forward dynamics does not distinguish the system
to an Axiom A diffeomorphism.
\medskip

Now, we state our main theorem:
\begin{main-theorem}
Any diffeomorphism of a compact manifold can be $C^1-$approximated by another diffeomorphism which:
\begin{enumerate}
\item either has a homoclinic tangency,
\item or has a heterodimensional cycle,
\item or is essentially hyperbolic.
\end{enumerate}
\end{main-theorem}

Roughly speaking we proved that  {\em homoclinic tangencies and heterodimensional cycles are the $C^1-$complete obstructions for the essential hyperbolicity.}

\begin{remark}
The proof gives a more precise result: inside the open set of diffeomorphisms that are not
limit in $\diff^1(M)$ of diffeomorphisms exhibiting a homoclinic tangency or a heterodimensional cycle,
the essentially hyperbolic diffeomorphisms contain a G$_\delta$ dense subset.
As a consequence, one may also require that these diffeomorphisms are also essentially hyperbolic for $f^{-1}$.
\end{remark}

\paragraph{d- Mechanisms \emph{associated to} phenomena.}
In contrast to the previous dichotomies, a mechanism could also be the key for a rich (semi-global) dynamics.
We  say that {\em a mechanism is associated  to a dynamical phenomenon} if the following holds:
\begin{itemize}
\item[--] the systems exhibiting the dynamical phenomenon can be approximated
by ones displaying the mechanism; 
\item[--] the ones exhibiting the mechanism generate (at least locally) the dynamical phenomenon.
\end{itemize}
As in the notion of complete obstruction, a mechanism is associated to a dynamical phenomenon not only if it generates it but if any time that the phenomenon appears by small perturbations the mechanism is created. Thus a goal would be to establish a dictionary between mechanisms
and (semi-global) dynamical phenomena.
\medskip

Let us mention some known examples.
\begin{itemize}
\item[--] \emph{Transverse homoclinic intersections / non-trivial hyperbolicity.}
On one hand, systems exhibiting a transversal homoclinic point of a hyperbolic periodic point has horseshoes associated to them; on the other hand horseshoes displays transversal homoclinic points (see for instance \cite{B} and \cite{S}).

\item[--] \emph{Heterodimensional cycles / non-hyperbolic $C^1$-robust transitivity.}
On the one hand, systems displaying heterodimensional cycles are
$C^1-$dense in the interior of the set of non-hyperbolic transitive diffeomorphisms
(see for instance~\cite{GW});
on the other hand, the $C^r-$unfolding of a (co-index one) heterodimensional cycles creates maximal invariant robustly transitive non-hyperbolic sets (see \cite{D1}).

\item[--] \emph{Homoclinic tangencies / residual co-existence of infinitely many independent pieces.} On the one hand, the existence of a homoclinic tangency
for $C^2$ surface diffeomorphisms, sectionally dissipative tangencies in higher dimension or the existence of a homoclinic tangencies
combined with heterodimensional cycles for $C^1$ diffeomorphisms may imply
locally residually the co-existence of infinitely many attractors (Newhouse phenomenon),
see~\cite{N3},~\cite{PV} and~\cite{BD}. On the other hand, it is conjectured that
any diffeomorphism exhibiting infinitely many attractors can be approximated by a diffeomorphism
which exhibits a homoclinic tangency (see for instance \cite{Bo}). 
\end{itemize}
Related to the above conjecture in \cite{Bo},  it was proved in \cite{PS4}  that for smooth diffeomorphisms, the co-existence of infinitely many attractors in a ``sectionally dissipative region of the manifold'' implies the creation of sectionally dissipative tangencies  by $C^1$ perturbations (see corollary  1.1  in \cite{PS4} for details). In a  more general framework as a byproduct of the proof of the main theorem, we prove the following. 
\smallskip

\noi{\bf Theorem. }{\em The co-existence of infinitely many attractors implies that either heterodimensional cycles or homoclinic tangencies can be created by $C^1$ perturbations.}
\smallskip

\noi See item c- in section~\ref{itinerary} for details and proof.

\paragraph{e- Robust mechanisms}
The mechanisms we presented are simple configurations of the dynamics but
as bifurcations are also one-codimensional.
From the deep studies of the role of cycles  and tangencies, Bonatti and Diaz have proposed to enrich Palis's conjecture and introduced the notion of
{\em robust heterodimensional cycles} and \emph{robust homoclinic tangencies},
meaning that now the mechanisms involve non-trivial transitive hyperbolic sets
instead of periodic orbits so that the cycles and tangencies may occur on an open set of diffeomorphisms.

From~\cite{BD2} the  main theorem can be restated in the following way:
\medskip

\noi{\bf Main theorem revisited. }{\em Any diffeomorphism of a compact manifold
can be $C^1-$approximated by another diffeomorphism which either is essentially hyperbolic,
or has a homoclinic tangency, or has a robust heterodimensional cycle.}
\medskip

We also refer to~\cite{Bo} for a complementary program about the dynamics of $C^1$-diffeomorphisms.

\subsection{Itinerary of the proof}\label{itinerary}

The proof focuses on diffeomorphisms far from homoclinic bifurcations and
consists in three parts.
\begin{itemize}
\item We first conclude that the quasi attractors (the Lyapunov stable chain-recurrence classes) are ``topologically hyperbolic'': they are partially hyperbolic homoclinic classes with a one-dimensional ``stable'' center bundle and
the union of their basin of attraction is dense in the manifold.
\item We then develop a series of perturbation techniques which ensure that topologically hyperbolic quasi-attractors are uniformly hyperbolic attractors.
\item At the end we prove that the union of the quasi-attractors is closed. With the second point this gives the finiteness of the hyperbolic attractors.
\end{itemize}
A diffeomorphism which satisfies the first and the third property could be called ``essentially topologically hyperbolic''.

\paragraph{a- Topological hyperbolicity.} From the start, we concentrate the study on quasi-attractors. Following \cite{C1,C2} (see theorems \ref{t.homoclinic} and \ref{t.aperiodic} below), it is concluded that $C^1-$far from homoclinic bifurcations, the aperiodic chain-recurrent classes are partially hyperbolic with a one-dimensional central bundl, and 
the homoclinic classes are partially hyperbolic with their central bundles being at most two-dimensional (however the hyperbolic extremal subbundles may be degenerated).
Moreover, a special type of dynamics has to hold along the central manifolds:
the center stable is chain-stable and the center unstable is chain-unstable.
We define a weak notion of topological hyperbolicity that we call \emph{chain-hyperbolicity}:
this is suitable for our purpose since in some cases the chain-hyperbolicity is robust under perturbations.
(See definition \ref{d.chain-hyperbolic} for details and justification of the names topological hyperbolicity and chain-hyperbolicity).

From corollary \ref{c.aperiodic} it is concluded that aperiodic classes can not be attractors and therefore they are out of our picture. For homoclinic classes, whenever the partially hyperbolic splitting has two extremal hyperbolic subbundles, corollary \ref{c.homoclinic}  concludes that the central bundle is one-dimensional subbundle and chain-stable otherwise a heterodimensional cycle is created.

\paragraph{b- Uniform hyperbolicity.}
At this step, a first dichotomy is presented (see corollary \ref{c.whitney}): either the quasi-attractor is contained in a normally hyperbolic submanifold (and from there one concludes the hyperbolicity, see corollary \ref{c.codim-one}) or  the strong stable foliation is non-trivially involved in the dynamic, meaning that at least two different points $x,y$ in the class share the same local strong stable leaf.  In this second case (see theorem \ref{t.position}), we will perturb the diffeomorphism in order to
obtain a {\em strong connection} associated to a periodic point, i.e. a periodic point whose strong stable and unstable manifolds intersect, see definition \ref{strong-int}; in particular, assuming that the quasi-attractor is not hyperbolic, a heterodimensional cycle can be created (see proposition \ref{p.strong-connection}).

To perform the perturbations, one has to discuss the relative position between two unstable leaves after projection by the strong stable holonomy: the position types are introduced in definition~\ref{definition-cases}. In particular, by analyzing the geometry of quasi-attractors one can reduce to the case the points $x,y$ belong to stable or to unstable manifolds of some periodic orbits.
Improving~\cite{Pu1} and~\cite{Pu2},
three different kinds of perturbations may be performed. They correspond to the following cases:
\begin{itemize}
\item[--] $x,y$ belong to unstable manifolds and their forward orbits have fast returns close to
$x$ or $y$.
\item[--] $x,y$ belong to unstable manifolds and their forward orbits have slow returns close to $x$ or $y$.
\item[--] $x,y$ belong to a stable manifold.
\end{itemize}
The two first cases are covered by theorem \ref{t.unstable} and the last one by theorem \ref{t.stable}.
To perform these perturbations one needs to control how the geometry of the class changes for any perturbed map; we prove (see proposition~\ref{p.continuation}) that whenever the perturbation of the homoclinic class does not display strong connection associated to periodic points then it is possible to get a well defined continuation for the whole class.

\paragraph{c- Finiteness of the attractors.}
The delicate point is to exclude the existence of an infinite number of sinks.
This is done by proving that for any non-trivial chain-recurrence classes, the extremal subbundles
are hyperbolic. We thus consider the splittings $E^s\oplus E^{cu}$ or $E^s\oplus E^{cs}\oplus E^{cu}$,
where $E^{cs},E^{cu}$ are one-dimensional, and in both cases we prove  that $E^{cu}$ is hyperbolic.
The first case follows from results in \cite{PS4}. In the second case, the hyperbolicity of the center unstable subbundle  follows for a more detailed understanding of the topological and geometrical structure of the homoclinic class (see theorem \ref{t.2D-central}). In fact, from being far from heterodimensional cycles, it is concluded that the the class is totally disconnected along the center stable direction (see theorems \ref{t.tot-discontinuity}) and from there a type of geometrical Markov partition is constructed (see proposition \ref{p.box}); this allows to use $C^2-$distortion arguments to conclude hyperbolicity of $E^{cu}$ as in~\cite{PS1} and \cite{PS4}.

After it is concluded that the chain-recurrence classes are partially hyperbolic with non-trivial extremal hyperbolic subbundles, the finiteness follows quite easily (see section \ref{ss.finiteness}).

\paragraph{Structure of the paper.}
In section \ref{s.classes} it is proved that the chain-recurrence classes for systems far from homoclinic bifurcations are ``topologically hyperbolic''.
Moreover, we stated there all the theorems (proved in the other sections)
needed to conclude the main theorem, which is done in subsection \ref{ss.quasi-attractor}.
In section \ref{s.weak-hyperbolicity} we give a general study of the chain-hyperbolic classes and their topological and geometrical structures.
This allows to obtain the continuation of some partially hyperbolic classes (done in section \ref{s.continuation}), and to introduce the notion of boundary points for quasi-attractors (done in section \ref{s.boundary}). In sections \ref{proofjointint} and \ref{p-nontransversal} are stated and proved the new perturbations techniques that hold in the $C^{1+\al}-$topology. In sections \ref{s.2D-central} and \ref{s.2D-central2} are studied partially hyperbolic homoclinic classes with a two-codimensional strong stable bundle, first analyzing their topological and geometrical structure and latter their hyperbolic properties.

\subsection{Some remarks about new techniques and $C^r-$versions of the main theorem}

We would like to highlight many of the new techniques developed in the present paper and  that can be used in other context.

\paragraph{\rm\em 1- Chain-hyperbolicity.}
We introduce the notion of chain-hyperbolic homoclinic class which generalizes the locally maximal hyperbolic sets. It allows to include some homoclinic classes having hyperbolic periodic points
with different stable dimensions, provided that at some scale, a stable dimension is well-defined.
We recover some classical properties of hyperbolic sets: the local product structure, the stability under perturbation, the existence of (chain) stable and unstable manifolds. See section~\ref{s.weak-hyperbolicity}.

\paragraph{\rm\em 2- Continuation of (non necessarily hyperbolic) homoclinic classes.}
It is well known that isolated hyperbolic sets are stable under perturbation and have a well
defined and  unique continuation. We extend this approach to certain partially hyperbolic sets
which are far from strong connections.
This is done by extending the continuation of their hyperbolic periodic points to their closure,
a technique that resembles to the notion of holomorphic motion. See section~\ref{s.continuation}.

\paragraph{\rm\em 3- Geometrical and topological properties of partially hyperbolic attractors.}
We study the geometrical structure of partially hyperbolic attractors
with a one-dimensional central direction in terms of the dynamics of the strong stable foliation.
For instance:
\begin{itemize}
 \item[--] It is presented a dichotomy proving that a homoclinic class is
 either embedded in a submanifold of lower dimension of the ambient space or
 one can create a strong connection (maybe after a perturbation). See theorems~\ref{t.tot-discontinuity} and~\ref{t.position}.

\item[--] In  certain cases it is introduced the notion of  stable boundary points of a partially hyperbolic homoclinic class (extending a classical notion for hyperbolic surfaces maps) which permits us to control the bifurcations that holds after perturbations. See proposition \ref{p.boundary} and lemma \ref{l.boundary1}.

\item[--] If they are no (generalized) strong connection,
it is proved that the homoclinic class is totally disconnected along its stable leaves. See theorem~\ref{t.tot-discontinuity}.

\item[--] The total disconnectedness mentioned above, allows us to introduce kind of
Markov partitions for non-hyperbolic partially hyperbolic classes. See proposition~\ref{p.box}.
\end{itemize}

\paragraph{\rm\em 4- Hyperbolicity of the extremal subbundles.}
For invariant compact sets having a dominated splitting $E\oplus F$ with $\dim(F)=1$,
\cite{PS1} and \cite{PS4} have developed a technique which allows to prove that $F$ is hyperbolic
provided $E$ is either uniformly contracted or one-dimensional.
We extend this result for partially hyperbolic systems with a $2$-dimensional central bundle,
that is when $E$ is only ``topologically contracted". See section~\ref{s.2D-central2}.

\paragraph{\rm\em 5- New perturbation techniques.}  It is developed new perturbation techniques suitable for partially hyperbolic sets with one-dimensional central directions. See theorems~\ref{t.unstable} and~\ref{t.stable}. We want to point out, that these perturbations hold in the $C^{1+\al}-$topology. Those perturbation resemble the $C^1-$connecting lemma but since in the present context a better understanding of the dynamic is available, then the perturbation can be perform in the $C^{1+\al}-$topology.

\paragraph{\rm\em 6- Consequences for hyperbolic dynamics.} Previous highlighted techniques can be formulated for hyperbolic attractors and have consequences
in terms of topological and geometrical structure. See theorems~\ref{t.tot-discontinuity} and~\ref{t.position}.

\paragraph{\rm\em 7- Generic structure of partially hyperbolic quasi-attractors.}
A byproduct of the proof shows (see theorem~\ref{t.consequences}) that for $C^1$-generic diffeomorphisms,
any quasi-attractor which has a partially hyperbolic structure with a one-dimensional
central bundle contains periodic points of different stable dimension.
\bigskip

We want to emphasize that many of the results contained in the present paper work in the $C^r-$category
for any $r\geq 1$ or for $r=1+\al$ with $\al\geq 0$ small. 
For instance, theorems \ref{t.position}, \ref{t.unstable} and \ref{t.stable} hold in the $C^{1+\al}-$topology.
This allows to prove (see the remark~\ref{r.position}, item \ref{i.position}) a partial version of Palis conjecture in the $C^{1+\al}-$category
when one restricts to partially hyperbolic attractors with one-dimensional center direction).

\begin{theorem*}
For any $C^2$ diffeomorphism $f$ of a compact manifold and any ``topologically hyperbolic attractor" $H(p)$ (i.e. which satisfies the assumptions stated in theorem~\ref{t.position}), there exists $\al>0$ with the following property.
For any $\delta>0$, there exists $C^{1+\al}$-perturbations $g$ of $f$ such that
\begin{itemize}
\item[--] either the homoclinic class $H(p_g)$ associated to the continuation $p_g$
of $p$ is hyperbolic,
\item[--] or there exists a periodic orbit $O$ of $g$ which has a strong homoclinic intersection
and one of its Lyapunov exponents has a modulus smaller than $\delta$.
\end{itemize}
\end{theorem*}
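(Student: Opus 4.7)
The plan is to reduce to the perturbative framework of theorem~\ref{t.position} and then to locate the resulting strong homoclinic intersection at a periodic orbit with a weak central Lyapunov exponent. Start from the topologically hyperbolic attractor $H(p)$, which by hypothesis carries a partially hyperbolic splitting $E^{s}\oplus E^{c}\oplus E^{u}$ with one-dimensional center $E^c$ (chain-stable or chain-unstable). Apply the dichotomy of corollary~\ref{c.whitney}: either $H(p)$ is contained in an invariant normally hyperbolic submanifold, in which case corollary~\ref{c.codim-one} yields that the continuation $H(p_g)$ is uniformly hyperbolic for every sufficiently small $C^{1+\alpha}$ perturbation $g$ and the first alternative holds with $g=f$; or the strong stable foliation is non-trivially involved, meaning that there are distinct points $x,y\in H(p)$ sharing a common local strong-stable leaf. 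Only the second case requires further work.

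In this second case I would invoke theorem~\ref{t.position}, whose conclusion is obtained via the perturbation techniques of theorems~\ref{t.unstable} and~\ref{t.stable}. A central feature of both theorems, essential here, is that they are proved in the $C^{1+\alpha}$ category for some $\alpha>0$ depending only on the dominated splitting constants of $H(p)$, which is precisely the $\alpha$ the statement asks for. Following the case-analysis of definition~\ref{definition-cases} for the relative position of the unstable leaves through $x$ and $y$ after strong-stable holonomy projection, one applies theorem~\ref{t.unstable} in the two sub-cases where $x,y$ lie on unstable manifolds of periodic orbits and theorem~\ref{t.stable} in the sub-case where they lie on a common stable manifold. In each situation, an arbitrarily $C^{1+\alpha}$-small perturbation $g$ of $f$ produces a periodic point $O$ of $g$ admitting a strong homoclinic intersection in the sense of definition~\ref{strong-int}.

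It remains to arrange that $O$ carry a central Lyapunov exponent of modulus smaller than $\delta$. If $H(p_g)$ is uniformly hyperbolic for some such $g$, the first alternative already holds; otherwise $H(p_g)$ is chain-hyperbolic with one-dimensional center $E^c$ but fails to be uniformly contracted or expanded along $E^c$. A standard Pliss-lemma combined with a closing argument (in the spirit of those used in~\cite{PS1}) then produces hyperbolic periodic orbits in $H(p_g)$ whose central Lyapunov exponent is arbitrarily small in modulus. The main obstacle is to perform the strong-connection perturbation precisely at such a weak periodic orbit, since \emph{a priori} the non-triviality of the strong-stable holonomy witnessed by $x,y$ need not occur near weak orbits. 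I would resolve this by combining the density of periodic orbits in the homoclinic class with the local product structure coming from chain-hyperbolicity: the pair $(x,y)$ can be replaced, up to sliding along strong-stable, stable and unstable holonomies, by a pair attached to the stable and unstable manifolds of a periodic orbit $O$ whose central exponent has modulus less than $\delta$, and theorems~\ref{t.unstable} and~\ref{t.stable} are then applied in a neighbourhood of $O$ to produce the desired strong homoclinic intersection at $O$.
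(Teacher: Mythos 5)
Your overall strategy matches the paper's up to the last step: reduce to theorem~\ref{t.position} (whose perturbation engines, theorems~\ref{t.unstable} and~\ref{t.stable}, indeed supply the exponent $\al>0$ depending only on the domination data), treat the locally-invariant-submanifold case as giving hyperbolicity, and then worry about the Lyapunov exponent of the orbit carrying the strong homoclinic intersection. One preliminary remark: you should quote the non-generic statements (theorem~\ref{t.whitney} for the dichotomy, theorem~\ref{codimension-one} for hyperbolicity inside the submanifold) rather than corollaries~\ref{c.whitney} and~\ref{c.codim-one}, which are $C^1$-generic results and do not apply to the fixed $C^2$ diffeomorphism $f$; the cleanest route is simply to apply theorem~\ref{t.position}, which already contains the dichotomy without genericity.

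The genuine gap is in your last paragraph. You correctly identify the obstacle --- the strong connection produced by theorems~\ref{t.unstable} and~\ref{t.stable} need not sit at a weak periodic orbit --- but your proposed fix (sliding the pair $(x,y)$ by holonomies so as to attach it to a weak periodic orbit $O$, then applying the perturbation theorems ``in a neighbourhood of $O$'') does not address it. Those theorems give no control over which periodic orbit ends up carrying the strong homoclinic intersection: in the fast-return case of theorem~\ref{t.unstable} it is a newly created periodic point shadowing a long segment of the forward orbit of $x$, and in theorem~\ref{t.stable} it arises from lemma~\ref{l.ordering} at an auxiliary periodic point close to an intersection point; in neither case is its central exponent related to that of the orbit near which you positioned the pair. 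The paper resolves this in the opposite order, via lemma~\ref{l.change-connection}: once \emph{some} periodic orbit homoclinically related to $p_g$ has a strong homoclinic intersection, a further $C^r$-small (hence $C^{1+\al}$-small) perturbation transports the strong connection to a periodic orbit whose minimal central exponent is close to that of any prescribed periodic point $q$. Taking $q$ with central exponent in $(-\delta,0)$ --- such $q$ exist by lemma~\ref{l.weak} and remark~\ref{r.weak} whenever $E^c$ is not uniformly contracted, and if it is uniformly contracted then $H(p)$ is already hyperbolic --- yields the second alternative. This is exactly the content of remark~\ref{r.position}, item~\ref{i.position}, and it is the step your argument is missing.
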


We don't know however if under the conclusions of this theorem it is possible to create a heterodimensional cycle by a $C^{1+\al}$-perturbation of
the diffeomorphism.

\section{Chain-recurrence classes far from homoclinic bifurcations}
\label{s.classes}

We introduce in sections~\ref{ss.trapped} and~\ref{ss.homoclinic}
the notion of trapped plaque families and chain-hyperbolic homoclinic classes.
Their basic properties will be studied systematically later in section~\ref{s.weak-hyperbolicity},
but we will derive before (sections~\ref{ss.homoclinic}, \ref{ss.aperiodic}
and~\ref{ss.finiteness}) important consequences for the generic dynamics far
from homoclinic bifurcations.
We also present (sections~\ref{ss.extremal} and~\ref{ss.quasi-attractor})
the main results of the paper that are proved in the next sections
and explain how they imply the main theorem.
In the last part (section~\ref{ss.consequences}) we give other consequences of our techniques.
We start this section by recalling some classical definitions.
\medskip

In all the paper $M$ denotes a compact boundaryless manifold.
\begin{defi}
We say that $f\in \diff^1(M)$ exhibits a \emph{homoclinic tangency} if there is a hyperbolic periodic orbit $O$
and a point $x\in W^s(O)\cap W^u(O)$ with $T_xW^s(O)+ T_xW^u(O)\neq T_xM$.
\end{defi}

\begin{defi}
We say that $f\in \diff^1(M)$ exhibits a \emph{heterodimensional cycle} if there are
two hyperbolic periodic orbits $O$ and $O'$ of different stable dimension,
such that $W^u(O)\cap W^s(O')\neq \emptyset$ and $W^u(O')\cap W^s(O)\neq \emptyset$. 
\end{defi}

\begin{defi} From now on, with $\overline{\Tang\cup\Cycl}$ it is denoted the set of diffeomorphisms
that can be $C^1-$approximated by one exhibiting either a homoclinic tangency or a heterodimensional cycle.
We say that a diffeomorphisms $f$ is \emph{$C^1-$far from cycles and tangencies}
if $f\in \diff^1(M)\setminus\overline{\Tang\cup\Cycl}$ 
\end{defi}
\medskip

The global dynamics of a diffeomorphism may be decomposed in the following way.
The \emph{chain-recurrent set} is the set of points that belong to a periodic $\varepsilon$-pseudo orbit
for any $\varepsilon>0$. This compact invariant set
breaks down into invariant compact disjoint pieces, called the \emph{chain-recurrence classes}:
two points belong to a same piece if they belong to a same periodic $\varepsilon$-pseudo orbit
for any $\varepsilon>0$.
An invariant set is \emph{chain-transitive} if it contains a $\varepsilon$-dense
$\varepsilon$-pseudo-orbit for any $\varepsilon>0$.

\begin{defi} A \emph{quasi-attractor} is a chain-recurrence class which is Lyapunov stable,
i.e. which admits a basis of neighborhoods $U$ satisfying $f(U)\subset U$.
\end{defi}
\medskip

For any diffeomorphism, we define another notion of ``piece of the dynamics''.
Associated to a hyperbolic periodic point $p$, one introduces
its \emph{homoclinic class} $H(p)$ which is the closure of the transverse
intersection points between the unstable and the stable manifolds $W^u(O),W^s(O)$
of the orbit $O$ of $p$.
It also coincides with the closure of the set of hyperbolic points $q$ that are
\emph{homoclinically related} to the orbit of $p$, i.e. such that
$W^u(q)$ and $W^s(q)$ have respectively a transverse intersection point
with the stable and the unstable manifolds of the orbit of $p$.
Note that for diffeomorphisms $g$ that are $C^1$-close to $f$, the periodic point $p$
has a hyperbolic continuation $p_g$. This allows to consider the homoclinic class $H(p_g)$.

For a $C^1$-generic diffeomorphism, the periodic points are hyperbolic and
\cite{BoCr} proved that a chain-recurrence
class that contain a periodic point $p$ coincides with the homoclinic class $H(p)$.
The other chain-recurrence classes are called the \emph{aperiodic classes}.
Those classes are treated in subsections \ref{ss.homoclinic} and \ref{ss.aperiodic}.

We state two other consequences of Hayashi's connecting lemma and~\cite{BoCr}.
\begin{lemma}\label{l.prelim} For any $C^1$-generic diffeomorphism $f$ and any homoclinic class $H(p)$,
\begin{itemize}
\item[--] if $H(p)$ contains periodic points with different stable dimensions, then
$f$ may be $C^1$-approxi\-mated by diffeomorphisms having a heterodimensional cycle;
\item[--] $H(p)$ is a quasi-attractor if and only if it contains the unstable manifold of $p$.
\end{itemize}
\end{lemma}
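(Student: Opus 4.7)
My plan is to treat both statements as routine consequences of the standard generic toolkit assembled from Kupka--Smale, the Bonatti--Crovisier theorem (ensuring $H(p)=C(p)$ for $C^1$-generic $f$), and Hayashi's $C^1$ connecting lemma. I will work on a residual set $\cR\subset\diff^1(M)$ where all of these properties hold simultaneously.

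For the first item, I would argue as follows. Since $f\in\cR$, $H(p)$ coincides with the chain-recurrence class $C(p)$, so two hyperbolic periodic orbits $O_1,O_2\subset H(p)$ of distinct stable dimensions lie in the same chain-recurrence class and are therefore linked by $\varepsilon$-pseudo-orbits in both directions for every $\varepsilon>0$. I would then apply Hayashi's connecting lemma twice: first to create an intersection $W^u(O_1)\cap W^s(O_2)\neq\emptyset$, and then, since this first intersection can be arranged transversely and is therefore $C^1$-robust, to create $W^u(O_2)\cap W^s(O_1)\neq\emptyset$ without destroying the previous one. The resulting diffeomorphism $g$ is arbitrarily $C^1$-close to $f$, and since the stable dimensions of the hyperbolic continuations $O_{1,g},O_{2,g}$ remain distinct, it exhibits a heterodimensional cycle.

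For the forward implication of the second item, I would reason directly. Choose a trapping neighborhood $U$ with $f(\overline U)\subset U$ and $H(p)=\bigcap_{n\geq 0}f^n(\overline U)$. The local unstable manifold $W^u_{\mathrm{loc}}(O_p)$ lies in $U$ once chosen small enough, and since $W^u(O_p)$ is $f$-invariant we have $W^u(O_p)=f^n(W^u(O_p))\subset f^n(\overline U)$ for every $n\geq 0$, so $W^u(O_p)\subset H(p)$.

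The converse implication of the second item is the step I expect to be the main obstacle, since it is where genericity is truly used. Note first that $H(p)\subset\overline{W^u(O_p)}$ always holds, because the transverse homoclinic intersection points defining $H(p)$ lie on $W^u(O_p)$, so the hypothesis $W^u(O_p)\subset H(p)$ is equivalent to the equality $\overline{W^u(O_p)}=H(p)$. A $C^1$-generic consequence of Hayashi's connecting lemma (in the spirit of Bonatti--Crovisier) is that $\overline{W^u(O_p)}$ meets every chain-recurrence class lying below $C(p)=H(p)$ in Conley's partial order, since any chain from $O_p$ to such a class can be approximated by a genuine heteroclinic orbit. If $H(p)$ were not Lyapunov stable, there would be such a class $C'\neq H(p)$, forcing $\overline{W^u(O_p)}\supsetneq H(p)$ and contradicting the hypothesis. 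Hence $H(p)$ is Lyapunov stable and therefore a quasi-attractor.
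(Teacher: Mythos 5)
Your proof is correct and follows exactly the route the paper intends: the paper states this lemma without proof, presenting it as a direct consequence of Hayashi's connecting lemma (and its pseudo-orbit version) together with the generic identity $H(p)=C(p)$ from \cite{BoCr}, which are precisely the tools you deploy. The only point worth keeping in mind is the standard care needed when performing the two connecting perturbations for the first item (supports chosen disjoint, or the transverse connection created first so that it is robust), which you already acknowledge.
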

\medskip

Quasi-attractor always exist but for a $C^1$-generic diffeomorphism they attract most orbit.
\begin{theorem}[\cite{MP,BoCr}]
Let $f$ be a diffeomorphism in a dense G$_\delta$ subset of $\diff^1(M)$.
Then the $\omega$-limit set of any point $x$ in a dense G$_\delta$ subset of $M$
is a quasi-attractor.
\end{theorem}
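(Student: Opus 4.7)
The plan is to combine a Baire-category argument on the set-valued map $x \mapsto \omega(x)$ with the generic structural results on chain-recurrence classes invoked earlier in the paper. Throughout I would work with the positive prolongational limit set
\[
J^+(x) \;=\; \bigcap_{n \geq 1} \overline{\bigcup_{y \in B(x,1/n),\; m \geq n} \{f^m(y)\}},
\]
which, as a set-valued map $M \to \mathcal{K}(M)$ into the compact metric space of nonempty compact subsets (Hausdorff topology), is upper-semicontinuous, constant along $f$-orbits (a direct reindexing using $f^{\pm 1}$), and always satisfies $\omega(x) \subset J^+(x)$. First I would show that, for \emph{any} diffeomorphism $f$, the subset $R := \{x \in M : J^+(x) = \omega(x) \text{ and } J^+ \text{ is continuous at } x\}$ is a dense $G_\delta$ of $M$: residual continuity of $J^+$ is standard Baire theory for upper-semicontinuous set-valued maps into a compact codomain, while the equality $J^+(x) = \omega(x)$ on a residual set is a classical argument using a countable basis of open sets (if some perturbation of $x$ has orbit entering $U$ infinitely often, so does a residual set of such $x$). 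For $x \in R$, density of the forward orbit of $x$ in $\omega(x)$ together with $J^+(x) = \omega(x)$ yields chain-transitivity of $\omega(x)$: any two points of $\omega(x)$ are joined by arbitrarily fine pseudo-orbits lying in $\omega(x)$.

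Next I would invoke the generic hypothesis on $f$ to upgrade chain-transitivity to Lyapunov stability. Upper-semicontinuity and $f$-invariance of $J^+$ give $J^+(x) \subset J^+(w)$ for every $w \in \omega(x)$; the reverse inclusion would follow from continuity of $J^+$ at each such $w$, but this is not automatic on a chain-transitive set. Here the Bonatti-Crovisier theory~\cite{BoCr} enters: on a dense $G_\delta$ $\mathcal{G} \subset \diff^1(M)$, every chain-recurrence class is a homoclinic or an aperiodic class, and Conley's fundamental theorem provides a countable family of nested trapping regions that separate distinct chain-recurrence classes and realize every quasi-attractor as their decreasing intersection. A hypothetical witness $z \in J^+(w) \setminus \omega(x)$ would have to lie outside some trapping region $U$ containing $\omega(x)$; but continuity of $J^+$ at $x \in R$ forces $J^+(y) \subset U$ for $y$ near $x$, and a reindexing along the orbit of $x$ then forces $J^+(w) \subset U$, contradicting the existence of $z$. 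Hence $J^+(w) = \omega(x)$ for every $w \in \omega(x)$, and a failure of Lyapunov stability of $\omega(x)$ would produce a sequence $y_k \to w$ in $\omega(x)$ with $f^{m_k}(y_k)$ leaving a fixed neighbourhood of $\omega(x)$, i.e.\ a point in $J^+(w) \setminus \omega(x) = \emptyset$.

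Combining the two steps, for $f \in \mathcal{G}$ and $x \in R$ the set $\omega(x)$ is both chain-transitive and Lyapunov stable, hence coincides with a complete chain-recurrence class and is a quasi-attractor, as required.

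The main obstacle is precisely the propagation of continuity of $J^+$ along $\omega(x)$: a chain-transitive compact set need not lie inside the residual continuity locus of an upper-semicontinuous map, so the inclusion $J^+(x) \subset J^+(w)$ is strict in general and Lyapunov stability of $\omega(x)$ can genuinely fail for an arbitrary $f$. The generic structural information on chain-recurrence classes, by furnishing a countable basis of trapping regions whose boundaries no $\omega(x)$ can straddle, is what closes the gap between chain-transitivity and Lyapunov stability.
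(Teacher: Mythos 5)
The paper does not prove this statement (it is quoted from \cite{MP} and \cite{BoCr}), so your proposal must stand on its own, and it has a genuine gap at exactly the point where all the difficulty of the theorem is concentrated. The soft part of your argument is fine: $J^+$ is upper semicontinuous and constant along orbits, $\omega(x)=J^+(x)$ on a residual subset of $M$ for \emph{every} diffeomorphism (your boundary-of-open-sets argument works), $\omega(x)$ is always chain-transitive, and $J^+(x)\subset J^+(w)$ for $w\in\omega(x)$. The problem is the sentence ``a hypothetical witness $z\in J^+(w)\setminus\omega(x)$ would have to lie outside some trapping region $U$ containing $\omega(x)$.'' No such $U$ need exist. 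Since $\omega(x)$ is invariant, any trapping region $U\supset\omega(x)$ has its Conley attractor $A=\bigcap_n f^n(\overline U)$ containing $\omega(x)$, and one checks directly that $J^+(w)\subset A\subset U$ for every $w\in U$; hence $J^+(w)$ is always contained in $\mathcal A(x):=\bigcap\{A:\ A\ \text{attractor},\ \omega(x)\subset A\}$. A separating trapping region for $z$ therefore exists if and only if $z\notin\mathcal A(x)$, so your step silently assumes $\mathcal A(x)=\omega(x)$ --- which, together with chain-transitivity, is essentially the statement that $\omega(x)$ is a quasi-attractor, i.e.\ the theorem itself. Conley theory only separates points lying in \emph{distinct} chain classes, and the dangerous witnesses $z$ are precisely those in the same chain class as $\omega(x)$ (or on heteroclinic connections issued from it); these are not separated by any attractor--repeller pair.

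A decisive symptom: nothing in your argument actually uses that $f$ is $C^1$-generic. The prolongational limit set, the countable family of attractors and the Baire argument in $M$ are available for every homeomorphism, and the remark that generic chain classes are homoclinic or aperiodic is never invoked in a deduction. But the conclusion is false for the identity map on a connected manifold, where $\omega(x)=\{x\}$ while the unique chain-recurrence class is $M$; so any correct proof must fail for the identity, i.e.\ must use a perturbation theorem. That is where \cite{BoCr} genuinely enters: their connecting lemma for pseudo-orbits shows that for $C^1$-generic $f$ the chain class of $w$ is contained in $J^+(w)$ and, more to the point, that for generic $x$ one has $\omega(x)=\mathcal A(x)\cap R(f)=$ the whole chain class of $\omega(x)$; Lyapunov stability then comes from \cite{MP} (or from $\mathcal A(x)$ being a nested intersection of attractors once $\omega(x)=\mathcal A(x)$ is known). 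Note also that even granting Lyapunov stability, your last line ``hence coincides with a complete chain-recurrence class'' does not follow from chain-transitivity plus Lyapunov stability alone (again the identity map: $\{x\}$ is Lyapunov stable and chain-transitive but is not a chain class); this too requires the perturbative input. To repair the proof you would have to insert the connecting-lemma-for-pseudo-orbits step explicitly rather than derive the theorem from topological dynamics and Baire category alone.
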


According to this result, the main theorem is a consequence of two independant properties
of $C^1$-generic diffeomorphisms that are $C^1$-far from cycles and tangencies:
\begin{itemize}
\item[--] the union of the quasi-attractors is closed (see proposition~\ref{p.finiteness});
\item[--] each quasi-attractor is a hyperbolic set (see theorem~\ref{t.position}).
\end{itemize}
Indeed by the shadowing lemma, any quasi-attractor which is hyperbolic is transitive
and attracts any orbit in a neighborhood. In particular,
the quasi-attractors are isolated in the chain-recurrence set. Since their union is closed,
they are finite.

\subsection{Trapped tangent dynamics}\label{ss.trapped}
Let $f$ be a diffeomorphism and $K$ be an invariant compact set.
\medskip

A \emph{dominated splitting} on $K$ is a decomposition $T_KM=E\oplus F$ of its
tangent bundle into two invariant linear sub-bundles such that, for some integer
$N\geq 1$, any unitary vectors $u\in E_x,v\in F_x$ at points $x\in K$ satisfy
$$2\|Df^N.u_x\|\leq \|Df^N.v_x\|.$$
This definition does not depend on the choice of a Riemannian metric on $M$.
In the same way, one can define dominated splittings
$T_KM=E_1\oplus \dots \oplus E_s$  involving more than two bundles.

When the bundle $E$ is uniformly contracted (i.e. when there exists $N\geq 1$
such that for any unitary vector $u\in E$ one has $\|Df^N.u\|\leq 2^{-1}$),
the stable set of each point $x$ contains an injectively embedded sub-manifold
$W^{ss}(x)$ tangent to $E_x$ called the \emph{strong stable manifold of $x$},
which  is mapped by $f$ on the manifold $W^{ss}(f(x))$.

A \emph{partially hyperbolic splitting} on $K$
is a dominated splitting $T_KM=E^s\oplus E^c\oplus E^u$ such that
$E^s$ and $E^u$ are uniformly contracted by $f$ and $f^{-1}$ respectively.

\medskip
\begin{defi}
A \emph{plaque family tangent to $E$} is a continuous map $\cW$ from the linear bundle $E$
over $K$ into $M$ satisfying:
\begin{itemize}
\item[--] for each $x\in K$, the induced map $\cW_x\colon E_x\to M$ is a $C^1$-embedding
which satisfies $\cW_x(0)=x$ and whose image is tangent to $E_x$ at $x$;
\item[--] $(\cW_x)_{x\in K}$ is a continuous family of $C^1$-embeddings.
\end{itemize}
The plaque family $\cW$ is \emph{locally invariant} if there exists $\rho>0$
such that for each $x\in K$ the image of the ball $B(0,\rho)\subset E_{x}$ by $f\circ \cW_{x}$
is contained in the plaque $\cW_{f(x)}$.
\end{defi}
We often identify $\cW_x$ with its image.
The plaque family theorem~\cite[theorem 5.5]{HPS} asserts that a locally invariant plaque family
tangent to $E$ always exists (but is not unique in general).

\begin{defi}\label{d.tt}
The plaque family is \emph{trapped} if for each $x\in K$, one has
$$f(\overline{\cW_x})\subset \cW_{f(x)}.$$
It is \emph{thin trapped} if for any neighborhood $S$ of the section $0$ in $E$ there exist:
\begin{itemize}
\item[--] a continuous family $(\varphi_x)_{x\in K}$ of $C^1$-diffeomorphisms of the spaces $(E_x)_{x\in K}$ supported in $S$;
\item[--] a constant $\rho>0$ such that for any $x\in K$ one has
$$f(\overline{\cW_x\circ \varphi_x(B(0,\rho))})\subset \cW_{f(x)}\circ \varphi_{f(x)}(B(0,\rho)).$$
\end{itemize}

\end{defi}
If a plaque family $\cW$ is thin trapped, then it is also the case
for any other locally invariant plaque family $\cW'$ tangent to $E$
(moreover there exists $\rho>0$ such that for each $x\in K$,
the ball $B(0,\rho)\subset E_{x}$ is sent by $\cW'_{x}$ into $\cW_{x}$, see lemma~\ref{l.uniqueness-coherence}).
One thus say that \emph{$E$ is thin trapped}.

\begin{remark}\label{r.nested}
Note also hat when $E$ is thin trapped, there exist nested families of trapped plaques
whose diameter are arbitrarily small.
\end{remark}

The two following properties are classical (see for instance~\cite[Lemma 2.4]{C2}).
On a small neighborhood of $K$, we introduce a cone field $\cC^E$ which is a thin neighborhood of the bundle $E$.
\begin{lemma}\label{l.uniqueness-coherence}
Let $K$ be a compact invariant set endowed with a dominated decomposition $T_KM=E\oplus F$.
There exists $r>0$ such that if there exists a trapped plaque family $\cW^{cs}$ tangent to $\cC^E$
whose plaques have a diameter smaller than $r$, then the following properties hold.
\begin{itemize}
\item[--] If $\widehat{\cW}^{cs}$ is another locally invariant plaque family tangent to $E^{cs}$,
then there exists $\rho>0$ such that
for each $x\in H(p)$ the image of the ball $B(0,\rho)\subset E_{x}$ by $\cW^{cs}_{x}$
is contained in $\widehat \cW^{cs}_{x}$.
\item[--] There exists $\varepsilon>0$ such that for any points $x,x'\in H(p)$ that are
$\varepsilon$-close with $\cW^{cs}_x\cap \cW^{cs}_{x'}\neq \emptyset$, then $f(\overline{\cW^{cs}_{x'}})\subset \cW^{cs}_{f(x)}$.
\end{itemize}
\end{lemma}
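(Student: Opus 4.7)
The plan is to combine a graph-transform contraction argument (for the first item, uniqueness) with a continuity-and-compactness argument (for the second item, coherence). First, I would fix a thin cone field $\cC^E$ around $E$ and use the domination $T_K M = E\oplus F$ to extract constants $\lambda>1$ and $r>0$ such that on an $r$-neighborhood of $K$ the cone $\cC^E$ is $Df$-invariant and any unit vectors $u\in\cC^E$, $v\notin\cC^E$ at nearby points satisfy $\|Df\cdot v\|\geq \lambda\|Df\cdot u\|$. Work in charts along the $f$-orbit of $x\in K$ that trivialize the bundles so that $E_{f^n(x)},F_{f^n(x)}$ are fixed subspaces, and write $\cW^{cs}_{f^n(x)}$ and $\widehat\cW^{cs}_{f^n(x)}$ as graphs of $C^1$-maps $\sigma_n,\widehat\sigma_n$ passing through $0$ with derivative in $\cC^E$. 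Trapping of $\cW^{cs}$ guarantees that the forward orbit of any $\cW^{cs}_x(e)$ stays on the graph of $\sigma_n$; local invariance of $\widehat\cW^{cs}$ gives the same, on a small ball, for $\widehat\sigma_n$. Assuming by contradiction that $\cW^{cs}_x(e_0)\notin\widehat\cW^{cs}_x$ for arbitrarily small $e_0$, the vertical gap $\Delta_n=|\sigma_n(e_n)-\widehat\sigma_n(e_n)|_F$ at the projection $e_n$ of $f^n(\cW^{cs}_x(e_0))$ is strictly positive, and the cone-invariance estimate applied between consecutive iterates yields $\Delta_{n+1}\geq\lambda\Delta_n$, hence $\Delta_n\to\infty$; this contradicts the uniform diameter bound $r$ on both plaque families. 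Compactness of $K$ delivers a uniform $\rho>0$ as claimed.

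For the second item, I would first use compactness of $K$ together with continuity of $f$ and the plaque family to extract a uniform trapping margin $\eta>0$: for every $x\in K$, the set $f(\overline{\cW^{cs}_x})$ lies in $\cW^{cs}_{f(x)}$ at intrinsic distance at least $\eta$ from its topological boundary. Continuity of the plaque family then provides, for every $\mu>0$, some $\varepsilon>0$ such that $d(x,x')<\varepsilon$ implies that $\cW^{cs}_{x'}$ is $C^1$-$\mu$-close to $\cW^{cs}_x$ when expressed in a common chart. Given a point $w\in\cW^{cs}_x\cap\cW^{cs}_{x'}$, I would apply the first item in a chart centered at $w$ to two locally invariant families tangent to $E$ through $w$ (namely $\cW^{cs}_{x'}$ and the local extension of $\cW^{cs}_x$ past $w$), concluding that $\cW^{cs}_{x'}$ is contained in an open extension of $\cW^{cs}_x$. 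The image $f(\overline{\cW^{cs}_{x'}})$ is then $C^0$-close to $f(\overline{\cW^{cs}_x})$, which already lies well inside $\cW^{cs}_{f(x)}$ by the margin $\eta$; choosing $\varepsilon$ small enough produces the desired inclusion $f(\overline{\cW^{cs}_{x'}})\subset\cW^{cs}_{f(x)}$.

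The delicate point is the graph-transform estimate $\Delta_{n+1}\geq\lambda\Delta_n$ powering the first item: the splitting $E\oplus F$ is strictly $Df$-invariant only on $K$, while the separations $\Delta_n$ are measured at points of the plaques, off $K$. This gap is bridged by the thin cone field $\cC^E$: choosing $r$ small enough so that $Df$ preserves $\cC^E$ on the $r$-tubular neighborhood of $K$ transfers the domination to nearby points, at the cost of a slight loss in $\lambda$ that can be absorbed by taking $r$ smaller. A secondary subtlety is the application of the uniqueness item at an intersection point $w$ which need not belong to $K$: the argument must be formulated intrinsically in a chart at $w$, using only the local invariance of the two plaques under $f$ and the cone invariance available in a tubular neighborhood of $K$.
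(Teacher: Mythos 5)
The paper does not actually prove this lemma (it is quoted as classical, with a pointer to \cite[Lemma 2.4]{C2}), so your argument has to stand on its own. Its architecture --- a graph-transform/cone comparison for the first item, a uniform trapping margin plus continuity for the second --- is the standard one, but the central estimate is wrong as written. Domination only gives a \emph{relative} inequality: for unit vectors $u$ in $\cC^E$ and $v$ in the complementary cone one has $\|Df\cdot v\|\geq \lambda\|Df\cdot u\|$ with $\lambda>1$, while $\|Df\cdot u\|$ itself may be far smaller than $1$ (in every application here $E=E^{cs}$ contains a uniformly contracted subbundle). Consequently the vertical gap only satisfies something like $\Delta_{n+1}\geq \lambda\,m\bigl(Df_{|E}\bigr)\,\Delta_n$, which can be a strict contraction; the conclusion $\Delta_n\to\infty$, and with it the contradiction against the diameter bound $r$, does not follow. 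The correct contradiction compares the gap with the \emph{horizontal} displacement from the common base point: if $e_n$ denotes the $E$-coordinate of the $n$-th iterate, domination yields $\Delta_{n+1}/|e_{n+1}|\geq \lambda\,\Delta_n/|e_n|$, so the difference vector between the two iterated points eventually lies deep inside the complementary cone, while the chords joining each of them to $f^n(x)$ must stay in the thin cone $\cC^E$ because both plaques pass through $f^n(x)$ tangentially to $\cC^E$. It is this incompatibility of cone memberships, not unboundedness of $\Delta_n$, that forces the two graphs to coincide near the base point; with that correction the first item goes through and the uniformity of $\rho$ follows from compactness as you say.

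The second item has a further soft spot. Showing that $f(\overline{\cW^{cs}_{x'}})$ is $C^0$-close to $f(\overline{\cW^{cs}_{x}})$, which sits at intrinsic depth $\eta$ inside $\cW^{cs}_{f(x)}$, does not give the inclusion $f(\overline{\cW^{cs}_{x'}})\subset\cW^{cs}_{f(x)}$: the plaque has positive codimension, so ambient proximity to a subset of it is far from containment. What is needed is that the two plaques actually agree, as subsets of a single graph over $E$, wherever they meet; the (corrected) uniqueness argument applied at the intersection point $w$ gives this only on a neighborhood of $w$, and it must be propagated to all of $\cW^{cs}_{x'}$ --- for instance by noting that the coincidence locus of the two graphs is closed and, by the uniqueness argument applied at every common point with a uniform radius, open in a connected common domain. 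Only after that does the trapping margin $\eta$ (which handles the part of $\cW^{cs}_{x'}$ extending beyond the horizontal domain of $\cW^{cs}_x$) close the argument.
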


\subsection{Homoclinic classes}\label{ss.homoclinic}
Far from homoclinic bifurcations, the homoclinic classes of a generic diffeomorphism
satisfy some weak form of hyperbolicity.

\begin{defi}\label{d.chain-hyperbolic}
A homoclinic class $H(p)$ is said to be \emph{chain-hyperbolic} if:
\begin{itemize}
\item[-] $H(p)$ has a dominated splitting $T_{H(p)} M= E^{cs}\oplus E^{cu}$ into
center stable and center unstable bundles;
\item[-] there exists a plaque family $(\cW^{cs}_x)_{x\in H(p)}$ tangent to $E^{cs}$
which is trapped by $f$ and a plaque family $(\cW^{cu}_x)_{x\in H(p)}$ tangent to $E^{cu}$
which is trapped by $f^{-1}$;
\item[-] there exists a hyperbolic periodic point $q_s$ (resp. $q_u$) homoclinically related to the orbit of $p$
whose stable manifold contains $\cW^{cs}_{q_s}$
(resp. whose unstable manifold contains $\cW^{cu}_{q_u}$).
\end{itemize}
Such a class is \emph{topologically hyperbolic} if its
center stable and center unstable plaques are thin trapped by $f$ and $f^{-1}$ respectively.
\end{defi}

One will see (lemma~\ref{l.chain-stable} below) that for any point $x\in H(p)$, the plaque $\cW^{cs}_{x}$
is contained in the chain-stable set of $H(p)$.
This justifies the name ``chain-hyperbolicity":
this definition generalizes the hyperbolic basic sets endowed with
families of stable and unstable plaques
(in this case the plaques $\cW^{cs}$ are the images of
local stable manifolds by a backward iterate $f^{-n}$).
With additional assumptions, the chain-hyperbolicity is a robust property:
if $H(p)$ is chain-hyperbolic for $f$, coincides with its chain-recurrence class and
if $E^{cs}, E^{cu}$ are thin trapped by $f$ and $f^{-1}$ respectively,
then for any $g$ that is $C^1$-close to $f$ the homoclinic class $H(p_g)$
associated to the continuation $p_g$ of $p$ is also chain-hyperbolic (see lemma~\ref{l.robustness}).

\begin{theo}[\cite{C2}]\label{t.homoclinic}
Let $f$ be a diffeomorphism in a dense G$_\delta$ subset of $\diff^1(M)\setminus \overline{\Tang\cup\Cycl}$.
Then, any homoclinic class of $f$ is chain-hyperbolic.
Moreover, the central stable bundle $E^{cs}$ is thin trapped.
If it is not uniformly contracted,
it decomposes as a dominated splitting $E^{cs}=E^s\oplus E^c$
where $dim(E^c)=1$ and $E^s$ is uniform; and there exist periodic orbits
homoclinically related to $p$ and whose Lyapunov exponents along $E^c$ are arbitrarily close to $0$.
The same holds for the central unstable bundle $E^{cu}$ and $f^{-1}$.
\end{theo}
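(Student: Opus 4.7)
The plan is to leverage genericity together with the two obstruction-free hypotheses ($C^1$-far from tangencies and from cycles) to produce successively (i) a dominated splitting with the right index, (ii) thin trapping of the extremal bundles, (iii) the anchor periodic points $q_s,q_u$, and (iv) the small central Lyapunov exponents. I would begin by fixing the generic setting: for $f$ in the residual set of $\diff^1(M)\setminus\overline{\Tang\cup\Cycl}$ one may assume, via Bonatti--Crovisier, that $H(p)$ equals its chain-recurrence class and periodic orbits homoclinically related to $p$ are dense in it; by Lemma~\ref{l.prelim} all periodic points in $H(p)$ must share the stable dimension of $p$, since otherwise a small perturbation would yield a heterodimensional cycle.

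Next I would build the dominated splitting. Being far from tangencies forces, at every periodic orbit in $H(p)$, a dominated splitting separating the contracting from the expanding directions (a Wen-type argument: otherwise Franks--Gourmelon perturbations on a periodic orbit with weakly dominated spectrum would create a tangency inside $H(p)$). Taking closure gives a dominated splitting $T_{H(p)}M=E^{cs}\oplus E^{cu}$ of the correct index. To refine $E^{cs}$, I would argue dichotomously: if $E^{cs}$ is not uniformly contracted, Mañé's ergodic closing lemma together with Liao's selecting lemma produces periodic orbits in $H(p)$ whose weakest exponent along $E^{cs}$ is arbitrarily close to $0$; a further Franks perturbation on such an orbit would then create a tangency, unless the weakest direction is itself dominated by the rest of $E^{cs}$. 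This yields the refinement $E^{cs}=E^s\oplus E^c$ with $\dim E^c=1$ and $E^s$ uniform, and simultaneously delivers the periodic orbits with arbitrarily small central Lyapunov exponents. The symmetric argument for $f^{-1}$ handles $E^{cu}$.

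For trapping, I would invoke the HPS plaque-family theorem to obtain locally invariant plaque families $\cW^{cs}$, $\cW^{cu}$ tangent to $E^{cs}$, $E^{cu}$, and then promote ``locally invariant'' to ``trapped'' using the one-dimensional character of the non-uniform center direction: a central plaque that were not eventually swallowed by its image could, after perturbation supported in a small tube around an orbit segment, be folded back on itself, producing a tangency. The same kind of argument, now quantified uniformly in scale, gives thin trapping: for any neighborhood of the zero section one must be able to rescale the plaques so that their images lie strictly inside, for otherwise an equicontinuity failure produces limit configurations along orbits where the central dynamics is neutral on a macroscopic plaque, and this again contradicts far-from-tangencies after a perturbation. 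The anchor points $q_s$ and $q_u$ are then obtained by selecting hyperbolic periodic points homoclinically related to $p$ whose $E^s$ and $E^u$ spectra are strong enough that the classical stable/unstable manifold theorem yields local manifolds that contain the corresponding thin-trapped plaques; density of such periodic points in $H(p)$, plus uniformity of the domination, ensures that such $q_s,q_u$ exist.

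The hardest step will be the thin trapping statement, together with its compatibility with the two-bundle decomposition $E^s\oplus E^c$. Trapping on its own follows comparatively cleanly from domination, but upgrading it to thin trapping demands a quantitative perturbation argument valid uniformly over the whole class and controlling the geometry of one-dimensional center plaques; this is where the far-from-tangencies hypothesis must be used most delicately, and where most of the technical work of~\cite{C2} is concentrated. Once thin trapping is in hand, verifying the remaining clauses of Definition~\ref{d.chain-hyperbolic} and packaging everything into the statement of the theorem is routine.
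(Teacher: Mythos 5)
Your proposal sets out to re-derive the content of~\cite{C2} from scratch, whereas the paper treats the theorem as essentially a quotation: everything except the thin trapping of $E^{cs}$ is taken directly from~\cite{C2}, and the proof consists only of upgrading what~\cite{C2} actually provides — namely that the one-dimensional bundle $E^c$ is of ``type (H)-attracting'', i.e.\ admits a locally invariant plaque family $\cD$ with arbitrarily small trapped neighborhoods $I$ of the zero section — to thin trapping of the full bundle $E^{cs}=E^s\oplus E^c$. That upgrade is done by enlarging each $\cD_x(I_x)$ to a small tubular neighborhood $V_x$ inside the plaque $\cW^{cs}_x$ and using the uniform contraction along $E^s$ to keep the trapping property $f(\overline{V_x})\subset V_{f(x)}$. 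This step, which is the only thing the paper actually proves, is absent from your outline: you treat thin trapping of $E^{cs}$ as a single monolithic property to be established directly.

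More seriously, the mechanism you propose for obtaining (thin) trapping is not a valid argument as stated. A plaque that fails to be swallowed by its image cannot simply be ``folded back on itself by a perturbation supported in a small tube'' to produce a homoclinic tangency; failure of trapping is a statement about the topological type of the central dynamics, not about a lack of domination, and no tangency is created this way. The actual argument in~\cite{C2} classifies the one-dimensional central dynamics via central models into chain-recurrent, chain-neutral, chain-parabolic and chain-hyperbolic types; the neutral and parabolic types are excluded because the class contains hyperbolic periodic orbits, and the chain-recurrent type is excluded by the \emph{far-from-cycles} hypothesis (it would force periodic points of both stable indices into $H(p)$, hence a heterodimensional cycle by Lemma~\ref{l.prelim}). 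Your sketch invokes only the far-from-tangencies hypothesis at this step, so even as an outline of~\cite{C2} it points at the wrong obstruction. The earlier parts of your plan (domination from far-from-tangencies via Franks--Gourmelon perturbations, weak periodic orbits via the ergodic closing and Liao's selecting lemmas) are in the right spirit for the quoted material, but the step you yourself identify as the hardest is the one where your argument does not go through.
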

\begin{proof}
The statement in~\cite{C2} is slightly different and we have to justify why the center stable bundle $E^{cs}$
is thin trapped. When $E^{cs}$ is uniformly contracted, this is very standard. When $E^{cs}$
is not uniformly contracted, \cite[section 6]{C2} asserts that there exists a dominated splitting $E^{cs}=E^s\oplus E^c$
such that $\dim(E^c)=1$, $E^s$ is uniformly contracted and that the bundle $E^c$ has ``type (H)-attracting":
there exists a locally invariant plaque family $\cD$ tangent to $E^c$ and arbitrarily small open neighborhoods
$I$ of the section $0$ in $E^c$ satisfying $f(\overline{\cD_x(I_x)})\subset \cD_{f(x)}(I_{f(x)})$ for each $x\in H(p)$.
The neighborhood $I$ may be chosen as a continuous family of open intervals $(I_x)_{x\in H(p)}$.

Let us now consider a locally invariant plaque family $\cW$ tangent to $E^{cs}$.
Since $I$ is small, one has
$\cD_x(I_x)\subset \cW_x$ for any $x\in H(p)$ (see~\cite[lemma 2.5]{C2}).
One then builds for each $x$ a small open neighborhood $V_x$ of
$\cD_x(I_x)$ in $\cW_x$ which depends continuously on $x$:
this can be obtained by modifying a tubular neighborhood of $\cD_x(I_x)$ in $\cW_x$.
Since $E^s$ is uniformly contracted one can still require the
trapping property $f(\overline{V_x})\subset V_x$.
Let $U_x\subset E^{cs}_x$ be the backward image of $V_x$ by $\cW^{cs}$.
Since $U_x$ can be obtained by modifying the tubular neighborhood of a $C^1$-curve,
it can be chosen diffeomorphic in $E^{cs}$ to an open ball
through a diffeomorphism as stated in definition~\ref{d.tt}.
\end{proof}

One deduces that the tangent bundle over
a non-hyperbolic homoclinic class as in theorem~\ref{t.homoclinic}
has a dominated splitting $TM=E^s\oplus E^c\oplus E^u$ or $E^s\oplus E^c_1\oplus E^c_2\oplus E^u$
where each bundle $E^c$ or $E^c_1,E^c_2$ is one-dimensional, $E^s$ is uniformly contracted and $E^u$
is uniformly expanded (however, one of them can be trivial).
Note that under perturbations the homoclinic class $H(p_g)$
is still chain-hyperbolic but its center stable bundle $E^{cs}$ is a priori not thin trapped.
\medskip

We will focus on the invariant compact sets $K$ that are \emph{Lyapunov stable},
i.e. that have a basis of neighborhoods $U$ that are invariant by $f$
(i.e. $f(U)\subset U$).
\begin{coro}\label{c.homoclinic}
Let $f$ be $C^1$-generic in $ \overline{\Tang\cup\Cycl}^c$.
Then, for any Lyapunov stable homoclinic class of $f$ the center unstable bundle is uniformly expanded.
\end{coro}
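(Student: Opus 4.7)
The plan is to argue by contradiction, applying Theorem~\ref{t.homoclinic} to $f^{-1}$ and then using a Franks-lemma perturbation to create a heterodimensional cycle. Specifically, suppose that $H(p)$ is Lyapunov stable but that $E^{cu}$ is not uniformly expanded. By the last sentence of Theorem~\ref{t.homoclinic} applied to $f^{-1}$ and $E^{cu}$, there is a dominated decomposition $E^{cu}=E^c\oplus E^u$ with $\dim E^c=1$ and $E^u$ uniformly expanded, together with a sequence of hyperbolic periodic orbits $q_n$ homoclinically related to $p$ whose Lyapunov exponents along $E^c$ tend to $0$ from above. Each $q_n$ has the same stable dimension as $p$, namely $\dim E^{cs}$.

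For $n$ large enough, I would apply Franks' lemma to perform an arbitrarily $C^1$-small perturbation $g_0$ of $f$, supported in a thin tubular neighborhood of the orbit of $q_n$, turning its central Lyapunov exponent from slightly positive to slightly negative. The continuation $q_{n,g_0}$ is then a hyperbolic periodic orbit of stable dimension $\dim E^{cs}+1$, strictly greater than that of $p_{g_0}$. One heteroclinic direction survives by a pure dimension count: since $W^u(p)\pitchfork W^s(q_n)$ was an open condition, and $\dim W^s(q_{n,g_0})$ has grown after the perturbation, one has $W^u(p_{g_0})\pitchfork W^s(q_{n,g_0})\neq\emptyset$. The opposite direction is recovered via Hayashi's connecting lemma: the Lyapunov stability of $H(p)$ provides a robust trapping neighborhood in which $W^u(p_{g_0})$ still accumulates on the $q_{n,g_0}$-orbit, and a further arbitrarily small perturbation produces $g$ with $W^u(q_{n,g})\cap W^s(p_g)\neq\emptyset$.

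Hence for a $C^1$-generic $g$ arbitrarily close to $g_0$ the homoclinic class $H(p_g)$ contains periodic orbits of two distinct stable dimensions; the first bullet of Lemma~\ref{l.prelim} then produces a diffeomorphism exhibiting a heterodimensional cycle arbitrarily $C^1$-close to $g$, and hence to $f$. This contradicts $f\in\overline{\Tang\cup\Cycl}^c$, so $E^{cu}$ must be uniformly expanded. The main difficulty lies in the middle step: once the Franks perturbation has altered the stable dimension of $q_n$, one must argue that the two periodic orbits still belong to a common homoclinic class after a further generic perturbation. This combines the chain-hyperbolic robustness of $H(p)$ (to be established via Proposition~\ref{p.continuation}) with Hayashi-type connecting lemmas, and is precisely the step where Lyapunov stability of the class enters essentially, by guaranteeing that the trapping neighborhood of $H(p)$ persists for all nearby diffeomorphisms.
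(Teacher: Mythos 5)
Your overall strategy (weak central exponent from Theorem~\ref{t.homoclinic}, Franks' lemma to flip the index of $q_n$, then close up a heterodimensional cycle) is the same as the paper's, but the order of operations creates a genuine gap at exactly the step you flag as "the main difficulty". After the index-changing perturbation $g_0$, you need the connection $W^u(q_{n,g_0})\cap W^s(p_{g_0})\neq\emptyset$, and for Hayashi's connecting lemma this requires that $W^u(q_{n,g_0})$ accumulate on points that are themselves accumulated by $W^s(p_{g_0})$. Your justification --- that "$W^u(p_{g_0})$ still accumulates on the $q_{n,g_0}$-orbit" inside a persisting trapping neighborhood --- points in the wrong direction: it restates the first connection, not the second. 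Lyapunov stability of $H(p)$ is a property of $f$; a trapping region surviving for $g_0$ only tells you that $W^u(q_{n,g_0})$ stays in that region, not that $W^u(q_{n,g_0})\subset H(p_{g_0})$ nor that it meets the closure of $W^s(p_{g_0})$. (The closing appeal to Lemma~\ref{l.prelim} has the same circularity: to place $q_{n,g}$ in the class of $p_g$ you already need both connections, and once you have both connections you already have the cycle.) The "pure dimension count" for the first connection is also under-argued, since $W^s(q_{n,g_0})$ is not a hyperbolic continuation of $W^s(q_n)$ --- its dimension has jumped --- though this part is repairable via the inclination lemma: a disc of $W^u(p)$ through the old transverse intersection point accumulates on $W^u_{loc}(q_n)$, and the new $(d^s+1)$-dimensional local stable manifold is transverse to that limit.

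The paper avoids the whole difficulty by never perturbing $f$ before the very last step. It encodes "there is a hyperbolic periodic orbit of stable dimension $d^s+1$ in $U$" as an open property and uses the standard Baire argument to transfer it from the Franks-perturbed diffeomorphism back to the generic $f$ itself. Then, working with $f$, Lemma~\ref{l.largestable} (with Remark~\ref{r.large}) gives such an orbit $q$ a stable manifold of uniform size containing its center-stable plaque, Lemma~\ref{l.contper} produces a point $z$ homoclinically related to $p$ whose center-unstable plaque lies in $W^u(z)$ and meets that plaque transversally, and Lyapunov stability of $H(p)$ for $f$ forces $q\in H(p)$ and $W^u(q)\subset H(p)$; hence every point of $W^u(q)\setminus\{q\}$ is accumulated by $W^s(O_p)$, and a single connecting-lemma perturbation, supported away from the existing transverse intersection $W^u(p)\cap W^s(q)$, closes the cycle. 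You should either restructure your argument along these lines or supply an actual proof that the second connection can be created after the index change.
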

\begin{proof}
For any open set $U\subset M$ and any integer $d\geq 0$,
one considers the following property:
\begin{description}
\item[$P(U,d)$:] There exists a hyperbolic periodic orbit $O\subset U$
whose stable dimension equals $d$.
\end{description}
This property is open: if $P(U,d)$ is satisfied by $f$, then
so it is by any diffeomorphism $g$ that is $C^1$-close to $f$.
Let us fix a countable basis of open sets $\cB$, i.e. for any compact set
and any open set $V$ satisfying $K\subset V\subset M$, there exists $U\in \cB$
such that $K\subset U\subset V$.
Then, for any diffeomorphism $f$ in dense G$_\delta$ subset $\cR_0\subset \diff^1(M)$,
for any open set $U\in \cB$ and any $d\geq 0$,
if there exists a perturbation $g$ of $f$ such that $P(U,d)$ holds for $g$, then the same holds for $f$.

We denote by $\cR\subset \cU$ a dense G$_\delta$ subset of
$\diff^1(M)\setminus \overline{\Tang\cup\Cycl}$ whose elements satisfy theorem~\ref{t.homoclinic}
and have hyperbolic periodic orbits are.

Let us consider $f\in \cR$ and a homoclinic class $H(p)$ of $f$ whose
center unstable bundle $E^{cu}=E^c_2\oplus E^u$ is not uniformly expanded. Hence $\dim(E^{c}_2)$
is one-dimensional, $p$ is not a sink (and apriori $E^u$ could be degenerated).
By the theorem~\ref{t.homoclinic}, there exists a hyperbolic periodic orbit $O$
homoclinically related to $p$ having some Lyapunov exponent along $E^{cu}$ arbitrarily close to $0$.
By Franks lemma, one can find a perturbation $g$ of $f$ such that $O$ becomes a hyperbolic periodic orbit
whose stable space contains $E^{c}_2$.
Since $f\in \cR_0$, one deduces that
any neighborhood of $H(p)$ contains a periodic orbit whose stable dimension is $d^s+1$,
where $d^s$ denotes the stable dimension of $p$.

Let us consider a locally invariant plaque families $\cW$ tangent to $E^{cs}$
over the maximal invariant set in a neighborhood of $H(p)$.
Let us consider a periodic orbit $O$
contained in a small neighborhood of $K$,
with stable dimension equal to $d^s+1$.
As a consequence, using the domination $E^{cs}\oplus E^{cu}$,
the Lyapunov exponents along $E^{cs}$ of $O$ is smaller than some uniform constant $-C<0$.
If the plaques of the family $\cW$ are small enough,
the lemma~\ref{l.largestable} and the remark~\ref{r.large} below then ensure that at some $q\in O$
one has $\cW_{q}\subset W^s(q)$.
By lemma~\ref{l.contper} below, $q$ is close to a hyperbolic periodic point
$z$ homoclinically related to $p$
whose plaque $\cW^{cu}_z$ is contained in the unstable set of $z$.
The plaque
$\cW_{q}$ intersects transversally the plaque $\cW^{cu}_z$.
This proves that the stable manifold of $q$ also intersects transversally
the unstable manifold of the orbit of $p$.

Since $H(p)$ is Lyapunov stable, it contains $W^u(z), q$ and $W^u(q)$.
As for $H(p)$, the point $q$ is not a sink. This proves that $E^u$ is non trivial.
Let $y\in W^u(q)\setminus \{q\}$.
Since $y$ belongs to $H(p)$, the stable manifold of the orbit of $p$ accumulates on $y$,
hence by a $C^1$-small perturbation produced by Hayashi's connecting lemma, one can create an intersection between the unstable manifold of $q$ and
the stable manifold of the orbit of $p$. The intersection between $W^u(p)$ and $W^s(q)$ persists hence
we have built a heterodimensional cycle, contradicting our assumptions.
We have proved that if $H(p)$ is Lyapunov stable, the bundle $E^{cu}$ is uniformly expanded.
\end{proof}

\subsection{Aperiodic classes}\label{ss.aperiodic}

Far from homoclinic bifurcations, the aperiodic classes have also a partially hyperbolic structure.
\begin{theo}[\cite{C2}]\label{t.aperiodic}
Let $f$ be a diffeomorphism in a dense G$_\delta$ subset of $\diff^1(M)\setminus \overline{\Tang\cup\Cycl}$.
Then, any aperiodic class of $f$ is a minimal set
and holds a partially hyperbolic structure $E^s\oplus E^c \oplus E^u$.
Moreover, there exists a continuous familly of center stable plaques
$\cW^{cs}$ tangent to $E^{cs}=E^s\oplus E^c$
which are trapped by $f$.
Similarly, there exists a continuous family of center unstable plaques
$\cW^{cu}$ tangent to $E^{cu}=E^c\oplus E^u$ which are trapped by $f^{-1}$.
\end{theo}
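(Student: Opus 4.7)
The plan is to prove this in three stages: first show that aperiodic classes are minimal, then obtain the partially hyperbolic splitting with a one-dimensional center, and finally upgrade the invariant plaque families to trapped ones.

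For minimality, I would argue by contradiction. Suppose an aperiodic class $C$ strictly contains an invariant compact set $\Lambda$. Using the connecting lemma for pseudo-orbits of Bonatti--Crovisier, one perturbs $f$ so that a chain-recurrent orbit inside $C$ passing close to $\Lambda$ becomes a true periodic orbit. For a $C^1$-generic $f$, this periodic orbit persists in $C$ (the chain-recurrence class varies semi-continuously and is determined by its periodic orbits), contradicting aperiodicity. Thus $C$ must coincide with the orbit closure of each of its points and, being chain-transitive, is minimal.

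For the dominated splitting, I would combine the ergodic closing lemma of Mañé with the assumption $f\notin \overline{\Tang\cup\Cycl}$. Every ergodic measure supported on $C$ is weak-$*$ approximated by measures on hyperbolic periodic orbits; all these orbits have a common stable dimension $d$ (otherwise a heterodimensional cycle could be produced by Hayashi's connecting lemma) and satisfy a uniform gap between their Lyapunov exponents on either side of the central ones (otherwise a Franks-type perturbation would produce a tangency, exactly as in the proof of theorem~\ref{t.homoclinic}). Passing to the closure, this yields a dominated splitting $TM|_C = E^s\oplus E^c\oplus E^u$ with $E^s$ uniformly contracted, $E^u$ uniformly expanded, and $\dim E^c\leq 1$. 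The case $\dim E^c = 0$ is excluded because a uniformly hyperbolic chain-recurrence class contains periodic points (by shadowing), contradicting aperiodicity; hence $\dim E^c = 1$.

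For the trapped plaque families, the Hirsch--Pugh--Shub plaque family theorem supplies locally invariant plaque families $\cW^{cs}$ and $\cW^{cu}$ tangent to $E^{cs}=E^s\oplus E^c$ and $E^{cu}=E^c\oplus E^u$. To obtain the trapping by $f$ on $\cW^{cs}$, I would argue that the central bundle $E^c$ has ``type (H)-attracting'' along $C$: if it were not, a small $C^1$-perturbation would turn the class into one containing a source-like periodic orbit on the center, which being aperiodic is impossible. Once $E^c$ is of attracting type, one builds small intervals $I_x\subset E^c_x$ satisfying $f(\overline{\cD_x(I_x)})\subset \cD_{f(x)}(I_{f(x)})$, then fattens these in the uniformly contracted $E^s$ direction to produce the trapping neighborhoods in $\cW^{cs}$, exactly as in the construction used at the end of the proof of theorem~\ref{t.homoclinic}. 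The same argument applied to $f^{-1}$ gives the trapping of $\cW^{cu}$.

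The main obstacle is the minimality step, which relies on the subtle semicontinuity properties of chain-recurrence classes for $C^1$-generic maps and on the flexibility of the connecting lemma for pseudo-orbits; the extraction of the exact dimensions of the central bundle is delicate too, but it is structurally the same as in the homoclinic case and follows the template already set up in theorem~\ref{t.homoclinic}.
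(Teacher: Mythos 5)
First, note that the paper does not prove theorem~\ref{t.aperiodic} at all: it is imported verbatim from~\cite{C2} and used as a black box (unlike theorem~\ref{t.homoclinic}, for which the paper at least supplies the extra argument upgrading ``trapped'' to ``thin trapped''). So there is no internal proof to compare against; your proposal has to be judged against what a correct proof would require.

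Your minimality step contains a genuine gap. You perturb to turn a pseudo-orbit of $C$ shadowing $\Lambda$ into a periodic orbit and then claim that ``for a $C^1$-generic $f$, this periodic orbit persists in $C$.'' The generic semicontinuity arguments (as in~\cite{BoCr}) only yield that any neighborhood of $C$ contains periodic orbits of the unperturbed generic $f$; they do not place those orbits inside the chain-recurrence class $C$. Indeed every aperiodic class of a $C^1$-generic diffeomorphism is already a Hausdorff limit of periodic orbits, with no contradiction to aperiodicity; if your argument were valid it would show that aperiodic classes never exist generically, which is precisely what one cannot prove and why the paper treats them as a separate case. The actual route in~\cite{C2} derives minimality (and the zero central exponents) from the partially hyperbolic structure and the neutral/trapped behaviour of the one-dimensional center, not from a closing-lemma contradiction.

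A second gap: your exclusion-of-tangencies argument only yields that each central sub-bundle between consecutive index jumps is one-dimensional, i.e.\ a splitting of the form $E^s\oplus E^c_1\oplus E^c_2\oplus E^u$ is still possible — and does occur for homoclinic classes (theorem~\ref{t.homoclinic}). To reduce to a single one-dimensional center for aperiodic classes you need the additional fact that a non-degenerate two-dimensional center would force the existence of a hyperbolic ergodic measure on the class, which by the $C^1$ Anosov-type closing lemma used elsewhere in the paper (cf.\ lemma~\ref{l.weak}) would generically put a periodic orbit inside the class, contradicting aperiodicity. Your treatment of the trapped plaque families, by contrast, is in the right spirit and does follow the template of the proof of theorem~\ref{t.homoclinic}, but it rests on the central dynamics being simultaneously of attracting type for $f$ and for $f^{-1}$, which again has to be extracted from the central-model analysis of~\cite{C2} rather than from a vague ``source-like periodic orbit'' perturbation.
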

\medskip

\begin{coro}\label{c.aperiodic}
Let $f$ be generic in $\diff^1(M)\setminus\overline{\Tang\cup\Cycl}$. Then,
for any aperiodic class, the bundles $E^u$ and $E^s$ are non-degenerated.

The strong unstable manifolds of points of the class are not contained in the class.
In particular, the class is not Lyapunov stable.
\end{coro}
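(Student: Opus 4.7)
The plan is to derive contradictions from each hypothesis in turn — a degenerate extremal bundle, a $W^{uu}$-saturated point, and Lyapunov stability — using the structure from Theorem~\ref{t.aperiodic} together with Pugh's closing lemma and the residual results from~\cite{BoCr}. I first establish non-degeneracy, then rule out saturation by strong unstable manifolds, and finally observe that Lyapunov stability forces such a saturation.

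\textbf{Non-degeneracy of $E^s$ and $E^u$.} Suppose by contradiction that $E^u$ is trivial over $K$. Then $T_K M = E^{cs}$, and the trapped plaque family $\cW^{cs}_x$ provided by Theorem~\ref{t.aperiodic} consists of open neighborhoods of $x$ in $M$; their union is an open trapping neighborhood $U$ of $K$ with $f(\overline U)\subset U$. Every point of $K$ is chain-recurrent, so Pugh's generic closing lemma combined with the identification of chain-recurrence classes with homoclinic classes from~\cite{BoCr} yields periodic points inside $K$, contradicting its aperiodicity. The case $E^s=0$ is handled by applying the same argument to $f^{-1}$, whose aperiodic classes coincide with those of $f$ and in which the roles of $E^s$ and $E^u$ are interchanged.

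\textbf{Strong unstable leaves are not saturated in $K$.} Assume for contradiction that $W^{uu}(x)\subset K$ for some $x\in K$. Pick $y\in W^{uu}(x)\setminus\{x\}$ arbitrarily close to $x$; by minimality of $K$ (Theorem~\ref{t.aperiodic}), there are arbitrarily large integers $n$ with $f^n(y)$ as close to $y$ as we wish. The orbit segment $y,f(y),\dots,f^n(y)$ together with the small jump $f^n(y)\to y$ forms a closed $\varepsilon$-pseudo-orbit based at $y$. Hayashi's connecting lemma, combined with the genericity of $f$ and~\cite{BoCr}, closes this pseudo-orbit to a genuine periodic orbit inside the chain-recurrence class of $y$, namely $K$ — contradicting aperiodicity.

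\textbf{Lyapunov stability is excluded.} If $K$ admits a basis of trapping neighborhoods $U$ with $f(\overline U)\subset U$, then for any $x\in K$ and $y\in W^{uu}(x)$ one has $d(f^{-n}(y),f^{-n}(x))\to 0$, so $f^{-n}(y)\in U$ for all large $n$; applying $f^n$ and using the trapping gives $y\in U$. Intersecting over a basis of trapping neighborhoods yields $y\in K$, so $W^{uu}(x)\subset K$, contradicting the preceding step. Hence $K$ cannot be Lyapunov stable.

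The hard part is the middle step: one must ensure that the periodic orbit produced by closing the pseudo-orbit actually falls inside $K$ and not inside a neighboring chain-recurrence class. This is where the minimality of $K$ combines with the residual identification~\cite{BoCr} of a pseudo-orbit's chain-limit with a chain-recurrence class, and where being far from heterodimensional cycles plays its role: the alternative scenario of periodic orbits of distinct indices accumulating on $K$ would otherwise produce such a cycle after a further small perturbation.
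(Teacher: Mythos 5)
Your step 3 (Lyapunov stability forces $W^{uu}(x)\subset K$, hence reduces to step 2) is correct and is essentially what the paper does. The problem lies in steps 1 and 2: in both you deduce the existence of a periodic orbit \emph{inside} the aperiodic class $K$ from a closing/connecting-lemma argument, and this deduction is invalid. For a $C^1$-generic $f$, the closing lemma applied to the minimal set $K$ produces periodic orbits of $f$ that approximate $K$ in the Hausdorff topology; it never places a periodic orbit \emph{in} $K$, and by definition an aperiodic class contains none. The flaw is most visible in step 2: your argument only uses that $y$ has forward returns arbitrarily close to itself, which is true for \emph{every} point of \emph{every} minimal set, so if it were valid it would prove that aperiodic minimal classes do not exist at all — and indeed it never uses the hypothesis $W^{uu}(x)\subset K$. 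Likewise in step 1, exhibiting a trapping neighborhood of $K$ does not by itself force periodic points into $K$; the citation of~\cite{BoCr} only identifies classes that already contain a periodic point with homoclinic classes.

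The missing mechanism — the one the paper actually uses — is the internal dynamics of the trapped center-stable plaques. Since $E^{cs}=E^s\oplus E^c$ with $E^s$ uniformly contracted and $\dim E^c=1$, for a $\tau$-periodic point $p$ close to $K$ (such points exist by the general density theorem) the plaque $\cW^{cs}_p$ is mapped into itself by $f^\tau$, and \emph{every} orbit inside it converges in the future to a periodic orbit. If $E^u$ were trivial these plaques would cover a neighborhood of $K$, so every point of $K$ would converge to a periodic orbit; since $K$ is compact and invariant its $\omega$-limit sets lie in $K$, so that periodic orbit would lie in $K$ — contradiction. If $E^u\neq 0$ and $W^{uu}(x)\subset K$, then $W^{uu}(x)$, having uniform size, meets some plaque $\cW^{cs}_p$ at a point of $K$ whose forward orbit again converges to a periodic orbit, which must then lie in $K$ — the same contradiction. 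Your proposal needs this argument (or a substitute for it) in both places where you currently invoke the closing lemma.
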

\begin{proof}
Let us consider an aperiodic class $K$ and
a locally invariant plaque family $\cW$ tangent to
$ E^{cs}$ over the maximal invariant set in a small neighborhood of $K$.
There exists a sequence of periodic orbits that accumulate on $K$.
A trapped plaque family $\cW^{cs}$ over $K$ whose plaques have small diameters
are contained in the plaques $\cW$ by lemma~\ref{l.uniqueness-coherence} below.
One deduces that one can extend the plaque family $\cW^{cs}$
over the maximal invariant set in a small neighborhood of $K$
as a trapped plaque family.

Since $K$ is a minimal set and $f$ is $C^1$-generic, Pugh's closing lemma
(the general density theorem) implies that $K$ is the Hausdorff limit of a sequence of periodic orbits.
For any $\tau$-periodic point $p$ whose orbit is close to $K$,
the plaque $\cW^{cs}_p$ is mapped into itself by $f^\tau$.
Since the plaque $\cW^{cs}$ is tangent to the bundle $E^{cs}= E^s\oplus E^c$
where $E^c$ has dimension $1$ and $E^s$ is uniformly contracted,
the orbit of any point in $\cW^{cs}_p$ accumulates in the future on a periodic orbit.

If $E^u$ is degenerate, the union of the plaques $\cW^{cs}_p$
cover a neighborhood of $K$, hence the orbit of any point in $K$
converges towards a periodic orbit, which is a contradiction.

If $E^u$ is not degenerate, the strong unstable manifold $W^{uu}(x)$ tangent to $E^u$
of any point $x\in K$ intersects the plaque $\cW^{cs}_p$ of a periodic point $p$.
One deduces that theres exists an orbit that accumulates on $K$ in the past and on
a periodic orbit $O$ in the future. If $W^{uu}(x)$ is contained in $K$,
the periodic orbit $O$ is contained in $K$, contradicting the fact that $K$ is an aperiodic class.
\end{proof}

\begin{remark}
Actually,  a stronger result can be proved.
\noindent
\emph{For any $C^1$-generic diffeomorphism and any aperiodic class $K$
endowed with a partially hyperbolic structure
$T_KM=E^s\oplus E^c\oplus E^u$ with $\dim(E^u)=1$, 
the class is not contained in a locally invariant submanifold tangent to $E^s\oplus E^c$.}

\noindent
Indeed, otherwise, one could work in this submanifold and get a contradiction as in the previous proof.
See also section \ref{ss.reduction}.
\end{remark}

\subsection{Reduction of the ambient dimension}\label{ss.reduction}

Let us consider an invariant compact set $K$
with a dominated splitting $T_KM=E^s\oplus F$ such that $E^s$ is uniformly
contracted. The dynamics on $K$ may
behave like the dynamics inside a manifold of smaller dimension.
This motivates the following definition.

\begin{definition}
A $C^1$-submanifold $\Sigma$ containing $K$ and tangent to $F$ is \emph{locally invariant}
if there exists a neighborhood $U$ of $K$ in $\Sigma$ such that
$f(U)$ is contained in $\Sigma$.
\end{definition}
More generally,
when $K$ admits a partially hyperbolic splitting $T_KM=E^s\oplus E^c\oplus E^u$
one may define the notion of locally invariant submanifold tangent to $E^c$. The next proposition state that the property 
defined above is robust by $C^1-$perturbations.

\begin{proposition}[\cite{BC2}]\label{p.whitney}
Let $K$ be an invariant compact set endowed with a dominated splitting
$T_KM=E^s\oplus F$ such that $E^s$ is uniformly contracted.
If $K$ is contained in a locally invariant submanifold tangent to $F$,
then the same holds for any diffeomorphism
$C^1$-close to $f$ and any compact set $K'$ contained in a small neighborhood
of $K$.
\end{proposition}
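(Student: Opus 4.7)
The plan is to use a graph-transform argument applied to the locally invariant submanifold $\Sigma$, viewing it as a normally attracting object whose normal direction is the uniformly contracted bundle $E^s$. The hypothesis is already the classical setup of Hirsch-Pugh-Shub persistence, modulo the complication that $\Sigma$ is only \emph{locally} invariant rather than compact-invariant; the whole work lies in handling this carefully and in showing that every $K'$ near $K$ must land on the perturbed submanifold.

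\emph{Step 1 (Open properties).} The existence of a dominated splitting $E^s\oplus F$ with $E^s$ uniformly contracted is an open property: one extends $E^s$ and $F$ to a pair of invariant cone fields on a neighborhood $U$ of $K$ with constants slightly weaker than those on $K$. For every $g$ sufficiently $C^1$-close to $f$ and every $g$-invariant compact set $K'\subset U$, one obtains a dominated splitting $E^s_g\oplus F_g$ over $K'$ with $E^s_g$ uniformly contracted.

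\emph{Step 2 (Graph transform producing $\Sigma_g$).} Shrink the neighborhood $V\subset \Sigma$ of $K$ to an open set $W$ with $\overline{W}\subset f^{-1}(V)$, so that $W$ is inflowing: $f^{-1}(\overline{W})\supset \overline{W}\supset K$. Extend $E^s$ to a continuous $C^1$-subbundle over $\overline{W}$ transverse to $\Sigma$ and use the exponential map to identify a tubular neighborhood of $\overline{W}$ in $M$ with a small disk bundle in $E^s|_{\overline{W}}$. In the resulting coordinates $(\xi,v)$ with $\xi\in \overline{W}$ and $v$ in the $E^s$-fiber, $f$ reads as a perturbation of a fiber contraction
$$(\xi,v)\mapsto (\phi(\xi),L_\xi v+O(\|v\|^2)),$$
where $\phi=f|_\Sigma$ and $\|L_\xi\|\le \lambda<1$ from the uniform contraction of $E^s$; for $g$ close to $f$ in $C^1$ the same form holds with $(\phi_g,L_{g,\xi})$ close to $(\phi,L_\xi)$. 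The graph transform $\Gamma_g$ sending a section $\sigma:\overline{W}\to E^s$ to the one whose graph is $g\bigl(\text{graph}(\sigma)\bigr)\cap(\text{tubular neighborhood over }\overline{W})$ is well defined by the inflowing property, contracts in the $C^0$-norm by the factor $\lambda$, and preserves $C^1$-regularity by the domination $E^s\prec F$ (this is where the second piece of the dominated splitting is used). Its unique fixed point $\sigma_g$ tends to $0$ as $g\to f$, and $\Sigma_g:=\text{graph}(\sigma_g)$ is a $C^1$-submanifold which is $C^1$-close to $\Sigma$ and, by construction, locally $g$-invariant.

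\emph{Step 3 (Containment $K'\subset\Sigma_g$ and tangency).} Let $y\in K'$ and write $y=(\xi,v)$ in the tubular coordinates. Because $K'$ is $g$-invariant and remains inside $U$, the whole backward orbit $(g^{-n}(y))_{n\ge 0}$ stays in the tube. If $v\ne \sigma_g(\xi)$, the distance from $g^{-n}(y)$ to $\Sigma_g$ in the $E^s$-direction grows exponentially with $n$, because $\Sigma_g$ is $g$-invariant and the fibers are expanded by the inverse of a map close to a contraction by $\lambda$. This contradicts the orbit remaining in the tube; hence $v=\sigma_g(\xi)$ and $y\in\Sigma_g$. Finally, at any point $y\in K'\subset\Sigma_g$, the tangent space $T_y\Sigma_g$ is a $Dg$-invariant complement of $E^s_{g,y}$ dominated by it, so by uniqueness of dominated splittings it coincides with $F_{g,y}$.

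\emph{Main obstacle.} The one genuinely delicate point is the passage from a merely locally invariant $\Sigma$ to a setting where the graph-transform fixed-point argument applies. The inflowing trick in Step 2 — shrinking $V$ to a compact $\overline{W}\Subset f^{-1}(V)$ — is exactly the device that converts ``locally invariant'' into ``inflowing invariant'', which is enough for a fiberwise contraction mapping argument with $g$ close to $f$. Once this reduction is in place, the rest is the standard Hirsch-Pugh-Shub mechanism, with the novelty being only the need to verify $K'\subset\Sigma_g$ by the backward-expansion argument of Step 3.
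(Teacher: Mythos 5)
The paper offers no proof of this proposition: it is imported from \cite{BC2} (listed as ``in preparation''), so there is nothing in the text to compare against, and I can only assess your argument directly. It has a genuine gap at its central step. The ``inflowing trick'' of Step 2 does not work: from $\overline W\subset f^{-1}(V)$ you get only $f(\overline W)\subset V$, not $f(\overline W)\subset\overline W$, and in fact no neighborhood $W$ of $K$ in $\Sigma$ with $f(\overline W)\subset\overline W$ exists unless $K$ is an attractor for $f_{|\Sigma}$ --- which it is not in general, since $F$ typically contains expanded directions (in all the applications here $F\supset E^u$). Worse, even if you had an inflowing base, the transform you define would not be well defined on it: $g(\mathrm{graph}(\sigma))$ is a graph over roughly $\phi_g(\overline W)$, so producing a section over all of $\overline W$ requires $\overline W\subset\phi_g(\overline W)$, i.e.\ an \emph{overflowing} base. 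Since the fiber contraction forces you to use the forward transform, and the base dynamics near $K$ is neither overflowing nor inflowing, the domain problem you yourself single out as ``the one genuinely delicate point'' is precisely the one your device fails to resolve.

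The standard repair is the Hirsch--Pugh--Shub globalization rather than a shrinking of domains: either extend the fibered map to a globally defined bundle map over the compact invariant set itself (replacing $\exp_{g(x)}^{-1}\circ g\circ\exp_x$ by $D_xg$ away from the zero section, as in lemma~\ref{l.extension0}), which yields a locally invariant plaque family tangent to $F_g$ over $K'$ --- after which the real content of the proposition becomes the \emph{coherence} of these plaques (locally invariant plaques tangent to a bundle with expanded directions are far from unique, so the fact that they glue into one submanifold must be argued, e.g.\ using that over $K$ the $f$-plaques sit inside $\Sigma$ and passing to the limit); or extend $f_{|\Sigma}$ near $\overline W$ to a diffeomorphism of a compact boundaryless manifold before running the graph transform. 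Your Step 3 is correct in spirit but is better run forward (push the point of $\mathrm{graph}(\sigma_g)$ lying over the base coordinate of $g^{-n}(y)$ forward $n$ times; it converges to $y$, which therefore lies in the closed graph) --- the backward version needs the shadow point to remain on $\Sigma_g$ and in the tube, which is exactly what the exponential growth destroys. In any case Step 3 presupposes a $\Sigma_g$ that Step 2 does not deliver.
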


There exists a simple criterion for the existence of a locally invariant submanifold.

\begin{theo}[\cite{BC2}]\label{t.whitney}
Let $K$ be an invariant compact set with a dominated splitting $E^s\oplus F$
such that $E^s$ is uniformly contracted.
Then $K$ is contained in a locally invariant submanifold tangent to $F$
if and only if the strong stable leaves for the bundle $E^s$
intersect the set $K$ in only one point.
\end{theo}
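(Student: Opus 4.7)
The plan is to establish the two implications separately: $(\Rightarrow)$ reduces to transversality plus a forward iteration argument, while $(\Leftarrow)$ is the substantive step and amounts to constructing the submanifold from a locally invariant plaque family tangent to $F$. For $(\Rightarrow)$, assuming $K\subset\Sigma$ with $\Sigma$ a locally invariant $C^1$-submanifold tangent to $F$, the splitting $T_xM=E^s_x\oplus T_x\Sigma$ at $x\in K$ and the implicit function theorem give a neighborhood $V_x$ of $x$ with $W^{ss}_{\mathrm{loc}}(x)\cap\Sigma=\{x\}$. For $y\in W^{ss}(x)\cap K$, the uniform contraction of $E^s$ yields $d(f^n(x),f^n(y))\to 0$; since $f^n(x),f^n(y)\in K\subset\Sigma$ both lie in $V_{f^n(x)}$ for $n$ large, one gets $f^n(y)=f^n(x)$ and hence $y=x$.

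For $(\Leftarrow)$, I would fix a locally invariant plaque family $(\cW^F_x)_{x\in K}$ tangent to $F$ (plaque family theorem) and set $\Sigma:=\bigcup_{x\in K}\cW^F_x$ on a neighborhood of $K$. The problem reduces to verifying Whitney's $C^1$-regularity condition for the pair $(K,F)$: for every $x\in K$ and every $\varepsilon>0$ there exists $\delta>0$ such that, in a linearizing chart at $x$ identifying $T_xM$ with $F_x\oplus E^s_x$,
\begin{equation*}
d(y-x,\,F_x)\;<\;\varepsilon\,d(x,y)\qquad\text{for every }y\in K\text{ with }d(x,y)<\delta.
\end{equation*}
Whitney's extension theorem then produces a $C^1$-graph over $F$ containing $K$, which by the coherence of locally invariant plaques (lemma~\ref{l.uniqueness-coherence}) coincides near $K$ with $\bigcup_{x\in K}\cW^F_x$; local invariance of $\Sigma$ is inherited from that of the plaque family.

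The main obstacle is verifying Whitney's condition, which is where the hypothesis $W^{ss}(x)\cap K=\{x\}$ must be used in a non-trivial way. I would argue by contradiction: a failure of Whitney's condition gives sequences $x_n,y_n\in K$ with $x_n,y_n\to x^\star\in K$ and
\begin{equation*}
v_n:=\frac{y_n-x_n}{d(x_n,y_n)}\;\longrightarrow\;v^\star=v^F+v^s,\qquad \|v^s\|\geq c>0,
\end{equation*}
from which one must extract a point $z\in K\cap W^{ss}(x^\star)\setminus\{x^\star\}$. The idea is to project $y_n$ along its strong stable leaf onto the plaque $\cW^F_{x_n}$: the transversality of the strong stable foliation to $F$ and the non-triviality of $v^s$ force the intersection point $z_n$ to lie at distance $\gtrsim c\,d(x_n,y_n)$ from $y_n$. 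Although $z_n$ is only on the plaque and not a priori in $K$, the $Df$-invariance of a narrow cone around $F$, the trapping of the plaque family, and the uniform contraction of $E^s$ allow one to shadow the forward orbit of $z_n$ by orbits of $K$; a diagonal extraction then produces the desired $z\in K\cap W^{ss}(x^\star)$ with $z\neq x^\star$. The delicate point is this shadowing step, which requires the compactness and invariance of $K$ together with a careful control of the nonlinear errors under iteration.
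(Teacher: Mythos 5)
This theorem is quoted from \cite{BC2} and not proved in the present paper, so I am judging your argument on its own merits. Your forward implication is correct, and your overall strategy for the converse (verify Whitney's $C^1$ tangency condition along $K$, then apply the Whitney extension theorem) is the right one. But the crucial step — extracting a non-trivial strong stable intersection from a failure of Whitney's condition — has a genuine gap. Your candidate points $z_n$ (the projections of $y_n$ along strong stable leaves onto $\cW^F_{x_n}$) satisfy $d(z_n,x_n)=O(d(x_n,y_n))\to 0$ and $d(z_n,y_n)=O(d(x_n,y_n))\to 0$, so $z_n$, $x_n$ and $y_n$ all converge to the same point $x^\star$. No diagonal extraction can then produce $z\in K\cap W^{ss}(x^\star)$ with $z\neq x^\star$: the whole configuration collapses to the trivial intersection. (The shadowing sub-step is also doubtful on its own: shadowing produces orbits near $K$, not in $K$, since $K$ is not assumed locally maximal.)

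The missing idea is a renormalization by backward iteration. Writing $y_n-x_n=u_n+s_n$ with $u_n\in F_{x_n}$, $s_n\in E^s_{x_n}$ and $\|s_n\|\geq c\,d(x_n,y_n)$, the uniform contraction of $E^s$ expands $s_n$ under $f^{-1}$, and the domination forces the $E^s$-component of the displacement to grow (per $N$ backward iterates) at least twice as fast as the $F$-component; hence under backward iteration the displacement grows and becomes increasingly aligned with $E^s$. Let $k_n$ be the first time $d(f^{-k}(x_n),f^{-k}(y_n))$ exceeds a fixed small $\delta_0$; then $k_n\to\infty$, the points $p_n=f^{-k_n}(x_n)$ and $q_n=f^{-k_n}(y_n)$ are at distance between $\delta_0$ and $\|Df^{-1}\|\,\delta_0$, their forward orbits stay $\delta_0$-close for $k_n$ iterates, and $q_n-p_n$ lies in an arbitrarily thin cone around $E^s$. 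Passing to the limit yields two distinct points $p,q\in K$ (invariance and closedness of $K$) with $q\in W^{ss}_{loc}(p)$ — the desired contradiction. Separately, your closing claim that the Whitney extension ``coincides near $K$'' with $\bigcup_x\cW^F_x$ and thereby inherits local invariance is too quick: the extension is highly non-unique and the union of plaques need not itself be a submanifold; one must run a graph-transform argument on submanifolds tangent to $F$ along $K$ to produce a locally invariant one. That step is fixable, but the backward renormalization above is indispensable and absent from your sketch.
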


One can deduce a generic version of previous theorem. 

\begin{corollary}\label{c.whitney}
Let $f$ be $C^1$-generic and $H(p)$ be a homoclinic class having a dominated splitting $E^s\oplus F$ such that $E^s$ is uniformly contracted.

Then, either $H(p)$ is contained in a locally invariant submanifold
tangent to $F$ or for any diffeomorphism $g$ that is $C^1$-close to $f$,
there exist two different points $x\neq y$ in $H(p_g)$ such that $W^{ss}(x)=W^{ss}(y)$.
\end{corollary}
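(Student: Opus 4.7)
The plan is to combine Proposition~\ref{p.whitney} (robustness of containment in a locally invariant submanifold) with Theorem~\ref{t.whitney} (characterization of such containment through the strong stable holonomy) to get the dichotomy on an open and dense subset of $\diff^1(M)$, which is enough for a $C^1$-generic statement.

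First I would fix a small neighborhood $V$ of $H(p)$ on which the dominated splitting $E^s\oplus F$ extends to a dominated splitting $\widetilde E^s\oplus \widetilde F$ over the maximal invariant set $K_V(f)$, with $\widetilde E^s$ uniformly contracted; this extension is robust under $C^1$-perturbations and the maximal invariant sets $K_V(g)$ vary upper-semicontinuously in the Hausdorff topology. Since $p$ is hyperbolic, for every $g$ in a $C^1$-neighborhood $\mathcal{U}_0$ of $f$ the continuation $p_g$ is defined, $H(p_g)\subset K_V(g)$, and $H(p_g)$ inherits the dominated splitting with $E^s$ uniformly contracted. I now define
\[
\mathcal{A}=\{g\in \mathcal{U}_0 : K_V(g)\text{ is contained in a locally invariant submanifold tangent to }\widetilde F\}.
\]
Proposition~\ref{p.whitney} tells exactly that $\mathcal{A}$ is open in $\mathcal{U}_0$: if $K_V(f)\subset \Sigma$ for some locally invariant submanifold $\Sigma$, then for every $g$ sufficiently $C^1$-close and every compact set contained in a small neighborhood of $K_V(f)$ — in particular $K_V(g)$ — the same containment holds.

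Next, for every $g\in \mathcal{U}_0\setminus \mathcal{A}$, Theorem~\ref{t.whitney} applied to the invariant compact set $H(p_g)$ (which carries the dominated splitting $E^s\oplus F$ with $E^s$ uniformly contracted) directly yields two distinct points $x\neq y\in H(p_g)$ with $W^{ss}(x)=W^{ss}(y)$: otherwise $H(p_g)$ would be contained in a locally invariant submanifold tangent to $F$, and hence $K_V(g)$ would be too (by applying the submanifold construction to the maximal invariant set), placing $g\in \mathcal{A}$. Denote by $\mathcal{B}$ the set of $g\in \mathcal{U}_0$ admitting such a pair $x\neq y\in H(p_g)$. We have just shown $\mathcal{U}_0\setminus \mathcal{A}\subset \mathcal{B}$, hence $\operatorname{int}(\mathcal{U}_0\setminus \mathcal{A})\subset \operatorname{int}(\mathcal{B})$.

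Finally, the set $\mathcal{A}\cup \operatorname{int}(\mathcal{U}_0\setminus \mathcal{A})$ is open and dense in $\mathcal{U}_0$, because its complement is the topological boundary $\partial \mathcal{A}$, which is closed and nowhere dense. Intersecting with the $C^1$-generic subset where $p$ persists as a hyperbolic periodic point and the homoclinic class has its usual generic properties, we find that $C^1$-generic $f$ either lies in $\mathcal{A}$ — giving the first alternative of the corollary — or lies in $\operatorname{int}(\mathcal{U}_0\setminus \mathcal{A})\subset \operatorname{int}(\mathcal{B})$, in which case every $g$ in a $C^1$-neighborhood of $f$ exhibits two distinct points of $H(p_g)$ on the same strong stable leaf. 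The one point requiring some care is the openness of $\mathcal{A}$: the homoclinic class $H(p_g)$ itself is only lower-semicontinuous, so one cannot apply Proposition~\ref{p.whitney} directly to $H(p_g)$ and must instead go through the maximal invariant set $K_V(g)$, which is upper-semicontinuous, and then deduce the containment for the smaller set $H(p_g)$.
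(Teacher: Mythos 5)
There is a genuine gap at the point you flag as ``the one point requiring some care''. Your set $\mathcal{A}$ is defined by containment of the \emph{maximal invariant set} $K_V(g)$ in a locally invariant submanifold, and its openness via proposition~\ref{p.whitney} is fine. The problem is the converse direction you need afterwards: to apply theorem~\ref{t.whitney} to $H(p_g)$ for $g\notin\mathcal{A}$, you assert that if $H(p_g)$ were contained in a locally invariant submanifold tangent to $F$ then $K_V(g)$ would be too. That implication is false in general: $K_V(g)$ strictly contains $H(p_g)$ and may contain other invariant pieces (periodic orbits, other classes trapped in $V$) whose strong stable leaves meet $K_V(g)$ in several points even when the leaves meet $H(p_g)$ in at most one point. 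Consequently $g\notin\mathcal{A}$ only tells you, via theorem~\ref{t.whitney}, that some strong stable leaf meets $K_V(g)$ twice; the two points need not lie in $H(p_g)$, which is what the corollary demands. Symmetrically, a generic $f$ with $H(p)$ contained in a submanifold could still fail to lie in $\mathcal{A}$, so your dichotomy does not align with the one in the statement.

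The fix is to make the containment property refer to $H(p_g)$ itself, and for that one needs $g\mapsto H(p_g)$ to be upper semicontinuous at $f$. This is exactly what the paper's proof supplies: for $f$ in the dense G$_\delta$ set of \cite{BoCr} where homoclinic classes coincide with chain-recurrence classes, $H(p_g)$ is contained in an arbitrarily small neighborhood of $H(p)$ for $g$ close to $f$, so proposition~\ref{p.whitney} applies directly with $K=H(p)$ and $K'=H(p_g)$. This makes the property ``$H(p_g)$ is contained in a locally invariant submanifold tangent to $F$'' robust at generic parameters, and the standard open-and-dense argument you use at the end (which is correct) then yields the dichotomy with theorem~\ref{t.whitney} applied to $H(p_g)$. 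So your overall architecture is right, but the detour through $K_V(g)$ must be replaced by the generic upper semicontinuity of the homoclinic class.
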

\begin{proof}
By~\cite{BoCr},
there exists a dense G$_\delta$ subset $\cR\subset \diff^1(M)$
of diffeomorphisms whose homoclinic classes are chain-recurrence classes.
In particular, for any $f\in \cR$ and any homoclinic class $H(p)$ for $f$,
the class $H(p_g)$ for $g$ $C^1$-close to $f$ is contained in a small neighborhood of $H(p)$. By proposition~\ref{p.whitney}, one deduces that
if $H(p)$ has a dominated splitting $E^s\oplus F$ and
is contained in a locally invariant submanifold tangent to $F$, then the same holds for the classes $H(p_g)$.

As a consequence, for any $f$ in a dense G$_\delta$ subset of $\diff^1(M)$,
and any homoclinic class $H(p)$ of $f$, either for any diffeomorphism $g$
close to $f$ the class $H(p_g)$ is contained in a locally invariant submanifold tangent to $F$
or for any diffeomorphism $g$ close to $f$ the class $H(p_g)$ is not contained in such a manifold.
The theorem~\ref{t.whitney} ends the proof.

\end{proof}
\medskip

The previous result raises an important question for us:

\begin{question*}
When $H(p)$ is not contained in a locally invariant submanifold tangent to $F$,
is it possible to find a periodic point $q$ homoclinically related to the orbit of $p$
whose strong stable manifold $W^{ss}(q)\setminus \{q\}$ intersects $H(p)$?
\end{question*}

Such an intersection is called a generalized strong homoclinic intersection in the next section. We will provide answers for this problem in some particular cases, see
theorems~\ref{t.tot-discontinuity} and~\ref{t.position} below.

\subsection{Strong homoclinic intersections}
Inside a homoclinic class, some periodic points exhibit
a transverse intersection between their stable and unstable manifolds.
If this intersection holds along strong stable and unstable manifolds of
the periodic orbit we say that there is a strong
homoclinic connection. More precisely, we introduce the following definition:

\begin{defi}\label{strong-int}
Given a  hyperbolic periodic orbit $O$ with a dominated splitting $T_OM=E\oplus F$
such that the stable dimension of $O$ is strictly larger (resp. strictly smaller) than $\dim(E)$
it is said that $O$ exhibits a \emph{strong stable homoclinic intersection}
(resp. a \emph{strong unstable homoclinic intersection}) if the invariant manifold of $O$
tangent to $E$ and the unstable manifold of $O$ (resp. the invariant manifold of $O$
tangent to $F$ and the stable manifold of $O$) have an intersection point outside the orbit $O$.
\end{defi}

This definition can be generalized for homoclinic  classes.
\begin{defi}
A homoclinic class $H(p)$ has a \emph{strong homoclinic intersection}
if there exists a hyperbolic periodic orbit orbit $O$ homoclinically related to $p$
which has a strong homoclinic intersection.
\end{defi}

The strong homoclinic intersections allow sometimes to create heterodimensional cycles.
The following statement generalizes~\cite[proposition 2.4]{Pu1}. The proof is similar and we only sketch it.
\begin{prop}\label{p.strong-connection}
Let $H(p)$ be a homoclinic class for a diffeomorphism $f$ such that:
\begin{itemize}
\item[--] $H(p)$ has a dominated splitting $E\oplus F$ and the stable dimension of $p$
is $\dim(E)+1$;
\item[--] there exist some hyperbolic periodic orbits homoclinically related to $p$
having some negative Lyapunov exponents arbitrarily close to $0$.
\end{itemize}
If there exist some diffeomorphisms $g$ $C^1$-close to $f$ such that $H(p_g)$
has a strong homoclinic intersection, then there exist some $C^1$-close perturbations
of $f$ that have an heterodimensional cycle
between a hyperbolic periodic orbit homoclinically related to $p$ and
a hyperbolic periodic orbit of stable dimension $\dim(E)$.
\end{prop}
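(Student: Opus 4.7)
The strategy, following~\cite[Proposition 2.4]{Pu1}, is to compose two $C^1$-small perturbations: a Franks' lemma perturbation along a suitable periodic orbit to change its index, and a connecting-lemma perturbation to create the non-generic heteroclinic intersection. Both hypotheses on $f$ being $C^1$-open, we may assume after a preliminary $C^1$-small perturbation that $f$ itself realizes the strong homoclinic intersection: there is a hyperbolic periodic orbit $O \subset H(p)$, homoclinically related to $p$ and of stable dimension $\dim(E)+1$, together with a point $x \in (W^{ss}(O) \cap W^u(O))\setminus O$. We also choose a periodic orbit $q$ homoclinically related to $p$ and $O$ whose weakest negative Lyapunov exponent has modulus smaller than a small $\delta>0$ to be fixed.

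The first perturbation comes from Franks' lemma applied along the orbit of $q$, in an arbitrarily small tubular neighborhood chosen disjoint from the orbits of $O$ and $x$. By inserting a small rotation in the derivative, we turn the weakest negative Lyapunov exponent of $q$ into a small positive exponent. The perturbed diffeomorphism $h$ is $C^1$-close to $f$, and the continuation $q_h$ is hyperbolic of stable dimension exactly $\dim(E)$, with stable direction almost tangent to the strong stable bundle $E$ along the orbit. Outside the support of this perturbation the dynamics is unchanged, so the orbit $O$ and its invariant manifolds, the homoclinic relation between $O$ and $p$, and the point $x \in W^{ss}(O)\cap W^u(O)$ are preserved. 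The transverse intersection $W^u(q)\cap W^s(O)$ of $f$ becomes a transverse intersection $W^u(q_h)\cap W^s(O)$ for $h$, since $\dim W^u(q_h)=\dim W^u(q)+1$ while $W^s(O)$ is unchanged. This is one leg of the desired cycle between $O$ (of index $\dim(E)+1$, homoclinically related to $p$) and $q_h$ (of index $\dim(E)$).

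The other leg, $W^u(O)\cap W^s(q_h)$, has dimension sum $\dim M-1$ and is not generic, so it must be produced by a further $C^1$-small perturbation via Hayashi's connecting lemma. The key geometric input is that $W^s(q_h)$ is tangent to the strong stable bundle $E$, so that it is, near $q_h$, a leaf of the strong stable foliation of $H(p_h)$, and that $x \in W^{ss}(O)\cap W^u(O)$ has a forward orbit lying in $W^{ss}(O)$ and converging to $O$ along $E$. Using the chain-recurrence of $H(p_h)$ and the proximity of the leaf $W^s(q_h)=W^{ss}(q_h)$ to the leaf $W^{ss}(O)$ (inherited from the homoclinic relation of $q$ and $O$ in $f$, via the $\lambda$-lemma applied in the center-stable direction), one produces an $\varepsilon$-pseudo-orbit from a far iterate $h^n(x)\in W^u(O)$ --- which is close to $W^{ss}(O)$ --- to a point of $W^s(q_h)$, confined to a thin tubular neighborhood of the strong stable foliation. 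Hayashi's connecting lemma then straightens this pseudo-orbit into a genuine orbit via a $C^1$-small perturbation supported in a small flow-box disjoint from the orbits of $O$, $q_h$, $p_h$ and from the transverse intersection already produced. This creates $W^u(O)\cap W^s(q_h)\ne\emptyset$ and completes the heterodimensional cycle.

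The main obstacle is this final step: the Hayashi perturbation must create a non-generic intersection while preserving the transverse intersection already built, as well as the hyperbolic continuations of $O$, $p$ and $q_h$. This is achievable precisely because the tangency of $W^s(q_h)$ with $E$ confines the pseudo-orbit to a thin strong-stable tube, and the support of the perturbation can therefore be localized in a small flow-box inside this tube, chosen disjoint from the periodic orbits and from the previously established transverse intersection. The smallness of $\delta$ controls the hyperbolicity of $q_h$ and the closeness of its stable space to $E$, providing uniform room for Hayashi's lemma to be applied cleanly.
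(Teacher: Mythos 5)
Your strategy---flip the index of a weak orbit $q$ by Franks' lemma, then close the cycle with the connecting lemma---breaks down at the second leg. After the Franks perturbation the leg $W^u(q_h)\cap W^s(O)$ does persist (the new strong unstable manifold of $q_h$ stays $C^1$-close on compact parts to the old $W^u_{loc}(q)$, and the transverse intersection with $W^s(O)$ is robust). But to create $W^u(O)\cap W^s(q_h)$ with Hayashi's connecting lemma you need a non-periodic point accumulated both by forward iterates of a point of $W^u(O)$ and by backward iterates of a point of $W^s(q_h)$; in particular you need $\overline{W^u(O)}\cap\overline{W^s(q_h)}\neq\emptyset$, and nothing in your argument supplies this. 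The manifold $W^s(q_h)$ is a single leaf of dimension $\dim(E)$, essentially the strong stable manifold $W^{ss}(q)$, which is a codimension-one submanifold of $W^s(q)$. The $\lambda$-lemma applied to the homoclinic relation between $q$ and $O$ gives accumulation of $W^s(q)$ on $W^s(O)$, but says nothing about where the leaf $W^{ss}(q)$ goes: there is no inclination lemma for strong stable manifolds, and controlling where such leaves return is exactly the difficulty that theorems~\ref{t.stable} and~\ref{t.unstable} of the paper are built to handle. The strong homoclinic intersection $x\in W^{ss}(O)\cap W^u(O)$ lives on a \emph{different} leaf, $W^{ss}(O)$, and gives no proximity between $W^s(q_h)$ and $W^u(O)$. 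Finally, ``confining an $\varepsilon$-pseudo-orbit to a thin strong-stable tube'' is not a hypothesis under which Hayashi's lemma (or any non-generic $C^1$ connecting argument) produces an actual intersection of two prescribed invariant manifolds.

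The paper sidesteps this by making the weak exponent and the strong connection live on the \emph{same} orbit. Lemma~\ref{l.change-connection} first transports the strong homoclinic intersection to a periodic point $q_1$ homoclinically related to $p$ whose weakest stable exponent lies in $(-\varepsilon,0)$. One then perturbs inside the local stable manifold $\cD$ of $q_1$ (of dimension $\dim(E)+1$) to split $q_1$ into a point $q'$ of stable index $\dim(E)$ and nearby points $q''$ of index $\dim(E)+1$ homoclinically related to $p$. The leg $W^u(q')\cap W^s(q'')$ is then automatic inside $\cD$, and the other leg is obtained by a \emph{local} perturbation at the strong homoclinic point of $q_1$: there, $W^u(q'')$ and $W^s(q')$ are small perturbations of $W^u(q_1)$ and of the invariant manifold of $q_1$ tangent to $E$, which genuinely intersect at that point, so the two manifolds pass within an arbitrarily small distance of each other and a local push creates the intersection. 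No global connecting argument is needed. If you want to keep your outline, you must first prove an analogue of lemma~\ref{l.change-connection} so that the orbit whose index you flip is the one carrying the strong homoclinic intersection.
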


Before proving this proposition, we explain how it is possible by a $C^r$-perturbation to
transport the strong homoclinic intersection to another periodic orbit.

\begin{lemma}\label{l.change-connection}
Let $H(p)$ be a homoclinic class for a $C^r$-diffeomorphism $f$ with $r\geq 1$ such that:
\begin{itemize}
\item[--] $H(p)$ has a dominated splitting $E\oplus F$ and the stable dimension of $p$
is $\dim(E)+1$;
\item[--] $H(p_g)$ has a strong homoclinic intersection.
\end{itemize}
Then for any periodic point $q$ homoclinically related to $p$
there exist some $C^r$-close perturbations of $f$ that have a periodic point $q'$
homoclinically related to the orbit of $p$ which exhibit a strong homoclinic intersection
and whose minimal Lyapunov exponents along $F$ are close to the one of $q$.
\end{lemma}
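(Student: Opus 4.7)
The plan is to exhibit a periodic orbit $q'$ by a shadowing procedure on a hyperbolic basic set containing both $q$ and the periodic orbit $O_0$ that carries the strong homoclinic intersection given by the hypothesis, and then to realize a strong intersection at $q'$ by a localized $C^r$-small perturbation. The assumption that the stable dimension of $p$ equals $\dim(E)+1$ forces the strong intersection at $O_0$ to be of \emph{strong stable} type, i.e.\ there exist $y_0\in O_0$ and $x_0\in (W^{ss}(y_0)\setminus O_0)\cap W^u(O_0)$. Since $q$ and $O_0$ are both homoclinically related to $p$, they are homoclinically related to each other and belong to a common transitive hyperbolic basic set $\Lambda$; choose transverse heteroclinic points $a\in W^u(q)\cap W^s(O_0)$ and $b\in W^u(O_0)\cap W^s(q)$.

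Using the local product structure of $\Lambda$ together with the orbits of $a$, $b$ and $x_0$, for each large $N$ the shadowing lemma produces a hyperbolic periodic orbit $q'_N\subset\Lambda$ whose combinatorial itinerary is: track $q$ for $N$ iterates, then follow the heteroclinic orbit of $a$ from $W^u(q)$ into $W^s(O_0)$, then track $O_0$ for a bounded number of iterates during which one passes arbitrarily close to $x_0$, then follow the heteroclinic orbit of $b$ from $W^u(O_0)$ back into $W^s(q)$, and close up. As $N\to\infty$ the proportion of time spent near $q$ tends to $1$, so by continuity of the Lyapunov exponents along the dominated splitting $E\oplus F$, the minimal Lyapunov exponent of $q'_N$ along $F$ converges to that of $q$.

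Next I would realize the strong intersection for $q'_N$ by a perturbation. By the $\lambda$-lemma applied along the positive part of the itinerary, a piece of $W^u(q'_N)$ is $C^1$-close to $W^u(O_0)$ in a neighborhood of $x_0$; analogously, pushing a local strong stable plaque of $q'_N$ back along the negative part of the itinerary and invoking continuity of the strong stable lamination of $\Lambda$, one obtains a piece of $W^{ss}(q'_N)$ that is $C^1$-close to $W^{ss}(y_0)$ near $x_0$. Both manifolds are therefore in near-tangency position, separated by a $C^0$-gap that tends to zero as the shadowing improves. A Franks-type $C^r$-perturbation, supported in a small ball around $x_0$ disjoint from the orbit of $q'_N$ and from the heteroclinic arcs used in its itinerary, translates $W^u(q'_N)$ by the required amount and produces a genuine intersection in $(W^{ss}(q'_N)\setminus q'_N)\cap W^u(q'_N)$. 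Since the support of the perturbation does not meet the orbit of $q'_N$, the orbit persists as a hyperbolic periodic orbit $q'$ of the perturbed diffeomorphism, homoclinically related to the continuation of $p$, with the same Lyapunov exponents as $q'_N$, hence close to those of $q$.

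The main obstacle is the quantitative matching in the last step: the $C^0$-gap between $W^{ss}(q'_N)$ and $W^u(q'_N)$ near $x_0$ is controlled by the shadowing quality (improving with $N$ and with the number of $O_0$-iterates), while the admissible size of the support of a $C^r$-small perturbation is bounded by the distance to the rest of the relevant orbits. One must select the shadowing parameters so that the gap can be closed by a perturbation of prescribed $C^r$-smallness without disturbing $q'_N$, $p$, or the heteroclinic arcs linking them; this is exactly the quantitative perturbation mechanism of~\cite[proposition~2.4]{Pu1}, which has to be adapted here to the dominated (partially hyperbolic) setting rather than to a purely hyperbolic surface setting.
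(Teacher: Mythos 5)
Your strategy coincides with the paper's: the paper likewise places $q$ and the orbit carrying the strong intersection in a common transitive hyperbolic set, obtains by shadowing a sequence of periodic orbits $q_n$ homoclinically related to $p$ that spend most of their iterates near the orbit of $q$ (the exponent control coming from the finer dominated splitting $E\oplus E^c\oplus F'$ on that set, with $E^c$ one-dimensional), and then exploits the accumulation of the local unstable manifold and of the local manifold tangent to $E$ of $q_n$ on the corresponding manifolds of the original orbit near the strong intersection point.

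The two arguments diverge only in the final perturbation, and the obstacle you flag there is essentially self-created. You do not need the orbit of $q'_N$ to pass ``arbitrarily close to $x_0$'': the point $x_0$ lies off $O_0$ and off the orbits of $a$, $b$ and $q$, hence at a definite distance $\rho_0>0$ from the closed set that your pseudo-orbits shadow, so every orbit $q'_N$ avoids the fixed ball $B(x_0,\rho_0/2)$. The perturbation can therefore be supported in that ball of fixed radius, and since the displacement to be realized is the gap $\epsilon_N\to 0$ between the two manifolds, its $C^r$-size is $O(\epsilon_N)$ for fixed $r$ — no balancing of gap against support is required (and to create the intersection one does not even need quasi-transversality: translate $W^u(q'_N)$ by the vector joining its point nearest to $x_0$ to the point of $W^{ss}(q'_N)$ nearest to $x_0$). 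The paper resolves the same step by a different mechanism: it first perturbs so that $T_\zeta W^u(O)\oplus E_\zeta$ is one-codimensional and then unfolds the quasi-transverse strong intersection along an arc $(g_t)$; since the manifolds of $q_n$ are $\epsilon_n$-close to those of $O$ near $\zeta$ while the unfolding sweeps a displacement of fixed amplitude in both directions, an intermediate-value argument yields the intersection for $q_n$ at some parameter $t$ once $n$ is large. Either mechanism completes the proof; with the observation above, yours closes the gap you point out without appealing to the quantitative perturbation lemma.
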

\begin{proof}
Let us consider a transverse intersection point
$z_s$ between $W^{s}(O)$ and $W^u(q)$ and a transverse intersection point $z_u$ between $W^u(O)$
and $W^s(q)$ where $O$ is the orbit of $p$. There exists a transitive hyperbolic set $K$ which contains $z_s,z_u,O$
and which is included in a small neighborhood $U$ of $\overline{\{f^n(z_s)\}_{n\in \ZZ}
\cup\{f^n(z_u)\}_{n\in \ZZ}}$.
One deduces that there exists a sequence of periodic points $(q_n)$ converging to $p$
and whose orbit is contained in $U$ and homoclinically related to $p$.
One may choose these orbits in such a way that they spend most of their iterates
close to the orbit of $q$. Note that $K$ has a dominated splitting of the form
$E\oplus E^c\oplus F'$ where $E^c$ is one-dimensional
and $E\oplus E^c, F'$ respectively coincide with the stable and the unstable bundle.
As a consequence the minimal Lyapunov exponents of $q_n$ along $E$ are arbitrarily close to
the corresponding exponent of $q$ when $n$ is large.

For a small $C^r$ perturbation $g$ supported in a small neighborhood of $\zeta$
(hence disjoint from $K$), one can first ensure that $T_\zeta W^u(O)\oplus E_\zeta$ is one-codimensional
and then consider a small arc of diffeomorphisms $(g_t)$ which coincides with $g$ when $t=0$ and
which unfolds the strong intersection:
in a neighborhood of $\zeta$ the strong homoclinic intersection has disapeared for $t\neq 0$.
The local unstable manifold and the local manifold tangent to $E$
for $q_n$ accumulate on the local unstable manifold and the local manifold tangent to $E$ for $O$
respectively. One thus deduces that for a diffeomorphism $C^r$ close to $g$
and $n$ large enough, the strong stable and the unstable manifolds of the orbit of $q_n$ intersect.
This gives the conclusion for $q'=q_n$.
\end{proof}

\begin{proof}[Sketch of the proof of proposition~\ref{p.strong-connection}]
Let us fix $\varepsilon>0$ and a periodic point $q$ homoclinically related to the orbit of $p$
and whose minimal Lyapunov exponent along $F$ belongs to $(-\varepsilon,0)$.
Let $g$ be a diffeomorphism $C^1$-close to $f$ and
$O$ be a periodic orbit homoclinically related to the continuation
$p_{g}$ of $p$ for $g$ which exhibits a strong homoclinic intersection $y$ between its unstable manifold
and its invariant manifold tangent to $E$.
By lemma~\ref{l.change-connection}, one can find a small $C^1$-perturbation $g_1$
having a periodic point $q_1$ homoclinically related to $p_{g_1}$, whose minimal
Lyapunov exponent along $F$ belongs to $(-\varepsilon,0)$ and which exhibits a strong homoclinic intersection.

Let us consider a local stable manifold $\cD$ of $q_1$.
Since $q_1$ has a stable exponent close to $0$, one can by $C^1$-perturbation $g'$
(as small as one wants if one chooses $\varepsilon$ and $q$ accordingly) create inside $\cD$ a hyperbolic periodic point
$q'$ of stable dimension $\dim(E)$. Since $\cD$ has dimension $\dim(E)+1$, one can also require
that $\cD$ contains finitely many periodic points of stable dimension $\dim(E)+1$,
close to $q_1$, whose stable sets cover a dense subset
of $\cD$. If the perturbation is realized in a small neighborhood of $q_1$,
the manifold $W^u(p_{g'})$ intersects transversally $\cD$, hence one can ensure that
the unstable manifold of $p_{g'}$ intersects transversally the stable manifold of a periodic
point $q''$, so that $q''$ and $p_{g'}$ are homoclinically related.
The stable manifold of $q''$ intersects the unstable manifold of $q'$ along an orbit
contained in $\cD$. Since the local invariant manifolds of $q',q''$ are close to those
of $q_1$, one can by a small perturbation close to the strong homoclinic intersection of $q_1$
create an intersection between $W^u(q')$ and $W^{s}(q'')$.
This gives a heterodimensional cycle associated to the periodic orbit $q''$
that is homoclinically related to $p_{g'}$.
\end{proof}
\medskip

If a homoclinic class $H(p)$ contains two hyperbolic periodic points $q,q'$ homoclinically related
to $p$ such that the strong stable manifold $W^{ss}(q)\setminus \{q\}$ and the unstable manifold $W^u(q')$ intersect, one can create
a strong homoclinic intersection by a $C^r$-perturbation, for any $r\geq 1$.
We have a more general result.

\begin{lemma} \label{joint-int-easy} Let $f$ be a $C^r$-diffeomorphism, $r\geq 1$ and
let $q, p_x, p_y$ be three periodic points whose
orbits are homoclinically related such that 
\begin{itemize}
\item[--] the homoclinic class $H(q)$ has a dominated splitting $T_{H(q)}M=E^s\oplus F$ and
$\dim(E^s)$ is strictly smaller than the stable dimension of $O$;
\item[--] there are two distinct transversal intersection points $x\in W^u(p_x)\trans W^s(q)$, $y\in  W^u(p_y)\trans W^s(q)$ sharing the same strong stable leaf.
\end{itemize}
Then for any $r\geq 1$, there is $g$ $C^r$-close to $f$ such that $H(q_g)$ has a strong homoclinic intersection.
 \end{lemma}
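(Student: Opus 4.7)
The plan is to construct, by an arbitrarily $C^r$-small perturbation $g$ of $f$, a periodic orbit in $H(q_g)$ whose strong stable and unstable manifolds meet outside the orbit. First I would use that $q$, $p_x$ and $p_y$ are pairwise homoclinically related to embed them, together with the heteroclinic orbits of $x$ and $y$, into a common transitive hyperbolic set $\Lambda\subset H(q)$ carrying the dominated splitting $E^s\oplus F$. For each large integer $N$, the shadowing lemma then yields a periodic orbit $O_N\subset \Lambda$ realising the closed pseudo-orbit: stay near $p_x$ for $N$ iterates, follow the heteroclinic orbit of $x$ toward $q$, stay near $q$, follow a heteroclinic in $W^u(q)\cap W^s(p_y)$ to $p_y$, stay near $p_y$ for $N$ iterates, follow the heteroclinic orbit of $y$ toward $q$, and close the loop via a heteroclinic in $W^u(q)\cap W^s(p_x)$. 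The orbit $O_N$ is homoclinically related to $q$; let $z_x^N,z_y^N\in O_N$ denote the points shadowing $x$ and $y$.

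By the inclination lemma, $W^u(O_N)$ is $C^1$-close to $W^u(p_x)$ near $z_x^N$ and to $W^u(p_y)$ near $z_y^N$; in particular it contains points arbitrarily close to both $x$ and $y$. Likewise the strong stable leaf $W^{ss}(z_x^N)$ is $C^1$-close to $W^{ss}(x)$, and since $y\in W^{ss}(x)$ this leaf passes arbitrarily close to $y$ as well. Near $y$ the two submanifolds $W^{ss}(z_x^N)$ and $W^u(O_N)$ (through $z_y^N$) therefore both accumulate on $y$, but they do not meet in general: by hypothesis $\dim E^s<d^s$, so that $\dim W^{ss}+\dim W^u<\dim M$.

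I would then perform a $C^r$-small perturbation $g$ of $f$ supported in an arbitrarily small neighborhood of $y$, disjoint from $O_N$, from $p_x,p_y,q$, and from every heteroclinic segment used in building $O_N$. Outside this neighborhood nothing changes, so $O_N$ has a continuation $O_{N,g}$ and $W^{ss}(z_x^{N,g})$ is still $C^1$-close to $W^{ss}(x)$. The perturbation is chosen so as to displace the local piece of $W^u(p_{y,g})$ at $y$ transversally to $W^{ss}(x)$ inside $W^s(q)$, which in turn forces $W^u(O_{N,g})$ (which is $C^1$-close to $W^u(p_{y,g})$ near $z_y^{N,g}$) to meet $W^{ss}(z_x^{N,g})$ at a point in the support. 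This is the sought strong homoclinic intersection in $H(q_g)$. The hard step is the last one: the intersection condition has positive codimension $d^s-\dim E^s$ and is non-transverse, so a generic transversality argument does not apply. It succeeds because the $C^r$-jet of $f$ at a preimage of $y$ supplies enough degrees of freedom to move the local piece of $W^u(p_y)$ in any transverse direction, while the unperturbed configuration already guarantees the geometric meeting $y\in W^{ss}(x)\cap W^u(p_y)$; this extra structural input is precisely what makes the codimension-$(d^s-\dim E^s)$ condition realisable by a $C^r$-small perturbation.
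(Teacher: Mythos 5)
Your overall strategy is the same as the paper's: produce a periodic orbit homoclinically related to $q$ whose strong stable manifold shadows $W^{ss}(x)$ and whose unstable manifold shadows $W^u(p_y)$, so that both pass close to $y$, and then create the strong connection by a perturbation localized at $y$. But there is a genuine gap in how you build the periodic orbit. You route the shadowed pseudo-orbit through \emph{the heteroclinic orbit of $y$ itself}, so the resulting orbit $O_N$ has a point $z_y^N$ with $d(z_y^N,y)\to 0$; worse, you then require the perturbation to be supported in a neighborhood of $y$ that is ``disjoint from every heteroclinic segment used in building $O_N$'' --- one of those segments is the orbit of $y$, which contains $y$, so no such neighborhood exists. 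Even after repairing the wording, the support must avoid $z_y^N$, hence must have radius $\lesssim\varepsilon_N$ (the shadowing constant), while the displacement needed to join $W^{ss}(z_x^N)$ to $W^u(O_N)$ is also of order $\varepsilon_N$; a bump perturbation with displacement $\varepsilon_N$ on a support of radius $\varepsilon_N$ has $C^1$-size of order $1$, not $o(1)$. The paper sidesteps exactly this by taking the transitive hyperbolic set $\Lambda$ to contain $p_x,p_y,x,q$ \emph{but not $y$} (the connection from $p_y$ to $q$ inside $\Lambda$ uses a heteroclinic orbit other than that of $y$). The shadowing periodic orbit $\hat q$ then stays a definite distance from $y$, its unstable manifold still reaches a neighborhood of $y$ by the inclination lemma because the orbit passes near $p_y$ and $y\in W^u(p_y)$, and the perturbation can be supported in a ball of \emph{fixed} radius around $y$ while the required displacement tends to $0$ --- hence is $C^r$-small for every $r$.

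Relatedly, your closing explanation of why the perturbation succeeds (``the $C^r$-jet of $f$ at a preimage of $y$ supplies enough degrees of freedom'') is not an argument, and the codimension count you worry about is a red herring. The mechanism is elementary: both manifolds pass within $\varepsilon$ of the single point $y$ while the support of the perturbation has fixed radius $\delta$, so translating one manifold onto the other requires a $C^0$-displacement of size $\varepsilon$ and hence a perturbation of $C^r$-size $O(\varepsilon/\delta^r)\to 0$. No transversality of the created intersection is needed or claimed --- a strong homoclinic intersection is merely an intersection point outside the orbit. Once the support issue above is fixed, this is all that remains of the ``hard step.''
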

\begin{proof}
One can assume that $y$ is distinct from $q$.
There is a transitive hyperbolic set $\La$
that contains $p_x$, $p_y$, $x$ and $q$ but not $y$.
So, it follows that there is a periodic point $\hat q$ homoclinically related to $p$ arbitrarily close to $x$
and whose orbit is close to $\La$ in  the Hausdorff topology.
One deduces that the local strong stable manifold of $\hat q$ and the local
unstable unstable manifold of the orbit of $\hat q$ are close to $y$.
By a $C^r$-perturbation, one can thus create an intersection at $y$, hence a strong connection between these manifolds,
keeping the transverse homoclinic orbits with $p$.
This shows that $H(q_g)$ has a strong homoclinic intersection for this new
diffeomorphism $g$.
\end{proof}
\medskip

We generalize again the definition of strong homoclinic intersection.

\begin{defi}
A homoclinic class $H(p)$ has a \emph{generalized strong homoclinic intersection}
if there exists a hyperbolic periodic orbit orbit $O$ homoclinically related to $p$,
having a dominated splitting $T_OM=E\oplus F$
such that the stable dimension of $O$ is strictly larger (resp. strictly smaller) than $\dim(E)$,
and whose invariant manifold tangent to $E$ (resp. to $F$) contains a point $z\in H(p)\setminus O$.
\end{defi}

Using the $C^1-$connecting lemma due to Hayashi, the following result holds immediately. 
\begin{prop}\label{p.generalized-strong-connection}
Let $H(p)$ be a homoclinic class for a diffeomorphism $f$
which has a generalized strong homoclinic intersection.
Then, there exist some $C^1$-close diffeomorphisms $g$ such that
$H(p_g)$ has a strong homoclinic intersection.
\end{prop}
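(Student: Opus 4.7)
The plan is to reduce the statement to a direct application of Hayashi's $C^1$-connecting lemma. By the definition of a generalized strong homoclinic intersection, there is a hyperbolic periodic orbit $O$ homoclinically related to $p$, a dominated splitting $T_O M = E \oplus F$ with stable dimension strictly larger or strictly smaller than $\dim(E)$, and a point $z \in H(p) \setminus O$ lying in the invariant manifold tangent to $E$ (respectively $F$). Replacing $f$ by $f^{-1}$ if necessary, I will assume the stable dimension of $O$ exceeds $\dim(E)$; then $E$ is an invariant subbundle of the stable direction that is dominated over $F$, so $E$ is uniformly contracted and the invariant manifold tangent to $E$ is a strong stable manifold $W^{ss}(O)$ of the orbit $O$.

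The key observation will be that $z$ lies in $\overline{W^u(O)} \cap W^{ss}(O)$. The inclusion $z \in W^{ss}(O)$ is given. For the other inclusion, I will use that $O$ is homoclinically related to $p$, so $H(O) = H(p)$: applying Smale's inclination lemma at a transverse homoclinic point of $p$ belonging to $W^s(O)$, the closure $\overline{W^u(O)}$ contains $\overline{W^u(p)}$ and hence the whole class $H(p)$, in particular containing $z$. The forward orbit of $z$ is not periodic, since $z \notin O$ and its iterates converge strongly to $O$. I would then invoke Hayashi's $C^1$-connecting lemma along this orbit, producing a $C^1$-small perturbation $g$ of $f$, supported in a small neighborhood of a finite arc of the orbit of $z$ disjoint from $O$ and from a fixed pair of transverse heteroclinic orbits realizing the homoclinic relation between $O$ and $p$, such that $W^u(O_g)$ and $W^{ss}(O_g)$ meet at a point $z_g$ close to $z$.

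To conclude, $z_g$ will lie outside $O_g$ because $z \notin O$ and the perturbation is small, and the transverse heteroclinic intersections witnessing the homoclinic relation between $O$ and $p$ will persist because they lie outside the support of the perturbation. Hence $O_g$ remains homoclinically related to $p_g$, and $W^{ss}(O_g)$ intersects $W^u(O_g)$ at the point $z_g \notin O_g$, giving a strong homoclinic intersection for $H(p_g)$. The main point requiring care is the application of the connecting lemma to the pair $(W^u(O), W^{ss}(O))$ rather than to the more familiar pair $(W^u(O), W^s(O))$; this causes no real difficulty, because the shadowing orbit produced by the lemma inherits the strong contraction rate toward $O$ (this rate is characterized by the subbundle $E$ of the hyperbolic splitting, which is preserved under the $C^1$-small perturbation) and therefore automatically lies in $W^{ss}(O_g)$.
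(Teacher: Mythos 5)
Your argument is correct and is exactly the route the paper takes: the paper offers no written proof, stating only that the result ``holds immediately'' from Hayashi's $C^1$-connecting lemma, and your text is a faithful expansion of that — using $z\in H(p)\subset \overline{W^u(O)}$ together with $z\in W^{ss}(O)\setminus O$, connecting $W^u(O)$ to the forward orbit of $z$ by a perturbation supported away from $O$ and from chosen heteroclinic orbits, and noting that the far-forward iterates of $z$ remain in $W^{ss}(O_g)$ since the dynamics is unchanged there. No gap to report.
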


One may wonder if this last result still holds in $C^r$-topologies for $r>1$.
We have a result in this direction under stronger assumptions.
The proof is much less elementary than the previous ones and will be obtained
as a corollary of theorem~\ref{t.stable} at the end of section~\ref{proofjointint}.

\begin{prop}\label{p.generalized-strong-connectionCr}
For any diffeomorphism $f_0$ and any homoclinic class
$H(p)$ which is a chain-recurrence class
endowed with a partially hyperbolic structure $E^s\oplus E^c\oplus E^u$, $\dim(E^c)=1$,
such that $E^s\oplus E^c$ is thin trapped,
there exists $\al_0> 0$ and a $C^1$-neighborhood $\cU$ of $f_0$
with the following property.

For any $\al\in [0,\al_0]$ and any $C^{1+\al}$-diffeomorphism $f\in \cU$ such that $H(p_f)$
has a generalized strong homoclinic intersection,
there exists a diffeomorphism $g$ arbitrarily $C^{1+\al}$-close to $f$ such that
$H(p_g)$ has a strong homoclinic intersection.
\end{prop}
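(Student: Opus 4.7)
The plan is to reduce the proposition to Theorem~\ref{t.stable}, which, for $f$ in a suitable $C^1$-neighborhood of $f_0$ and $\alpha$ small, transforms any pair of distinct points of $H(p_f)$ sharing a strong stable leaf and a common stable manifold of a periodic orbit into a genuine strong homoclinic intersection via a $C^{1+\alpha}$-perturbation. The key is to extract such a pair from the generalized intersection.

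First I would concentrate on the strong stable case of a generalized homoclinic intersection: a periodic orbit $O$ homoclinically related to $p_f$ with dominated splitting $T_OM = E \oplus F$, $\dim(E)$ strictly less than the stable dimension of $O$, and a point $z \in H(p_f) \setminus O$ on the invariant manifold of $O$ tangent to $E$. The partially hyperbolic structure $E^s \oplus E^c \oplus E^u$ with $\dim(E^c)=1$ forces the stable dimension of $O$ to be $\dim(E^s)$ or $\dim(E^s)+1$, hence $\dim(E) \leq \dim(E^s)$. Since dominated splittings over $O$ of a prescribed first-bundle dimension are unique and totally ordered by inclusion (they arise from a finest dominated splitting by grouping), one deduces $E \subset E^s|_O$. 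The invariant manifold tangent to $E$ is then contained in the strong stable manifold $W^{ss}(O)$ tangent to $E^s$, so $z \in W^{ss}(q) \setminus \{q\}$ for some $q \in O$. Thus $z$ and $q$ are two distinct points of $H(p_f)$ sharing the strong stable leaf of $q$ and both lying in the common stable manifold $W^s(O)$.

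At this stage I would feed the pair $(z,q)$ into Theorem~\ref{t.stable}, whose hypotheses (chain-hyperbolicity of the class, existence of a center stable plaque family, thin trapping of $E^s \oplus E^c$) are provided by the assumptions of the proposition and their robustness; its conclusion yields a diffeomorphism $g$ arbitrarily $C^{1+\alpha}$-close to $f$ such that $H(p_g)$ contains a strong homoclinic intersection, which is exactly the claimed statement. A generalized strong unstable intersection is handled by the parallel argument using the strong unstable foliation on $H(p)$ and the corresponding version of Theorem~\ref{t.stable} for $f^{-1}$.

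The main obstacle is verifying that the hypotheses of Theorem~\ref{t.stable} remain uniformly valid throughout the $C^1$-neighborhood $\cU$ of $f_0$, since the configuration data $(z,q)$ depend on $f$. This reduces to three persistence statements: the partially hyperbolic splitting $E^s \oplus E^c \oplus E^u$ survives on the continuation of $H(p)$, the thin-trapping property of $E^s \oplus E^c$ is preserved, and the chain-hyperbolic continuation of $H(p)$ is well-defined. The first two are standard robustness properties (Lemma~\ref{l.uniqueness-coherence} and its consequences); the third is furnished by the continuation framework of Section~\ref{s.continuation}, in particular Proposition~\ref{p.continuation} and Lemma~\ref{l.robustness}. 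A careful joint choice of $\cU$ and $\alpha_0$ then permits Theorem~\ref{t.stable} to be applied verbatim, completing the proof.
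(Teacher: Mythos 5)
Your overall route is the paper's: reduce the generalized intersection to a point $x_f\in W^{ss}(q_f)\setminus\{q_f\}$ on the strong stable manifold of a periodic point $q_f$ homoclinically related to $p$, then invoke Theorem~\ref{t.stable}. That reduction is fine, and you correctly flag that the real issue is feeding a \emph{family} of configurations into Theorem~\ref{t.stable}.

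The gap is precisely in the step you describe as ``furnished by the continuation framework.'' Theorem~\ref{t.stable} does not take the single pair $(x_f,q_f)$ as input: it requires continuous maps $g\mapsto x_g,y_g$ on a whole $C^{1+\al}$-neighborhood $\cV$ of $f$ with $y_g\in W^{ss}_{loc}(x_g)$ for \emph{every} $g\in\cV$, the points being limits of continuations of periodic points. The coincidence of strong stable leaves is a degenerate condition with no a priori reason to persist, so robustness of the splitting and Lemma~\ref{l.robustness} do not give it. The paper's mechanism is a contradiction (dichotomy) argument: assume no $g$ in a $C^{1+\al}$-neighborhood of $f$ has a strong homoclinic intersection --- this is exactly the hypothesis under which Proposition~\ref{p.continuation} and, crucially, Corollary~\ref{c.continuation} are valid, so your appeal to Proposition~\ref{p.continuation} is unjustified unless the argument is organized this way. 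Then Lemma~\ref{l.NGSHI} shows $x_f$ is accumulated by the class on one side of $W^{ss}_{loc}(x_f)$ inside $\cW^{cs}_{x_f}$, hence admits a lift to $\widetilde{H(p_f)}$ and a continuation $g\mapsto x_g$; Corollary~\ref{c.continuation} gives $x_g\in W^{ss}_{loc}(q_g)$ for all $g\in\cV$, continuity follows as in Lemma~\ref{l.cont-unstable}, and Theorem~\ref{t.stable} (applied with $y_g=q_g$) produces the contradiction. Separately, your handling of a ``strong unstable'' generalized intersection via an $f^{-1}$-version of Theorem~\ref{t.stable} would require $E^c\oplus E^u$ to be thin trapped for $f^{-1}$, which is not among the hypotheses; the paper treats only the strong stable configuration.
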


\subsection{Total disconnectedness along the center-stable plaques}
Let us consider a chain-hyperbolic homoclinic class $H(p)$. In certain part of the proof of the main theorem, we need a better understanding on the geometrical properties of the class in order, for instance,  
to build analogs of Markov partitions. To do that, we
need to ensure that the intersection of $H(p)$ with its center-stable plaques is totally
disconnected. By lemma~\ref{l.uniqueness-coherence} this property does not depend on the choice of a center-stable plaque family.
It is provided by the following result proved in section~\ref{s.2D-central}.

\begin{theo}\label{t.tot-discontinuity}
Let $f$ be a diffeomorphism and $H(p)$ be a chain-hyperbolic homoclinic class
with a dominated splitting $E^{cs}\oplus E^{cu}=(E^{ss}\oplus E^c_1)\oplus E^c_2$ such that $E^c_1,E^c_2$ are one-dimensional and $E^{cs}$ and $E^{cu}$ are thin trapped.
Then, one of the following cases holds.
\begin{itemize}
\item The strong stable manifolds (tangent to $E^s$) intersect the class in at most one point.
\item There exists a periodic point $q$ in $H(p)$ whose strong stable manifold
$W^{ss}(q)\setminus\{q\}$ intersects $H(p)$.
\item The class is totally disconnected along the center-stable plaques.
\end{itemize}
\end{theo}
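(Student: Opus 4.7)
Plan: I would argue by contraposition: assume that cases (a) and (b) of the conclusion both fail, and deduce case (c). Fix $z_0\in H(p)$ and let $K_0$ denote the connected component of $z_0$ in $H(p)\cap\cW^{cs}_{z_0}$; the aim is to show $K_0=\{z_0\}$ for every such $z_0$. Since $E^{ss}$ is uniformly contracted and $E^c_1$ is one-dimensional, the plaque $\cW^{cs}_{z_0}$ is foliated by the strong stable plaques tangent to $E^{ss}$, and its leaf space is a one-dimensional curve $J_{z_0}$ tangent to $E^c_1$ at $z_0$. Let $\pi_{z_0}\colon\cW^{cs}_{z_0}\to J_{z_0}$ be the induced strong-stable-holonomy projection; then $\pi_{z_0}(K_0)$ is a connected subset of $J_{z_0}$, hence either a single point or a non-degenerate interval. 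I would treat these two subcases separately.

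In the first subcase $\pi_{z_0}(K_0)$ is a single point, so $K_0\subseteq W^{ss}_{loc}(z_0)$ and $W^{ss}(z_0)\cap H(p)$ contains the non-trivial continuum $K_0$. Here I would exploit the density in $H(p)$ of periodic points homoclinically related to $p$, together with continuity of the strong stable lamination: picking such a periodic point $q$ arbitrarily close to a point $w\in K_0\setminus\{z_0\}$, the leaf $W^{ss}_{loc}(q)$ is $C^1$-close to $W^{ss}_{loc}(z_0)$ and passes near another point of $K_0$. A closing-type argument, combined with the forward-invariance provided by the thin trapping of $\cW^{cs}$ (see lemma~\ref{l.uniqueness-coherence}), should then yield a periodic point $q'$ homoclinically related to $p$ whose strong stable manifold meets $H(p)\setminus O(q')$, contradicting the failure of case (b).

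In the second subcase $\pi_{z_0}(K_0)$ is a non-degenerate interval $I_0$, i.e.\ the projection of $H(p)\cap\cW^{cs}_{z_0}$ to the centre direction contains an interval. By theorem~\ref{t.homoclinic}, there exist periodic points homoclinically related to $p$ whose Lyapunov exponent along $E^c_1$ is arbitrarily close to $0$, and by chain-hyperbolicity there is a periodic point $q_s$ whose stable manifold contains $\cW^{cs}_{q_s}$. Using the thin trapping of $\cW^{cu}$ to obtain a transverse centre-unstable lamination with controlled holonomy, I would iterate $I_0$ forward and arrange that its image, after suitable holonomy, meets the stable manifold of a well-chosen periodic point at a point lying on the corresponding strong stable leaf but distinct from the periodic orbit, again contradicting case (b).

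The main obstacle will be this second subcase: converting a topological continuum inside the class along the $E^c_1$ direction into a concrete strong homoclinic intersection for a specific periodic orbit. This demands precise control of the interaction between the strong stable holonomy inside $\cW^{cs}$ and the centre-unstable lamination of $H(p)$, together with a careful selection of periodic orbits with very weak contraction along $E^c_1$ so that the induced intersection point is genuinely distinct from the periodic orbit. Both subcases ultimately rest on transferring topological features of $H(p)$ to specific periodic orbits via closing and continuity arguments, which is the technical core of the proof.
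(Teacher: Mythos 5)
Your overall strategy (negate (a) and (b), deduce (c)) matches the paper's, and your subcase 1 is indeed the statement of the paper's Proposition~\ref{ss-tot-discontinuity}. However, both subcases as you sketch them have genuine gaps.

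In subcase 1 you pass from ``$W^{ss}_{loc}(q)$ is $C^1$-close to $W^{ss}_{loc}(z_0)$ and passes near another point of $K_0$'' to a strong homoclinic intersection by ``a closing-type argument''. Closeness does not give containment: to contradict the negation of (b) you must exhibit a point of $H(p)$ actually lying on $W^{ss}(q)\setminus\{q\}$. The natural tool is the center-unstable holonomy $\Pi^{cu}$ projecting the continuum $K_0$ onto $\cW^{cs}_q$ (landing in $H(p)$ by lemma~\ref{l.bracket0}), but this holonomy does not preserve the strong stable foliation, so the image of $K_0\subset W^{ss}_{loc}(z_0)$ need not lie in, or even re-cross, $W^{ss}_{loc}(q)$ away from $q$. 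Moreover the $\alpha$-limit set of $z_0$ need not contain periodic points, so one cannot simply push $K_0$ backward onto a periodic orbit. Overcoming this is the technical core of the paper's section~\ref{s.2D-central}: one needs the global holonomies along center-unstable plaques, their Zorn-type maximal extensions (lemma~\ref{l.extension} and addendum~\ref{a.extension}), the propagation of non-compactness under projection (lemma~\ref{l.projection}, a delicate monotone fixed-point argument), and the identification of boundary points with unstable manifolds of periodic orbits (lemma~\ref{p.boundary0}) so that backward iterates of the continuum accumulate on a periodic orbit. None of this is a routine closing argument.

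In subcase 2 the intended contradiction cannot be reached as stated, because a connected set of $H(p)\cap\cW^{cs}_{z_0}$ projecting onto a nondegenerate central interval is \emph{not} by itself incompatible with the negation of (b): under exactly the hypotheses ``(b) fails and (c) fails'', the paper proves (lemma~\ref{l.graph-uniform}) that such ``graphs'' transverse to the strong stable plaques exist through every point of the class. If you project such a graph onto $\cW^{cs}_{q'}$ for a periodic point $q'$, it crosses $W^{ss}_{loc}(q')$ in exactly one point, and that point is $q'$ itself --- no contradiction. The contradiction only appears when you also use the negation of (a): take $x\neq y$ with $y\in W^{ss}_{loc}(x)$, and project the graph through $x$ onto the plaque of a periodic point $q$ chosen close to $y$; the crossing point with $W^{ss}_{loc}(q)$ is then close to $x$, hence distinct from $q$. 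Your subcase 2 never invokes the failure of (a), so this step is missing. Finally, note that your two subcases are not independent: to know that a component with nondegenerate central projection is a graph (meets each strong stable plaque at most once) you already need the conclusion of subcase 1 together with an extra argument ruling out a component returning twice to the same strong stable plaque (lemma~\ref{l.graph}).
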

Under this general setting the point $q$ is not necessarily homoclinically related to $p$.
Note that this theorem also applies and may be interesting for locally maximal hyperbolic sets $K$ having
a dominated splitting $T_KM=E^s\oplus E^u=(E^{s}\oplus E^c)\oplus E^u$ such that
$E^c,E^u$ are one-dimensional.

\subsection{Extremal bundles}\label{ss.extremal}
Theorems~\ref{t.homoclinic} and~\ref{t.aperiodic}
show that the chain-recurrence classes $K$ of a $C^1$-generic
diffeomorphism far from homoclinic bifurcations have a partially hyperbolic
splitting $T_KM=E^s\oplus E^c\oplus E^u$ with $\dim(E^c)\leq 2$.
We now prove that the extremal bundles are non-degenerated.
This will ensure that the diffeomorphisms considered in the main theorem have
only finitely many sinks.

For aperiodic classes this has already been obtained with corollary~\ref{c.aperiodic}.
For homoclinic classes one can apply the following result.

\begin{teo}\label{t.extremal}
Let $f$ be a diffeomorphism in a dense G$_\delta$ subset of $\diff^1(M)$
and let $H(p)$ be a homoclinic class endowed with a partially hyperbolic splitting
$T_{H(p)}M=E^s\oplus E^c_1\oplus E^c_2\oplus E^u$, with $\dim(E^c_1)\leq 1$ and $\dim(E^c_2)\leq 1$.
Assume moreover that the bundles $E^s\oplus E^c_1$ and $E^c_2\oplus E^u$
are thin trapped by $f$ and $f^{-1}$ respectively and that the class is contained in a locally invariant
submanifold tangent to $E^s\oplus E^c_1\oplus E^c_2$.

Then one of the two following cases occurs:
\begin{itemize}
\item[--] either $H(p)$ is a hyperbolic set,
\item[--] or there exists diffeomorphisms $g$
arbitrarily $C^1$-close to $f$ with a periodic point $q$ homoclinically related
to the orbit of $p_g$ and exhibiting a heterodimensional cycle.
\end{itemize}
\end{teo}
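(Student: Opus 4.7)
The plan is to reduce the ambient dimension by passing to the locally invariant submanifold $\Sigma$ tangent to $E^s\oplus E^c_1\oplus E^c_2$ and then to dichotomize via theorem \ref{t.tot-discontinuity} applied to the restricted dynamics $f|_\Sigma$. On $\Sigma$, the homoclinic class $H(p)$ inherits the dominated splitting $(E^s\oplus E^c_1)\oplus E^c_2$, and both summands are thin trapped: $E^s\oplus E^c_1$ directly from the hypothesis, and $E^c_2$ because the thin-trapped $E^c_2\oplus E^u$ plaques in $M$ intersect $\Sigma$ transversally along curves tangent to $E^c_2$, inheriting the thin-trapping under $f^{-1}$ by lemma \ref{l.uniqueness-coherence}. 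The hypotheses of theorem \ref{t.tot-discontinuity} are thus satisfied for $H(p)$ inside $\Sigma$.

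In the ``strong connection'' alternative of \ref{t.tot-discontinuity}, some periodic point $q\in H(p)$ satisfies $(W^{ss}(q)\setminus\{q\})\cap H(p)\neq\emptyset$. By density of the periodic points homoclinically related to $p$ (ensured by chain-hyperbolicity), one can arrange that $q$ is homoclinically related to $p$, producing a generalized strong homoclinic intersection. Proposition \ref{p.generalized-strong-connection} then $C^1$-perturbs $f$ to produce an actual strong homoclinic intersection, and proposition \ref{p.strong-connection} upgrades this to a heterodimensional cycle. The hypotheses of \ref{p.strong-connection} are met: choosing $E=E^s$ if $E^c_1$ fails to be uniformly contracted (and $E=E^s\oplus E^c_1$ otherwise), theorem \ref{t.homoclinic} supplies periodic orbits homoclinically related to $p$ with Lyapunov exponents along the non-hyperbolic central bundle arbitrarily close to zero. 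This realizes the second alternative of the conclusion.

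Assume now that the strong-connection case does not occur, so that we are left with the other two alternatives of \ref{t.tot-discontinuity}; we must deduce that $H(p)$ is hyperbolic. If the strong stable leaves of $E^s$ intersect $H(p)$ in single points, then theorem \ref{t.whitney} applied inside $\Sigma$ places $H(p)$ in a further $2$-dimensional locally invariant submanifold tangent to $E^c_1\oplus E^c_2$, where a Pujals--Sambarino criterion (via corollary \ref{c.codim-one} and the results of \cite{PS1, PS4}) yields uniform contraction of $E^c_1$ and expansion of $E^c_2$. If instead $H(p)$ is totally disconnected along its center-stable plaques, proposition \ref{p.box} provides geometric Markov boxes and the $C^2$-distortion arguments of \cite{PS1, PS4}, adapted in theorem \ref{t.2D-central}, give uniform expansion of $E^c_2$; a dual argument inside $\Sigma$ using the thin-trapped $\cW^{cu}$ family for $f^{-1}$ gives uniform contraction of $E^c_1$. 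Combined with the already uniformly hyperbolic bundles $E^s$ and $E^u$, the class $H(p)$ is then uniformly hyperbolic.

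The main obstacle is the hyperbolicity step in the totally disconnected case: the $C^2$-distortion bounds of Pujals--Sambarino, originally engineered for surface dynamics, must be transported to the present $C^1$-generic partially hyperbolic setting with a $2$-dimensional center. The Markov-type boxes of proposition \ref{p.box} and the refined geometric control of center-stable plaques supplied by theorem \ref{t.2D-central} are designed to enable precisely this. The dual argument for $E^c_1$ requires additional care, because $\Sigma$ is only tangent to $E^s\oplus E^c_1\oplus E^c_2$ and not to $E^c_1\oplus E^c_2\oplus E^u$, so one cannot merely reverse the roles of $f$ and $f^{-1}$ at the level of the ambient reduction and must instead argue directly inside $\Sigma$.
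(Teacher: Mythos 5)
Your overall strategy matches the paper's: dispose of the ``single strong stable intersection'' case via theorem~\ref{t.whitney} and the surface criterion, of the totally disconnected case via theorem~\ref{t.2D-central}, and of the strong-connection case via propositions~\ref{p.generalized-strong-connection} and~\ref{p.strong-connection}. However, there is a concrete gap in how you route the cases. You apply theorem~\ref{t.tot-discontinuity} unconditionally and then, in the strong-connection alternative, propose to take $E=E^s\oplus E^c_1$ when $E^c_1$ is uniformly contracted. This breaks proposition~\ref{p.strong-connection} twice over: its first hypothesis requires the stable dimension of $p$ to equal $\dim(E)+1$, whereas with $E=E^s\oplus E^c_1$ (the full stable bundle, since $E^{cs}$ is thin trapped) it equals $\dim(E)$; and its second hypothesis requires periodic orbits homoclinically related to $p$ with negative exponents arbitrarily close to $0$, which lemma~\ref{l.weak} supplies only when $E^c_1$ is \emph{not} uniformly contracted. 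Indeed no heterodimensional cycle should be expected in that sub-case: a genuinely hyperbolic class can perfectly well have strong stable leaves (tangent to $E^s$) meeting it in several points. The paper avoids this by ordering the cases differently: it first settles the situations where $E^c_1$ or $E^c_2$ is degenerate \emph{or} uniformly hyperbolic by merging the relevant bundle into $E^s$ (resp.\ $E^u$) and invoking corollary~\ref{c.codim-one}, and only invokes theorem~\ref{t.tot-discontinuity} in the residual case where both central bundles are one-dimensional and $E^c_1$ is not uniformly contracted, so that lemma~\ref{l.weak} and remark~\ref{r.weak} feed proposition~\ref{p.strong-connection} with $E=E^s$. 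Note also that theorem~\ref{t.tot-discontinuity} as stated requires $E^c_1,E^c_2$ one-dimensional, so the degenerate cases must in any event be treated separately, which you do not do.

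Two secondary points. First, your claim that the periodic point $q$ of the second alternative of theorem~\ref{t.tot-discontinuity} can be ``arranged'' to be homoclinically related to $p$ by density is not a valid argument (replacing $q$ by a nearby related periodic point loses the intersection $W^{ss}(q)\cap H(p)$); the correct justification is lemma~\ref{l.linked} together with genericity (all periodic orbits hyperbolic and of the same index, hence homoclinically related), as in remark~\ref{r.weak}. Second, in the totally disconnected case the paper does not argue bundle by bundle: it applies theorem~\ref{t.2D-central} through the same $C^2$-approximation and Baire scheme as corollaries~\ref{c.PS1} and~\ref{c.codim-one} to conclude that the chain-recurrence class of $p_g$ is hyperbolic for nearby smooth $g$, and hence that $H(p)$ is hyperbolic for the generic $f$. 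Your proposed ``dual argument inside $\Sigma$'' for the contraction of $E^c_1$ is left entirely unproved, and as you yourself observe it cannot be obtained by exchanging $f$ and $f^{-1}$ (the center-unstable bundle for $f^{-1}$ is $E^s\oplus E^c_1$, which is not one-dimensional); as written this step of your proof is missing.
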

\begin{remark}
We will see in section~\ref{ss.consequences} that the result can be improved: the second
case of the theorem never appears.
\end{remark}

The proof relies on techniques developed in \cite{PS1, PS2, PS4}  for $C^2$-diffeomorphisms
that extend a result in \cite{mane-contribution-stabilite} for one-dimensional endomorphisms.
We list different settings that have been already studied.

\paragraph{ a) The surface case.}
For $C^2$-maps, the non-hyperbolic transitive sets which have a dominated splitting
contain either a non-hyperbolic periodic point
or a curve supporting the dynamics of an irrational rotation.

\begin{theo}[\cite{PS1}]\label{t.KS1}
Let $f$ be a $C^2$ diffeomorphism of surface and $K$ be a compact invariant set
having a dominated splitting $T_KM=E\oplus F$, $\dim(F)=1$ whose periodic orbits are all hyperbolic. Then, one of the following cases occur.
\begin{itemize}
\item[--] $K$ contains a sink or a compact invariant one-dimensional submanifold
tangent to $F$.
\item[--] $F$ is uniformly contracted by $f^{-1}$.
\end{itemize}
\end{theo}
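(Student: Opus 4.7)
The plan is to argue by contradiction: assume $F$ is not uniformly contracted by $f^{-1}$ and that $K$ contains neither a sink nor a compact invariant one-dimensional submanifold tangent to $F$, and derive a contradiction. The overall strategy is the Ma\~{n}\'{e}--Pujals--Sambarino scheme in the two-dimensional dominated setting: produce a non-trivial $C^1$ arc tangent to $F$ contained in $K$ whose iterates are uniformly controlled, then close it up into a forbidden configuration. First, since $F$ is not uniformly expanded by $f$, a Pliss-type selection argument produces a point $x\in K$ and arbitrarily long times $n_k\to\infty$ with
\[
\prod_{i=0}^{n-1}\|Df|_{F}(f^i(x))\|\leq 1 \qquad \text{for every } 0\leq n\leq n_k.
\]
Replacing $x$ by a limit point, I may work inside a minimal invariant compact $\Lambda\subset K$ every point of which has this ``no forward expansion along $F$'' property.

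The technical heart is a $C^2$ distortion estimate for small curves tangent to $F$. Because $E\oplus F$ is a dominated splitting on a surface and $f$ is $C^2$, there exist constants $\ell_0,C>0$ such that any $C^1$ arc $\gamma$ of length $\leq\ell_0$ tangent to a narrow cone around $F$, all of whose iterates $f^i(\gamma)$ for $0\leq i\leq n$ have length $\leq\ell_0$, satisfies
\[
\frac{\|Df^n|_{T_z\gamma}\|}{\|Df^n|_{T_{z'}\gamma}\|}\leq C, \qquad z,z'\in\gamma.
\]
The proof is a geometric series estimate whose ratio is the domination constant summed against $\|f\|_{C^2}$; this step fails in the $C^1$ category and is where the smoothness hypothesis is really used. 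Combined with the Pliss times $n_k$, this estimate implies that a small seed arc $\gamma_x$ tangent to $F$ through $x\in\Lambda$ has iterates $f^{n_k}(\gamma_x)$ whose length is bounded above by $\ell_0$ (they cannot grow, by the non-expansion at $x$) and bounded below (by distortion comparison with the derivative at $x$). By Ascoli--Arzel\`a a subsequence converges to a nondegenerate $C^1$ arc $I\subset K$ tangent to $F$.

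Finally, one studies the dynamics of $I$. Its forward iterates remain arcs of bounded length tangent to $F$, so passing to an $\omega$-limit in the Hausdorff topology on small $F$-arcs yields a compact family $\mathcal{I}$ of arcs invariant under a suitable return map. A Poincar\'e--Bendixson-type reasoning on the surface, together with the hypothesis that every periodic point is hyperbolic, forces $\mathcal{I}$ either to close up into a circle tangent to $F$ (a compact invariant one-dimensional submanifold, contradicting the standing assumption) or to spiral onto a periodic orbit whose local attracting manifold contains an $F$-arc; since the orbit is hyperbolic and attracts an arc tangent to $F$, it must be a sink, again contradicting the standing assumption. The main obstacle is precisely this last dichotomy: ruling out ``wandering'' $F$-arcs that neither close up nor are absorbed into a periodic attractor. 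This is the Denjoy-type core of the argument, where the $C^2$ distortion control is combined with hyperbolicity of periodic orbits to exclude a wandering interval supporting quasi-rotation behaviour that refuses to stabilize; propagating the distortion estimate uniformly along the non-expanding iterates extracted from Pliss's lemma is the technical crux of the whole proof.
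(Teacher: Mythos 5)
Your outline follows the broad Ma\~n\'e--Pujals--Sambarino architecture of the actual proof in \cite{PS1} (which the paper only cites, it does not reprove it): reduction to a minimally non-expanded subset, $C^2$ distortion along curves tangent to $F$, a Denjoy-type exclusion of wandering behaviour, and a final circle-versus-sink dichotomy. But the two steps you present as routine are exactly where the difficulty lies, and as written they fail. The distortion of $f^n$ along an arc $\gamma$ tangent to $F$ is controlled by $\exp\bigl(C\sum_{i=0}^{n-1}\ell(f^i(\gamma))\bigr)$, not by the supremum of the lengths: knowing only $\ell(f^i(\gamma))\leq\ell_0$ for $0\leq i\leq n$ yields the useless bound $e^{Cn\ell_0}$. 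There is no ``geometric series whose ratio is the domination constant'': domination compares the growth along $E$ with that along $F$ and says nothing about decay of $\ell(f^i(\gamma))$ (on an invariant circle carrying a rotation all these lengths are comparable). The whole point of \cite{PS1} is to manufacture the summability $\sum_i\ell(f^i(\gamma))<\infty$, either from pairwise disjointness of the iterates (the wandering-interval case) or from adapted rectangles with Markovian returns and summability estimates between consecutive returns; section~\ref{s.2D-central2} of the present paper reproduces and generalizes precisely this machinery (proposition~\ref{p.summability}, lemmas~\ref{l.distortion-holo} and~\ref{l.deep}), and it cannot be bypassed.

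The extraction of a nondegenerate limit arc has the same problem plus a sign error. The Pliss times give $\prod_{i<n}\|Df|_{F}(f^i(x))\|\leq 1$, an \emph{upper} bound on the derivative along $F$; with bounded distortion this bounds $\ell(f^{n_k}(\gamma_x))$ from above, but no lower bound comes ``by distortion comparison with the derivative at $x$''. If the relevant ergodic measure has strictly negative exponent along $F$, the arcs genuinely collapse to points and one must argue differently: by domination both exponents are then negative, and Pesin theory (here the $C^2$ hypothesis is used again) produces a periodic sink inside $\omega(x)\subset K$. In the zero-exponent case the non-collapse of the arcs is essentially \cite[Lemma 3.5.2]{PS1} and is itself a substantial argument resting on the summability above. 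Also, your claim that one may pass to a minimal set ``every point of which'' has non-positive partial products is false in general; what one gets is a set minimal for the property that $F$ is not uniformly expanded on it. Finally, the limit arc and the periodic point it may produce need not lie in $K$, so concluding that the resulting sink or invariant circle is contained in $K$, as the statement requires, needs an extra argument (the $\omega$-limit argument for the sink; Denjoy's theorem applied to the $C^2$ invariant circle to identify it with a minimal subset of $K$), which your sketch does not address.
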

One deduces the following generic result.

\begin{corollary}\label{c.PS1}
Let $f$ be a $C^1$-generic diffeomorphism and $K$ be a
partially hyperbolic set endowed with a dominated splitting $T_KM=E^s\oplus E^c_1\oplus E^c_2\oplus E^u$, with $\dim(E^c_1)=\dim(E^c_2)=1$.

If $K$ is contained in a locally
invariant surface tangent to $E^c_1\oplus E^c_2$
and does not contain a periodic orbit of stable dimension
$\dim(E^s)$ or $\dim(E^s)+2$, then $K$ is hyperbolic.
\end{corollary}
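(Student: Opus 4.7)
My plan is to view $K$ as an invariant set of the restricted surface diffeomorphism $f|_\Sigma$, where $\Sigma$ is the locally invariant surface tangent to $E^c_1\oplus E^c_2$ provided by the hypothesis; on $\Sigma$ the splitting $E^c_1\oplus E^c_2$ is a one--one dominated decomposition of the tangent bundle along $K$. Theorem~\ref{t.KS1} applied to $f|_\Sigma$ (with $E=E^c_1$, $F=E^c_2$) together with Kupka--Smale forces one of three alternatives: $K$ contains a sink of $f|_\Sigma$, it contains a compact one-dimensional $f|_\Sigma$-invariant submanifold tangent to $E^c_2$, or $E^c_2$ is uniformly expanded. A symmetric application to $f|_\Sigma^{-1}$ yields the corresponding trichotomy for $E^c_1$. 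Ruling out the two ``bad'' alternatives on each side reduces the corollary to a direct consequence of Theorem~\ref{t.KS1}.

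Ruling them out uses the stable-dimension constraint. A sink of $f|_\Sigma$ inside $K$ is a periodic orbit of $f$ whose contracting space contains $E^s\oplus E^c_1\oplus E^c_2$, so its stable dimension is $\dim(E^s)+2$, which is forbidden; a source of $f|_\Sigma$ has stable dimension $\dim(E^s)$ and is forbidden symmetrically. A compact invariant curve $\gamma\subset K$ tangent to $E^c_2$ (or $E^c_1$) carries a $C^1$ dynamics without hyperbolic periodic orbits, because any periodic point on $\gamma$ would have stable dimension $\dim(E^s)+2$ or $\dim(E^s)$; hence $f|_\gamma$ is conjugate to an irrational rotation, and a $C^1$-small perturbation of $f$ along $\gamma$ via Franks' lemma creates a periodic sink (or source) of $f|_\Sigma$ in $\gamma$, reproducing the forbidden configuration.

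The main obstacle is regularity. Theorem~\ref{t.KS1} is stated in the $C^2$ category, whereas we work in $C^1$ and the surface $\Sigma$ delivered by Theorem~\ref{t.whitney} is only of class $C^1$, so $f|_\Sigma$ is a priori just a $C^1$ surface map. I would circumvent this by combining $C^1$-openness of hyperbolicity with $C^2$-density. Let $\cV$ be a small $C^1$-neighborhood of $f$ and $\cH\subset\cV$ the open subset of $g$ for which the maximal invariant set $K_g$ in a fixed neighborhood $U$ of $K$ is hyperbolic. It suffices to show that $\cH$ is $C^1$-dense among the $g\in\cV$ that still satisfy the hypotheses of the corollary: the persistence of the locally invariant surface is provided by Proposition~\ref{p.whitney}, and the absence of periodic orbits in $K_g$ with stable dimension $\dim(E^s)$ or $\dim(E^s)+2$ can be imposed on a further dense $G_\delta$ subset (otherwise a standard perturbation would produce one in $K$ itself, contradicting the hypothesis on $f$).

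Given such a $g\in\cV$, approximate it by a $C^2$ diffeomorphism $\tilde g$ in $\cV$. The bunching provided by the uniform contraction of $E^s$ and expansion of $E^u$ allows the invariant surface $\Sigma_{\tilde g}$ tangent to $E^c_1\oplus E^c_2$ to be chosen of class $C^{1+\alpha}$, so that $\tilde g|_{\Sigma_{\tilde g}}$ is a $C^{1+\alpha}$ surface diffeomorphism on a $C^{1+\alpha}$ surface. Applying the $C^{1+\alpha}$ version of Theorem~\ref{t.KS1} from~\cite{PS1,PS2} and combining with the exclusion of sinks, sources and invariant curves from the previous paragraph gives uniform contraction of $E^c_1$ and uniform expansion of $E^c_2$ for $\tilde g$, hence hyperbolicity of $K_{\tilde g}$. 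This establishes the density of $\cH$; combined with its openness, the generic diffeomorphism $f$ satisfying the hypotheses lies in $\cH$, so $K$ is hyperbolic. The delicate step is the passage from the $C^2$ surface conclusion back to $f$, and in particular verifying that the two forbidden stable dimensions remain forbidden after the $C^2$-approximation: this is the place where the $C^1$-generic choice of $f$ is really used.
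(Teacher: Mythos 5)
Your broad outline matches the paper's: reduce to the restricted surface map, invoke Theorem~\ref{t.KS1} after a smooth approximation, and convert the dichotomy into hyperbolicity. But the way you organize the perturbation and genericity argument contains a gap that the paper's proof is specifically designed to avoid, and your proposed route does not close it.

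You set $\cH$ to be the set of $g$ near $f$ for which the \emph{maximal invariant set $K_g$} in a fixed neighborhood $U$ of $K$ is hyperbolic, and you aim to show $\cH$ is $C^1$-open and dense. To get hyperbolicity of $K_g$ from Theorem~\ref{t.KS1} you need to rule out sinks, sources and invariant circles for the $C^2$-approximation $\tilde g$. You only explain how to rule these out for $f$ and for the given set $K$, using the hypothesis that $K$ contains no periodic orbit of stable dimension $\dim(E^s)$ or $\dim(E^s)+2$; but that hypothesis constrains the single set $K$, not the whole maximal invariant set, and a $C^2$-small perturbation can introduce new periodic sinks or sources inside $U$. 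Your parenthetical Baire argument --- ``otherwise a standard perturbation would produce one in $K$ itself, contradicting the hypothesis on $f$'' --- does not work: the hypothesis forbids forbidden orbits \emph{in $K$}, whereas a perturbation produces them only \emph{near $K$}, and there is no mechanism pushing them into $K$. In fact $\cH$ can be empty: nothing prevents every $K_g$, including $K_f$, from containing attracting or repelling periodic orbits in $U$ that happen not to lie in $K$ itself. The paper's proof side-steps this by \emph{not} excluding sinks and sources for the smoothed map $f_3$: Theorem~\ref{t.KS1} (together with Kupka--Smale and the elimination of irrational circles by a further smooth perturbation) yields that every orbit in $K_3$ accumulates on a hyperbolic set, and then, for any $C^1$-nearby diffeomorphism, the maximal invariant set in a neighborhood of $\Lambda$ splits into a hyperbolic set $L$ of stable dimension $\dim(E^s)+1$ together with finitely many sinks and sources; the genericity argument then forces $K$ (which by hypothesis avoids all periodic orbits of the two forbidden dimensions) to lie inside the continuation of $L$, hence be hyperbolic. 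You need some version of this ``decomposition with excess pieces'' step; ruling out the excess pieces before applying the theorem cannot work.

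Two smaller points. First, your claim that $\Sigma_{\tilde g}$ is $C^{1+\alpha}$ by bunching is defensible for $\alpha$ small (the strict domination gap gives $(1+\alpha)$-normal hyperbolicity for small $\alpha>0$), so this is a legitimate alternative to the paper's device of conjugating $\Sigma_0$ to a smooth surface and then approximating by a smooth diffeomorphism preserving it --- but the paper's route is more elementary and avoids relying on a $C^{1+\alpha}$ version of Theorem~\ref{t.KS1}. Second, your argument excluding invariant circles via Franks' lemma has the same logical flaw as above (the new periodic orbit created by the perturbation need not lie in $K$), whereas the paper simply perturbs $f_2|_\Sigma$ to a $C^\infty$ Kupka--Smale diffeomorphism with no invariant circle carrying an irrational rotation and then works with that map.
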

\noindent
Note that a periodic orbit of stable dimension $\dim(E^s)$ or $\dim(E^s)+2$
is a source or a sink in the surface.
If $K$ is transitive and non trivial, it does not contain such a periodic orbit.
\begin{proof}
By proposition~\ref{p.whitney} and theorem~\ref{t.whitney},
the property for a partially hyperbolic set to be contained in a locally invariant
surface tangent to $E^c_1\oplus E^c_2$ is robust.
It is thus enough to consider open sets $\cU\subset \diff^1(M)$,
$U\subset M$ and a (non necessarily invariant) compact set $\Lambda\subset U$
such that for each $f\in \cU$ any invariant compact set $K$ contained in $U$ has a dominated splitting $T_KM=E^s\oplus E^c_1\oplus E^c_2\oplus E^u$ and is contained in a locally invariant surface tangent to $E^c_1\oplus E^c_2$: we have to obtain the conclusion of the theorem for an open and dense subset of diffeomorphisms in $\cU$ and invariant compact sets contained in $\Lambda$.
A standard Baire argument then concludes that the theorem holds for $C^1$ generic diffeomorphisms.

Let us fix a diffeomorphism $f_0\in \cU$ and consider the maximal invariant set $K_0$ in a small closed
neighborhood of $\Lambda$.
By assumption it is contained in a locally invariant surface $\Sigma_0$
tangent to $E^c_1\oplus E^c_2$. One can conjugate $f_0$ by a diffeomorphism which sends $\Sigma_0$
on a smooth surface $\Sigma$ and approximate the obtained diffeomorphism $f_1$ by a smooth diffeomorphism.
By this new diffeomorphism, the smooth surface $\Sigma$ is mapped on a smooth surface $f_1(\Sigma)$
which is $C^1$-close to $\Sigma$. As a consequence, there exists a smooth diffeomorphism $f_2$
that is $C^1$-close to $f_1$ which preserves $\Sigma$.
One deduces that the maximal invariant set $K_2$ for $f_2$ in a small neighborhood of $\Lambda$ is contained in
$\Sigma$.
One can perturb the restriction of $f_2$ to a neighborhood of $K_2$
in $\Sigma$ and obtain a smooth Kupka-Smale diffeomorphism
without any invariant one-dimensional submanifold supporting the dynamics of an irrational rotation. 
This perturbation can be extended to a smooth diffeomorphism of $M$: indeed the compactly supported diffeomorphism close to the identity
in $\Sigma$ are isotopic to the identity and can be extended in a trivializing neighborhood of $\Sigma$
as a compactly supported diffeomorphism close to the identity.

At this point we have built a smooth diffeomorphism $f_3$ that is $C^1$-close to $f$
and an invariant smooth surface $\Sigma$ which contains the maximal invariant set $K_3$ of $f_3$
in a small neighborhood of $\Lambda$. Moreover all the periodic orbits in $K_3$ are hyperbolic
and the dynamics inside any invariant one-dimensional submanifold of $K_3$ is Morse-Smale.
Theorem~\ref{t.KS1} then shows that any orbit in $K_3$ accumulates on a hyperbolic set.
Now, for any diffeomorphism $C^1$-close to $f_3$, the dynamics contained in a small neighborhood of
$\Lambda$ is hyperbolic: it contains a hyperbolic set $L$ of stable dimension $\dim(E^s)+1$,
a finite collection of hyperbolic periodic orbits $O_1,\dots,O_s$ of stable dimension $\dim(E^s)$ or $\dim(E^s)+2$ and
any other orbit accumulates in the future and in the past on $L\cup O_1\cup\dots O_s$.
\end{proof}

\paragraph{ b) The one-codimensional case.}
This has been considered for homoclinic classes.

\begin{theo}[\cite{PS4}]\label{codimension-one}
Let $f$ be a $C^2$ diffeomorphism and $H(p)$ be a homoclinic class
endowed with a partially hyperbolic splitting $E^s\oplus E^c$ with $\dim(E^c)=1$
whose periodic orbits are hyperbolic. Then $H(p)$ is hyperbolic.
\end{theo}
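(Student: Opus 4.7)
The plan is to prove that the one-dimensional bundle $E^c$ is uniformly expanded; combined with the uniform contraction of $E^s$, this yields the hyperbolicity of $H(p)$.

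First the periodic orbits have to be classified. Since $\dim(E^c)=1$ and each periodic orbit $O\subset H(p)$ is hyperbolic, either $E^c|_O$ is uniformly expanded (so $O$ has stable index $\dim(E^s)$) or $E^c|_O$ is uniformly contracted (so $O$ is a sink of $f$). Sinks are isolated and cannot lie in the non-trivial homoclinic class $H(p)$, since $p$ has a non-trivial unstable manifold and $H(p)$ is the closure of transverse homoclinic points of $p$. Hence every periodic orbit of $H(p)$ expands $E^c$, and in particular $p$ has stable index $\dim(E^s)$.

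The heart of the argument is then to upgrade pointwise expansion of $E^c$ at the dense set of periodic points to uniform expansion on all of $H(p)$, following the Ma\~{n}\'{e}--Pujals--Sambarino strategy for $C^2$ systems with one-dimensional central behaviour. Arguing by contradiction, assume $E^c$ is not uniformly expanded. Pliss's lemma then yields a sequence $y_n\in H(p)$ and times $t_n\to\infty$ such that the products $\prod_{k=0}^{t_n-1}\|Df^{-1}|_{E^c(f^{-k}(y_n))}\|$ do not decay exponentially. Through each $y_n$ one selects a locally invariant $C^1$-arc $\cW^c_{y_n}$ tangent to $E^c$ furnished by the plaque family theorem, and then one controls $C^2$-distortion along the backward iterates of these one-dimensional arcs. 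The crucial point is that the domination $E^s\prec E^c$, together with the $C^2$-regularity of $f$, absorbs the contribution of the transverse (possibly high-dimensional) $E^s$-direction into the spectral gap, reducing the distortion estimate to an essentially one-dimensional Koebe-type computation, exactly as in the surface case of Theorem~\ref{t.KS1}.

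Combining this distortion control with the Pliss times produces central arcs of uniform length whose backward iterates remain of uniform length; passing to a Hausdorff limit furnishes a non-trivial compact invariant set $\Gamma\subset H(p)$ tangent to $E^c$ on which $E^c$ is not uniformly expanded. The main obstacle is then to rule out such a $\Gamma$. Three situations may arise: $(a)$ $\Gamma$ contains a non-hyperbolic periodic point, contradicting the hypothesis; $(b)$ $\Gamma$ is attracted by a periodic orbit of $H(p)$, which must then be a sink, contradicting the first paragraph; or $(c)$ $\Gamma$ is an invariant circle tangent to $E^c$ on which $f$ induces an irrational rotation. Case $(c)$ is the subtle one and has to be excluded using the specific homoclinic-class structure: since the transverse homoclinic orbits of $p$ are dense in $H(p)$ and expand $E^c$, their accumulation on $\Gamma$ together with the distortion bounds forces periodic returns on $\Gamma$, incompatible with the minimality of the rotation. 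Therefore $E^c$ is uniformly expanded and $H(p)$ is hyperbolic.
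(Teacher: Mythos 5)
First, a point of reference: the paper does not prove this statement at all --- it is imported verbatim from \cite{PS4} and used as a black box (its only role here is to feed into corollary~\ref{c.codim-one}). So there is no internal proof to compare with; what can be said is whether your sketch would stand on its own. Your overall strategy is the correct one --- it is precisely the Ma\~n\'e--Pujals--Sambarino scheme that \cite{PS4} implements, and your first step (every hyperbolic periodic orbit in $H(p)$ either expands $E^c$ or is a sink, and a non-trivial homoclinic class cannot contain a sink) is sound, since a transverse homoclinic point of $O(p)$ in the basin of a sink $q$ would have to converge in forward time both to $O(q)$ and to $O(p)$.

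The gaps are in the two places where all the actual work of \cite{PS4} lives, and they cannot be dismissed in a sentence each. (i) Your distortion step uses $C^1$ plaques from the plaque family theorem and then invokes ``$C^2$-distortion along the backward iterates.'' A Koebe-type estimate needs the central curves to form a family of uniformly $C^2$ embeddings that stays $C^2$-bounded under backward iteration; this is a Denjoy-type lemma (lemma~\ref{l.smoothness} in this paper, \cite[lemma 3.3.2]{PS1}) and is not supplied by the plaque family theorem. (ii) Even granting $C^2$ curves, bounded distortion of $Df^{-n}|_{E^c}$ along an arc $J$ is controlled by $\sum_{k<n}\ell(f^{-k}(J))$, and the assertion that domination ``absorbs'' the $E^s$-direction does not produce this summability: one must build adapted rectangles/Markov-type returns and prove summability between returns (this is exactly what sections~\ref{s.2D-central} and~\ref{s.2D-central2} of the paper have to redo, at length, in a harder setting). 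Finally, your exclusion of the irrational-rotation circle via ``forced periodic returns'' is not convincing as stated; the clean argument is that such a circle is normally attracting (because $E^s$ is uniformly contracted and dominated), hence its orbit is an attractor inside the transitive set $H(p)$, forcing $H(p)$ to coincide with it --- impossible since $H(p)$ contains the saddle $p$ while an irrational rotation has no periodic points. This is the route taken in the proof of corollary~\ref{c.codim-one}. As written, your text is a faithful roadmap of \cite{PS4}, not a proof of the theorem.
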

As before, this gives the following generic result
(which is a particular case of theorem~\ref{t.extremal}).

\begin{corollary}\label{c.codim-one}
For any $C^1$-generic diffeomorphism,
any homoclinic class $H(p)$ that is
\begin{itemize}
\item[--] endowed with a partially hyperbolic splitting $E^s\oplus E^c\oplus E^u$, $\dim(E^c)=1$,
\item[--] contained in a locally invariant submanifold tangent to $E^s\oplus E^c$,
\end{itemize}
is hyperbolic.
\end{corollary}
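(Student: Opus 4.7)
The plan is to mimic the proof of Corollary~\ref{c.PS1}, reducing the problem to Theorem~\ref{codimension-one} by working inside the locally invariant submanifold tangent to $E^s\oplus E^c$ and then invoking a Baire argument.

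First I would fix open sets $\cU\subset \diff^1(M)$, $U\subset M$ and a compact set $\Lambda\subset U$ such that for every $f\in\cU$ the maximal invariant set contained in $\overline{U}$ admits a partially hyperbolic splitting $E^s\oplus E^c\oplus E^u$ with $\dim(E^c)=1$ and is contained in a locally invariant submanifold tangent to $E^s\oplus E^c$. By Proposition~\ref{p.whitney} together with Theorem~\ref{t.whitney}, this is an open condition, so it suffices to show that on a $C^1$-open and dense subset of $\cU$ every homoclinic class $H(p)\subset\Lambda$ satisfying the hypotheses is hyperbolic; covering the space of homoclinic classes by countably many such $(\cU,U,\Lambda)$ and applying Baire yields the stated generic conclusion.

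Next I would smooth the situation. Given $f_0\in\cU$, let $\Sigma_0$ be the $C^1$ locally invariant submanifold tangent to $E^s\oplus E^c$ containing the maximal invariant set in $\overline U$. Conjugating by a diffeomorphism sending $\Sigma_0$ to a smooth submanifold $\Sigma$ and then smoothing, one obtains a $C^\infty$ diffeomorphism $f_1$ close to $f_0$ in the $C^1$ topology such that $f_1(\Sigma)$ is $C^1$-close to $\Sigma$. A further $C^1$-small perturbation in a tubular neighborhood produces a smooth diffeomorphism $f_2$ that preserves $\Sigma$ near $\Lambda$ and still satisfies the partial hyperbolicity. The restriction $f_2|_\Sigma$ has a dominated splitting $E^s\oplus E^c$ with $\dim(E^c)=1$ and $E^s$ uniformly contracted. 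Using that compactly supported smooth perturbations of the identity on $\Sigma$ extend to smooth diffeomorphisms of $M$ via a trivializing neighborhood of $\Sigma$, one may further perturb inside $\Sigma$ to a Kupka--Smale diffeomorphism $f_3$ whose periodic orbits in $\Sigma$ are all hyperbolic.

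Then Theorem~\ref{codimension-one}, applied to $f_3|_\Sigma$, implies that the homoclinic class $H(p_{f_3})$ is hyperbolic inside $\Sigma$. Since the transverse bundle $E^u$ is uniformly expanded, $H(p_{f_3})$ is hyperbolic as a subset of $M$, hence so is $H(p_g)$ for any $g$ in a $C^1$-neighborhood of $f_3$. This shows that the hyperbolicity conclusion holds on a $C^1$-open and dense subset of $\cU$, and the Baire argument over a countable family of $(\cU,U,\Lambda)$ finishes the proof.

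The main technical obstacle is the same as in Corollary~\ref{c.PS1}: producing a smooth diffeomorphism that preserves a smooth submanifold $\Sigma$ near $\Lambda$ while remaining $C^1$-close to $f_0$ and retaining the partially hyperbolic splitting, and then carrying out a Kupka--Smale type perturbation inside $\Sigma$ whose extension to $M$ does not spoil these properties. Once this is done, Theorem~\ref{codimension-one} provides hyperbolicity inside $\Sigma$ essentially for free, and hyperbolicity in $M$ follows because $E^u$ is already uniformly expanded.
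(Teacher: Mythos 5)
Your reduction to a smooth diffeomorphism preserving a smooth submanifold $\Sigma$ is the same as the paper's (it explicitly reuses the smoothing step from Corollary~\ref{c.PS1}), but the way you close the argument has a genuine gap. You apply Theorem~\ref{codimension-one} to $H(p_{f_3})$ and then assert that hyperbolicity of $H(p_{f_3})$ passes to $H(p_g)$ for all $g$ in a $C^1$-neighborhood, and that the set of diffeomorphisms where the conclusion holds is open. Neither is true: hyperbolicity of a homoclinic class is not a $C^1$-open property, because the class can explode under perturbation. More to the point, the smooth perturbation $g$ is not generic, so the chain-recurrence class $\Lambda$ containing $p_g$ may strictly contain $H(p_g)$; what transfers back to the generic $f$ (via continuity of $g\mapsto C(p_g)$ at $f$ and robustness of hyperbolicity of maximal invariant sets) is hyperbolicity of the whole chain-recurrence class of $p_g$, not of the homoclinic class alone. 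Knowing only that $H(p_g)$ is hyperbolic, you cannot guarantee that $H(p_f)=C(p_f)$ is contained in the isolating neighborhood where hyperbolicity persists.

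The paper bridges exactly this gap: assuming the chain-recurrence class $\Lambda\ni p_g$ is not hyperbolic, it extracts a minimally non-hyperbolic invariant compact set $K\subset\Lambda$, shows $K$ is transitive (it supports an ergodic measure with non-positive central exponent), excludes that $K$ is a sink or contains an invariant circle tangent to $E^c$ (these would be normally attracting in $\Sigma_g$, contradicting chain-transitivity of $\Lambda\ni p_g$), and then invokes \cite[lemma 5.12]{PS4} to place $K$ inside \emph{some} homoclinic class $H(q)$, to which Theorem~\ref{codimension-one} is finally applied to reach a contradiction. Note also that Theorem~\ref{codimension-one} requires all periodic orbits of the class to be hyperbolic, which the paper gets by choosing the $C^2$ perturbation with hyperbolic periodic orbits rather than by a Kupka--Smale perturbation supported in $\Sigma$; your extra perturbation step is harmless but the essential missing ingredient is the passage from "some homoclinic class is hyperbolic" to "the chain-recurrence class of $p_g$ is hyperbolic."
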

\begin{proof}
Consider a $C^1$-generic diffeomorphism $f$
and a homoclinic class $H(p)$ as stated in the corollary
and $\Sigma$ the locally invariant submanifold tangent to $E^s\oplus E^c$
containing $H(p)$.
By genericity, one can suppose that
the class $H(p)$ is a chain-recurrence class and that for any diffeomorphism
$g$ close to $f$, the class $H(p_g)$ is contained in a small neighborhood of $H(p)$.
Moreover, if for some arbitrarily close diffeomorphisms $g$ the chain-recurrence class
containing $p_g$ is hyperbolic, then the class $H(p)$ for $f$ is also hyperbolic.

Let us consider a $C^2$-diffeomorphism
$g$ arbitrarily close to $f$ in $\diff^1(M)$ and whose periodic orbits
are hyperbolic. By proposition~\ref{p.whitney}, the chain recurrence class $\Lambda$ containing $p_g$ is still contained in a locally invariant submanifold $\Sigma_g$.
As in the proof of corollary~\ref{c.PS1}, one may have chosen $g$ so that
$\Sigma_g$ is a smooth submanifold.
Let us assume by contradiction that $\Lambda$ is not hyperbolic:
there exists an invariant compact set $K\subset \Lambda$
that is not hyperbolic and that is minimal for the inclusion.
Since $K$ coincides with the support of an ergodic measure
whose Lyapunov exponent along $E^c$ is non-positive,
the set $K$ is transitive.
The set $K$ cannot be a sink, nor contain
an invariant one-dimensional submanifold tangent to $ E^c$,
since by transitivity the set $K$ would be reduced to a sink or
a union of normally attracting curves in $\Sigma_g$, contradicting the fact that $\Lambda$ is chain-transitive and contains $p_g$.
One can thus apply~\cite[lemma 5.12]{PS4} and conclude that $K$ is contained in a homoclinic
class $H(q)$. Since $H(q)$ is contained in a small neighborhood of $H(p)$,
it is contained in $\Sigma_g$. By theorem~\ref{codimension-one} applied for $g$
inside $\Sigma_g$, one deduces that $H(q)$ is a hyperbolic set. This
contradicts the fact that $K$ is non hyperbolic.
As a consequence, the chain-recurrence class containing $p_g$ is hyperbolic,
hence coincides with $H(p)$.
This proves that the homoclinic class $H(p)$ is hyperbolic.
\smallskip
\end{proof}

\paragraph{ c) The $2$-codimensional case.} For homoclinic classes
with two-codimensional strong stable bundle, one can replace the uniformity of the center stable
bundle by the thin trapping property and the total disconnectedness along the center stable plaques.
This theorem is proved in section~\ref{s.2D-central2}. 

\begin{theo}\label{t.2D-central}
 Let $f_0$ be a diffeomorphism and $H(p_{f_0})$ be a chain-recurrence class which is
a chain-hyperbolic homoclinic class  endowed with a dominated splitting $E^{cs}\oplus E^{cu}$ such that $E^{cu}$ is one-dimensional and $E^{cs},E^{cu}$ are thin trapped (for $f_0$ and $f_0^{-1}$ respectively).
Assume moreover that the intersection of $H(p_{f_0})$ with its center-stable plaques is totally disconnected.

Then, for any $C^2$ diffeomorphism $f$ that is close to $f_0$ in $\diff^1(M)$
and for any $f-$invariant compact set $K$ contained in a small neighborhood of $H(p_{f_0})$ and whose
periodic orbit are hyperbolic, one of the following cases occurs.
\begin{itemize}
\item[--] $K$ contains a sink or a compact invariant one-dimensional submanifold
tangent to $E^{cu}$.
\item[--] $E^{cu}$ is uniformly contracted by $f^{-1}$.
\end{itemize}
\end{theo}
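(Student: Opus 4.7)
My plan is to follow closely the strategy of Pujals--Sambarino \cite{PS1, PS4}, adapted from the surface/codimension-one setting to the present partially hyperbolic framework with a one-dimensional $E^{cu}$ and a higher-dimensional but thin trapped $E^{cs}$. Fix $f_0$ and $H(p_{f_0})$ as in the hypothesis, and assume $f$ is $C^2$ and $C^1$-close to $f_0$. I will argue by contradiction: suppose $K$ contains no sink, no compact invariant one-dimensional submanifold tangent to $E^{cu}$, and yet $E^{cu}$ is not uniformly contracted by $f^{-1}$. I want to produce an invariant structure along $E^{cu}$ (sink or curve) and hence contradict the assumption.

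First, I would propagate the structure from $H(p_{f_0})$ to $K$. The thin-trapping of $E^{cs}$ and $E^{cu}$ is robust under $C^1$-perturbation (lemma~\ref{l.uniqueness-coherence} and its analogue for $f^{-1}$), so for all $f$ close enough to $f_0$, the maximal invariant set $\Lambda_f$ in a small neighborhood $U$ of $H(p_{f_0})$ inherits a dominated splitting $E^{cs}\oplus E^{cu}$ with thin trapped plaque families $\cW^{cs}, \cW^{cu}$ and in particular $K\subset\Lambda_f$. Since the intersection of $H(p_{f_0})$ with its center-stable plaques is totally disconnected and since center-stable plaques are nested and locally unique, by shrinking $U$ I can arrange that every center-stable plaque $\cW^{cs}_x$ meets $\Lambda_f$ in a totally disconnected set. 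Invoking proposition~\ref{p.box}, I then cover $K$ by finitely many ``geometric boxes'' whose vertical sides are center-stable plaques and whose horizontal cross-sections are arcs tangent to $E^{cu}$; this gives a Markov-type partition in which the center-stable holonomy between two boxes is well defined wherever iterates stay in the boxes.

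Next, I pass to the one-dimensional dynamics. Inside each box I project along center-stable plaques onto a $C^2$ curve tangent to $E^{cu}$; since $E^{cu}$ is one-dimensional and $f$ is $C^2$, the images of these curves under iteration remain $C^2$ curves with uniformly bounded $C^2$-norm, even though the projections are only $C^1$. Using the Markov structure, I obtain return maps along $E^{cu}$-curves that are $C^2$ interval maps with bounded distortion, exactly as in the surface setting of \cite{PS1}. At this point I apply the Pujals--Sambarino machinery verbatim: Mañé's argument selects, from the non-uniform expansion assumption, a minimal invariant set $K'\subset K$ whose $E^{cu}$-exponents are non-positive; on this set the $C^2$ distortion estimate along $E^{cu}$-curves forces the $\omega$-limit set of a $K'$-orbit to be either a periodic sink (contradicting the absence of sinks in $K$) or a finite union of normally attracting arcs tangent to $E^{cu}$, which by chain-recurrence and minimality closes up into a compact invariant submanifold tangent to $E^{cu}$, contradicting our contrapositive assumption. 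Hence $E^{cu}$ must be uniformly contracted by $f^{-1}$.

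The main obstacle is step two: transporting the $C^2$ distortion argument from an honest surface to the partially hyperbolic setting, because center-stable holonomies are generally only $C^1$ (or even just Hölder), so one cannot naively pass to a $C^2$ quotient one-dimensional dynamics. The remedy, following \cite{PS4}, is never to quotient, but to keep the analysis on actual $C^2$ $E^{cu}$-curves inside $M$; domination $E^{cs}\oplus E^{cu}$ then controls the geometry of these curves under iteration, and the total disconnectedness along center-stable plaques is exactly what allows the Markov-like partition of proposition~\ref{p.box} to be built without overlap, so that the induced one-dimensional return maps are well defined and the $C^2$ distortion lemma applies. Ensuring that all the uniform constants (trapping radii, domination constant, Markov partition geometry, distortion bounds) depend only on $f_0$ and not on the particular $f$ and $K$ is the most delicate bookkeeping, but follows from the robustness of thin trapping together with the fact that $K$ is assumed to lie in a fixed small neighborhood of $H(p_{f_0})$.
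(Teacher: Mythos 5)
Your overall strategy coincides with the paper's: reduce by a Zorn/Ma\~n\'e-type argument to a minimal non-uniformly expanded transitive subset, use the total disconnectedness along the center-stable plaques (via proposition~\ref{p.box}) to build adapted Markov-like rectangles foliated by $C^2$ curves tangent to $E^{cu}$, and run the Pujals--Sambarino $C^2$-distortion machinery along those curves inside $M$ rather than on a quotient. You also correctly identify that one must not pass to a quotient one-dimensional system, since the center-stable holonomy is not $C^2$.

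There is, however, a concrete gap at the point where you assert that the induced return dynamics consists of ``$C^2$ interval maps with bounded distortion, exactly as in the surface setting''. The $C^2$ distortion estimate controls derivatives along a \emph{single} unstable curve; but the summability argument --- bounding $\sum_k \ell(f^{-k}(W^{cu}_S(x)))$ by observing that the returns land in pairwise disjoint subrectangles whose reference-fiber lengths sum to at most the length of one fiber of $R$ --- requires comparing the length of the curve through $f^{-n_i}(x)$ with the length of the curve through a fixed reference point of the same subrectangle. That comparison is made through the center-stable holonomy, which here is only H\"older because $E^{cs}$ is multi-dimensional and merely thin trapped, not uniformly contracted. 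This is precisely the new difficulty relative to \cite{PS1,PS4}, and the paper resolves it by proving bounded holonomy distortion only for $N$-contracting returns (definition~\ref{d.contracting} and lemma~\ref{l.distortion-holo}, where the H\"older sum $\sum_i d(f^i(z),f^i(\Pi(z)))^{\alpha}$ is made convergent using the contraction along $E^{cs}$ at Pliss-type times), and then interleaving these contracting returns with the intermediate stretches via lemma~\ref{l.deep} and proposition~\ref{p.summability}. Without this mechanism your plan does not close: for a general return the fibers of a subrectangle may have incomparable lengths. A secondary omission is the dichotomy needed in the construction of the rectangle itself (unbounded first-return times versus a hole with aperiodic boundary, proposition~\ref{p.construction}), which determines where the open set satisfying the backward contraction property actually sits.
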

\medskip

We can now prove that for $C^1$-generic diffeomorphisms far from
homoclinic bifurcations, the extremal sub-bundles of the homoclinic classes
are non-degenerated.

\begin{proof}[Proof of theorem~\ref{t.extremal}]
As before, one can assume that, for $g$  close to $f$,
the class $H(p_g)$ is contained in a small neighborhood of $H(p)$.
Moreover, if for some arbitrarily close diffeomorphisms $g$ the chain-recurrence class
containing $p_g$ is hyperbolic, then the class $H(p)$ for $f$ is hyperbolic.
The following several cases have to be considered.
\smallskip

Note first that when the bundle $E^{c}_1$ or $E^c_2$ is degenerated,
corollary~\ref{c.codim-one} implies that $H(p)$ is a hyperbolic set.
\smallskip

When the strong stable leaves intersect the class in at most one point,
theorem~\ref{t.whitney} implies that the class is contained in a
locally invariant submanifold tangent to $E^c_1\oplus E^{c}_2$.
By corollary~\ref{c.PS1} the class is then hyperbolic.
\smallskip

When the intersection of the class with the center stable plaques
is totally disconnected, one can apply theorem~\ref{t.2D-central}.
For any $C^2$ diffeomorphisms $g$ $C^1$-close to $f$ in $\diff^1(M)$
with hyperbolic periodic orbits, the chain-recurrence class containing
$p_g$ is hyperbolic. As a consequence $H(p)$ is hyperbolic.
\smallskip

It remains the case that both bundles $E^c_1,E^{c}_2$ are one-dimensional,
$E^c_1$ is not uniformly contracted,
the class contains two different point in a same strong stable leaf
and the intersection of the class with the center stable plaques is not
totally disconnected.
One can then apply theorem~\ref{t.tot-discontinuity} when the dynamics is restricted to a
locally invariant submanifold tangent to $E^s\oplus E^c_1\oplus E^{c}_2$
and one deduces that the class has a generalized strong homoclinic intersection.

By lemma~\ref{l.weak} and remark~\ref{r.weak} the class contains hyperbolic periodic orbits homoclinically related to $p$
and whose Lyapunov exponent along $E^c_1$ is arbitrarily close to zero.
One concludes applying the propositions~\ref{p.generalized-strong-connection}
and~\ref{p.strong-connection} and creating a heterodimensional cycle 
associated to a periodic orbit homoclinically related to $p$.
\end{proof}

\subsection{Finiteness of quasi-attractors}\label{ss.finiteness}
We now consider the \emph{quasi-attractors} 
and prove one part of the main theorem.

\begin{prop}\label{p.finiteness}
For any $C^1$-generic diffeomorphism that is far from homoclinic tangencies and heterodimensional cycles, the union of all the quasi-attractors is closed.
\end{prop}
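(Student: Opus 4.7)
The plan is to show that any Hausdorff limit of quasi-attractors is itself a quasi-attractor, from which closedness of $\cQ=\bigcup\{Q:Q\text{ is a quasi-attractor}\}$ follows at once. Concretely, start with $x_n\in Q_n\in\cQ$ and $x_n\to x$. If only finitely many distinct quasi-attractors occur in the sequence, then $x$ lies in one of them (which is closed) and we are done. Otherwise, using that distinct quasi-attractors are disjoint chain-recurrence classes (generic Bonatti--Crovisier) and the compactness of the Hausdorff topology on compact subsets of $M$, extract a subsequence of pairwise distinct $Q_n$ with $Q_n\to A$ in Hausdorff distance and $x\in A$. We want $A$ to be a quasi-attractor.

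I would first transport structure to the limit. By Corollary~\ref{c.aperiodic}, each $Q_n$ is a homoclinic class; by Theorem~\ref{t.homoclinic}, it has a dominated splitting $T_{Q_n}M=E^{cs}_n\oplus E^{cu}_n$ with $E^{cs}_n$ thin trapped; by Corollary~\ref{c.homoclinic}, $E^{cu}_n$ is uniformly expanded. Since $f$ is far from tangencies and heterodimensional cycles, the dominated splittings admit uniform constants (after extraction, the index $\dim E^{cu}_n$ is constant, using \cite{C2}). The limit $A$ then inherits a partially hyperbolic splitting $E^{cs}\oplus E^{u}$ of the same index with $E^u$ uniformly expanded. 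A uniform choice of thin trapped plaque families $\cW^{cs,n}$ over the $Q_n$ (Definition~\ref{d.tt}), made coherent via Lemma~\ref{l.uniqueness-coherence}, extracts by Arzel\`a--Ascoli to a trapped plaque family $\cW^{cs}$ tangent to $E^{cs}$ over $A$, of arbitrarily small diameter.

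Next, I would prove that $A$ is Lyapunov stable. For $z\in A$, write $z=\lim z_n$ with $z_n\in Q_n$. Since $Q_n$ is Lyapunov stable, any backward pre-orbit from a point of $Q_n$ remains in every forward-invariant neighborhood of $Q_n$, hence in $Q_n$; therefore $W^{u}_\varepsilon(z_n)\subset Q_n$, where $\varepsilon>0$ is a uniform size coming from the uniform expansion of $E^u$. By standard $C^1$ compactness of local strong unstable manifolds, $W^{u}_\varepsilon(z_n)\to W^{u}_\varepsilon(z)$ in $C^1$, so $W^{u}_\varepsilon(z)\subset A$ for every $z\in A$. Consequently the $cs$-tube
\[
V=\bigcup_{z\in A}\cW^{cs}_z
\]
is an open neighborhood of $A$ in $M$ (by local product structure, using the unstable saturation of $A$), and the trapping property of $\cW^{cs}$ yields $f(\overline V)\subset V$. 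Shrinking $\cW^{cs}$ via thin trapping produces a basis of trapping neighborhoods of $A$, so $A$ is Lyapunov stable. It is also chain-transitive, as a Hausdorff limit of the chain-transitive sets $Q_n$ (any $\varepsilon$-pseudo-orbit in $Q_n$ approximates an $\varepsilon$-pseudo-orbit near $A$, with endpoints adjusted). Hence $A$ is a quasi-attractor, and $x\in A\subset\cQ$.

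The main obstacle is transferring the thin-trapping of $E^{cs}$ to the limit $A$ with uniform constants: this requires the uniformity of the partially hyperbolic constants across all quasi-attractors, which is where being $C^1$-far from tangencies and cycles enters crucially (via the results of \cite{C2}), together with a careful Arzel\`a--Ascoli extraction of the coherent plaque families. A secondary, more routine point is handling the degenerate case $\dim E^u=0$: then each $Q_n$ is a periodic sink, hence locally isolated in $\cQ$ (no accumulation of distinct $Q_n$'s is possible generically far from homoclinic bifurcations), so this case does not produce a genuine limit $A$.
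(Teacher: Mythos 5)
There is a genuine gap, and it sits exactly where you flag your ``main obstacle'': the transfer of the trapped center-stable structure to the Hausdorff limit $A$. This is not a technical Arzel\`a--Ascoli issue that careful bookkeeping will fix. Theorem~\ref{t.homoclinic} gives thin-trapping of $E^{cs}_n$ over each class $Q_n$ \emph{separately}; since the $Q_n$ are infinitely many distinct chain-recurrence classes, there is no uniform ``margin'' in the inclusions $f(\overline{\cW_x})\subset\cW_{f(x)}$, and in the limit the strict inclusion can degenerate (indeed the central exponents of periodic orbits in $Q_n$ may tend to $0$). Worse, the scenario you must rule out is precisely the one where your limit bundle is \emph{not} trapped: the limit $L$ lies in a homoclinic class $H(p)$ (this needs an argument too --- it is Claim~1 of the paper's proof, via Corollary~\ref{c.aperiodic}), and the stable dimension of the $p_n$ may be strictly larger than that of $p$. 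In that case the bundle you want to trap over $A$ contains a central direction $E^c$ that is simultaneously part of the thin-trapped center-\emph{unstable} bundle of $H(p)$ for $f^{-1}$; if your tube argument went through, $A$ would be a Lyapunov stable chain class, hence equal to $H(p)$, and Corollary~\ref{c.homoclinic} would force $E^c$ to be uniformly expanded --- contradicting the trapping you assumed. So the uniformity you need already encodes the conclusion. The paper's proof does not attempt this transfer at all: it shows (Claim~2) that if $H(p)$ is not a quasi-attractor the stable indices must jump, then produces periodic orbits $O_n\subset A_n$ away from a point of $L$ (Claim~3, which invokes Theorem~\ref{t.extremal}) and uses Hayashi's connecting lemma to create an intersection $W^s(p)\cap W^u(O_n)$, hence a heterodimensional cycle --- contradicting the standing hypothesis. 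The fact that your argument never uses ``far from heterodimensional cycles'' dynamically, only to import structure theory, is the symptom of the missing idea.

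A secondary but real gap: the degenerate case $\dim E^u=0$ is not routine. There, the $Q_n$ are periodic sinks, your limit $A$ is not saturated by unstable manifolds, and the tube construction collapses; asserting that distinct sinks cannot accumulate is essentially Theorem~\ref{t.extremal} (the exclusion of infinitely many sinks), which the paper describes as the delicate point of the whole argument and proves via Sections~\ref{s.2D-central} and~\ref{s.2D-central2}. You may of course invoke that theorem, but the justification you give (``locally isolated in $\cQ$'') is not a proof.
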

\begin{proof}
Consider a sequence of quasi-attractors $(A_n)$ which converges towards a (chain-transitive) set $L$.
By theorem~\ref{t.homoclinic}, they are homoclinic classes $A_n=H(p_n)$
and one can assume that all the periodic orbits $p_n$ have the same dimension.
\medskip

\noi{\it Claim 1.
$L$ is a contained in a homoclinic class $H(p)$.}
\begin{proof}
If $L$ is contained in an aperiodic
class, by theorem~\ref{t.aperiodic}
it has splitting $T_LM=E^s\oplus E^c\oplus E^u$ with $\dim(E^c)=1$.
So, this is the same for the classes $A_n$.
Since the classes $A_n$ are quasi-attractor,
they are saturated by strong unstable leaves,
and therefore the same holds for $L$.
This contradicts corollary~\ref{c.aperiodic}.
\end{proof}
\medskip

By theorem~\ref{t.homoclinic}, the class
$H(p)$ has a dominated splitting $E^s\oplus E^c_1\oplus E^c_2\oplus E^u$
where $E^c_1$ and $E^c_2$ have dimension $0$ or $1$.
We assume by contradiction that $H(p)$ is not a quasi-attractor.
\medskip

\noi{\it Claim 2.
The stable dimension of the periodic points $p_n$ is strictly
larger than the stable dimension of $p$.}
\begin{proof}
Let us consider some plaque families $\cW^{cs},\cW^{cu}$
over the maximal invariant set in a neighborhood of $L$
and tangent to $E^s\oplus E^c_1$ and $E^{c}_2\oplus E^u$ respectively,
as in the definition of chain-hyperbolic class.
Let us assume by contradiction that the stable dimension of $p_n$ is smaller or equal to
the stable dimension of $p$.

We claim that for any periodic point $q_n$ homoclinically related to the orbit of $p_n$,
one has $\cW^{cu}_{q_n}\subset W^u(q_n)$.
Indeed if it is not the case, using that $\cW^u$ is trapped by $f^{-1}$,
there would exists a periodic point $q'_n\in \cW^{cu}_{q_n}$,
in the closure of $W^u(q_n)$ and whose stable dimension
is $\dim(\cW^{cu}_{q_n})-1$. Since $H(p_n)$ is a quasi-attractor it contains $q'_n$
and by lemma~\ref{l.prelim}, there exist $C^1$-perturbations of $f$ which exhibit a
heterodimensional cycle. This is a contradiction.

In particular, one has $\cW^{cu}_{q_n}\subset H(p_n)$ and, passing to the limit,
the set $L$ contains all the plaques $\cW^{cu}_x$, $x\in L$.
Let us consider any periodic point $q$ homoclinically related to the orbit of $p$
and close to $L$ such that $\cW^{cs}_q\subset W^s(q)$.
This exists by lemma~\ref{l.contper}.
The plaques $\cW^{cs}_q$ and $\cW^{cu}_x$ for some $x\in L$ intersect transversally,
hence the forward iterates of $\cW^{cu}_x$ accumulate $W^u_q$.
One thus deduces that $H(p)$ contains $W^u(q)$.
By lemma~\ref{l.prelim} $H(p)$ is thus a quasi-attractor, contradicting our assumption.
\end{proof}
\medskip

We are thus reduced to consider the case that on the union of the $A_n$ and $H(p)$
there exists a dominated splitting $TM=E^{cs}\oplus E^c\oplus E^u$
such that $E^c$ is one-dimensional, $E^{cs}\oplus E^c$ is thin trapped by $f$ over each quasi-attractor
$A_n$ and $E^{c}\oplus E^{cu}$ is thin trapped by $f^{-1}$ over $H(p)$.
We also fix a point $z\in L$ and a small neighborhood $U$ of $z$.
\medskip

\noi{\it Claim 3.
In each set $A_n$ there exists  a periodic orbit $O_n$
which avoids $U$.}
\begin{proof}
By theorem~\ref{t.extremal}, the bundle $E^u$ is non-degenerated
and the set $A_n$ is saturated by strong unstable leaves.
By a standard argument (see for instance~\cite[lemma 5.2]{mane-contribution-stabilite}),
each class $A_n$
contains an invariant compact set $K_n$ which avoids $U$.
Then one can reduce $K_n$ and assume that it is minimal.

Let us consider two plaque families $\tilde \cW^{cs}, \tilde \cW^u$
tangent to $E^{cs}\oplus E^c$ and $E^u$
with arbitrarily small diameter, above the maximal invariant set
in a small neighborhood of $A_n$
and whose restriction to $A_n$ satisfy the definition of chain-hyperbolic classes.
By the closing lemma, there exists a periodic orbit $\tilde O_n$
arbitrarily close to $K_n$ in the Hausdorff topology.
By lemma~\ref{l.contper}, there exists a point $q$ homoclinically related to the orbit of $p$
such that $\tilde \cW^{u}_q$ is contained in $W^u(q)$ and
intersects $\tilde \cW^{cs}_y$ for some $y\in \tilde O_n$ at a point $\zeta$.
One deduces that $\zeta$ converges toward a periodic orbit $O_n$ contained in the plaques of the family $\tilde \cW^{cs}$ above $\tilde O_n$.
Since $A_n$ is a quasi-attractor, it contains $\zeta$ and $O_n$.
By construction $O_n$ is included in an arbitrarily small neighborhood of $K_n$,
as required.
\end{proof}
\medskip

\noi{\it Claim 4.
There exists $N\geq 1$ such that
$f^N(W^u_{loc}(p))$ intersects transversally $W^s_{loc}(O_n)$
for each $n$ large.
Moreover, this property is stable under $C^1$-perturbations
with supports avoiding a neighborhood of $O_n$.}
\begin{proof}
Since the stable space of $O_n$ is $E^{cs}\oplus E^c$
and since $E^c$ is non-degenerate,
all the exponents of $O_n$ along $E^{cs}$ are bounded away from zero.
By lemma~\ref{l.largestable} and remark~\ref{r.large}, the orbit
$O_n$ contains a point $q_n$ such that
$\cW^{cs}_{q_n}\subset W^s_{loc}(q_n)$.
For $N$ large, $f^N(W^u_{loc}(p))$
is close to any point of $L$.
For $n$ large, $O_n$ is contained in a small neighborhood of $L$.
One thus deduces that $f^N(W^u_{loc}(p))$ and $W^s_{loc}(q_n)$ intersect transversally
and this property is robust under perturbations
with supports avoiding a neighborhood of $O_n$.
\end{proof}
\medskip

\noi{\it Conclusion.}
Since $W^u(O_n)$ is dense in $A_n$,
and $A_n$ converges towards $L$, the unstable manifold of $O_n$
has a point close to $z$ for $n$ large.
Since $z$ is in $H(p)$, the stable manifold of $p$ has a point close to $z$. Observing that the orbits of $O_n$ are far from the neighborhood $U$ of $z$,
there exists small perturbations given by the connecting lemma
in a small neighborhood of a finite number of iterates of $z$, such that
$W^s(p)$ and $W^u(O_n)$ intersect. The orbit of $O_n$ has been preserved
and the intersection $W^s(O_n)\cap W^u(p)$ is still non empty.
This gives a heterodimensional cycle and therefore a contradiction.
As a consequence $H(p)$ is a quasi-attractor.
\end{proof}

\begin{remark}
In the case the quasi-attractors $A_n$ are non-degenerated, $E^u$ coincides with the
unstable dimension of the periodic points in the sets $A_n$; hence we already know that
$E^u$ is non-degenerated. Theorem~\ref{t.extremal} is thus needed only to guarantee that
the sinks of $f$ accumulate on quasi-attractors.
\end{remark}

\subsection{Hyperbolicity of quasi-attractors: proof of the main theorem}
\label{ss.quasi-attractor}
It remains now to prove that
for any $C^1$-generic diffeomorphism that is far from homoclinic tangencies and heterodimensional cycles,
the quasi-attractors are hyperbolic.
The proof is independent from proposition~\ref{p.finiteness}.

Let us consider a quasi-attractor and  let us assume by contradiction that it is not hyperbolic.
From sections~\ref{ss.homoclinic} and~\ref{ss.aperiodic}, the quasi-attractor  is a homoclinic class $H(p)$
with a splitting $E^s\oplus E^c\oplus E^u$ where $E^c$ is one-dimensional, $E^s\oplus E^c$
is thin-trapped and it contains arbitrarily weak periodic orbits homoclinically related to $p$.
From theorem~\ref{t.extremal} and corollary~\ref{c.whitney}, for any diffeomorphism $g$ that is $C^1$-close to $f$
the homoclinic class $H(p_g)$ associated to the continuation of $p$ for $g$
contains two different points $x,y$ such that $W^{ss}(x)=W^{ss}(y)$.
The end of the proof is based on the next theorem. The first case of the dichotomy
is not satisfied in our setting
and in the second case, one can create a heterodimensional cycle in $H(p)$, by proposition~\ref{p.strong-connection}.
This contradicts our assumptions on the diffeomorphism $f$ and
concludes the proof of the main theorem.
\begin{teo}\label{t.position}
Let $H(p)$ be a homoclinic class of a diffeomorphism $f$ which is a quasi-attractor
endowed with a partially hyperbolic structure $E^s\oplus E^c\oplus E^u$
such that $\dim(E^c)=1$ and $E^{cs}=E^s\oplus E^c$ is thin trapped.
Assume also that all the periodic orbits in $H(p)$ are hyperbolic.
Then, there exists $\al\geq 0$ (which is positive if $f$ is $C^r$ for some $r>0$)
and $C^{1+\al}-$small perturbations $g$ of $f$ such that the homoclinic class associated to the continuation
$p_g$ of $p$ satisfies one of the following cases.
\begin{itemize}
\item[--] Either one has $W^{ss}(x)\neq W^{ss}(y)$ for any $x\neq y$ in $H(p_g)$ and therefore
the class $H(p_g)$ is contained in a $C^1$-submanifold $N\subset M$ tangent to $E^c\oplus E^u$
which is locally invariant.
\item[--] Or one has $W^{ss}(x)= W^{ss}(y)$ for some hyperbolic periodic point $x$ homoclinically
related to the orbit of $p_g$ and some $y\neq x$ in $H(p_g)\cap W^u(x)$ and therefore
the class $H(p_g)$ has a strong homoclinic intersection.
\end{itemize}
\end{teo}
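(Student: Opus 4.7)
The plan is to follow the itinerary sketched in subsection~\ref{ss.quasi-attractor}
and reduce the conclusion to the perturbation theorems~\ref{t.unstable} and~\ref{t.stable}.
First, I would apply the dichotomy of corollary~\ref{c.whitney} to the partially hyperbolic
splitting $E^s\oplus(E^c\oplus E^u)$ with $E^s$ uniformly contracted. Either, for every
diffeomorphism $g$ arbitrarily $C^1$-close to $f$, the class $H(p_g)$ is contained in a
locally invariant submanifold $N$ tangent to $E^c\oplus E^u$; by theorem~\ref{t.whitney}
this is equivalent to having $W^{ss}(x)\neq W^{ss}(y)$ for any distinct points $x,y\in H(p_g)$,
yielding the first alternative of the conclusion (with $\al=0$ and any such $g$). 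Otherwise,
after an arbitrarily small $C^1$-perturbation, there exist two distinct points $x\neq y$ in
$H(p_g)$ satisfying $W^{ss}(x)=W^{ss}(y)$, and the rest of the proof addresses this second
case and aims at producing, by a further $C^{1+\al}$-perturbation, a strong homoclinic
intersection associated to a hyperbolic periodic point homoclinically related to the orbit
of~$p_g$.

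Next, using the chain-hyperbolicity of $H(p_g)$ (theorem~\ref{t.homoclinic}) and the
description of quasi-attractors as closures of unstable manifolds of their hyperbolic periodic
points (lemma~\ref{l.prelim}), I would reduce to a convenient normal form for the pair $(x,y)$.
By the density of the periodic orbits in $H(p_g)$ and of their invariant manifolds, each of
$x,y$ can be approximated either by a point of the unstable manifold $W^u(O)$ of a periodic
orbit $O$ homoclinically related to $p_g$, or by a boundary point of the class along the
center-stable plaque through $x$ (using proposition~\ref{p.boundary} and
lemma~\ref{l.boundary1}). After this reduction the pair $(x,y)$ falls in one of the three
configurations enumerated in definition~\ref{definition-cases}: both points lie on unstable
manifolds of such orbits with fast returns close to $x$ or $y$, both lie on unstable
manifolds with only slow returns, or both lie on a common stable manifold. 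Throughout this
reduction I would invoke proposition~\ref{p.continuation} to ensure that, as long as no
strong connection is created, the homoclinic class $H(p_g)$ admits a well-defined continuation
under small $C^1$-perturbations, so that the pair $(x,y)$ survives along the way.

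Finally I would invoke the appropriate perturbation theorem. In the two first configurations,
theorem~\ref{t.unstable} produces a $C^{1+\al}$-small perturbation $g$ of $f$ for which a
hyperbolic periodic point $x$ homoclinically related to $p_g$ has a point
$y\in W^u(x)\setminus\{x\}$ on its strong stable leaf, i.e.\ a strong homoclinic intersection,
which is the second alternative of the conclusion. In the last configuration,
theorem~\ref{t.stable} produces a generalized strong homoclinic intersection at a periodic
point of $H(p_g)$; this intersection is then upgraded to a genuine strong homoclinic
intersection using the $C^{1+\al}$-version given by
proposition~\ref{p.generalized-strong-connectionCr} (whose hypotheses on $E^{cs}$ being thin
trapped are part of the standing assumptions) together with lemma~\ref{l.change-connection},
which allows to transfer the intersection to a periodic orbit homoclinically related to the
orbit of $p_g$. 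The main obstacle I foresee is the middle step: arranging that the pair
$(x,y)$ sharing a strong stable leaf can be placed on invariant manifolds of a periodic orbit
homoclinically related to $p_g$ and in one of the three recognized relative positions, while
simultaneously preserving a well-defined continuation of $H(p_g)$ throughout the series of
$C^{1+\al}$-perturbations that are performed.
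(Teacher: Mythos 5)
Your overall skeleton matches the paper's: a dichotomy between a locally invariant submanifold tangent to $E^c\oplus E^u$ and the existence of $x\neq y$ with $W^{ss}(x)=W^{ss}(y)$, followed in the second case by a reduction to periodic configurations and an application of theorems~\ref{t.unstable} and~\ref{t.stable}. But the step you yourself flag as ``the main obstacle'' is a genuine gap, and it is precisely where the real work of the proof lies. The paper does not get from ``two points share a strong stable leaf'' to the hypotheses of theorems~\ref{t.unstable}/\ref{t.stable} by density of periodic orbits: approximating $x$ and $y$ by points on unstable manifolds of periodic orbits destroys the relation $W^{ss}(x)=W^{ss}(y)$ unless the projections $\Pi^{ss}(W^u_{loc}(x))$ and $\Pi^{ss}(W^u_{loc}(y))$ cross topologically transversally. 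The needed reduction is exactly proposition~\ref{p.position}, whose proof (section~\ref{s.boundary}) runs through the trichotomy of definition~\ref{definition-cases} on the relative position of unstable leaves under strong stable holonomy: the transversal case yields a robust pair on periodic unstable manifolds (lemma~\ref{l.transversal}); the jointly integrable case yields either a strong connection or a pair in $W^s(q)$ with continuations continuous at $f$ (lemma~\ref{joint.int.continuation}, which needs the continuation machinery of proposition~\ref{p.continuation} and corollary~\ref{c.continuation} and the boundary-point structure of proposition~\ref{p.boundary}); and the strictly non-transversal case needs lemmas~\ref{l.boundary1} and~\ref{l.boundary2} to confine the offending pairs to finitely many periodic unstable leaves. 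None of this is supplied or replaced by an alternative argument in your proposal, and the continuity-at-$f$ of the maps $g\mapsto x_g,y_g$ required by theorem~\ref{t.stable} cannot be obtained from a density argument.

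Two secondary points. First, your opening step invokes corollary~\ref{c.whitney}, which is a statement about $C^1$-generic diffeomorphisms, whereas theorem~\ref{t.position} explicitly requires no generic assumption (remark~\ref{r.position}, item 1); the paper obtains the first alternative as an outcome of proposition~\ref{p.position} (its first case, produced by perturbation in the strictly non-transversal situation), not from genericity. Second, you misattribute the content of definition~\ref{definition-cases}: it classifies relative positions of projected unstable leaves (transversal / jointly integrable / strictly non-transversal), not ``fast returns / slow returns / stable manifold''; the latter is the internal dichotomy of theorem~\ref{t.unstable} (section~\ref{recurrence-dichotomy}). Also, theorem~\ref{t.stable} directly produces a strong homoclinic intersection; proposition~\ref{p.generalized-strong-connectionCr} is deduced from it, not used to upgrade its conclusion.
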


\begin{obs}\label{r.position} We want to emphasize some features of theorem~\ref{t.position}.

\begin{enumerate}

\item The result does not require any generic assumption.

\item It holds in the $C^{1+\al}-$category for $\alpha>0$ small.

\item The theorem can also be applied to the context of hyperbolic attractors
whose stable bundle has a dominated splitting $E^s=E^{ss}\oplus E^c$ such that $\dim(E^c)=1$.
This can have important consequences in terms of the Hausdorff dimension of the attractor: if the the attractor is contained in a submanifold, the Hausdorff dimension is smaller than $1+u$ (where $u=dim(E^u)$); if there is a strong connection, the dimension could jump close to $1+u+s$ (where $s=dim(E^{ss})$)
(see~\cite{BDV-hausdorff}).
Note that the proof in the hyperbolic case is simpler
since we can use the hyperbolic continuation of any point in the attractor.

\item\label{i.position} In the case the bundle $E^c$ is not uniformly contracted, one can assume that
the periodic point $x$ has an arbitrarily small Lyapunov exponent.
Indeed by lemma~\ref{l.weak} and remark~\ref{r.weak}, for any $\varepsilon>0$, there exists a
periodic point $q$ homoclinically related to the orbit of $p$ and whose central Lyapunov exponent is contained
in $(-\varepsilon,0)$. Let us consider a perturbation $g$ having a periodic point $x$ homoclinically related to $p_g$ and exhibiting a strong homoclinic intersection. By another $C^r$-small perturbation
(see lemma~\ref{l.change-connection}), one can obtain a periodic point $x'$ homoclinically related to the orbit of $p$, with a strong connection and a central Lyapunov exponent close to the exponent of $q$.
\end{enumerate}

\end{obs}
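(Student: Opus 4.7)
\emph{Plan.} The four assertions of Observation~\ref{r.position} are not independent theorems but structural comments on how Theorem~\ref{t.position} was set up, and on how its conclusion behaves when one varies the regularity class or restricts the hypotheses. I would justify each in turn by going back to the statement of Theorem~\ref{t.position} and by inspecting the perturbation steps invoked in its proof.

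For item (1) the verification is by inspection of the hypotheses: Theorem~\ref{t.position} is stated for \emph{every} diffeomorphism $f$ admitting a homoclinic class $H(p)$ which is a quasi-attractor, is endowed with a partially hyperbolic splitting $E^s\oplus E^c\oplus E^u$ with $\dim E^c=1$ and thin-trapped $E^{cs}$, and whose periodic orbits are hyperbolic. None of these conditions are $C^1$-Baire assumptions on $\diff^1(M)$; they are structural pointwise requirements on the particular class $H(p)$. I would then check that the arguments used in the proof (Proposition~\ref{p.continuation}, Theorems~\ref{t.unstable} and~\ref{t.stable}, etc.) are themselves pointwise statements, so that no Baire argument is hidden in the interior.

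For item (2), I would track each perturbation in the proof of Theorem~\ref{t.position} and verify that they all come from tools that are declared in the $C^{1+\alpha}$ category. The essential perturbation machinery is contained in Theorems~\ref{t.unstable} and~\ref{t.stable}, both stated for $C^{1+\alpha}$ perturbations with $\alpha>0$ small; the continuation results and Theorem~\ref{t.tot-discontinuity} involve no perturbation at all and are $C^1$-open (hence $C^{1+\alpha}$). The main point to guard against is an accidental appeal to Hayashi's $C^1$-connecting lemma; this is the hardest part of the verification and it must be checked, for each branch of the proof, that any chain of pseudo-orbits used is handled by the stronger tools of this paper rather than by the connecting lemma. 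I expect this to be the key technical obstacle, but a careful bookkeeping of the proof should confirm it.

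For item (3), the observation is that a uniformly hyperbolic attractor $\Lambda$ with a dominated splitting of its stable bundle $E^s=E^{ss}\oplus E^c$, $\dim E^c=1$, automatically satisfies the hypotheses of Theorem~\ref{t.position}: the splitting of $T_\Lambda M$ is partially hyperbolic of the form $E^{ss}\oplus E^c\oplus E^u$; the bundle $E^{cs}=E^{ss}\oplus E^c$ coincides with the (uniformly contracted) stable bundle and is therefore a fortiori thin trapped; and all periodic orbits of a hyperbolic set are hyperbolic. Thus Theorem~\ref{t.position} applies verbatim. I would add that in this setting every point of $\Lambda$ admits a well-defined continuation under $C^1$-perturbations by classical hyperbolic stability, so that the non-trivial continuation machinery of Proposition~\ref{p.continuation} collapses to the standard case and the proof simplifies considerably.

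For item (4), assume $E^c$ is not uniformly contracted and let $g$ be the perturbation produced by the second alternative of Theorem~\ref{t.position}, with a periodic point $x$ exhibiting a strong homoclinic intersection. By Lemma~\ref{l.weak} together with Remark~\ref{r.weak}, applied to the non uniformly contracted bundle $E^c$, for any prescribed $\varepsilon>0$ there is a periodic point $q$ homoclinically related to the orbit of $p$ whose central Lyapunov exponent lies in $(-\varepsilon,0)$. Lemma~\ref{l.change-connection}, applied in the $C^{1+\alpha}$ category, then transports the strong homoclinic intersection from $x$ to a new periodic point $x'$ homoclinically related to $p_{g'}$, whose minimal Lyapunov exponent along $F=E^c\oplus E^u$ is close to that of $q$ (so in particular of modulus smaller than any prescribed $\delta$). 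Combining this with items~(1)--(2), one recovers the refined theorem stated at the end of Section~1.
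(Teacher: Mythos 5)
Your proposal is correct and follows essentially the same route as the paper: items (1)--(3) are justified by direct inspection of the hypotheses of Theorem~\ref{t.position} and of the $C^{1+\al}$ nature of the perturbation tools (Proposition~\ref{p.position}, Theorems~\ref{t.stable} and~\ref{t.unstable}), and item (4) is exactly the paper's argument combining Lemma~\ref{l.weak} with Remark~\ref{r.weak} to produce a weak periodic point $q$ and then Lemma~\ref{l.change-connection} to transfer the strong homoclinic intersection to a periodic point whose central exponent is close to that of $q$. No gaps.
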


The proof of theorem~\ref{t.position} is based on the following proposition
whose proof is postponed to section~\ref{s.boundary}
and uses the notion of \emph{stable boundary point} (see section~\ref{s.weak-hyperbolicity})
and of \emph{continuation of a homoclinic class} (see section~\ref{s.continuation}).
As before $W^{ss}_{loc}(x)$ and $W^u_{loc}(x)$ denote local stable and unstable
manifolds tangent to $E^s_x$ and $E^u_x$ respectively for the points $x\in H(p_g)$.
Note that this result holds in any $C^r$-topology, $r\geq 1$.

\begin{prop}\label{p.position}
Given a  $C^r$ diffeomorphisms under the assumptions of theorem~\ref{t.position}, for any $\al\in[0,r-1]$
one of the following cases occurs.
\begin{enumerate}
\item There exists $g$, $C^{1+\al}$-close to $f$, such that
for any $x\neq y$ in $H(p_g)$, one has $W^{ss}(x)\neq W^{ss}(y)$.
\item There exists $g$, $C^{1+\al}$-close to $f$, such that $H(p_g)$
exhibits a strong homoclinic intersection.
\item There exist a neighborhood $\cV\subset \diff^{1+\al}(M)$ of $f$ and
some hyperbolic periodic points $q$ and $p^x_n, p^y_n$ for $n\in \NN$
such that:
\begin{itemize}
\item[--] the continuations $q_g,p^x_{n,g},p^y_{n,g}$ exist on $\cV$ and
are homoclinically related to the orbit of $p_g$;
\item[--] $(p^x_{n,g})$, $(p^y_{n,g})$ converge towards two distinct points $x_g,y_g$ in $H(p_g)\cap W^s_{loc}(q_g)$ for any $g\in \cV$;
\item[--] the map $g\mapsto x_g,y_g$ is continuous at $f$;
\item[--] $y_g\in W^{ss}_{loc}(x_g)$ for any $g\in \cV$.
\end{itemize}
\item There exist two hyperbolic periodic points $p_x,p_y$ homoclinically related to the orbit of $p$ and an open set $\cV\subset \diff^{1+\al}(M)$
whose closure contains $f$, such that for any $g\in \cV$ the class $H(p_g)$ contains
two different points $x\in W^u(p_{x,g})$ and $y\in W^u(p_{y,g})$
satisfying $W^{ss}(x)=W^{ss}(y)$.
\end{enumerate}
\end{prop}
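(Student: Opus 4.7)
The plan is to argue by contradiction against the first two alternatives: assume that no $C^{1+\al}$-small perturbation of $f$ realizes either (1) or (2), and derive (3) or (4). The failure of (1) gives, for every $g$ in some $C^{1+\al}$-neighborhood $\cV_0$ of $f$, two distinct points $x_g \neq y_g$ in $H(p_g)$ with $W^{ss}(x_g) = W^{ss}(y_g)$; the failure of (2) rules out any strong homoclinic intersection in $H(p_g)$ for $g \in \cV_0$. The first step is to invoke the continuation machinery of section~\ref{s.continuation}: since $H(p)$ is chain-hyperbolic (by theorem~\ref{t.homoclinic} together with the quasi-attractor hypothesis), $E^{cs}$ is thin trapped, and no periodic orbit of $H(p_g)$ has a strong homoclinic intersection for $g \in \cV_0$, the continuation result yields a $C^1$-neighborhood $\cV \subset \cV_0$ of $f$ and a Hausdorff-continuous extension $g \mapsto H(p_g)$ of the hyperbolic continuation of periodic points to the whole class.

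Next I set up a dichotomy on how the coincidence $W^{ss}(x_g) = W^{ss}(y_g)$ can be located inside the class:
\begin{itemize}
\item[(A)] There exist $C^{1+\al}$-arbitrarily close perturbations $g$ of $f$ for which the pair can be chosen with $x_g \in W^u(p_{x,g})$ and $y_g \in W^u(p_{y,g})$ for some hyperbolic periodic points $p_x, p_y$ homoclinically related to the orbit of $p$.
\item[(B)] No such perturbation exists.
\end{itemize}
In (A), the configuration (two transverse unstable leaves of hyperbolic periodic points meeting a common strong stable leaf) is $C^{1+\al}$-open, so it persists on an open subset of $\diff^{1+\al}(M)$ accumulating on $f$, which is exactly case (4).

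In alternative (B), the pair $(x_g, y_g)$ avoids unstable manifolds of periodic points of $H(p_g)$. Using that these unstable manifolds are dense in the quasi-attractor, $x_g$ and $y_g$ must be stable boundary points of $H(p_g)$ in the sense of section~\ref{s.weak-hyperbolicity}. The boundary-point analysis of section~\ref{s.boundary}, available here because $H(p_g)$ is chain-hyperbolic and has no strong homoclinic intersection, then asserts that each such boundary point lies on the local stable manifold of a hyperbolic periodic point of $H(p_g)$ and is approximable, from one side along its center-stable plaque, by a sequence of periodic points homoclinically related to $p_g$. Applying this to $x_g$ and $y_g$, which share a common local strong stable leaf, produces the periodic point $q$ with $W^s_{loc}(q)$ containing both, together with the approximating sequences $p^x_n, p^y_n$ of case (3); the continuity of $g \mapsto (x_g, y_g)$ at $f$ follows from the Hausdorff continuity of the continuation combined with the continuity of local stable manifolds of hyperbolic periodic points.

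The principal obstacle lies in alternative (B): one must check that the boundary-point selection is coherent with the continuation, so that a single $q$ and single sequences $p^x_n, p^y_n$ work uniformly over $\cV$, and that the relation $y_g \in W^{ss}_{loc}(x_g)$ persists throughout. Persistence of the strong stable coincidence should be forced by our assumptions: breaking it as $g$ varies would either violate the Hausdorff continuity of $H(p_g)$ (the two approximating sequences could no longer converge to a single common strong stable leaf) or collapse the pair into a strong homoclinic intersection, both of which contradict the standing hypothesis that (1) and (2) fail on $\cV_0$.
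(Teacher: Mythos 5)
Your proof-by-contradiction frame is admissible, but the central step fails. The paper's proof rests on the trichotomy of definition~\ref{definition-cases} for the relative position of two unstable leaves after projection by the strong stable holonomy (transversal, jointly integrable, strictly non-transversal), and this trichotomy is exactly what decides whether the configuration is robust. In your alternative (A) you assert that the configuration ``$x_g\in W^u(p_{x,g})$, $y_g\in W^u(p_{y,g})$, $W^{ss}(x_g)=W^{ss}(y_g)$'' is $C^{1+\al}$-open. It is not. The projections $\Pi^{ss}(W^u_{loc}(x))$ and $\Pi^{ss}(W^u_{loc}(y))$ are two one-codimensional topological submanifolds of a transversal disk $\cD$; their intersection persists under perturbation only when it is topologically transverse, i.e.\ in the transversal case (this is the content of lemma~\ref{l.transversal}). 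In the jointly integrable and strictly non-transversal cases the coincidence of strong stable leaves can be destroyed by perturbation even when both points sit on unstable manifolds of periodic orbits, and the paper must then run a different argument: in the strictly non-transversal case it first shows (lemma~\ref{l.boundary1}, proposition~\ref{p.boundary}, lemma~\ref{l.boundary2}) that all such pairs at definite distance lie on finitely many periodic unstable leaves, and then distinguishes whether the closed sets $D_{i,j}$ of diffeomorphisms realizing the configuration have nonempty interior (giving case 4) or empty interior (giving case 1 --- a conclusion your dichotomy cannot produce at all, since you have already assumed case 1 fails and never revisit it).

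Your alternative (B) also inverts the boundary-point structure theorem. Proposition~\ref{p.boundary} states that a stable boundary point lies on the \emph{unstable} manifold of a periodic point; hence if $x_g$ avoids all periodic unstable manifolds it is \emph{not} a stable boundary point, and is therefore accumulated by $H(p_g)$ on both sides of $W^{ss}_{loc}(x_g)$ inside $\cW^{cs}_{x_g}$. It is this two-sided accumulation --- not a boundary-point property --- that makes the continuation $g\mapsto x_g$ well defined and continuous at $f$, via corollary~\ref{c.continuation}, lemma~\ref{l.continuite} and a squeeze between the two semi-continuous branches, as in lemma~\ref{joint.int.continuation}; the continuation $\Phi_g$ is in general only semi-continuous, so your appeal to ``Hausdorff continuity'' does not suffice. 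Finally, the periodic point $q$ of case 3 with $x_g,y_g\in W^s_{loc}(q_g)$ is produced by the jointly integrable geometry (the two projected unstable leaves coincide locally, so a nearby periodic point with large stable plaque crosses both unstable leaves), not by your claim that boundary points lie on stable manifolds of periodic points. As written, neither branch of your dichotomy yields the stated conclusions.
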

\medskip

One concludes the proof of theorem~\ref{t.position} by discussing the two last cases of the proposition~\ref{p.position}.
The two following theorems, proved in sections~\ref{proofjointint} and~\ref{p-nontransversal}
give a strong homoclinic intersection.

In the first case, the points $x,y$ belong to the stable manifold of a periodic point $q$. 
\begin{theo}\label{t.stable}
For any diffeomorphism $f_0$ and any homoclinic class $H(p)$ which is a chain-recurrence class
endowed with a partially hyperbolic structure $E^s\oplus E^c\oplus E^u$, $\dim(E^c)=1$,
such that $E^s\oplus E^c$ is thin trapped, there exists $\al_0> 0$ and
a $C^1$-neighborhood $\cU$ of $f_0$ with the following property.

For any $\al\in [0,\al_0]$, any diffeomorphism $f\in \cU$
and any $C^{1+\al}$-neighborhood $\cV$ of $f$,
if there exist:
\begin{itemize}
\item[--] some hyperbolic periodic points $q_f$ and $p_{n,f}^x,p_{n,f}^y$ with $n\in \NN$ for $f$
whose hyperbolic continuations $q_g,p_{n,g}^x,p_{n,g}^y$
exist for $g\in\cV$ and are homoclinically related to the orbit of $p_g$,
\item[--] two maps $g\mapsto x_g,y_g$ defined on $\cV$, continuous at $f$,
such that for any $g\in \cV$ the points $x_g,y_g$ belong to $W^{s}(q_g)$,
are the limit of $(p_{n,g}^x)$ and $(p_{n,g}^y)$ respectively
and satisfy $y_g\in W^{ss}_{loc}(x_g)$,
\end{itemize}
then, there exist $C^{1+\al}$-small perturbations $g$ of $f$
such that the homoclinic class $H(p_g)$ exhibits a strong homoclinic intersection.
\end{theo}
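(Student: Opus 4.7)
The plan is to produce the configuration of Lemma~\ref{joint-int-easy} after a $C^{1+\al}$-small perturbation and then invoke that lemma. Recall that Lemma~\ref{joint-int-easy} turns the existence of two distinct transverse heteroclinic points $\xi \in W^u(p_x)\trans W^s(q)$ and $\eta \in W^u(p_y)\trans W^s(q)$ sharing a strong stable leaf, where $p_x,p_y,q$ are homoclinically related periodic orbits, into a strong homoclinic intersection at (the continuation of) $q$; since $q_g$ is homoclinically related to $p_g$, this is a strong homoclinic intersection for $H(p_g)$.

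First I would manufacture approximate heteroclinic points. Since $p_{n,g}^x$ and $q_g$ are homoclinically related and $p_{n,g}^x \to x_g \in W^s(q_g)$, the inclination lemma combined with transversality yields, for $n$ large, a transverse heteroclinic point $\xi_n^g \in W^u(p_{n,g}^x)\trans W^s(q_g)$ arbitrarily close to $x_g$; symmetrically one obtains $\eta_n^g \in W^u(p_{n,g}^y)\trans W^s(q_g)$ close to $y_g$. From $y_g \in W^{ss}_{loc}(x_g)$ with $x_g \neq y_g$, the displacement of $\eta_n^g$ from $W^{ss}_{loc}(\xi_n^g)$, measured along the one-dimensional central direction $E^c$, tends to $0$ with $n$ but is generically nonzero.

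The technical heart of the proof is a $C^{1+\al}$-small perturbation that closes this gap, i.e.\ shifts $\xi_n^g$ onto $W^{ss}_{loc}(\eta_n^g)$ without destroying the continuations $q_g, p_{n,g}^x, p_{n,g}^y$ nor the transverse heteroclinic structure used to build $\xi_n,\eta_n$. I would choose an iterate $z_k = f^k(\xi_n)$ lying far from all relevant invariant objects (the periodic orbits in question, $\cO(p_g)$, and the other iterates of $\xi_n,\eta_n$ one wants to preserve) and support a perturbation in a small ball of radius $r$ around $z_k$ which acts essentially as a translation of magnitude $\delta_n$ (the current center displacement) in the direction of $E^c$. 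Since $r$ can be fixed uniformly while $\delta_n\to 0$, the $C^{1+\al}$-norm of the perturbation, of order $\delta_n/r^{1+\al}$, goes to $0$; this works for any fixed $\al\leq \al_0$, where $\al_0$ depends on the geometric room available to localize $z_k$.

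The main obstacle is precisely this $C^{1+\al}$-control of the center-direction shift. In the $C^1$-topology a single application of Hayashi's connecting lemma would suffice, but in $C^{1+\al}$ one must keep the support radius bounded uniformly below while the required displacement is arbitrarily small, and one must verify that the support contains only one iterate of $\xi_n$ (to avoid cumulative drift that would spoil the target shift) and is disjoint from the returns near $\cO(q_g)$ that pin down the strong stable leaf of $\xi_n^g$ via the contraction along $E^s$. Arranging this careful separation of scales and iterates, and checking that the resulting $C^{1+\al}$-size is controllable, is the core technical content that is presumably developed in section~\ref{proofjointint}. Once the shift is accomplished, Lemma~\ref{joint-int-easy} applies to the perturbed diffeomorphism and a final $C^{1+\al}$-small perturbation produces the desired strong homoclinic intersection for $H(p_g)$.
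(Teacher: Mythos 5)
Your high-level reduction --- manufacture two transverse heteroclinic points of homoclinically related periodic orbits lying on the same strong stable leaf inside $W^s(q)$ and then invoke Lemma~\ref{joint-int-easy} --- is indeed the target configuration of the paper's argument, and you correctly identify the $C^{1+\al}$ control of the perturbation as the technical heart. But there is a genuine gap in the plan: you propose to shift $\xi_n$ \emph{exactly} onto $W^{ss}_{loc}(\eta_n)$. The required displacement cannot be prescribed in advance, because the strong stable foliation near $\xi_n,\eta_n$ is determined by the entire forward dynamics and moves at first order with the perturbation, as do the heteroclinic points themselves; ``closing the gap'' is a fixed-point problem you never solve. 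The paper avoids it altogether: Proposition~\ref{p.jointintegrable} builds an \emph{arc} $(g_t)_{t\in[-1,1]}$ of perturbations along which the sign of the central displacement between the two unstable manifolds --- read off on a rectified disc $D^m\subset W^s(p)$ near a transverse homoclinic point of $p$, and then transferred to nearby periodic points $\hat p^x_n,\hat p^y_n$ --- flips between $t=-1$ and $t=+1$. Lemma~\ref{l.ordering}, an intermediate-value argument over a connected family of diffeomorphisms whose engine is precisely Lemma~\ref{joint-int-easy}, then forces a strong homoclinic intersection for some $g$ in the family. Some version of this crossing argument is indispensable; without it your proof does not close.

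The perturbation mechanism, and hence the source of $\al_0$, is also not what you describe. A ``translation of size $\delta_n$ supported in a ball of fixed radius $r$ around one iterate $z_k$ far from everything'' is not available: since $y_g\in W^{ss}_{loc}(x_g)$, the forward orbits of $\xi_n$ and $\eta_n$ shadow each other and the orbit of $q$, so any support meeting the forward orbit of $\xi_n$ also meets the very data that pins down the strong stable leaf you are trying to hit. The paper instead uses a shear $T_n$ supported in a product region $R_1^n(\rho^n)$ around $f^n(U_1)\subset W^s_{loc}(p)$, equal to the identity on $W^s_{loc}(p)$ (so $x_g,y_g$ and the stable disc are untouched) and displacing in the center direction by $\rho^{\al_0 n}L(\bar z)$. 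The displacement it creates at the measuring point $z_{-m}$ is only of order $\rho^{\al_0 n}\|z_{-m}\|$ and must beat the uncertainty $\lambda^n\|z_{-m}\|$ in the position of the unstable manifolds coming from the domination; the inequalities $\lambda^{1/\al_0}<\rho<\|Df_0^{-1}\|^{-1}$, i.e.\ $\lambda<\rho^{\al_0}$, are exactly what determine $\al_0$, while the $C^{1+\al}$ size is $(\rho^{\al_0-\al})^n\to 0$. Your heuristic bound $\delta_n/r^{1+\al}$ with $r$ fixed misses this competition, which is the actual content of the smoothness threshold rather than ``geometric room to localize.''
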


In the second case, the points $x,y$ belong to the unstable manifold of periodic points
$p_x,p_y$.
\begin{theo}\label{t.unstable}
For any diffeomorphism $f_0$, for any homoclinic class $H(p)$ which is a chain-recurrence class
endowed with a partially hyperbolic structure $E^s\oplus E^c\oplus E^u$, $\dim(E^c)=1$,
such that $E^s\oplus E^c$ is thin trapped
there exists $\al_0> 0$ and for any hyperbolic periodic points $p_x$, $p_y$
homoclinically related to the orbit of $p$, 
there exists a $C^1$-neighborhood $\cU$ of $f$ with the following property.

Given any $\alpha\in [0,\alpha_0]$ and any $C^{1+\al}$-diffeomorphism $f\in \cU$, if
there exist two different points $x\in W^u(p_{x,f})$ and $y\in W^u(p_{y,f})$ in $H(p_{f})$
satisfying $W^{ss}(x)=W^{ss}(y)$,
then, there exist $C^{1+\al}$-small perturbations $g$ of $f$
such that the homoclinic class $H(p_g)$ exhibits a strong homoclinic intersection.
\end{theo}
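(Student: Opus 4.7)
The strategy is to produce, by a $C^{1+\alpha}$-perturbation $g$ of $f$, a periodic orbit $O$ whose strong stable manifold meets $W^u(O)$ outside $O$, by exploiting the input datum $x\in W^u(p_{x,f})$, $y\in W^u(p_{y,f})$, $x\neq y$, $W^{ss}(x)=W^{ss}(y)$. Since $p_x,p_y$ are homoclinically related to $p$ (and to each other), there is a transitive hyperbolic set $K$ containing both orbits with dense transverse homoclinic and heteroclinic connections. The backward iterates $f^{-n}(x)$ and $f^{-n}(y)$ accumulate on the orbits of $p_x$ and $p_y$, while the forward iterates satisfy $d(f^n(x),f^n(y))\to 0$ exponentially along $E^s$ (since $y\in W^{ss}(x)$) and accumulate on $H(p_f)$. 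Because $H(p_f)$ is a chain-recurrence class, for arbitrarily large $n$ the iterate $f^n(x)$ returns close either to $x$, to $y$, or to some periodic orbit of $K$. Combining this with the heteroclinic structure of $K$, one obtains a pseudo-orbit cycle starting and ending near $p_x$, passing through the $x$-branch, jumping through the strong stable identification to the $y$-branch, and returning via the heteroclinic web of $K$. The plan is to realize this pseudo-orbit as a true periodic orbit $O$ of a $C^{1+\alpha}$-small perturbation $g$.

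Following the dichotomy highlighted in section~\ref{itinerary}, I split into two subcases according to the smallest return time. In the \emph{fast return} case, a perturbation supported in a small tube around a single returning iterate suffices to close the pseudo-orbit; the strong stable leaf through the returning point is preserved continuously under a small $C^1$-perturbation (since it depends only on the uniformly contracted bundle $E^s$), and the datum $y\in W^{ss}(x)$ translates after closing into a point of $W^u(O)\setminus O$ lying on the strong stable leaf of a point of $O$. In the \emph{slow return} case, the support of the perturbation must be enlarged, but the long forward dwell time spent near $H(p_f)$ provides exponential contraction in $E^s$ that can be traded against the support radius $r$, keeping the $C^{1+\alpha}$-size $\delta/r^{1+\alpha}$ of the perturbation small; this is precisely where the upper bound $\alpha_0$ on the exponent enters, being determined by the spectral gap between $E^s$ and $E^c$ (so that $r$ may be chosen large enough relative to the closing error $\delta$).

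It remains to check that the intersection produced is a genuine strong homoclinic intersection for an orbit homoclinically related to $p_g$. The shadowing construction ensures that some point of $O$ shadows $x$ at a chosen time while another point of $W^u(O)$ shadows $y$ at the same time on the same strong stable leaf, so $W^{ss}(O)\cap (W^u(O)\setminus O)\neq\emptyset$; robustness of the homoclinic relation between $K$ and $p$ then forces $O$ to be homoclinically related to $p_g$. Continuity of $E^s$ and its strong stable foliation in the $C^1$-topology of the diffeomorphism ensures that none of the geometric data used in the construction is destroyed by the perturbation.

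The main obstacle is maintaining $C^{1+\alpha}$ control. In the $C^1$-topology, Hayashi's connecting lemma would give the result essentially for free; in $C^{1+\alpha}$, however, a bump function of $C^0$-norm $\delta$ supported on a ball of radius $r$ has $C^{1+\alpha}$-norm at least of order $\delta/r^{1+\alpha}$, so the support scale cannot be shrunk independently of the perturbation magnitude. The delicate step is to balance these competing constraints against the exponential behavior of the dominated splitting, using the thin trapping of $E^s\oplus E^c$ and the uniformity of $E^u$ to guarantee enough room; this balancing is precisely what fixes the threshold $\alpha_0>0$ and what distinguishes the present statement from its $C^1$-topology ancestors in \cite{Pu1} and \cite{Pu2}.
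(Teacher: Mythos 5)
Your proposal correctly identifies the fast/slow return dichotomy as the organizing principle, and your fast-return mechanism is close in spirit to the paper's lemma~\ref{nontransversal1}: one does close up a small strong-stable disc $D\ni y$ around $x$ under $\varphi\circ f^n$ to create a periodic point $q$ with $W^{ss}(q)\cap W^u(p_{y,g})\neq\emptyset$, and the exponent threshold does come from balancing the displacement against the support radius via lemma~\ref{l.perturbation}. (Note, though, that the closing is not a shadowing of a pseudo-orbit that ``jumps'' from the $x$-branch to the $y$-branch: a jump along $W^{ss}$ has definite size $d(x,y)$ and cannot be absorbed by a small perturbation; the periodic orbit is created where the orbit of $x$ genuinely returns to itself, and $y$ comes along for free because it lies in the contracted disc $D$.)

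The genuine gap is in the slow-return case. You claim the same closing strategy survives because ``the long forward dwell time provides exponential contraction in $E^s$ that can be traded against the support radius.'' This is exactly backwards: when $n(N)>K\,N$ the ratio of the closing displacement ($\approx\mu_c^{aN}$-type scale) to the admissible support radius cannot be made compatible with the $C^{1+\al}$ constraint $|\log r|>(1+\al)|\log\widehat r|$, which is precisely why the dichotomy is needed. In the slow case the paper abandons the closing strategy entirely and instead perturbs near $f^{-1}(x)$ to \emph{separate} the strong stable leaves of the continuations $x_g,y_g$, with control of the side ($x_{g^+}\in\cW^{cs,+}_{g^+,y_{g^+}}$ and $x_{g^-}\in\cW^{cs,-}_{g^-,y_{g^-}}$); the long return time $n(N)>KN$ is what makes the error terms $\lambda^{-\chi n(N)}+[\lambda_u^{-n(N)}Cr]^{\alpha_s}$ (coming from the H\"older regularity of the fake strong-stable holonomies and from the possible interaction of the orbit of $y$ with the perturbation support) negligible compared to the displacement $r$. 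Converting this separation into a strong homoclinic intersection then requires the continuation machinery of section~\ref{s.continuation} (proposition~\ref{p.continuation}, corollary~\ref{c.continuation}, lemma~\ref{l.ordering}), which your proposal does not address: $x$ and $y$ are not periodic, so ``the strong stable leaf is preserved continuously under perturbation'' is not available off the shelf, and the intermediate-value argument over the family $(g_t)$ is what actually produces the intersection. You also omit the preliminary reduction: if $W^{ss}(p_x)\setminus\{p_x\}$ already meets $H(p)$ (failure of condition~(***)), the return analysis does not localize and one must instead invoke proposition~\ref{p.generalized-strong-connectionCr}, i.e.\ theorem~\ref{t.stable}.
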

\medskip

Some weaker results similar to theorems \ref{t.unstable} and \ref{t.stable}    were obtained in \cite{Pu2} for attracting homoclinic classes in dimension three and assuming strong dissipative properties.

\subsection{Other consequence on quasi-attractor. Main theorem revisited} \label{ss.consequences}
As it was mentioned in the introduction, for $C^1$-generic diffeomorphisms
one obtains a stronger version of theorem~\ref{t.position}.
We point out that what follows in this section  is not used in the proof of our main theorem.

\begin{theorem}\label{t.consequences} 
Let $f$ be a diffeomorphism in a dense G$_\delta$ subset of $\diff^1(M)$
and let $\Lambda$ be a quasi-attractor endowed with a partially hyperbolic splitting
$T_\Lambda M=E^s\oplus E^c\oplus E^u$ with $\dim(E^c)=1$.
If $E^c$ is not uniformly contracted and not uniformly expanded, then $\Lambda$ is a homoclinic class which contains hyperbolic periodic points of both stable dimensions $\dim(E^s)$ and $\dim(E^s)+1$.
\end{theorem}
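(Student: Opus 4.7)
The plan is to produce, inside the quasi-attractor $\Lambda$, hyperbolic periodic orbits of each of the two stable dimensions $d:=\dim(E^s)$ and $d+1$. Once a periodic orbit is known to lie in $\Lambda$, the Bonatti--Crovisier theorem from \cite{BoCr} automatically upgrades $\Lambda$ to a homoclinic class (since for a $C^1$-generic diffeomorphism the chain-recurrence class of any periodic point coincides with its homoclinic class). The strategy combines the non-uniform hyperbolicity along $E^c$ with Ma\~n\'e's ergodic closing lemma, Franks' lemma, and a Baire genericity argument.

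First I produce ergodic measures witnessing the non-hyperbolicity of the center. Since $\Lambda$ is compact and $E^c$ is not uniformly contracted, a standard variational/subadditive argument yields an $f$-invariant ergodic probability $\mu^+$ supported in $\Lambda$ with central Lyapunov exponent $\lambda^c(\mu^+)\geq 0$; dually, the failure of uniform expansion gives an ergodic $\mu^-\subset\Lambda$ with $\lambda^c(\mu^-)\leq 0$. Next, I close these to periodic orbits: applying Ma\~n\'e's ergodic closing lemma to $\mu^\pm$, after arbitrarily $C^1$-small perturbations of $f$ one finds periodic orbits whose Lyapunov spectra approximate those of $\mu^\pm$, and a further Franks-lemma perturbation on each orbit makes its central exponent strictly of the prescribed sign. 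This produces, in any prescribed trapping neighborhood of $\Lambda$ and after arbitrarily small $C^1$-perturbation of $f$, hyperbolic periodic orbits of stable dimensions $d$ (from $\mu^+$) and $d+1$ (from $\mu^-$).

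To return to $f$ itself and to locate the orbits inside $\Lambda$, I fix a countable basis $(U_n)$ of trapping neighborhoods of $\Lambda$, which exists by Lyapunov stability. For each $n$ and each $d'\in\{d,d+1\}$ the property ``there exists a hyperbolic periodic orbit of stable dimension $d'$ contained in $U_n$'' is $C^1$-open, and the preceding paragraph shows it is $C^1$-dense on the open subset of $\diff^1(M)$ where $\Lambda$ persists with its partially hyperbolic splitting $E^s\oplus E^c\oplus E^u$ and $E^c$ is non-uniformly contracted (resp. expanded). A Baire argument then delivers a residual set of diffeomorphisms on which, for every $n$ and every $d'$, such periodic orbits exist in $U_n$; combined with the generic Bonatti--Crovisier fact that $\Lambda$ is the Hausdorff limit of the homoclinic classes of the periodic orbits it contains, we conclude that periodic orbits of both stable dimensions sit inside $\Lambda$, and \cite{BoCr} then yields $\Lambda=H(p)$. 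The main obstacle is this last step: guaranteeing that the periodic orbits produced by perturbation, which a priori only lie in shrinking neighborhoods of $\Lambda$, actually belong to the chain-recurrence class $\Lambda$ itself rather than to some disjoint class lying nearby; this requires combining Lyapunov stability (which controls the trapping regions) with the generic semi-continuity of chain-recurrence classes, so that the hyperbolic continuations of the perturbed periodic orbits can be tracked back into the continuation of $\Lambda$ and, in the limit, into $\Lambda$ itself.
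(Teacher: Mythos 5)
Your construction produces, for the generic $f$, hyperbolic periodic orbits of both stable dimensions in every trapping neighborhood $U_n$ of $\Lambda$, but the final step --- moving these orbits \emph{into} the chain-recurrence class $\Lambda$ --- is precisely the hard part, and the tools you invoke do not close it. A periodic orbit contained in $U_n$ lies in the maximal invariant set of $U_n$, which in general strictly contains $\Lambda$; as $n\to\infty$ you obtain a sequence of orbits Hausdorff-accumulating on $\Lambda$, but no single one of them need belong to $\Lambda$ (for instance, the Franks perturbation that makes the center contracting can produce a periodic orbit whose chain class is a separate, sink-like piece sitting arbitrarily close to the quasi-attractor). Semi-continuity of chain-recurrence classes controls how $\Lambda$ varies with the diffeomorphism, not whether nearby periodic orbits of $f$ itself belong to it; and the assertion that ``$\Lambda$ is the Hausdorff limit of the homoclinic classes of the periodic orbits it contains'' presupposes what you are trying to prove.

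The paper's proof goes through a mechanism that is absent from your argument. Step 1 uses the trichotomy of \cite{C2} together with the chain-recurrence-class versions of the closing lemmas and the results of \cite{ABCDW} to show directly that $\Lambda$ is a homoclinic class $H(p)$ containing periodic orbits homoclinically related to $p$ whose central exponent is arbitrarily close to $0$; this is how one index, and the weak orbits, are placed inside the class. Step 2 then creates, by a $C^1$-perturbation, a heterodimensional cycle associated to a periodic orbit homoclinically related to $p_g$ (via theorem~\ref{t.position}, corollary~\ref{c.whitney}, theorem~\ref{t.extremal} and proposition~\ref{p.strong-connection}, or the argument of corollary~\ref{c.homoclinic}, according to which central bundle is thin trapped). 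Step 3 invokes theorem~\ref{t.BDK}: the cycle can be made \emph{robust}, between transitive hyperbolic sets, and it is this robustness that forces, for the $C^1$-generic $f$, periodic points of the second stable dimension into $H(p)$ itself. Without the heterodimensional cycle and the robust-cycle theorem, your argument stalls at ``periodic orbits of both indices accumulate on $\Lambda$'', which is strictly weaker than the statement.
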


The proof uses the following result from \cite{BDKS}.
\begin{theorem}[\cite{BDKS}]\label{t.BDK}
Let $f$ be a diffeomorphism that exhibits a heterodimensional cycle
between two hyperbolic periodic points $p,q$
whose stable dimensions differ by $1$.

Then, there exist a $C^1$-perturbation $g$ of $f$ and two transitive hyperbolic sets $K,L$ -
the first contains the hyperbolic continuation $p_g$, the second has same stable dimension as $q$ -
that form a robust cycle: for any diffeomorphism $h$ that is $C^1$-close to $f$,
there exists heteroclinic orbits that join the continuations $K_g$ to $L_g$
and $L_g$ to $K_g$.
\end{theorem}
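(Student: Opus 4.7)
The plan is to reduce the cycle to a normal form, replace the periodic point of smaller stable index by a blender (a hyperbolic set whose invariant manifolds force robust intersection with submanifolds of forbidden dimension), and leverage the original transverse branch of the cycle to get the other connection robustly.

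First I would put the cycle in a canonical local form. Say $p$ has stable dimension $d$ and $q$ has stable dimension $d+1$. Replacing $f$ by a suitable iterate, one may assume $p,q$ are fixed points. By Franks' lemma one may modify $Df$ at $p$ and $q$ so that the derivatives are diagonalizable (or complex with controlled argument) in charts where $f$ is linear on a neighborhood of $p$ and $q$. Using Hayashi's connecting lemma, the heteroclinic intersections can be arranged to occur at convenient base points $x \in W^u(p) \pitchfork W^s(q)$ (this branch is transverse since $\dim W^u(p)+\dim W^s(q) = n+1$) and $y \in W^u(q) \cap W^s(p)$ (quasi-transverse, with the intersection being a single point and $\dim W^u(q)+\dim W^s(p)=n-1$). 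The problem is that the branch at $y$ is never transverse, hence cannot be made robust directly.

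The heart of the argument is the construction of a \emph{cu-blender} $K$ containing $p_g$. I would first perform a $C^1$-perturbation, supported in an arbitrarily small neighborhood of $p$, that gives the weakest stable eigenvalue of $p$ a specific configuration: either a pair of complex conjugate stable eigenvalues with argument incommensurable to $\pi$, or two real stable eigenvalues whose ratios are controlled. Next, following the Bonatti--Diaz construction, a $C^1$-small perturbation localized along an orbit segment of the homoclinic return of $p$ (obtained by combining the two heteroclinic orbits of the cycle) creates a horseshoe $K$ containing $p_g$ realized as the maximal invariant set of an iterated function system generated by two affine-like branches $\varphi_1,\varphi_2$ in a cube. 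The branches are chosen so that their contractions along the strong stable bundle are weaker than the contraction along an extra ``central'' direction, and so that their images overlap appropriately in the central direction. This overlap yields the blender property: there is a $C^1$-open set $\cD$ of embedded disks of dimension $n-d-1$ (the dimension of $W^u(q)$) crossing a fixed region $\cB$, such that every disk in $\cD$ intersects $W^s(K_h)$ for every $h$ in a $C^1$-neighborhood of $g$. Set $L_g$ to be any small transitive hyperbolic extension of $q_g$ (for instance $q_g$ itself).

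Finally I would verify the robust cycle. The quasi-transverse intersection at $y$ places $W^u(q)$ as a disk of the blender family $\cD$ after finitely many forward iterations; hence $W^u(L_h) \cap W^s(K_h)$ is nonempty for all $h$ near $g$, providing the first robust heteroclinic connection. For the second connection, the transverse intersection at $x$ persists as a robust transverse intersection $W^u(p_h) \pitchfork W^s(q_h) \subset W^u(K_h) \pitchfork W^s(L_h)$ for all nearby $h$. Combining these two robust connections gives the robust cycle between $K$ and $L$.

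The principal obstacle is the blender construction itself: producing, by a purely $C^1$-small localized perturbation of the given cycle, an IFS structure whose geometry (overlap and domination between the central contraction and the strong stable contractions) survives all $C^1$-perturbations. The delicate points are the choice of linearizing coordinates compatible with the two heteroclinic transitions, the matching of eigenvalue ratios so that the two branches are contractions with comparable rates, and the verification that the geometric overlap condition is robust and transfers between the disk family $\cD$ and the continuations of $W^u(L_h)$. Once the blender is in place, the two robust intersections follow by the standard persistence properties.
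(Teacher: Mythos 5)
You should first note that the paper does not prove this statement: it is imported verbatim from the reference \cite{BDKS} (``Robust heterodimensional cycles for hyperbolic continuations'', listed as in preparation), so there is no internal proof to compare against. Your sketch follows exactly the strategy of that line of work (Bonatti--D\'\i az \cite{BD2} and its refinement in \cite{BDKS}): normalize the co-index one cycle, build a cu-blender at the index of $p$, use the quasi-transverse branch at $y$ to place $W^u(q)$ robustly inside the superposition region of the blender, and keep the transverse branch at $x$ for the other connection. As a road map this is the correct and, as far as is known, the only available approach.

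The gap is that essentially all of the difficulty of the theorem is concentrated in the step you defer. The earlier result of \cite{BD2} already yields a robust cycle between \emph{some} pair of transitive hyperbolic sets created by unfolding the cycle; what the cited theorem adds -- and what your sketch does not establish -- is that the blender $K$ can be chosen to contain the continuation $p_g$ of the \emph{original} periodic point. This is not a routine strengthening: the two affine-like branches of your iterated function system must be realized as return maps of a horseshoe homoclinically tied to $p$ itself, and whether this can be achieved by a single localized $C^1$-perturbation depends on the geometry of the given cycle (real versus nonreal central eigenvalues, twisted versus non-twisted transition maps along the two heteroclinic orbits); the reduction to the model situation requires a case analysis and sometimes the creation of an intermediate cycle with better spectral data before the blender can be anchored at $p$. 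There is also a small directional slip: for a cu-blender containing $p$ whose stable set robustly meets $(n-d-1)$-dimensional disks, the eigenvalue to be normalized is the \emph{weakest unstable} (central) eigenvalue of $p$ -- equivalently the weakest stable eigenvalue of $q$ -- not the weakest stable eigenvalue of $p$. None of this invalidates the plan, but as written the proposal restates the theorem's main content as a construction still to be carried out rather than proving it.
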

A consequence of this result is that for any $C^1$-generic diffeomorphism
and any hyperbolic point $p$ of stable dimension $i\geq 2$,
if there exists some small perturbations $g$ of $f$ which exhibits a heterodimensional cycle between a periodic point homoclinically related to $p_g$
and a periodic orbit of stable dimension $i-1$, then the homoclinic class $H(p)$ for $f$
contains periodic points of indices $i-1$.

\begin{proof}[Proof of theorem~\ref{t.consequences}]
The existence of the dominated splitting implies that there is no diffeomorphism $C^1$-close to $f$
which exhibits a homoclinic tangency in a small neighborhood of $\Lambda$.
\medskip

\noindent
\emph{Step 1.} We first prove that $\Lambda$ is a homoclinic class $H(p)$ which contains periodic orbits whose central exponents are arbitrarily close to $0$.
This uses the following.
\smallskip

\noindent
{\it Claim. If $\Lambda$ contains an invariant compact set $K$
such that any invariant measure supported on $K$ has a Lyapunov exponent along $E^c$ equal to $0$,
then $\Lambda$ contains periodic orbits whose central Lyapunov exponent is arbitrarily close to $0$.}
\begin{proof}
The proof is similar to the proof of theorem  9.25 in \cite{C3} and uses proposition 9.23 also in \cite{C3}. See also~\cite{Y}.
\end{proof}
\medskip

Since $E^c\oplus E^u$ is not uniformly expanded, the trichotomy given by~\cite[theorem 1]{C2}
and the previous claim imply that
the class $\Lambda$ contains periodic orbits whose central exponent is negative or arbitrarily close to $0$.
Similarly, since $E^s\oplus E^c$ is not uniformly contracted,
the class $\Lambda$ contains periodic orbits whose central exponent is positive or arbitrarily close to $0$.
In any case $\Lambda$ is a homoclinic class $H(p)$
which contains for any $\delta>0$
some periodic orbits $O^-_{\delta},O^+_{\delta}$ whose central exponent
is respectively smaller than $\delta$ and larger than $-\delta$. From the results in 
\cite{ABCDW} follows that that $H(p)$ contains periodic orbits
whose central exponents are arbitrarily close to $0$.
\medskip

\noindent
\emph{Step 2.} We then show that one can find a diffeomorphism
$C^1$-close to $f$ and a periodic orbit homoclinically related to $p_g$
which exhibits a heterodimensional cycle.

Using the center models introduced in~\cite{C1}, the dynamics along the central bundle $E^c$ can be classified into {\em chain-recurrent, chain-neutral, chain-hyperblic and chain-parabolic}  (see~\cite[section 2.2]{C2} for details).
Since $H(p)$ contains hyperbolic periodic orbits, some types can not occur
(the neutral and the parabolic ones).
Note that since $H(p)$ contains periodic orbits whose central exponent is close to $0$ and since $f$ is $C^1$-generic,
the class $H(p)$ is the limit of periodic orbits of both indices $\dim(E^s)$ and $\dim(E^s)+1$
for the Hausdorff topology.
When the central dynamics has the chain-recurrent type, \cite[proposition 4.1]{C2},
this implies that these periodic orbits are contained in $H(p)$, hence both indices appear in the class.

It reminds to consider a central dynamics which has the chain-hyperbolic type:
equivalently two cases are possible: either $E^s\oplus E^c$ is thin trapped by $f$ or $E^c\oplus E^u$
is thin-trapped by $f^{-1}$. In any case it follows that there exists a diffeomorphism $g$ that is $C^1$-close to $f$
and a periodic point homoclinically related to the continuation $p_g$ which exhibits a heterodimensional cycle:
in the first case, this is a direct consequence of theorem~\ref{t.position}, corollary~\ref{c.whitney},
theorem~\ref{t.extremal} and proposition~\ref{p.strong-connection};
in the second case, one argues as on the proof of corollary~\ref{c.homoclinic}.
\medskip

\noindent
\emph{Step 3.} We then concludes with theorem~\ref{t.BDK} that the class
$H(p)$ contains hyperbolic periodic points of different stable dimension.
\end{proof}

\bigskip

Theorem~\ref{t.extremal} can be combined with theorem~\ref{t.BDK} to get the following improvement.

\begin{theorem-extremal}
Let $f$ be a diffeomorphism in a dense G$_\delta$ subset of $\diff^1(M)$
and let $H(p)$ be a homoclinic class endowed with a partially hyperbolic splitting
$T_{H(p)}M=E^s\oplus E^c_1\oplus E^c_2\oplus E^u$, with $\dim(E^c_1)\leq 1$ and $\dim(E^c_2)\leq 1$.
Assume moreover that the bundles $E^s\oplus E^c_1$ and $E^c_2\oplus E^u$
are thin trapped by $f$ and $f^{-1}$ respectively and that the class is contained in a locally invariant
submanifold tangent to $E^s\oplus E^c_1\oplus E^c_2$.
Then $H(p)$ is a hyperbolic set.
\end{theorem-extremal}

\begin{proof} Arguing by contradiction, from theorem 6 it would be possible to create a heterodimensional cycle involving points of different indexes and from 
theorem \ref{t.BDK} it is get a robust heterodimensional cycle, then for generic diffeomorphisms the center dynamics it is not trapped neither for $f$ nor for $f^{-1};$ a contradiction.

\end{proof}


\section{Properties of chain-hyperbolic homoclinic classes}\label{s.weak-hyperbolicity}
Let $H(p)$ be a homoclinic class which is chain-hyperbolic for a diffeomorphism $f$.
We consider as in the definition~\ref{d.chain-hyperbolic}
the two  periodic points $q_s,q_u\in H(p)$
and the plaque families $\cW^{cs},\cW^{cu}$ respectively tangent to the bundles $E^{cs},E^{cu}$.

\subsection{Periodic points with large stable manifold}
We first give an immediate consequence of the trapping property.
\begin{lemma}\label{l.largemanifold}
Let $O$ be a periodic orbit in $H(p)$.
If there exists a point $q_0\in O$ such that $\cW^{cs}_{q_0}$ is contained in the stable manifold of $q_0$,
then this property holds for any point $q\in O$ and more generally for any point
$z\in W^s(q_0)\cap H(p)$.
\end{lemma}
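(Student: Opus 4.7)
The approach is to use the trapping property of the plaque family $\cW^{cs}$ together with forward iteration along the orbit of $q_0$, in order to propagate the inclusion $\cW^{cs}_{q_0}\subset W^s(q_0)$ to other points of $H(p)$. Throughout, let $\tau$ denote the period of $O$.

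For the first assertion, fix $q=f^i(q_0)\in O$ with $0\le i<\tau$, and take $y\in \cW^{cs}_q$. Iterating trapping $\tau-i$ times yields $f^{\tau-i}(y)\in \cW^{cs}_{f^{\tau-i}(q)}=\cW^{cs}_{q_0}\subset W^s(q_0)$, so $f^{n\tau+\tau-i}(y)\to q_0$ as $n\to\infty$; applying the continuous map $f^i$ gives $f^{(n+1)\tau}(y)\to q$, that is $y\in W^s(q)$.

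For the second assertion, let $z\in W^s(q_0)\cap H(p)$ and $y\in \cW^{cs}_z$. Trapping yields $f^n(y)\in \cW^{cs}_{f^n(z)}$ for every $n\ge 0$, and note that $W^s(z)=W^s(q_0)$. The compact set $\overline{\cW^{cs}_{q_0}}$ is mapped into $\cW^{cs}_{q_0}$ by $f^\tau$, and by the first assertion every one of its points converges to $q_0$ under $f^{n\tau}$; a compactness argument (the nested sequence $f^{n\tau}(\overline{\cW^{cs}_{q_0}})$ has intersection $\{q_0\}$, hence converges to $\{q_0\}$ in Hausdorff distance) upgrades pointwise convergence to uniform convergence. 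Consequently, for each $\delta>0$ there is $N$ with $f^{N\tau}(\overline{\cW^{cs}_{q_0}})\subset B(q_0,\delta/2)$. By continuity of $f^{N\tau}$ and of the plaque family $x\mapsto \cW^{cs}_x$ into the space of $C^1$-embeddings, there is a neighborhood $V$ of $q_0$ such that $f^{N\tau}(\overline{\cW^{cs}_x})\subset B(q_0,\delta)$ for every $x\in V\cap H(p)$. Since $f^{k\tau}(z)\to q_0$, for all large $k$ one has $f^{k\tau}(z)\in V$, and then $f^{(k+N)\tau}(y)=f^{N\tau}(f^{k\tau}(y))\in f^{N\tau}(\cW^{cs}_{f^{k\tau}(z)})\subset B(q_0,\delta)$. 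Since $\delta>0$ was arbitrary, $f^{n\tau}(y)\to q_0$, so $y\in W^s(q_0)=W^s(z)$.

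The main technical obstacle is the second assertion: since $\cW^{cs}$ is only assumed trapped (not thin trapped), there is no canonical contracting nested family of subplaques built into the hypothesis, and the uniform convergence on the compact plaque $\overline{\cW^{cs}_{q_0}}$ must be extracted by hand from trapping, hyperbolicity of $q_0$ (forced by the inclusion $\cW^{cs}_{q_0}\subset W^s(q_0)$ together with the domination $E^{cs}\oplus E^{cu}$), and continuity of the plaque family. As an alternative one could appeal to the coherence part of Lemma~\ref{l.uniqueness-coherence}: as soon as $f^{k\tau}(z)$ is close enough to $q_0$ and the plaques $\cW^{cs}_{f^{k\tau}(z)}$ and $\cW^{cs}_{q_0}$ intersect, the inclusion $f(\overline{\cW^{cs}_{f^{k\tau}(z)}})\subset \cW^{cs}_{f(q_0)}$ may be iterated forward along $O$ to conclude directly that $f^{k\tau}(y)\in W^s(q_0)$. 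The first assertion, by contrast, is essentially formal.
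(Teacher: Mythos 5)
Your proof of the first assertion is correct and is essentially the paper's argument (the paper writes $q=f^{-n}(q_0)$ and pushes the plaque forward into $\cW^{cs}_{q_0}$ by trapping; you index forward within the period — same idea).

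For the second assertion there is a genuine gap at the parenthetical step ``the nested sequence $f^{n\tau}(\overline{\cW^{cs}_{q_0}})$ has intersection $\{q_0\}$.'' Pointwise convergence of every point of a trapped compact set to a fixed point does \emph{not} imply that the nested intersection reduces to that point: the intersection $K_\infty=\bigcap_n f^{n\tau}(\overline{\cW^{cs}_{q_0}})$ is a compact $f^\tau$-invariant set every point of which converges to $q_0$, and such sets can be strictly larger than $\{q_0\}$ in general (think of a trapped annulus around a globally attracting but non-Lyapunov-stable fixed point: every point of the annulus converges to the fixed point, yet the intersection of the forward images contains a whole invariant circle; or a homoclinic orbit union its fixed point, which sits inside $W^s(q_0)$). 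So the passage from ``each point of $\overline{\cW^{cs}_{q_0}}$ converges to $q_0$'' to ``uniform convergence on $\overline{\cW^{cs}_{q_0}}$'' is exactly the content that needs to be proved, and your justification of it is circular. To repair it along your lines you would have to use more structure: e.g.\ that $f^\tau(\overline{\cW^{cs}_{q_0}})$ is a compact subset of the \emph{embedded} plaque $\cW^{cs}_{q_0}\subset W^s(q_0)$, hence compact for the intrinsic topology of $W^s(q_0)$, hence absorbed by the local stable manifold of $q_0$ in bounded time — that is where the uniformity actually comes from.

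The clean route is the one you relegate to an ``alternative,'' which is in fact the paper's proof: iterate $z$ forward until $f^n(z)$ is $\varepsilon$-close to a point $q$ of $O$, apply the coherence statement of Lemma~\ref{l.uniqueness-coherence} to get $f(\overline{\cW^{cs}_{f^n(z)}})\subset \cW^{cs}_{f(q)}$, conclude $\cW^{cs}_{f^n(z)}\subset W^s(O)$ using the first assertion, and pull back by trapping to get $\cW^{cs}_z\subset W^s(q_0)$. This bypasses any uniform-convergence issue. (You are right to flag that coherence requires the two plaques to intersect; that is the point where ``continuity'' is invoked in the paper and it deserves a sentence, but it is a much smaller issue than the nested-intersection claim.) I would promote that argument to the main proof and drop the compactness argument, or else fill in the intrinsic-compactness step above.
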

\begin{proof}
Any point $q\in O$ can be written as $q=f^{-n}(q_0)$ with $n\geq 0$.
By the trapping property, $\cW^{cs}_q$ is contained in $f^{-n}(\cW^{cs}_{q_0})$, hence
in $f^{-n}(W^{s}(q_0))=W^s(q)$.
Any point $z\in W^s(q_0)$ has large forward iterates $f^n(z)$, $n\geq n_0$ which remain close to $O$.
By continuity and the coherence (lemma~\ref{l.uniqueness-coherence})
one deduces that $\cW^{cs}_{f^n(z)}$ is also contained in the stable manifold of $O$.
By the trapping property this also holds for $z$.
\end{proof}

The homoclinic class $H(p)$ contains a dense set of ``good'' periodic points, in the sense which is defined in the next lemma:
\begin{lemma}\label{l.contper}
For any $\delta>0$ small, there exists a dense set $\cP_0\subset H(p)$ of periodic points homoclinically
related to the orbit of $p$ with the following property.
\begin{itemize}
\item[--] The modulus of the Lyapunov exponents of any point $q\in \cP_0$ are larger than $\delta$.
\item[--] The plaques $\cW^{cs}_q$ and $\cW^{cu}_q$ for any point $q\in \cP_0$
are respectively contained in the stable and in the unstable manifolds of $q$.
\end{itemize}
\end{lemma}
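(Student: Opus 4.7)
The plan is to construct, for every $z \in H(p)$ and every $\varepsilon>0$, a hyperbolic periodic point $q \in H(p)$ within distance $\varepsilon$ of $z$, homoclinically related to the orbit of $p$, satisfying both (a) and (b); this yields the density of $\cP_0$. As a preliminary I would establish a matching of dimensions: since $q_s, q_u, p$ are pairwise homoclinically related they share a common stable dimension $d^s$, and the inclusions $\cW^{cs}_{q_s}\subset W^s(q_s)$ and $\cW^{cu}_{q_u}\subset W^u(q_u)$ force $\dim E^{cs}\leq d^s$ and $\dim E^{cu}\leq \dim M-d^s$. Summing with $\dim E^{cs}+\dim E^{cu}=\dim M$ yields equality in both, and a standard domination argument then shows that every hyperbolic periodic point $q$ homoclinically related to $p$ has stable and unstable spaces equal to $E^{cs}_q$ and $E^{cu}_q$, respectively. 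I would then fix $\delta_0>0$ smaller than half the modulus of every Lyapunov exponent of $q_s$ along $E^{cs}$ and of $q_u$ along $E^{cu}$ --- these being bounded away from $0$ by the dimension matching --- and allow $\delta\in (0,\delta_0)$.

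For the construction, given $z\in H(p)$ and $\varepsilon>0$, I would pick by density a periodic point $q_1$ homoclinically related to $p$ with $d(q_1,z)<\varepsilon/2$. The orbits of $q_s, q_u, q_1$ are pairwise homoclinically related, hence embed in a common transitive basic set $\Lambda$. Using the shadowing lemma in $\Lambda$, I would build a periodic orbit shadowing the pseudo-orbit that spends one period near $q_1$, then $N_s$ periods near $q_s$ (reached through a fixed heteroclinic transition inside $\Lambda$), then $N_u$ periods near $q_u$, and returns to $q_1$. For $N_s, N_u$ sufficiently large and the shadowing radius small enough, the point $q$ of this periodic orbit that shadows $q_1$ lies within $\varepsilon$ of $z$, while its Lyapunov exponents along $E^{cs}$ (resp.\ along $E^{cu}$) are arbitrarily close to convex combinations of the corresponding exponents of $q_s$ and $q_u$, and so have modulus exceeding $\delta$. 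This yields (a), and $q\in\Lambda$ ensures the homoclinic relation with $p$.

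For (b), the matching of dimensions gives $E^s(q)=E^{cs}_q$, so $\cW^{cs}_q$ is a locally invariant plaque tangent at the hyperbolic point $q$ to its full stable space. Near $q$ the plaque $\cW^{cs}_q$ therefore coincides with $W^s_{loc}(q)$, and combined with the trapping property $f^{\tau}(\overline{\cW^{cs}_q})\subset \cW^{cs}_q$ (where $\tau$ is the period of $q$) a standard forward-iteration argument --- whose contraction estimate can, if necessary, be sharpened by taking $N_s$ large enough that the exponents of $q$ along $E^{cs}$ approach those of $q_s$ --- shows that every point of $\cW^{cs}_q$ converges to $q$, giving $\cW^{cs}_q\subset W^s(q)$. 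The symmetric argument applied to $f^{-1}$ yields $\cW^{cu}_q\subset W^u(q)$.

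The main obstacle is the shadowing step: one must simultaneously realize three constraints --- proximity to $z$, a long sojourn near $q_s$, and a long sojourn near $q_u$ --- while retaining enough control on the Lyapunov averages to ensure (a) and on the plaque contraction to ensure (b). This is possible only because a single transitive hyperbolic set $\Lambda$ containing all three orbits exists: $q_s, q_u$ come from the chain-hyperbolic hypothesis while $q_1$ is obtained by density of periodic points homoclinically related to $p$.
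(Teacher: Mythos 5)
Your global strategy (approximate $z$ by a periodic point $q_1$ homoclinically related to $p$, place $q_1,q_s,q_u$ in a common transitive hyperbolic set, and shadow a pseudo-orbit that lingers near $q_s$ and near $q_u$) is the same as the paper's, and your dimension-matching observation is correct. The gap is in the deduction of the second item, $\cW^{cs}_q\subset W^s(q)$. You argue that since $E^s(q)=E^{cs}_q$ the plaque coincides with $W^s_{loc}(q)$ near $q$, and that the trapping property $f^{\tau}(\overline{\cW^{cs}_q})\subset\cW^{cs}_q$ then forces every point of the plaque to converge to $q$. Trapping does no such thing: it only yields a nonempty compact $f^\tau$-invariant set $\bigcap_{n}f^{n\tau}(\overline{\cW^{cs}_q})\subset\cW^{cs}_q$, which may be strictly larger than $\{q\}$ (the plaque could contain another periodic point). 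Local coincidence with $W^s_{loc}(q)$ only gives containment of some neighborhood of $q$ in the plaque, of a size you do not control; and ``sharpening the contraction by taking $N_s$ large'' does not help, because Lyapunov exponents are asymptotic and yield no uniform finite-time product bound starting at $q$ itself. What is needed --- and what the paper does --- is a two-step quantitative argument: first, the uniform contraction of $\|Df_{|E^{cs}}\|$ along the first half of the long sojourn in $U_s$ produces a uniform $\rho>0$ such that the $\rho$-ball of $\cW^{cs}_{z_k}$ around any such iterate $z_k$ lies in $W^s(z_k)$; second, the hypothesis $\cW^{cs}_{q_s}\subset W^s(q_s)$ --- which your proof never exploits quantitatively --- provides an $N$ with $f^N(\cW^{cs}_{q_s})$ of radius less than $\rho/2$, so by continuity of the plaque family $f^{N}(\cW^{cs}_{z_k})$ has radius less than $\rho$ and is absorbed into the stable set; finally lemma~\ref{l.largemanifold} propagates the conclusion along the whole orbit. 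Without this step the full plaque is not controlled.

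A secondary slip: you take $\delta_0$ below half the moduli of the exponents of $q_s$ along $E^{cs}$ and of $q_u$ along $E^{cu}$ only. A convex combination of the exponents of $q_s$ and $q_u$ along $E^{cu}$ need not exceed $\delta$ if $q_s$ has a small positive exponent along $E^{cu}$ and the weights favor $q_s$ (as they would if you take $N_s\gg N_u$ to help the contraction). The paper avoids this by requiring $2\delta$ to be below \emph{all} exponents of both $q_s$ and $q_u$ and by forcing the orbit to spend a proportion $\tfrac12(1-\varepsilon)$ of its period near each of them.
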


\begin{proof}
Let us choose $\delta>0$ such that the modulus of the Lyapunov exponents of $q_s$ and $q_u$
are larger than $2\delta$.
Let $U_s$ and $U_u$ be some small disjoint neighborhoods
of the orbits of $q_s$ and $q_u$ respectively:
there exist some constant $j\geq 1$ such that for any segment of orbit $\{x,\dots, f^{jn}(x)\}$ contained in
$H(p)\cap U_s$ or in $H(p)\cap U_u$, one has for any $u\in E_x$ and $v\in F_x$,
$$\prod_{i=0}^{n-1}\|Df^j_{f^{ij}(x)}.u\|\leq e^{-2\delta nj}\|u\| \text{ and }
\prod_{i=0}^{n-1}\|Df^j_{ij}.v\|\geq e^{2\delta nj}\|v\|.$$

We fix $\varepsilon>0$ small and
consider the periodic orbits $O$ that are homoclinically related to the orbit of $p$ with the following
combinatorics:
there are at least $\frac 1 2(1- \varepsilon).\tau$ consecutive iterates in $U_s$
and at least $\frac 1 2(1- \varepsilon).\tau$ consecutive iterates in $U_u$, where $\tau$ is the period of $O$.
In particular, the maximal Lyapunov exponent of $O$
along $E^{cs}$ is smaller than $-\delta$ and the minimal Lyapunov exponent of $O$
along $E^{cu}$ is larger than $\delta$.
Let us write the orbit $O=\{z,\dots,f^{\tau-1}(z)\}$ as the concatenation
of a segment of orbit $\{z,\dots, f^{m-1}(z)\}$ in $U_s$,
a segment of orbit $\{f^{m+\ell_1}(z),\dots, f^{2m+\ell_1-1}(z)\}$ in $U_u$,
and two other segments of orbit
$\{f^{m}(z),\dots,f^{m+\ell_1-1}(z)\}$ and
$\{f^{2m+\ell_1}(z),\dots,f^{2m+\ell_1+\ell_2-1}(z)\}$,
such that $m\geq \frac 1 2(1- \varepsilon).\tau$,
and $\ell_1,\ell_2\leq \frac \varepsilon 2 \tau$.
Provided $\varepsilon$ is small, at any iterate $z_k=f^k(z)$ with
$0\leq k <m/2$, one has for any $u\in E_{f^k(z)}$ and any $n\geq 0$,
$$\prod_{i=0}^{n-1}\|Df^j_{f^{ij}(z_k)}.u\|\leq e^{-\delta nj}.\|u\|.$$
One deduces that there exists $\rho>0$ such that the ball centered at $z_k$ with radius $\rho$
in $\cW^{cs}_{z_k}$ is contracted by forward iterations so that it is contained in the stable set of $z_k$.

Since the stable set of $q_s$ contains $\cW^{cs}_{q_s}$, there exists $N\geq 2$
such that $f^N(\cW^{cs}_{q_s})$ has a radius smaller than $\rho/2$.
If $\tau$ is large enough, since $\{z,\dots,f^{m-1}(z)\}$ is contained in the neighborhood
$U_s$ of the hyperbolic orbit of $q_s$, there exists an iterate $z_k=f^{k}(z)$,
$0\leq k <\frac m 2 -N$ arbitrarily close to $q_s$.
By continuity of the plaque family $\cW^{cs}$, one deduces that
$f^{N}(\cW^{cs}_{z_k})$ has radius smaller than $\rho$, hence is contained in the stable set of
$f^{N}(z_k)$. Consequently the plaque $\cW^{cs}_{z_k}$ is contained in the stable manifold of $z_k$.
By lemma~\ref{l.largemanifold} for any point $q$ in the orbit $O$, the plaque $\cW^{cs}_q$
is contained in the stable manifold of $q$. Similarly the unstable manifold of $q$ contains the plaque
$\cW^{cu}_q$.
In order to prove the lemma, it remains to show that the union of the orbits $O$ we considered is dense
in $H(p)$:
Indeed any point $x$ in $H(p)$ can be approximated by a hyperbolic periodic point
$q$ whose orbit is homoclinically related to the orbit of $q_s$ and $q_u$. Then there exists a transitive
hyperbolic set which contains the points $q,q_s,q_u,p$. One deduces by shadowing
that there exists a hyperbolic periodic orbit $O$ having a point close to $x$
which is homoclinically related to the orbit of $p$ and has the required combinatorics.
\end{proof}
When the central bundles are one-dimensional, one can control the size of the
invariant manifolds of the periodic orbits whose Lyapunov exponents are far from $0$.

\begin{lemma}\label{l.largestable}
Let us assume that there is a dominated splitting $E^{cs}=E\oplus E^c$ such that
$E^c$ has dimension $1$.
For any $\delta>0$, there exists $\rho>0$ with the following property:
let $O\subset H(p)$ be a periodic orbit whose Lyapunov exponents along $E^{cs}$ are smaller than $-\delta$.
Then, there exists $q\in O$ whose stable set contains the ball centered at $q$ with radius $\rho$.
\end{lemma}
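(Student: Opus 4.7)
The strategy is to combine Pliss's lemma with the local invariance and continuity of the plaque family $\cW^{cs}$. First, I would equip $M$ with an adapted Riemannian metric in which the dominated splitting $E^{cs}=E\oplus E^c$ and the cocycle $Df|_{E^{cs}}$ are well-behaved: since $E^c$ dominates $E$ and is one-dimensional, the assumption that all Lyapunov exponents of a periodic orbit $O\subset H(p)$ along $E^{cs}$ are smaller than $-\delta$ translates into $\frac{1}{\tau}\log\|(Df^\tau)|_{E^{cs}_z}\|\leq -\delta$ for any $z\in O$ of period $\tau$.

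Next, I would apply Pliss's lemma to the sequence $a_i=\log\|Df|_{E^{cs}_{f^i(z)}}\|$, whose partial sum over the full period averages below $-\delta$. This yields an iterate $q=f^k(z)\in O$ and a constant $\delta'>0$ (depending only on $\delta$ and on the global upper bound of $\log\|Df\|$) such that
\[
\prod_{i=0}^{n-1}\|Df|_{E^{cs}_{f^i(q)}}\|\;\leq\; e^{-\delta' n},\qquad \forall n\geq 0.
\]
This is the analogue of the ``good contracting endpoint'' extracted in the proof of lemma~\ref{l.contper}, and it is the natural initial point from which a uniform-size stable disk can be built.

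Finally, I would convert this linear estimate into a contracting ball of uniform radius inside the plaque $\cW^{cs}_q$ by a standard graph-transform / nonlinear-perturbation argument performed along the forward orbit $(f^i(q))_{i\geq 0}$. Because $\cW^{cs}$ is a continuous, locally invariant plaque family over the compact set $H(p)$, and $f$ is $C^1$ on a neighborhood of $H(p)$, the nonlinear error in the induced dynamics on the plaques admits a modulus of continuity uniform over $H(p)$. Combined with the uniform exponential rate $e^{-\delta' n}$, this produces a constant $\rho>0$ depending only on $\delta$ and on the global data (the $C^1$-size of $f$, the domination constants, the continuity modulus of $\cW^{cs}$) such that the ball of radius $\rho$ in $\cW^{cs}_q$ centered at $q$ is forward-trapped by $f$ and contracted exponentially to $q$; hence it lies inside the stable set of $q$.

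The main obstacle is precisely this last step: guaranteeing that $\rho$ is uniform as $O$ varies over all periodic orbits of $H(p)$ satisfying the exponent hypothesis. The uniformity rests on the fact that $H(p)$ is compact, $\cW^{cs}$ is continuous over $H(p)$, and the nonlinear perturbation estimate only uses $C^1$-data of $f$ restricted to a fixed neighborhood of $H(p)$; none of these pieces of data depend on the chosen orbit $O$.
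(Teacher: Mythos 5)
Your proof follows essentially the same route as the paper: extract a point of the periodic orbit from which the $E^{cs}$-cocycle contracts at a uniform exponential rate, then feed this into the standard plaque/graph-transform argument over the compact set $H(p)$ to obtain a stable disk of uniform radius. The paper performs the extraction on the one-dimensional bundle $E^c$ (for a periodic orbit and a one-dimensional cocycle one gets a point $q_0$ with $\|Df^n_{|E^c}(q_0)\|\le e^{-n\delta}$ for all $n\ge 0$ exactly, with no Pliss loss) and then transfers the contraction to all of $E^{cs}$ using the domination $E\oplus E^c$; you instead apply Pliss directly to $a_i=\log\|Df_{|E^{cs}(f^i(z))}\|$. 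The one step you must justify is the input to Pliss: you need $\frac1\tau\sum_{i=0}^{\tau-1} a_i\le-\delta''$, and for a bundle of dimension $\ge 2$ this does \emph{not} follow from the Lyapunov exponents along $E^{cs}$ being $<-\delta$, since submultiplicativity only gives $\log\|Df^\tau_{|E^{cs}}\|\le\sum_i a_i$, i.e.\ the inequality in the wrong direction. It is rescued here precisely by the hypothesis you invoke only in passing: in an adapted metric making $E\perp E^c$ with pointwise domination $\|Df_{|E}\|\le\kappa\,\|Df_{|E^c}\|$, $\kappa<1$, one has $\|Df_{|E^{cs}_x}\|=\|Df_{|E^c_x}\|$ at every point, so $(a_i)$ is the multiplicative one-dimensional central cocycle and its period-average is exactly the central Lyapunov exponent, which is $\le-\delta$. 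With that reduction made explicit your argument is complete and yields the same uniform $\rho$ as the paper's.
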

\begin{proof}
Let $O\subset H(p)$ be a hyperbolic periodic orbit whose Lyapunov exponents along $E^{cs}$ are smaller than $-\delta$: since $E^c$ is one-dimensional this implies that there exists $q_0\in O$ such that for each $n\geq 0$ one has
$\|Df^n_{|E^c}(q_0)\|=\prod_{i=0}^{n-1}\|Df_{|E^{c}}(f^{i}(q_0))\|
\leq e^{-n.\delta}$.
The domination $E\oplus E^{c}$ then implies that for each $n\geq 0$, one has
\begin{equation}\label{e.unif}
\prod_{i=0}^{n-1}\|Df^N_{|E^{cs}}(f^{i.N}(q_0))\|\leq C.e^{-n},
\end{equation}
where $C,N>0$ are some uniform constants given by the domination.
One deduces from~(\ref{e.unif}) that a uniform neighborhood of $q_0$ in $\cW^{cs}_{q_0}$
is contained in $W^s(q_0)$.
\end{proof}

\begin{remark}\label{r.large}
The previous lemma still holds if one replaces $g$ by a diffeomorphism $C^1$-close to $f$
and if one considers a periodic orbit $O$ of $g$ contained in a small neighborhood of $H(p)$ and
a locally invariant plaque family of $g$ over $O$ whose plaques are $C^1$-close to the plaques of $\cW^{cs}$.
\end{remark}
\medskip

\begin{lemma}\label{l.linked}
Let us assume that $E^{cs}$ and $E^{cu}$ are thin trapped by $f$ and $f^{-1}$ respectively.
Then, all the hyperbolic periodic orbits contained in $H(p)$ are homoclinically related together.
\end{lemma}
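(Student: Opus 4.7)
The plan is to show that every hyperbolic periodic orbit $O \subset H(p)$ is homoclinically related to the orbit of $p$; since homoclinic relation is an equivalence relation among hyperbolic periodic orbits, this will suffice to conclude. The two tools I would use are the dense set $\cP_0 \subset H(p)$ of ``good'' periodic points provided by lemma~\ref{l.contper}, and the thin trapping hypothesis on $E^{cs}$ and $E^{cu}$.

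First I would show that every hyperbolic periodic orbit $O \subset H(p)$ has stable dimension equal to $\dim(E^{cs})$. Let $q \in O$ have period $\tau$. By remark~\ref{r.nested}, the thin trapping of $E^{cs}$ produces, at any prescribed small scale, an open neighborhood $V_q \subset \cW^{cs}_q$ of $q$ with $f^{\tau}(\overline{V_q}) \subset V_q$. A linearization argument then shows that no eigenvalue of $Df^{\tau}|_{E^{cs}_q}$ can have modulus strictly larger than one, for otherwise an arbitrarily small perturbation of $q$ in the corresponding eigendirection would escape $V_q$ after $\tau$ iterates, regardless of how small $V_q$ is. Since $q$ is hyperbolic, unit modulus eigenvalues are also excluded, hence $E^{cs}_q$ lies in the stable eigenspace of $q$. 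A symmetric argument with $E^{cu}$ and $f^{-1}$ places $E^{cu}_q$ inside the unstable eigenspace. Since $E^{cs} \oplus E^{cu} = TM$, the stable dimension of $q$ equals $\dim(E^{cs})$, and the local invariant manifolds $W^s_{\mathrm{loc}}(q),\, W^u_{\mathrm{loc}}(q)$ are tangent at $q$ to $E^{cs}_q$ and $E^{cu}_q$ respectively.

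Next, fix $x \in O$ and use the density of $\cP_0$ in $H(p)$ to pick $q' \in \cP_0$ arbitrarily close to $x$; by construction $\cW^{cs}_{q'} \subset W^s(q')$, $\cW^{cu}_{q'} \subset W^u(q')$, and $q'$ is homoclinically related to $p$. By the previous step, the disk $W^u_{\mathrm{loc}}(x)$ is tangent to $E^{cu}_x$ and has definite size, while the plaque $\cW^{cs}_{q'}$ is tangent to $E^{cs}_{q'}$. The dominated splitting forces these two disks to be transverse near $x$, and continuity of the plaque family provides an intersection point, yielding a point in $W^u(O) \trans \cW^{cs}_{q'} \subset W^u(O) \trans W^s(q')$. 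A symmetric argument using $W^s_{\mathrm{loc}}(x)$ and $\cW^{cu}_{q'}$ produces a point in $W^s(O) \trans W^u(q')$. Hence $O$ and $q'$ are homoclinically related, and therefore so are $O$ and the orbit of $p$.

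The main obstacle lies in the first step: extracting from the purely topological thin trapping condition a bound on the spectrum of the derivative $Df^{\tau}|_{E^{cs}_q}$. Care is required because the trapped plaques come with chart diffeomorphisms $\varphi_x$ as in definition~\ref{d.tt} which are only small perturbations of the identity, and because $Df^{\tau}|_{E^{cs}_q}$ may carry a nilpotent part on top of its spectral contraction; one must exploit the freedom to shrink the trapping scale $S$ arbitrarily. Once the spectral bound is established, the remaining arguments reduce to standard transversality consequences of the dominated splitting and continuity of plaques.
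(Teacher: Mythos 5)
Your proof is correct and follows essentially the same route as the paper's: the paper likewise first observes that the trapping of $E^{cs}$ and $E^{cu}$ forces every hyperbolic periodic point of $H(p)$ to have stable index $\dim(E^{cs})$, then takes a point $q'\in\cP_0$ from lemma~\ref{l.contper} arbitrarily close to the given periodic point $q$ and intersects $\cW^{cs}_{q'}$ with $W^u(q)$ and $\cW^{cu}_{q'}$ with $W^s(q)$. The only difference is that you spell out the spectral argument behind the index claim, which the paper states as an observation without proof.
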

\begin{proof}
First, observe that all the hyperbolic periodic points in $H(p)$
have the same stable index.
Let us take a periodic point $q$ in the class.
By lemma~\ref{l.contper}, there exists a periodic orbit $O$
homoclinically related to $p$ and having a point $q'$
arbitrarily close to $q$ such that $\cW^{cs}_{q'}\subset W^s(q')$ and $\cW^{cu}_{q'}\subset W^u(q')$.
One deduces that the plaques $\cW^{cs}_{q'}$ intersects $W^u(q)$ and
$\cW^{cu}_{q'}$ intersects $W^s(q)$. As a consequence $O$ and $q$ are homoclinically related.

\end{proof}

\subsection{Local product stability}

For any invariant compact set $K$, we define the \emph{chain-stable set} of $K$
as the set of points $x\in M$ such that for any $\varepsilon>0$, there exists a $\varepsilon$-pseudo-orbit
that joints $x$ to $K$. The chain-unstable set of $K$ is the chain stable set of $K$ for the map $f^{-1}$.

\begin{lemma}\label{l.chain-stable}
For any point $x\in H(p)$, the plaque $\cW^{cs}_x$ (resp. $\cW^{cu}_y$)
belongs to the chain-stable set (resp. the chain-unstable set) of $H(p)$.
\end{lemma}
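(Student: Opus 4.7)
The plan is to establish the chain-stable statement for $\cW^{cs}$; the chain-unstable statement for $\cW^{cu}$ is then obtained by applying the same argument to $f^{-1}$, using that chain-hyperbolicity is symmetric and that lemma~\ref{l.contper} equally furnishes (dense) periodic points $q$ with $\cW^{cu}_q \subset W^u(q)$. Fix $y \in \cW^{cs}_x$ with $x \in H(p)$ and $\varepsilon > 0$. The key observation is that lemma~\ref{l.contper} produces a dense set of periodic points $q \in H(p)$ for which $\cW^{cs}_q \subset W^s(q)$; for any such $q$, every point of $\cW^{cs}_q$ converges under forward iteration to the orbit of $q$, hence is trivially chain-stable to $H(p)$. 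My plan is to replace $y$ by a nearby point $y' \in \cW^{cs}_q$ and concatenate the orbit of $y'$ with a short jump to $H(p)$.

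Concretely, I would first write $y = \cW^{cs}_x(v)$ for some $v \in E^{cs}_x$, and then choose $q \in H(p)$ close to $x$ as above. Using a continuous local trivialization of the bundle $E^{cs}$, pick $v_q \in E^{cs}_q$ close to $v$ and set $y' := \cW^{cs}_q(v_q)$. Since $\cW^{cs}$ is by definition a continuous map on the bundle $E^{cs}$, the point $y'$ can be made arbitrarily close to $y$; using in addition the uniform continuity of $f$, I can arrange $d(f(y), f(y')) < \varepsilon$. Because $y' \in \cW^{cs}_q \subset W^s(q)$, there exists an integer $N$ with $d(f^{N+1}(y'), f^{N+1}(q)) < \varepsilon$. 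Setting $z_0 := y$, $z_i := f^i(y')$ for $1 \le i \le N$, and $z_{N+1} := f^{N+1}(q) \in H(p)$ gives a finite sequence whose only non-trivial jumps occur at the first and last step, both bounded by $\varepsilon$. This is the desired $\varepsilon$-pseudo-orbit from $y$ to $H(p)$.

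The argument is essentially formal once lemma~\ref{l.contper} is in hand, so I do not anticipate a genuine obstacle; the only delicate point is the continuous transfer $v \rightsquigarrow v_q$, justified by $\cW^{cs}$ being a continuous family of $C^1$-embeddings. It is worth noting what I would \emph{not} attempt: one might be tempted to iterate the trapping property directly to prove the stronger statement $\omega(y) \subset H(p)$, but trapping gives only $f^n(y) \in \cW^{cs}_{f^n(x)}$ with no a priori control on the plaque diameter, so such a direct approach would require thin-trapping rather than mere trapping. The detour through dense good periodic points sidesteps this issue cleanly.
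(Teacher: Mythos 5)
Your proof is correct and follows essentially the same route as the paper: both arguments invoke lemma~\ref{l.contper} to approximate $x$ by periodic points $q$ with $\cW^{cs}_q\subset W^s(q)$ and then use the continuity of the plaque family to transfer a point of $\cW^{cs}_x$ to a nearby point of $\cW^{cs}_q$ lying in a stable manifold. Your version merely spells out the resulting $\varepsilon$-pseudo-orbit that the paper leaves implicit.
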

\begin{proof}
By lemma~\ref{l.contper}, the point $x$ is the limit of periodic points $p_n\in \cP_0$
such that $\cW^{cs}_{p_n}$ is contained in the stable set of $p_n$ for each $n\geq 0$.
By definition of a plaque family, any point of $\cW^{cs}_{x}$ is limit
of a sequence of points $x_n\in \cW^{cs}_{p_n}$, proving that $x$ is contained in the
chain-stable set of $H(p)$.
\end{proof}

\begin{lemma}\label{l.bracket0}
For any points $x,y\in H(p)$, any transverse intersection point
between $\cW^{cs}_{x}$ and $\cW^{cu}_{y}$ is contained in $H(p)$.
\end{lemma}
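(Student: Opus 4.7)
The plan is to approximate $x$ and $y$ by ``good'' periodic points provided by Lemma~\ref{l.contper}, transport the transverse intersection point to genuine intersections between stable and unstable manifolds of periodic orbits homoclinically related to $p$, and then pass to the limit.

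First, fix a transverse intersection point $z\in\cW^{cs}_{x}\cap\cW^{cu}_{y}$. Using Lemma~\ref{l.contper} choose sequences of periodic points $p^x_n,p^y_n\in\cP_0$ homoclinically related to $p$ with $p^x_n\to x$ and $p^y_n\to y$. Since $p^x_n,p^y_n\in\cP_0$, their central plaques satisfy
\[
\cW^{cs}_{p^x_n}\subset W^s(p^x_n) \qquad\text{and}\qquad \cW^{cu}_{p^y_n}\subset W^u(p^y_n).
\]
By the continuity of the plaque families $\cW^{cs}$ and $\cW^{cu}$, the embeddings $\cW^{cs}_{p^x_n}$ converge in the $C^1$-topology to $\cW^{cs}_{x}$ and $\cW^{cu}_{p^y_n}$ to $\cW^{cu}_{y}$. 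Because the intersection at $z$ is transverse and its dimensions are complementary (inside a neighbourhood of $z$), persistence of transverse intersections for $C^1$-close submanifolds yields, for all sufficiently large $n$, a transverse intersection point $z_n\in \cW^{cs}_{p^x_n}\cap \cW^{cu}_{p^y_n}$ with $z_n\to z$.

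By the inclusions above, $z_n$ is a transverse intersection point of $W^s(p^x_n)$ with $W^u(p^y_n)$. Since both periodic orbits $p^x_n$ and $p^y_n$ are homoclinically related to the orbit of $p$, they are homoclinically related to each other, so by the standard inclination/$\lambda$-lemma argument, the whole orbit of $z_n$ is accumulated by transverse homoclinic intersection points of the orbit of $p$. Hence $z_n\in H(p)$ for each large $n$. Since $H(p)$ is closed and $z_n\to z$, we conclude $z\in H(p)$.

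The argument is essentially routine once Lemma~\ref{l.contper} is available; the only delicate point is the persistence of the transverse intersection under $C^1$-perturbations of the plaques, which requires that the transversality at $z$ holds in the full manifold (so that $\dim\cW^{cs}_x+\dim\cW^{cu}_y\geq\dim M$ near $z$), a fact that is built into the notion of transverse intersection used in the statement.
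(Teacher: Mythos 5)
Your proof is correct and follows essentially the same route as the paper: approximate $x,y$ by the periodic points of Lemma~\ref{l.contper} whose plaques are contained in genuine stable/unstable manifolds, use continuity of the plaque families to persist the transverse intersection, observe that the resulting point is a transverse heteroclinic point of orbits homoclinically related to $p$ and hence lies in $H(p)$, and conclude by closedness. The paper's proof is just a more condensed version of the same argument.
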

\begin{proof}
By lemma~\ref{l.contper}, there exist two periodic points $p_x$ and $p_y$
close to $x$ and $y$ respectively whose orbits are homoclinically related to $p$
such that $\cW^{cs}_{p_x}\subset W^s(p_x)$
and $\cW^{cu}_{p_y}\subset W^u(p_y)$.
By continuity of the plaque families $\cW^{cs}$ and $\cW^{cu}$, one deduces that
$\cW^{cs}_{p_x}$ and $\cW^{cu}_{p_y}$ intersect transversally at a point $z'\in H(p)$ close to $z$.
Hence $z$ belongs to $H(p)$.
\end{proof}

\subsection{Robustness}
Let us consider a compact set $K$ having a dominated splitting $T_KM=E\oplus F$ for $f$.
If $U\subset M$ and $\cU\subset \diff^1(M)$ are some small neighborhoods of $K$ and $f$,
then for each $g\in \cU$
the maximal invariant set $K_g=\bigcap_{n\in \ZZ} g^n(\overline U)$ has a dominated splitting
$E_g\oplus F_g$ such that $\dim(E_g)=\dim(E)$. Moreover the maps $(g,x)\mapsto E_{g,x}, F_{g,x}$ are continuous.
Hence one may look for a plaque family tangent to the continuation $E_g$ of $E$ for $g$.

A collection of plaque families $(\cW_g)_{g\in \cU}$ tangent to the bundles $(E_g)_{g\in \cU}$
over the sets $(K_g)_{g\in \cU}$ is \emph{continuous} if
$(\cW_{g,x})_{g\in\cU,x \in K_g}$ is a continuous family of $C^1$-embeddings.
It is \emph{uniformly locally invariant} if there exists $\rho>0$ such that for each $g\in \cU$ and $x\in K_g$,
the image of the ball $B(0,\rho)\subset E_{g,x}$ by $g\circ \cW_{g,x}$
is contained in the plaque $\cW_{g,g(x)}$.

\begin{lemma}\label{l.extension0}
Let $K$ be an invariant compact set for a diffeomorphism $f$
having a dominated splitting $E\oplus F$. Then,
there exist some neighborhoods $U$ of $K$ and $\cU\subset \diff^1(M)$ of $f$
and a continuous collection of plaque families $(\cW_g)_{g\in \cU}$
tangent to the bundles $(E_g)_{g\in \cU}$ over the maximal invariant sets $(K_g)_{g\in \cU}$ in $\overline{U}$,
which is uniformly locally invariant.
\end{lemma}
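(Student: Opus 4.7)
The plan is to extend the plaque family theorem of Hirsch--Pugh--Shub to a parametrized setting by encoding the family of triples $(g,K_g,E_g)$ as a single bundle dynamical system and repeating the graph transform argument with parameters.

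First I would shrink $\cU$ and $U$ so that for every $g\in\cU$ the maximal invariant set $K_g=\bigcap_{n\in\ZZ}g^n(\overline U)$ carries a dominated splitting $E_g\oplus F_g$ which is the continuation of $E\oplus F$, with the same domination constant $N$ and factor $1/2$, and so that the bundles $(g,x)\mapsto E_{g,x},F_{g,x}$ are continuous on the compact set $\Sigma:=\{(g,x):g\in\overline\cU,\;x\in K_g\}$ (this requires only that $\overline\cU$ be $C^1$-compact, e.g.\ a small closed ball). Consider the continuous dynamics $F\colon\Sigma\to\Sigma$, $(g,x)\mapsto(g,g(x))$, which preserves the continuous subbundles $(g,x)\mapsto E_{g,x}$ and $(g,x)\mapsto F_{g,x}$ of the pullback of $TM$ to $\Sigma$, with uniform domination.

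Next I would set up the graph transform. Pick a continuous family of adapted charts $\phi_{g,x}\colon B_{E_{g,x}}(0,\rho)\times B_{F_{g,x}}(0,\rho)\to M$, $\phi_{g,x}(0,0)=x$, such that in these coordinates the map $g$ between $x$ and $g(x)$ reads
\[
(u,v)\;\longmapsto\;\bigl(A_{g,x}u+\alpha_{g,x}(u,v),\;B_{g,x}v+\beta_{g,x}(u,v)\bigr),
\]
where $A_{g,x},B_{g,x}$ are the linear parts dictated by the splitting (hence satisfying the domination after iterating $N$ times) and $\alpha_{g,x},\beta_{g,x}$ have arbitrarily small Lipschitz norm if $\rho$ is small enough. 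Because the charts and the differentials $Dg$ depend continuously on $(g,x)\in\Sigma$ in the $C^0$-topology (a consequence of the definition of the $C^1$-topology on $\diff^1(M)$), the data $A,B,\alpha,\beta$ are continuous in $(g,x)$. Let $\Gamma$ denote the closed subset of the Banach space of continuous sections $(g,x)\mapsto h_{g,x}$, where $h_{g,x}\colon B_{E_{g,x}}(0,\rho)\to F_{g,x}$ is Lipschitz with $h_{g,x}(0)=0$ and Lipschitz constant bounded by a fixed small number $\kappa$. Then, exactly as in Hirsch--Pugh--Shub, define the (iterated $N$-th) graph transform $\cG$ on $\Gamma$ by requiring that the graph of $\cG h$ at $(g,x)$ is the preimage under $g^N$ of the graph of $h$ at $F^N(g,x)$, truncated to the chart domain. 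The uniform domination together with smallness of $\alpha,\beta$ ensure that $\cG$ is well defined and a uniform contraction on $\Gamma$, with factor strictly less than $1/2$.

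Finally I would invoke the Banach fixed point theorem to get a unique $h^\ast\in\Gamma$, which is automatically continuous on $\Sigma$. Setting $\cW_{g,x}:=\phi_{g,x}(\mathrm{graph}(h^\ast_{g,x}))$ produces a continuous family of $C^1$-embeddings $\cW_{g,x}\colon E_{g,x}\to M$ tangent to $E_{g,x}$ at $x$, and the construction gives a uniform $\rho>0$ with $g(\cW_{g,x}(B(0,\rho)))\subset\cW_{g,g(x)}$, i.e.\ uniform local invariance. I expect the only technical obstacle to be the verification that the graph transform is a genuine contraction when the parameter $g$ varies: this requires checking that the estimates on $A,B,\alpha,\beta$ and their dependence on $(g,x)$ are uniform over $\cU$. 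This is where one uses that $\diff^1(M)$-continuity of $g$ gives $C^1$-continuity of $dg$ as a bundle map over $M$, and that the continuation of a dominated splitting under $C^1$-perturbations is continuous (a well-known robustness fact used repeatedly in the paper). Every other step is a routine rewriting of the usual plaque family construction with an additional parameter $g$.
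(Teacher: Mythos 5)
Your approach coincides with the paper's: both run the Hirsch--Pugh--Shub graph transform uniformly in the parameter $g$. The paper does this by lifting each $g$ to a map $\hat g$ of $TM$ via the exponential map, interpolated with the linear part $T_xg$ outside a small neighbourhood of $0$, and then quoting the HPS proof fibrewise; your observation that uniformity of the nonlinear error over $\cU$ is obtained by fixing the scale $\rho$ for $f$ first and then shrinking $\cU$ (rather than shrinking $\rho$ for each $g$) is exactly the right resolution of the one delicate estimate.

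The one genuine gap is the regularity of the fixed point. Your Banach space consists of continuous sections valued in Lipschitz graphs, equipped with the sup norm, so the contraction mapping theorem only delivers graphs that are Lipschitz in the fibre variable and $C^0$-continuous in $(g,x)$. It does not by itself give that each $\cW_{g,x}$ is a $C^1$-embedding tangent to $E_{g,x}$ at $x$, nor that the family is continuous for the $C^1$-topology; both are required by the definition of a plaque family and both are used later (e.g.\ transversality of intersections of $\cW^{cs}$ and $\cW^{cu}$ plaques). To close this you must either run the $C^1$ section theorem / fibre-contraction argument of HPS with the parameter $g$ included, or use the paper's shortcut: the graphs $\Gamma_{g,x}$ are characterized as $\bigcap_{n\geq 0}\hat g^{-n}\bigl(\{(y_1,y_2):C\|y_1\|\geq\|y_2\|\}\bigr)$, which gives $C^0$-continuity in $(g,x)$, and each graph is tangent to the continuous dominated subbundle $\hat E^{cs}$ of the lifted map $\hat g$ inside the fibres $T_xM$, which upgrades the continuity to $C^1$.
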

\begin{proof}
Let $\exp$ be the exponential map from a neighborhood of the section $0$ in $TM$ to $M$.
Each diffeomorphism $g$ close to $f$ induces a diffeomorphism $\hat g$ on $TM$,
which coincides for each $x\in K$ with the map $\exp^{-1}_{g(x)}\circ g\circ \exp_x$
on a small neighborhood of $0\in T_xM$ and with the linear map $T_xg$ outside another small
neighborhood of $0$; moreover,
$\hat g$ is arbitrarily close to the linear bundle automorphism $Tg$ over the map $g$.
The proof of the plaque family theorem~\cite[theorem 5.5]{HPS} associates to each $x\in K_g$
the graph $\Gamma_{g,x}$ in $T_xM$ of a $C^1$ map $\psi_{g,x}\colon E_{g,x}\to F_{g,x}$ tangent to $E_{g,x}$ at $0\in T_xM$ and satisfying
\begin{equation}\label{e.invariance}
\hat g(\Gamma_{g,x})=\Gamma_{g,g(x)}.
\end{equation}
The graphs $\Gamma_{g,x}$ are uniformly Lipschitz and are characterized for some constant $C>0$ by
$$\Gamma_{g,x}=\bigcap_{n\geq 0}
\hat g^{-n}
(\{(y_1,y_2)\in E_{g,g^{-n}(x)}\times F_{g,g^{-n}(x)}, C\|y_1\|\geq \|y_2\|\}).$$
One thus deduces that they depend continuously on $(g,x)$ for the $C^0$-metric.
On the other hand, each map $\hat g$ has a dominated splitting $\hat E^{cs}\oplus\hat E^{cu}$
inside the spaces $T_x M$ and each graph $\Gamma_{g,x}$ is tangent to the bundle $\hat E^{cs}$.
The bundle $\hat E^{cs}$ depends continuously on $(g,x)$, hence the graphs $\Gamma_{g,x}$
depend continuously on $(g,x)$ also for the $C^1$-metric.

The plaque $\cW_{g,x}$ is defined as the image by the exponential
$\exp_x$ of a uniform neighborhood of $0\in \Gamma_{g,x}$.
For instance, one may choose $\varepsilon>0$ small and define for any $z\in E_{g,x}$,
$$\cW_{g,x}(z)=\exp_x(y,\psi_{g,x}(y)),\text{ where } y=\varepsilon.\frac{\arctan\|z\|}{\|z\|}.z.$$
By construction and the invariance~(\ref{e.invariance}),
the plaque families $(\cW_{g})$ are uniformly locally invariant.
\end{proof}

\begin{lemma}\label{l.robustness}
Let us assume that $E^{cs}$ and $E^{cu}$ are thin trapped by $f$ and $f^{-1}$ respectively
and that $H(p)$ coincides with its chain-recurrence class.
Then, there exist some neighborhoods $U$ of $K$ and $\cU\subset \diff^1(M)$ of $f$
and two continuous collections of plaque families $(\cW^{cs}_g)_{g\in\cU},(\cW^{cu}_g)_{g\in \cU}$
tangent to the bundles $(E^{cs}_g),(E^{cu}_g)$ over the maximal invariant sets $(K_g)_{g\in \cU}$ in $U$,
which are trapped by $g$ and by $g^{-1}$ respectively.

The plaques may be chosen arbitrarily small. As a consequence, for any diffeomorphism $g$ that is $C^1$-close to $f$, the homoclinic class $H(p_g)$ of $g$ associated to the continuation $p_g$ of $p$ is still chain-hyperbolic.
\end{lemma}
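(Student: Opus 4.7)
The plan is to start from Lemma~\ref{l.extension0}, which after shrinking the neighborhoods $U$ of $H(p)$ and $\cU$ of $f$ supplies continuous and uniformly locally invariant collections of plaque families $(\widehat{\cW}^{cs}_g)_{g\in\cU}$ and $(\widehat{\cW}^{cu}_g)_{g\in\cU}$ tangent to $E^{cs}_g$ and $E^{cu}_g$ over the maximal invariant sets $K_g=\bigcap_{n\in\ZZ}g^n(\overline U)$. These are a priori only locally invariant; the bulk of the proof is to upgrade them to trapped plaque families and to transfer the trapping property from $f$ to nearby $g$.

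Since $E^{cs}$ is thin trapped for $f$, Definition~\ref{d.tt} gives, for any arbitrarily small neighborhood $S$ of the zero section of $E^{cs}$, a continuous family $(\varphi_{f,x})_{x\in H(p)}$ of $C^1$-diffeomorphisms of the fibers supported in $S$ and a radius $\rho>0$ such that $f(\overline{\widehat{\cW}^{cs}_{f,x}\circ\varphi_{f,x}(B(0,\rho))})\subset \widehat{\cW}^{cs}_{f,f(x)}\circ\varphi_{f,f(x)}(B(0,\rho))$ for every $x\in H(p)$. I would extend $(\varphi_{f,x})$ continuously from $H(p)$ to $K_f$ keeping the support in $S$: because $H(p)$ coincides with its chain-recurrence class, shrinking $U$ forces $K_f$ into an arbitrarily small neighborhood of $H(p)$, and a Tietze-type extension on the fiber bundle $E^{cs}_f$ produces the desired prolongation. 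By compactness the trapping inclusion holds on $K_f$ with uniform margin, so for the restricted plaques $\cW^{cs}_{f,x}=\widehat{\cW}^{cs}_{f,x}\circ\varphi_{f,x}|_{B(0,\rho)}$ one has $f(\overline{\cW^{cs}_{f,x}})\subset \cW^{cs}_{f,f(x)}$ for every $x\in K_f$.

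Next I would transport this construction to perturbations. Pick a continuous family $(\varphi_{g,x})_{g\in\cU,x\in K_g}$ of diffeomorphisms supported in $S$ which extends $(\varphi_{f,x})$, and set $\cW^{cs}_{g,x}=\widehat{\cW}^{cs}_{g,x}\circ\varphi_{g,x}|_{B(0,\rho)}$. Continuity of $\widehat{\cW}^{cs}_g$, of $\varphi_{g,x}$ and of $g$ in the $C^1$-topology, combined with the uniform margin obtained at $f$, forces the trapping inequality $g(\overline{\cW^{cs}_{g,x}})\subset \cW^{cs}_{g,g(x)}$ to persist for every $g$ in a possibly smaller $C^1$-neighborhood of $f$ and every $x\in K_g$. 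The same argument applied to $f^{-1}$ yields the trapped family $(\cW^{cu}_g)_{g\in\cU}$. Because $S$ may be shrunk arbitrarily, we may assume the plaques have diameter smaller than the constant $r$ of Lemma~\ref{l.uniqueness-coherence}, so the construction is independent (up to restriction) of the initial choice of $\widehat{\cW}^{cs}_g$.

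Finally, to conclude chain-hyperbolicity of $H(p_g)$ I would check the three clauses of Definition~\ref{d.chain-hyperbolic}. The dominated splitting $E^{cs}_g\oplus E^{cu}_g$ and the trapped plaque families restrict from $K_g$ to $H(p_g)\subset K_g$. The periodic points $q_s,q_u\in H(p)$ given by chain-hyperbolicity of $H(p)$ have hyperbolic continuations $q_{s,g},q_{u,g}$; their homoclinic relations to the orbit of $p$ persist by openness of transverse intersection, so they are homoclinically related to $p_g$; and by continuity of local stable/unstable manifolds of hyperbolic periodic points and the smallness of the plaques, the inclusions $\cW^{cs}_{f,q_s}\subset W^s(q_s)$ and $\cW^{cu}_{f,q_u}\subset W^u(q_u)$ persist as $\cW^{cs}_{g,q_{s,g}}\subset W^s(q_{s,g})$ and $\cW^{cu}_{g,q_{u,g}}\subset W^u(q_{u,g})$. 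The main obstacle I expect is the construction of the reparametrizations $(\varphi_{g,x})$: extending them continuously from $H(p)$ to $K_f$ (where the assumption that $H(p)$ coincides with its chain-recurrence class is crucial) and then to $K_g$ for all nearby $g$, while preserving both the support condition in $S$ and the strict trapping with uniform margin, is the technical heart of the argument.
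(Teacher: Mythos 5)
Your proposal is correct and follows essentially the same route as the paper: start from lemma~\ref{l.extension0}, use the thin-trapping reparametrizations $(\varphi_x)$ with a strict margin, extend them (the paper uses a partition of unity, your Tietze-type extension plays the same role) to a full neighborhood of $H(p)$ in $M$, and let compactness propagate the strict trapping inclusion to every $g$ in a $C^1$-neighborhood, with the final chain-hyperbolicity check done exactly as you describe. The only slight misplacement is where the hypothesis that $H(p)$ coincides with its chain-recurrence class actually enters: it is not needed to extend $\varphi$ from $H(p)$ to $K_f$ (any continuous extension to a neighborhood in $M$ suffices), but rather to guarantee via upper semicontinuity of chain-recurrence classes that $H(p_g)\subset K_g$ for nearby $g$, so that the trapped families really do restrict to $H(p_g)$.
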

\begin{proof}
Let us consider a continuous collection of plaque families $(\cW_{g})$ tangent to the bundles $(E^{cs}_g)$
over the sets $(K_g)$ as given by lemma~\ref{l.extension0}.
Since $E^{cs}$ is thin trapped for $f$ over $H(p)$,
there exists a constant $\rho>0$ and a continuous family of embedding $(\varphi^0_x)$ of $(E^{cs}_x)$ supported in a small neighborhood $S$
of the section $0$ in $E^{cs}$ and
satisfying for each $x\in H(p)$,
\begin{equation}\label{l.trapped}
f(\overline{\cW_{f,x}\circ\varphi^0_x(B(0,\rho)}))\subset \cW_{f,f(x)}\circ\varphi^0_{f(x)}(B(0,\rho)).
\end{equation}
One may find a continuous family of embeddings $(\varphi_x)$ that is close to $(\varphi^0_x)$
for the $C^1$-topology and that extends to any point $x$ in a neighborhood of $H(p)$:
one fixes a finite collection of points $x_i$ in $H(p)$
and using a partition of the unity one defines $\varphi_x$ as a barycenter between
$\varphi^0_{x_i}$ associated to points $x_i$ that are close to $x$.
One deduces that there exist a neighborhood $U$ of $H(p)$ in $M$,
and a continuous family of embeddings
$(\varphi_x)$ of $(E^{cs}_x)$ over $U$,
such that (\ref{l.trapped}) still holds for $g$, $x\in K_g$
and $(\varphi_x)$.
One can thus define $\cW^{cs}_{g,x}$ as the embedding
$$z\mapsto \cW_{g,x}\circ \varphi_{g,x} \left(\frac{\rho.\arctan\|z\|}{\|z\|}.z\right).$$
By construction the plaque family $\cW^{cs}_g$ is trapped by $g$
and the collection $(\cW^{cs}_g)_{g}$ is continuous.

The plaques $\cW^{cs}_{g,x}$ may have been chosen arbitrarily small
and in particular much smaller than the stable manifold $W^s(q_{s,g})$
of the continuation $q_{s,g}$ of $q_s$.
The trapping property thus implies that $\cW^{cs}_{q_{s,g}}$ is contained in the stable manifold of $q_{s,g}$.
One builds similarly the plaques $\cW^{cu}_{g,x}$ and proves that $\cW^{cu}_{q_{u,g}}$
is contained in the unstable manifold of $q_{u,g}$ for any $g$ close to $f$.
We have thus shown that $H(p_g)$ is chain-hyperbolic.
\end{proof}
\begin{remark}\label{r.coherence}
Under the setting of lemma~\ref{l.robustness}.
One can check that the numbers $r,\rho,\varepsilon$ that appear in lemma~\ref{l.uniqueness-coherence}
for the coherence and the uniqueness of the plaque families can be chosen uniform in $g$.
\end{remark}

\subsection{Quasi-attractors}

\begin{lemma}\label{l.cont-quasi-attractor}
If the chain hyperbolic class $H(p)$ is a quasi-attractor and if the bundle $E^{cu}$
is uniformly expanded, then
for any diffeomorphism $g$ $C^1-$close to $f$ and
any hyperbolic periodic point $q$ homoclinically related to the orbit of $p_g$,
the unstable manifold $W^u(q)$ is contained in the homoclinic class $H(p_g)$.
\end{lemma}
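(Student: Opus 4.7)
The proof combines three ingredients: the trapping neighborhood provided by the quasi-attractor, the robustness of the dominated splitting and uniform expansion of $E^u$, and the inclination lemma. First I fix a trapping region $U$ of $H(p)$ on whose maximal $f$-invariant set the dominated splitting $E^{cs}\oplus E^u$ persists with $E^u$ uniformly expanded. For $g$ sufficiently $C^1$-close to $f$ we still have $g(\overline U)\subset U$, and the maximal $g$-invariant set $\Lambda_g=\bigcap_{n\geq 0}g^n(\overline U)$ is $g$-Lyapunov stable and carries a continuation $E^{cs}_g\oplus E^u_g$ of the splitting with $E^u_g$ uniformly expanded. The standard Lyapunov-stability argument gives $W^u(q)\subset\Lambda_g$: for $z\in W^u(q)$ one has $g^{-n}(z)\to q\in\Lambda_g$, so eventually $g^{-n}(z)\in\overline U$; by forward invariance of $\overline U$ every forward iterate remains in $\overline U$, so $z$ itself lies in $\Lambda_g$.

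Next, I use the inclination lemma at the hyperbolic periodic orbit of $q$ to refine this inclusion. Since $q$ is homoclinically related to the orbit of $p_g$, pick a transverse intersection $y\in W^u(O(p_g))\cap W^s(q)$. Forward $g$-iterates of a small disk of $W^u(O(p_g))$ through $y$ converge in the $C^1$ topology to $W^u_{loc}(q)$, so $W^u_{loc}(q)\subset\overline{W^u(O(p_g))}$; by forward invariance of $\overline{W^u(O(p_g))}$ this upgrades to $W^u(q)\subset\overline{W^u(O(p_g))}$. Since $H(p_g)$ is closed it then suffices to prove the inclusion $W^u(O(p_g))\subset H(p_g)$.

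This last inclusion is the main obstacle. For $f$ itself it is immediate from Lyapunov stability of $H(p)$ via the trapping argument of the first paragraph, so the real task is to establish that $H(p_g)$ is itself Lyapunov stable, which does not follow automatically from Lyapunov stability of the possibly larger set $\Lambda_g$. My plan is to invoke the continuation of the chain-hyperbolic structure given by Lemma~\ref{l.robustness} (the thin-trapping hypothesis on $E^u$ is free from uniform expansion, while the trapped plaque family tangent to $E^{cs}$ extends to nearby diffeomorphisms by Lemma~\ref{l.extension0}), and then to apply Lemma~\ref{l.contper} for $g$ in order to produce, near every hyperbolic periodic orbit of $g$ in the chain-recurrence class of $p_g$, a periodic point homoclinically related to $p_g$ with large invariant manifolds. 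This identifies $H(p_g)$ with the chain-recurrence class of $p_g$, which inherits the Lyapunov stability from $\Lambda_g$. Rerunning the trapping argument of the first paragraph with $H(p_g)$ in place of $\Lambda_g$ then yields $W^u(O(p_g))\subset H(p_g)$, completing the proof.
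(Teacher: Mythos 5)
Your first two paragraphs are sound and match the endgame of the paper's argument: the inclination lemma reduces everything to the inclusion $W^u(O(p_g))\subset H(p_g)$, exactly as in the paper's last lines. The problem is your third paragraph, where this inclusion is actually supposed to be proved. The assertion that the chain-recurrence class of $p_g$ ``inherits the Lyapunov stability from $\Lambda_g$'' is a non sequitur: Lyapunov stability does not pass to invariant subsets. The set $\Lambda_g$ is the maximal invariant set in one \emph{fixed} trapping neighborhood $U$ of $H(p)$; it may strictly contain the chain class of $p_g$, and nothing provides arbitrarily small forward-invariant neighborhoods of that smaller set. (Quasi-attractors are in general not persistent, which is precisely why the lemma cannot simply quote Lyapunov stability of $H(p_g)$.) In addition, the identification $H(p_g)=C(p_g)$ is a generic property (via \cite{BoCr}) and is not available for an arbitrary $g$ near $f$, and lemma~\ref{l.robustness} requires $E^{cs}$ to be \emph{thin} trapped, which is not among the hypotheses here; lemma~\ref{l.extension0} only yields a locally invariant family, not a trapped one. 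So the step ``hence $H(p_g)$ is Lyapunov stable'' is unsupported, and with it the whole conclusion.

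The paper avoids Lyapunov stability of $H(p_g)$ altogether and shows directly that transverse homoclinic points of $O_g$ are dense in $W^u(O_g)$. Two uniform, hence robust, facts are used. First, since $H(p)$ is a quasi-attractor saturated by unstable leaves and $W^s(O)$ is dense in $H(p)$ (via the trapped plaques $\cW^{cs}$), compactness gives $\rho>0$ and $N\geq 1$ such that \emph{every} $\rho$-disk $D$ in the unstable lamination satisfies $f^N(D)\pitchfork W^s_{loc}(O)\neq\emptyset$; this transversal intersection persists for $g$ close to $f$ and disks close to the lamination. Second, the partially hyperbolic structure extends to the small trapping neighborhood $U$, so $g$ uniformly expands along $W^u(O_g)\subset U$; consequently any neighborhood $V$ of a point $x$ inside $W^u(O_g)$ has an iterate $g^n(V)$ containing a $\rho$-disk, whence $g^{n+N}(V)$ meets $W^s_{loc}(O_g)$ transversally. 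The resulting transverse homoclinic point lies in $H(p_g)$, and pulling back by $g^{-(n+N)}$ shows $V\cap H(p_g)\neq\emptyset$; letting $V$ shrink gives $x\in H(p_g)$. You would need to supply this (or an equivalent) argument to close the gap.
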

\begin{proof}
Since $H(p)$ is a homoclinic class, there exists a dense set of points $x\in H(p)$
that belong to the stable manifold of $p$. Moreover by the trapping property,
$\cW^{cs}_x$ contains $f^{-n}(\cW^{cs}_{f^n(x)})$ for any $n\geq 0$, hence is contained in the
stable manifold of the orbit of $p$.

If $H(p)$ is a quasi-attractor and $E^{cu}$ is uniformly expanded,
it is the union of the unstable manifolds $W^u(x)$
of the points $x\in H(p)$. If one fixes $\rho>0$ then any disk
$D$ of radius $\rho$ contained in an unstable manifold $W^u(x)$ intersects transversally
the stable manifold of $p$. Hence, by compactness there exists $N\geq 1$ uniform such that
$f^N(D)$ intersects transversally the local stable manifold $W^s_{loc}(O)$ of the orbit $O$ of $p$.
This property is open: since $H(p)$ is a chain-recurrence class, for any $g$ close to $f$,
the class $H(p_g)$ is contained in a small neighborhood of $H(p)$,
hence for any disk $D$ of radius $\rho$
contained in $W^u(x)$ for some $x\in H(p)$,
the iterate $f^N(D)$ intersects transversally $W^s_{loc}(O_g)$.

Moreover since $H(p)$ is a quasi-attractor, there exists an arbitrarily small open neighborhood $U$
of $H(p)$ such that $f(\overline U)\subset U$. Hence for $g$ close to $f$ one still
has $g(\overline U)\subset U$ and the unstable manifold $W^u(O_g)$ is contained in $U$.
Since $U$ is a small neighborhood of the set $H(p)$, the partially hyperbolic structure extends
to the closure of $W^u(O_g)$; in particular the dynamics of $g$ uniformly expands along the manifold
$W^u(O_g)$.

One deduces that for any $g$ close to $f$,
for any point $x\in W^u(O_g)$, for any neighborhood $V$ of $x$ inside $W^u(O_g)$,
there exists an iterate $g^n(V)$ with $n\geq 1$ which contains a disk of radius $\rho$,
so that $g^{n+N}(V)\subset W^u(O_g)$ intersects transversally $W^s_{loc}(O_g)$.
One deduces that $H(p_g)$ meets $g^{n+N}(V)$, hence $V$. Since $V$ can be chosen arbitrarily small
and $H(p_g)$ is closed, the point $x$ belongs to $H(p_g)$. We have proved that $\overline{W^u(O_g)}\subset H(p_g)$.

Let $q$ be any hyperbolic periodic point homoclinically related to $p_g$.
The unstable manifolds of the orbit of $p$ and $q$ have the same closure.
In particular $W^u(q)\subset H(p_g)$.
\end{proof}

\subsection{Stable boundary points}\label{ss.one-codim}
We now discuss the case the center stable bundle has a dominated decomposition
$E^{cs}=E^s\oplus E^c$ with $\dim(E^c)=1$ and $E^s$ is uniformly contracted.

\paragraph{Half center-stable plaques.}
Any point $x\in H(p)$ has a uniform strong stable manifold which is one-codimensional inside $\cW^{cs}_x$.
A neighborhood of $x$ intersects $\cW^{cs}_x\setminus W^{ss}_{loc}(x)$ into two connected components.
The choice of an orientation on $E^c_x$ allows to denote them by $\cW^{cs,+}_x$
and $\cW^{cs,-}_x$.
One can then consider if $x$ is accumulated inside $\cW^{cs}_x\setminus W^{ss}_{loc}(x)$
by points of $H(p)$ in one or in both components.
Note that this does not depend on the choice
of the plaque family $\cW^{cs}$. Note also that the same case will occur all along the orbit of $x$.
\medskip

If one considers a point $y\in H(p)\cap W^{cu}_x$ close to $x$,
one gets an orientation of $E^c_y$ that matches with the orientation of $E^c_x$.
The points of $H(p)\cap \cW^{cs}_x$ close to $x$ projects on $\cW^{cs}_y$ through the holonomy
along the center unstable plaques, but there is no reason that the projection of the points
in $H(p)\cap \cW^{cs,+}_x$ are contained inside $\cW^{cs,+}_y$. However the following can be proved.

\begin{lemma}\label{l.integrability}
Consider any periodic point $q$ homoclinically related to $p$ and any point $x\in H(p)$ close to $q$
such that $W^u_{loc}(q)$ intersects $W^{ss}_{loc}(x)$ at a point $z$.
If $q$ is accumulated by $H(p)\cap \cW^{cs,+}_q$ then $z$ is accumulated by $H(p)\cap \cW^{cs,+}_x$.
More precisely, there exists $y\in H(p)\cap \cW^{cs,+}_q$ arbitrarily close to $q$
such that $W^u_{loc}(y)$ intersects $H(p)\cap\cW^{cs,+}_{x}$ close to $z$.
\end{lemma}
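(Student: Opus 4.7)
My plan is to define a local holonomy $H$ from $\cW^{cs}_q$ to $\cW^{cs}_x$ by intersecting center-unstable plaques with $\cW^{cs}_x$, show that $H$ carries $\cW^{cs,+}_q$ into $\cW^{cs,+}_x$ near $q$ and $z$, pick $y\in H(p)\cap \cW^{cs,+}_q$ close to $q$ using the accumulation hypothesis, and invoke lemma~\ref{l.bracket0} to obtain $H(y)\in H(p)\cap \cW^{cs,+}_x$ close to $z$.

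First I would set up $H$. Since $W^u_{loc}(q)\subset \cW^{cu}_q$ and $\cW^{cs}_x$ have complementary dimensions and the dominated splitting $E^{cs}\oplus E^{cu}$ forces their tangent cones to be transverse along $H(p)$, the intersection at $z$ is transverse and isolated. By the implicit function theorem together with the continuity of the plaque family $\cW^{cu}$, there exist small neighborhoods $U_q\subset \cW^{cs}_q$ of $q$ and $U_z\subset \cW^{cs}_x$ of $z$ and a homeomorphism $H\colon U_q\to U_z$ with $H(q)=z$ characterized by $\{H(y)\}=\cW^{cu}_y\cap U_z$.

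Next I would check that $H$ sends $U_q\cap W^{ss}_{loc}(q)$ into $W^{ss}_{loc}(x)$: for $y'\in U_q\cap W^{ss}(q)$, the $W^{ss}$-holonomy from $\cW^{cu}_q$ to $\cW^{cu}_{y'}$ (both plaques are transverse to the strong stable foliation) carries $z\in W^{ss}(x)$ to a point of $\cW^{cu}_{y'}\cap W^{ss}(x)\subset \cW^{cu}_{y'}\cap \cW^{cs}_x$, and by uniqueness of the transverse intersection this point must be $H(y')$. Therefore $H$ permutes the two components $\cW^{cs,\pm}_q\cap U_q$ with the two components $\cW^{cs,\pm}_x\cap U_z$. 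The paragraph preceding the lemma records that the $\cW^{cu}$-holonomy matches the chosen orientations of $E^c_q$ and $E^c_x$; applied to the leaves $W^{ss}_{loc}(q)$ and $W^{ss}_{loc}(x)$, this forces $H(\cW^{cs,+}_q\cap U_q)\subset \cW^{cs,+}_x$. The accumulation hypothesis then provides $y\in H(p)\cap \cW^{cs,+}_q\cap U_q$ arbitrarily close to $q$; the point $H(y)\in \cW^{cs,+}_x$ is close to $z$, and is a transverse intersection of $\cW^{cu}_y$ with $\cW^{cs}_x$ with $y,x\in H(p)$, so lemma~\ref{l.bracket0} yields $H(y)\in H(p)\cap\cW^{cs,+}_x$, which is the required intersection of $W^u_{loc}(y)=\cW^{cu}_y$ with $H(p)\cap \cW^{cs,+}_x$ close to $z$.

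The main obstacle I expect is the orientation bookkeeping in the second step: one only knows a priori that $H$ permutes $\cW^{cs,\pm}_q\cap U_q$ with $\cW^{cs,\pm}_x\cap U_z$, and identifying which half is sent to which requires the compatibility of $E^c$-orientations along $\cW^{cu}$ recalled just before the lemma. Everything else reduces to routine continuity and transversality arguments together with lemma~\ref{l.bracket0}. A minor notational point is the reading of $W^u_{loc}(y)$ when $y$ is not periodic, which here is naturally interpreted as the local piece of $\cW^{cu}_y$ (and coincides with the genuine local unstable manifold when $y$ is replaced by a nearby hyperbolic periodic point via lemma~\ref{l.contper}).
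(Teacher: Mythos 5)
Your overall strategy — a static holonomy $H(y)=\cW^{cu}_y\cap\cW^{cs}_x$ sending the $+$ half of $\cW^{cs}_q$ to the $+$ half of $\cW^{cs}_x$ — is not the paper's argument, and it contains a genuine gap at its central step. The claim that $H$ carries $U_q\cap W^{ss}_{loc}(q)$ into $W^{ss}_{loc}(x)$ cannot be justified as you propose. Your ``$W^{ss}$-holonomy from $\cW^{cu}_q$ to $\cW^{cu}_{y'}$'' does not exist: here $W^{ss}$ is tangent to $E^s$ and $\cW^{cu}$ is tangent to $E^{cu}$, so $\dim W^{ss}+\dim\cW^{cu}=\dim M-1$, and a strong stable leaf through a point of $\cW^{cu}_q$ has no reason to meet $\cW^{cu}_{y'}$ at all (the very hypothesis $W^u_{loc}(q)\cap W^{ss}_{loc}(x)\neq\emptyset$ is a codimension-one coincidence, not a transverse intersection). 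Worse, the assertion you are trying to prove at this step is precisely a local joint-integrability statement for $E^s$ and $E^{cu}$, which is one of the three mutually non-exclusive situations of definition~\ref{definition-cases} (transversal / jointly integrable / strictly non-transversal) and is false in the other two. The paragraph immediately preceding the lemma says exactly this: the center-unstable holonomy between center-stable plaques has ``no reason'' to send $\cW^{cs,+}$ into $\cW^{cs,+}$; you have read it as supplying the orientation bookkeeping, when in fact it is warning against your argument. (A secondary point: $\cW^{cu}_y$ is only defined for $y\in H(p)$, so $H$ is not a homeomorphism of full neighborhoods and the ``permutes the two components'' step would need an extension of the plaque family in any case.)

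What makes the lemma true is the dynamics, which your proof never uses: $q$ is periodic and $z\in W^u_{loc}(q)$. The paper takes a one-sided neighborhood $D^+$ of $z$ in $\cW^{cs,+}_x$ and applies the $\lambda$-lemma to $f^{-1}$: $f^{-n}(D)$ converges to $W^s_{loc}(q)$ while $f^{-n}(W^{ss}_{loc}(x))$ converges to $W^{ss}_{loc}(q)$ by continuity of the strong stable lamination, so $f^{-n}(D^+)$ converges to the corresponding half of $W^s_{loc}(q)\setminus W^{ss}_{loc}(q)$. One then intersects with $\cW^{cu}_{y_0}$ for $y_0\in H(p)\cap\cW^{cs,+}_q$ supplied by the accumulation hypothesis, invokes lemma~\ref{l.bracket0}, and pushes forward by $f^n$ to land in $D^+$ with $y=f^n(y_0)$. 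The backward convergence to the periodic orbit is what replaces the (unavailable) side-preservation of the static holonomy; to repair your proof you would need to reintroduce this iteration step rather than refine the holonomy argument.
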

\begin{proof}
Let us consider a point $y_0\in \cW^{cs,+}_q$ close to $q$; it belongs to $W^s(q)$.
Let $D$ be a neighborhood of $z$ in $\cW^{cs}_{x}$ and $D^+$ a neighborhood of $z$ in $\cW^{cs,+}_{x}$.
By the $\lambda$-lemma, the sequence $f^{-n}(D)$, $n\geq 0$ converges toward $W^s(q)$.
Observe that the strong stable manifolds of $x$ and $z$ coincide.
By continuity of the strong stable lamination, the sequence $f^{-n}(W^{ss}_{loc}(x))$ converges toward
$W^{ss}(q)$. Hence $\cW^{cu}_{y_0}$ intersects $f^{-n}(D^+)$ close to $q$ for $n$ large enough.
The intersection is transversal, hence belongs to $H(p)$ by lemma~\ref{l.bracket0}. One thus deduces that $D^+$ intersects $H(p)$.
By taking $D^+$ arbitrarily small, one has proved that $z$ is accumulated by $H(p)\cap \cW^{cs,+}_x$.
Also the local unstable manifold $W^u_{loc}(y)$ of the point $y=f^n(y_0)$ intersects $D^+$, giving the conclusion.
\end{proof}

\begin{lemma}\label{l.NGSHI}
Let us assume that $H(p)$ does not contains periodic points $q,q'$ homoclinically related
to the orbit of $p$ such that $W^{ss}(q)\setminus\{q\}$ and $W^u(q')$ intersect.
Then any point $x\in H(p)$ is accumulated by $H(p)$ in $\cW^{cs}_x\setminus W^{ss}_{loc}(x)$.
\end{lemma}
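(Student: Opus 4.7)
The plan is to argue by contradiction. Suppose some $x_0\in H(p)$ admits an open neighborhood $V\subset \cW^{cs}_{x_0}$ with $V\cap H(p)\subset W^{ss}_{loc}(x_0)$. I would then construct two periodic points $q,q'$ in $H(p)$, both homoclinically related to the orbit of $p$, exhibiting a strong connection $W^{ss}(q)\setminus\{q\}\cap W^u(q')\neq\emptyset$, contradicting the hypothesis.

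The first step is to transfer the isolation property from $x_0$ to a nearby periodic point. By Lemma~\ref{l.contper}, I would pick a periodic $q\in H(p)$ homoclinically related to $p$, arbitrarily close to $x_0$, with $\cW^{cs}_q\subset W^s(q)$ and $\cW^{cu}_q\subset W^u(q)$. The transversality of the splitting $E^{cs}\oplus E^{cu}$ and closeness yield a unique intersection $z\in \cW^{cu}_q\cap \cW^{cs}_{x_0}$ near $x_0$, which lies in $H(p)$ by Lemma~\ref{l.bracket0}. Since $z\in V$, the isolation hypothesis forces $z\in W^{ss}_{loc}(x_0)$, so $z\in W^u_{loc}(q)\cap W^{ss}_{loc}(x_0)$ and Lemma~\ref{l.integrability} is available with $x=x_0$.

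Now I would invoke that lemma in contrapositive form with a fixed continuous local orientation of $E^c$: if $q$ were accumulated by $H(p)\cap \cW^{cs,+}_q$, then $z$ would be accumulated by $H(p)\cap \cW^{cs,+}_{x_0}$, which is impossible because $z\in V$ while $V\cap \cW^{cs,+}_{x_0}\cap H(p)=\emptyset$. Hence $q$ is not accumulated by $H(p)\cap \cW^{cs,+}_q$, and the symmetric argument rules out accumulation from $\cW^{cs,-}_q$ as well. Therefore $q$ inherits the same isolation property as $x_0$: there is an open neighborhood $V_q\subset \cW^{cs}_q$ with $V_q\cap H(p)\subset W^{ss}_{loc}(q)$. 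This transfer from the (possibly non-periodic) point $x_0$ to the periodic point $q$ is the main obstacle of the proof, and it is exactly the content of Lemma~\ref{l.integrability} that resolves it.

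Once $q$ is periodic and isolated, the remaining step exploits the density statement of Lemma~\ref{l.contper} once more. I would choose a second periodic $q'\in H(p)$ homoclinically related to $p$, close to $q$ but not lying on $W^u(q)$ (possible since $W^u(q)$ has codimension $\dim(E^{cs})>0$ and periodic points are dense in $H(p)$, which is not contained in $W^u(q)$ near $q$), with $\cW^{cu}_{q'}\subset W^u(q')$. The intersection $w=\cW^{cu}_{q'}\cap \cW^{cs}_q$ belongs to $V_q\cap H(p)\subset W^{ss}_{loc}(q)$ by Lemma~\ref{l.bracket0}, and $w\neq q$ because $q\notin \cW^{cu}_{q'}$ (which would force $q'\in \cW^{cu}_q\subset W^u(q)$). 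Thus $w\in W^{ss}(q)\setminus\{q\}$ and $w\in W^u(q')$, with $q,q'$ periodic points of $H(p)$ homoclinically related to $p$, producing the forbidden configuration and completing the contradiction.
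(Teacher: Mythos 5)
Your proof is correct and follows essentially the same route as the paper's: assume a counterexample $x_0$, find a nearby periodic point $q$ homoclinically related to $p$ whose unstable plaque meets $\cW^{cs}_{x_0}$ inside $W^{ss}_{loc}(x_0)$, transfer the isolation property from $x_0$ to $q$ via Lemma~\ref{l.integrability}, and then produce a second periodic point $q'$ of a different orbit whose unstable plaque meets $W^{ss}(q)\setminus\{q\}$, contradicting the hypothesis. Your justification that $w\neq q$ is most cleanly seen by noting that $q\in\cW^{cu}_{q'}\subset W^u(q')$ with $q$ periodic would force $q$ to lie on the orbit of $q'$, which is excluded since $q'$ is chosen in a different orbit (the set $\cP_0$ of Lemma~\ref{l.contper} is infinite and $H(p)$ is not a single periodic orbit, as is implicit throughout this section); your stated inference that $q\in\cW^{cu}_{q'}$ would imply $q'\in\cW^{cu}_q$ reaches the same conclusion but leans on coherence in a way you didn't spell out. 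Aside from that cosmetic point, the argument matches the paper's.
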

\begin{proof}
Let us assume by contradiction that there exists a point
$x\in H(p)$ which is not accumulated by points in $(\cW^{cs}_{x}\cap H(p))\setminus W^{ss}_{loc}(x)$.
Let $q\in H(p)$ be a periodic point close to $x$ and homoclinically related to the orbit of $p$.
Its unstable manifold intersects transversally
$\cW^{cs}_x$ at a point $z\in H(p)$. Since $z$ can be chosen arbitrarily close to $x$,
it belongs to $W^{ss}_{loc}(x)$ and
it is not accumulated by points in $(\cW^{cs}_{x}\cap H(p))\setminus W^{ss}_{loc}(x)$.
By lemma~\ref{l.integrability}, $W^s_{loc}(q)\setminus W^{ss}(q)$ is disjoint from $H(p)$.
In particular the point $q$ is not accumulated by points in $(\cW^{cs}_{q}\cap H(p))\setminus W^{ss}_{loc}(q)$.
One can thus repeat for $q$ the argument we have made for $x$ and find a periodic point $q'\neq q$
homoclinically related to the orbit of $p$ such that $W^u(q')$ intersects $W^{ss}(q)$.
This contradicts our assumption.
\end{proof}

We now introduce the definition of the  stable boundary points, generalizing the notion
of stable boundary points for uniformly hyperbolic set whose stable bundle is one-dimensional
(see~\cite[appendix 2]{PT}). This notion plays an important role
and it is extensively studied in section \ref{s.boundary}.
\begin{defi}\label{d.boundary}
A point $x\in H(p)$ is a \emph{ stable boundary point} if it is not accumulated inside
both components of $\cW^{cs}_x\setminus W^{ss}_{loc}(x)$ by points of $H(p)$.
\end{defi}
Observe that if $x$ is a  stable boundary point, then any iterate of $x$ is also.
Note that if $E^{cs}$ is one-dimensional, a stable boundary point $x\in H(p)$
is a point which is not accumulated by points of $H(p)$ in both components of $\cW^{cs}_x\setminus \{x\}$.

Naturally in the same way, if the center unstable subbundle split $E^{cu}=E^c_2\oplus E^u$,
where $E^{c}_2$ is one-dimensional and $E^u$ is uniformly expanded, it can be defined the notion of
{\em unstable boundary point}.
\medskip

The next lemma about stable boundary points is a version of a classical one for hyperbolic systems.
A more general proposition about stable boundary points is provided in section \ref{s.boundary}.

\begin{lemma}\label{p.boundary0} Let $f$ be a diffeomorphism and $H(p)$ be a chain-recurrence class which is
a chain-hyperbolic homoclinic class  endowed with a dominated splitting $E^{cs}\oplus E^{cu}=E^{cs}\oplus (E^c_2\oplus E^{u})$ such that $E^{cs},E^{c}_2$ are one-dimensional, $E^{cs},E^{cu}$ are thin trapped (for $f$ and $f^{-1}$ respectively) and $E^u$ is uniformly expanded.
Then any stable boundary point of $H(p)$ belong to the
unstable set of a periodic point.
\end{lemma}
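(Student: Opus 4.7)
The plan is to adapt the classical argument for stable boundary points in hyperbolic basic sets on surfaces (Palis--Takens, Appendix~2 of \cite{PT}) to the chain-hyperbolic setting. Write $\cB \subset H(p)$ for the set of stable boundary points. Since $E^{cs}$ is one-dimensional, for each $x \in \cB$ we can choose an orientation of $E^{cs}_x$ (consistently along the orbit, up to replacing $f$ by $f^2$) so that one component $\cW^{cs,+}_x$ of $\cW^{cs}_x\setminus\{x\}$ is disjoint from $H(p)$ on a neighborhood of $x$.

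First I would show that $\cB$ is closed and invariant under $f^{\pm 1}$. Invariance under $f$ is immediate from the trapping property $f(\overline{\cW^{cs}_x}) \subset \cW^{cs}_{f(x)}$ together with the $f$-invariance of $H(p)$. For invariance under $f^{-1}$: any point $y \in \cW^{cs,+}_{f^{-1}(x)} \cap H(p)$ close to $f^{-1}(x)$ would map to $f(y) \in H(p) \cap \cW^{cs,+}_x$ close to $x$, contradicting $x \in \cB$. Closedness follows from continuity of the plaque family $\cW^{cs}$ and its uniqueness modulo small reparametrizations (lemma~\ref{l.uniqueness-coherence}). Next, I would prove that $\cB$ is saturated by local strong unstable manifolds intersected with $H(p)$: if $z \in \cB$ and $z' \in W^{uu}_{loc}(z) \cap H(p)$, then $z' \in \cB$. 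Here one uses the local product structure from lemma~\ref{l.bracket0} (any transverse intersection of $\cW^{cs}_{z'}$ and $\cW^{cu}_z$ lies in $H(p)$) together with the $\lambda$-lemma applied to the unstable holonomy, to transfer the emptiness of $\cW^{cs,+}_z$ near $z$ to emptiness of $\cW^{cs,+}_{z'}$ near $z'$.

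With these preparations in hand, I would consider the $\alpha$-limit set $\alpha(x)$, which by the previous steps is a nonempty compact $f$-invariant subset of $\cB \cap H(p)$ saturated by local strong unstable manifolds. The key step is to locate a hyperbolic periodic point $q \in \alpha(x)$ homoclinically related to the orbit of $p$. For this I would use that $H(p)$ is chain-hyperbolic with $E^u$ uniformly expanded, $E^c_2$ one-dimensional, and $E^{cs}$ thin trapped; combining the density of good periodic points from lemma~\ref{l.contper} with a Pliss-type argument on the central Lyapunov exponents of a minimal subset of $\alpha(x)$ (forcing a hyperbolic periodic return, then using the trapped plaque families to promote it to a homoclinically related periodic point) produces such a $q \in \alpha(x)$. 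Once $q$ is available, the saturation of $\cB$ by strong unstable manifolds together with the trapping of $\cW^{cs}$ forces the backward orbit of $x$ to shadow the orbit of $q$, giving $x \in W^u(q)$.

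The main obstacle is the last part of the previous paragraph: producing an actual periodic point inside $\alpha(x)$. In the classical hyperbolic surface setting this is essentially immediate from the local product structure and one-dimensionality; here the chain-hyperbolic structure is only $C^1$-robust and the center $E^c_2$ need not be uniformly expanded or contracted along orbits in $\alpha(x)$, so the closing argument must carefully combine the thin trapping of $E^{cs}$ with the density of good periodic points in $H(p)$ and the uniform expansion of $E^u$. Once this is settled, the conclusion $x \in W^u(q)$ follows by a standard backward-shadowing argument using Step~2.
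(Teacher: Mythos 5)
Your write-up does not contain a proof: the step you yourself identify as ``the main obstacle'' --- producing a periodic point inside $\alpha(x)$ and then deducing $x\in W^u(q)$ --- is exactly the content of the lemma, and you leave it as a strategy (``a Pliss-type argument\dots\ forcing a hyperbolic periodic return, then using the trapped plaque families to promote it\dots'') rather than an argument. Worse, the intermediate statement you aim for is too strong and in general false: the lemma only asserts that $x$ lies in the unstable set of \emph{some} periodic point, and the paper explicitly notes that this point need not be hyperbolic (nor homoclinically related to $p$). The $\alpha$-limit set of $x$ can perfectly well be a single non-hyperbolic, e.g.\ center-parabolic, periodic orbit, in which case no Pliss or closing argument will produce a hyperbolic periodic point homoclinically related to $p$ inside $\alpha(x)$. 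Your preparatory steps (closedness and invariance of the set of stable boundary points, saturation by strong unstable leaves) are plausible but are not what the proof hinges on.

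The paper's proof is much shorter and does not pass through $\alpha(x)$ at all. One picks three backward iterates $x_1=f^{-k}(x)$, $x_2=f^{-l}(x)$, $x_3=f^{-m}(x)$ with $k<l<m$, arbitrarily close to one another. If two of the center-unstable plaques, say $\cW^{cu}_{x_1}$ and $\cW^{cu}_{x_2}$, intersect, the coherence (lemma~\ref{l.uniqueness-coherence}) forces $\cW^{cu}_{x_1}$ to be mapped into itself by $f^{k-l}$; since $E^{cu}=E^c_2\oplus E^u$ with $E^c_2$ one-dimensional and $E^u$ uniformly expanded, the backward dynamics restricted to this plaque converges to a (possibly non-hyperbolic) periodic point whose unstable set therefore contains $x_1$, hence $x$. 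If instead the three plaques are pairwise disjoint, the middle point $x_2$ has its center-stable plaque meeting $\cW^{cu}_{x_1}$ and $\cW^{cu}_{x_3}$ on opposite sides of $x_2$ in $\cW^{cs}_{x_2}\setminus\{x_2\}$; these intersection points lie in $H(p)$ by lemma~\ref{l.bracket0}, and the thin trapping of the one-dimensional $E^{cs}$ keeps their forward orbits arbitrarily close to $x$ on both sides of $\cW^{cs}_x\setminus\{x\}$, contradicting that $x$ is a stable boundary point. To repair your proposal you would need either to supply the missing closing argument (and weaken its target, since hyperbolicity of $q$ is unattainable in general) or to switch to a pigeonhole argument of this kind.
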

\begin{proof}
Let $x$ be a stable boundary point of $H(p)$.
Let us introduce three backward iterates $x_1=f^{-k}(x)$,  $x_2=f^{-l}(x)$ and $x_3=f^{-m}(x)$ arbitrarily close
with $k<l<m$.
If the center-unstable plaques of two of those three points (for instance $x_1,x_2$) intersect,
from the coherence (lemma~\ref{l.uniqueness-coherence}) it follows that the center-unstable plaque $\cW^{cu}_{x_1}$
is mapped into itself by $f^{k-l}$. Since $E^{cu}$ splits as $E^{c}_2\oplus E^u$,
one deduces that the backward orbit of $x_1$ belongs to the unstable set of a periodic point
of $\cW^{cu}_{x_1}$ (this point is not necessarily hyperbolic).

If the center unstable plaques of the three points do not intersect, we can assume that the center stable plaque of namely $x_2$  intersects the center unstable plaques of the other two points in different connected components of $\cW_{x_2}^{cs}\setminus \{x_2\}.$ By lemma \ref{l.bracket0} those points of intersection belong to $H(p)$ and using that $E^{cs}$ is thin trapped, the forward orbits of those points remain arbitrarily close to $x$ (provided that the points $x_1, x_2, x_3$ were sufficiently close) and contained in  different components of $\cW_{x_2}^{cs}\setminus \{x_2\};$ a contradiction. 
\end{proof}

The following proposition is not needed in the context of the present paper, however we provide it since it helps to understand the notion of boundary point.
\begin{prop}
Using~\cite[proposition 3.2]{C2}, one can prove that if the homoclinic class $H(p)$ is endowed with a partially hyperbolic
structure $E^s\oplus E^c\oplus E^u$ with $\dim(E^c)=1$ such that $E^{cs}=E^s\oplus E^c$
is thin trapped, then,
\begin{itemize}
\item[--] either any  stable boundary point $x\in H(p)$ belongs to the unstable manifold of
a periodic point,
\item[-] or there exists a diffeomorphism $g$ that is $C^1$-close to $f$ and a periodic orbit contained in
a small neighborhood of $H(p)$ which has a strong homoclinic intersection.
\end{itemize}
\end{prop}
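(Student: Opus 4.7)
The strategy is to adapt the proof of lemma \ref{p.boundary0} to the present setting, where the center-unstable bundle $E^{cu}=E^u$ is uniformly expanded rather than admitting a one-dimensional center summand, and to invoke \cite[proposition 3.2]{C2} to cover the case where the backward orbit of the boundary point fails to converge to a periodic orbit. Let $x\in H(p)$ be a stable boundary point. Replacing $f$ by $f^2$ if necessary, assume $Df$ preserves a continuous orientation on $E^c$, so that the designations $\cW^{cs,+}_{z}$ and $\cW^{cs,-}_{z}$ are well defined and consistent along the backward orbit $x_n=f^{-n}(x)$; each $x_n$ is again a stable boundary point of $H(p)$.

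Consider the $\alpha$-limit set $\alpha(x)\subset H(p)$. As a closed invariant subset of the chain-hyperbolic class, it inherits the partially hyperbolic splitting $E^s\oplus E^c\oplus E^u$ with $E^{cs}$ thin trapped. First suppose $\alpha(x)$ contains a hyperbolic periodic orbit $O$. A small neighborhood of $O$ is a trapping region for its chain-recurrence class (by hyperbolicity), so chain-transitivity of $\alpha(x)$ forces $\alpha(x)=O$; the classical hyperbolic stable manifold theorem then gives $x\in W^u(O)$, yielding the first alternative.

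It remains to treat the case where $\alpha(x)$ contains no periodic point. Pick two backward iterates $x_k$ and $x_l$ of $x$, with $k<l$, arbitrarily close to one another (available by recurrence); if $x_l\in W^{ss}_{loc}(x_k)$ the orbit of $x$ would be periodic, which was already handled. Otherwise the local unstable manifold $W^u(x_l)$, tangent to the uniformly expanded bundle $E^u$, meets $\cW^{cs}_{x_k}$ transversally at a point $z_k$ close to $x_k$ but off $W^{ss}_{loc}(x_k)$, and by lemma \ref{l.bracket0} one has $z_k\in H(p)$. The boundary property at $x_k$ together with the orientation preservation of $E^c$ forces $z_k$ to lie on the accumulated side of $W^{ss}_{loc}(x_k)$, and the symmetric statement holds at $x_l$. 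This is precisely the geometric input of \cite[proposition 3.2]{C2}, which, using Hayashi's connecting lemma inside the controlled region carved out by the trapped center-stable plaques, produces a $C^1$-small perturbation $g$ of $f$ and a periodic orbit $\mathcal{O}$ of $g$ in a small neighborhood of $\alpha(x)\subset H(p)$ whose local strong stable manifold meets its unstable manifold outside the orbit. This is a strong homoclinic intersection and delivers the second alternative.

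The main obstacle is a clean verification of the hypotheses of \cite[proposition 3.2]{C2} in our situation. One has to check that the accumulated and non-accumulated sides of the strong stable foliation are consistently transported along backward iterates and respected by the transverse intersections produced by lemma \ref{l.bracket0}, and that the trapping of $\cW^{cs}$ by $f$ really confines the constructed periodic orbit so that its local strong stable manifold wraps back to meet its unstable manifold. Both points reduce to book-keeping of the strong stable holonomy along the one-dimensional center direction, exactly as in the proof of lemma \ref{p.boundary0}.
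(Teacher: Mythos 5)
Your proposal diverges from the paper's argument at two points, and both are genuine gaps.

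First, the orientation-reversed case cannot be dismissed by replacing $f$ with $f^2$. The relevant dichotomy is whether $Df$ preserves a continuous orientation of the bundle $E^c$ \emph{over the $\alpha$-limit set $\alpha(x)$}; when it does not, the obstruction comes from arbitrarily close backward iterates $x_{n_k},x_{m_k}$ for which $Df^{m_k-n_k}$ reverses the local orientation of $E^c$, and the return times $m_k-n_k$ have no controlled parity (indeed $E^c$ need not be continuously orientable over $\alpha(x)$ at all), so passing to $f^2$ does not remove the problem. The paper treats this case separately: taking a limit point $y$ of such pairs, the one-sided accumulation property of the boundary points forces $H(p)\cap\cW^{cs}_y\subset W^{ss}(y)$, which contradicts lemma~\ref{l.NGSHI} (and if the hypothesis of that lemma fails, lemma~\ref{joint-int-easy} already produces a strong homoclinic intersection, i.e.\ the second alternative). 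Your argument simply never addresses this branch.

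Second, your treatment of the case where $\alpha(x)$ contains a hyperbolic periodic orbit $O$ is wrong. A hyperbolic \emph{saddle} has no isolating trapping neighborhood for its chain-recurrence class — its class can be a nontrivial homoclinic class — so chain-transitivity of $\alpha(x)$ does not force $\alpha(x)=O$, and you cannot conclude $x\in W^u(O)$. The paper only reaches the first alternative when $\alpha(x)$ is \emph{reduced to} a periodic orbit. When $\alpha(x)$ properly contains $O$, the twisted-position property derived from the one-sided accumulation (property (*) in the paper) shows that $\alpha(x)$ meets both $W^{ss}(O)\setminus O$ and $W^u(O)\setminus O$, and Hayashi's connecting lemma then yields a strong homoclinic intersection for $O$ after a $C^1$-perturbation — that is, this case lands in the \emph{second} bullet, not the first. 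Your remaining case (no periodic point in $\alpha(x)$) is in the right spirit — establish the twisted configuration and invoke \cite[proposition 3.2]{C2} — but as written it relies on the orientation reduction above and on the hypotheses of that proposition being checked, which you explicitly defer.
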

\noindent One will use instead a similar result for quasi-attractors, see section~\ref{ss.structure} below.

\begin{proof}[Sketch of the proof]
Let $x$ be a strong boundary point.
Let us take the sequence $\{x_n=f^{-n}(x)\}_{n>0}$. 
Since $E^{cs}$ is thin trapped, one may take a small plaque family $\cW^{cs}$ which is trapped
and such that for each $n\geq 0$, one connected component $U_n$ of $\cW^{cs}_{x_n}\setminus W^{ss}_{loc}(x_n)$ is disjoint from $H(p)$.
In particular:
\begin{description}
\item[(*)]\emph{For any two close iterates $x_n,x_m$, the unstable manifold $W^u_{loc}(x_n)$
does not meet $U_m$.}
\end{description}
We consider two cases:
either the orientation of the center manifolds of all close backward iterates is preserved or not.
Equivalently, the tangent map $Df$ preserves or not a continuous orientation of the bundle $E^c$
over $\alpha(x)$, the $\alpha$-limit set of $x$. One can assume that $\alpha(x)$ is not reduced to
a periodic orbit since otherwise, $x$ belongs to the unstable manifold of a periodic orbit
and the statement follows.

\noindent
\emph{- The orientation preserved case.}
From property (*), any two close iterates $x_n,x_m$ are in twisted position
(see~\cite[section 3]{C2}), implying that $\alpha(x)$ is twisted.
If $\alpha(x)$ contains a periodic orbit $O$, it contains points in $W^{ss}(O)\setminus O$
and in $W^u(O)\setminus O$;
as a consequence, one can apply the Hayashi connecting lemma and get a strong
homoclinic intersection for $O$ by an arbitrarily small $C^1$-perturbation.
Otherwise $\alpha(x)$ contains a non-periodic minimal set and from~\cite[proposition 3.2]{C2},
there exists a diffeomorphism $g$ that is $C^1$-close to $f$ and a periodic orbit contained in
a small neighborhood of $H(p)$ which has a strong homoclinic intersection.

\noindent
\emph{- The orientation reversed case.}
Let us consider a sequence of arbitrarily close points $x_{n_k},x_{m_k}$
such that $Df^{m_k-n_k}$ reverse the local orientation on $E^c$ at $x_{n_k}$.
One may assume that they converge toward  a point $y\in \alpha(x)$.
Property (*) now implies that $H(p)\cap\cW^{cs}_y$ is contained in $W^{ss}(y)$.
This contradicts lemma~\ref{l.NGSHI} above.
\end{proof}

\subsection{Non-uniformly hyperbolic bundles}
When the bundle $E^{cs}$ is not uniformly contracted, the class may contain
weak periodic orbits.
\begin{lemma}\label{l.weak}
Let us assume that $H(p)$ is a chain-recurrence class
and that there exists a dominated splitting $E^{cs}=E^s\oplus E^c$ where $E^c$
is one-dimensional, $E^{cs}$ is thin-trapped and $E^s$ is uniformly contracted.
Then, there exists some hyperbolic periodic orbits in $H(p)$
whose Lyapunov exponent along $E^c$ is arbitrarily close to zero.
\end{lemma}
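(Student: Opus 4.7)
The plan is to argue by contradiction, assuming there exists $\delta>0$ such that every hyperbolic periodic orbit contained in $H(p)$ has central Lyapunov exponent bounded above by $-\delta$. The strategy is then to propagate this uniform bound from periodic orbits to all invariant measures supported on $H(p)$, and finally to the whole class, forcing $E^c$ to be uniformly contracted. Combined with the uniform contraction of $E^s$ and the domination $E^s\oplus E^c$, this would give the uniform contraction of $E^{cs}$; in the (implicit) non-uniformly-contracted regime where this lemma is applied, this yields a contradiction.

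First, I would use the chain-hyperbolic structure of $H(p)$ to show that periodic measures are dense in the space of invariant probability measures supported on $H(p)$. By Lemma~\ref{l.contper} there is a dense set $\cP_0 \subset H(p)$ of periodic points homoclinically related to the orbit of $p$ whose plaques $\cW^{cs}_q, \cW^{cu}_q$ are contained in the stable and unstable manifolds of $q$ respectively; together with the local product structure of Lemma~\ref{l.bracket0}, any finite orbit segment in $H(p)$ can be shadowed by a hyperbolic periodic orbit homoclinically related to $p$ that remains inside $H(p)$. By Sigmund-type approximation arguments, any ergodic invariant measure on $H(p)$ is then a weak-$*$ limit of periodic measures associated to orbits in $\cP_0$.

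Second, since $x \mapsto \log\|Df_{|E^c_x}\|$ is continuous on $H(p)$ and since $\lambda^c(\mu) = \int \log\|Df_{|E^c}\|\,d\mu$ is continuous under weak-$*$ convergence, the upper bound $\lambda^c \leq -\delta$ along periodic measures carries over to every invariant measure $\mu$ supported on $H(p)$. Since $E^c$ is one-dimensional and $H(p)$ is compact, a standard Cao-type uniformization result then gives the uniform contraction of $E^c$: there exist $N\geq 1$ and $C>0$ such that $\|Df^{nN}_{|E^c_x}\| \leq C\,e^{-n\delta/2}$ for every $x\in H(p)$ and every $n\geq 0$. Together with the uniform contraction of $E^s$ and the domination of the splitting $E^s\oplus E^c$, this yields the uniform contraction of $E^{cs}$.

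The main obstacle is the density of periodic measures for a homoclinic class which is only chain-hyperbolic (not necessarily uniformly hyperbolic). The standard proof for hyperbolic basic sets uses the Anosov closing lemma and specification; here I would replace these by the local product structure of the chain-hyperbolic class (Lemma~\ref{l.bracket0}) together with the trapping of the plaque families, which give a shadowing mechanism inside $H(p)$ itself. Once this density is secured, the rest of the argument is routine: continuity of $\lambda^c$ on the space of invariant measures and the one-dimensional Cao criterion for uniform contraction close the proof.
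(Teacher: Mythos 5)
Your reduction of the lemma to the density of periodic measures is where the argument breaks down: that density is essentially the whole content of the lemma, and the justification you offer for it does not go through. A chain-hyperbolic class is not a hyperbolic basic set: the plaques $\cW^{cs}_x$ are only contained in the \emph{chain-stable} set of $H(p)$ (lemma~\ref{l.chain-stable}), not in the stable set, so the class has no shadowing or specification property, and the local product structure of lemma~\ref{l.bracket0} only tells you that a transverse intersection of two plaques lies in $H(p)$ --- it does not let you close up a nearly recurrent orbit segment into a periodic orbit that shadows it. Moreover the difficulty is concentrated exactly on the measures you cannot avoid: thin trapping forces all central exponents on $H(p)$ to be $\leq 0$, and (taking $K\subset H(p)$ minimal for the failure of uniform contraction of $E^c$) one must handle an ergodic measure whose central exponent is \emph{zero}. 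Such a measure is not hyperbolic, so neither Katok-type closing nor the $C^1$ Anosov closing lemma applies to it, and your ``Sigmund-type'' approximation has no mechanism for producing periodic orbits near it. This is precisely why the paper's proof splits into cases: when $K$ carries hyperbolic measures with central exponent close to $0$ it invokes the $C^1$ closing lemma of~\cite{C2}; when the negative exponents are bounded away from $0$ it invokes Liao's selecting lemma; and in the remaining case (all exponents on $K$ equal to $0$, so that $E^{cu}$ is uniformly expanded over $K$) it closes a periodic pseudo-orbit with small central average via the weak shadowing lemma for dominated splittings.

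A second, related omission: even once a periodic orbit with small central exponent is produced, it is a priori only contained in a neighborhood of $K$, not in $H(p)$. The paper needs an extra argument --- intersecting the trapped center-stable plaques over that orbit with unstable manifolds of points of $K$, and its unstable manifold with center-unstable plaques of the class --- to exhibit a periodic orbit $O'$ lying in the chain-recurrence class of $p$, which equals $H(p)$ by hypothesis. Your stronger claim that the closed-up orbits are ``homoclinically related to $p$'' is not needed here and is only obtained in remark~\ref{r.weak} under the extra assumption that all periodic orbits of $H(p)$ are hyperbolic. The parts of your plan that are sound --- weak-$*$ continuity of $\mu\mapsto\int\log\|Df_{|E^c}\|\,d\mu$ for a continuous one-dimensional bundle, and the Cao-type criterion converting a uniform negative bound on the exponents of all invariant measures into uniform contraction --- are indeed routine; the lemma lives entirely in the step you delegated to ``routine'' shadowing.
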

\begin{remark}\label{r.weak}
If one assumes that all the periodic orbits in $H(p)$ are hyperbolic, then one
can ensure that the obtained periodic orbits are homoclinically related to $p$.
Indeed, since $E^{cs}$ is thin-trapped, all the periodic orbits in $H(p)$
have the same stable dimension and by lemma~\ref{l.linked} are homoclinically related to $p$.
\end{remark}
\begin{proof}
One can consider an invariant compact set $K\subset H(p)$ such that
the restriction of $E^c$ to $K$ is not uniformly contracted and $K$ is minimal
for the inclusion and these properties.
Since the bundle $E^c_{|K}$ is one-dimensional, thin trapped and not uniformly contracted, $K$ coincides
with the support of an ergodic measure $\mu$ whose Lyapunov exponent along $E^c$ is zero.
The exponent of any other measure supported on $K$ is non-positive.

In the case there exists ergodic measures $\mu$ supported on $K$ whose Lyapunov exponent
along $E^c$ is negative and arbitrarily close to zero, the domination $E^{cs}\oplus E^{cu}$
implies that these measures are hyperbolic and the $C^1$-version of Anosov closing lemma
(see~\cite[proposition 1.4]{C2}) ensures that the chain-recurrence class $H(p)$ contains hyperbolic periodic orbits
whose Lyapunov exponent along $E^c$ is arbitrarily close to zero.

In the case there exists ergodic measures supported on $K$ with negative Lyapunov exponent along $E^c$
but never contained in a small interval $(-\delta,0)$, one can argue as in the proof of
\cite[theorem 1]{C2} and apply Liao's selecting lemma. Once again, the chain-recurrence class $H(p)$ contains hyperbolic periodic orbits
whose Lyapunov exponent along $E^c$ is arbitrarily close to zero.

In the remaining case, all the measures supported on $K$ have a Lyapunov exponent along $E^c$ that is equal to zero.
In particular, $E^{cu}$ is uniformly expanded on $K$.
We have also assumed that $E^{cs}$ is thin trapped.
As a consequence, one can choose over the maximal invariant set in a neighborhood of $K$
some plaques $\cD^{cs}$ and $\cD^{cu}$ with arbitrarily small diameter and that are trapped by $f$
and $f^{-1}$ respectively.

For any $\varepsilon>0$ there exists a periodic $\varepsilon$-pseudo-orbit
$x_0,x_1,\dots,x_n=x_0$ contained in $K$ such that the quantity
$$\frac 1 n \sum_{k=0}^{n-1}\log \|Df_{|E^c}(x_k)\|$$
is arbitrarily close to zero. By the weak shadowing lemma~\cite[lemma 2.9]{C2},
there exists a periodic orbit $O_0$ contained in an arbitrarily small neighborhood of $K$
and whose Lyapunov exponent along $E^c$ is close to zero.

The unstable manifold of a point $x\in K$ close to $O_0$ intersects a center-stable plaque
of $O_0$. Since these plaques are trapped and $E^c$ is one-dimensional,
this implies that the center-stable plaques of $O_0$ contains a periodic orbit $O'$
whose stable manifold intersects $W^u(x)$.
On the other hand $W^u(O')$ intersects a center-unstable plaque
of a point of $H(p)$. As a conclusion $O'$ is contained in the chain-recurrence class of $p$.
Since the plaques $\cD^{cs}$ have a small diameter, the Lyapunov exponent of $O'$ along $E^c$
is close to the Lyapunov exponent of $O$, hence is close to zero.

The conclusion of the lemma has been obtained in al the cases.
\end{proof}

\section{Continuation of chain-hyperbolic homoclinic classes}\label{s.continuation}

Let $H(p)$ be a homoclinic class of a diffeomorphism $f$ and
assume that it is a chain-recurrence class endowed with
a partially hyperbolic structure $E^s\oplus E^c\oplus E^u$ such that $\dim(E^c)=1$ and
the bundle $E^{cs}=E^s\oplus E^c$ is thin trapped.
By lemma~\ref{l.robustness}, the homoclinic class $H(p_g)$ is still chain-hyperbolic for the
diffeomorphisms $g$ close to $f$. We explain here, how in certain sense, the points in $H(p)$
can be continued in $H(p_g)$. If $f$ is far from strong homoclinic intersections, proposition~\ref{p.continuation}
shows that the points of $H(p_g)$
are in correspondence with the continuation of the points of $H(p)$ up to some identifications
and blow-ups in the central direction (that can be compared with the blow-up of an Anosov diffeomorphism during
the construction of a ``derived from Anosov'' map).

\subsection{Preliminary constructions}
\paragraph{Local central orientation.}
The bundle $E^c$ on $H(p)$ is one-dimensional and locally trivial.
Moreover it depends continuously on the dynamics $f$.
One deduces that for any $g,g'$ close to $f$, the orientations of $E^c_{g,x}$ and $E^c_{g',x'}$
for two points $x\in H(p_g)$ and $x'\in H(p_{g'})$ can be compared provided $x$ and $x'$ are close
(say at distance less than $\varepsilon$).
To make this precise, one can cover a neighborhood of $H(p)$ by
a finite number of open sets $U_i$ endowed with non-singular one-forms $\alpha_i$
such that $\alpha_i$ never vanishes on the bundle $E^c$.
Two close points $x,x'$ belong to a same open set $U_i$.
Two orientations on $E^c_{g,x}$ and $E^c_{g',x'}$ match if they both coincide with the class
of $\alpha_i$ or the class of $-\alpha_i$. If $x,x'$ are close enough, this does not depend on the open set $U_i$
containing $\{x,x'\}$. If one considers another collection of pairs $(U'_i,\alpha'_i)$,
the orientations on $E^c_{g,x}$ and $E^c_{g',x'}$ still match if the distance between $x$ and $x'$
is small and $g$ is close enough to $f$.

\paragraph{Plaque families.}
In the following one fixes $\delta>0$ small which is a lower bound for the modulus of the Lyapunov exponents
of $p_g$ for $g$ close to $f$.
One chooses some continuous collections of plaque families $(\cW^{cs}_{g})$
for the diffeomorphisms $g$ close to $f$ as given by lemma~\ref{l.robustness}.
Since $E^{cs}$ is thin trapped, the plaques may be chosen with a small diameter
so that the properties stated in lemma~\ref{l.uniqueness-coherence} hold.
Also, by lemma~\ref{l.largestable}, for $g$ that is $C^1$-close to $f$
and for any periodic point $q\in H(p_g)$
whose Lyapunov exponents along $E^{cs}$ are smaller than $-\delta/2$,
the plaque $\cW^{cs}_{g,q}$ is contained in the stable set of $q$.

One will consider local manifolds $W^ {ss}_{g,loc}(x)$ and $W^ {u}_{g,loc}(x)$
for $x\in H(p_g)$ with a small diameter so that $W^ {u}_{g,loc}(x)$ intersects a plaque
$\cW^ {cs}_{g,y}$ in at most one point and the intersection is always transversal.

\paragraph{Shadowing.}
Then, one chooses $\varepsilon>3\varepsilon'>0$ so that the following lemma holds and
so that for any $g,g'$ close to $f$ and any $x\in H(p_g)$ and $y\in H(p_{g'})$
satisfying $d(x,y)<\varepsilon$
the local manifold $W^{u}_{g,loc}(x)$ intersects $\cW^{cs}_{g',y}$.
\begin{lemma}\label{l.expansivity}
There exists $\varepsilon>3\varepsilon'>0$ small such that any diffeomorphisms $g,g'$ close to $f$ satisfy:
\begin{itemize}
\item[--] if $x,y\in H(p_g)$ are two points such that the forward orbit of $x$ is $\varepsilon$-shadowed
by the forward orbit of $y$, then $y\in \cW^{cs}_{g,x}$;
\item[--] if $x,y\in H(p_g)$ are two points $\varepsilon'$-close
such that $y$ belongs to $\cW^{cs}_{g,x}$, then
the forward orbit of $x$ is $\frac \varepsilon 3$-shadowed by the forward orbit of $y$;
\item[--] for any periodic orbit $O\subset H(p_g)$ of $g$
whose central Lyapunov exponent is smaller that $-\delta$,
any periodic orbit of $g'$ that $\varepsilon$-shadows $O$
also $\varepsilon'$-shadows $O$, has a central Lyapunov exponent smaller than $\delta/2$
and is homoclinically related to $p_{g'}$; moreover any point $x\in H(p_g)$ whose backward orbit $\varepsilon$-shadows $O$ belongs to the unstable manifold of $O$.
\end{itemize}
\end{lemma}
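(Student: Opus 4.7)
The plan is to prove the three assertions separately, exploiting the thin trapping of the plaque families $\cW^{cs}_g$ provided by lemma~\ref{l.robustness}, the uniform expansion of $E^u=E^{cu}$, and the classical continuation of hyperbolic periodic orbits. First I fix, on some small $C^1$-neighborhood $\cU$ of $f$, continuous collections of trapped plaque families $(\cW^{cs}_g)_{g\in\cU}$ with plaques of extrinsic diameter smaller than a constant $r>0$, continuous families of small local strong unstable leaves $W^u_{g,loc}$, and a $Dg$-invariant cone field $\cC^u$ around $E^u$ whose vectors are uniformly expanded. The constants will satisfy $r<\varepsilon'<\varepsilon/3$, and $\varepsilon$ will be smaller than the scale on which a local product chart is defined.

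Assertion~(2) is then immediate from trapping: if $y\in\cW^{cs}_{g,x}$, then by induction $g^n(y)\in\cW^{cs}_{g,g^n(x)}$ for every $n\geq 0$, so $d(g^n(x),g^n(y))$ is bounded by the plaque diameter $r<\varepsilon/3$. For assertion~(1), I would use a local product structure on scale $\varepsilon$: for any $x,y\in H(p_g)$ with $d(x,y)<\varepsilon$, the leaf $W^u_{g,loc}(y)$ meets $\cW^{cs}_{g,x}$ transversally at a unique point $z=[x,y]_g$. Applying assertion~(2) to the pair $(x,z)$ yields $d(g^n(x),g^n(z))<\varepsilon/3$ for all $n\geq 0$. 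Arguing by contradiction, if $y\neq z$ then $y$ and $z$ lie on a common local unstable leaf at positive distance, so the uniform expansion of $\cC^u$ combined with the domination $E^{cs}\oplus E^u$ forces $d(g^n(y),g^n(z))$ to grow exponentially until it exits the local chart, and by the triangle inequality $d(g^n(x),g^n(y))$ eventually exceeds $\varepsilon$, contradicting the shadowing hypothesis. Hence $y=z\in\cW^{cs}_{g,x}$.

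For assertion~(3), the domination $E^s\oplus E^c\oplus E^u$ combined with the assumption that the central Lyapunov exponent of $O$ is smaller than $-\delta$ forces the full stable spectrum of $O$ to be bounded above by a uniform negative constant, so $O$ is a hyperbolic periodic orbit of $g$ with a definite spectral gap. Standard continuation via the implicit function theorem then provides, for $g'$ sufficiently close to $f$, a unique hyperbolic periodic orbit $\tilde O$ of $g'$ in a prescribed small neighborhood of $O$, arbitrarily close to $O$ in Hausdorff distance and with Lyapunov exponents close to those of $O$, in particular with central exponent smaller than $-\delta/2$. Choosing $\varepsilon$ smaller than this neighborhood, any periodic orbit $O'$ of $g'$ that $\varepsilon$-shadows $O$ must coincide with $\tilde O$ by uniqueness, and therefore automatically $\varepsilon'$-shadows $O$ after possibly shrinking $\cU$. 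Since $O$ has transverse homoclinic intersections with the orbit of $p_g$ (it lies in $H(p_g)$) and such intersections persist under $C^1$-perturbation, $O'=\tilde O$ is homoclinically related to $p_{g'}$. The final statement — that any $x\in H(p_g)$ whose backward orbit $\varepsilon$-shadows $O$ lies in $W^u(O)$ — is the classical characterization of the local unstable manifold of the hyperbolic orbit $O$ (equivalently, the backward-time analogue of assertion~(1) applied in a center-unstable chart along $O$). The main technical point is to pick $r,\varepsilon,\varepsilon'$ and $\cU$ coherently so that all three items hold uniformly for every $g,g'\in\cU$; this is achieved in the order (2), (1), (3) above.
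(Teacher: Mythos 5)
Your treatment of items (1) and (2) follows the paper's strategy (local product structure plus uniform expansion along $E^u$ for (1), trapping for (2)), but your choice of constants $r<\varepsilon'<\varepsilon/3$ is internally inconsistent. Item (1) needs the bracket $z=W^u_{g,loc}(y)\cap\cW^{cs}_{g,x}$ to exist for \emph{every} pair $x,y$ at distance up to $\varepsilon$; this forces the plaques $\cW^{cs}_{g,x}$ to have radius at least comparable to $\varepsilon$, whereas your proof of item (2) requires their diameter to be below $\varepsilon/3$. With plaques of diameter $r<\varepsilon/3$, a point $y$ at distance close to $\varepsilon$ from $x$ need not have $W^u_{g,loc}(y)$ meeting $\cW^{cs}_{g,x}$ at all, and your argument for (1) collapses. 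This is precisely why item (2) carries the extra hypothesis $d(x,y)<\varepsilon'$ and why the paper works with \emph{two} nested trapped families: the family $\cW^{cs}$ at a scale adapted to $\varepsilon$ (used for the bracket in (1)), and a finer trapped family $\widehat\cW^{cs}$ of diameter less than $\varepsilon/3$ (whose existence uses thin trapping); the coherence lemma~\ref{l.uniqueness-coherence} then transfers $y\in\cW^{cs}_{g,x}$ with $d(x,y)<\varepsilon'$ into $y\in\widehat\cW^{cs}_{g,x}$, after which trapping of the fine family gives (2). Your single-family setup cannot do both jobs.

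For item (3) your route is essentially circular relative to what is being proved. The implicit function theorem yields a continuation $\tilde O$ of $O$ on a neighborhood of $f$ whose size depends on $O$ (its period is unbounded, and $E^c$ is only contracted in the average with rate $-\delta$, not uniformly along the orbit), whereas the lemma requires constants uniform over all such $O$ and all $g,g'$ in a fixed neighborhood; establishing that uniform continuation is exactly the content of the subsequent lemma~\ref{l.cont-periodic}, which is deduced \emph{from} the present statement. Likewise, "any periodic orbit of $g'$ that $\varepsilon$-shadows $O$ coincides with $\tilde O$ by uniqueness" invokes a uniqueness/expansivity property at the uniform scale $\varepsilon$ that is not given by the IFT and is the nontrivial point: the paper proves the $\varepsilon'$-shadowing directly by propagating, around the whole orbit, the intersections of $W^u_{g',loc}(g'^n(q'))$ and $\cW^{cs}_{g',g'^n(q')}$ with fine trapped plaques $\widehat\cW^{cs},\widehat\cW^{cu}$ of diameter much smaller than $\varepsilon'$, using the uniform stable-manifold size from lemma~\ref{l.largestable}. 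The homoclinic relation with $p_{g'}$ is also not automatic from $O\subset H(p_g)$; it comes from the uniform size of $W^s(q')$ inside $\cW^{cs}_{g',q'}$ together with the dense set of periodic points with uniformly large invariant manifolds given by lemma~\ref{l.contper}. Your final claim about $W^u(O)$ is morally correct, but again needs the uniform estimates (it is proved by showing the backward-invariant intersection points $z_n=W^u_{g,loc}(g^{-n}(x))\cap\cW^{cs}_{g,g^{-n}(q)}$ force $z_0=q$ because $\cW^{cs}_{g,g^{-n}(q)}\subset W^s(g^{-n}(q))$), not by citing the classical local unstable manifold of a hyperbolic orbit whose hyperbolicity constants are not uniform in $O$.
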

\begin{proof}
We prove the first item. Let us consider the intersection point $z$ between $\cW^{cs}_{g,x}$ and $W^u_{g,loc}(y)$.
By uniform local invariance of $\cW^{cs}_{g}$, one checks inductively that
the point $g^n(z)$ is the intersection point between
$\cW^{cs}_{g,g^n(x)}$ and $W^u_{g,loc}(g^n(y))$ for $n\geq 0$.
If $z\neq y$, since $z$ and $y$ belong to the same unstable leaf, the distance
$d(g^n(z),g^n(y))$ increases exponentially and becomes much larger than $\varepsilon$, contradicting
that the distance between $g^n(x)$ and $g^n(y)$ is bounded by $\varepsilon$.
One deduces that $y=z$, hence $y$ belongs to $\cW^{cs}_{g,x}$.

Now we choose $\varepsilon'\ll \varepsilon$ and prove the second item.
Since $E^{cs}$ and $E^{u}$ are thin trapped by $f$ and $f^{-1}$,
lemma~\ref{l.robustness} associates some continuous trapped plaque families $\widehat \cW^{cs}_g$ and $\widehat \cW^{cu}_g$
over $H(p_g)$ for $g$ close to $f$ with diameter smaller than $\varepsilon/3$.
From lemma~\ref{l.uniqueness-coherence} if $\varepsilon'$ is small enough, then
for any $x,y\in H(p_g)$ such that $y\in \cW^{cs}_{g,x}$ and $d(x,y)<\varepsilon'$, the point $y$ belongs to $\widehat \cW^{cs}_{g,x}$. By the trapping property, $g^n(y)$ belongs to $\widehat \cW^{cs}_{g,g^n(x)}$
for any $n\geq 0$, hence $d(g^n(x),g^n(y))<\varepsilon/3$ as required.

We then prove the properties of the third item.
We first note that if $g,g'$ are close to $f$ and $\varepsilon$ is small enough, then
any periodic orbit $O'$ of $g'$ that $\varepsilon$-shadows a periodic orbit $O$ of $H(p_g)$
still has a partial hyperbolic structure and has Lyapunov exponents close to those of $O$.
This proves that the central Lyapunov exponent of $O'$ is smaller than $-\delta/2$.

One deduces from lemma~\ref{l.largestable} that for some point $q'\in O'$
the stable manifold of $q$ has  uniform size inside $\cW^{cs}_{g',q'}$.
From lemma~\ref{l.contper}, there exists a dense set of periodic points $x\in H(p_{g'})$
whose stable manifold has a uniform size.
If $\varepsilon$ is small enough
and $g,g'$ close enough to $f$, one thus deduces that
$q'$ is close to a point of $H(p_{g'})$.
From the uniformity of the invariant manifolds, we deduce that
the stable and unstable manifolds of $q'$ intersect the stable and unstable manifolds of a hyperbolic
periodic orbit homoclinically related to $p_{g'}$. In particular, $O'$ is homoclinically related to $p_{g'}$.

Let us consider again, as given by lemma~\ref{l.robustness},
some continuous plaque families $\widehat \cW^{cs}_h$ and $\widehat \cW^{cu}_h$
over $H(p_h)$ for $h$ close to $f$ with diameter much smaller than $\varepsilon'$.
From lemma~\ref{l.uniqueness-coherence} there exists $\rho>0$ such that
for any $g$ close to $f$ and any $x\in H(p_g)$, the ball $B(x,\rho)$
in $\cW^{cs}_{g,x}$ is contained in $\widehat \cW^{cs}_{g,x}$.
From the trapping property,
the following holds for $g,g'$ close to $f$:
if $x\in H(p_g)$ and $y\in H(p_{g'})$ such that $d(x,y)<\varepsilon$
satisfy that $W^u_{g',loc}(y)$ intersects $\widehat \cW^{cs}_{g,x}$, then the same holds for $g(x)$ and $g'(y)$.
Using the estimate~(\ref{e.unif}) in the proof of lemma~\ref{l.largestable},
there exists a uniform integer $N\geq 1$
and an iterate $q\in O$ such that $g^N(\cW^{cs}_{g,q})$ has radius smaller than $\rho$, hence is contained
in $\widehat \cW^{cs}_{g,g^N(q)}$. Let us choose $q'\in O'$ such that $d(g'{}^n(q'),g^n(q))<\varepsilon$
for each $n\in \ZZ$.
Provided that $g,g'$ have been chosen close enough to $f$, the intersection $z_n$
between $W^u_{g',loc}(g'{}^n(q'))$ and $\cW^{cs}_{g,g^n(q)}$ for $0\leq n\leq N$ are close
to the $N$ first iterates of $z_0$ under $g$, hence $z_N$ is contained in $\widehat \cW^{cs}_{g,g^N(q)}$.
By our construction, one deduces that $W^u_{g',loc}(g'{}^n(q'))$
intersects $\widehat \cW^{cs}_{g,g^n(q)}$ for any $n\geq N$, hence any $n\in \ZZ$. 
With the same argument, $\cW^{cs}_{g',g'{}^n(q')}$
intersects $\widehat \cW^{cu}_{g,g^n(q)}$, for any $n\in \ZZ$.
Since the diameter of the plaques $\widehat \cW^{cu}$ and
$\widehat \cW^{cs}$ is much smaller than $\varepsilon'$, one deduces that $g^{n}(q)$ and $g'{}^n(q')$
are at distance smaller than $\varepsilon'$. We have proved that $O$ is $\varepsilon'$-shadowed by $O'$.

Let us now consider a point $x\in H(p_g)$ whose backward orbits $\varepsilon$-shadows
the backward orbit of a point $q\in O$.
Let us introduce for each $n\geq 0$ the intersection point $z_n$
between $W^u_{g,loc}(g^{-n}(x))$ and $\cW^{cs}_{g,g^{-n}(q)}$.
By construction one has $g(z_{n+1})=z_n$ and in particular $z_0$
is contained in the intersection of the $g^n(\cW^{cs}_{g,g^{-n}(q)})$.
By assumption $\cW^{cs}_{g,g^{-n}(q)}$ is contained in the stable manifold of
$g^{-n}(q)$. This proves that $z_0$ coincides with $q$. As a consequence
$z_0$ belongs to $W^u_{g,loc}(q)$.
\end{proof}

\subsection{Continuation of uniform periodic points}
The periodic points with uniform Lyapunov exponents have a uniform hyperbolic continuation.
\begin{lemma}\label{l.cont-periodic}
There exists a simply connected open neighborhood $\cU\subset\diff^1(M)$ of $f$ such that:
\begin{itemize}
\item[--] The hyperbolic continuation of $p$ exists for any $g\in \cU$
and the class $H(p_g)$ is chain-hyperbolic.
\item[--] For any $g\in \cU$ and any periodic orbit $O\subset H(p_g)$ of $g$
whose central Lyapunov exponent is smaller than $-\delta$,
the hyperbolic continuation $O_{g'}$ of $O$ exists for any $g'\in \cU$
and is homoclinically related to $p_{g'}$.
Moreover its central Lyapunov exponent is still smaller than $-\delta/2$,
and $O_{g'}$ is $\frac \varepsilon 3$-shadowed by $O$.
\end{itemize}
\end{lemma}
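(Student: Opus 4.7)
The first item is immediate from lemma~\ref{l.robustness}: there is a $C^1$-neighborhood of $f$ on which $p_g$ is defined as the hyperbolic continuation of $p$ and on which $H(p_g)$ is chain-hyperbolic with continuous families of trapped plaques. I would take $\cU$ to be a simply connected open subset of this neighborhood, for instance a sufficiently small $C^1$-ball around $f$.

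For the second item, the essential point is the uniformity of the hyperbolic rates along $O$. The bundles $E^s$ and $E^u$ are uniformly hyperbolic with rates $\lambda>0$ that persist on the maximal invariant set in a neighborhood of $H(p)$ under small $C^1$-perturbations. Consequently, any orbit $O\subset H(p_g)$ whose central Lyapunov exponent is smaller than $-\delta$ has all its Lyapunov exponents of modulus larger than $\min(\lambda,\delta)$, with estimates uniform in the period $\tau$ of $O$ and in $g$. The standard implicit function theorem argument then produces $\eta_0>0$, depending only on $\lambda$ and $\delta$, such that the hyperbolic continuation $O_{g'}$ of $O$ exists for every $g'$ at $C^1$-distance less than $\eta_0$ from $g$, remains uniformly hyperbolic with rates bounded below by $\min(\lambda,\delta)/2$, and lies in a $C^0$-neighborhood of $O$ whose size is proportional to the perturbation.

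Having shrunk $\cU$ so that it is contained in a $C^1$-ball of radius less than $\eta_0$ (and still simply connected), I would extend the continuation of $O$ along any $C^1$-path from $g$ to $g'$ in $\cU$: at each step of the path the local continuation exists, and by uniqueness these local continuations glue into a continuation along the path. Simple connectedness of $\cU$ ensures that the resulting orbit $O_{g'}$ does not depend on the chosen path. The Hausdorff distance between $O$ and $O_{g'}$ is then smaller than $\varepsilon$ (provided $\cU$ is taken small enough), so that the third item of lemma~\ref{l.expansivity} upgrades this to the desired $\varepsilon/3$-shadowing, yields the bound $-\delta/2$ on the central exponent, and shows that $O_{g'}$ is homoclinically related to $p_{g'}$.

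The main technical obstacle is establishing the uniformity of $\eta_0$ independently of the period $\tau$. This is provided by the dominated splitting together with the uniform hyperbolicity of the extremal bundles, which furnish invariant cone fields around $E^s$ and $E^u$ that are robustly preserved by small $C^1$-perturbations. These cone fields give control on the iterates of $Dg'$ along $O_{g'}$ that depends only on $\lambda$, $\delta$ and the size of the perturbation, not on $\tau$, which is precisely what makes the continuation scheme work uniformly for all periodic orbits simultaneously on the single neighborhood $\cU$.
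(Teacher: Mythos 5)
There is a genuine gap in the second step, namely the claim that ``the standard implicit function theorem argument then produces $\eta_0>0$, depending only on $\lambda$ and $\delta$'', valid simultaneously for every periodic orbit $O\subset H(p_g)$ with central exponent $<-\delta$. The hypothesis on $O$ only bounds the \emph{average} of $\log\|Dg_{|E^c}\|$ over the period $\tau$; it gives no uniform constant in a pointwise estimate $\prod_{i=0}^{n-1}\|Dg_{|E^c}(g^i(q))\|\leq Ce^{-n\delta'}$ valid at every $q\in O$. Partial products of $\|Dg_{|E^c}\|$ along $O$ can be exponentially large in $\tau$ (the domination only caps $\|Dg_{|E^c}\|$ by $\kappa\|Dg_{|E^u}\|$, so the central direction may be expanded along long stretches of the orbit before being contracted), hence the hyperbolicity constant of the cocycle $Dg$ over $O$ — and with it the persistence radius furnished by the implicit function theorem or the shadowing lemma — degenerates as $\tau\to\infty$. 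The cone fields you invoke are invariant around $E^s$ and $E^u$ only; they say nothing about the central direction, which is precisely where the hyperbolicity of $O$ is weak and non-uniform. So the single radius $\eta_0$ on which your whole construction rests is not established.

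The paper sidesteps the need for any uniform radius by an open–closed argument: along a path $(\gamma_t)$ in the simply connected set $\cU$, one considers the maximal interval $I\ni 0$ on which the continuation $O_t$ exists \emph{and} $\frac\varepsilon2$-shadows $O$. Closedness of this condition follows by taking limits of the orbits $O_t$; the limit orbit $\varepsilon$-shadows $O$, so the third item of lemma~\ref{l.expansivity} upgrades this a priori to $\varepsilon'$-shadowing with $\varepsilon'<\varepsilon/3$ together with a central exponent $<-\delta/2$ (this last estimate is exactly the point where shadowing, rather than an abstract perturbation bound, controls the central direction: the exponent is an average of terms computed at points $\varepsilon$-close to those of $O$, so it moves by a quantity independent of $\tau$). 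The strict improvement from $\varepsilon/2$ to $\varepsilon'$ plus the hyperbolicity of the limit orbit give openness at the endpoint — and here only \emph{some} persistence radius is needed, not one uniform in $\tau$ — contradicting maximality. Your proof would be repaired by replacing the uniform-$\eta_0$ step with this bootstrap; the remainder of your argument (uniqueness via simple connectedness, and the application of lemma~\ref{l.expansivity} to get the homoclinic relation and the exponent bound) agrees with the paper.
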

\begin{proof}
Lemma~\ref{l.robustness} gives the existence of an open set $\cU$ satisfying the first item.

Let us consider a path $(\gamma_t)_{t\in [0,1]}$ in $\cU$ between $g$ and $g'$
and the maximal interval $I$ containing $0$ where the hyperbolic continuation $O_t$ of $O$
is defined and $\varepsilon/2$-shadows $O$.
If $I=[0,t_0)$ with $t_0<1$, one can consider a periodic orbit $O_{t_0}$ for $g_{t_0}$
that is the limit of a sequence of orbit $O_t$ for $t<t_0$. By construction $O_{t_0}$
$\varepsilon$-shadows $O$, hence $O_{t_0}$ has a central Lyapunov exponent smaller than $-\delta/2$
and also $\varepsilon'$-shadows $O$ by lemma~\ref{l.expansivity}.
Since $\varepsilon'<\varepsilon/3$, we have contradicted the definition of $t_0$.
Hence, the orbit $O$ has a hyperbolic continuation $O_{g'}$ for $g'$. Since $\cU$ is simply connected,
this continuation is unique. We have shown that $O$ is $\varepsilon'$-shadowed by $O_{g'}$,
hence by lemma~\ref{l.expansivity}, $O_{g'}$ is homoclinically related to $p_{g'}$, has
a central Lyapunov exponent  smaller than $-\delta/2$. Since $\varepsilon'<\varepsilon/3$,
all the properties stated in the second item are satisfied.
\end{proof}
\medskip

This justifies the following definition.
\begin{definition} Let us
 denotes with $\cP$, the set of hyperbolic periodic points $q\in H(p)$ homoclinically related to the orbit of $p$
whose continuation $q_g$ exists for any diffeomorphism
$g\in\cU$ and such that for some $g\in \cU$ the central Lyapunov exponent of $q_g$
is smaller than $-\delta$.
\end{definition}
\noindent
Since for any $g\in \cU$ the central Lyapunov exponents of $p_g$ is smaller than $-\delta$, there exists a dense set of periodic points in $H(p_g)$
whose central Lyapunov exponent is smaller than $-\delta$.
By lemma~\ref{l.cont-periodic}, one deduces that the continuations $q_g$ of points in $q\in\cP$ are dense in $H(p_g)$.

Note also that by lemma~\ref{l.cont-periodic} the central Lyapunov exponent of $q_g$ for $q\in \cP$
is smaller than $-\delta/2$; hence the plaque $\cW^{cs}_{g,q_g}$ is contained in $W^s_{g}(q_g)$.

\subsection{Pointwise continuation of $H(p)$}

\begin{definition}\label{d.continuation}
For any $g,g'\in \cU$, one says that two points $x\in H(p_g)$
and $x' \in H(p_{g'})$ \emph{have the same continuation} if there exists a sequence of
hyperbolic periodic points $(p_n)$ in $\cP$ such that
$(p_{n,g})$ and $(p_{n,g'})$ converge toward $x$ and $x'$ respectively.
\end{definition}
\noindent
This implies that
$g^k(x)$ and $g'^k(x')$ have the same continuation for each $k\in \ZZ$.
\medskip

By compactness and density of the points $q_g$ with $q\in \cP$,
one sees that, for any $g,g'\in \cU$, any point $x\in H(p_g)$
has the same continuation as some $x'\in H(p_{g'})$.
In general $x'$ is not unique.
The following implies that if $x'_1,x'_2\in H(p_{g'})$ have the same continuation as $x$,
then $x'_2$ belongs to $\cW^{cs}_{g',x'_1}$.

\begin{lemma}\label{l.cont-central}
For any $g,g'\in \cU$, let us consider $x\in H(p_g)$
and $x'\in H(p_{g'})$ such that $x$ and $x'$ have the same continuation.
Then, the orbits of $x$ by $g$ is $\frac \varepsilon 3$-shadowed by the orbit of $x'$ by $g'$.

As a consequence, if $x_1,x_2\in H(p_g)$ are $\varepsilon'$-close and satisfy
$x_2\in \cW^{cs}_{g,x_1}$, then for any $x'_1,x'_2\in H(p_{g'})$ such that $x_i,x'_i$ have the same continuation
for $i=1,2$, one still has $x'_2\in \cW^{cs}_{g',x'_1}$.
\end{lemma}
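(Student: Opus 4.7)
The plan is to deduce the shadowing statement directly from the definition of ``same continuation'' by a simple limit argument, and then to combine it with Lemma~\ref{l.expansivity} to propagate the center-stable plaque relation from $(x_1,x_2)$ to $(x'_1,x'_2)$. No new technology is needed beyond keeping track of the uniform constants $\varepsilon>3\varepsilon'$ fixed before Lemma~\ref{l.expansivity}.

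For the first assertion, I would invoke Definition~\ref{d.continuation} to pick a sequence $(p_n)\subset\cP$ with $p_{n,g}\to x$ and $p_{n,g'}\to x'$. Since each $p_n$ has, at some diffeomorphism of $\cU$, central Lyapunov exponent smaller than $-\delta$, Lemma~\ref{l.cont-periodic} applies: starting from the $g$-orbit of $p_{n,g}$, its hyperbolic continuation on the simply connected open set $\cU$ satisfies
\[
d(g^k(p_{n,g}),\,g'^k(p_{n,g'}))\leq \varepsilon/3\qquad\text{for every } k\in\ZZ.
\]
Letting $n\to\infty$ and using the continuity of $g$ and $g'$ on $M$ gives $d(g^k(x),g'^k(x'))\leq \varepsilon/3$ for every $k\in\ZZ$, which is precisely the $\varepsilon/3$-shadowing claim.

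For the consequence, assume $x_1,x_2\in H(p_g)$ are $\varepsilon'$-close with $x_2\in\cW^{cs}_{g,x_1}$. The second item of Lemma~\ref{l.expansivity} then says that the forward $g$-orbit of $x_1$ is $\varepsilon/3$-shadowed by the forward $g$-orbit of $x_2$. If $x'_i\in H(p_{g'})$ has the same continuation as $x_i$ for $i=1,2$, the part of the lemma already established yields $d(g^k(x_i),g'^k(x'_i))\leq \varepsilon/3$ for $i=1,2$ and every $k\geq 0$, and the triangle inequality gives
\[
d(g'^k(x'_1),g'^k(x'_2))\leq \tfrac{\varepsilon}{3}+\tfrac{\varepsilon}{3}+\tfrac{\varepsilon}{3}=\varepsilon\qquad\text{for every } k\geq 0.
\]
Thus the forward $g'$-orbit of $x'_1$ is $\varepsilon$-shadowed by the forward $g'$-orbit of $x'_2$, and the first item of Lemma~\ref{l.expansivity}, applied with $g'$ in place of $g$, delivers $x'_2\in\cW^{cs}_{g',x'_1}$.

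The only delicate point is the matching of constants: one needs the $\varepsilon/3$ shadowing produced by Lemma~\ref{l.cont-periodic} together with the $\varepsilon/3$ shadowing coming from ``$x_2\in\cW^{cs}_{g,x_1}$ with $d(x_1,x_2)<\varepsilon'$'' to fit inside the $\varepsilon$-expansivity window of the first item of Lemma~\ref{l.expansivity}, and this is exactly what the earlier choice $\varepsilon>3\varepsilon'$ was designed to guarantee. I do not expect any genuine obstacle here; the difficulty lies entirely in having set up the uniform constants in the right order, which is already done.
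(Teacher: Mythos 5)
Your proof is correct and follows essentially the same route as the paper's: approximate $x,x'$ by the continuations $(p_{n,g}),(p_{n,g'})$ of points of $\cP$, pass the $\varepsilon/3$-shadowing of Lemma~\ref{l.cont-periodic} to the limit, and then combine the two items of Lemma~\ref{l.expansivity} with the triangle inequality to transfer the plaque relation to $(x'_1,x'_2)$. The bookkeeping of the constants $\varepsilon>3\varepsilon'$ is exactly as in the paper.
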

\begin{proof}
Let us consider a sequence $(p_n) \in \cP$ whose continuations $(p_{n,g})$, $(p_{n,g'})$
for $g$ and $g'$  converges toward $x$ and $x'$ respectively.
From lemma~\ref{l.cont-periodic}, the orbit of $(p_{n,g})$ by $g$ is
$\frac \varepsilon 3$-shadowed by the orbit of $(p_{n,g'})$ by $g'$. Taking the limit, one deduces that
the orbit of $x$ by $g$ is $\frac \varepsilon 3$-shadowed by the orbit of $x'$ by $g'$.

If $x_1,x_2\in H(p_g)$ are $\varepsilon'$-close and satisfy
$x_2\in \cW^{cs}_{g,x_1}$, by lemma~\ref{l.expansivity} the forward orbit of $x_2$ is
$\frac \varepsilon 3$-shadowed by the forward orbit of $x_1$.
By the first part of the lemma, one deduces that
for any $x'_1,x'_2\in H(p_{g'})$ such that $x_i,x'_i$ for $i=1,2$ have the same continuation, then
the forward orbit of $x'_1$ by $g'$
is $\varepsilon$-shadowed by the forward orbit of $x'_2$ by $g'$.
By lemma~\ref{l.expansivity}, this implies that $x'_2\in \cW^{cs}_{g',x'_1}$.
\end{proof}

One then shows that
if $x$ is a hyperbolic periodic point in $\cP$,
then $x'$ coincides with its hyperbolic continuation (hence is unique).
This is also true for the unstable manifold of points in $\cP$.

\begin{lemma}\label{l.cont-unstable}
For any $g\in \cU$, let $q_g$ be the hyperbolic continuation of some point $q\in \cP$
and let us consider some point $x\in W^u(q_g)\cap H(p_g)$.
Then, for any $g'\in \cU$, there exists a unique point $x'\in H(p_{g'})$
which has the same continuation as $x$; moreover $x'$ belongs to $W^u(q_{g'})$ and varies continuously with $g'$.
In particular the hyperbolic continuation $q_{g'}$ of $q_g$ is the unique point in $H(p_{g'})$
such that $q_g$ and $q_{g'}$ have the same continuation (in the sense of the definition~\ref{d.continuation}).
\end{lemma}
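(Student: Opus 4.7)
The plan is to construct $x'$ as a limit of continuations of periodic points in $\cP$ approaching $x$, and then to show it must lie on $W^u(q_{g'})$ by combining the shadowing output of lemma~\ref{l.cont-central} with the local hyperbolicity of $q_{g'}$. Since the continuations of points of $\cP$ are dense in $H(p_g)$, one obtains a sequence $p_n \in \cP$ with $p_{n,g} \to x$; the continuations $p_{n,g'}$ are defined throughout $\cU$ by lemma~\ref{l.cont-periodic}. Extracting a subsequence, $p_{n,g'} \to x' \in H(p_{g'})$, and then by definition~\ref{d.continuation} the points $x$ and $x'$ share the same continuation, so lemma~\ref{l.cont-central} yields the full-orbit $\varepsilon/3$-shadowing $d(g^{-k}(x), g'^{-k}(x')) \leq \varepsilon/3$ for all $k \in \ZZ$.

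The main obstacle is showing $x' \in W^u(q_{g'})$. After replacing $x$ by a sufficiently large backward iterate $g^{-N}(x)$ (and $x'$ by $g'^{-N}(x')$), one may assume that $x$ lies in an arbitrarily small local piece $W^u_{loc}(q_g)$, so that the backward iterates $g^{-k}(x)$ remain for all $k \geq 0$ in an arbitrarily small tubular neighborhood of the orbit of $q_g$. The $\varepsilon/3$-shadowing from the previous paragraph, together with the closeness of the orbits of $q_g$ and $q_{g'}$, then confines $g'^{-k}(x')$ for all $k \geq 0$ to a uniformly small neighborhood of the orbit of $q_{g'}$. Choosing the neighborhood $\cU$ and the constant $\varepsilon$ small enough compared to the uniform linearization scale of $q$ (available by the uniform hyperbolicity granted in lemma~\ref{l.cont-periodic}), local invariant manifold theory at $q_{g'}$ forces any orbit with this backward behavior to lie in $W^u_{loc}(q_{g'})$, whence $x' \in W^u(q_{g'})$.

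Uniqueness and continuity then follow by soft arguments. If $x'' \in H(p_{g'})$ also has the same continuation as $x$, the preceding paragraph applies verbatim and places $x''$ on $W^u(q_{g'})$; on the other hand, the mutual shadowing between $x'$ and $x''$ given by lemma~\ref{l.cont-central} combined with lemma~\ref{l.expansivity} forces $x'' \in \cW^{cs}_{g',x'}$, and the transversality of the center-stable plaques with $W^u(q_{g'})$ yields $x'' = x'$. Continuity of $g' \mapsto x'$ at any $g'_0 \in \cU$ is a compactness argument: for $g'_n \to g'_0$ in $\cU$, any subsequential limit $y^*$ of $x'(g'_n)$ lies in $W^u(q_{g'_0}) \cap H(p_{g'_0})$ and, by a diagonal extraction on the defining sequences of periodic points, has the same continuation as $x$; uniqueness then gives $y^* = x'(g'_0)$. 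Finally, the ``in particular'' statement is obtained by applying the result to $x = q_g \in W^u(q_g) \cap H(p_g)$: the constant sequence $p_n = q$ trivially witnesses that $q_g$ and $q_{g'}$ have the same continuation, and the uniqueness just established identifies $q_{g'}$ as the only point of $H(p_{g'})$ sharing the continuation of $q_g$.
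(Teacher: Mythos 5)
Your overall strategy coincides with the paper's: existence of $x'$ as a limit of continuations of points of $\cP$, membership in $W^u(q_{g'})$ via the triangle inequality on $\varepsilon/3$-shadowings (lemma~\ref{l.cont-periodic} for the orbit of $q$, lemma~\ref{l.cont-central} for $x$, and the convergence of $g^{-n}(x)$ to the orbit of $q_g$), then uniqueness and continuity; the paper simply packages your appeal to ``local invariant manifold theory'' as the third item of lemma~\ref{l.expansivity}. Two steps, however, do not close exactly as you wrote them.

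First, in the uniqueness argument, transversality of $\cW^{cs}_{g',x'}$ with $W^u(q_{g'})$ only makes their intersection discrete, not a single point: the \emph{global} unstable manifold of the orbit of $q_{g'}$ may return and cut a fixed center-stable plaque several times, so $x''\in\cW^{cs}_{g',x'}$ together with $x',x''\in W^u(q_{g'})$ does not yet force $x''=x'$. The repair, which uses only material you already have, is the paper's: apply lemma~\ref{l.expansivity} at every time to get ${g'}^n(x'')\in\cW^{cs}_{g',{g'}^n(x')}$ for all $n\in\ZZ$, then let $n\to-\infty$, where both points lie on a small \emph{local} unstable leaf of the orbit of $q_{g'}$; a local unstable leaf meets a $\cW^{cs}$-plaque in at most one point, as arranged in the preliminary constructions of section~\ref{s.continuation}. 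Second, in the continuity argument, the diagonal extraction does not obviously produce a single sequence in $\cP$ witnessing that $y^*$ has the same continuation as $x$: you control $p^{(n)}_{k(n),g'_n}\to y^*$, but lemma~\ref{l.cont-periodic} only gives $\varepsilon/3$-shadowing (not closeness going to zero) between $p^{(n)}_{k(n),g'_n}$ and $p^{(n)}_{k(n),g'_0}$, and the periods are unbounded, so convergence of the continuations at $g'_0$ to $y^*$ does not follow. It is cleaner to pass to the limit directly in the shadowing estimates, as the paper does: for each fixed $k$ one has ${g'_n}^k(x'(g'_n))\to {g'_0}^k(y^*)$, hence the $g'_0$-orbit of $y^*$ $\varepsilon$-shadows the $g$-orbit of $x$, and the (corrected) uniqueness argument identifies $y^*$ with $x'(g'_0)$.
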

\begin{proof}
Let us consider any $x'\in H(p_{g'})$ which has the same continuation as $x$.
From lemma~\ref{l.cont-periodic}, the orbit of $q_{g'}$ by $g'$ is $\frac \varepsilon 3$-shadowed by
the orbit of $q_g$ by $g$ and from lemma~\ref{l.cont-central}, the orbit of $x$ by $g$ is $\frac \varepsilon 3$-shadowed by
the orbit of $x'$ by $g'$.
There exists $N\geq 1$ such that the backward orbit of $g^{-N}(x)$ is
$\frac \varepsilon 3$-shadowed by the backward orbit of $g^{-N}(q_g)$.
Hence the backward orbit of ${g'}^{-N}(x')$ is $\varepsilon$-shadowed by
the backward orbit of ${g'}^{-N}(q_{g'})$. By lemma~\ref{l.expansivity},
$x$ belongs to the unstable manifold of $g^{-N}(q_g)$.
It remains to prove that $x'$ is the only point in $H(p_{g'})$ which has the same
continuation as $x\in H(p_g)$.

Let $x'_1,x'_2\in H(p_{g'})$ be two points that have the same continuation as $x\in H(p_g)$.
By lemma~\ref{l.cont-central} their orbits under $g'$ both $\frac \varepsilon 3$-shadow the orbit of $x$ under $g$.
By lemma~\ref{l.expansivity},
${g'}^n(x'_2)$ belongs to $\cW^{cs}_{g',{g'}^n(x'_1)}$ for each $n\in \ZZ$.
When $n$ goes to $-\infty$, the points ${g'}^n(x'_2)$ and ${g'}^n(x'_1)$
are contained in a small local unstable manifold of the orbit of $q_{g'}$.
Since the plaques $W^u_{loc}$ and $\cW^{cs}$ intersect in at most one point,
this implies that $x'_1=x'_2$.

Let us denote by $x_{g'}$ the point which has the same continuation as $x$.
In order to prove the continuity of the map $g'\mapsto x_{g'}$,
one considers any limit point $x'$ of points $x_{g'}$ when $g'$ goes to $g$.
As before, the orbit of $x$ by $g$ is $\varepsilon$-shadowed by the orbit of $x'$,
so that ${g}^n(x')$ belongs to the unstable manifold of the orbit of $q$ and to
$\cW^{cs}_{g,{g}^n(x)}$ for each $n\in \ZZ$. This implies $x=x'$.
\end{proof}

\smallskip

\begin{remark}\label{r.cont-unstable}
Lemma~\ref{l.cont-unstable} also implies that definition~\ref{d.continuation} does not depend on the choice
of $\delta$ and $\cU$. Indeed, if one considers $\widetilde \delta\in (0,\delta)$ and $\widetilde \cU\subset \cU$
another neighborhood of $f$, then one gets two sets of periodic points
$\cP\subset \widetilde \cP$.
Let us consider $g,g'\in \widetilde{\cU}$ and two points $x\in H(p_g)$,
$x' \in H(p_{g'})$ which have the same continuation on $\widetilde \cU$ with respect to $\widetilde \cP$;
we claim that they also have the same continuation with respect to $\cP$.
Indeed one considers a sequence $(\widetilde {p}_n)$ in $\widetilde \cP$
such that $(\widetilde {p}_{n,g})$ converges toward $x$.
Then, for each $n$ there exists $p_n\in \cP$ such that
$p_{n,g}$ is close to $\widetilde {p}_{n,g}$. By lemma~\ref{l.cont-unstable},
$p_{n,g'}$ is close to $\widetilde {p}_{n,g'}$, hence one can obtain a sequence
$(p_n)$ in $\cP$ such that $(p_{n,g})$ converges toward $x$ and $(p_{n,g'})$ converges toward $x'$,
as wanted.
\end{remark}

\subsection{Continuations far from strong homoclinic intersections}
For $g\in \cU$ we define $\widetilde{H(p_g)}$
as the set of pairs $\tilde x=(x,\sigma)$ where $x\in H(p_g)$ and $\sigma$
is an orientation of $E^c_{g,x}$, such that
$x$ is accumulated in $H(p_g)\cap \cW^{cs,+}_{g,x}$ where $\cW^{cs,+}_{g,x}$
is the component of $\cW^{cs}_{g,x}\setminus W^{ss}_{loc}(x)$ determined by the orientation $\sigma$
as introduced in section~\ref{ss.one-codim}.

One can view $\widetilde{H(p_g)}$ as a subset of the unitary bundle associated to $E^c_g$ over $H(p_g)$.
The dynamics of $g$ can thus be lifted to $\widetilde{H(p_g)}$ and defines a map $\tilde g$.
One also defines the projection $\pi_g\colon \widetilde{H(p_g)} \to H(p_g)$ such that $\pi_g(x,\sigma)=x$.

\begin{proposition}\label{p.continuation}
Let $H(p)$ be a homoclinic class of a diffeomorphism $f\in \diff^r(M)$ such that
\begin{itemize}
\item[--] it is not a periodic orbit,
\item[--] is a chain-recurrence class endowed with
a partially hyperbolic structure $E^s\oplus E^c\oplus E^u$ such that $\dim(E^c)=1$ and
$E^{cs}=E^s\oplus E^c$ is thin trapped.
\end{itemize}
In a $C^1$-small neighborhood $\cU$ of $f$ in $\diff^1(M)$ we consider
a $C^r$-open connected set $\cV\subset \cU$ such that there is no diffeomorphism $g\in \cV$
whose homoclinic class $H(p_g)$ has a strong homoclinic intersection.

Then, for each $g,g'\in \cV$, the following holds:
\begin{description}
\item[a)] \emph{(Lifting).} The map $\pi_g\colon \widetilde{H(p_g)}\to H(p_g)$ is surjective
and semi-conjugates $\tilde g$ to $g$.
\item[b)] \emph{(Continuation of the lifting).}
For any $\tilde x_g=(x_g,\sigma)\in \widetilde{H(p_g)}$,
there is a unique $\tilde x_{g'}=(x_{g'},\sigma')\in \widetilde{H(p_{g'})}$
such that $x_g=\pi_g(\tilde x_g)$ and $x_{g'}=\pi_{g'}(\tilde x_{g'})$ have the same continuation
and such that the orientations $\sigma$ on $E^c_{g,x_g}$ and $\sigma'$ on $E^c_{g',x_{g'}}$ match;
this defines a bijection $\Phi_{g,g'}\colon \widetilde{H(p_g)}\to \widetilde{H(p_{g'})}$.
We denote $\Phi_g:=\Phi_{f,g}$. 
\item[c)] \emph{(Continuation of the projection).}
For any $x_g\in H(p_g)$ and $x_{g'}\in H(p_{g'})$ having the same continuation,
there exists $\tilde x\in \widetilde{H(p)}$ such that $\pi_g(\Phi_g(\tilde x))=x_g$
and $\pi_{g'}(\Phi_{g'}(\tilde x))=x_{g'}$.
\end{description}
\end{proposition}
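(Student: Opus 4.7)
The plan is to leverage the hyperbolic continuation $q\mapsto q_g$ of $\cP$-periodic points (lemma~\ref{l.cont-periodic}), together with the shadowing and coherence statements of lemmas~\ref{l.expansivity}, \ref{l.uniqueness-coherence}, \ref{l.cont-central} and~\ref{l.cont-unstable}. The underlying fact driving everything is that by hypothesis no $g\in\cV$ has a strong homoclinic intersection; via propositions~\ref{p.generalized-strong-connection} and~\ref{p.generalized-strong-connectionCr} this also excludes \emph{generalized} strong homoclinic intersections for every $g\in\cV$, and it is this exclusion that will rule out the pathological plaque configurations that could otherwise obstruct lifting, uniqueness, and continuation.

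\textbf{Part (a).} Fix $g\in\cV$ and $x\in H(p_g)$. If $x$ were not accumulated by $H(p_g)$ in either connected component of $\cW^{cs}_{g,x}\setminus W^{ss}_{loc}(x)$, then approximating $x$ by $\cP$-periodic points and re-running the dichotomy in the proof of lemma~\ref{l.NGSHI} inside $H(p_g)$ would force the existence of two periodic points homoclinically related to $p_g$ exhibiting a non-trivial strong-stable/unstable intersection, i.e.\ a generalized strong homoclinic intersection for~$g$, forbidden on~$\cV$. Hence at least one orientation $\sigma$ provides a valid preimage of $x$, so $\pi_g$ is surjective. The semi-conjugacy $\pi_g\circ\tilde g=g\circ\pi_g$ then reduces to the fact that $g$ sends $\cW^{cs,+}_{g,x}$ into $\cW^{cs,+}_{g,g(x)}$, which follows from the trapping of the plaque family and the continuity of the orientation of $E^c_g$.

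\textbf{Part (b).} Given $\tilde x_g=(x_g,\sigma)$, I would first construct a lift: by density of the continuations of $\cP$-points in $H(p_g)$ (lemmas~\ref{l.contper} and~\ref{l.cont-periodic}) combined with the accumulation of $H(p_g)$ on the $\sigma$-side of $x_g$, pick $p_n\in\cP$ with $p_{n,g}\in\cW^{cs,+}_{g,x_g}$ and $p_{n,g}\to x_g$; extract a subsequence along which $p_{n,g'}$ converges to some $x_{g'}\in H(p_{g'})$, which by construction shares a continuation with $x_g$. For large $n$, the relative position of $p_{n,g'}$ with respect to $W^{ss}_{loc}(x_{g'})$ inside $\cW^{cs}_{g',x_{g'}}$ is determined, through the continuous local trivialization of $E^c$, by $\sigma$; this fixes the matching orientation $\sigma'$ and hence the lift $\tilde x_{g'}$. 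The hard part is uniqueness. Suppose a second lift $(y_{g'},\tau')$ exists; by lemma~\ref{l.cont-central} we have $y_{g'}\in\cW^{cs}_{g',x_{g'}}$. If $y_{g'}\ne x_{g'}$ share a local strong-stable leaf, then approximating each by $\cP$-periodic points and invoking lemma~\ref{joint-int-easy} (using proposition~\ref{p.generalized-strong-connectionCr} to preserve the $C^{1+\al}$-topology when needed) yields, arbitrarily close to $g'$ inside $\cV$, a strong homoclinic intersection, contradicting the standing hypothesis. If instead $y_{g'}\ne x_{g'}$ lie on distinct center leaves of the plaque, then the matching of both $\sigma'$ and $\tau'$ with the single orientation $\sigma$ through the continuous trivialization of $E^c_{g'}$ forces the two witness sequences to accumulate $x_g$ from opposite $\pm$-sides in $\cW^{cs}_{g,x_g}$, contradicting the initial choice of~$\sigma$. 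Bijectivity of $\Phi_{g,g'}$ follows by applying the same construction with the roles of $g$ and $g'$ exchanged, so that $\Phi_{g',g}$ becomes its inverse.

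\textbf{Part (c) and main obstacle.} Given $x_g\in H(p_g)$ and $x_{g'}\in H(p_{g'})$ sharing a continuation, pick a witness sequence $(p_n)\subset\cP$; after a further extraction, $p_{n,f}\to x_f\in H(p_f)$ with $p_{n,f}$ eventually on a single fixed $\pm$-side of $W^{ss}_{loc}(x_f)$, giving an orientation $\sigma$ on $E^c_{f,x_f}$. Then $\tilde x:=(x_f,\sigma)\in\widetilde{H(p_f)}$, and the very same witness sequence realises $\Phi_g(\tilde x)$ and $\Phi_{g'}(\tilde x)$ as the uniquely characterised lifts of $x_g$ and $x_{g'}$ produced in~(b). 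The delicate step throughout is the uniqueness in~(b): the relation ``same continuation'' is a priori multi-valued (precisely at the points of $H(p_g)$ whose preimage in $\widetilde{H(p_g)}$ is non-trivially blown up), and it is the orientation data --- made meaningful by the exclusion of (generalized) strong homoclinic intersections inside $\cV$ --- that collapses this multi-valuedness to a genuine bijection.
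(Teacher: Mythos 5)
Your overall strategy coincides with the paper's (approximate by the hyperbolic continuations of $\cP$-periodic points, use density and the exclusion of strong homoclinic intersections on $\cV$), and part (a) is fine. But there is a genuine gap at the heart of parts (b) and (c): you repeatedly assert that the side of $W^{ss}_{loc}(x_{g'})$ on which $p_{n,g'}$ (or $W^u_{g',loc}(p_{n,g'})$) lands is ``determined, through the continuous local trivialization of $E^c$, by $\sigma$.'' The trivialization of $E^c$ only lets you \emph{compare} orientations of the central bundle at nearby points for nearby diffeomorphisms; it says nothing about which component of $\cW^{cs}_{g',x_{g'}}\setminus W^{ss}_{loc}(x_{g'})$ an unstable manifold actually meets. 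The paper supplies this missing mechanism as lemma~\ref{l.ordering}: if $W^u_{g,loc}(q_{1,g})$ meets $\cW^{cs,+}_{g,q_{2,g}}$ then $W^u_{g',loc}(q_{1,g'})$ cannot meet $\cW^{cs,-}_{g',q_{2,g'}}$, because by connectedness of $\cV$ a flip of sides would force, at some intermediate $h\in\cV$, an intersection of $W^u_{h,loc}(q_{1,h})$ with $W^{ss}_{h,loc}(q_{2,h})\setminus\{q_{2,h}\}$ and hence (lemma~\ref{joint-int-easy}) a strong homoclinic intersection. This order-preservation statement between \emph{pairs of periodic points} is what every side-transfer in the proof rests on, and your proposal never states or proves it; without it the existence of the lift $\tilde x_{g'}$, the uniqueness argument in the ``distinct center leaves'' case, and the claim in (c) that one witness sequence works simultaneously for $g$ and $g'$ are all unsupported.

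A second, smaller omission: even granting lemma~\ref{l.ordering}, it only excludes the wrong side --- it does not prevent $W^u_{g',loc}(p_{n,g'})$ from intersecting exactly $W^{ss}_{loc}(x_{g'})$, in which case your sequence fails to witness accumulation on the $\sigma'$-side and $\tilde x_{g'}$ need not lie in $\widetilde{H(p_{g'})}$. The paper handles this degenerate case by replacing the sequence $(p_n)$ with a new sequence $(\bar p_n)$ via lemma~\ref{l.integrability} (each $p_{n,g'}$ is itself accumulated on the correct side, so a nearby periodic point has its local unstable manifold genuinely inside $\cW^{cs,+}_{g',x_{g'}}$, with the intersection point in $H(p_{g'})$ by lemma~\ref{l.bracket0}). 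Your uniqueness argument for two lifts on the same strong stable leaf (via lemma~\ref{joint-int-easy}) is correct, but the case of lifts on distinct leaves again needs the order transfer of lemma~\ref{l.ordering} applied to the two witness sequences, not a contradiction with ``the initial choice of $\sigma$'': both sequences accumulate $x_g$ from the same $\sigma$-side by construction, and the contradiction only appears when one compares the relative order of $p^1_{n}$ and $p^2_{m}$ for $g$ versus for $g'$.
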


\begin{remarks}\label{r.continuity}
One may consider on $\widetilde{H(p_g)}$ the topology induced by $E^c_g$.
This set is in general not compact since a sequence of points $x_n\in H(p_g)$ that are accumulated
in $H(p_g)\cap \cW^{cs,+}_{g,x_n}$ may converge toward a point $x\in H(p_g)$ which is not accumulated
in $\cW^{cs,+}_{g,x}$.
One can show however that the map $(g,\tilde x)\mapsto \Phi_g(\tilde x)$ is semi-continuous.
\end{remarks}

The next lemma is used in the proof of the proposition~\ref{p.continuation} and of lemma~\ref{l.continuite}.
\begin{lemma}\label{l.ordering}
Let us consider $q_1,q_2\in \cP$ and $g,g'\in \cV$ such that
$d(q_{1,g},q_{2,g})<\varepsilon/3$.
If $W^u_{g,loc}(q_{1,g})$ intersects $\cW^{cs,+}_{g,q_{2,g}}$, then
$W^u_{g',loc}(q_{1,g'})$ does not intersect $\cW^{cs,-}_{g',q_{2,g'}}$.
\end{lemma}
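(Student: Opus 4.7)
The plan is to argue by contradiction: suppose, in addition to the hypothesis, that $W^u_{g',loc}(q_{1,g'})$ does intersect $\cW^{cs,-}_{g',q_{2,g'}}$. Since $\cV$ is $C^r$-open and connected, choose a $C^r$-continuous path $(g_t)_{t\in[0,1]}\subset\cV$ from $g_0=g$ to $g_1=g'$. After shrinking $\cU$ from lemma~\ref{l.cont-periodic} if necessary, each hyperbolic continuation $q_{i,t}$ (for $i=1,2$) varies by a distance much smaller than $\varepsilon/3$ along the path, so that $d(q_{1,t},q_{2,t})<\varepsilon$ remains valid for all $t\in[0,1]$. By the defining property of $\varepsilon$ stated just before lemma~\ref{l.expansivity}, this forces $W^u_{t,loc}(q_{1,t})$ to intersect $\cW^{cs}_{t,q_{2,t}}$, and the dominated splitting $E^{cs}\oplus E^{cu}$ ensures the intersection is transversal and thus reduced to a unique point $z_t$ that depends continuously on $t$. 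By hypothesis $z_0\in\cW^{cs,+}_{0,q_{2,0}}$ and $z_1\in\cW^{cs,-}_{1,q_{2,1}}$.

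Using that the plaques and strong stable manifolds vary continuously and that the local orientation of the one-dimensional bundle $E^c$ matches continuously along the path, the boundary $W^{ss}_{t,loc}(q_{2,t})$ separating $\cW^{cs,+}_{t,q_{2,t}}$ from $\cW^{cs,-}_{t,q_{2,t}}$ must be crossed at some $t_0\in(0,1)$, i.e. $z_{t_0}\in W^{ss}_{t_0,loc}(q_{2,t_0})$. The intersection point $z_{t_0}$ is a transversal intersection of $W^u(q_{1,t_0})$ with $W^s(q_{2,t_0})$ lying on the strong stable leaf of $q_{2,t_0}$; it is distinct from $q_{2,t_0}$, since $q_{2,t_0}$ would otherwise lie on the local unstable manifold of the distinct periodic orbit $q_{1,t_0}$, which is impossible.

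Because $q_1,q_2\in\cP$, the periodic orbits $q_{1,t_0}$, $q_{2,t_0}$ and $p_{t_0}$ are pairwise homoclinically related, and the homoclinic class $H(p_{t_0})=H(q_{2,t_0})$ carries the dominated splitting $E^s\oplus(E^c\oplus E^u)$ with $\dim E^s$ strictly smaller than the stable dimension of $q_{2,t_0}$. Applying lemma~\ref{joint-int-easy} with $q=q_{2,t_0}$, $p_x=q_{1,t_0}$, $p_y=q_{2,t_0}$, $x=z_{t_0}$, and $y=q_{2,t_0}$ — the two being distinct transversal intersection points of unstable manifolds with $W^s(q_{2,t_0})$ sharing the strong stable leaf $W^{ss}(q_{2,t_0})$ — produces a diffeomorphism $h$ arbitrarily $C^r$-close to $g_{t_0}$ such that $H(p_h)$ exhibits a strong homoclinic intersection. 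Since $\cV$ is $C^r$-open and contains $g_{t_0}$, the perturbation $h$ belongs to $\cV$, contradicting the defining property of $\cV$ and thereby the negation of the conclusion.

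The main obstacle is to ensure that the intersection point $z_t$ genuinely persists and varies continuously throughout the entire path rather than disappearing and possibly reappearing on the other side of $W^{ss}_{t,loc}(q_{2,t})$; this reduces to the uniform bound $d(q_{1,t},q_{2,t})<\varepsilon$ along the whole path, which is secured by choosing $\cU$ small enough in the construction of proposition~\ref{p.continuation} that each continuation $g\mapsto q_{i,g}$ ($i=1,2$) oscillates by much less than $\varepsilon/3$ on $\cU$. Everything else then follows from continuity of the plaque families and transversality arguments already in place.
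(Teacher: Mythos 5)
Your proof is correct and follows essentially the same route as the paper: both arguments use the $\varepsilon/3$-shadowing of lemma~\ref{l.cont-periodic} to keep $d(q_{1,h},q_{2,h})<\varepsilon$ over all of $\cV$ (so the transverse intersection of $W^u_{loc}(q_{1,h})$ with $\cW^{cs}_{h,q_{2,h}}$ persists), then invoke connectedness of $\cV$ to find an intermediate parameter where that intersection point lands on $W^{ss}_{loc}(q_{2})\setminus\{q_{2}\}$, and finally apply lemma~\ref{joint-int-easy} to produce a strong homoclinic intersection inside $\cV$, contradicting its defining property. The extra care you take in justifying continuity of the intersection point and its distinctness from $q_{2,t_0}$ only makes explicit what the paper leaves implicit.
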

\begin{proof}
By lemma~\ref{l.cont-periodic} and our choice of $\varepsilon$,
one has $d(q_{1,h},q_{2,h})<\varepsilon$ for any $h\in \cU$, hence
$W^u_{h,loc}(q_{1,h})$ intersects $\cW^{cs}_{h,q_{2,h}}$.
Now, $W^{ss}_{h,loc}(q_{2,h})$ is one-codimensional in $\cW^{cs}_{h,q_{2,h}}$
and varies continuously with $h$.
Let us assume that $W^u_{g,loc}(q_{1,g})$ intersects $\cW^{cs,+}_{g,q_{2,g}}$ and that
$W^u_{g',loc}(q_{1,g'})$ intersects $\cW^{cs,-}_{g',q_{2,g'}}$.
By connectedness of $\cV$,
one deduces that for some $h_0\in \cV$ the local  manifolds $W^u_{h_0,loc}(q_{1,h_0})$
and $W^{ss}_{h_0,loc}(q_{2,h_0})\setminus \{q_{1,h_0}\}$ intersect.
By using lemma~\ref{joint-int-easy} one gets a diffeomorphism $h\in \cV$ having a strong
homoclinic intersection in $H(p_h)$, giving a contradiction.
\end{proof}

\begin{lemma}\label{l.continuite}
Under the setting of proposition~\ref{p.continuation},
if $(g_n)$ converges in $\cV$ toward $g$ and
$(\tilde x_n)$ toward $\tilde x$ in $\widetilde {H(p)}$, then
any limit $\bar x$ of $(\Phi_{g_n}(\tilde x_n))$ satisfies
$\pi_g(\bar x)\in \cW^{cs}_{g,x_g}\setminus \cW^{cs,+}_{g,x_g}$ where $x_g=\pi_g(\tilde x)$.
\end{lemma}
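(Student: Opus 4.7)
The plan is to first show $\bar y := \pi_g(\bar x)$ lies in $\cW^{cs}_{g,x_g}$ via an expansivity/shadowing argument, and then rule out the possibility $\bar y \in \cW^{cs,+}_{g,x_g}$ using lemma~\ref{l.ordering} together with the standing assumption that no $h \in \cV$ has a strong homoclinic intersection in $H(p_h)$. Throughout, write $\tilde x_n = (x_n,\sigma_n)$, $\tilde x = (x,\sigma)$, $\Phi_{g_n}(\tilde x_n) = (y_n,\tau_n)$, so that $y_n \to \bar y$; by construction of $\Phi_{g_n}$, the points $x_n \in H(p)$ and $y_n \in H(p_{g_n})$ have the same continuation, and the orientations $\sigma_n,\tau_n$ match.

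\textbf{Step 1 (membership in $\cW^{cs}_{g,x_g}$).} Apply lemma~\ref{l.cont-central} to the pair $(x_n,y_n)$: one has $d(f^k(x_n),g_n^k(y_n)) \leq \varepsilon/3$ for every $k \in \ZZ$. Since $g_n \to g$ in $C^1$, the iteration $g_n^k(y_n) \to g^k(\bar y)$ for each fixed $k$, so passing to the limit yields $d(f^k(x),g^k(\bar y)) \leq \varepsilon/3$. Applying lemma~\ref{l.cont-central} once more to $(x,x_g)$, which share a common continuation by definition of $x_g$, gives $d(f^k(x),g^k(x_g)) \leq \varepsilon/3$. Combining these two estimates by the triangle inequality, the $g$-orbit of $x_g \in H(p_g)$ is $\varepsilon$-shadowed by the $g$-orbit of $\bar y \in H(p_g)$, so the first item of lemma~\ref{l.expansivity} forces $\bar y \in \cW^{cs}_{g,x_g}$.

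\textbf{Step 2 (ruling out the $+$-component).} Suppose by contradiction $\bar y$ lies strictly on the $\sigma$-side of $x_g$, i.e.\ $\bar y \in \cW^{cs,+}_{g,x_g}$. Because $\cP$-continuations are dense in $H(p_h)$ uniformly in $h \in \cU$ and unstable plaques of periodic points in $\cP$ depend continuously on $h$ and intersect $\cW^{cs}$ transversally, one can choose periodic points $q_1, q_2 \in \cP$ whose $g$-continuations lie arbitrarily close to $x_g$ and $\bar y$ respectively, and such that $W^u_{g,loc}(q_{2,g})$ meets $\cW^{cs}_{g,q_{1,g}}$ strictly on the $\sigma$-side (this configuration is open, and available under the hypothesis $\bar y \in \cW^{cs,+}_{g,x_g}$). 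On the other hand, by Step~1 the $f$-orbit of $x$ is $\varepsilon/3$-shadowed by the $g$-orbit of $\bar y$, and the orientation-matching condition in the definition of $\Phi_{g_n}$ forces the accumulating periodic points of $\tilde x_n$ on the $\sigma_n$-side to correspond, after taking limits and using lemma~\ref{l.expansivity}, to a placement of $W^u_{f,loc}(q_{2,f})$ relative to $\cW^{cs}_{f,q_{1,f}}$ on the opposite side (otherwise one would obtain, by the same shadowing argument as in Step 1, that $\tilde x_n$ is accumulated from the wrong side for large $n$, contradicting $\tilde x_n \in \widetilde{H(p)}$ with orientation $\sigma_n$). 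Lemma~\ref{l.ordering} applied to $(q_1,q_2)$ and to $f, g \in \cV$ is then violated, and the intermediate-value argument in its proof produces some $h \in \cV$ with $W^u_{h,loc}(q_{1,h}) \cap (W^{ss}_{h,loc}(q_{2,h})\setminus\{q_{1,h}\}) \neq \emptyset$; by lemma~\ref{joint-int-easy} this gives a diffeomorphism in $\cV$ whose class $H(p_h)$ exhibits a strong homoclinic intersection, contradicting the standing hypothesis on $\cV$.

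The main technical obstacle is the sign bookkeeping in Step 2: one must carefully track how the matching of orientations in $\Phi_{g_n}$, the position of accumulating periodic points in $\cP$, and the sliding of center-stable plaques under $g_n \to g$ combine to force incompatible sides of intersection for the pair $(q_1,q_2)$ across $f$ and $g$. Step 1 is a routine application of lemma~\ref{l.cont-central}, but Step 2 is where the hypothesis excluding strong homoclinic intersections in $\cV$ is genuinely used, through lemma~\ref{l.ordering} and connectedness of $\cV$.
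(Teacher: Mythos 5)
Your Step 1 is fine and even makes explicit a point the paper leaves implicit: the shadowing estimates from lemma~\ref{l.cont-central} passed to the limit, combined with the first item of lemma~\ref{l.expansivity}, do give $\bar y\in\cW^{cs}_{g,x_g}$ (modulo a word on why $\bar y\in H(p_g)$). The problem is Step 2, which contains a genuine gap. You choose $q_1,q_2\in\cP$ only by the position of their $g$-continuations (near $x_g$ and near $\bar y$), and then you need to know on which side $W^u_{f,loc}(q_{2,f})$ meets $\cW^{cs}_{f,q_{1,f}}$ for the diffeomorphism $f$ in order to contradict lemma~\ref{l.ordering}. Nothing in your selection pins this down, and the justification you offer is not valid: if $W^u_{f,loc}(q_{2,f})$ happened to meet the $\sigma$-side as well, this would only say that $x_f$ is accumulated by $H(p)$ from the $+$ side, which is exactly what $\tilde x\in\widetilde{H(p)}$ asserts; membership in $\widetilde{H(p)}$ does not forbid accumulation from the other side, so no contradiction with ``$\tilde x_n\in\widetilde{H(p)}$ with orientation $\sigma_n$'' can arise.

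The missing idea is a diagonal extraction from the very sequences that define the lifts. By construction of $\Phi_{g_n}(\tilde x_n)$ one can produce a single sequence $(p_m)$ in $\cP$ such that $p_{m,f}\to x_f$ with $W^u_{f,loc}(p_{m,f})$ meeting $\cW^{cs,+}_{f,x_f}$, while $p_{m,g_m}\to\bar x$. Fixing $n$ large (so $W^u_{f,loc}(p_{n,f})$ crosses $\cW^{cs,+}_{f,x_f}$ at a definite distance from $x_f$, and $p_{n,g}$ is close to $x_g$) and then taking $m\gg n$, one gets $W^u_{f,loc}(p_{n,f})\cap\cW^{cs,+}_{f,p_{m,f}}\neq\emptyset$ for $f$, whereas $p_{n,g_m}$ is close to $x_g$ and $p_{m,g_m}$ is close to $\bar x\in\cW^{cs,+}_{g,x_g}$, so $W^u_{g_m,loc}(p_{n,g_m})\cap\cW^{cs,-}_{g_m,p_{m,g_m}}\neq\emptyset$. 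The contradiction with lemma~\ref{l.ordering} is thus between $f$ and $g_m$, not between $f$ and $g$ as you propose: comparing $f$ with $g$ directly fails because the only uniform control on $d(p_{m,g},p_{m,g_m})$ is the $\varepsilon/3$-shadowing bound of lemma~\ref{l.cont-periodic}, which does not tend to $0$ as $m\to\infty$, so one cannot conclude that $p_{m,g}$ is close to $\bar y$. You need to repair Step 2 along these lines.
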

\begin{proof}
Let us assume by contradiction that $\bar x$ belongs to $\cW^{cs,+}_{g,x_g}$.
There exists a sequence $(p_n)$ in $\cP$ that converges toward $x_f=\pi_f(\tilde x)$
such that $W^u_{loc}(p_n)$ intersects $\cW^{cs,+}_{x_f}$
and $(p_{n,g_n})$ converges towards $\bar x$.
By proposition~\ref{p.continuation}, the sequence $(p_{n,g})$ converges toward $x_g$.
Hence, one can consider $n$ large such that $p_{n,g}$ is close to $x_g$.
By continuity of the map $h\mapsto p_{n,h}$, the point $p_{n,h}$ is still close to $x_h$ for a diffeomorphisms $h$ nearby. For $m$ large enough,
$p_{n,g_m}$ is close to $x_g$ and $p_{m,g_m}$ is close to $\bar x$.
One deduces that $W^u_{loc,g_m}(p_{n,g_m})$ meets $\cW^{cs,-}_{g_m,p_{m,g_m}}$.
On the other hand, since $W^u_{loc}(p_n)$ meets $\cW^{cs,+}_{x_f}$,
the local manifold $W^u_{loc}(p_{n,g})$ meets $\cW^{cs,+}_{g,p_{m}}$.
The lemma~\ref{l.ordering} below contradicts our assumption that $f$ is far from homoclinic intersections.
\end{proof}

\begin{proof}[Proof of proposition~\ref{p.continuation}]
We introduce the open set $\cU$ and the collection of periodic points $\cP$ as
in the previous sections.

The item a) of the proposition is a direct consequence of lemmas~\ref{l.NGSHI}
and~\ref{joint-int-easy}.
The item b) is first proved in the case $x_g$ is the hyperbolic continuation $q_g$
of a periodic point $q\in \cP$. In this case there is only one possible continuation $x_{g'}$.
We are thus reduced to prove.
\medskip

\noi{\it Claim 1.
Consider any periodic point $q\in \cP$ and an orientation $\sigma$ on $E^c_q$. If
$q_g$ is accumulated by $H(p_g)\cap \cW^{cs,+}_{g,q_g}$ for some $g\in \cV$, then the same
holds for any $g$.}
\begin{proof}
Let us consider $g\in \cV$ such that $q_g$ is accumulated by $H(p_g)\cap \cW^{cs,+}_{g,q_g}$.
In particular, there exists a sequence $(p_n)$ in $\cP$ such that $(p_{n,g})$ converges toward $q_g$
and $W^u_{g,loc}(p_{n,g})$ intersects $\cW^{cs,+}_{g,q_g}$.
By lemma~\ref{l.cont-unstable}, the sequence $(p_{n,g'})$ converges toward $q_{g'}$.
Moreover $W^u_{g',loc}(p_{n,g'})$ does not intersect $W^{ss}_{g',loc}(q_{g'})$
since this would contradict our assumptions
by lemma~\ref{joint-int-easy}.
Also by lemma~\ref{l.ordering}, $W^u_{g',loc}(p_{n,g'})$ does not intersects $\cW^{cs,-}_{g',q_{g'}}$.
One thus deduces that $W^u_{g',loc}(p_{n,g'})$ intersects $\cW^{cs,+}_{g',q_{g'}}$.
The intersection point belongs to $H(p_{g'})$ by lemma~\ref{l.bracket0}, hence $q_{g'}$ is accumulated
by $H(p_{g'})\cap \cW^{cs,+}_{g',q_{g'}}$.
\end{proof}
\medskip

We now prove the item b) in the general case.
\medskip

\noi{\it Claim 2.
Let us consider $x_g\in H(p_g)$ and $x_{g'}\in H(p_{g'})$ and a sequence $(p_n)$ in $\cP$
such that $(p_{n,g})$ converges toward $x_g$ and $(p_{n,g'})$ converges toward $x_{g'}$.
If the local unstable manifolds $W^u_{g,loc}(p_{n,g})$ intersect $\cW^{cs,+}_{g,x_g}$,
then there exists another sequence $(\bar p_n)$ in $\cP$ having the same properties as $(p_n)$
and which satisfies furthermore that
the local unstable manifolds $W^u_{g',loc}(\bar p_{n,g'})$ intersect $\cW^{cs,+}_{g',x_{g'}}$.}
\begin{proof}
We first remark that each point $(p_{n,g})$, with $n$ large enough,
is accumulated by $H(p_{g})\cap \cW^{cs,+}_{g,p_{n,g}}$.
Indeed, $W^u_{g,loc}(p_{1,g})$ intersects $\cW^{cs,+}_{g,p_{n,g}}$ for $n$ large at some point
$y_n$ which belongs to $H(p_g)$ by lemma~\ref{l.bracket0}.
By lemma~\ref{l.largestable}, $\cW^{cs,+}_{g,p_{n,g}}$ is contained in the stable manifold of $p_{n,g}$,
hence the forward orbit of $y_n$ accumulates the orbit of $p_{n,g}$, proving the announced property.
From claim 1, the points $p_{n,g'}$ are also accumulated by 
$H(p_{g'})\cap \cW^{cs,+}_{g',p_{n,g'}}$.

Now we note that $W^u_{g',loc}(p_{n,g'})$ does not intersect $\cW^{cs,-}_{g',x_{g'}}$.
Indeed, if this occurs, one would deduce that for $m\gg n$
the manifold $W^u_{g',loc}(p_{n,g'})$ intersects $\cW^{cs,-}_{g',p_{m,g'}}$
and that $W^u_{g,loc}(p_{n,g})$ intersects $\cW^{cs,+}_{g,p_{m,g}}$.
By lemma~\ref{l.ordering} this would contradict our assumptions.
If $W^u_{g',loc}(p_{n,g'})$ intersects $\cW^{cs,+}_{g',x_{g'}}$ for a subsequence $(\bar p_n)$ of $(p_n)$,
the claim holds. We thus reduced to consider the case
$W^u_{g',loc}(p_{n,g'})$ intersect $W^{ss}_{g',loc}(x_{g'})$. We denote by $z_n$ the intersection.
Since $p_{n,g'}$ is accumulated by $H(p_{g'})\cap \cW^{cs,+}_{g',p_{n,g'}}$,
lemma~\ref{l.integrability} implies that there exists $\bar p_n\in \cP$ such that
\begin{itemize}
\item[--] $\bar p_{n,g'}$ is close to $p_{n,g'}$ (hence $(\bar p_n)$ has the same properties as $(p_n)$),
\item[--] $W^u_{g',loc}(\bar p_{n,g'})$ intersects $\cW^{cs,+}_{g',x_{g'}}$ as announced.
\end{itemize}
\end{proof}

The last claim implies the existence statement of the item b): if $\tilde x_g$ belongs to $\widetilde{H(p_g)}$,
one may approximate the points of $H(p_g)\cap \cW^{cs,+}_{g,x_g}$ by periodic points that are the continuations
for $g$ of points in $\cP$. Hence, there exists a sequence $(p_n)$ in $\cP$ such that
$W^u_{g,loc}(p_{n,g})$ intersects $\cW^{cs,+}_{g,x_g}$ for each $n$. Taking a subsequence, one may also assume that
the points $p_{n,g'}$ converge toward a point $x_{g'}\in H(p_{g'})$. One defines $\tilde x_{g'}=(x_{g'},\sigma)$
such that $\sigma$ is the orientation with matches with the orientation of $\tilde x_g$.
By the previous claim, one can replace the sequence $(p_n)$ by another one $(\bar p_n)$ such that
$(\bar p_{n,g'})$ still converges toward $x_{g'}$ and furthermore
$W^u_{g',loc}(\bar p_{n,g'})$ intersects $\cW^{cs,+}_{g',x_{g'}}$ for each $n$.
The intersection point belongs to $H(p_{g'})$ by lemma~\ref{l.bracket0}, hence $\tilde x_{g'}$ belongs to
$\widetilde {H(p_{g'})}$, as required.
\medskip

\noi{\it Claim 3.
For any $g_1,g_2\in \cV$,
let us consider $x_1\in H(p_{g_1})$ and $x_2\in H(p_{g_2})$ having the same continuation.
Then, there exist two matching orientations on $E^c_{g_1,x_1}$, $E^c_{g_2,x_2}$
and a sequence $(p_n)$ in $\cP$ such that $(p_{n,g_i})$ converges toward $x_i$ and
the local unstable manifolds $W^u_{g_i,loc}(p_{n,g_i})$ intersects $\cW^{cs,+}_{g_i,x_i}$
for $i=1,2$.}
\begin{proof}
By assumption, there exists a sequence $(p_n^0)$ in $\cP$ such that
$(p^0_{n,g_i})$ converges toward $x_i$ for $i=1,2$.
We first replace $(p_n^0)$ by a sequence $(p_n^1)$ so that $W^u_{g_1,loc}(p_{n,g_1}^1)$ does not intersect $W^{ss}_{g_1,loc}(x_1)$:
if there exists a subsequence of $(p_n^0)$ which has this property, we get the subsequences $(p_n^1)$;
otherwise, one can assume that $W^u_{g_1,loc}(p_{n,g_1}^0)$ intersects $W^{ss}_{g_1,loc}(x_1)$ for each $n\geq 0$.
From lemma~\ref{l.integrability},
there exists $y\in \cW^{cs}_{g,p_{n,g_1}^0}$ arbitrarily close to $p_{n,g_1}^0$
such that its unstable manifold intersects $\cW^{cs}_{x_1}\setminus W^{ss}_{g_1,loc}(x_1)$.
One can approximate $y$ by a point $p_{n,g_1}^1$ with $p_n^1\in \cP$.
Doing this for each $n$,
one gets a required sequence $(p_n^1)$ such that $(p^1_{n,g_1})$ still converges toward $x_1$.
By lemma~\ref{l.cont-unstable}, one can ensure that the sequence $(p^1_{n,g_2})$ converges toward $x_2$.
By choosing the orientations on $E^c_{g_i,x_i}$, one can now assume that
$W^u_{g_1,loc}(p^1_{n,g_1})$ intersects $\cW^{cs,+}_{g_1,x_1}$.
By the claim~2, one can modify again the sequence $(p_n^1)$ and replace it by a sequence $(p_n)$
having the required properties.
\end{proof}

One can now conclude the uniqueness part of the item b).
Let us assume by contradiction that $\tilde x_g\in \widetilde{H(p_g)}$ has two
distinct continuations $\tilde x_{g'}^1,\tilde x_{g'}^2$ in $\widetilde{H(p_{g'})}$ as stated in item b).
By lemma~\ref{l.cont-central}, one may assume that $x_{g'}^1$ belongs to $\cW^{cs,-}_{g',x^2_{g'}}$.
Claim~3 provides us with two sequence $(p_n^i)$, $i=1,2$.
On the one hand $W^u_{g,loc}(p^i_{n,g})$ intersects $\cW^{cs,+}_{g,x_g}$,
hence for $n\geq 1$ and $m\gg n$, $W^u_{g,loc}(p_{n,g}^1)$ intersects
$\cW^{cs,+}_{g,p^2_{m,g'}}$.
On the other hand $x_{g'}^1\in \cW^{cs,-}_{g',x^2_{g'}}$,
hence $W^u_{g',loc}(p^1_{n,g'})$ intersects $\cW^{cs,-}_{g',p^2_{m,g'}}$.
By lemma~\ref{l.ordering}, this contradicts our assumptions.
\medskip

The item c) is a direct consequence from the claim~3.
\end{proof}
\bigskip

\begin{corollary}\label{c.continuation}
Under the assumptions of proposition~\ref{p.continuation},
let us consider $g\in \cV$ and $\tilde x,\tilde y\in \widetilde{H(p_{g})}$ such that
the projections $x=\pi_{g}(\tilde x)$ and $y=\pi_{g}(\tilde y)$ are $\varepsilon'$-close and satisfy $y\in W^{ss}_{g,loc}(x)$.

Then, for any $g'\in \cV$ the projections $x'=\pi_{g'}(\tilde x')$ and $y'=\pi_{g'}(\tilde y')$,
associated to the continuations $\tilde x',\tilde y'\in \widetilde{H(p_{g'})}$ of $\tilde x,\tilde y$, still satisfy $y'\in \cW^{cs}_{g',x'}$ and the open region in $\cW^{cs}_{g',x'}$ bounded by
$W^{ss}_{loc}(x)\cup W^{ss}_{loc}(y')$ does not meet $H(p)$.
When the orientations of $\tilde x$ and $\tilde y$ match, one also has $y'\in W^{ss}_{g',loc}(x')$.
\end{corollary}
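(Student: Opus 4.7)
The assertion $y' \in \cW^{cs}_{g',x'}$ is an immediate application of Lemma~\ref{l.cont-central}, since $y \in W^{ss}_{g,loc}(x) \subset \cW^{cs}_{g,x}$ and $d(x,y)<\varepsilon'$.

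For the disjointness of the open region from $H(p_{g'})$, I argue by contradiction: assume $z' \in H(p_{g'})$ lies strictly between $W^{ss}_{g',loc}(x')$ and $W^{ss}_{g',loc}(y')$ in $\cW^{cs}_{g',x'}$. In particular $y' \notin W^{ss}_{g',loc}(x')$, and up to a choice of orientation we may assume $y' \in \cW^{cs,+}_{g',x'}$ and $z' \in \cW^{cs,+}_{g',x'}\cap \cW^{cs,-}_{g',y'}$. By Proposition~\ref{p.continuation}(a) the point $z'$ lifts to some $\tilde z' \in \widetilde{H(p_{g'})}$, whose continuation $\tilde z = \Phi_{g',g}(\tilde z') \in \widetilde{H(p_g)}$ projects to a point $z \in H(p_g)$ that belongs to $\cW^{cs}_{g,x}$ by Lemma~\ref{l.cont-central} applied in reverse to the pair $(x',z')$. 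Approximate $x,y,z$ by continuations at $g$ of periodic points $p_n,q_n,r_n \in \cP$; this is possible by Lemma~\ref{l.contper} together with the existence part of Proposition~\ref{p.continuation}(b). For $n,m$ large, the intersection $W^u_{g',loc}(r_{m,g'})\cap \cW^{cs}_{g',p_{n,g'}}$ sits near $z'$ on the ``$+$''-side of $W^{ss}_{g',loc}(p_{n,g'})$, while $W^u_{g',loc}(r_{m,g'})\cap \cW^{cs}_{g',q_{n,g'}}$ sits on the ``$-$''-side of $W^{ss}_{g',loc}(q_{n,g'})$. Lemma~\ref{l.ordering} then forces the corresponding intersections at $g$ to avoid $\cW^{cs,-}_{g,p_{n,g}}$ and $\cW^{cs,+}_{g,q_{n,g}}$ respectively. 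Since $p_{n,g}$ and $q_{n,g}$ are both close to the common strong stable leaf $W^{ss}_{g,loc}(x)=W^{ss}_{g,loc}(y)$ and the ``$\pm$''-components of their center-stable plaques correspond to each other under the strong stable holonomy, this forces $W^u_{g,loc}(r_{m,g})$ to meet $\cW^{cs}_{g,p_{n,g}}$ on the strong stable leaf itself. Using Lemma~\ref{l.integrability} to perturb $r_m$ within $\cP$ so that the intersection is pushed strictly to one side produces the contradiction with Lemma~\ref{l.ordering} and the hypothesis that $\cV$ contains no diffeomorphism with a strong homoclinic intersection.

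The final assertion, $y'\in W^{ss}_{g',loc}(x')$ when the orientations of $\tilde x,\tilde y$ match, is a short consequence of the two preceding ones. Matching orientations are preserved by $\Phi_{g,g'}$ by construction (Proposition~\ref{p.continuation}(b)), so $\tilde x'$ and $\tilde y'$ carry matching ``$+$''-orientations at $g'$. If $y' \notin W^{ss}_{g',loc}(x')$, then $y' \in \cW^{cs,+}_{g',x'}$, and the previous assertion ensures that the open strip in $\cW^{cs,+}_{g',x'}$ strictly between $W^{ss}_{g',loc}(x')$ and $W^{ss}_{g',loc}(y')$ contains no point of $H(p_{g'})$. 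However, the ``$+$''-orientation of $\tilde x'$ requires $x'$ to be accumulated by $H(p_{g'}) \cap \cW^{cs,+}_{g',x'}$, and any such sequence is eventually contained in the forbidden strip, a contradiction. The main obstacle lies in the second assertion, and specifically in the borderline configuration where the continuation $z$ lies on $W^{ss}_{g,loc}(x)$; it is precisely here that Lemma~\ref{l.integrability}, which enables a prescribed-side approximation by points of $\cP$, is essential to close the argument.
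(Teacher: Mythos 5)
Your first step ($y'\in\cW^{cs}_{g',x'}$ via Lemma~\ref{l.cont-central}) and your overall toolkit (periodic approximations from $\cP$, Lemma~\ref{l.ordering}, the continuation machinery) coincide with the paper's. But you reorganize the argument: you attack the "empty region" statement head-on by squeezing the intersection of $W^u_{loc}(r_{m,g})$ between the two families $\cW^{cs,\pm}_{g,p_{n,g}}$ and $\cW^{cs,\pm}_{g,q_{n,g}}$, whereas the paper first proves the one-sided claims "$y'\notin\cW^{cs,+}_{g',x'}$ (orientation of $\tilde x$)" and "$x'\notin\cW^{cs,+}_{g',y'}$ (orientation of $\tilde y$)" -- using only the sequence $(p_{n,g})$ accumulating $x$ from the $+$-side (which is part of the data of $\tilde x\in\widetilde{H(p_g)}$) together with a single $q\in\cP$ shadowing $y$ and $y'$ -- and then gets the region statement for free by applying the same one-sided claim to $\tilde z$: its continuation at $g$ would have to land in $\cW^{cs,+}_{g,x}\cap\cW^{cs,+}_{g,y}$, which is empty because $y\in W^{ss}_{g,loc}(x)$.

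The reorganization creates a genuine gap at your squeeze step. For a fixed large $n$, Lemma~\ref{l.ordering} only tells you that the intersection of $W^u_{g,loc}(r_{m,g})$ avoids $\cW^{cs,-}_{g,p_{n,g}}$ and $\cW^{cs,+}_{g,q_{n,g}}$; since $p_{n,g}$ and $q_{n,g}$ are distinct periodic points, $W^{ss}_{loc}(p_{n,g})$ and $W^{ss}_{loc}(q_{n,g})$ are distinct leaves and (depending on which side of $W^{ss}_{loc}(p_{n,g})$ the point $q_{n,g}$ falls -- something you do not control) there may be a nonempty open strip between them where the intersection can sit without any contradiction. So "this forces $W^u_{g,loc}(r_{m,g})$ to meet $\cW^{cs}_{g,p_{n,g}}$ on the strong stable leaf itself" is false for fixed $n$; only after passing to the limit $n\to\infty$ do you get that the intersection with $\cW^{cs}_{g,x}$ lies on $W^{ss}_{loc}(x)$ -- and $x$ is not periodic, so this is not yet a (generalized) strong homoclinic intersection between periodic orbits and does not contradict anything by itself. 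Your appeal to Lemma~\ref{l.integrability} to "push the intersection to one side" is the right idea to close this borderline case, but it is not carried out: you must check that $r_{m,g}$ is accumulated on at least one side (Lemma~\ref{l.NGSHI}), replace it by a point $r'\in\cP$ whose unstable manifold meets $\cW^{cs,\pm}_{g,x}$ strictly, and -- crucially -- verify via Lemma~\ref{l.cont-unstable} that $r'_{g'}$ stays near $z'$ so that the comparison at $g'$ still feeds into Lemma~\ref{l.ordering}. Finally, in your last paragraph the assertion "if $y'\notin W^{ss}_{g',loc}(x')$ then $y'\in\cW^{cs,+}_{g',x'}$" is unjustified: the case $y'\in\cW^{cs,-}_{g',x'}$ must be handled separately (it is, symmetrically, excluded by the accumulation of $y'$ from its own $+$-side, since matching orientations put $x'$ in $\cW^{cs,+}_{g',y'}$ and the forbidden strip then blocks that accumulation).
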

\begin{proof}
Let us consider two points $\tilde x,\tilde y$ whose projections are $\varepsilon'$-close
and satisfy $y\in \cW^{cs}_{g,x}$.
Then, the same holds for $g'$ and the continuations $x',y'$ by lemma~\ref{l.cont-central}.

The point $x$ is the limit of a sequence $(p_{n,g})$ with $p_n\in\cP$ such that $W^u_{loc}(p_{n,g})$
intersects $\cW^{cs,+}_x$. We claim that $y'$
does not belong to $\cW^{cs,+}_{x'}$.
Let us assume by contradiction that this is not the case.
On the one hand $y$ does not meet $\cW^{cs,+}_{g,x}$
whereas $W^u_{loc}(p_{n,g})$ intersects $\cW^{cs,+}_x$: this implies that
$W^u_{loc}(p_{n,g})$ intersects the component of $\cW^{cs}_{g,y}\setminus W^{ss}_{g,loc}(y)$
corresponding to the orientation of $\tilde x$.
On the other hand for $m$ large $p_{m,g'}$ is close to $x'$, hence
$W^u_{loc}(p_{m,g'})$ intersects the component of $\cW^{cs}_{g',y'}\setminus W^{ss}_{g',loc}(y')$
corresponding to the reversed orientation of $\tilde x$.
There exists $q\in \cP$ such that $q_g$ and $q_{g'}$ are arbitrarily close to $y$ and $y'$ respectively.
Hence, $W^{u}_{g,loc}(p_{n,g})$ intersects one component of $W^{cs}_{g,q}\setminus W^{ss}_{g,loc}(q_g)$
and $W^{u}_{g',loc}(p_{n,g'})$ intersects the component of $W^{cs}_{g',q}\setminus W^{ss}_{g',loc}(q_{g'})$
which corresponds to the other orientation.
From lemma~\ref{l.ordering}, this implies that there exists $h\in \cU$ such that $H(p_h)$
has a strong homoclinic intersection, contradicting our assumptions.

Similarly, $x'$ does not belong to $\cW^{cs,+}_{g',y'}$ for the orientation
on $E^c_{y'}$ induced by $\tilde y$.

When the orientations of $\tilde x$ and $\tilde y$ match, this
implies that $y'$ belongs to $W^{ss}_{g',loc}(x')$.
When the orientations differ, $W^{ss}_{g',loc}(x')$ and $W^{ss}_{g',loc}(y')$
bound the open region $\cW^{cs,+}_{g',x'}\cap \cW^{cs,+}_{g',y'}$.
If there exists a point $\tilde z\in \widetilde{H(p)}$ whose projection by $\pi_{g'}$
belongs to this region, the discussion above proves that
the projection of its continuation for $g$ also belongs to
$\cW^{cs,+}_{g,x}\cap \cW^{cs,+}_{g,y}$. But for $g$ this open region
is empty since $y\in W^{ss}_{g,loc}(x)$.
This is a contradiction. Hence the open region bounded by $W^{ss}_{g',loc}(x')$ and $W^{ss}_{g',loc}(y')$
in $\cW^{cs}_{g',x'}$ does not meet $H(p_{g'})$.
\end{proof}

\begin{corollary}\label{c.continuation2}
Under the assumptions of proposition~\ref{p.continuation},
let us consider a diffeomorphism $g\in \cV$ and a hyperbolic periodic point $q_g$
whose hyperbolic continuation $q_{g'}$ is defined and homoclinically related to the orbit of
$p_{g'}$ for each $g'\in \cV$.

Then, for $g'\in \cV$, $q_{g'}$ is the unique point in $H(p_{g'})$ which has the same
continuation as $q_g$.
\end{corollary}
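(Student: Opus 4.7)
The corollary extends the uniqueness clause of Lemma~\ref{l.cont-unstable} from hyperbolic periodic points $q\in\cP$ (periodic points of $f$ whose central Lyapunov exponent is at most $-\delta$) to any hyperbolic periodic point $q_g$ of $g\in\cV$ whose continuation is defined and remains homoclinically related to $p_{g'}$ throughout $\cV$. Let $x_{g'}\in H(p_{g'})$ have the same continuation as $q_g$. By Definition~\ref{d.continuation} there is a sequence $(p_n)$ in $\cP$ with $p_{n,g}\to q_g$ and $p_{n,g'}\to x_{g'}$. It suffices to prove that necessarily $p_{n,g'}\to q_{g'}$, for then uniqueness of the limit of $(p_{n,g'})$ yields $x_{g'}=q_{g'}$.

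To establish this convergence, the plan is to transfer the relation $p_{n,g}\to q_g$ from $g$ to $g'$ through a common hyperbolic basic set. By Lemma~\ref{l.linked} the hyperbolic periodic point $q_g$ is homoclinically related, inside $H(p_g)$, to every periodic orbit of $\cP$. For each large $n$ I would embed the orbits of $q_g$ and $p_{n,g}$ in a transitive hyperbolic basic set $\Lambda_{n,g}\subset H(p_g)$, localized in a shrinking neighbourhood of the orbit of $q_g$, so that inside $\Lambda_{n,g}$ the orbit of $p_{n,g}$ shadows the orbit of $q_g$ with an error $\eta_n\to 0$. The hypothesis that $q_{g'}$ stays hyperbolic and homoclinically related to $p_{g'}$ throughout the connected open set $\cV$, combined with the uniform $\varepsilon/3$-shadowing for orbits in $\cP$ supplied by Lemma~\ref{l.cont-periodic}, provides the homoclinic data needed to continue $\Lambda_{n,g}$ to a hyperbolic basic set $\Lambda_{n,g'}$ containing both $q_{g'}$ and $p_{n,g'}$ along any path in $\cV$. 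Uniform hyperbolic shadowing inside $\Lambda_{n,g'}$ then converts the $\eta_n$-shadowing of $q_g$ by $p_{n,g}$ into a shadowing of $q_{g'}$ by $p_{n,g'}$, forcing $p_{n,g'}\to q_{g'}$.

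The main obstacle is that $q_g$ is \emph{not} assumed to lie in $\cP$: its central Lyapunov exponent may be arbitrarily close to zero, so the uniform $\varepsilon/3$-shadowing of Lemma~\ref{l.cont-periodic} does not apply to $q_g$ itself, and the strategy of Lemmas~\ref{l.cont-central} and~\ref{l.cont-unstable} cannot be invoked verbatim on $q_g$. The role of the auxiliary basic set $\Lambda_{n,g}$ is precisely to ferry the uniform shadowing that is available for $p_{n,g}\in\cP$ across to a shadowing relation between $q_{g'}$ and $p_{n,g'}$; constructing $\Lambda_{n,g}$ and propagating its hyperbolic continuation along all of $\cV$ using only the hypothesis that $q_{g'}$ remains homoclinically related to $p_{g'}$ is the delicate step of the argument.
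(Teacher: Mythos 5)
Your reduction to showing that $p_{n,g'}\to q_{g'}$ for a sequence $(p_n)$ in $\cP$ realizing the continuation is the right first step, but the transfer mechanism you propose has two concrete failures. First, $p_{n,g}\to q_g$ is a convergence of \emph{points}, not of orbits: a periodic point $p_{n,g}$ distinct from $q_g$ and homoclinically related to it has an orbit that leaves every fixed small neighbourhood of the orbit of $q_g$, so no transitive hyperbolic set containing both orbits can be ``localized in a shrinking neighbourhood of the orbit of $q_g$'', and the premise that the orbit of $p_{n,g}$ $\eta_n$-shadows the orbit of $q_g$ with $\eta_n\to 0$ is false for an arbitrary sequence realizing the continuation (and for uniqueness you must handle an arbitrary sequence). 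Second, even granting some basic set $\Lambda_{n,g}$ containing both orbits, its continuation is only guaranteed on a neighbourhood of $g$ whose size is governed by its hyperbolicity constants; since $q_g$ is not assumed to lie in $\cP$, its central exponent may be arbitrarily weak, these constants are not uniform in $n$, and the hypothesis --- which only asserts that the single orbit $q_{g'}$ persists and remains homoclinically related to $p_{g'}$ for each $g'\in\cV$ --- gives you no way to propagate $\Lambda_{n,g}$, let alone uniform shadowing inside it, across the fixed set $\cV$.

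The paper's argument uses a much softer mechanism that your proposal never invokes: the standing assumption that no $g'\in\cV$ exhibits a strong homoclinic intersection. One chooses the $p_n$ so that $W^u_{g,loc}(p_{n,g})$ meets $\cW^{cs,+}_{g,q_g}$ while $W^u_{g,loc}(q_g)$ meets $\cW^{cs,-}_{g,p_{n,g}}$; lemma~\ref{l.ordering} forbids these relative positions from being reversed for any $g'\in\cV$, and lemma~\ref{l.continuite} then traps any limit $\bar q_{g'}$ of $(p_{n,g'})$ inside $\cW^{cs}_{g',q_{g'}}\setminus \cW^{cs,-}_{g',q_{g'}}$. For $g'$ close to $g$ and $n$ large, the continuity of $g'\mapsto p_{n,g'}$ for each fixed $n\in\cP$ places $\bar q_{g'}$ in a small neighbourhood of $q_{g'}$, and the uniform hyperbolicity of the single periodic orbit $q_{g'}$ forces $\bar q_{g'}=q_{g'}$; one then extends from a neighbourhood of $g$ to all of $\cV$. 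You should replace the basic-set continuation by this ordering/position argument, which is where the ``far from strong homoclinic intersections'' hypothesis of proposition~\ref{p.continuation} is actually consumed.
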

\begin{proof}
It is enough to prove that in a small neighborhood of $g$, the point $q_{g'}$ is the unique point in $H(p_{g'})$ which has the same continuation as $q_g$.
Let us consider a sequence $(p_n)$ in $\cP$ such that $p_{n,g}$
accumulates on $q_{g}$ and $W^{u}_{g,loc}(p_{n,g})$ intersects $\cW^{cs,+}_{g,q_{g}}$.
One may also choose the $p_n$ such that
$W^{u}_{g,loc}(q_{g})$ intersects $\cW^{cs,-}_{g,p_{n,g}}$.
By lemma~\ref{r.continuity}, for any $g'\in \cV$,
the limit $\bar q_{g'}$ of $(p_{n,g'})$ is a periodic point in $\cW^{cs}_{g,q_{g'}}\setminus
\cW^{cs,-}_{g,q_{g'}}$.
Also $W^{u}_{g',loc}(\bar q_{g'})$ intersects $\cW^{cs,-}_{g',p_{n,g'}}$.

For $n$ large and $g'$ close to $g$, the points $p_{n,g'}$ and $q_{g'}$ are close:
this implies that $\bar q_{g'}$ is contained in a small neighborhood of $q_{g'}$.
Since $q_{g'}$ is uniformly hyperbolic for any $g'$ close to $g$, this implies that
$\bar q_{g'}$ and $q_{g'}$ coincide, as claimed.
\end{proof}

\section{Boundary points of quasi-attractors}\label{s.boundary}
We discuss the properties of chain-hyperbolic homoclinic classes as in the previous section
that are furthermore quasi-attractors. In particular, we conclude the proof of proposition~\ref{p.position}. The following slightly more general setting will be considered.
\begin{itemize}
\item[--] Let $V\subset M$ be an invariant open set which is a trapping region $f(\overline V)\subset V$.
\item[--] Assume that the maximal invariant set in $V$ is endowed with a partially
hyperbolic splitting $E^s\oplus E^c\oplus E^u$ such that $\dim(E^c)=1$.
\item[--] Let $H(p)\subset V$ be a chain-hyperbolic homoclinic class with the splitting
$E^{cs}\oplus E^{cu}=(E^s\oplus E^c)\oplus E^u$ and containing the
unstable manifold of $p$.
\end{itemize}
In particular, $H(p)$ is saturated by the unstable leaves, tangent to $E^{u}$, and
$U$ is foliated by a forward invariant foliation which extends the strong stable lamination tangent to $E^s$.

\subsection{Comparison of unstable leaves through the strong stable holonomy}\label{su-intersections}
Let us assume that $H(p)$  satisfies the following property.
\begin{description}
\item[Strong intersection property:] \emph{there exist $x,y\in H(p)$ with $y\in W^{ss}(x)\setminus \{x\}$.}
\end{description}
As explained in section~\ref{ss.reduction}, this property prevents the class
to be contained in  a lower dimensional submanifold tangent to $E^c\oplus E^u$.
  
For any point $x\in H(p)$, we fix arbitrarily some plaque $\cD$
transverse to $W^{ss}_{loc}(x)$ and define for any $z$
close to $W^{ss}_{loc}(x)$ the projection $\Pi^{ss}(z)\in \cD$
through the strong stable holonomy.
When $z$ belongs to $H(p)$, the map $\Pi^{ss}$ is a homeomorphism from a neighborhood of $z$
in $\cW^{cu}_z$ to a neighborhood of $\Pi^{ss}(z)$ in $\cD$.
Hence, the projection $\Pi^{ss}(W^u_{loc}(z))$
is a one-codimensional topological submanifold of $\cD$.
In particular, in a neighborhood of $z$, the set
$\cD\setminus \Pi^{ss}(W^u_{loc}(z))$  has locally two connected components.

\begin{definition}\label{definition-cases}
Let us fix $\varepsilon_0>0$ small. The following situations can occur.
\begin{description}
\item[-- The transversal case.] There exists $x,y\in H(p)$ with $y\in W^{ss}_{loc}(x)\setminus \{x\}$ such that
$\Pi^{ss}(W^u_{loc}(y))$ intersects both components of
$\Pi^{ss}(B(x,\varepsilon_0))\setminus \Pi^{ss}(W^u_{loc}(x))$.

\item[-- The jointly integrable case.] There exists $x,y\in H(p)$ with $y\in W^{ss}_{loc}(x)\setminus \{x\}$
such that \\ $\Pi^{ss}(W^u_{loc}(x))$ and $\Pi^{ss}(W^u_{loc}(y))$ coincide in $\Pi^{ss}(B(x,\varepsilon_0))$.

\item[-- The strictly non-transversal case.]
For any $x,y\in H(p)$ with $y\in W^{ss}_{loc}(x)\setminus \{x\}$,
the projection $\Pi^{ss}(W^u_{loc}(y))$
intersects one of the components of
$\Pi^{ss}(B(x,\varepsilon_0))\setminus \Pi^{ss}(W^u_{loc}(x))$ and is disjoint from the other.
\end{description}
\end{definition}
\noindent Note that these definitions do not depend on the choice of the plaque $\cD$.
Clearly one of these three cases happen.
The transversal and the jointly integrable cases may occur at the same time.
The strictly non-transversal case is quite particular.
\begin{lemma}\label{l.boundary}
Let us assume that $H(p)$ does not satisfy the transversal case and consider
two points $x,y\in H(p)$ with $y\in W^{ss}_{loc}(x)\setminus \{x\}$.
For  $\varepsilon$ small, if $\Pi^{ss}(W^u_{loc}(y))$ intersects
$\Pi^{ss}(B(x,\varepsilon))\setminus \Pi^{ss}(W^u_{loc}(x))$,
then $x$ and $y$ are not accumulated by $H(p)$
in the same component of $\cW^{cs}_x\setminus W^{ss}_{loc}(x)$.
\end{lemma}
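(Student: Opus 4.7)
The plan is to argue by contradiction: assuming both $x$ and $y$ are accumulated by $H(p)$ in the same component $\cW^{cs,+}_x=\cW^{cs,+}_y$ of $\cW^{cs}_x\setminus W^{ss}_{loc}(x)$ (here I fix an orientation on $E^c$ and use that $y\in W^{ss}_{loc}(x)$ to match orientations at $x$ and $y$, so the two half-plaques coincide locally), I will exhibit a pair of points of $H(p)$ on a common strong stable leaf whose projected unstable leaves realize the transversal case, contradicting the standing hypothesis on $H(p)$.

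First I set up coordinates. Take a tubular neighborhood of $W^{ss}_{loc}(x)$ containing $y$ with local coordinates $(u,c,s)\in \RR^u\times\RR\times\RR^s$ in which $W^{ss}_{loc}(x)=\{u=0,c=0\}$ and $\cW^{cs}_x$ coincides locally with $\{u=0\}$. Taking the transversal $\cD=\{s=0\}$, the holonomy has the form $\Pi^{ss}(u,c,s)=(u,c)$, so $\Pi^{ss}(W^u_{loc}(x))$ is a $C^1$-graph through $(0,0)$ tangent to $\{c=0\}$, while $\Pi^{ss}(W^u_{loc}(y))$ is a $C^1$-graph $\{(u,\phi(u))\}$ through $(0,0)$. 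The assumption that $H(p)$ is not in the transversal case forces $\phi$ to have constant sign, say $\phi\geq 0$ (after aligning the $+$ orientation); the lemma's hypothesis that $\Pi^{ss}(W^u_{loc}(y))$ meets $\Pi^{ss}(B(x,\varepsilon))\setminus \Pi^{ss}(W^u_{loc}(x))$ then provides $u_*\neq 0$ with $\phi(u_*)=:2\alpha>0$.

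Next I choose approximations and compare projections. Pick sequences $(x_n)\to x$ and $(y_m)\to y$ in $H(p)\cap \cW^{cs,+}_x$, with coordinates $x_n=(0,c^x_n,s^x_n)$ and $y_m=(0,c^y_m,s^y_m)$ where $c^x_n,c^y_m>0$ both tend to $0$. Since $H(p)$ is saturated by unstable leaves, $W^u_{loc}(x_n)$ and $W^u_{loc}(y_m)$ are contained in $H(p)$, and by $C^1$-continuity of $E^u$ their strong stable projections take the form
$$\Pi^{ss}(W^u_{loc}(x_n))=\{(u,c^x_n+\psi_n(u))\},\qquad \Pi^{ss}(W^u_{loc}(y_m))=\{(u,c^y_m+\phi_m(u))\},$$
with $\psi_n(0)=\phi_m(0)=0$, $\psi_n\to 0$ in $C^1$ and $\phi_m\to \phi$ in $C^1$. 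I then fix $n,m$ large so that $\phi_m(u_*)>\alpha$ and $\|\psi_n\|_{C^0}<\alpha/4$, and arrange (using that both $(c^x_n)$ and $(c^y_m)$ tend to $0$) that $0<c^x_n-c^y_m<\alpha/2$.

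Finally I read off the contradiction. At $u=0$ the difference $(c^y_m+\phi_m(0))-(c^x_n+\psi_n(0))=c^y_m-c^x_n$ is negative, whereas at $u=u_*$ it exceeds $\alpha/2-(c^x_n-c^y_m)>0$. By the intermediate value theorem the two graphs cross at some $u_1$ between $0$ and $u_*$, producing points $w_x\in W^u_{loc}(x_n)$ and $w_y\in W^u_{loc}(y_m)$ on a common strong stable leaf, both in $H(p)$ and distinct (the local unstable leaves $W^u_{loc}(x_n)$ and $W^u_{loc}(y_m)$ are disjoint for $n,m$ large). Because the sign of the difference changes across $u_1$, the projection $\Pi^{ss}(W^u_{loc}(w_y))=\Pi^{ss}(W^u_{loc}(y_m))$ meets both components of $\Pi^{ss}(B(w_x,\varepsilon_0))\setminus \Pi^{ss}(W^u_{loc}(w_x))$ near $w_x$, so the pair $(w_x,w_y)$ realizes the transversal case for $H(p)$, contradicting the hypothesis. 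The main delicate point will be establishing $C^1$-convergence $\psi_n\to 0$ and $\phi_m\to\phi$ of the projected unstable graphs, which rests on the $C^1$-continuity of the bundle $E^u$ together with the continuity of strong stable holonomy transverse to the unstable leaves; once this is in hand, the intermediate value argument produces exactly the topological crossing required by the definition of the transversal case, even though the crossing need not be differentiably transverse.
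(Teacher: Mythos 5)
Your proof is correct in substance and rests on the same key mechanism as the paper's: a point of $H(p)$ accumulating from the ``wrong'' side, combined with the hypothesis that $\Pi^{ss}(W^u_{loc}(y))$ separates from $\Pi^{ss}(W^u_{loc}(x))$ at some $u_*$, forces a topologically transverse crossing of two projected unstable leaves of points of $H(p)$, contradicting the standing non-transversality. The execution differs: the paper treats $x$ and $y$ separately, showing that $y$ cannot be accumulated from the component $U^+_x$ disjoint from $\Pi^{ss}(W^u_{loc}(y))$ (a single nearby point $z$ projects into $U^+_x$ while its projected unstable leaf, being $C^0$-close to that of $y$, still reaches $U^-_x$, so it crosses $\Pi^{ss}(W^u_{loc}(x))$), and symmetrically that $x$ cannot be accumulated from $U^-_y$; the stated conclusion then follows by cases. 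You instead use both accumulation hypotheses simultaneously, with two approximating sequences and an explicit intermediate-value estimate that crosses $\Pi^{ss}(W^u_{loc}(x_n))$ with $\Pi^{ss}(W^u_{loc}(y_m))$. Your version is more quantitative but proves only the stated dichotomy, whereas the paper's softer connectedness argument yields the slightly stronger side-by-side information for free; both are legitimate.

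Two points to tighten. First, the ``main delicate point'' you flag is a red herring: the strong stable holonomy is in general only continuous (H\"older), so the projected leaves are only topological graphs and $C^1$-convergence of $\psi_n,\phi_m$ is not available — but it is also not needed. Your argument uses only $\phi_m(u_*)\to\phi(u_*)$, a uniform $C^0$ bound on $\psi_n$, and the intermediate value theorem, all of which follow from continuity of the unstable plaque family and of $\Pi^{ss}$ (equivalently, from the fact that $\Pi^{ss}(W^u_{loc}(x_n))$ locally separates $\cD$ into two components, as the paper records). Second, ``the sign of the difference changes across $u_1$'' is not quite what you need, since the difference could vanish on an interval; what the definition of the transversal case requires, and what your estimates actually deliver, is that within $\Pi^{ss}(B(w_x,\varepsilon_0))$ the set $\Pi^{ss}(W^u_{loc}(y_m))$ meets both components of the complement of $\Pi^{ss}(W^u_{loc}(x_n))$ — which is immediate from $g(0)<0<g(u_*)$ once $\varepsilon\ll\varepsilon_0$. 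With those adjustments, and the observation (which you make) that $w_x\neq w_y$ because the two unstable plaques are disjoint by coherence, the proof is complete.
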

\begin{proof}
Note that if $\varepsilon$ is small enough
and if $\Pi^{ss}(W^u_{loc}(y))$ intersects
$\Pi^{ss}(B(x,\varepsilon))\setminus \Pi^{ss}(W^u_{loc}(x))$,
then $\Pi^{ss}(W^u_{loc}(x))$ intersects
$\Pi^{ss}(B(y,\varepsilon_0))\setminus \Pi^{ss}(W^u_{loc}(y))$.

We denote by $U^+_x,U^-_x$ the local
connected components of $\Pi^{ss}(B(x,\varepsilon))\setminus \Pi^{ss}(W^u_{loc}(x))$
such that $\Pi^{ss}(W^u_{loc}(y))$ meets $U^-_x$ and is disjoint from $U^+_x$.
We also denote by $U^+_y,U^-_y$ the local
connected components of $\Pi^{ss}(B(y,\varepsilon_0))\setminus \Pi^{ss}(W^u_{loc}(y))$
such that $\Pi^{ss}(W^u_{loc}(x))$ meets $U^+_y$ and is disjoint from $U^-_y$.
In particular, $U^+_x\subset U^+_y$.

Let us assume by contradiction that $y$ is accumulated by $H(p)$
from the side of $\cW^{cs}_x\setminus W^{ss}_{loc}(x)$ which projects in $U^+_x$.
Let us consider a point $z\in H(p)$ close to
$y$ and which projects inside $U^+_x$.
Its local unstable manifold is close to the unstable manifold of $y$, hence
$\Pi^{ss}(W^u_{loc}(z))$ meets $U^-_x$ also.
This implies that we are in the transversal case which is a contradiction.

Similarly if $x$ is accumulated by $H(p)$ from the side of
$\cW^{cs}_x\setminus W^{ss}_{loc}(x)$ which projects in $U^-_y$, we find a contradiction.
One deduces that $x$ and $y$ can not be accumulated by $H(p)$
on the same side of $\cW^{cs}_x\setminus W^{ss}_{loc}(x)$.
\end{proof}

\subsection{Structure of the  stable boundary points}\label{ss.structure}

For quasi-attractors not in the transversal case, we prove 
that the  stable boundary points (see section~\ref{ss.one-codim})
belong to the unstable manifold of a periodic orbit.

\begin{proposition}\label{p.boundary}
Let $H(p)$ be a homoclinic class such that
\begin{itemize}
\item[--] $H(p)$ is a quasi-attractor endowed with a partially hyperbolic structure $E^s\oplus E^c\oplus E^u$ such that 
$E^c$ is one-dimensional and $E^{cs}=E^s\oplus E^c$ is thin trapped,
\item[--] for any periodic points $q,q'\in H(p)$ homoclinically related to the orbit of $p$, the manifolds
$W^{ss}(q)\setminus \{q\}$ and $W^u(q')$ are disjoint,
\item[--] the transversal case does not hold.
\end{itemize}
Then any  stable boundary point of $H(p)$ belongs to the unstable manifold of a periodic point.
\end{proposition}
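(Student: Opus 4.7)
The plan is to argue by contradiction, following the strategy of lemma~\ref{p.boundary0} but replacing the one-dimensional thin-trapped centre-unstable bundle used there by the hypothesis that the transversal case fails. Assume $x$ is a stable boundary point which does not belong to the unstable manifold of any periodic point. Since the stable boundary property is $f$-invariant, every backward iterate $x_n=f^{-n}(x)$ is itself a stable boundary point. By compactness some subsequence of $(x_n)$ converges to a point $y\in H(p)$, so one can fix three close backward iterates $x_{t_1},x_{t_2},x_{t_3}$ with $t_1<t_2<t_3$, together with a whole sequence of further backward iterates accumulating on $x_{t_2}$. Since $E^{cu}=E^u$ is uniformly expanded, the centre-unstable plaques $\cW^{cu}$ may be taken as pieces of the local unstable manifolds.

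Consider first the case where two plaques $\cW^{cu}_{x_{t_i}}$ and $\cW^{cu}_{x_{t_j}}$ meet (with $t_i<t_j$). The coherence statement from lemma~\ref{l.uniqueness-coherence}, applied to the $cu$ bundle under $f^{-1}$ and propagated along the orbit, yields $f^{-(t_j-t_i)}(\overline{\cW^{cu}_{x_{t_j}}})\subset \cW^{cu}_{x_{t_j}}$. Uniform expansion of $E^u$ turns $f^{-(t_j-t_i)}$ into a contraction on this plaque, whose unique fixed point $p^\ast$ is a periodic point of $f$ contained in $W^u_{loc}(x_{t_j})$. Consequently $x_{t_j}$, and then $x=f^{t_j}(x_{t_j})$, lies in the unstable manifold of a point in the orbit of $p^\ast$, contradicting the assumption.

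The remaining case, in which the three plaques $\cW^{cu}_{x_{t_i}}$ are pairwise disjoint, is where the main obstacle lies. Each close backward iterate $x_t$ produces a transverse intersection point $z_t=\cW^{cu}_{x_t}\cap \cW^{cs}_{x_{t_2}}$ which belongs to $H(p)$ by lemma~\ref{l.bracket0}; since $z_t\to x_{t_2}$, the stable boundary condition forces them to lie eventually on a single component, say $\cW^{cs,-}_{x_{t_2}}$. To derive a contradiction, one exploits the failure of the transversal case together with the absence of strong homoclinic intersections. Each $z_t$ can be paired with its strong stable projection $z'_t\in \cW^{cu}_{x_{t_2}}\cap H(p)$, giving two distinct points of $H(p)$ on a common strong stable leaf to which lemma~\ref{l.boundary} applies. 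Propagating the resulting rigid side constraint on $\Pi^{ss}(W^u_{loc}(z_t))$ versus $\Pi^{ss}(W^u_{loc}(z'_t))$ along backward iterates of $x_{t_2}$ should, combined with the $f$-invariance of the preferred side and the expansion of $E^u$, force accumulation of $H(p)$ in the complementary component $\cW^{cs,+}_{x_{t_2}}$ as well, yielding the desired contradiction.

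In lemma~\ref{p.boundary0} the final contradiction was immediate from one-dimensionality of the centre-unstable direction and thin trapping of $E^{cu}$. Here the higher codimension of $\cW^{cs}_{x_{t_2}}$ relative to the strong stable leaf makes the correct \emph{side} of the centre-stable plaque meaningful only through the strong stable holonomy, which is exactly what the non-transversal hypothesis and lemma~\ref{l.boundary} are designed to provide. The technical delicacies to be handled are the verification that $z_t\neq z'_t$ so that a genuine same-leaf pair is produced, and that the preferred side is preserved coherently under iteration so that the final accumulation in $\cW^{cs,+}_{x_{t_2}}$ is genuinely forced.
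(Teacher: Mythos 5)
Your skeleton is right in outline — argue by contradiction, look at backward iterates accumulating on a point of the $\alpha$-limit set, dispose of the case where two unstable plaques of iterates meet by extracting a periodic point, and in the remaining case invoke the failure of the transversal case via lemma~\ref{l.boundary} — and the first case is handled essentially as in the paper. But the main case has a genuine gap: you never actually produce the object that lemma~\ref{l.boundary} needs, namely two points of $H(p)$ lying on a \emph{common strong stable leaf} with distinct local unstable manifolds. Your candidate pair $(z_t,z'_t)$ with ``$z'_t\in \cW^{cu}_{x_{t_2}}\cap H(p)$ the strong stable projection of $z_t$'' is not well defined: $W^{ss}_{loc}(z_t)$ is a one-codimensional submanifold of the center-stable plaque $\cW^{cs}_{x_{t_2}}$, and the transverse plaque $\cW^{cu}_{x_{t_2}}$ meets that center-stable plaque essentially only at $x_{t_2}$; so $W^{ss}_{loc}(z_t)$ reaches $\cW^{cu}_{x_{t_2}}$ only if $z_t$ already lies on $W^{ss}_{loc}(x_{t_2})$ — which is precisely the statement that has to be proved. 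The paper's proof devotes its central Claim to exactly this point: it shows that (after exchanging the two iterates if necessary) $W^{ss}_{loc}(x_2)$ meets $W^u_{loc}(x_1)$, by moving along a path in $W^u_{loc}(x_1)$ (which stays in $H(p)$ because the class is a quasi-attractor), projecting the center-stable plaques along the path by $\Pi^{ss}$, using the boundary-point property of \emph{both} iterates together with lemma~\ref{l.NGSHI} and the fact that $Df^{n-m}$ can be chosen orientation-preserving on $E^c$ to see that $\Pi^{ss}(W^u_{loc}(x_2))$ must avoid one prescribed component at each end of the path, and only then invoking the failure of the transversal case to force $\Pi^{ss}(W^u_{loc}(x_2))$ to actually contain $\Pi^{ss}(x_1)$ or $\Pi^{ss}(x'_1)$. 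None of these ingredients (the path in the unstable leaf, the quasi-attractor property, the orientation bookkeeping) appear in your sketch.

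Your endgame is also aimed at the wrong contradiction. Forcing $H(p)$ to accumulate on $x_{t_2}$ from both sides of $W^{ss}_{loc}(x_{t_2})$ would contradict the boundary-point property, but that cannot be the mechanism: the proposition does not claim boundary points are impossible, only that they lie on unstable manifolds of periodic points. What the paper does, once the same-leaf pair is obtained and lemma~\ref{l.boundary} forces $\Pi^{ss}(W^u_{loc}(x_1))=\Pi^{ss}(W^u_{loc}(x_2))$, is use the expansion along $E^u$ and the return map $f^{n-m}$ to produce a periodic point $q$ whose unstable manifold has the same $\Pi^{ss}$-projection; repeating the construction near a second accumulation point (or near some $\zeta'\in W^{ss}_{loc}(\zeta)\setminus\{\zeta\}$ when $q=\zeta$) yields a second periodic point $q'$, and then $W^{ss}_{loc}(q)\cap W^u_{loc}(q')\neq\emptyset$ contradicts the second hypothesis of the proposition (via lemma~\ref{l.linked} to see $q,q'$ are homoclinically related to $p$). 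You would need to supply both the Claim and this periodic-point construction for the argument to close.
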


\begin{proof}
Let $x$ be a  stable boundary point of $H(p)$.
Let us assume by contradiction that the point $x$ does not belong to the unstable manifold of a periodic point.
In particular, the unstable manifolds $W^u(f^n(x))$ for $n\in \ZZ$
are all distinct.

Let us consider a point $\zeta$ in the $\alpha$-limit set of $x$.
By considering a plaque transverse to $W^{ss}_{loc}(\zeta)$,
the holonomy $\Pi^{ss}$ is well defined in a neighborhood of $\zeta$.
Since $E^{cs}$ is thin trapped, the plaques of the family $\cW^{cs}$
can be chosen small and one may thus assume that one of the components of
$\cW^{cs}_{x}\setminus W^{ss}_{loc}(x)$ is disjoint from $H(p)$.
Let us introduce two backward iterates $x_1=f^{-n}(x)$ and $x_2=f^{-m}(x)$,
of $x$ close to $\zeta$. By the trapping property, one of the components of
$\cW^{cs}_{x_i}\setminus W^{ss}_{loc}(x_i)$ is also disjoint from $H(p)$ for $i=1$ and $i=2$.
Since $x_1$ and $x_2$ are close, it makes sense to compare the orientations of $E^c_1$ and $E^c_2$.
Choosing different iterates $x_1$ and $x_2$ if necessary, one may assume
that the tangent map $Df^{n-m}\colon E^c_{x_1}\to E^c_{x_2}$ preserves the orientation.

\begin{claim}
Exchanging $x_1$ and $x_2$ if necessary, $W^{ss}_{loc}(x_2)$
meets $W^u_{loc}(x_1)$.
\end{claim}
\begin{proof}
Observe that the plaque $\cW^{cs}_{x_2}$ meets $W^u_{loc}(x_1)$ at a point $x'_1\in H(p)$.
One chooses a small path $t\mapsto x_1(t)$ inside $W^u_{loc}(x_1)$
between $x_1=x_1(0)$ and $x'_1=x_1(1)$. Since $H(p)$ is a quasi-attractor
this path is contained in $H(p)$.
Each plaque $\cW^{cs}_{x_1(t)}$ meets $W^u_{loc}(x_2)$ at a point $x_2(t)$,
defining a path $t\mapsto x_2(t)$ inside $W^u_{loc}(x_2)\cap H(p)$.

For any $t\in [0,1]$, the plaques $\cW^{cs}_{x_1(t)}$ and $\cW^{cs}_{x_2(t)}$
projects by $\Pi^{ss}$ on a $C^1$ curve $\gamma(t)$ which is
topologically transverse to $\Pi^{ss}(W^u_{loc}(x_1))$ and $\Pi^{ss}(W^u_{loc}(x_2))$.
The set $\cD\setminus \Pi^{ss}(W^u_{loc}(x_1))$ has locally two connected components $U^+,U^-$. Hence, $\gamma(t)\setminus \Pi^{ss}(x_1)$ has two connected components $\gamma^+(t)\subset U^+$ and $\gamma^-(t)\subset U^-$ for each $t$.

Let us consider the components $\gamma_1^\pm:=\gamma^\pm(0)$.
By lemma~\ref{l.NGSHI} and since $x_1$ is a  stable boundary point,
$\Pi^{ss}(H(p)\cap \cW^{cs}_{x_1})$ meets one of them, $\gamma_1^-$,
and is disjoint from the other one, $\gamma_1^+$.
Similarly, we define $\gamma^-_2,\gamma^+_2$ the connected components of
$\gamma(1)\setminus \Pi^{ss}(x_2)$, such that
$\Pi^{ss}(H(p)\cap \cW^{cs}_{x_2})$ meets the first
and is disjoint from the second.
One deduces that $\gamma^+_2$ is contained in $U^+$ or in $U^-$.
Recall that $\gamma^+_1\subset U^+$.
Since $Df^{n-m}$ preserves the local orientation of $E^c$,
the orientations on $\gamma^+_1$ and $\gamma^+_2$ match and $\gamma_2^+$
is contained in $U^+$.

As a consequence $\Pi^{ss}(W^u_{loc}(x_2))$ is disjoint from
$\gamma^+_1:=\gamma^+(0)$ and from $\gamma^-_2:=\gamma^-(1)$. Since we are not in the transversal case,
one deduces that $\Pi^{ss}(W^u_{loc}(x_2))$ contains $\Pi^{ss}(x_1)$ or
$\Pi^{ss}(x_1')$. Exchanging $x_1$ and $x_2$ if necessary, one has
$W^{ss}(x_1')=W^{ss}(x_2)$.
\end{proof}
\smallskip

Let us denote $x'_1$ the intersection point between
$W^{ss}_{loc}(x_2)$ and $W^u_{loc}(x_1)$.
Since $x_2$ is a boundary point, one connected component of
$\cW^{cs}_{x_2}\setminus W^{ss}_{loc}(x_2)$ is disjoint from $H(p)$.
By lemma~\ref{l.NGSHI} the other component contains sequences of points
of $H(p)$ that accumulate on $x_2$ and $x_1'$.
One deduces from the lemma~\ref{l.boundary} that
the projections of $W^u_{loc}(x_1)$ and $W^u_{loc}(x_2)$ through
the strong stable holonomy match.
Consequently, there exists a periodic point $q\in H(p)$ such that
$W^u_{loc}(x_1)$ and $W^u_{loc}(x_2)$
project on $W^u(q)$ by the strong stable holonomy.
Note that when $x_1,x_2$ are arbitrarily close to $\zeta$,
the point $q$ is  also close.

If $q$ and $\zeta$ are distinct, one may consider
backward iterates $x'_1,x'_2$ closer to $\zeta$.
One builds another periodic point $q'\in H(p)$.
All the local unstable manifolds of $x_1,x_2,x'_1,x'_2,q,q'$
have the same projection through the strong stable holonomy.
By lemma~\ref{l.linked}, $q$ and $q'$ are homoclinically related to the orbit of $p$.
This proves that $W^{ss}_{loc}(q)$ and $ W^{u}_{loc}(q')$ intersect,
contradicting our assumption.

If $q$ and $\zeta$ coincide, one can consider higher backward iterates
$f^{-n}(x)$ in a neighborhood of $\zeta$. They all have distinct local unstable plaques
whose projection by the strong stable holonomy coincide.
One deduces that one can find a sequence of such backward iterates which accumulates on
a point $\zeta'\in W^{ss}_{loc}(\zeta)$ different from $\zeta$.
Repeating the construction near $\zeta'$,
one builds a periodic point $q'\in H(p)$
distinct from $q$ and as before $W^{ss}_{loc}(q)$ and $ W^{u}_{loc}(q')$ intersect,
giving again a contradiction.
This ends the proof of the proposition.
\end{proof}

\subsection{The transversal case}
When $H(p)$ is a quasi-attractor, the lemma~\ref{l.cont-quasi-attractor} ensures that
for diffeomorphisms $g$ close to $f$  the unstable manifold
$W^u(p_g)$ is still contained in $H(p_g)$.

\begin{lemma}\label{l.transversal}
Let us assume that $H(p)$ is a quasi attractor and consider $f'$, $C^1$-close to $f$,
such that the transversal case holds for a pair of points $x\neq y$ in $H(p_{f'})$.
Then, for any two different hyperbolic periodic points $p_x,p_y$ homoclinically related
to the orbit of $p_{f'}$ and close to $x$ and $y$ respectively,
and for any diffeomorphism $g$ that is $C^1$-close to $f'$
there exist $x'\in W^u(p_{x,g})$ and $y'\in W^u(p_{y,g})$ in $H(p_g)$ satisfying $W^{ss}(x')=W^{ss}(y')$.
\end{lemma}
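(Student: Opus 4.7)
The plan is to exploit the openness of the transversal hypothesis together with the continuity of the unstable lamination and of the strong stable holonomy on the trapping region $V$. Once the transversal condition is witnessed by a pair of small disks inside $W^u_{loc}(x)$ and $W^u_{loc}(y)$, it will persist both under $C^1$-small perturbations of those disks and under $C^1$-small perturbations of the ambient diffeomorphism, and this is the only geometric input needed.

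First, I would make the hypothesis quantitative. The transversal case provides a plaque $\cD$ transverse to $W^{ss}_{loc}(x)$ and small disks $D_x\subset W^u_{loc}(x)$, $D_y\subset W^u_{loc}(y)$ such that $\Pi^{ss}(D_y)$ meets both local components of $\cD\setminus \Pi^{ss}(D_x)$. Thus the two topological codimension-one submanifolds $\Pi^{ss}(D_x)$ and $\Pi^{ss}(D_y)$ of $\cD$ have a crossing intersection, and this crossing is stable under $C^1$-small perturbations of $D_x, D_y$ (and of $\Pi^{ss}$): the connected set $\Pi^{ss}(D_y)$ continues to meet both sides of $\Pi^{ss}(D_x)$. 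Since $p_x$ is a hyperbolic periodic point homoclinically related to $p_{f'}$ and close to $x$, and since the local unstable manifolds vary continuously in the $C^1$-topology inside the partially hyperbolic structure on $V$, the leaf $W^u_{loc}(p_x)$ contains a disk $\widetilde D_x$ that $C^1$-approximates $D_x$; analogously one obtains $\widetilde D_y\subset W^u_{loc}(p_y)$ close to $D_y$. Stability of the crossing then produces points $x'\in \widetilde D_x\subset W^u(p_x)$ and $y'\in \widetilde D_y\subset W^u(p_y)$ with $\Pi^{ss}(x')=\Pi^{ss}(y')$, i.e. $W^{ss}(x')=W^{ss}(y')$.

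Finally, I would promote the construction to any diffeomorphism $g$ that is $C^1$-close to $f'$. The hyperbolic continuations $p_{x,g}, p_{y,g}$ exist, remain homoclinically related to $p_g$ (an open property), and their local unstable manifolds depend continuously on $g$; so the approximating disks $\widetilde D_{x,g}\subset W^u_{loc}(p_{x,g})$ and $\widetilde D_{y,g}\subset W^u_{loc}(p_{y,g})$ still $C^1$-approximate $D_x, D_y$. The forward-invariant extension of the strong stable lamination in $V$ and its holonomy $\Pi^{ss}_g$ also vary continuously with $g$. The transverse crossing of the projected disks is therefore preserved and yields points $x'_g\in W^u(p_{x,g})$ and $y'_g\in W^u(p_{y,g})$ with $W^{ss}(x'_g)=W^{ss}(y'_g)$. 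Their membership in $H(p_g)$ follows from Lemma~\ref{l.cont-quasi-attractor}, which ensures that the unstable manifolds of $p_{x,g}$ and $p_{y,g}$ are contained in $H(p_g)$. The main technical point I anticipate is the bookkeeping of the joint continuity of the unstable lamination, of the strong stable lamination, and of its holonomy with respect to $g$, but all of this is standard given the partially hyperbolic structure on the trapping region $V$.
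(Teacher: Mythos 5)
Your proof is correct and follows essentially the same route as the paper's: topological transversality of the projected local unstable manifolds is an open condition, it transfers to the nearby periodic points $p_x,p_y$ and persists for all $g$ that are $C^1$-close to $f'$, and membership of $x',y'$ in $H(p_g)$ is obtained from Lemma~\ref{l.cont-quasi-attractor}. The only difference is that you spell out the crossing/stability argument in more detail than the paper's sketch, which is fine.
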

\begin{proof}
Let $x, y\in H(p_{f'})$ with $y\in W^{ss}_{loc}(x)\setminus \{x\}$
such that the intersection between $\Pi^{ss}(W^{u}_{loc}(x))$
and $\Pi^{ss}(W^{u}_{loc}(y))$ is topologically transversal.
Consider two periodic points $p_x,p_y$ homoclinically related to $p_{f'}$
and close to $x$ and $y$ respectively, so that
the local unstable manifolds of $p_x$ and $p_y$ are close to the local
unstable manifold of $x$ and $y$.
This implies that $\Pi^{ss}(W^u_{loc}(p_x))$ and $\Pi^{ss}(W^u_{loc}(p_y))$
intersect topologically transversally. By continuity of the local unstable manifolds and the local strong stable holonomy this property still holds for any
$g$ close to $f'$:
there are points $x'\in W^u_{loc}(p_{x,g}), y'\in W^u_{loc}(p_{y,g})$ such that 
$W^{ss}(x')=W^{ss}(y')$. By lemma~\ref{l.cont-quasi-attractor}, the local unstable manifolds of $p_{x,g},p_{y,g}$ remain in $H(p_g)$ and therefore the points $x', y'$ are in $H(p_g).$
\end{proof}

\subsection{The jointly integrable case}
The next lemma states that in the jointly integrable case either a heterodimensional cycle is created by a $C^r-$perturbation or for any point in the class there is a well defined continuation.
\begin{lemma}\label{joint.int.continuation}
Let us assume that $H(p)$ is a quasi-attractor whose periodic orbits
are hyperbolic, that $E^{cs}$ is thin trapped
and that the jointly integrable case holds.
Then for any $r\geq 1$ such that $f\in \diff^r(M)$, one of the following cases occurs.
\begin{itemize}
\item[--] There exists $g$ that is $C^r$-close to $f$
such that $H(p_g)$ exhibits a strong homoclinic intersection.
\item[--] There exists a hyperbolic periodic point $q$ homoclinically related to the orbit of $p$, two maps $g\mapsto x_g,y_g$ defined on a neighborhood $\cV$ of $f$ in $\diff^r(M)$ and continuous at $f$ such that for any diffeomorphism $g\in \cV$ the points $x_g,y_g$ belong to $H(p_g)\cap W^{s}(q_g)$ and are continuations of $x_f,y_f$. Moreover
$y_g$ belongs to $W^{ss}_{loc}(x_g)$.
\end{itemize}
\end{lemma}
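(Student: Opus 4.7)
The plan is to find ``stable'' realizations of a jointly-integrable pair inside the stable manifold of a common periodic orbit, unless a $C^r$-perturbation produces a strong homoclinic intersection. Fix $x_0,y_0\in H(p)$ with $y_0\in W^{ss}_{loc}(x_0)\setminus\{x_0\}$ and $\Pi^{ss}(W^u_{loc}(x_0))=\Pi^{ss}(W^u_{loc}(y_0))$ on a neighborhood, as furnished by the jointly integrable hypothesis. Because $E^{cs}$ is thin trapped, $f^n(y_0)\in W^{ss}_{loc}(f^n(x_0))$ for every $n\ge 0$ and the matching of the strong-stable projections persists along the orbit. Using lemma~\ref{l.contper}, one finds a hyperbolic periodic point $q$ homoclinically related to $p$ with $\cW^{cs}_q\subset W^s(q)$, and an integer $N$ large enough that $f^N(x_0)\in\cW^{cs}_q$; thin trapping of $E^{cs}$ then forces $f^N(y_0)\in W^s(q)$ as well. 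After relabelling $x_f=f^N(x_0)$, $y_f=f^N(y_0)$, both points lie in $H(p)\cap W^s(q)$ with $y_f\in W^{ss}_{loc}(x_f)$, the joint-integrability identification being preserved.

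Second, I construct candidate continuations. Using that unstable manifolds of periodic points in $\cP$ are dense in $H(p)$, pick sequences $p_n^x,p_n^y\in\cP$ converging to $x_f,y_f$ respectively, chosen so that for each $n$ the corresponding points on $W^u_{loc}(p_n^x)$ and $W^u_{loc}(p_n^y)$ share the same strong stable leaf (this uses the joint-integrability identification near $(x_f,y_f)$). For $g$ in a $C^r$-neighborhood $\cV$ of $f$, the hyperbolic continuations $p_{n,g}^x,p_{n,g}^y,q_g$ are defined; by shadowing (lemma~\ref{l.expansivity}), the unique intersection of $W^u_{g,loc}(p_{n,g}^x)$ with $\cW^{cs}_{g,q_g}$ produces a well-defined point close to $x_f$, and passing to the limit in $n$ yields $x_g\in H(p_g)\cap W^s(q_g)$; the analogous construction gives $y_g\in H(p_g)\cap W^s(q_g)$. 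Continuity at $g=f$ follows from the continuous dependence of all objects involved.

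The decisive point is to preserve $y_g\in W^{ss}_{loc}(x_g)$, and this is where the dichotomy appears. By lemma~\ref{l.cont-quasi-attractor}, $W^u(p_{n,g}^x)$ and $W^u(p_{n,g}^y)$ remain inside $H(p_g)$ for $g\in\cV$. If there exist $g$ arbitrarily $C^r$-close to $f$ and $n$ large enough such that the projections $\Pi^{ss}(W^u_{g,loc}(p_{n,g}^x))$ and $\Pi^{ss}(W^u_{g,loc}(p_{n,g}^y))$ cross topologically transversely, then two distinct intersection points of $H(p_g)$ lie on a common strong stable leaf as unstable manifolds of homoclinically related periodic orbits, and lemma~\ref{joint-int-easy} provides, by a further $C^r$-perturbation, the first conclusion. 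Otherwise, by connectedness of $\cV$, the strong-stable matching $y_g\in W^{ss}_{loc}(x_g)$ persists for every $g\in\cV$ and yields the second conclusion. The main obstacle is to exclude the ``strictly non-transversal'' configuration along paths of perturbations in $\cV$: the key observation is that any transition between matched and mismatched relative positions of the two projections must pass through a transversal crossing, which can then be further perturbed into a strong homoclinic intersection via lemma~\ref{joint-int-easy}.
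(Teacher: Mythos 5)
Your overall architecture (reduce to a pair $x,y$ in the stable manifold of a periodic point $q$, build continuations $x_g,y_g$ through periodic approximations, then argue that the strong-stable matching either persists or can be broken only by creating a strong homoclinic intersection) follows the paper's strategy, but the decisive third step has a genuine gap. You claim that ``any transition between matched and mismatched relative positions of the two projections must pass through a transversal crossing.'' This is false: the point $y_g$ can drift continuously off $W^{ss}_{loc}(x_g)$ into one fixed component of $\cW^{cs}_{x_g}\setminus W^{ss}_{loc}(x_g)$ without the projected unstable leaves ever crossing transversally and without ever producing an intersection of a strong stable manifold with an unstable manifold. Lemma~\ref{l.ordering} only forbids \emph{switching sides} along a connected family of perturbations; it does not forbid leaving the leaf to one side. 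So your dichotomy does not close, and the conclusion $y_g\in W^{ss}_{loc}(x_g)$ is not established.

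What is missing is the stable boundary point analysis. The paper first rules out (via lemma~\ref{joint-int-easy}) that both $x$ and $y$ lie on unstable manifolds of periodic points, then invokes proposition~\ref{p.boundary} to conclude that $x$ is \emph{not} a stable boundary point, i.e.\ $x$ is accumulated by $H(p)$ in \emph{both} components of $\cW^{cs}_x\setminus W^{ss}_{loc}(x)$. This two-sided accumulation is exactly what pins the continuation: corollary~\ref{c.continuation} applied to the two orientations squeezes $x_g$ onto $W^{ss}_{loc}(y_g)$ (in the orientation-reversing case one only gets that the open region between the two strong stable leaves avoids $H(p_g)$, which is why two-sidedness is indispensable). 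The same two-sided pinning, combined with lemmas~\ref{l.cont-central}, \ref{l.ordering} and~\ref{l.continuite}, is also what yields the continuity of $g\mapsto x_g,y_g$ at $f$, which you assert without proof. A secondary, fixable inaccuracy: your reduction to $W^s(q)$ by forward iteration does not work as stated ($f^N(x_0)$ being close to $q$ does not place it in $\cW^{cs}_q$); the paper instead uses the joint-integrability identification to slide $x_0,y_0$ along their unstable manifolds onto $W^s_{loc}(q)$.
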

\begin{proof}
Note that by our assumptions the results of sections~\ref{s.weak-hyperbolicity} and~\ref{s.continuation} apply. In particular
for $g$ $C^1$-close to $f$ the class $H(p_g)$ is still chain-hyperbolic and
contains $W^u(p)$. Let us assume that the first item of the proposition does not hold:
on a $C^r$-neighborhood $\cV$ of $f$, there is no diffeomorphism whose homoclinic
class $H(p_g)$ has a strong homoclinic intersection.

Recall that all the periodic orbits are hyperbolic. Since $E^s\oplus E^c$ is thin trapped,
they have the same index and by lemma~\ref{l.linked}, they are all homoclinically related.
There is no periodic points $q,q'\in H(p)$ such that
$W^{ss}(q)\setminus \{q\}$ and $W^u(q')$ intersect: otherwise, one gets a strong homoclinic intersection by using lemma~\ref{joint-int-easy}.
In particular, the proposition~\ref{p.boundary} can be applied.

As in definition~\ref{definition-cases}, let $x,y\in H(p)$ be two close points with disjoint local unstable manifolds
such that for any $z\in W^u_{loc}(x)\cap B(x,\varepsilon_0)$
we have $W^{ss}_{loc}(z)\cap W^u_{loc}(y)\neq \emptyset.$
Observe that there exists a periodic point $q\in H(p)$ close to $x$
whose local stable manifold intersects both the local unstable manifold of $x$ and $y$. Without lose of generality, we can assume that $x,y$ belong to $W^s_{loc}(q)$.

The point $x, y$ do not belong both to the unstable manifold of some periodic points $p_x, p_y$:
otherwise, we would get a strong connection by applying lemma~\ref{joint-int-easy}.
We can thus now assume that $x$ does not belong to the unstable manifold of a periodic point.
In particular, by proposition~\ref{p.boundary} it is not a  stable boundary point
and it is accumulated by points in $H(p)$ from both connected components of $\cW^{cs}_x\setminus W^{ss}_{loc}(x).$
The corollary~\ref{c.continuation} (in the orientation preserving case) implies that there exist two maps
$g\mapsto x_g,y_g$ on $\cV$ satisfying $(x_f,y_f)=(x,y)$ and
for any $g$ close to $f$, the points $x_g,y_g$ belong to $H(p_g)$ and have the same strong stable manifold. The points $x_g,y_g$ are accumulated by $H(p_g)$ in the same component
of $\cW^{cs}_{x_g}\setminus W^{ss}_{loc}(x_g)$.

Let us prove the continuity.
Since the point $x$ is accumulated from both sides, it has two continuations
$g\mapsto x_g,x'_g$. By lemma~\ref{l.cont-central},
for any $g$ one has $x'_g\in \cW^{cs}_{x_g}$.
One can choose an orientation of $E^c_x$ and by lemma~\ref{l.ordering}
assume that for any $g$, the point $x'_g$ does not meet
$\cW^{cs,+}_{x_g}$.
By lemma~\ref{l.continuite}, the map $g\mapsto x'_g$ is semi-continuous at $f$:
when $(g_n)$ is a sequence that converges to $f$, then
any limit $\bar x'$ of $(x'_{g_n})$ does not meet $\cW^{cs,-}_{x'_f}=\cW^{cs,-}_{x}$.
One deduces that any limit $\bar x$ of $(x_{g_n})$ does no meet
$\cW^{cs,-}_{x}$ either.
Since the map $g\mapsto x_g$ is also semi-continuous,
the limit $\bar x$ does not meet $\cW^{cs,+}_{x}$.
One deduces that $\bar x$ belongs to $W^{ss}_{loc}(x)$.
The orbit of $\bar x$ is shadowed by the orbit of $x$, hence one
deduces that $\bar x=x$.
Let us now consider any limit point $\bar y$
of $(y_{g_n})$. By construction it has to belong to $W^{ss}_{loc}(x)$
and $W^{ss}_{loc}(y)$,
and so $\bar y=y$. We have thus proved that the maps
$g\mapsto x_g,y_g$ are continuous at $f$.
\end{proof}

\subsection{The strictly non-transversal case}
In the strictly non-transversal case, roughly speaking is proved that either by perturbation is created a strong homoclinic connection, or for a diffeomorphisms nearby the strong stable leaves contains at most one point in the class or there are two periodic points such that for any diffeomorphisms nearby their unstable manifolds intersects some strong stable leaves (see lemma \ref{l.strictly-transversal}). 
\begin{lemma}\label{l.boundary1}
Let us assume that $H(p)$ satisfies the strictly non-transversal case.
Then, any close points $x\neq y$ in $H(p)$ satisfying $y\in W^{ss}_{loc}(x)$
are  stable boundary points. Moreover they are not accumulated by
$H(p)$ in the same component of $\cW^{cs}_{x}\setminus W^{ss}_{loc}(x)$.
\end{lemma}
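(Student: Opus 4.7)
\emph{Plan.} The second assertion is an immediate consequence of lemma~\ref{l.boundary}: the strictly non-transversal hypothesis guarantees that $\Pi^{ss}(W^u_{loc}(y))$ meets exactly one of the two local components of $\cD\setminus \Pi^{ss}(W^u_{loc}(x))$, so the hypothesis of lemma~\ref{l.boundary} is fulfilled and its conclusion transfers: $x$ and $y$ cannot be accumulated by $H(p)$ in a common component of $\cW^{cs}_x\setminus W^{ss}_{loc}(x)$.

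For the first assertion I argue by contradiction, focusing on $x$ (the argument for $y$ is symmetric, by exchanging the roles of $x$ and $y$ in the strictly non-transversal hypothesis). Suppose that $x$ is accumulated by $H(p)$ in both components $\cW^{cs,+}_x$ and $\cW^{cs,-}_x$, and pick sequences $z_n\in H(p)\cap \cW^{cs,+}_x$ and $z'_n\in H(p)\cap \cW^{cs,-}_x$ with $z_n,z'_n\to x$. Then the projected unstable manifolds $\Pi^{ss}(W^u_{loc}(z_n))$ and $\Pi^{ss}(W^u_{loc}(z'_n))$ are $C^0$-close to $\Pi^{ss}(W^u_{loc}(x))$ and pass through points $\Pi^{ss}(z_n)\in U^+_x$ and $\Pi^{ss}(z'_n)\in U^-_x$ respectively. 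Using lemma~\ref{l.contper}, I replace $z_n$ and $z'_n$ by hyperbolic periodic points $q_n,q'_n\in\cP_0$ homoclinically related to $p$, whose center-stable and center-unstable plaques are contained in the corresponding global manifolds; recall also that all hyperbolic periodic orbits in $H(p)$ are homoclinically related, by lemma~\ref{l.linked}. Combining the local product structure of $H(p)$ (lemma~\ref{l.bracket0}) with an application of the $\lambda$-lemma that brings an iterate of $W^u(q_n)$ transverse to $W^s(q'_n)$ along a nearby strong-stable leaf, one produces a pair $(\hat x,\hat y)\in H(p)\times H(p)$ lying on a single strong-stable leaf and such that $\Pi^{ss}(W^u_{loc}(\hat y))$ meets both local components of $\cD\setminus \Pi^{ss}(W^u_{loc}(\hat x))$, contradicting the strictly non-transversal assumption.

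The main obstacle is the construction of the pair $(\hat x,\hat y)$ witnessing a transversal crossing from one-sided local data near $x$. This demands careful bookkeeping of the orientation of the one-dimensional bundle $E^c$ transported along orbits, repeated use of the thin-trapping property of $E^{cs}$, and of the coherence of plaque families (lemma~\ref{l.uniqueness-coherence}), in order to guarantee that accumulation on both sides of $W^{ss}_{loc}(x)$ inside $\cW^{cs}_x$ can be combined into a genuine transversal crossing along a single strong-stable leaf. An alternate route, possibly simpler in this setting, is to use the second assertion to conclude that $y$ is isolated inside $\cW^{cs}_y\setminus W^{ss}_{loc}(y)$ and then exhibit a point of $H(p)$ inside the would-be isolation neighborhood by transporting $z_n$ to a nearby strong-stable leaf via periodic approximation, again combining lemmas~\ref{l.contper}, \ref{l.bracket0}, and~\ref{l.uniqueness-coherence}.
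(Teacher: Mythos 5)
There is a genuine gap, and it sits in your treatment of the first assertion. You argue by contradiction, assuming $x$ is accumulated by $H(p)$ on both sides of $W^{ss}_{loc}(x)$ in $\cW^{cs}_x$, and you propose to manufacture from the approximating sequences $z_n,z'_n$ a pair $(\hat x,\hat y)$ on a single strong stable leaf realizing the transversal case. You acknowledge yourself that this construction is the ``main obstacle'' and you do not carry it out; moreover it is unclear it can be carried out as sketched, because two-sided accumulation at the single point $x$ does not by itself produce two points of the class on a common strong stable leaf (the $z_n$ and $z'_n$ have no reason to lie on strong stable leaves of each other or of $x$), and any witness of the transversal case must consist of such a pair. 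The whole detour is unnecessary: the proof of lemma~\ref{l.boundary} actually shows that $y$ is not accumulated from the side of $\cW^{cs}_x\setminus W^{ss}_{loc}(x)$ projecting into $U^+_x$ and that $x$ is not accumulated from the side projecting into $U^-_y$. Since a stable boundary point is by definition (definition~\ref{d.boundary}) a point that fails to be accumulated in \emph{both} components, each of $x$ and $y$ is immediately a stable boundary point once lemma~\ref{l.boundary} applies; equivalently, one can combine your second assertion with lemma~\ref{l.NGSHI}. This is the route the paper takes, in one line.

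A secondary point concerns your application of lemma~\ref{l.boundary} to the pair $(x,y)$ itself. That lemma requires, \emph{for every small} $\varepsilon$, that $\Pi^{ss}(W^u_{loc}(y))$ meet $\Pi^{ss}(B(x,\varepsilon))\setminus \Pi^{ss}(W^u_{loc}(x))$, whereas the strictly non-transversal case only guarantees an intersection somewhere in the ball of radius $\varepsilon_0$: the two projected unstable manifolds may coincide on a neighborhood of $\Pi^{ss}(x)$ and separate only away from it, in which case the hypothesis of lemma~\ref{l.boundary} fails at $(x,y)$ for small $\varepsilon$. The paper first replaces $(x,y)$ by points $x'\in W^u_{loc}(x)$ and $y'\in W^u_{loc}(y)\cap W^{ss}_{loc}(x')$ located where the separation occurs at arbitrarily small scales, applies lemma~\ref{l.boundary} there, and then transfers the one-sided accumulation statement back to $(x,y)$ along the unstable leaves. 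Your proof should include this reduction.
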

\begin{proof}
Since $H(p)$ satisfies the strictly non-transversal case and $x,y$ are close,
there exists $y'\in W^u_{loc}(y)$ and $x'\in W^u_{loc}(x)$ such that
$y'\in W^{ss}_{loc}(x')$ and for any $\varepsilon>0$, the manifolds
$\Pi^{ss}(W^{u}_{loc}(y'))$ intersects
$\Pi^{ss}(B(x',\varepsilon))\setminus \Pi^{ss}(W^u_{loc}(x'))$.
By lemma~\ref{l.boundary}, they are not accumulated by $H(p)$
in the same component and in particular both are  stable boundary points.
\end{proof}
\smallskip

For the points $(x,y)$ as in the previous lemma
the following property obviously holds
(the open region considered below is then empty):
\begin{itemize}
\item[(**)]
\it $\cW^{cs}_x$ contains $y$. The open region in $\cW^{cs}_x$ bounded by $W^{ss}_{loc}(x)\cup W^{ss}_{loc}(y)$
does not meet $H(p)$.
\end{itemize}
Note that this property already appeared in corollary~\ref{c.continuation}.
The next lemma states that the set of such pairs $(x,y)$ is quite small.

\begin{lemma}\label{l.boundary2}
Let $H(p)$ be a quasi-attractor such that $E^{cs}$ is thin trapped,
the strictly non-transversal case holds and for any periodic points
$q,q'\in H(p)$ the manifolds $W^{ss}(q)\setminus \{q\}$ and $W^{u}(q)$ are disjoint.
Let us fix $\delta>0$.
Then, there exist $N\geq 1$ and finitely many periodic points $p_1,\dots,p_s$ such that
any points $x\neq y$ in $H(p)$ satisfying (**) and $d(x,y)\geq \delta$
belong to the union of the $f^N(W^u_{loc}(p_i))$, $i\in\{1,\dots,s\}$.
\end{lemma}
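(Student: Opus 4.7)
The plan is to use compactness to reduce the statement to a finite covering argument, combined with the characterization of stable boundary points as lying in unstable manifolds of periodic orbits.

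First I would observe that the set $\cE \subset H(p) \times H(p)$ of pairs $(x,y)$ with $x \neq y$, $d(x,y) \geq \delta$, and satisfying property (**) is compact. The condition $y \in W^{ss}_{loc}(x)$ passes to limits by continuity of the strong stable lamination; the condition $d(x,y)\geq \delta$ is clearly closed; and (**) passes to limits because $H(p)$ is closed and the plaque family $\cW^{cs}$ (which is thin trapped for $E^{cs}$) varies continuously with the base point. Next, lemma~\ref{l.boundary1} tells us that for any $(x,y) \in \cE$, both $x$ and $y$ are stable boundary points. Since the hypothesis of the lemma precludes intersections of $W^{ss}(q)\setminus\{q\}$ with $W^u(q')$ for periodic $q,q' \in H(p)$, proposition~\ref{p.boundary} applies, and hence each such $x$ (resp.\ $y$) belongs to the unstable manifold $W^u(q_x)$ of a (unique) periodic point $q_x \in H(p)$. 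In particular there exist $n_x \geq 0$ and $z_x \in W^u_{loc}(q_x)$ with $x = f^{n_x}(z_x)$.

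The goal is then reduced to showing that the set $K := \pi_1(\cE) \cup \pi_2(\cE)$, which is compact, can be covered by a finite collection of sets of the form $f^{N}(W^u_{loc}(p_i))$. For this it suffices to prove that each $x_0 \in K$ admits an open neighborhood $V_{x_0} \subset M$ with $V_{x_0} \cap K \subset f^{n_{x_0}}(W^u_{loc}(q_{x_0}))$, for then compactness of $K$ yields a finite subcover, providing the periodic points $p_1,\dots,p_s$ and the uniform integer $N := \max_i n_{x_i}$. To establish the local inclusion, fix $x_0$ and choose a small neighborhood $U$ of the orbit of $q_{x_0}$ such that $f^{-n_{x_0}}(x_0) \in W^u_{loc}(q_{x_0}) \cap U$ and such that $U$ is disjoint from all other periodic orbits of bounded period that appear as some $q_{x}$ with $x$ near $x_0$. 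For $x_k \to x_0$ in $K$, the backward iterate $f^{-n_{x_0}}(x_k)$ lies in $U$ for large $k$; since $x_k \in W^u(q_{x_k})$ and we have excluded nontrivial intersections of strong stable and unstable manifolds of distinct periodic orbits, the orbit of $q_{x_k}$ must coincide with that of $q_{x_0}$, and then $f^{-n_{x_0}}(x_k) \in W^u_{loc}(q_{x_0})$, giving the claim.

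The hard part will be the local inclusion argument. Its subtlety is that a priori the assignment $x \mapsto q_x$ is not known to be continuous: the unstable manifolds $W^u(O)$ of distinct periodic orbits are disjoint but may accumulate on one another in a complicated way inside $H(p)$. The argument must use precisely the hypotheses that no strong homoclinic intersections exist among periodic orbits in $H(p)$ (so that in a small uniform neighborhood of a given hyperbolic orbit $O$, only the unstable manifold of $O$ itself passes close to $O$ in a structured way) together with the strictly non-transversal case (to control the geometry of nearby unstable leaves via the strong stable holonomy as in section~\ref{su-intersections}). These two inputs should, after possibly shrinking $U$ and passing to a finite set of periodic orbits with bounded period that could accumulate near $x_0$, force stabilization of $q_{x_k}$ to $q_{x_0}$ and thus the local inclusion.
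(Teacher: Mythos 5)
Your setup is right --- compactness of the set of pairs, and the reduction via lemma~\ref{l.boundary1} and proposition~\ref{p.boundary} to the fact that each coordinate of such a pair lies in the unstable manifold of a periodic point --- and this matches the paper's starting point. But the entire content of the lemma is the local rigidity step, and your argument for it does not work. You claim that if $x_k\to x_0$ in $K$ then $f^{-n_{x_0}}(x_k)$ eventually lies in a small neighborhood $U$ of the orbit of $q_{x_0}$ and that this forces $q_{x_k}=q_{x_0}$. Having a single backward iterate land in $U$ puts no constraint on where the full backward orbit of $x_k$ accumulates, so it does not identify $q_{x_k}$; and your auxiliary assumption that the relevant periodic orbits have bounded period is never established (nothing rules out infinitely many periodic orbits of unbounded period whose unstable manifolds accumulate at $x_0$). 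The absence of strong connections between periodic orbits gives disjointness of the relevant invariant manifolds, hence uniqueness of $q_x$, but it does not by itself yield the stabilization you need. Your closing paragraph essentially concedes that this is the hard part and gestures at the hypotheses without supplying the mechanism.

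The paper proves the rigidity differently, and crucially works with the \emph{pair} $(x,y)$ rather than with the individual points: assuming a sequence $(x_n,y_n)\to(x,y)$ in $P$ with all leaves $W^u_{loc}(x_n)$ distinct, it first shows (using the strictly non-transversal case and the trapping of $\cW^{cs}$) that $W^u_{loc}(x_n)$ cannot cut $W^{ss}_{loc}(x)$, and then projects the four local unstable manifolds of $x_n,x,y,y_n$ by the strong stable holonomy onto a transverse disc. The strictly non-transversal hypothesis forces the cyclic order of these four projected hypersurfaces along $\Pi^{ss}(\cW^{cs}_{x})$ to persist along every nearby curve $\Pi^{ss}(\cW^{cs}_{x'})$ with $x'\in W^u_{loc}(x)$, and property (**) --- the emptiness of the region of $\cW^{cs}_x$ between $W^{ss}_{loc}(x)$ and $W^{ss}_{loc}(y)$ --- then yields a contradiction. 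By discarding $y$ and trying to localize around $x$ alone, you lose exactly the information that drives this argument. To repair your proof you would need to reinstate the pair structure and carry out the holonomy-ordering argument (or an equivalent substitute); the compactness bookkeeping at the end is the easy part.
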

\begin{proof}
We fix $\delta>0$ small.
We first note that by lemma~\ref{l.boundary1} and proposition~\ref{p.boundary},
any $x,y$ as in the statement of the lemma are  stable boundary points and there exists
some periodic points $p_x,p_y\in H(p)$ such that
$x$ belongs to $W^u(p_x)$ and $y$ to $W^u(p_y)$.

Let $P$ be the (closed) set of pairs $(x,y)\in H(p)^2$ satisfying~(**) and $d(x,y)\geq \delta$.
We have to prove that if two pairs $(x,y)$ and $(x',y')$ in $P$ are close, then
$x'\in W^u_{loc}(x)$ and $y'\in W^u_{loc}(y)$. This is done by contradiction:
we consider a sequence $(x_n,y_n)_{n\geq 0}$ in $P$ that converges toward $(x,y)$
and assume that all the leaves $W^{u}_{loc}(x_n)$ are distinct.
One may assume that $x$ is accumulated by $H(p)$ inside $\cW^{cs,+}_{x}$.

First we claim that $W^{u}_{loc}(x_n)$ does not cut $W^{ss}_{loc}(x)$.
Otherwise, we denote by $z_n$ the intersection point. The plaque $\cW^{cs}_{z_n}$ coincides
with $\cW^{cs}_{x}$ in a neighborhood of $z_n$ by lemma~\ref{l.uniqueness-coherence}, hence
$z_n$ is not accumulated by $H(p)\cap \cW^{cs,-}_{z_n}$ for $n$ large.
One deduces that $z_n$ and $x$ belongs to the same local strong stable leaf and are accumulated
by points of $H(p)\cap \cW^{cs,+}_{z_n}$ and $H(p)\cap \cW^{cs,+}_{x}$ respectively,
contradicting the definition of the strictly non-transversal case.

Let $\Pi^{ss}$ be the projection along the strong stable holonomy on a disk $\cD$ transverse
to $W^{ss}_{loc}(x)$. The projections $\Pi^{ss}(W^u_{loc}(x_n)), \Pi^{ss}(W^u_{loc}(x)), \Pi^{ss}(W^u_{loc}(y)), \Pi^{ss}(W^u_{loc}(y_n))$ are one codimensional manifolds of $\cD$:
by our assumptions, the one-dimensional curve $\gamma=\Pi^{ss}(\cW^{cs}_x)$ meets them in this order.
Since we are in the strictly non-transversal case, the order is the same on any other curve
$\gamma'=\Pi^{ss}(\cW^{cs}_{x'})$ where $x'\in W^u_{loc}(x)$ is close to $x$.
In particular, when $x'$ is the intersection point between $W^u_{loc}(x)$ and
$\cW^{cs}_{x_n}$, one finds a contradiction since $W^{u}_{loc}(x)$ and $W^{u}_{loc}(y)$
cannot intersect the open region of $\cW^{cs}_x$ bounded by $W^{ss}_{loc}(x)\cup W^{ss}_{loc}(y)$
and by the same argument as above, $W^{u}_{loc}(x)\cap W^{ss}_{loc}(x_n)$ and
$W^{u}_{loc}(y)\cap W^{ss}_{loc}(y_n)$ are empty.
This concludes the proof of the lemma.
\end{proof}

\begin{lemma}\label{l.strictly-transversal}
Let us assume that $H(p)$ is a quasi-attractor whose periodic orbits
are hyperbolic, that $E^{cs}$ is thin trapped
and that the strictly non-transversal integrable case holds.
Then for any $r\geq 1$ such that $f\in \diff^r(M)$, one of the following cases occurs.
\begin{itemize}
\item[--] There exists $g$, $C^r$-close to $f$
such that $H(p_g)$ exhibits a strong homoclinic intersection.
\item[--] There exists $g$, $C^r$-close to $f$
such that for any $x\neq y$ in $H(p_g)$ one has $W^{ss}(x)\neq W^{ss}(y)$.
\item[--]
There exist two hyperbolic periodic points $p_x,p_y$ homoclinically related
to the orbit of $p$ and an open set $\cV\subset \diff^r(M)$
whose closure contains $f$, such that for any $g\in \cV$ the class $H(p_g)$ contains
two different points $x\in W^u(p_{x,g})$ and $y\in W^u(p_{y,g})$
satisfying $W^{ss}(x)=W^{ss}(y)$.
\end{itemize}
\end{lemma}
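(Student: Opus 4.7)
\emph{Proof strategy.} The plan is a case analysis according to whether some diffeomorphism arbitrarily $C^r$-close to $f$ falls into one of the other branches of Definition~\ref{definition-cases} (transversal or jointly integrable), versus the situation where the strictly non-transversal case persists on a whole $C^r$-neighborhood of $f$. In the first two subcases, the results developed earlier immediately deliver one of the three conclusions; in the third subcase we reduce to a finite combinatorial situation and run a Baire-type argument to produce either case 2 or case 3.

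\emph{Nearby transversal or jointly integrable.} Suppose first that some $f'$ arbitrarily $C^r$-close to $f$ lies in the transversal case. Choose hyperbolic periodic points $p_x, p_y$ homoclinically related to $p_{f'}$, close respectively to the transversal pair in $H(p_{f'})$; Lemma~\ref{l.transversal} applied to $f'$ then yields a $C^r$-open set of perturbations $g$ of $f'$ for which $H(p_g)$ contains distinct points $x \in W^u(p_{x,g})$, $y \in W^u(p_{y,g})$ with $W^{ss}(x) = W^{ss}(y)$, and this open set has $f$ in its closure, giving case 3. Suppose instead that some $f'$ nearby lies in the jointly integrable case. Applying Lemma~\ref{joint.int.continuation} to $f'$ either produces directly a $C^r$-close diffeomorphism with strong homoclinic intersection (case 1 for $f$), or furnishes continuous maps $g \mapsto x_g, y_g$ on a neighborhood $\cV$ of $f'$ with $x_g, y_g \in W^s(q_g) \cap H(p_g)$ and $y_g \in W^{ss}_{loc}(x_g)$. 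In this second alternative, Lemma~\ref{l.contper} produces sequences of hyperbolic periodic points $(p^x_n), (p^y_n)$ accumulating on $x_{f'}, y_{f'}$, and Proposition~\ref{p.continuation} (whose hypotheses are valid on $\cV$ since case 1 fails there) transports their continuations so that $(p^x_{n,g}), (p^y_{n,g})$ still accumulate on $x_g, y_g$. All the hypotheses of Theorem~\ref{t.stable} are then satisfied, producing a $C^{1+\al}$-perturbation with strong homoclinic intersection, i.e.\ case 1.

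\emph{Persistent strictly non-transversal case.} It remains the case that a whole $C^r$-neighborhood $\cU$ of $f$ lies in the strictly non-transversal case and, assuming case 1 fails, no $g \in \cU$ has a strong homoclinic intersection (so Lemma~\ref{joint-int-easy} forbids any periodic pair $q, q'$ in $H(p_g)$ with $W^{ss}(q) \setminus \{q\}$ meeting $W^u(q')$). Fix $\delta > 0$ small; Lemma~\ref{l.boundary2} applied to $f$ delivers $N \geq 1$ and finitely many periodic points $p_1, \ldots, p_s \in H(p)$, homoclinically related to $p$ by Lemma~\ref{l.linked}, such that every pair of distinct points in $H(p)$ on a common strong stable leaf and of ambient distance $\geq \delta$ lies in $\bigcup_i f^N(W^u_{loc}(p_i))$. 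For each ordered pair $(i, j)$ let $\cV_{ij} \subset \cU$ be the set of $g$ admitting distinct $x \in W^u(p_{i,g})$, $y \in W^u(p_{j,g})$ in $H(p_g)$ with $W^{ss}(x) = W^{ss}(y)$; this is a closed subset of $\cU$ by continuity of invariant manifolds and of the strong stable foliation. If some $\cV_{ij}$ has interior with $f$ in its closure, case 3 holds with $(p_x, p_y) = (p_i, p_j)$. Otherwise, after shrinking $\cU$, each $\cV_{ij}$ has empty interior; by a finite Baire argument I can pick $g \in \cU \setminus \bigcup_{i,j} \cV_{ij}$. Using robustness of the partially hyperbolic structure and the plaque families (Lemma~\ref{l.robustness}) together with upper-semicontinuity in $g$ of the set of pairs satisfying property~(**) at scale $\geq \delta/2$, the continuations $p_{1,g}, \ldots, p_{s,g}$ still capture every such pair for $g$ at scale $\delta/2$. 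Since $g$ avoids all $\cV_{ij}$, no such pair exists for $g$ at that scale; and since in the strictly non-transversal case every pair $(x, y)$ in $H(p_g)$ with $y \in W^{ss}(x) \setminus \{x\}$ has some iterate at intrinsic $W^{ss}$-distance in $[\delta/2, \delta]$ (where~(**) holds automatically by Lemma~\ref{l.boundary1}), we conclude that $H(p_g)$ has no two distinct points on a common strong stable leaf, which is case 2.

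\emph{Main obstacle.} The delicate step is the transfer of the finiteness statement of Lemma~\ref{l.boundary2} from $f$ to the perturbation $g$, i.e.\ verifying that the fixed list of continuations $p_{1,g}, \ldots, p_{s,g}$ and exponent $N$ capture \emph{every} strong stable coincidence in $H(p_g)$ at scale $\delta/2$. This rests on upper-semicontinuity of the closed set of pairs satisfying~(**) at uniform scale, combined with the continuity of the unstable laminations and strong stable holonomies coming from the partial hyperbolicity. A second subtle point is the iteration argument in $W^{ss}$, which must be performed at the intrinsic $W^{ss}$-distance so that every hypothetical pair can be reduced to one at a scale where Lemma~\ref{l.boundary2} applies; the strictly non-transversal hypothesis ensures that~(**) is then automatic.
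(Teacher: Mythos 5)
Your skeleton for the main case (the finite family $p_1,\dots,p_s$ from Lemma~\ref{l.boundary2}, the closed sets indexed by pairs $(i,j)$, and the interior/empty-interior dichotomy) matches the paper's, but the step you yourself flag as delicate is not justified and, as written, fails. Having picked $g$ outside all the $\cV_{ij}$, you must show that \emph{every} pair $x\neq y$ in $H(p_g)$ with $W^{ss}(x)=W^{ss}(y)$ is trapped on $\bigcup_i g^{N+\tau_i}(W^u_{loc}(p_{i,g}))$. Upper-semicontinuity of the set of pairs satisfying (**) only says that such pairs for $g$ lie \emph{near} the sets $f^N(W^u_{loc}(p_i))$ as $g\to f$; it does not place them \emph{on} the unstable manifolds of the continuations $p_{i,g}$, and it does not let you re-run Lemma~\ref{l.boundary2} for $g$ with the same list of periodic points (doing so would anyway require the quasi-attractor, thin-trapping and strictly-non-transversal hypotheses for $g$, none of which you verify). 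The paper closes this gap with the pointwise continuation machinery of section~\ref{s.continuation}: since no $g$ near $f$ has a strong homoclinic intersection, Proposition~\ref{p.continuation} applies; given a pair $(x,y)$ for $g$ one passes to a backward iterate with $d(x,y)>2\delta$, takes the continuations $x_f,y_f$ for $f$ (Lemma~\ref{l.cont-central}), and applies Corollary~\ref{c.continuation}: in the orientation-preserving case $y_f\in W^{ss}_{loc}(x_f)$, contradicting Lemma~\ref{l.boundary1} applied to $f$; in the orientation-reversing case $(x_f,y_f)$ satisfies (**) at scale $\geq\delta$, so Lemma~\ref{l.boundary2} (for $f$) puts them on $f^N(W^u_{loc}(p_i))$ and $f^N(W^u_{loc}(p_j))$, and Lemma~\ref{l.cont-unstable} transports this back to $x\in g^{N+\tau_i}(W^u_{loc}(p_{i,g}))$, contradicting the choice of $g$. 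Note that every appeal to the strictly-non-transversal hypothesis and to Lemmas~\ref{l.boundary1} and~\ref{l.boundary2} is thereby made for $f$ only.

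This also makes your opening case division superfluous, and it is itself shaky: you apply Lemma~\ref{joint.int.continuation} to a perturbation $f'$ without checking that $H(p_{f'})$ is still a quasi-attractor with hyperbolic periodic orbits and thin-trapped $E^{cs}$ (the paper warns after Theorem~\ref{t.homoclinic} that thin trapping is a priori not preserved under perturbation). A smaller issue: your $\cV_{ij}$ are defined with the full, non-compact manifolds $W^u(p_{i,g})$ and global strong stable leaves, so their closedness is unclear; the paper uses the compact pieces $f^{N+\tau_i}(\overline{W^u_{loc}(p_{i,g})})$ and $\overline{W^{ss}_{loc}(x)}$ precisely to make the sets $D_{i,j}$ closed.
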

\begin{proof}
As in the proof of lemma~\ref{joint.int.continuation},
for $g$ that is $C^1$-close to $f$ the class $H(p_g)$ is still chain-hyperbolic and contains $W^u(p)$. Moreover, one can assume that for any periodic points $q,q'\in H(p)$
the manifolds $W^{ss}(q)\setminus \{q\}$ and $W^u(q)$ do not intersect.
Let us fix $\delta<0$ small.
One can consider the periodic points $p_1,\dots,p_s$ and the integer $N\geq 1$
provided by the lemma~\ref{l.boundary2}.
These points are hyperbolic, homoclinically related to $p$ by lemma~\ref{l.linked}
and have a continuation for any $g$ that is $C^1$-close to $f$.
One may also assume there is no $g$ in a $C^r$-neighborhood of $f$
such that $H(p_g)$ has a strong homoclinic intersection.
One can then consider the continuation given by proposition~\ref{p.continuation}.
We also introduce the period $\tau_i$ of each periodic point $p_i$.

In a small neighborhood of $f$ in $\diff^r(M)$,
consider for each pair $(p_i,p_j)$ the (closed) subset $D_{i,j}$ of diffeomorphisms $g$ such that
the class $H(p_g)$ contains some distinct points $x\in f^{N+\tau_i}(\overline{W^u_{loc}(p_{i,g})})$
and $y\in f^{N+\tau_j}(\overline{W^u_{loc}(p_{j,g})})$ with $y\in \overline{W^{ss}_{loc}(x)}$.
The diffeomorphisms in the interior of $D_{i,j}$ are in the third case of the lemma.

If the sets $D_{i,j}$ have empty interior,
there exists an open set $\cV$ in $\diff^r(M)$
whose closure contains $f$ such that for any $g\in \cU$,
any $p_i,p_j$ and any distinct points
$x\in f^{N+\tau_i}(W^u_{loc}(p_{i,g}))$
and $y\in f^{N+\tau_j}(W^u_{loc}(p_{j,g}))$ one has $y\not\in W^{ss}_{loc}(x)$.
To conclude, we have to prove that for $g\in \cU$ close to $f$
and any distinct points $x, y\in H(p_g)$ one has $W^{ss}(x)\neq W^{ss}(y)$,
giving the second case of the lemma.
This is done by contradiction: one considers a pair $(x,y)$ such that
$y\in W^{ss}_{loc}(x)$ and
up to consider a backward iterate, one can require that
the points $x,y$ satisfy $d(x,y)>2\delta$.
Having chosen $g$ close enough to $f$, one deduces (lemma~\ref{l.cont-central})
that any continuations $x_f,y_f$ for $f$ still satisfy
$d(x_f,y_f)>\delta$.

If $x,y$ are accumulated in the same component
of $\cW^{cs}_{x}\setminus W^{ss}_{loc}(x)$, then by corollary~\ref{c.continuation}
(in the orientation preserving case) the same holds for the continuations $x_f,y_f$ for $f$.
This contradicts lemma~\ref{l.boundary1}.

If $x,y$ are accumulated in different components
of $\cW^{cs}_{x}\setminus W^{ss}_{loc}(x)$,
then by corollary~\ref{c.continuation} (in the orientation reversing case)
the continuations $x_f,y_f$ for $f$
satisfy (**). Since their distance is bounded from below by $\delta$,
lemma~\ref{l.boundary2} implies that $x_f,y_f$
belong to $f^N(W^u_{loc}(p_i))$ and $f^N(W^u_{loc}(p_j))$ respectively.
By lemma~\ref{l.cont-unstable}, one deduces that for the diffeomorphism $g$ close, the points $x,y$
belong to $g^{N+\tau_i}(W^u_{loc}(p_{i,g}))$ and $g^{N+\tau_j}(W^u_{loc}(p_{j,g}))$ respectively.
This contradicts our assumption on $g$.
\end{proof}

\subsection{Proof of proposition~\ref{p.position}}

Let us consider a diffeomorphism $f\in \diff^{1+\al}(M)$, $\al\geq 0$, and a homoclinic class $H(p)$ as in the statement of theorem~\ref{t.position}
and assume that the two first cases of the proposition do not occur.
If the jointly integrable case holds,
the lemma~\ref{joint.int.continuation} gives the third case of the proposition.
If the transversal or the strictly non-transversal case holds,
the lemmas~\ref{l.transversal} and~\ref{l.strictly-transversal}
give the fourth case of the proposition.

\section{Periodic stable leaves: proof of theorem \ref{t.stable}}
\label{proofjointint}
In this section we prove theorem~\ref{t.stable} and proposition~\ref{p.generalized-strong-connectionCr}.
Let us consider:
\begin{itemize}
\item[1)] A diffeomorphism $f_0$
and a homoclinic class $H(p_{f_0})$ which is a chain-recurrence class
endowed with a partially hyperbolic splitting $E^s\oplus E^c\oplus E^u$ where
$E^c$ is one-dimensional and $E^s\oplus E^c$ is thin-trapped.
\item[2)] Some $\al\in [0,1)$, a $C^{1+\al}$-diffeomorphism $f$ that is $C^1$-close to $f_0$,
an open neighborhood $\cV\subset \diff^{1+\al}(M)$ of $f$ and some collections of hyperbolic periodic points $q_{f}$, $\{p_{n,f}^x\}_{n\in \NN}$ and $\{p^{y}_{n,f}\}_{n\in \NN}$ for $f$
such that the following properties hold.
\begin{itemize}
\item[--] For $g\in \cV$, the continuations $q_{g}$, $p_{n,g}^x$, $p_{n,g}^y$
exist and are homoclinically related to $p_{g}$.
\item[--] For each $g\in \cV$, the sequences $(p_{n,g}^x)$ and $(q_{n,g}^y)$ converge
towards two distinct points $x_g,y_g$ in $H(p_g)\cap W^s_{loc}(q_g)$
such that $y_g$ belongs to $W^{ss}_{loc}(x_g)$.
\item[--] The maps $g\mapsto x_g,y_g$ are continuous at $f$.
\end{itemize}
\end{itemize}

We will show that if $\al\geq 0$ is small, then
there exists a diffeomorphism $g\in \cV$ whose homoclinic class $H(p_g)$ has a strong homoclinic intersection.

\begin{propo}\label{p.jointintegrable}
For any diffeomorphism $f_0$ and any homoclinic class $H(p_{f_0})$ satisfying the assumption 1) above, there exists $\alpha_0\in (0,1)$ and a $C^1$-neighborhood $\cU$ of $f$
with the following property.

\noindent
For any $\al\in [0,\al_0]$, any diffeomorphism $f$,
any neighborhood $\cV\subset \diff^{1+\al}(M)$ and any maps $g\mapsto x_g,y_g$ satisfying the assumption 2), there exists a transverse intersection $z\in W^s(q_{f})\cap W^{u}_{loc}(q_{f})\setminus \{q_{f}\}$
and an arc of diffeomorphisms $(g_t)_{t\in [-1,1]}$ in $\cV$ such that
\begin{itemize}
\item[--] for each $t\in [-1,1]$, considering the
(unique) continuation $z_t$ of $z$ for $g_t$, 
the center stable plaque $\cD^{cs}_{z_t}$ intersects $W^u_{loc}(x_{g_t})$ 
and $W^u_{loc}(y_{g_t})$ at some points $\hat x_t$ and $\hat y_t$;
\item[--] considering an orientation of the central bundle in a neighborhood of $q$,
one has $$\hat y_{-1}\in \cD^{cs,-}_{\hat x_{-1}} \text{ and } \hat y_{1}\in \cD^{cs,+}_{\hat x_{1}}.$$
\end{itemize}
\end{propo}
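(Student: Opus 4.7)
The plan is to produce the arc $(g_t)$ by a single localized $C^{1+\al}$-small perturbation which displaces the local unstable manifold of $x_g$ in the central direction while leaving that of $y_g$ essentially fixed, so that the intersection point $\hat x_t\in\cD^{cs}_{z_t}\cap W^u_{loc}(x_{g_t})$ sweeps across $W^{ss}_{loc}(\hat y_t)$ as $t$ varies in $[-1,1]$.

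I would first fix the transverse homoclinic point and the persistent intersections. Since $q_f$ is homoclinically related to $p_f$, pick a transverse homoclinic intersection $z\in W^u_{loc}(q_f)\cap W^s(q_f)\setminus\{q_f\}$ close enough to $q_f$. Because $x_f,y_f\in W^s_{loc}(q_f)$ are close to $q_f$, a center-stable plaque $\cD^{cs}_z$ of small fixed diameter is transverse to both $W^u_{loc}(x_f)$ and $W^u_{loc}(y_f)$ and meets them at points $\hat x_f,\hat y_f$ close to $z$. By the continuity of $g\mapsto x_g,y_g$ at $f$, the existence of a hyperbolic continuation $z_g$ of $z$, and the continuity of the plaque families in $g$, analogous intersections $\hat x_g,\hat y_g$ persist for every $g$ sufficiently $C^1$-close to $f$. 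Moreover, in the jointly integrable setting of this section, the hypothesis $y_f\in W^{ss}_{loc}(x_f)$ yields $\hat y_f\in W^{ss}_{loc}(\hat x_f)$ at $t=0$.

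Next I would choose the perturbation support and construct the arc. Since $x_f\neq y_f$ lie on the same local strong stable leaf, the backward iterates $f^{-k}(x_f)$ and $f^{-k}(y_f)$ leave the local strong stable neighborhood and separate in $M$ after finitely many steps. For $N$ large, set $w:=f^{-N}(x_f)$ and pick a small ball $B=B(w,r)$ disjoint from the segment $\{f^{-k}(y_f):0\le k\le 2N\}$, from the orbits of $q_f,p^x_{n,f},p^y_{n,f}$ with $n\le n_0$ (large), and from $\cD^{cs}_z$ together with its relevant forward iterates. Let $v\in E^c_w$ be a unit vector and let $\phi_t:M\to M$ be a smooth bump diffeomorphism supported in $B$, equal on a concentric subball to translation by $t\eta v$, with $\eta>0$ small; set $g_t:=\phi_t\circ f$. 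A standard bump estimate gives $\|\phi_t-\id\|_{C^{1+\al}}\le C|t|\eta r^{-(1+\al)}$, so $g_t\in\cV$ for every $\al\le\al_0$ when $\eta$ is small enough. Since $B$ avoids the periodic orbits involved, their continuations (and hence $z_t$) are preserved, and $x_{g_t},y_{g_t}$ stay close to $x_f,y_f$. Because $B$ also avoids the backward orbit of $y_f$, the plaque $W^u_{g_t,loc}(y_{g_t})$ and hence $\hat y_t$ is unchanged up to higher order, whereas the inclusion $w\in B$ means the perturbation alters the $g_t^{-1}$-orbit of $x_f$ from step $N+1$ onwards; propagating this alteration through the plaque family back to $x_f$ and intersecting with $\cD^{cs}_{z_t}$ produces a central displacement of $\hat x_t$ of size $ct$, with a non-zero slope $c$ for a suitable choice of $v$. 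Choosing the orientation of $E^c$ and the sign of $v$ appropriately and taking $\eta$ so that $|c|$ dominates the (vanishing) initial offset between $\hat y_f$ and $W^{ss}_{loc}(\hat x_f)$, one secures $\hat y_{-1}\in\cD^{cs,-}_{\hat x_{-1}}$ and $\hat y_{1}\in\cD^{cs,+}_{\hat x_{1}}$.

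The hard part will be the simultaneous compatibility of three constraints: the $C^{1+\al}$-smallness of $\phi_t$, which forces $|t|\eta\ll r^{1+\al}$; a central displacement at $z$ of at least a fixed size, which forces $\eta$ times a dynamical propagation factor to be bounded below; and the geometric separation of the backward orbit of $y_f$ from that of $x_f$ and from the periodic skeleton, which caps $r$ above. These inequalities can be reconciled only when $\al$ is sufficiently small, and $\al_0$ is chosen precisely to this end; this is the technical reason the conclusion holds in $C^{1+\al}$ for small $\al$ only, rather than in $C^1$ unconditionally.
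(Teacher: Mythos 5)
Your high-level goal is the right one (make the relative position of $\hat x_t$ and $\hat y_t$ on a common center-stable plaque change sign along the arc), but the perturbation mechanism you propose cannot deliver it, and the decisive quantitative comparison is missing. Note first that by hypothesis $y_g\in W^{ss}_{loc}(x_g)$ for \emph{every} $g\in\cV$, so the base points are pinned to a common strong stable leaf no matter what you do; the only thing a perturbation can change is the shape of $W^u_{loc}(x_{g_t})$ relative to $W^u_{loc}(y_{g_t})$ after projection by the strong stable holonomy. You assert that your translation at $w=f^{-N}(x_f)$ produces a displacement of $\hat x_t$ "of size $ct$ with a non-zero slope $c$", but you never estimate $c$, nor the competing term it must dominate. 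That competitor is the a priori offset between $\Pi^{ss}(W^u_{loc}(x_{g_t}))$ and $\Pi^{ss}(W^u_{loc}(y_{g_t}))$ at the measuring point: since you fix $z$ once and for all at definite distance from $q_f$, this offset is an uncontrolled quantity of fixed scale (the failure of joint integrability at scale $d(x_f,z)$); in particular the offset between $\hat y_f$ and $W^{ss}_{loc}(\hat x_f)$ at $t=0$ is not "vanishing". On the other side, a perturbation supported at the backward iterate $f^{-N}(x_f)$ influences $W^u_{loc}(x_{g_t})$ only through the graph transform over $N$ forward steps: the induced change of the unstable tangent direction at $x_{g_t}$ is $O((\eta/r)\,\kappa^{N})$ with $\kappa<1$ the domination constant, and the induced relative displacement at unstable distance $\delta$ is $O(\delta(\eta/r)\kappa^{N})$, while the $C^{1+\al}$-smallness already forces $\eta/r$ to be small. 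The signal is exponentially small in $N$ and cannot be guaranteed to beat a fixed-scale noise; the sign of $\hat y_{\pm1}$ relative to $W^{ss}_{loc}(\hat x_{\pm1})$ is therefore not controlled. This is a genuine gap, not a detail.

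The paper's construction is arranged precisely to reverse this inequality, and differs from yours in both the nature and the location of the perturbation and in the choice of the measuring point. The perturbation is a \emph{shear} $T_n$ (a derivative perturbation, not a translation) supported around the \emph{forward} iterate $f^n(x)$ inside a Sternberg-linearizing chart at $p=q$, equal to the identity on $W^s_{loc}(p)$, whose derivative adds $\rho^{\al_0 n}L(\bar z)$ to the central coordinate; it leaves the forward orbits of $x,y$ and the manifold $W^u_{loc}(y_t)$ untouched but tilts the tangent space of $W^u(g_t^{n+1}(x_t))$ by $t\rho^{\al_0 n}$ where that manifold crosses the support. The comparison is then made not on a fixed plaque but on a backward iterate $D^m$ of the homoclinic disc, with $\|z_{-m}\|\le\rho^{n+1}$, after a preliminary $C^{1+\al}$-small rectification making $D$ and its strong stable foliation affine so that a single linear projection $\pi_c$ measures both points. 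There the uncontrolled offset is bounded by the cone width $\lambda^n\|z_{-m}\|$ (both unstable manifolds are tangent to $\cC^u_n$ after $n$ linear iterations and $\pi_c(x_t)=\pi_c(y_t)$ by hypothesis), while the shear contributes $t\rho^{\al_0 n}L(z_{-m})\ge c\,t\,\rho^{\al_0 n}\|z_{-m}\|$ for a well-chosen $L$; since $\lambda<\rho^{\al_0}$ the signal dominates for $n$ large. The co-scaling of the measuring point with the perturbation parameter, and the use of a shear at a forward iterate acting multiplicatively on the scale $\|z_{-m}\|$, are exactly what your argument lacks.
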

\medskip

Let us conclude the proof of theorem~\ref{t.stable}.
By construction and lemma~\ref{l.bracket0}, for each $t\in [-1,1]$ the points
$z_t,x_t,y_t$ belong to the homoclinic class $H(p_{g_t})$.
Moreover one can find for each $n\in \NN$ two hyperbolic periodic points
$\hat p_{n,g}^x$ and $\hat p_{n,g}^y$ whose continuations exists for every $g\in \cV$,
are homoclinically related to $p_{g}$ and are arbitrarily close to the intersections
$\hat x_g,\hat y_g$
between $W^{s}_{loc}(z_{g})$ and $W^u_{loc}(x_g)$ or $W^u_{loc}(y_g)$ respectively.
By corollary~\ref{c.continuation2}, one can assume that the hyperbolic points
$\hat p_{n,g}^x$ and $\hat p_{n,g}^y$ are the hyperbolic continuations of points of $\cP$.
For $n$ large, $W^u_{loc}(\hat p^y_{n})$ intersects $\cW^{cs,-}_{\hat p^x_{n}}$ for $g_{-1}$
and $\cW^{cs,+}_{\hat p^x_{n}}$ for $g_{1}$.
One can thus apply lemma~\ref{l.ordering} and obtain a diffeomorphism $g\in \cV$ which has a strong
homoclinic intersection.
Note that the neighborhood $\cV$ of $f$ can be taken arbitrarily small. As a consequence
the perturbation $g$ is arbitrarily $C^{1+\al}$-close to $f$.
Hence the proposition implies theorem \ref{t.stable}.

\subsection{An elementary $C^{1+\al}$-perturbation lemma}
\label{elementary}
The perturbations in sections~\ref{proofjointint} and~\ref{p-nontransversal} will be realized through the following lemma.
\begin{lemma}\label{l.perturbation}
Let us consider a $C^{1+\al}$ map $v_0\colon \RR^d\to \RR^\ell$,
and two numbers $\widehat D>2 D>0$.
Then, there exists a $C^{1+\al}$-map $v\colon \RR^d\to \RR^\ell$
which coincides with $v_0$ on the ball $B(0,D)$ and with $0$
outside the ball $B(0,\widehat D)$
and whose $C^{1+\al}$-size is arbitrarily small if
the $C^{1+\al}$-size of $v_0$ and the quantity 
$\widehat D^{-(1+\alpha)} \sup_{B(0,\widehat D)}\|v_0\|$ are small.
\end{lemma}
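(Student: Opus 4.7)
The plan is to take $v = \psi \cdot v_0$ where $\psi$ is a smooth cutoff supported in $B(0,\widehat D)$. Fix once and for all a bump $\chi\colon \RR^d \to [0,1]$ with $\chi \equiv 1$ on $B(0,1/2)$ and $\chi \equiv 0$ outside $B(0,1)$, and set $\psi(x) := \chi(x/\widehat D)$. Since $\widehat D > 2D$, one has $\psi \equiv 1$ on $B(0,D)$ and $\psi \equiv 0$ outside $B(0,\widehat D)$, so $v := \psi v_0$ immediately inherits the required coincidence properties. The scaling of $\psi$ yields constants $C$ depending only on $\chi$ such that $\|\psi\|_\infty \leq 1$, $\|D\psi\|_\infty \leq C/\widehat D$, and $[D\psi]_\al \leq C/\widehat D^{1+\al}$.

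The estimates on the $C^{1+\al}$-size of $v$ will proceed via the product rule $Dv = (D\psi)\otimes v_0 + \psi\,Dv_0$ together with the H\"older product inequality $[fg]_\al \leq \|f\|_\infty [g]_\al + \|g\|_\infty [f]_\al$. The worst contribution to $\|Dv\|_\infty$ is the term $\|D\psi\|_\infty \|v_0\|_\infty \leq C\widehat D^{-1}\sup_{B(0,\widehat D)}\|v_0\|$, and the worst contribution to $[Dv]_\al$ is $[D\psi]_\al \|v_0\|_\infty \leq C\widehat D^{-(1+\al)}\sup_{B(0,\widehat D)}\|v_0\|$. This last bound is exactly the reason the lemma requires the rescaled quantity $\widehat D^{-(1+\al)}\sup_{B(0,\widehat D)}\|v_0\|$ to be small. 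The remaining contributions, namely $\psi\,Dv_0$ and the cross term $\|D\psi\|_\infty [v_0]_\al$ (using $[v_0]_\al \leq \|Dv_0\|_\infty (2\widehat D)^{1-\al}$ on $B(0,\widehat D)$), are all directly controlled by the $C^{1+\al}$-size of $v_0$, up to multiplicative factors depending on an a priori upper bound on $\widehat D$.

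The one point requiring care is the H\"older estimate of $Dv$ across the boundary $\partial B(0,\widehat D)$, since the estimates above are computed only on $B(0,\widehat D)$. However $Dv$ extends continuously by zero, and for $x \in B(0,\widehat D)$ and $y \notin B(0,\widehat D)$ one lets $z$ denote the intersection of the segment $[x,y]$ with $\partial B(0,\widehat D)$: then $Dv(z)=0$ and $|x-z| \leq |x-y|$, so
\[
\frac{|Dv(x) - Dv(y)|}{|x-y|^\al} \;=\; \frac{|Dv(x) - Dv(z)|}{|x-y|^\al} \;\leq\; \frac{|Dv(x) - Dv(z)|}{|x-z|^\al},
\]
showing that the global $\al$-H\"older seminorm of $Dv$ agrees with the one computed inside $B(0,\widehat D)$. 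Combining all these estimates yields a bound of the form $\|v\|_{C^{1+\al}} \leq C'\bigl(\widehat D^{-(1+\al)}\sup_{B(0,\widehat D)}\|v_0\| + \|v_0\|_{C^{1+\al}}\bigr)$ for a constant $C'$ depending only on $\chi$ and the range of $\widehat D$. There is no serious obstacle: the only subtlety is matching the blow-up rate $\widehat D^{-(1+\al)}$ of $[D\psi]_\al$ with the rescaled sup norm of $v_0$ in the hypothesis, which is precisely why this particular quantity is singled out in the statement.
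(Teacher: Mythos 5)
Your overall strategy is exactly the paper's: set $v=\psi\,v_0$ with a bump $\psi(x)=\chi(x/\widehat D)$, apply the Leibniz rule and the H\"older product inequality, and identify $[D\psi]_\al\,\sup_{B(0,\widehat D)}\|v_0\|\lesssim \widehat D^{-(1+\al)}\sup_{B(0,\widehat D)}\|v_0\|$ as the term that dictates the hypothesis on the rescaled sup norm; your boundary argument across $\partial B(0,\widehat D)$ is also fine. But there is a genuine gap in your treatment of the ``remaining contributions''. The two cross terms $\|D\psi\|_\infty\,[v_0]_\al$ and $[\psi]_\al\,\sup_{B(0,\widehat D)}\|Dv_0\|$ (the latter arising from the H\"older seminorm of $\psi\,Dv_0$, which you list but do not expand) are both of order $\widehat D^{-\al}\sup_{B(0,\widehat D)}\|Dv_0\|$ --- the first one after your own interpolation $[v_0]_\al\leq \|Dv_0\|_\infty(2\widehat D)^{1-\al}$. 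You claim these are controlled by the $C^{1+\al}$-size of $v_0$ ``up to multiplicative factors depending on an a priori upper bound on $\widehat D$''. That is backwards: an upper bound on $\widehat D$ gives a \emph{lower} bound on $\widehat D^{-\al}$; to absorb $\widehat D^{-\al}$ into a constant you would need $\widehat D$ bounded away from $0$, and in every application of this lemma in the paper $\widehat D$ tends to $0$ exponentially (it is $\lambda_u^{-n}$, $\rho^n$, $e^{-bn}$, ...). Concretely, with $\sup\|Dv_0\|=1/n$ and $\widehat D=e^{-n}$ the quantity $\widehat D^{-\al}\sup\|Dv_0\|$ blows up even though the $C^{1+\al}$-size of $v_0$ tends to $0$, so this step of your argument fails.

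The missing ingredient is the interpolation step that the paper's proof makes explicit. For $x$ in the support of $D\psi$ one bounds $\|Dv_0(x)\|$ by a difference quotient of $v_0$ over a segment of length comparable to $\widehat D$ plus a H\"older increment of $Dv_0$, which gives $\sup_{B(0,\widehat D)}\|Dv_0\|\leq C\bigl(\widehat D^{-1}\sup_{B(0,\widehat D)}\|v_0\|+\widehat D^{\al}\Lip_\al(Dv_0)\bigr)$, hence $\widehat D^{-\al}\sup_{B(0,\widehat D)}\|Dv_0\|\leq C\bigl(\widehat D^{-(1+\al)}\sup_{B(0,\widehat D)}\|v_0\|+\Lip_\al(Dv_0)\bigr)$ --- precisely a combination of the two quantities assumed small. (For the term $\widehat D^{-1}[v_0]_\al$ the paper instead uses the convexity estimate $[v_0]_\al\leq C(\sup\|v_0\|)^{1/(1+\al)}\Lip_\al(Dv_0)^{\al/(1+\al)}$, which after multiplying by $\widehat D^{-1}=(\widehat D^{-(1+\al)})^{1/(1+\al)}$ gives the same kind of control; your route through $\sup\|Dv_0\|$ works too once the first interpolation is in place.) Your final displayed inequality $\|v\|_{C^{1+\al}}\leq C'\bigl(\widehat D^{-(1+\al)}\sup_{B(0,\widehat D)}\|v_0\|+\|v_0\|_{C^{1+\al}}\bigr)$ is in fact correct, but only by virtue of these interpolations; the justification you give does not establish it.
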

\begin{proof}
One chooses a smooth bump map $\rho\colon \RR^d\to [0,1]$
which coincides with $0$ outside $B(0,\frac 2 3 \widehat D)$ and with $1$ inside
$B(0,D)$. The map $v$ is then defined by $v=\rho.v_0$.

When $\al>0$, we define $\Lip_\al(h)$ the $\al$-H\"older size of a map $h$, that is $$\Lip_\al(h)=\sup_{x\neq y}\frac{\|h(x)-h(y)\|}{\|x-y\|^\al}.$$
We then denote by $A, A'$ the $C^0$ norm of $v_0,Dv_0$ and by
$A_\al,A'_{\al}$ the $\al$-H\"older sizes of $v_0,Dv_0$ on $B(0,\widehat D)$. There exists a universal constant $C>0$ such that for any $\al\in (0,1]$ one has
$$\Lip_\al(\rho)\leq C.\widehat D^{-\al},$$
$$\Lip_\al(D\rho)\leq C.\widehat D^{-(1+\al)}.$$

From inequalities above, it is easily to check that  when the $C^{1+\al}$ size of $v_0$ is small,
the $C^{1+\al}$-size of $v$ is controlled by $A \widehat D^{-(1+\al)}$:
\begin{itemize}
\item[--] The $C^0$ norm of $v$ is smaller than $A$.
\item[--] The $C^0$-norm of $Dv$ is bounded by $A\Lip_1(\rho)+A'\leq CA\widehat D^{-1}+A'$.
\item[--] When $\al>0$, the $\al$-H\"older constant of $Dv$ is bounded by
\begin{equation}\label{e.control}
A_\al\Lip_1(\rho)+A\Lip_\al(D\rho)+A'_\al+\sup_{B(0,\frac 2 3 \widehat D)}\|Dv_0\|\;\Lip_\al(\rho). 
\end{equation}
\end{itemize}
Observe that the three first terms in~(\ref{e.control}) are small when $A'_\al$
and $A\widehat D^{-(1+\al)}$ are small.
Indeed the usual convexity estimate gives
$$A_\al\leq C A^{1/(1+\al)}{A'_\al}^{\al/(1+\al)}.$$
For any $x\in B(0,\frac 2 3 \widehat D)$ one has
\begin{equation*}
\begin{split}
\|Dv_0(x)\|&\leq C.\left[ \sup_{\|u\|=\widehat D/3}
\frac {\|v_0(x+u)-v_0(x)\|}{\|u\|}+\sup_{y\in B(x,\widehat D/3)}
\|Dv_0(y)-Dv_0(x)\|\right]\\
&\leq 3C.(A\widehat D^{-1}+A'_\alpha\widehat D^\alpha).
\end{split}
\end{equation*}
The last term in~(\ref{e.control}) is thus smaller than $A\widehat D^{-(1+\al)}+A'_\al$.

When the $C^{1+\al}$-size of $v_0$ is small, $A'_\al$ is small and the lemma follows.
\end{proof}

\begin{remark}\label{r.perturbation}
When $v_0(0)=0$, for proving that the quantity 
$\widehat D^{-(1+\alpha)} \sup_{B(0,\widehat D)}\|v_0\|$ is small
it is enough to show that
$\widehat D^{-\alpha} \sup_{B(0,\widehat D)}\|Dv_0\|$ is small.
\end{remark}

\subsection{Preliminary constructions}\label{ss.prel}
To simplify the presentation, one will assume that $q_0$ coincides with $p_0$
and is fixed by $f_0$.

\paragraph{The smoothness bound $\al_0$.}
We denote also by $\lambda_c\in (0,1)$ an upper bound for the contraction along $E^c$ and by $\lambda_u>1$ a lower bound for the expansion along the bundle $E^u$.

We choose $\alpha_0>0$ small so that
$$\lambda_u^{\alpha_0}\max(\lambda,\lambda_c)<1,$$
$$\|Df_0^{-1}\|^{\al_0}\lambda<1.$$
In particular, one can consider $\rho\in (0,1)$ such that
$$\lambda^{1/\alpha_0}<\rho<\|Df_0^{-1}\|^{-1}.$$

\paragraph{The neighborhoods $V_1,V_2$ of $x$.}
Once the smoothness $\al\in[0,\al_0]$ and the neighborhood $\cV$
have been fixed, one introduces a continuity point $f'\in \cV$
for both maps $g\mapsto x_g,y_g$.
Let $z_{f'}\in W^s(p_{f'})\cap W^u_{loc}(p_{f'})$ be a transverse homoclinic point of the orbit of $p_{f'}$ that does not belong to the orbit of $x_{f'}$ or $y_{f'}$.
We choose two small open neighborhoods $V_1,V_2$
of $x_{f'}$, such that $\bar V_2\subset V_1$.
Choosing them small enough,
the orbit of the intersection $\overline{V_1}\cap W^s_{loc}(p_{f'})$
is disjoint from the orbit of $y_{f'}$ and $z_{f'}$.

Since $f'$ is a continuity point of $g\mapsto x_g,y_g$,
for any diffeomorphism $g\in \cV$ close to $f'$,
the point $x_g$ still belongs to $V_2$ and
the orbit of the intersection $\overline{V_1}\cap W^s_{loc}(p_{g})$
is still disjoint from the orbit of the continuations $y_g,z_g$.

\paragraph{The diffeomorphism $f$.}
We choose a diffeomorphism $f\in \cV$ arbitrarily close to $f'$.
One can require that $f$ is of class $C^\infty$ and that there is no resonance
between the eigenvalues of the linear part associated to the orbit of $p_f$.
As a consequence of Sternberg linearization theorem, the dynamics in a neighborhood
of the orbit of $p_f$ can be linearized by a smooth conjugacy map.

In order to simplify the notations we will denote $p=p_f$, $q=q_f$, $x=x_f$, $y=y_f$.

\paragraph{Local coordinates.}
One can find a small neighborhood $B$ of $p$ and a $C^r$-chart $B\to \RR^d$
which linearizes the dynamics and maps $p$ on $0$ and
the local manifolds $W^{ss}_{loc}(p), W^s_{loc}(p), W^u_{loc}(p)$ inside the coordinate planes $\RR^s\times \{0\}^{u+1}$, $\{0\}^s\times \RR\times \{0\}^u$
and $\{0\}^{s+1}\times \RR^u$, where $s,u,d$ denotes the dimension of $E^{ss}, E^u$
and $M$ respectively. The coordinates in the chart are written $(\bar x,\bar y, \bar z)\in \RR^s\times \RR\times \RR^u$.

The map $f$ viewed in the chart
is thus a linear map $A=A_s\times A_c\times A_u$ of $\RR^d$ which preserves these coordinate planes.
Replacing $x,y$ by iterates, one can assume that their forward orbits
are contained in $B$.

\paragraph{The local stable disk $D$.}
Let $z_0$ be the transverse homoclinic point of the orbit of $p$ for $f$
that is the continuations of $z_{f'}$. For $n\geq 0$ we also define $z_{-n}=f^{-n}(z_0)$.

We can thus choose a small neighborhood $D$ of $z_0$ in $W^s(p)$ whose orbit
is disjoint from the orbits of $x$ and $y$.
Replacing $z_0$ by an iterate, one can assume that its backward orbit belongs to $B$.
The disk $D$ (or one of its backward iterates) endowed with its strong stable foliation
can then be linearized.

\begin{lemma}
By a $C^{1+\al}$-small perturbation of $f$ one may assume
furthermore that in the chart at $p$,
\begin{itemize}
\item[--] $D$ is contained in an affine plane parallel to
the local stable manifold $W^{s}_{loc}(p)$,
\item[--] the strong stable manifolds inside $D$ coincide with the affine planes
parallel to $W^{ss}_{loc}(p)$.
\end{itemize}

\end{lemma}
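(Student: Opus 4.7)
The plan is to realize the required straightening as a $C^{1+\alpha}$-small perturbation of $f$ whose support lies in a small neighborhood $U$ of an intermediate iterate $z_j = f^j(z_0)$ of the transverse homoclinic orbit, with $j$ chosen so that $U$ is disjoint from the linearization ball $B$ at $p$ and from the orbits of $x$, $y$, $q$. Since the perturbation $g$ would coincide with $f$ on $B$, the linearizing chart at $p$ and the hyperbolic continuations of $p,q,x,y$ are automatically preserved. First I would shrink $D$ so that its forward iterates stay in a thin tube around the homoclinic orbit and $f^N(D)$ already lies flat in $W^s_{loc}(p)\cap B$ for some $N$ large enough that $z_N$ is deep inside $B$; then I would pick $0<j<N$ with $z_j\notin\overline{B}$ and with a neighborhood $U$ of $z_j$ disjoint from the other relevant orbits.

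Using the smooth linearizing chart for $D$ and its strong stable foliation furnished by the preceding text, the disk $f^j(D)$ and its strong stable foliation can be written in coordinates at $z_j$ (inherited from the chart at $p$ via the unperturbed linear dynamics $A$) as small graphs over the affine flat model by a $C^{1+\alpha}$ map $v_0$ vanishing at $0$. Shrinking $D$ controls the diameter $\widehat D_0$ of $f^j(D)$; the smoothness of $W^s(p)$ and of its strong stable foliation, together with the nonresonance at $p$, give the estimate $\sup\|Dv_0\|=O(\widehat D_0)$ and the $C^{1+\alpha}$-size of $v_0$ tends to zero. In particular $\widehat D_0^{-\alpha}\sup\|Dv_0\|$ can be made arbitrarily small since $\alpha\leq\alpha_0<1$. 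Lemma~\ref{l.perturbation} together with remark~\ref{r.perturbation} then absorbs this error into a $C^{1+\alpha}$-small modification supported in $U$; for the resulting diffeomorphism $g$, the disk $g^j(D)$ is the affine disk prescribed by the chart at $p$ and carries a linear strong stable foliation.

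Finally, because $g=f$ is linear on $B$ and the coordinate planes are preserved by $A$, the flat shape of $g^j(D)$ propagates both forward to $g^{j+1}(D),\ldots,g^N(D)\subset W^s_{loc}(p)$ and, crucially, backward through $A^{-1}$ to $g^{j-1}(D),\ldots,g^0(D)=D$: each of these disks sits inside an affine plane parallel to $W^s_{loc}(p)$ with strong stable leaves parallel to $W^{ss}_{loc}(p)$. The main obstacle is the quantitative estimate in the middle step: simultaneously flattening the disk $f^j(D)$ and linearizing its strong stable foliation by a single $C^{1+\alpha}$-small perturbation. This relies on both the smooth linearizing chart for $D$ and its foliation (which motivated the earlier reduction to a smooth $f$ with nonresonant spectrum at $p$) and on the quantitative perturbation tool in lemma~\ref{l.perturbation}, for which the cap $\alpha\leq\alpha_0$ on the H\"older exponent is essential.
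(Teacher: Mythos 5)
Your strategy has a fatal flaw in the choice of where to perturb. You place the support near a forward iterate $z_j=f^j(z_0)$ lying \emph{outside} the linearization ball $B$, and then claim that the affine structure created at $z_j$ ``propagates backward through $A^{-1}$'' to $D$. But the transition map carrying the germ at $z_0$ to the germ at $z_j$ is $f^j$, and the orbit segment $z_0,\dots,z_j$ necessarily contains points outside $B$ (that is exactly why a $j$ with $z_j\notin\overline B$ exists); on that segment $f$ is not linear in any chart, so affine planes and affine foliations at $z_j$ do not pull back to affine objects at $z_0$ in the chart at $p$. The phrase ``coordinates at $z_j$ inherited from the chart at $p$ via $A$'' has no meaning outside $B$, where $A$ no longer represents $f$.

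There is also a quantitative gap that shrinking $D$ cannot repair. For $D$ to become parallel to $W^s_{loc}(p)$, the tangent plane $T_{z_0}W^s(p)$ must be rotated onto $\RR^{s+1}\times\{0\}^u$ by a definite angle; transported by the fixed diffeomorphism $f^j$, the correct target at $z_j$ is the curved disk $f^j(\text{affine plane through }z_0)$, whose tangent plane at $z_j$ also differs from $T_{z_j}f^j(D)$ by a definite angle. Your estimate $\sup\|Dv_0\|=O(\widehat D_0)$ only measures the deviation of $f^j(D)$ from \emph{its own} tangent plane, not from this target, so the perturbation you actually need is not even $C^1$-small, let alone $C^{1+\alpha}$-small. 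The paper resolves both issues by perturbing at a deep \emph{backward} iterate $z_{-n}\in W^u_{loc}(p)$, which stays inside $B$ together with all intermediate iterates: the transition back to $z_0$ is the linear map $A^n$, so affine structure does propagate; the inclination lemma makes $f^{-n}(D)$ a graph of slope $O(\lambda^n)$ over $\RR^{s+1}\times\{0\}^u$; the conjugated straightening $\Phi_n=A^{-n}\circ\Phi\circ A$ of the strong stable foliation satisfies $\|D\Phi_{n,\bar y}-\id\|\leq \lambda^n+\lambda_c^n$ thanks to the domination between $A_s$ and $A_c$; and the support radius $\lambda_u^{-n}$ converts these exponentially small corrections into a $C^{1+\alpha}$ cost of order $(\lambda\lambda_u^{\alpha})^n$ and $((\lambda+\lambda_c)\lambda_u^{\alpha})^n$, which tend to $0$ precisely because $\alpha\leq\alpha_0$. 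None of this amplification mechanism is available at a fixed forward iterate, so the proposal cannot be repaired without relocating the perturbation as the paper does.
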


\begin{proof}
We choose a large integer $n\geq 1$.
The ball $W$ centered a $z_{-n}$ of radius $r=\lambda_u^{-n}$
does not intersect the local stable manifold of the orbit of $p$,
neither the iterates $z_{-k}$ for $k\neq n$.

We first rectify the disc $D$:
in the chart, the disc $f^{-n}(D)$ can be seen as the graph
of a map whose derivative has norm smaller than $\lambda^n$.
By the $\lambda$-lemma,
this graph is arbitrarily $C^{1+\alpha}$-close to
the linear plane $W^{s}_{loc}(p)$.
One can thus apply lemma~\ref{l.perturbation}: by a diffeomorphism supported inside
$W$ which fixes $z_{-n}$,
one can send a neighborhood of $z_{-n}$ inside $D$ in an affine plane
parallel to $W^s_{loc}(p)$.
By remark~\ref{r.perturbation}, this diffeomorphism is $C^{1+\al}$-close to the identity provided
that $\lambda^nr^{-\al}=(\lambda\lambda_u^{\alpha})^n$ is small, which is the case if $\al<\al_0$ and our choice of $\al_0$.

Assuming now that $D$ is contained in an affine plane parallel to $W^{s}_{loc}(p)$,
we denote by $W^{c}_{loc}(z_0)$ the affine space containing $z_0$ parallel to $\{0\}^s\times \RR\times\{0\}^u$.
We rectify the strong stable foliation inside $D$:
this is the image of the affine foliation parallel to $W^{ss}_{loc}(p)$
by a diffeomorphism $\Phi$ of the form
$$\Phi\colon(\bar x,\bar y,\bar z)\mapsto (\bar x, \varphi(\bar x,\bar y),\bar z),$$
which fixes $z_0$ and $W^{c}_{loc}(z_0)$.
Let us again consider $n\geq 1$ large.

Inside $f^{-n}(D)$, the strong stable foliation is the image of the affine foliation by
the map $\Phi_n=A^{-n}\circ \Phi\circ A$ where $A=(A_s,A_c,A_u)$ is the linear map of $\RR^d$
which coincides with $D_pf$.
The components $\Phi_{n,\bar x},\Phi_{n,\bar z}$ of $\Phi$ along the coordinates
$\bar x,\bar z$ coincide with the identity of the planes $\RR^s\times \{0\}$
and $\{0\}\times \RR^u$.
The derivative of the component $\Phi_{n,\bar y}$ at a point $\zeta$ is
$$D\Phi_{n,\bar y}(\zeta)= A^{-n}_c\;\partial_{\bar x} \varphi(A^n.\zeta) \; A^n_s+ \partial_{\bar y}\varphi(A^n.\zeta).$$
When $n$ goes to infinity, the first term $A^{-n}_c\;\partial_{\bar x} \varphi \; A^n_s$
goes to zero as $\lambda^n$ since the contraction $A_s$ is stronger than $A_c$.
Since $f$ is assumed to be smooth, $\partial_{\bar x}\varphi(\zeta), \partial_{\bar y}\varphi(\zeta)$
are Lipschitz in $\zeta$.
The map $A^n$ sends a uniform neighborhood of $z_{-n}$ in $f^{-n}(D)$ inside
a ball of radius $\lambda_c^n$ of $D$; hence
if one restricts $D\Phi_{n,\bar y}$ to a small neighborhood of $p$,
the second term $\partial_{\bar y}\varphi(A^n.\zeta)$ is $\lambda_c^n$-close to $\partial_{\bar y}\varphi(z_0)$.
One deduces that $D\Phi_{n,\bar y}$ converges uniformly to the identity and that
$$\|D\Phi_{n,\bar y}-\id\|\leq \lambda^n+\lambda_c^n.$$
The same argument shows that the Lipschitz constant
of $D\Phi_{n,\bar y}$ goes to zero as $n$ goes to infinity.
One can thus apply lemma~\ref{l.perturbation}, in order to rectify the strong stable foliation
on a small neighborhood of $z_{-n}$ in $f^{-n}(D)$, by a map supported on the ball
$B(z_{-n},\lambda_u^{-n})$. The perturbation is small in topology $C^{1+\alpha}$,
provided that
$$\|D\Phi_{n,\bar y}-\id\|\lambda_u^{n\alpha}\leq  (\lambda^n+\lambda_c^n)\lambda_u^{n\alpha}$$
is small, which is the case  when $n$ is large since   $\alpha<\alpha_0$ by the choice of $\al_0$.
\end{proof}

\paragraph{The perturbation support}
Let us denote by $D^m$ the connected component of $f^{-m}(D)\cap B$ which contains $z_{-m}$.
We choose two small open neighborhoods $U_1,U_2$
of $x$ in $W^s_{loc}(p)$, such that $\bar U_2\subset U_1$:
they are obtained as the intersection of $V_1,V_2$ with $W^s_{loc}(p)$.
By construction, their orbit is disjoint from the orbit of $z_0$ and $y$.
For each $n\geq 0$ and $s>0$, we introduce $R_1^n(s)$ the product (in the coordinates of the chart at $p$)
$$R_1^n(s)=f^{n}(U_1)\times \{|\bar z|<s\},$$
and similarly we define $R_2^n(s)$. See figures~\ref{rectji} and~\ref{staji}.

\begin{figure}[subsection]

\begin{center}
\input{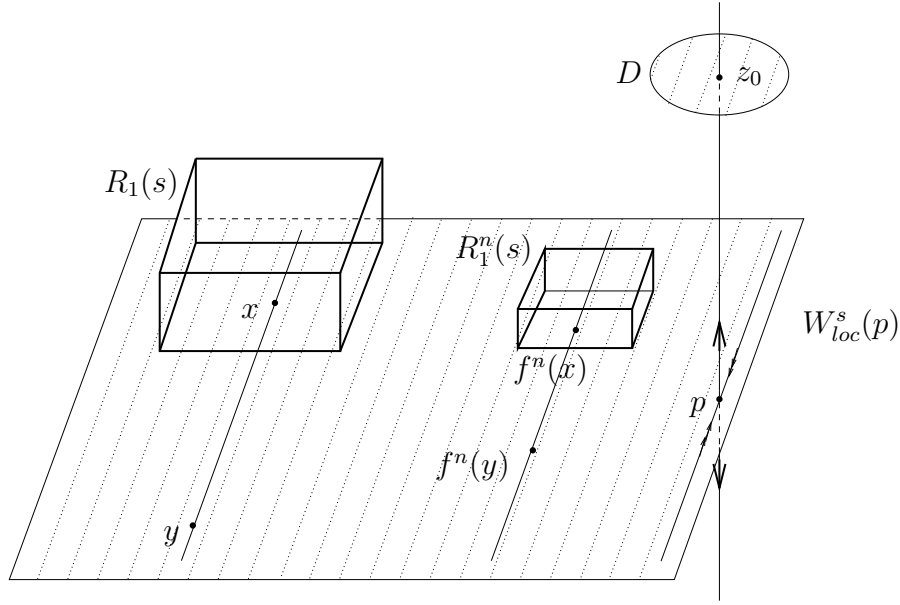}
\end{center}

\caption{The perturbation support. \label{rectji}}
\end{figure}

\begin{figure}[subsection]

\begin{center}
\input{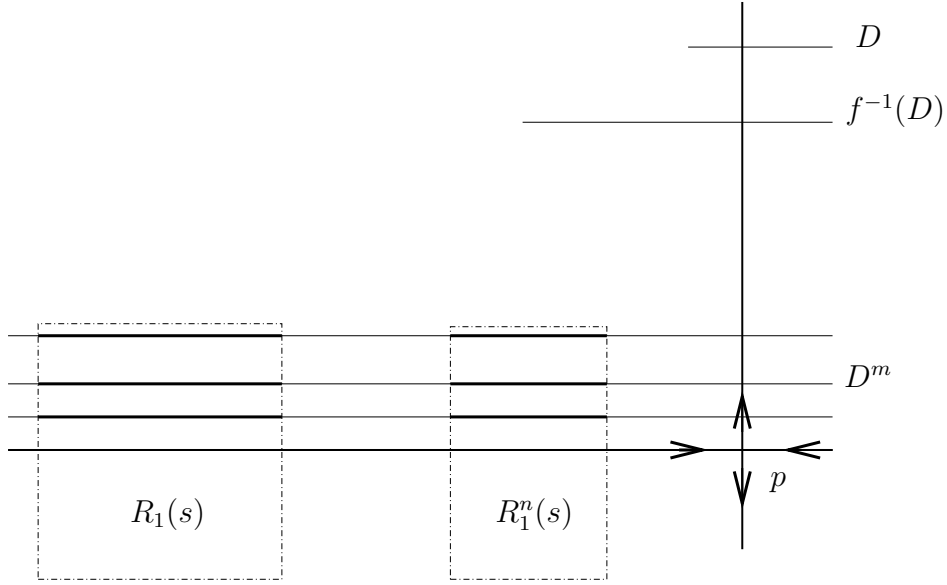}
\end{center}

\caption{The local stable disks $D^m$. \label{staji}}
\end{figure}

\subsection{The perturbation}\label{ss.twist}

Let us choose a linear form $L$ on $\RR^u$ and recall that $\rho\in (0,1)$ has been chosen smaller than $\|Df^{-1}\|^{-1}$.
The perturbation $g$ of $f$ will be obtained as the composition $T\circ f$ where $T$
in the chart around $p$ coincides with a map $T_n$, for $n$ large, given by the following lemma.
See figure~\ref{pertji}.
\begin{figure}[subsection]

\begin{center}
\input{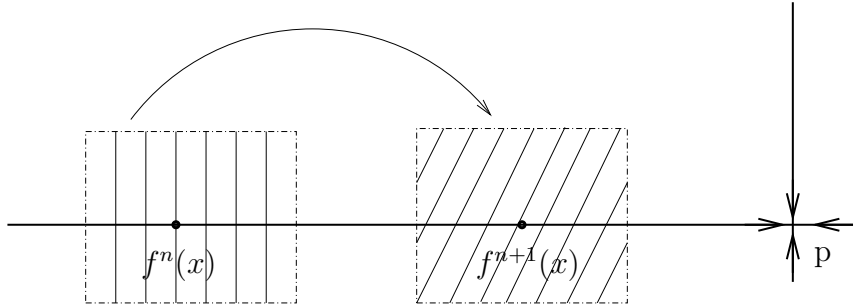}
\end{center}

\caption{The perturbation. \label{pertji}}
\end{figure}

\begin{lemma}\label{l.twist} There exists a sequence of smooth diffeomorphisms $T_n$ of $\RR^d$ such that
\begin{itemize}
\item[--] $T_n$ coincides with the identity outside $R_1^n(\rho^n)$ and on $W^{s}_{loc}(p)$,
\item[--] $DT_n$ coincides on $R_2^n(\rho^{n+1})$ with the linear map
$$B:(\bar x, \bar y, \bar z)\mapsto (\bar x,\bar y + \rho^{\al_0 n}.L(\bar z), \bar z),$$
\item[--] $(T_n)$ converges to the identity in topology $C^{1+\alpha}$.
\end{itemize}
\end{lemma}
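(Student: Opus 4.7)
The plan is to write $T_n$ explicitly as a model shear in the $\bar y$-direction, modulated by a product cutoff, and then to estimate its $C^{1+\alpha}$-size directly. The model map is
\[
S_n(\bar x,\bar y,\bar z)\;=\;\bigl(\bar x,\ \bar y+\rho^{\alpha_0 n}\,L(\bar z),\ \bar z\bigr),
\]
which satisfies $DS_n\equiv B$, fixes $W^s_{loc}(p)=\{\bar z=0\}$ pointwise (because $L$ is linear with $L(0)=0$), and differs from the identity globally by a linear function of $\bar z$ of amplitude $\rho^{\alpha_0 n}$.

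To localize $S_n$, I use a tensor-product cutoff $\chi_n(\bar x,\bar y,\bar z)=\phi^s_n(\bar x,\bar y)\,\phi^u_n(\bar z)$, where $\phi^s_n$ is a bump on $W^s_{loc}(p)$ equal to $1$ on $f^n(U_2)$ and supported in $f^n(U_1)$, and $\phi^u_n$ is a bump in $\RR^u$ equal to $1$ on $B_u(0,\rho^{n+1})$ and supported in $B_u(0,\rho^n)$. Then I set $T_n:=\id+\chi_n\cdot(S_n-\id)$. By construction $T_n$ is the identity outside $R^n_1(\rho^n)$ and on $W^s_{loc}(p)$, and on $R_2^n(\rho^{n+1})$ (where $\chi_n\equiv 1$) it coincides with $S_n$, so that $DT_n=B$ there; the first two bullets of the lemma are then immediate.

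The heart of the proof is the $C^{1+\alpha}$-estimate on $v_n:=T_n-\id$. Its only nonzero component lies in the $\bar y$-direction and has $C^0$-norm $\lesssim\rho^{(1+\alpha_0)n}\,\|L\|$, since $|\bar z|\le\rho^n$ on the support. For the derivative, three types of terms arise: (a) the intrinsic derivative of the shear, of order $\rho^{\alpha_0 n}\|L\|$; (b) terms involving $D\phi^u_n\sim\rho^{-n}$, multiplied by the amplitude $\rho^{(1+\alpha_0)n}$, hence of order $\rho^{\alpha_0 n}$; (c) terms involving $D\phi^s_n$. For (c), the annulus $f^n(U_1)\setminus f^n(U_2)$ has width $\sim\lambda_c^{n}$ (the slowest contraction rate inside $W^s_{loc}(p)$), so $\|D\phi^s_n\|\lesssim\lambda_c^{-n}$ and the contribution is $\lesssim (\rho^{1+\alpha_0}/\lambda_c)^{n}$, which decays exponentially thanks to $\rho<\|Df_0^{-1}\|^{-1}\le\lambda_c$. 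Passing to the $\alpha$-Hölder norm of $Dv_n$ produces additional factors of the form $\lambda_c^{-\alpha n}$ or $\rho^{-\alpha n}$, and these are absorbed using $\alpha\le\alpha_0$ together with the choices $\lambda^{1/\alpha_0}<\rho$ and $\lambda_u^{\alpha_0}\max(\lambda,\lambda_c)<1$ fixed in section~\ref{ss.prel}. Concretely, one would apply lemma~\ref{l.perturbation} twice (once per factor of $\chi_n$, with ``$\widehat D$'' equal to $\rho^n$ for $\phi^u_n$ and to a constant multiple of $\lambda_c^n$ for $\phi^s_n$) and invoke remark~\ref{r.perturbation} to reduce $C^0$-control to $C^1$-control.

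The main obstacle is the \emph{anisotropy} of the rectangle $R_1^n(\rho^n)$: it shrinks at very different exponential rates ($\lambda_c^n$ and $\rho^n$) in the stable and unstable directions, so lemma~\ref{l.perturbation} cannot be invoked on a single Euclidean ball. The product cutoff is precisely what is needed to decouple these two scales, and the compatibility of the constants $\alpha_0,\rho,\lambda,\lambda_c,\lambda_u$ is exactly what makes all of the terms above go to $0$. The rest is a careful but routine verification.
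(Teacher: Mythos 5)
Your construction is essentially the paper's: the paper sets $T_n=\id+t_n$ with $t_n=\rho^{\al_0 n}\,\varphi\circ f^{-n}(\bar x,\bar y,0)\,\psi(\rho^{-n}\bar z)\,L(\bar z)$, which is exactly your product cutoff (a fixed stable bump pushed forward by $f^{n}$, times a fixed unstable bump rescaled by $\rho^{n}$) multiplied by the shear $S_n-\id$, and the $C^{1+\al}$-size is estimated the same way, yielding the bound $(\rho^{\al_0-\al})^n\to 0$. The only slip is your claim that the annulus $f^n(U_1)\setminus f^n(U_2)$ has width $\sim\lambda_c^n$ and hence $\|D\phi^s_n\|\lesssim\lambda_c^{-n}$: the minimal width of that annulus is governed by the \emph{strongest} contraction inside $W^s_{loc}(p)$, so the bound you can actually justify is $\|D\phi^s_n\|\leq C\,\|Df_0^{-1}\|^{n}<C\rho^{-n}$ (this is precisely what the choice $\rho<\|Df_0^{-1}\|^{-1}$ is for); the corresponding contribution is then $\rho^{(1+\al_0)n}\cdot\rho^{-n}=\rho^{\al_0 n}$, so the conclusion is unaffected.
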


\begin{proof}
Let us choose a smooth map $\varphi\colon \RR^{s+1}\to [0,1]$
supported on $U_1$ which takes the value $1$ on $\overline{U_2}$
and a smooth map $\psi\colon \RR^u\to [0,1]$ supported on the unit ball
and which coincides with $1$ on the ball $B(0,\rho)$.
We then define
$$T_n\colon (\bar x,\bar y,\bar z)\mapsto (\bar x,\bar y+t_n(\bar x,\bar y,\bar z), \bar z),$$
$$t_n(\bar x,\bar y,\bar z)=\rho^{\al_0 n} \; \varphi\circ f^{-n}(\bar x,\bar y,0)\; \psi(\rho^{-n}.\bar z)\; L(\bar z).$$
The two first properties are clearly satisfied.
On $R_1^{n}(\rho^n)$, the factor $L(\bar z)$ is bounded
(up to a constant) by $\rho^n$.
Since $f^{-n}$ is linear
and (by our choice of $\rho$) has a norm smaller than $\rho^{-n}$, as before the $C^{1+\al}$
size of the perturbation $T$ can be easily computed: it is smaller
than $(\rho^{\al_0-\al})^n$ and goes to zero as $n$ gets larger.
\end{proof}

\begin{remark}
After the perturbation, the orbits of $z_0$ and $p$ are unchanged.
The local manifold $W^s_{loc}(p)$ and its strong stable foliation
are also the same. For $m$ large and $s>0$ small, the forward orbit of $D^m\cap R_1^{n+1}(s)$
does not intersect the support of the perturbation,
hence the strong stable foliation on $D^m\cap R_1^{n+1}(s)$ still
coincides with the linear one.
\end{remark}

\subsection{Proof of proposition~\ref{p.jointintegrable}}
Recall that $z_{-m}$ is the image of $z_0$ by the linear map $A^{-m}$.
Let us choose a linear form $L$ on $\RR^u$ and a constant $c>0$ such that
for infinitely many values of $m\geq 0$ one has
\begin{equation}\label{e.choixL} L(z_{-m})>c\|z_{-m}\|.\end{equation}
We define $L_t=-tL$ for any $t\in [-1,1]$.
The construction of section~\ref{ss.twist}
associates to $n\geq 1$ large, a perturbation $g_t=T_{n,t}\circ f$.
We also consider a large integer $m\geq 1$ so that the distance of $z_{-m}$
to $p$ is smaller than $\rho^{n+1}$ and~\eqref{e.choixL} is satisfied.
The point $z$ announced in the statement of the proposition wil be $z_{-m}$.

We introduce the continuations $x_t=x_{g_t},y_t=y_{g_t}$ of $x,y$ for $g_t$
and the intersection $\hat x_t,\hat y_t$
of the local unstable manifold at $g_t^{n+1}(x_t),g_t^{n+1}(y_t)$ with the disc $D^m$.
By lemma~\ref{l.largemanifold}, the points $\hat x_t,\hat y_t$ belong to $H(p_t)$.
Since for each considered perturbation, the disc $D^{m}$ is still contained in $W^s_{loc}(p)$
and is endowed with the same linear strong stable foliation,
it is enough to introduce the projection $\pi_c$ on the central coordinate $\bar y$
and to show that
\begin{equation}\label{e.trans}
\pi_c(\hat x_{1})<\pi_c(\hat y_{1}) \text{ and } \pi_c(\hat x_{-1})>\pi_c(\hat y_{-1}).
\end{equation}

\medskip

First we notice that
since $g_t$ is close to $f$ and $f'$ in $\cV$, the continuations $x_t$ and $y_t$
of $x,y$ are still contained in $U_2$ and in $W^s_{loc}(p)\setminus \overline{U_1}$.
Since $g_t$ coincides with $A$ outside $R_1^n(\rho^n)$,
the local unstable manifolds of $g_t^n(x_t)$ and $g_t^{n+1}(y_t)$ are tangent to the cone
$$\cC^u_n=\{(v^{cs},v^u)\in \RR^{s+1}\times \RR^u, \; \|v^{cs}\|\leq \lambda^n\|v^u\|\}.$$
By construction the local unstable manifold of $g_t^{n+1}(x_t)$ in $f(R_2^n(\rho^{n+1}))$ is
tangent to the cone $B_t(\cC^u_{n+1})$, where $B_t$ is the linear map
associated to $L_t$ as in lemma~\ref{l.twist}.

The points $\hat x_t,\hat y_t$ are contained in the intersection of
these cones with the affine plane parallel to $\RR^{s+1}\times \{0\}$ containing $A^{-m}(z_0)$.
One deduces that
$$\pi_c(\hat x_t)\in B(\pi_c(x_t)-t\rho^{\alpha_0 n}L(z_{-m}),\lambda^n\|z_{-m}\|),$$
$$\pi_c(\hat y_t)\in B(\pi_c(y_t),\lambda^n\|z_{-m}\|).$$

By assumption we have $\pi_c(x_t)=\pi_c(y_t)$ and by our choice of $\rho$ one has $\lambda<\rho^{\alpha_0}$.
In particular, by~\eqref{e.choixL}, for $n$ large enough and $t=-1$ or $t=1$, these two balls are disjoint.
One also controls the sign of $\pi_c(\hat y_t)-\pi_c(\hat x_t)$ and gets~\eqref{e.trans} as wanted.

\subsection{Proof of proposition~\ref{p.generalized-strong-connectionCr}}
The number $\al_0>0$ is given by theorem~\ref{t.stable}.
The open set $\cU$ is chosen to satisfy theorem~\ref{t.stable} and
proposition~\ref{p.continuation}.

We then consider $\al\in [0,\al_0]$
and a diffeomorphism $f$ as in the statement of the proposition.
Let us assume by contradiction that in a $C^{1+\al}$-neighborhood $\cV$ of $f$,
there is no diffeomorphism $g$ such that $H(p_g)$ has a strong homoclinic intersection.
The proposition~\ref{p.continuation} applies.

By assumption there exists a hyperbolic periodic point $q_f$ homoclinically related to the
orbit of $p$ and a point $x_{f}\in H(q_{f})\cap W^{ss}(q_{f})\setminus \{q_{f}\}$.
By lemma~\ref{l.NGSHI}, $x_{f}$ is accumulated by points of the class $H(p_{f})$ in
$\cW^{cs}_{x_{f}}\setminus W^{ss}_{loc}(x_{f})$.
Considering the forward orbit of these points,
one deduces that $x_{f}$ and $q_{f}$ are accumulated by points of $H(q_{f})$
inside the same component of $\cW^{cs}_{x_{f}}\setminus W^{ss}_{loc}(x_{f})$.
By corollary~\ref{c.continuation}, there exists a continuation $g\mapsto x_g$ such that
$x_g$ belongs to $W^{ss}_{loc}(q_g)$ for each $g\in \cV$. Since $q_g$ and $W^{ss}_{loc}(q_g)$
vary continuously, one can argue as in the proof of lemma~\ref{l.cont-unstable} and conclude that
$g\mapsto x_g$ is continuous.

Now the theorem~\ref{t.stable} applies to the diffeomorphisms $f_0,f$ and to the points $q=y=p$ and $x$. One gets a strong homoclinic intersection for some $g\in \cV$ and the class $H(p_g)$.
This is a contradiction, concluding the proof of the proposition.


\section{Periodic unstable leaves: proof of theorem \ref{t.unstable}}

\label{p-nontransversal}

Now we continue with the proof of theorem \ref{t.unstable}.
In this section we consider:

\begin{itemize}
\item[1)] A diffeomorphism $f_0$ and a homoclinic class $H(p_{f_0})$
which is a chain-recurrence class endowed with a partially hyperbolic splitting $E^s\oplus E^c\oplus E^u$ where $E^c$ is one-dimensional and $E^s\oplus E^c $ is thin-trapped.
\item[2)] Two hyperbolic periodic points $p_{x,f_{0}}$ and $p_{y,f_{0}}$
homoclinically related to the orbit of $p_{f_0}$ and a diffeomorphism
$f$ that is $C^1$-close to $f_0$ such that
there exists two points $x\in W^{u}(p_{x,f})$, $y\in W^{u}(p_{y,f})$ in $H(p_f)$ whose strong stable manifold coincide.
\end{itemize}

Note that by lemma~\ref{l.robustness}, the homoclinic class
associated to the hyperbolic continuation $p_f$ of $p_{f_0}$ is still chain-hyperbolic. Moreover the continuations of $x,y$ are well defined and unique (lemma~\ref{l.cont-unstable}).
We will show that $f$ is the limit of diffeomorphisms $g$ such that $H(p_g)$ has a strong homoclinic intersection.
The results of this section are sum up in the next proposition.

\medskip

\begin{propo}\label{p.unstable-nt}

For any diffeomorphism $f_0$ and any homoclinic class $H(p_{f_0})$
satisfying the assumption 1) above, there exists $\alpha_0\in (0,1)$
with the following property.

For $\alpha\in [0,\alpha_0]$ and any hyperbolic periodic points $p_{x,f_0},p_{y,f_0}$
homoclinically related to the orbit of $p_{f_0}$,
any $C^{1+\alpha}$-diffeomorphism $f$ that is $C^1$-close to $f_0$
and satisfies 2) can be $C^{1+\alpha}$-approximated
by a diffeomorphism $g$ such that:

\begin{itemize}
\item[--] Either there exists a periodic point $q$ homoclinically related to the orbit of $p_g$
such that $$W^{ss}_{loc}(q)\cap W^u(p_{y,g})\neq \emptyset.$$
\item[--] Or the continuations of $x,y$ satisfy
$x_g \notin W^{ss}_{loc}(y_g)$ and also $x_g$ belongs to an arbitrary  previously selected component of $\cW^{cs}_x\setminus W^{ss}_{loc}(x)$.
\end{itemize}
\end{propo}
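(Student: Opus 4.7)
}
My strategy is to produce the required center-direction displacement of $x_g$ relative to $y_g$ by a localized twist perturbation, adapting the machinery of Section~\ref{proofjointint}. First I would reduce to a convenient form: after an arbitrarily small $C^{1+\al}$ modification (which does not affect the hypotheses), assume that $f$ is smooth near the orbits of $p_x$ and $p_y$ and resonance-free, so that Sternberg's theorem provides smooth linearizing charts in which $W^{ss}_{loc}$, $W^c_{loc}$, $W^u_{loc}$ of each periodic orbit are coordinate subspaces. In these charts the backward iterates $f^{-n}(x)$, $f^{-n}(y)$ decay exponentially toward the periodic orbits, and the continuations $x_g,y_g$ depend continuously on $g$ by lemma~\ref{l.cont-unstable}.

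Next I would split into the two sub-cases mentioned in the itinerary (section~\ref{itinerary}b). \emph{Fast-return case}: there are arbitrarily large $n$ such that $f^n(x)$ (or $f^n(y)$) returns within distance comparable to $\rho^n$ of the orbits of $p_x$ or $p_y$, where $\rho\in(\la^{1/\al_0},\|Df^{-1}\|^{-1})$ is chosen as in section~\ref{elementary}. In such a window I place a perturbation $g=T\circ f$ where $T$ is a center twist analogous to the one of lemma~\ref{l.twist}, supported in a thin tube around (a backward iterate of) the unstable leaf of $p_x$ of cross-section $\rho^n$ in the strong-stable direction, and shifting the center coordinate by $\rho^{\al_0 n}$ times a suitable linear form in the strong-stable variable. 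The $C^{1+\al}$ size of $T$ is controlled by $(\rho^{\al_0-\al})^n$ via lemma~\ref{l.perturbation} and remark~\ref{r.perturbation}, hence tends to zero for $\al<\al_0$. The support is chosen disjoint from the orbit of $y$ and from the rest of the orbit of $x$, so $y_g$ is essentially unchanged while $x_g$ gets displaced in the center direction by a definite amount, on whichever side of $W^{ss}_{loc}(y_g)$ we wish by choosing the sign of the twist. This directly gives the second alternative of the conclusion.

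\emph{Slow-return case}: the forward orbits of $x,y$ do not return fast enough to allow a single-window twist. Here I exploit that the backward orbits spend arbitrarily long stretches in the linearized zones of $p_x$ or $p_y$. I place a perturbation near an iterate $f^{-N}(y)$ supported in a long thin strip stretched along $W^u(p_y)$ inside the linear chart, whose transverse size is $\rho^N$. The same H\"older accounting as in lemma~\ref{l.twist} shows the $C^{1+\al}$ size is $O(\rho^{(\al_0-\al)N})$, and the perturbation moves $y_g$ transversely to the strong-stable foliation of $p_y$. Two outcomes are possible: either the continuation $y_g$ is cleanly pushed off $W^{ss}_{loc}(x_g)$ (second alternative), or during the one-parameter deformation the strong-stable manifold of some hyperbolic periodic point $q$ nearby $p_y$ (obtained by Pugh closing, contained in $H(p_g)$ via lemma~\ref{l.bracket0} and homoclinically related to $p_g$ by lemma~\ref{l.linked}) crosses $W^u(p_{y,g})$; in the second alternative one invokes an intermediate-value argument in the parameter, analogous to the one concluding section~\ref{proofjointint} via lemma~\ref{l.ordering}, to produce the claimed intersection $W^{ss}_{loc}(q)\cap W^u(p_{y,g})\neq\emptyset$.

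The main obstacle I anticipate is the simultaneous control of the $C^{1+\al}$ size and the center-direction shift. The choice $\la_u^{\al_0}\max(\la,\la_c)<1$ of $\al_0$ is precisely what guarantees that the center displacement of amplitude $\rho^{\al_0 n}$, once the local unstable leaves are pulled back through the non-linear part of $f$, still dominates the $\la^n$-sized wiggle coming from the deviation of the local unstable leaves from the linearized model. A secondary subtlety, specific to this unstable-leaves situation (and distinguishing it from section~\ref{proofjointint}), is that $x$ and $y$ are not close in the ambient manifold, so the support-disjointness required to decouple $x_g$ and $y_g$ is what forces the fast/slow dichotomy in the first place: the slow-return case is exactly the configuration in which no twist can move one of the points without also moving the other, and then the only available perturbations are those which happen to create the strong connection of the first alternative.
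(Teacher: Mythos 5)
Your general toolkit (localized twists, the $C^{1+\al}$ size control of lemma~\ref{l.perturbation}, a fast/slow return dichotomy) is the right one, but the dichotomy you set up is not the one that makes the argument work, and you have attached the two alternatives of the conclusion to the wrong cases. The relevant recurrence is not that of $f^n(x)$ to the periodic orbits of $p_x,p_y$, but the self-recurrence of the forward orbit of $x$ near $x$ itself: one measures, for the forward dynamical balls $B_n(x)$ (which necessarily contain $y$, since $y\in W^{ss}(x)$), the first time $n(N)$ at which $f^{n}(B_n(x))$ meets a $\mu_c^N$-ball around $f^{-1}(x)$, and splits according to whether $n(N)\le KN$ or $n(N)>KN$. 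In the fast case one does \emph{not} get the second alternative: one gets the first, by choosing a small diffeomorphism $\varphi$ sending $f^n(D)$ back into $D$, where $D\subset W^{ss}_{loc}(x)$ is the smallest disc containing $y$; then $\varphi\circ f^n$ is a contraction of $D$ and produces a periodic point $q$ with $D\subset W^{ss}(q)$, hence $W^{ss}(q)\cap W^u(p_{y,g})\ni y$. This closing mechanism is the essential new idea behind the first alternative and is absent from your proposal; the intermediate-value argument of section~\ref{proofjointint} that you invoke instead produces a strong homoclinic intersection between pre-existing periodic points and does not yield a periodic point whose strong stable manifold meets $W^u(p_{y,g})$.

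Two further points. First, your fast-return perturbation cannot have support ``disjoint from the orbit of $y$'': since $y\in W^{ss}(x)$, the forward iterates $f^n(y)$ are exponentially close to $f^n(x)$, so any window around a forward return of $x$ contains the corresponding iterate of $y$ --- this is precisely why the argument must work with the dynamical balls $B_n(x)\ni y$ rather than with the orbit of $x$ alone. Second, the separation of $x_g$ from $W^{ss}_{loc}(y_g)$ in the slow case is obtained by a center push of amplitude $r$ at the single backward iterate $f^{-1}(x)$, and the whole content of the estimate is that the induced displacement of the strong-stable projection of $y_g$ is bounded by $\lambda^{-\chi n(N)}+[\lambda_u^{-n(N)}Cr]^{\alpha_s}\ll r$; this requires the H\"older regularity $\alpha_s$ of the (fake) strong stable holonomies and of the bundle $E^{ss}$ (lemmas~\ref{l.holonomy} and~\ref{l.bundle}), which your proposal never brings in. You also omit the preliminary reduction to the case where $W^{ss}(p_x)\setminus\{p_x\}$ is disjoint from $H(p)$ (condition (***)), without which the localization lemmas controlling the returns of $B_n(x)$ --- and hence the dichotomy itself --- are unavailable; when (***) fails one must instead conclude directly via proposition~\ref{p.generalized-strong-connectionCr}.
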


\medskip

Note that by using corollary \ref{c.continuation} in both cases of the conclusion of the proposition,
$f$ is $C^{1+\al}$-approximated by a diffeomorphism whose homoclinic class
$H(p)$ exhibits a strong homoclinic intersection.
The proposition thus clearly implies theorem \ref{t.unstable}.

In what follows, in  subsection \ref{ss.holonomy} are introduced the fake holonomies and it is explained the H\"older regularity. In  subsection \ref{ss.localization} it is shown that the recurrences to the point $x$ in proposition \ref{p.unstable-nt} hold along the center direction and in subsection \ref{recurrence-dichotomy} it is a presented a dichotomy related to the recurrence time. Related to this dichotomy, two different perturbations are introduced in lemma \ref{nontransversal1} and \ref{nontransversal2-bis} proved in sections \ref{nont-proof} and \ref{nont2-bis-proof} respectively.

\subsection{Strong stable holonomy}\label{ss.holonomy}

\paragraph{Plaques.}

Using an adapted metric if needed, we can assume that there exist
constants $\lambda>1$ and $0<\lambda_s<1<\lambda_u$ such that for any $x\in H(p_{f_0})$
and any unitary vectors $u\in E^s_x$, $v\in E^c_x+E^u_x$ and $w\in E^u$, one has
$$\lambda.\|D_xf_0.u\|<\|D_xf_0.v\|, \quad \|D_xf_0.u\|\leq \lambda_s\quad
\text{and} \quad \|D_xf_0.w\|\geq \lambda_u.$$
Let us introduce a strong stable cone field $\cC^s$ above $H(p_{f_0})$:
one can choose $a>0$ small and define at each point $x$ the set
$$\cC^s_x=\{(u^s,u^c,u^u)\in E^s_x+E^c_x+E^u_x,\quad
\|u^s\|\geq a.\|u^c+u^u\|\}.$$
The cone field extends continuously to a neighborhood $U$ of $H(p_{f_0})$
such that at any $x\in U\cap g(U)$,
$$D_xf_0^{-1}.\cC^s_{g(x)}\subset \cC^s_{x}.$$
For some $r_0>0$, at any point $x\in U$ there exists a plaque of radius $r_0$
tangent to $\cC^s$.
Similarly, one can define a center-unstable cone field $\cC^{cu}$ and
an unstable cone-field $\cC^u$ on $U$ close to the bundles $E^{c}\oplus E^u$ and $E^u$ respectively. All these properties remain valid for any diffeomorphism that is $C^1$-close to $f_0$.

\paragraph{Strong stable holonomy.}

It is a classical fact that the strong stable holonomies are H\"older.
The proof extends to more general objects, that we call
\emph{fake holonomies}. For more references see~\cite{burns-wilkinson}.

Let us consider a small constant $\delta>0$ that is  used to measure how orbits separate.
For any diffeomorphism $f$ that is $C^1$-close to $f_0$, let us consider two different points $z\in H(p_f)$ and
$z'\in W^{u}_{loc}(z)$ close to each other. Note that there exists a smallest integer
$N=N(z,z')\geq 1$ such that $f^N(z)$ and $f^N(z')$ are at distance larger than $\delta$.

\begin{defi}\label{fake-holonomy}

Two points $\widehat{\Pi^{ss}}(z), \widehat{\Pi^{ss}}(z')$ are called
\emph{fake strong stable holonomies} of $z,z'$ if they satisfy the following properties.

\begin{itemize}

\item[--] There exists a center-unstable plaque of radius  $r_0$ containing  $\widehat{\Pi^{ss}}(z)$ and $\widehat{\Pi^{ss}}(z')$.
\item[--] There exists two plaques of radius $r_0$ at $f^N(z)$ and $f^N(z')$
that are tangent to $\cC^s$ and contain $f^N(\widehat{\Pi^{ss}}(z))$ and
$f^N(\widehat{\Pi^{ss}}(z'))$ respectively.

\item[--] For $0\leq k \leq N$, the distances $d(f^k(z), f^k(\widehat{\Pi^{ss}}(z)))$, $d(f^k(z),f^k(\widehat{\Pi^{ss}}(z)))$ are smaller than $r_0$.

\end{itemize}

\end{defi}

Note that by invariance of the cone field $\cC^s$ under backward iterations
the point $f^k(\widehat{\Pi^{ss}}(z))$ belongs to a plaque
at $f^k(z)$ tangent to $\cC^s$ and whose radius is smaller than $\lambda_s^k.r_0$.
\medskip

The choice for the plaques tangent to $\cC^s$ is of course not unique: one can consider for instance
the local strong stable manifold (in this case, the fake holonomies coincide with the usual
strong-stable holonomies) but one can also choose the local strong stable manifold of a diffeomorphism
$C^1$-close to $f$. In fact the fake holonomies  allow us to compare the holonomies when the diffeomorphism is changed.

\paragraph{H\"older regularity.}

We now sketch how the classical result about H\"older regularity adapts for the fake holonomies.
\begin{lemma}\label{l.holonomy}
If $\delta>0$ has been chosen small enough,
then there exists $\alpha_s>0$ such that for any diffeomorphism $f$
that is $C^1$-close to $f_0$,
for any $z\in H(p_f)$ and $z'\in W^u_{loc}(z)$ close,
and for any fake holonomies $\widehat{\Pi^{ss}}(z), \widehat{\Pi^{ss}}(z')$, one has
$$d(\widehat{\Pi^{ss}}(z), \widehat{\Pi^{ss}}(z'))\leq d(z,z')^{\alpha_s}.$$

\end{lemma}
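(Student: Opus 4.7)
I would prove this lemma by the classical dominated-splitting bootstrap for H\"older regularity of stable holonomies. The orbit-closeness condition defining fake holonomies forces the $E^{cu}$-component of the shift from $z$ to $\widehat{\Pi^{ss}}(z)$ to be exponentially small in $N$, with base $\lambda^{-1}$ (the domination constant), and this decay converts into a H\"older estimate in $d(z,z')$ via the unstable expansion rate.

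\emph{Bootstrap estimate.} Write $\widehat{\Pi^{ss}}(z) = z + \xi$ in local coordinates at $z$, with $\xi\in \cC^s_z$ and $|\xi|\leq r_0$; decompose $\xi = \xi_s + \xi_{cu}$ according to $E^s\oplus E^{cu}$. The remark following Definition~\ref{fake-holonomy} gives $|f^k(\xi)|\leq \lambda_s^k r_0$ for $0\leq k\leq N$, so $|f^k(\xi_s)|$ and $|f^k(\xi_{cu})|$ are both $\leq C\lambda_s^k r_0$. On the other hand, the domination $\|Df|_{E^{cu}}\|_{\min}\geq \lambda\,\|Df|_{E^s}\|_{\max}$ implies
$$\frac{|f^k(\xi_{cu})|}{|f^k(\xi_s)|}\;\geq\;\lambda^k\,\frac{|\xi_{cu}|}{|\xi_s|}.$$
Combined with $|f^k(\xi_{cu})|/|f^k(\xi_s)|\leq C/a$ (from the cone $\cC^s$), one obtains at $k=N$
$$|\xi_{cu}| \;\lesssim\; \lambda^{-N} r_0,$$
and the analogous bound for the shift $\xi'$ at $z'$.

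\emph{CU plaque constraint.} Since $\widehat{\Pi^{ss}}(z)$ and $\widehat{\Pi^{ss}}(z')$ lie on a common center-unstable plaque, the $E^s$-part of $(z'-z)+(\xi'-\xi)$ must vanish. As $z'-z\in E^u$ contributes nothing to $E^s$, this forces $\xi'_s = \xi_s$ up to $O(d(z,z')^2)$ curvature corrections; combined with the bootstrap, one gets
$$d(\widehat{\Pi^{ss}}(z),\widehat{\Pi^{ss}}(z')) \;\leq\; d(z,z') + 2C\lambda^{-N} r_0 + O(d(z,z')^2).$$

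\emph{Conclusion.} The unstable expansion gives $d(z,z')\leq \delta\,\lambda_u^{-(N-1)}$, hence $\lambda^{-N}\leq C_\delta\, d(z,z')^{\log\lambda/\log\lambda_u}$. Setting $\alpha_s := \min(1,\log\lambda/\log\lambda_u)>0$ and taking $\delta$ small enough to absorb the constants (at the cost of slightly reducing the exponent) yields $d(\widehat{\Pi^{ss}}(z),\widehat{\Pi^{ss}}(z'))\leq d(z,z')^{\alpha_s}$. The main obstacle is the bootstrap: one must carefully justify that the orbit-closeness of the fake plaques, together with the domination, truly pins the $E^{cu}$-component of the shift to size $\lesssim \lambda^{-N}r_0$, and one has to control the second-order curvature corrections arising in the CU-plaque constraint; the rest of the argument is routine exponential bookkeeping.
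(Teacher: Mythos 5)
Your blueprint --- iterate forward to the separation time $N$, control the shift $\xi=\widehat{\Pi^{ss}}(z)-z$, and turn $N\sim|\log d(z,z')|$ into a H\"older exponent --- is also the paper's. Your implementation of the shift estimate is different: the paper pulls the projected pair back from time $N$ with operator norms of $Df^{\pm 1}$, obtaining $d(\widehat{\Pi^{ss}}(z),\widehat{\Pi^{ss}}(z'))\leq\sigma^N d(z,z')$ with $\sigma>1$ and then combining with an \emph{upper} bound on $N$; you decompose $\xi$ into $E^s\oplus E^{cu}$ parts and pin $|\xi_{cu}|\lesssim\lambda^{-N}r_0$ directly via the domination constant $\lambda>1$, which makes the positivity of the exponent transparent. (A small imprecision along the way: the $\cC^{cu}$-plaque does not force $\xi'_s=\xi_s$ up to $O(d(z,z')^2)$; it only gives that the $E^s$-part of $v=\widehat{\Pi^{ss}}(z')-\widehat{\Pi^{ss}}(z)$ is bounded by the cone width times its $E^{cu}$-part. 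But that suffices, since all you really use is $|v|\lesssim|v_{cu}|\leq d(z,z')+|\xi_{cu}|+|\xi'_{cu}|$.)

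There is, however, a genuine direction error in your last step. Because you carry a \emph{decaying} factor $\lambda^{-N}$ rather than a growing $\sigma^N$, you need a \emph{lower} bound on $N$, not an upper bound. Your inequality $d(z,z')\leq\delta\,\lambda_u^{-(N-1)}$ (which uses the \emph{minimal} unstable expansion $\lambda_u$) gives $N\leq 1+\log(\delta/d(z,z'))/\log\lambda_u$, i.e.\ an upper bound on $N$ and hence a \emph{lower} bound on $\lambda^{-N}$; the claimed $\lambda^{-N}\leq C_\delta\,d(z,z')^{\log\lambda/\log\lambda_u}$ does not follow. The needed lower bound on $N$ comes instead from the \emph{maximal} unstable expansion $\mu_u:=\sup_x\|Df_x|_{E^u}\|$: from $\delta<d(f^N(z),f^N(z'))\leq\mu_u^N\,d(z,z')$ one gets $N\geq\log(\delta/d(z,z'))/\log\mu_u$, hence $\lambda^{-N}\leq(d(z,z')/\delta)^{\log\lambda/\log\mu_u}$. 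The corrected exponent is $\alpha_s$ slightly below $\min(1,\log\lambda/\log\mu_u)$, which is still positive, so the lemma does hold under your strategy --- but the chain of inequalities as you wrote it does not reach the conclusion.
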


\begin{proof}[Sketch of the proof]
Observe that if $N$ is sufficiently large (provided that $z'$ is close enough to $z$), the distances $d(f^N(\po(z)),f^N(z))$ and $d(f^N(\po(z')),f^N(z'))$
are exponentially small. Hence
$d(f^N(\widehat{\po}(z)), f^N(\widehat{\po}(z')))$
is of the same order than $d(f^N(z), f^N(z'))$ and close to $\delta$.

The distance $d(\widehat{\Pi^{ss}}(z), \widehat{\Pi^{ss}}(z'))$
is bounded by $\|Df^{-1}\|^Nd(f^N(\po(z)), f^N(\widehat{\Pi^{ss}}(z')))$
and the distance $d(z, z')$ is bounded from below by
$\|Df\|^{-N}d(f^N(\po(z)), f^N(\widehat{\Pi^{ss}}(z')))$.
This proves that there exists $\sigma>0$ (which only depends on $f_0$)
such that
$$d(\widehat{\Pi^{ss}}(z), \widehat{\Pi^{ss}}(z'))\leq \sigma^N. d(z,z').$$
On the other hand, since the distance along the unstable manifolds
growth uniformly, there exists another constant $C>0$ such that
$$N\leq C.\log d(z,z').$$
The result follows from these two last inequalities.
Observe that the exponent $\alpha_s$ only depends on $C$ and $\sigma$ which are uniform on
a $C^1$-neighborhood of $f_0$.
\end{proof}

\paragraph{Regularity of the strong stable bundle.}

The regularity of the strong stable bundle needs more smoothness on the diffeomorphism.
Note that the strong stable bundle is defined at any point whose forward orbit is contained in a
small neighborhood $U$ of $H(p_{f_0})$.
\begin{lemma}\label{l.bundle}
There exists $\alpha'_s$ such that for any diffeomorphism $f$ that is $C^1$-close to $f_0$
and of class $C^{1+\alpha}$ for some $\alpha\in (0,\alpha'_s)$, there exists a constant $C>0$
with the following property.

At any points $z,z'$ close having their forward orbit contained in $U$, one has
$$d(E^{ss}_z,E^{ss}_{z'})\leq C.d(z,z')^{\alpha}.$$

\end{lemma}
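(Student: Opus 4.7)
The plan is to carry out the standard graph-transform argument for H\"older regularity of invariant subbundles, adapted to the fact that $E^{ss}$ is only defined at points whose forward orbit stays in the neighborhood $U$. First, I would fix a continuous reference splitting $F^s\oplus F^{cu}$ defined on all of $U$ (not required to be invariant) with $F^s\subset \cC^s$ and $F^{cu}\subset \cC^{cu}$, obtained for instance by continuously extending the splitting $E^s\oplus (E^c\oplus E^u)$ from $H(p_{f_0})$. In bundle charts adapted to this reference splitting, the true strong stable line $E^{ss}_z$ is the graph of a linear map $\sigma_z\colon F^s_z\to F^{cu}_z$, and the $C^1$-size of $\sigma_z$ is uniformly bounded because $E^{ss}_z\subset\cC^s_z$.

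Second, writing $Df_z$ in block form with respect to the reference splitting at $z$ and at $f(z)$, the backward invariance of $E^{ss}$ translates into a graph-transform relation $\sigma_z=\mathcal{G}_z(\sigma_{f(z)})$, where the operator $\mathcal{G}_z$ is Lipschitz in its argument with factor some $\mu<1$ controlled by the domination constant $\lambda^{-1}$ (this is the usual estimate for the invariant section of the bundle of linear maps between dominated subbundles). Given two close points $z,z'$ with forward orbits in $U$, I would iterate this relation a number $N$ of times, choosing $N$ of order $-\log d(z,z')/\log\lambda_u$ so that $d(f^N(z),f^N(z'))$ reaches some fixed small scale. Telescoping gives
\[
\|\sigma_z-\sigma_{z'}\|\ \leq\ \mu^N\,\|\sigma_{f^N(z)}-\sigma_{f^N(z')}\|\ +\ \sum_{k=0}^{N-1}\mu^k\,\bigl\|\mathcal{G}_{f^k(z)}-\mathcal{G}_{f^k(z')}\bigr\|_{\mathrm{op}}.
\]
The first term is bounded by a constant times $\mu^N$, which is $O(d(z,z')^\beta)$ for $\beta=-\log\mu/\log\lambda_u$. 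For the second, the $C^{1+\alpha}$ regularity of $f$ and the continuity of the reference splitting give $\|\mathcal{G}_{f^k(z)}-\mathcal{G}_{f^k(z')}\|_{\mathrm{op}}\leq C\,d(f^k(z),f^k(z'))^\alpha\leq C\,\lambda_u^{k\alpha}\,d(z,z')^\alpha$, and the geometric sum $\sum_{k\geq 0}(\mu\lambda_u^\alpha)^k$ converges provided $\mu\lambda_u^\alpha<1$.

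Third, fixing $\alpha'_s>0$ so that $\mu\lambda_u^{\alpha'_s}<1$, one concludes that for every $\alpha\in(0,\alpha'_s)$ there is a constant $C>0$ with $\|\sigma_z-\sigma_{z'}\|\leq C\,d(z,z')^\alpha$, which translates into the required H\"older estimate $d(E^{ss}_z,E^{ss}_{z'})\leq C\,d(z,z')^\alpha$. All constants depend only on the uniform hyperbolicity/domination data on $U$ and on the $C^{1+\alpha}$-size of $f$, hence are uniform on a $C^1$-neighborhood of $f_0$.

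The main obstacle is not the calculation itself but making sure that all ingredients are uniform on a $C^1$-neighborhood of $f_0$ and on the forward-invariant set in $U$: the reference splitting must remain transverse, the contraction rate $\mu$ of the graph transform must stay below $1$ under $C^1$-perturbation, and the cone fields must remain invariant. Once these uniform constants are fixed, $\alpha'_s$ depends only on the domination and the expansion along $E^u$, and is independent of the particular orbit under consideration.
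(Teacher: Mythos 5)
Your argument is, in substance, the same as the paper's: a one-step estimate combining the contraction on directions induced by $Df^{-1}$ (rate $\lambda<1$ coming from the domination $E^{ss}\oplus(E^c\oplus E^u)$) with the $\alpha$-H\"older variation of $Df$ in the base point, iterated along the forward orbits in $U$ and telescoped. The paper writes this directly as $d(E^{ss}_z,E^{ss}_{z'})\le \lambda\, d(E^{ss}_{f(z)},E^{ss}_{f(z')})+C\,d(f(z),f(z'))^\alpha$ and sums the resulting geometric series; your graph-transform formulation over a reference splitting is just an explicit way of ``working in charts''.

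One step is wrong as written: you bound $d(f^k(z),f^k(z'))\le C^{1/\alpha}\lambda_u^{k}\,d(z,z')$, where $\lambda_u$ is the \emph{minimal} expansion along $E^u$. Two nearby points with forward orbits in $U$ need not lie on a common unstable leaf, and in any case their separation rate under $f$ is only controlled by $\|Df\|_\infty$, which in general exceeds $\lambda_u$. The correct bound is $d(f^k(z),f^k(z'))\le K^k\,d(z,z')$ with $K>\|Df\|_\infty$, and the threshold must therefore be defined by $\lambda K^{\alpha'_s}<1$ (as in the paper), not by $\mu\lambda_u^{\alpha'_s}<1$; the same substitution is needed in your choice of the stopping time $N$. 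With that correction the geometric sum $\sum_k(\lambda K^{\alpha})^k$ converges and the argument goes through. A minor additional point: the reference splitting must be chosen H\"older (e.g.\ smooth), since mere continuity does not yield $\|\mathcal{G}_{x}-\mathcal{G}_{x'}\|\le C\,d(x,x')^{\alpha}$.
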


\begin{proof}[Sketch of the proof]

Let us choose $K>\|Df\|_\infty$ and as before denote by $\lambda\in (0,1)$ a bound for the domination between $E^{ss}$ and $E^c\oplus E^u$. We choose $\alpha'_s>0$ such that
$K^{\alpha'_s}\lambda<1$.
By working in charts, one has for some constant $C>0$,
\begin{equation*}
\begin{split}
d(E^{ss}_z,E^{ss}_{z'})&\leq d( Df^{-1}_{f(z)}(E^{ss}_{f(z)}), Df^{-1}_{f(z)}(E^{ss}_{f(z')}))+
d(Df^{-1}_{f(z)}(E^{ss}_{f(z')}),Df^{-1}_{f(z')}(E^{ss}_{f(z')}))\\
&\leq \lambda d(E^{ss}_{f(z)},E^{ss}_{f(z')}) + C.d(f(z),f(z'))^\alpha.
\end{split}
\end{equation*}

By induction one gets for any $k\geq 1$,
$$d(E^{ss}_z,E^{ss}_{z'})\leq C.\sum_{j=0}^{k-1} \lambda^j d(f^{j+1}(z),f^{j+1}(z'))^\alpha
+ \lambda^{k}d(E^{ss}_{f^k(z)},E^{ss}_{f^k(z')}).$$
One can bound $d(f^{j}(z),f^{j}(z'))$ by $K^jd(z,z')$.
Since $\lambda K^{\al'_s}<1$,
this gives 
$$d(E^{ss}_z,E^{ss}_{z'})\leq C (d(z,z')^\alpha + \lambda^k).$$
By choosing $k$ large enough, one gets the estimate.

\end{proof}

\subsection{Localization of returns to $p_x$}\label{ss.localization}

We now fix a diffeomorphism $f$ that is $C^1$-close to $f_0$.
We assume that  is $C^{1+\al}$ for some $\al\in [0,1)$.
In order to simplify the notations we will now set $p=p_f$, $p_x=p_{x,f}$ and $p_y=p_{y,f}$. Let $\tau_x$ be the period of $p_{x}$
and let us consider a local central manifold $W^c_{loc}(p_x)$.

\smallskip

We will use the following assumption:

\begin{itemize}

\item[(***)] \emph{The intersection between $W^{ss}(p_x)\setminus \{p_x\}$ and
$H(p)$ is empty.}

\end{itemize}
The orbit of any point $z\in W^s(p_{x})\setminus \{p_{x}\}$ meets the fundamental 
domain $f^{\tau_x}(W^s_{loc}(p_{x}))\setminus W^s_{loc}(p_{x})$.
The next lemma states, that if $H(p)$ and $p_x$ satisfy (***),
and if $z$ belongs to $H(p)\cap W^s_{loc}(p_x)$ then its orbit meets a kind of ``fundamental center domain"
of $p_x$.

\begin{lema}\label{l.localization1}

\label{local1} If (***) is satisfied,
there are points $z_0, z_1$ contained in $ W^c_{loc}(p_x)\setminus \{p_x\}$
such that if $z\in W^{s}_{loc}(p_x)\cap H(p)$ then there is $k\in \ZZ$  verifying that
$$f^{k}(z)\in W^{ss}_{loc}([f^{2\tau_x}(z_0),z_0)])\cup W^{ss}_{loc}([f^{2\tau_x}(z_1),z_1)])$$
where $[f^{2\tau_x}(z_i),z_i]$, for $i\in \{0,1\}$
is the connected arc of $W^c_{loc}(p_x)$ whose extremal points are $z_i,$ $f^{2\tau_x}(z_i)$
and (see figure~\ref{f.funda})
$$W^{ss}_{loc}([f^{2\tau_x}(z_i),z_i])= \bigcup_{\{z'\in [f^{2\tau_x}(z_i),z_i]\}} W^{ss}_{loc}(z').$$

\end{lema}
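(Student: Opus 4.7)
The plan is to exploit the product structure of $W^s_{loc}(p_x)$, which, thanks to the partial hyperbolicity and thin trapping of $E^s \oplus E^c$ combined with the hyperbolicity of $p_x$ (whose full stable bundle is $E^s \oplus E^c$), is foliated by the strong stable leaves $W^{ss}_{loc}(w)$ as $w$ varies over the one-dimensional local center manifold $W^c_{loc}(p_x)$. This produces a continuous projection $\pi^c \colon W^s_{loc}(p_x) \to W^c_{loc}(p_x)$ characterized by $z \in W^{ss}_{loc}(\pi^c(z))$. The iterate $f^{\tau_x}$ preserves $W^c_{loc}(p_x)$ and acts as a contraction toward the fixed point $p_x$; passing to $f^{2\tau_x}$ guarantees that each of the two connected components of $W^c_{loc}(p_x) \setminus \{p_x\}$ is preserved.

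The key consequence of hypothesis (***) is that $\pi^c$ never sends a point of $W^s_{loc}(p_x) \cap H(p)$ other than $p_x$ to $p_x$: if $\pi^c(z) = p_x$ for some $z \in H(p)$, then $z$ lies in $W^{ss}_{loc}(p_x) \subset W^{ss}(p_x)$, so (***) forces $z = p_x$. Hence, for $z \in W^s_{loc}(p_x) \cap H(p)$ distinct from $p_x$, the point $w := \pi^c(z)$ lies in one of the two connected components of $W^c_{loc}(p_x) \setminus \{p_x\}$.

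I would then pick $z_0$ and $z_1$, one in each component, near the outer boundary of $W^c_{loc}(p_x)$, so that the arc $[f^{2\tau_x}(z_i), z_i]$ is a fundamental domain for the $f^{2\tau_x}$-action on the corresponding component, with $\bigcup_{m \geq 0} f^{2m\tau_x}([f^{2\tau_x}(z_i), z_i)) = (p_x, z_i]$. Shrinking $W^s_{loc}(p_x)$ if needed (for instance replacing it by the $f^{2\tau_x}$-saturation $\{p_x\} \cup \bigcup_{m \geq 0} f^{2m\tau_x}(W^{ss}_{loc}([f^{2\tau_x}(z_i), z_i)))$ for $i=0,1$), any $z \in W^s_{loc}(p_x) \setminus \{p_x\}$ can be written as $z = f^{2m\tau_x}(z')$ with $m \geq 0$ and $z' \in W^{ss}_{loc}([f^{2\tau_x}(z_i), z_i))$ for some $i \in \{0,1\}$. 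Setting $k = -2m\tau_x$, we obtain $f^k(z) = z' \in W^{ss}_{loc}([f^{2\tau_x}(z_i), z_i))$, which is exactly the conclusion of the lemma.

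The main obstacle is making the choice of $z_0, z_1$ and of $W^s_{loc}(p_x)$ compatible, so that the fundamental-domain decomposition above covers every relevant point: the local strong stable leaves of points on the fundamental arcs have uniformly bounded size, so one has to saturate by forward iterates and then confirm that the remaining piece of the stable manifold not covered collapses onto $\{p_x\}$. With this arranged, the argument reduces to the standard basin-description of a one-dimensional contraction by fundamental domains, and (***) is used precisely to rule out the degenerate case $\pi^c(z) = p_x$ in which no fundamental domain would apply.
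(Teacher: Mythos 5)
There is a genuine gap, and it sits exactly at the point you flag as "the main obstacle" but do not resolve. The conclusion requires an iterate of $z$ to lie in $W^{ss}_{loc}([f^{2\tau_x}(z_i),z_i])$, a union of \emph{local} strong stable plaques of uniformly bounded size $r_0$. In your scheme the relevant iterate is the backward one $f^{-2m\tau_x}(z)$ that brings the center projection $\pi^c(z)$ into the fundamental arc; but $f^{-1}$ expands distances inside the strong stable leaves, so although $f^{-2m\tau_x}(z)$ lies on the global leaf $W^{ss}\bigl(f^{-2m\tau_x}(\pi^c(z))\bigr)$, its distance to the center manifold inside that leaf is at least $\lambda_s^{-2m\tau_x}$ times that of $z$ and leaves the plaque of size $r_0$ once $m$ is large. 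Consequently the forward saturation $\{p_x\}\cup\bigcup_{m\geq 0}f^{2m\tau_x}\bigl(W^{ss}_{loc}([f^{2\tau_x}(z_i),z_i))\bigr)$ is an exponentially pinching horn around the center arc: for every $\epsilon_0>0$ it misses all points at strong stable distance at least $\epsilon_0$ from $W^c_{loc}(p_x)$ whose center projection is sufficiently close to $p_x$. It is therefore not a neighborhood of $p_x$ in $W^s(p_x)$, so replacing $W^s_{loc}(p_x)$ by it is not a harmless shrinking: it renders the lemma vacuous for its intended use (items (iii)--(v) of the subsection and lemma~\ref{l.localization2} need the hypothesis to be met by every forward iterate of a point of $H(p)\cap W^s(p_x)$ once it is near $p_x$). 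Your assertion that "the remaining piece of the stable manifold not covered collapses onto $\{p_x\}$" is false for the stable manifold itself, and what must be shown is that $H(p)$ avoids that remaining piece --- which is precisely the content of the lemma, not a detail to be "confirmed".

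This is also where hypothesis (***) really enters, and you invoke it only in the strictly weaker form $\pi^c(z)\neq p_x$, which excludes points lying on $W^{ss}_{loc}(p_x)$ but says nothing about points of $H(p)$ accumulating on $W^{ss}_{loc}(p_x)\setminus\{p_x\}$ from nearby leaves. The paper's proof handles exactly this by a compactness argument: with $z_i^n=f^{n\tau_x}(z_i^0)$, the boxes $W^{ss}_{loc}([f^{2\tau_x}(z_i^n),z_i^n])$ together with $W^{ss}_{loc}(p_x)$ cover a neighborhood of $p_x$ and $f^{\tau_x}$ maps the box of level $n$ into the box of level $n+1$; if the conclusion failed for every choice $z_i=z_i^n$, then for arbitrarily large $n$ there would be a point $\zeta_n\in H(p)\cap W^s_{loc}(p_x)$ in the box of level $n+1$ whose $f^{-\tau_x}$-image falls outside the box of level $n$, and an accumulation point of $(\zeta_n)$ would lie in $\bigl(W^{ss}_{loc}(p_x)\setminus\{p_x\}\bigr)\cap H(p)$, contradicting (***). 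Note in particular that the good pair $(z_0,z_1)$ is obtained non-constructively at some sufficiently deep level, not "near the outer boundary" as you propose. Some such limiting argument controlling the strong stable drift of backward iterates of points of $H(p)$ is indispensable; without it your argument only proves a statement about the pinched horn, not about $W^s_{loc}(p_x)$.
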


\begin{figure}[subsection]

\begin{center}



\input{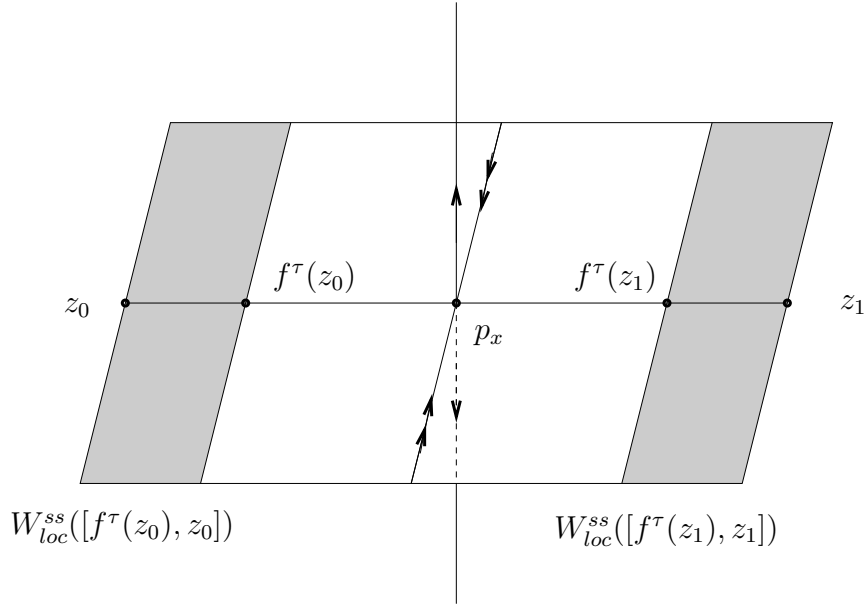}

\end{center}

\caption{Fundamental center domains.  \label{f.funda}}

\label{center domain}

\end{figure}

\begin{proof}
Let us consider two points $z_0^0$ and $z_1^0$ in two different connected
components of $W^c_{loc}(p_x)$ and set $z^n_i=f^{i\tau_x}(z_i^0)$.

 Note that the image of $W^{ss}_{loc}([f^{2\tau_x}(z_i^n),z_i^n])$ by $f^\tau$
is contained in $W^{ss}_{loc}([f^{2\tau_x}(z_i^{n+1}),z_i^{n+1}])$.
The union of the $W^{ss}_{loc}([f^{2\tau_x}(z_i^n),z_i^n])$ over $n\geq 0$
and of $W^{ss}_{loc}(p_x)$ contains a neighborhood of $p_x$.

If the thesis of the lemma does not hold, it follows that
for arbitrarily large $n\geq 0$, there exists a point $\zeta_n\in H(p_x)\cap W^s_{loc}(p_x)$
which belongs to $W^{ss}_{loc}([f^{2\tau_x}(z_i^{n+1}),z_i^{n+1}])$ and whose
preimage by $f^{\tau}$ does not belong to $W^{ss}_{loc}([f^{2\tau_x}(z_i^n),z_i^n])$.
An accumulation point $\zeta$ of $\{\zeta_n\}$ belongs to
$W^{ss}_{loc}(p_x)\setminus\{p_x\}]\cap H(p_x)$,
contradicting the assumption of the lemma.
\end{proof}
\bigskip

We now describe the returns of the forward orbit of $x$ in the neighborhood $W$ of the orbit of $p_x$. We need to take into account the orbits
that follow the orbit of $x$ during some time.
For that we let $\lambda_s\in (0,1)$ be an upper bound for the
contraction along $E^s$, we let $\lambda>1$ be a lower bound for the
domination between $E^s$ and $E^c\oplus E^u$ as in section~\ref{ss.holonomy}
and we let $\mu_c>\mu_s$ in $(0,1)$ be the modulus of the center eigenvalue at $p_x$
and the maximal modulus of the strong stable eigenvalues at $p_x$.
We also choose $\rho>1$ such that
$$\rho<\min (\lambda,\lambda_s^{-1/2},\mu_c/\mu_s).$$
We then introduce some ``forward dynamical balls" centered at $x$:
we fix $k_0\geq 1$ and for $n\geq 0$ we define the set
$$B_n(x)=\left\{z\in M,\; \forall\,  0\leq k\leq n, \, d(f^k(z),f^k(x))<\rho^{k-k_0}.\prod_{\ell=0}^{k-1}\|Df_{|E^s}(f^\ell(x))\|\right\}.$$
Note that:
\begin{itemize}
\item[(i)] By our choice of $\rho$,
the intersection of all the balls $B_n(x)$ coincides with a local strong stable manifold of $x$
and the image $f^n(B_n(x))$ has diameter smaller than $\sqrt \lambda_s^{n-k_0}$.
\item[(ii)] By taking $k_0$ large enough, the point $y$ belongs to the balls $B_n(x)$ and its forward iterates
satisfy the stronger estimate
\begin{equation}\label{i.2}
d(f^k(y),f^k(x))<\frac 1 3\rho^{k-k_0}.\prod_{\ell=0}^{k-1}\|Df_{|E^s}(f^\ell(x))\|.
\end{equation}
\end{itemize}
\noindent Let us now assume that (***) holds.

\begin{itemize}
\item[(iii)] For $n$ large enough, $f^n(B_n(x))$ does not intersect $W^u_{loc}(p_x)$. Otherwise
$B_n(x)$ would intersect a large backward iterate of $W^u_{loc}(p_x)$: this would imply that the strong stable manifold of the orbit of $p_x$ contains $x$
and contradicts our assumptions that $W^{ss}(p_x)\cap H(p)=\{p_x\}$. In fact, by first item if for $n$ large enough, $f^n(B_n(x))$ intersects $W^u_{loc}(p_x)$ it follows that $p_x\in W^{ss}_{loc}(x)$.
\item[(iv)] One can choose the neighborhood $W$ of the orbit of $p_x$ so that
the backward orbit of $x$ is contained in $W$ and $x\not \in \overline W$.
The lemma~\ref{l.localization1} above implies that the forward iterates of $x$ close to $p_x$
are close to the central manifold of $p_x$. Consequently, their distance to the local unstable manifold of the orbit of $p_x$ decreases by iteration by a factor close to the central eigenvalue of $p_x$. One thus gets the following.
\end{itemize}

\begin{lemma}\label{l.localization2}
Let us fix $\eta>0$ small.
If (***) holds,
any large iterate $f^n(B_n(x))$ which intersects $B(x,\delta)$
has the following property.

Let $m$ be the largest integer such that $m<n$ and
$f^{m}(B_n(x))$ is not contained in $W$. Then the distance between the points of $f^n(B_n(x))$
and $W^u_{loc}(p_x)$ belongs to $[\mu_c^{(1+\eta).(n-m)},\mu_c^{(1-\eta).(n-m)}]$, where
$\mu_c$ denotes the modulus of the center eigenvalue associated to $p_{x}$.
\end{lemma}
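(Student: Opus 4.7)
The plan is to work in a linearizing chart around the orbit of $p_x$: in a small enough neighborhood $W$, the map $f^{\tau_x}$ is $C^1$-close to its linear part $A_s \oplus A_c \oplus A_u$ at $p_x$, with contraction $\lambda_s$ on $E^s$, contraction $\mu_c \in (0,1)$ on $E^c$, expansion $\mu_u > 1$ on $E^u$, and the domination $\lambda_s < \mu_c$ (from the adapted metric). Writing $f^k(x) = (x^s_k, y^c_k, z^u_k)$ for $m+1 \leq k \leq n$, the local unstable manifold $W^u_{loc}(p_x)$ appears as the $E^u$-axis, so $d(f^k(x), W^u_{loc}(p_x))$ is comparable, up to a universal constant, to $\max(|x^s_k|, |y^c_k|)$.

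For the upper bound, the inclusion $f^k(B_n(x)) \subset W$ for $m+1 \leq k \leq n-1$ lets us iterate in the chart. Starting from $|x^s_{m+1}|, |y^c_{m+1}| \leq r_W$ (where $r_W$ denotes the diameter of $W$), the strong stable component contracts at rate $\lambda_s$ per iterate and the center at rate $\mu_c$, with multiplicative errors arbitrarily close to one by shrinking $W$. Since $\lambda_s < \mu_c$, the center contribution dominates at time $n$, giving $d(f^n(x), W^u_{loc}(p_x)) \leq C \, r_W \, \mu_c^{n-m-1}$ for a uniform constant $C$; for any fixed $\eta > 0$ and $n-m$ large, this is at most $\mu_c^{(1-\eta)(n-m)}$.

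For the lower bound, item~(iv), derived from lemma~\ref{l.localization1}, says that the forward iterates of $x$ in $W$ lie close to the central manifold $W^c_{loc}(p_x)$: the strong stable component $|x^s_k|$ is exponentially smaller than $|y^c_k|$, so the distance to $W^u_{loc}(p_x)$ is essentially $|y^c_n| = \mu_c^{n-m-1} |y^c_{m+1}|$, with per-iteration contraction approximately $\mu_c$. It thus remains to show that $|y^c_{m+1}|$ does not decay faster than $\mu_c^{\eta(n-m)}$ as $n-m \to \infty$. Up to extraction along valid return times, $f^{m+1}(x)$ converges to a limit point $\bar z \in W^s_{loc}(p_x) \cap H(p)$ (the unstable coordinate $|z^u_{m+1}|$ tends to $0$ because the orbit stays $n-m-1$ iterates in $W$). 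Combining hypothesis~(***) with item~(iii) shows that the set of such limit points $\bar z$ is a compact subset of $W^s_{loc}(p_x) \cap H(p)$ uniformly bounded away from $p_x$. Applying lemma~\ref{l.localization1} to each $\bar z$ and using compactness yields a uniform bound on the required iteration time to the fundamental center domain, from which $|y^c_n| \geq \mu_c^{(1+\eta)(n-m)}$ follows for $n-m$ large.

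The main obstacle is securing this uniform separation of the limit points $\bar z$ from $p_x$. A priori the orbit could enter $W$ arbitrarily close to the strong stable manifold of $W^u_{loc}(p_x)$, making $|y^c_{m+1}|$ decay faster than any geometric rate and so breaking the lower bound. The rigidity imposed by (***) (namely $W^{ss}(p_x) \cap H(p) = \{p_x\}$), combined with the recurrence $f^n(x) \approx x$ and the quantitative content of item~(iii), is what enforces the required separation and thereby the uniform lower bound on $|y^c_{m+1}|$.
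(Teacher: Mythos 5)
Your argument is correct and matches the paper's (very brief) justification: the paper likewise derives the estimate from the fact that, by Lemma~\ref{l.localization1} and hypothesis (***), the iterates entering and remaining in $W$ lie near the central manifold of $p_x$ with center coordinate uniformly bounded above and below at the entry time $m+1$, after which the distance to $W^u_{loc}(p_x)$ contracts at a rate within $\mu_c^{1\pm\eta}$ per iterate. Your compactness argument for the uniform lower bound on the entry center coordinate is the right way to make the paper's one-line appeal to (***) precise (only note that the relevant strong-stable rate near $p_x$ is $\mu_s<\mu_c$ rather than the global bound $\lambda_s$, and that the same estimates apply to every point of $B_n(x)$, not just $x$, since they shadow the orbit of $x$ at an exponentially small scale).
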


\begin{itemize}
\item[(v)] For any forward iterate $f^\ell(x)$ close to $p_x$, the quantity
$\rho \|Df_{|E^s}(f^\ell(x))\|$ is smaller than $\mu_c$ by our choice of $\rho$.
We thus obtain another version of the estimate of item (i).
\end{itemize}

\begin{lemma}\label{l.localization3}
If $f^n(B_n(x))$ intersects $B(x,\mu_c^N)$ for some $N\geq 1$ large,
then the diameter of $f^n(B_n(x))$ is smaller than
$\sqrt\lambda_s^{n-N}\mu_c^N$.
\end{lemma}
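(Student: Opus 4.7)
The plan is to refine the baseline diameter bound $\sqrt{\lambda_s}^{n-k_0}$ from item (i) by exploiting the return hypothesis to count more carefully the iterates $f^\ell(x)$ close to $p_x$, where the estimate in item (v) is sharper than the global bound coming from $\rho<\lambda_s^{-1/2}$.

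First I will apply the defining constraint of $B_n(x)$ at time $k=n$, which gives for any two points of $f^n(B_n(x))$ the bound
\begin{equation*}
\mathrm{diam}(f^n(B_n(x))) \;\leq\; 2\rho^{-k_0}\prod_{\ell=0}^{n-1}\bigl(\rho\,\|Df_{|E^s}(f^\ell(x))\|\bigr).
\end{equation*}
Splitting the product according to whether $f^\ell(x)$ lies in the neighborhood $W$ of the orbit of $p_x$ or not, item (v) bounds each factor indexed by an iterate in $W$ by $\mu_c$, while the choice $\rho<\lambda_s^{-1/2}$ bounds each of the remaining factors by $\sqrt{\lambda_s}$.

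Second, I will convert the position hypothesis into a quantitative count of iterates close to $p_x$. Applying Lemma~\ref{l.localization2} to $B_n(x)$ (noting that $x\notin \overline W$ forces $f^n(B_n(x))$ to escape $W$, so the parameter $m<n$ of that lemma is well-defined and the orbit sits in $W$ throughout the iterates $m+1,\dots,n-1$), I will relate the return scale $\mu_c^N$ to the sojourn time $n-m$ near $p_x$. The bridge will use that $x\in W^u(p_x)$, Lemma~\ref{l.localization1}, and the trapping structure in the center direction to conclude that the number of in-$W$ iterates is at least $N$, up to a uniform additive constant. Combining this count with the product estimate from the first step, one obtains the bound $C\,\mu_c^N\sqrt{\lambda_s}^{n-N}$ where the multiplicative constant can be absorbed into the factor $\rho^{-k_0}$ (or, equivalently, into the choice of $k_0$).

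The main obstacle is the second step: Lemma~\ref{l.localization2} controls only the distance from $f^n(B_n(x))$ to $W^u_{loc}(p_x)$, which remains roughly equal to the constant $d(x,p_x)$ since the return is to $x$ rather than to $p_x$, whereas the hypothesis controls the distance to $x$ itself. Extracting ``the number of in-$W$ iterates is at least $N$'' from the $\mu_c^N$-return therefore requires a finer analysis of how the orbit of $x$ enters and exits $W$: the final exit along the unstable direction contributes only a bounded number of ``outside'' iterates, while the contraction along the center direction during the in-$W$ phase is what must produce the $\mu_c^N$-refinement of the return scale.
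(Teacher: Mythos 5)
Your overall route is the paper's (the paper states this lemma with no written proof, presenting it as a direct consequence of item~(v)), and your first step is the intended one, with one bookkeeping caveat: bounding \emph{every} in-$W$ factor by $\mu_c$ and every remaining factor by $\sqrt{\lambda_s}$ yields $\mu_c^{w}\sqrt{\lambda_s}^{\,n-w}$, where $w$ is the number of iterates in $W$, and this exceeds the stated bound $\mu_c^{N}\sqrt{\lambda_s}^{\,n-N}$ whenever $\mu_c>\sqrt{\lambda_s}$ and $w>N$ (no relation between $\mu_c$ and $\sqrt{\lambda_s}$ is assumed). The fix is trivial: once $w\geq N$ is known, bound exactly $N$ of the in-$W$ factors by $\mu_c$ and \emph{all} of the other $n-N$ factors by $\sqrt{\lambda_s}$, which is legitimate since every factor satisfies $\rho\,\|Df_{|E^s}(f^\ell(x))\|\leq\rho\lambda_s<\sqrt{\lambda_s}$.

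The genuine gap is the count $w\geq N$, and your diagnosis of the obstacle misidentifies the relevant manifold. You worry that $d(f^n(B_n(x)),W^u_{loc}(p_x))$ stays of order $d(x,p_x)$; but by item~(iv) the entire backward orbit of $x$ lies in $W$ and $x\in W^u(p_x)$, so $x$ lies on $f^{j_0}(W^u_{loc}(p_x))$ for some fixed $j_0\geq 1$ --- $x$ sits at the exit of $W$ \emph{on} the unstable manifold of the orbit of $p_x$, not at positive distance from it. Lemma~\ref{l.localization2} (more precisely the mechanism behind it: during the terminal sojourn $\{m+1,\dots,n-1\}$ in $W$ the transverse coordinates $(\xi^{ss},\xi^c)$ contract by a factor close to $\mu_c$ per iterate, and transporting by the bounded iterate $f^{j_0}$ only changes distances by a uniform factor) therefore bounds from below by $c\,\mu_c^{(1+\eta)(n-m)}$ the distance from $f^n(B_n(x))$ to the piece of $W^u(p_x)$ through $x$. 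Since $x$ lies on that piece, for $z\in B_n(x)$ with $f^n(z)\in B(x,\mu_c^N)$ one gets $\mu_c^N>d(f^n(z),x)\geq c\,\mu_c^{(1+\eta)(n-m)}$, hence $n-m\geq N/(1+\eta)-O(1)$, and all of those final iterates lie in $W$: this is the missing count, and it is exactly where hypothesis (***) enters (via lemma~\ref{l.localization2}). Two residual remarks: the argument produces roughly $(1-\eta)N$ in-$W$ iterates rather than $N$, so the lemma as literally stated holds only after replacing $N$ by $(1-\eta)N$ or absorbing constants --- harmless in its only application (lemma~\ref{l.ab} and the proof of lemma~\ref{nontransversal1}), where only the exponential rates matter; and for $N$ large the same inequality shows $m<n-1$, i.e.\ the return cannot occur without a long terminal passage through $W$, which is why the hypothesis requires $N$ large.
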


\subsection{Recurrence time dichotomy}\label{recurrence-dichotomy}

As before we denote by $\la_u,\la>1$ the lower bounds for the expansion along $E^u$ and the domination between $E^s$ and $E^c\oplus E^u$.
By lemma~\ref{l.holonomy}, there exists $\alpha_s\in (0,1)$
such that the strong stable fake holonomies are $\alpha_s$-H\"older.
The lemma~\ref{l.bundle} gives $\al'_s\in (0,1)$ which control the smoothness of the strong stable bundle.
Recall that by $\mu_c\in (0,1)$ we denote the modulus of the center eigenvalue associated to $p_{x}$ for $f$.
We also denote by $\bar \al_0$ the bound on the smoothness associated to
$H(p_{f_0})$ in proposition~\ref{p.generalized-strong-connectionCr}.

Let $\chi,K_1,K_2$  be some positive constants defined by 
$$\chi=\frac{\log \lambda_u}{\log\lambda_u+\|Df^{-1}_0\|},$$
$$K_1= \frac{|\log\mu_c|}{\chi\;\log\la},\,\,\,\,\,K_2=\frac{(1-\al_s)|\log\mu_c|}{\al_s\log\la_u}.$$

\medskip

Let us consider again the $C^{1+\al}$-diffeomorphism $f$ that is $C^1$-close to $f$.
If $\al$ belongs to $[0,\bar \al_0]$ and condition (***) does not hold, then
the proposition~\ref{p.generalized-strong-connectionCr} implies that there exist
$C^{1+\al}$-perturbation $g$ of $f$ such that $H(p_g)$ has a strong homoclinic intersection,
concluding the proof of the proposition.
\medskip

In the following we assume that condition (***) holds for $f$ and as in the statement of
proposition~\ref{p.unstable-nt}, that there exist two different points $x\in W^u(p_x)$
and $y\in W^u(p_y)$ whose strong stable manifolds coincide.
For any $N$ large, we take $V$ a neighborhood of size $\mu_c^N$ around $f^{-1}(x).$
We define $n= n(N)$, the smallest element of $\NN\cup \{\infty\}$
such that $f^n(B_n(x))$ intersects $V$.
By the property (iii) of section~\ref{ss.localization}, the sequence $\{n(N)\}$ increases and goes to $+\infty$ as $N$ increases.

\medskip

We  fix a constant $K>\max(1,K_0,K_1,K_2)$ and we are going to consider two cases:
\begin{enumerate}
\item {\it Fast returns.} There exists arbitrarily large $N$ such that 
\begin{eqnarray}\label{no-opttr}
n(N)\leq  K. N. \end{eqnarray}
\item {\it Slow returns.} There exists arbitrarily large $N$ such that 
\begin{eqnarray}\label{opttr} n(N)> K.N . \end{eqnarray}
\end{enumerate}
One of these two conditions (maybe both) occur.
If the first option holds, we prove the following.
\begin{lemma}\label{nontransversal1}
Assume that (***) and (\ref{no-opttr})
hold for some $K>0$, and that $\al<\inf(\frac 1 {K-1},\al'_s)$.
Then there exists a diffeomorphism $\varphi\in \diff^{1+\al}(M)$
that is $C^{1+\al}$-close to the identity such that $g=\varphi\circ f$ has a hyperbolic periodic point $q$
homoclinically related to the orbit of $p_{x,g}$ and whose strong stable manifold
$W^{ss}(q)\setminus \{q\}$ intersects $W^{u}(p_{y,g})$.
\end{lemma}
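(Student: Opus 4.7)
The plan is to exploit the fast return condition to manufacture, by a small $C^{1+\al}$-perturbation, a periodic point $q$ of period $n+1$ that is close to $x$, is homoclinically related to the continuation $p_{x,g}$, and whose strong stable manifold passes through a point of $W^u(p_{y,g})$ close to $y$. The overall geometric setup is dictated by the localization lemmas of subsection~\ref{ss.localization}; the perturbation itself is produced by the elementary construction of lemma~\ref{l.perturbation}.

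First I would fix $N$ large for which (\ref{no-opttr}) holds and set $n=n(N)\leq K\cdot N$. By construction some point in $f^n(B_n(x))$ lies in $V=B(f^{-1}(x),\mu_c^N)$, so
$$d(f^n(x),f^{-1}(x))\leq C\mu_c^N,\qquad d(f^{n+1}(x),x)\leq C'\mu_c^N,$$
and by lemma~\ref{l.localization3} the diameter of $f^n(B_n(x))$ is bounded by $\sqrt{\lambda_s}^{\,n-N}\mu_c^N$, so in particular $f^{n+1}(y)$ lies at distance at most $\sqrt{\lambda_s}^{\,n-N}\mu_c^N$ from $f^{n+1}(x)$. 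Next I would choose a ball $U$ centred at $x$ of radius $\widehat{D}$ and apply lemma~\ref{l.perturbation} to produce a diffeomorphism $\varphi$ supported in $U$ whose restriction to a smaller concentric ball $U'\supset\{f^{n+1}(x),f^{n+1}(y)\}$ is a translation sending $f^{n+1}(x)$ to $x$. Setting $g=\varphi\circ f$ would then yield $g^{n+1}(x)=x$, so $q:=x$ is a periodic point of $g$; the hyperbolicity of $q$ with the stable index of $p_{x}$ follows from the partial hyperbolicity of the ambient splitting and the fact that the orbit of $q$ under $g$ shadows the $f$-orbit of $x$.

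The radius $\widehat{D}$ must be chosen subject to two competing constraints. On the one hand $U$ must not meet the rest of the finite orbit segments $\{f^j(x),f^j(y):1\leq j\leq n\}$, which would corrupt the shadowing argument and the homoclinic relation of $p_x$ and $p_y$ with $p$. Using lemma~\ref{l.localization2} and assumption~(***), all such iterates lie at a uniformly positive distance from $x$, so a bound of the form $\widehat{D}\leq \delta_0\mu_c^{c\,n/N}$ for appropriate constants $c,\delta_0$ suffices. On the other hand, since the translation magnitude is of order $\mu_c^N$, lemma~\ref{l.perturbation} gives a $C^{1+\al}$-bound of order $\mu_c^N/\widehat{D}^{1+\al}$. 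Taking $\widehat{D}$ close to $\mu_c^{N/(1+\al)}$ and using the fast return inequality $n\leq KN$, a direct calculation shows that both constraints are compatible and that the $C^{1+\al}$-size of $\varphi$ tends to $0$ as $N\to\infty$, precisely when $\al<1/(K-1)$.

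Finally I would check that the continuation $p_{x,g}$ (resp.\ $p_{y,g}$) exists and is homoclinically related to $p_g$ and to $q$: since the support $U$ is disjoint from the orbits of $p_x,p_y$, the continuations coincide with $p_x,p_y$ themselves, and the heteroclinic intersections $W^u(p_x)\cap W^s(p)$ and similarly for $p_y$ are preserved. The shadowing between $\{g^k(q)\}$ and $\{f^k(x)\}$ gives $q\in H(p_{x,g})$. For the strong stable intersection, $f^{n+1}(y)$ is mapped by $\varphi$ to a point $y'$ lying at distance $O(\sqrt{\lambda_s}^{\,n-N}\mu_c^N)$ from $q=x$ along the image of the local strong stable plaque of $x$ for $f$; using the H\"older continuity of the strong stable bundle for $g$ (lemma~\ref{l.bundle}, which requires $\al<\al'_s$), this plaque is $C^0$-close to $W^{ss}_g(q)$, and the fake-holonomy H\"older estimate (lemma~\ref{l.holonomy}) lets one adjust $y'$ along $W^u(p_{y,g})$ so that the adjusted point actually lies on $W^{ss}_g(q)\setminus\{q\}$.

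The main obstacle will be the simultaneous reconciliation of the two bounds on $\widehat{D}$ in the size estimate: the upper bound coming from avoiding collisions with $\{f^j(x),f^j(y)\}$, and the lower bound coming from making lemma~\ref{l.perturbation} applicable with a small $C^{1+\al}$-norm. The condition $\al<1/(K-1)$ appears exactly at this step, and it is there that a careful inspection of the geometry of the orbit near $W^s_{loc}(p_x)$, via lemmas~\ref{l.localization1}--\ref{l.localization3}, is required.
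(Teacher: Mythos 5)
Your overall strategy — perturb near $x$ using lemma~\ref{l.perturbation}, create a periodic point whose period is the fast return time, and argue that its strong stable manifold for $g$ meets $W^u(p_{y,g})$ — is in the right spirit, but two steps break down.

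\textbf{The separation of returns is missing.} You assert that ``all such iterates lie at a uniformly positive distance from $x$,'' but this is false: the intermediate returns $f^m(B_m(x))$, $k_0<m<n$, approach $x$ as $n\to\infty$. What is actually true — and what the paper proves in a preliminary step (lemma~\ref{l.ab}) — is that one can find $a>b$ in $(K^{-1},1)$ and infinitely many $n$ such that $f^n(B_n(x))$ enters $B(x,\mu_c^{an})$ while every earlier return $f^m(B_m(x))$, $k_0<m<n$, avoids the much larger ball $B(x,\mu_c^{bn})$; moreover $a/b$ can be made arbitrarily close to $K/(K-1)$. This separation is exactly what lets one support the perturbation on $B(x,\mu_c^{bn})$, with displacement of order $\mu_c^{an}$, and deduce from lemma~\ref{l.perturbation} that the $C^{1+\al}$-size is of order $\mu_c^{n(a-b(1+\al))}\to 0$ precisely when $1+\al<a/b$, i.e.\ when $\al<1/(K-1)$. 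Your choice of exponents ($\widehat D\approx\mu_c^{N/(1+\al)}$, vague constants $c,\delta_0$) has no such justification; without lemma~\ref{l.ab} the two constraints on $\widehat D$ cannot be reconciled, and this is where the threshold $1/(K-1)$ actually enters.

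\textbf{The strong stable intersection is not delivered by making $x$ periodic.} You set $q:=x$ and then try to ``adjust'' the image $y'=\varphi(f^{n+1}(y))$ along $W^u(p_{y,g})$ so that it lands on $W^{ss}_g(q)$. This last step does not work: there is no reason the adjustment stays in $W^u(p_{y,g})$, and nothing forces a genuine intersection. The paper handles this cleanly by a different choice of $q$: take $D\subset W^{ss}_{loc}(x)$ to be the smallest disc containing $y$, note that $f^n(D)\subset f^n(B_n(x))\subset B(x,\mu_c^{an})$, and choose $\varphi$ supported in $B(x,\mu_c^{bn})$ so that $\varphi(f^n(D))\subset D$. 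Since the support of $\varphi$ avoids $D$ and its first $n-1$ iterates, the return map $g^n|_D=\varphi\circ f^n|_D$ is a contraction of $D$ into itself; its fixed point $q$ is a periodic point of $g$ whose strong stable manifold automatically contains $D$, hence contains $y\in W^u(p_{y,g})$. With that, the homoclinic relation between $q$ and $p_{x,g},p_{y,g}$ follows easily because the perturbation support is disjoint from the relevant orbits. You should rebuild your argument around this choice of $D$ and $q$ rather than around making $x$ itself periodic.
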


\noindent If the second option holds, we prove the following.

\begin{lemma}\label{nontransversal2-bis}
Assume that~(\ref{opttr})
holds for some $K>\max(K_1,K_2)$,
and that $1+\al<\frac{K}{\max(K_1,K_2)}$.
Then, there exists a diffeomorphism $\varphi\in \diff^{1+\al}(M)$ that is $C^{1+\alpha}$-close to the identity such that $g=\varphi\circ f$ satisfies the second option of
proposition \ref{p.unstable-nt}: if one fixes an orientation on $E^c_{y}$,
there exist two such diffeomorphism $g^+,g^-$ such that
$x_{g^+}$ (resp. $x_{g^-}$) belongs to $\cW^{cs,+}_{g^+,y_{g^+}}$
(resp. $\cW^{cs,-}_{g^-,y_{g^-}}$).

\end{lemma}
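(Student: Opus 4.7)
My plan is to construct, using lemma~\ref{l.perturbation}, a $C^{1+\alpha}$-small perturbation $\varphi$ supported in the $\mu_c^N$-neighborhood $V$ of $f^{-1}(x)$ which acts as a small translation in a chosen unit direction $v\in E^c_{f^{-1}(x)}$, and then set $g=\varphi\circ f$. The slow-returns condition $n(N)>KN$ ensures that the forward orbit of $x$ stays outside $V$ for $0\le k<n(N)$, so the perturbation activates only at the return time $n(N)$ and displaces the returning orbit near $x$ by approximately $\epsilon\,Df\cdot v$ in a central direction, where $\epsilon$ is the translation amplitude. Meanwhile, the backward orbit of $y$ stays well away from $V$ since $y\in W^{ss}_{loc}(x)$ expands backward only by $\lambda_s^{-1}$, keeping $f^{-k}(y)$ at distance $\gg\mu_c^N$ from $V$ for all $k$ and large $N$. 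The sign of $\epsilon$ will control which side of $y_g$ the continuation $x_g$ lies on, producing the two diffeomorphisms $g^+$ and $g^-$.

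Concretely, in a local chart adapted to $E^s\oplus E^c\oplus E^u$ at $f^{-1}(x)$, I would apply lemma~\ref{l.perturbation} with $v_0\equiv\epsilon v$ on $B(0,\widehat D/3)$, $\widehat D=\mu_c^N$, and $\epsilon=\mu_c^{N(1+\alpha)+\delta}$ for a small $\delta>0$ depending on the excess $K-(1+\alpha)\max(K_1,K_2)>0$. This produces $\varphi\in\diff^{1+\alpha}(M)$ supported in $V$, equal to translation by $\epsilon v$ on $B(f^{-1}(x),\widehat D/3)$, with $C^{1+\alpha}$-size $O(\mu_c^\delta)$ going to zero as $N\to\infty$. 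By proposition~\ref{p.continuation}, applicable since one may assume the $C^{1+\alpha}$-neighborhood $\cV$ contains no diffeomorphism whose class $H(p_h)$ has a strong homoclinic intersection (otherwise the lemma holds trivially), the continuations $x_g$ and $y_g$ are well-defined throughout the family. Using the trapping/chain-hyperbolic framework of section~\ref{s.weak-hyperbolicity} together with the domination $E^s\oplus E^c\oplus E^u$, the translation at $f^{-1}(x)$ transfers through the iterate $f^{n(N)+1}$ to a displacement of $x_g$ from $x$ of magnitude $\sim\epsilon\mu_c$ in the chosen central direction, while $y_g$ stays $o(\epsilon\mu_c)$-close to $y$ because $W^u(p_y)$ does not cross $V$.

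The two auxiliary constants $K_1$ and $K_2$ bound the error terms that could obscure the displacement. The condition $K>(1+\alpha)K_1$, equivalent to $\lambda^{n\chi}>\mu_c^{-N(1+\alpha)}$, uses the dominated splitting to control the $C^1$-drift of $W^u(p_{x,g})$ across $V$, guaranteeing the faithful transfer of the translation to $x_g$ with error $o(\epsilon\mu_c)$. The condition $K>(1+\alpha)K_2$, equivalent to $\lambda_u^{n\alpha_s}>\mu_c^{-N(1+\alpha)(1-\alpha_s)}$, ensures via lemma~\ref{l.holonomy} that the $\alpha_s$-H\"older error in the strong stable fake holonomy comparison of the perturbed unstable leaf through $x$ with the unperturbed one through $y$ is $O(\widehat D^{\alpha_s})$, much smaller than $\epsilon\mu_c$; this makes the sign of the center-direction relative displacement of $x_g$ and $y_g$ unambiguous. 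Running the construction with $+v$ and $-v$ produces $g^+$ and $g^-$ with $x_{g^\pm}$ in $\cW^{cs,\pm}_{g^\pm,y_{g^\pm}}$. The main obstacle will be carrying out these H\"older and dominated-splitting estimates uniformly along the very long transient $n(N)$ and verifying that the continuation framework of proposition~\ref{p.continuation} is preserved along the deformation, so that $x_{g^\pm}$ is unambiguously identified on the correct side of $y_{g^\pm}$.
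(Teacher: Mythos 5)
Your overall architecture is the paper's: a translation of amplitude $r\approx \mu_c^{N(1+\alpha)+\delta}$ in the central direction, supported at scale $\widehat r\approx\mu_c^{N}$ around $f^{-1}(x)$, realized as a $C^{1+\alpha}$-small map via lemma~\ref{l.perturbation} precisely because $1+\alpha<K/\max(K_1,K_2)$ leaves room between $|\log r|>(1+\alpha)|\log\widehat r|$ and the error bounds; and you have correctly decoded the numerical content of both conditions ($\lambda^{\chi n}>\mu_c^{-N(1+\alpha)}$ and $\lambda_u^{\alpha_s n}>\mu_c^{-N(1+\alpha)(1-\alpha_s)}$). However, your description of the mechanism contains errors that would derail a literal execution. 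First, the displacement of $x_g$ is immediate, not ``activated at the return time $n(N)$'' nor ``transferred through $f^{n(N)+1}$'': the perturbed map sends $f^{-1}(x)$ to a point at central distance $r$ from $x$, the backward orbit of $x$ avoids the (tiny) support, and a bounded-time cone/domination argument (with constants independent of $N$) shows $x_g$ lies within $(\lambda')^{-k}r\ll r$ of $g(f^{-1}(x))$. The long return time plays no role in moving $x_g$; it is needed entirely on the $y$ side.

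That $y$ side is where your sketch has a genuine gap. The assertion that ``$W^u(p_y)$ does not cross $V$'' is unjustified, and backward expansion along $W^{ss}_{loc}(x)$ at rate $\lambda_s^{-1}$ is irrelevant: what must be shown is that the \emph{forward} orbits of $y$ and of its continuation $y_g$ avoid the perturbation region for the first $n(N)$ iterates. For $y$ this follows because $y$ lies in every dynamical ball $B_n(x)$ and $n(N)$ is by definition the first time $f^n(B_n(x))$ meets $V$; for $y_g$ one needs a bootstrap (unstable distances grow by $\lambda_u$ while each step of the perturbation contributes at most $r$, forcing $d(f^m(y),g^m(y_g))\le C r$ at time $n(N)$ and hence $d(y,y_g)\le \lambda_u^{-n(N)}Cr$). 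Only then do the two error terms appear, and you have swapped their roles: the condition involving $K_2$ bounds the $\alpha_s$-H\"older holonomy error $[\lambda_u^{-n(N)}Cr]^{\alpha_s}$ coming from projecting $y_g$ versus $y$ along strong stable leaves onto the center-unstable disc $\cD$ at $x$, while the condition involving $K_1$ bounds the discrepancy $\lambda^{-\chi n(N)}$ between the true strong-stable projection for $g$ and the fake holonomy for $f$, using that the first $\chi n(N)$ iterates of $y_g$, $\Pi^{ss}_g(y_g)$ and $\widehat{\Pi^{ss}_f}(y_g)$ coincide for $f$ and $g$. Neither has to do with a ``drift of $W^u(p_{x,g})$ across $V$''. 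With these corrections the comparison $d(\Pi^{ss}_g(y_g),x)<\varepsilon r\ll r= d(g(f^{-1}(x)),x)$ yields the separation and the choice of side, as in the paper.
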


\noindent Both lemmas and the proposition~\ref{p.generalized-strong-connectionCr} conclude the proof of proposition~\ref{p.unstable-nt}.

\medskip

Note that for proving proposition~\ref{p.unstable-nt}
one can choose $K$ independently from $\mu_c$, for instance any
$$K=\|Df_0\|\max\left(\frac 3{\log\lambda_s},\frac {2\;\chi} {\log \lambda}, \frac{2\;(1-\alpha_s)}{\alpha_s\log \lambda_u}\right).$$
In this way we obtain a bound
$$\al_0=\inf\left(\bar \al_0, \frac 1 {K-1},\al'_s, \frac{K}{\max(K_1,K_2)}\right)$$
for the smoothness exponent $\alpha$ in proposition~\ref{p.unstable-nt},
which only depends on $f_0$ as announced.

\subsection{Fast returns: proof of lemma \ref{nontransversal1}}\label{nont-proof}

Let us assume that condition~(\ref{no-opttr}) holds for some large values of $N$
and some $K>0$ such that $\al<\inf\left(\frac{1}{K-1},\al'\right)$.
We also assume that (***) holds so that the lemma~\ref{l.localization2} applies.

\begin{lemma}\label{l.ab}
There are $a>b$ in $(K^{-1},1)$ such that some arbitrarily large $N$ and $n=n(N)$ satisfy:

\begin{enumerate}
\item $f^n(B_n(x))\cap B(x,\mu_c^{a\,n})\neq \emptyset$ and
\item $f^m(B_m(x))\cap B(x,\mu_c^{b\,n})= \emptyset$ for any $k_0<m<n.$
\end{enumerate}
Moreover $\frac a b$ can be chosen arbitrarily close to $\frac K {K-1}$.

\end{lemma}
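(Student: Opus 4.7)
The lemma is a purely combinatorial statement about the return times $n(N)$ under the fast return hypothesis. I would first reformulate it in exponent form: for each $n \geq 1$, set $T_n := \max\{N \geq 0 : f^n(B_n(x)) \cap B(x,\mu_c^N) \neq \emptyset\}$. Then $n = n(N)$ is equivalent to $T_n \geq N > T_m$ for every $m < n$; in particular such an $n$ is a \emph{record}, meaning $T_n > T_m$ for all $m < n$. The fast return hypothesis~\eqref{no-opttr} translates to: infinitely many records $n$ satisfy $\alpha_n := T_n/n \geq 1/K$. Conditions~1 and~2 for a pair $(N, n=n(N))$ become $T_n \geq an$ and $T_{\mathrm{prev}(n)} < bn$ respectively, where $\mathrm{prev}(n)$ denotes the largest record strictly smaller than $n$.

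Given a fixed record $n$, the pairs $(a,b)$ with $a > b > K^{-1}$ satisfying both conditions form a non-empty set if and only if $T_n/n > \max(K^{-1}, T_{\mathrm{prev}(n)}/n)$, in which case the maximum achievable ratio is bounded by
\[
\frac{a}{b} \;\leq\; \frac{T_n/n}{\max(K^{-1},\, T_{\mathrm{prev}(n)}/n)}\;=\;\frac{T_n}{\max(n/K,\, T_{\mathrm{prev}(n)})}.
\]
Thus the goal reduces to exhibiting, for any $\epsilon > 0$, infinitely many records $n$ with $T_n \geq (K/(K-1) - \epsilon)\cdot\max(n/K,\, T_{\mathrm{prev}(n)})$. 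My plan is to split into two exhaustive scenarios: (A) infinitely many records $n$ satisfy $\alpha_n \geq 1/(K-1)$; or (B) only finitely many do. In case~(A) I would take $a$ just below $1/(K-1)$ and $b$ just above $1/K$ so that $a/b$ approaches $K/(K-1)$; condition~2 is then secured by further restricting to a subsequence where $T_{\mathrm{prev}(n)} \leq n/K$, which exists by a pigeonhole on the strictly increasing values $T_{n_j}$. In case~(B) all fast return records $n$ eventually satisfy $\alpha_n \in [K^{-1},\,(K-1)^{-1})$, and I would extract consecutive records $n_{j-1} < n_j$ both in the fast return subsequence for which the jump $T_{n_j}/T_{n_{j-1}}$ is close to $K/(K-1)$.

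The main obstacle lies in the quantitative analysis of case~(B). If the ratio $T_{n_j}/T_{n_{j-1}}$ were always bounded above by $K/(K-1) - \epsilon$, the strictly increasing sequence $T_{n_j}$ would grow at most at this geometric rate, whereas the fast return inequality forces $T_{n_j} \geq n_j/K$ along a subsequence; comparing these with the upper bound $T_n \leq n\log\|Df\|/|\log\mu_c|$ supplied by the dynamics yields the contradiction. The factor $K/(K-1)$ emerges precisely as the balance point between the geometric growth of $T$ along records and the linear lower bound imposed by the fast return. Achieving $a/b$ arbitrarily close to (but not exceeding) this threshold is the content of the ``moreover'' clause, and a careful bookkeeping of the thresholds ensures $b > K^{-1}$ remains strict.
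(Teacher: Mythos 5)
Your reduction of the lemma to the ratio $T_n/\max(n/K,\,T_{\mathrm{prev}(n)})$ over records $n$ is a correct reformulation and matches the paper's bookkeeping with the sequences $(N_i,n_i)$. The gap is in your opening sentence: the lemma is \emph{not} a purely combinatorial statement about $N\mapsto n(N)$, and neither of your two cases can be closed at that level of generality. Consider the data $T_{n_j}=n_j/K$ with records $n_j=2^j$ (or $T_n=\lceil n/K\rceil$, with a record roughly every $K$-th integer): the fast-return hypothesis~\eqref{no-opttr} holds for arbitrarily large $N$ and the trivial upper bound $T_n\le Cn$ holds, yet $T_n/\max(n/K,\,T_{\mathrm{prev}(n)})=1$ at every record, and indeed no $a>K^{-1}$ with $T_n\ge an$ exists at all. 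Concretely, your case (A) pigeonhole (``restrict to a subsequence where $T_{\mathrm{prev}(n)}\le n/K$'') fails when records are dense, and your case (B) contradiction fails because $T_{n_j}$ and $n_j$ can both grow geometrically at the same rate, keeping $T_{n_j}/n_j$ pinned at $1/K$ while every jump $T_{n_j}/T_{n_{j-1}}$ stays below $K/(K-1)$; moreover even a large jump would not help, since the maximum in the denominator can be attained by $n/K$ rather than by $T_{\mathrm{prev}(n)}$.

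What the paper uses, and what your argument is missing, is a dynamical input that excludes precisely these configurations. Let $m_i$ be the last time before $n_i$ at which $f^{m}(B_{n_i}(x))$ is not contained in the neighborhood $W$ of the orbit of $p_x$. Since the backward orbit of $x$ lies in $W^u_{loc}(p_x)$, lemma~\ref{l.localization2} gives $(1+\varepsilon)(n_i-m_i)\ge N_i$: a return of depth $N_i$ must be paid for by a comparable number of iterates spent near $p_x$, where the contraction toward $W^u_{loc}(p_x)$ has rate $\mu_c$. On the other hand, since $x\notin\overline W$, every earlier deep-return time satisfies $n_j\le m_i$ for $j<i$. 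Combining, $n_j\le n_i-(1+\varepsilon)^{-1}N_i$ for all $j<i$; with $R=\limsup N_j/n_j\ge K^{-1}$ this yields $N_j\le R\,[1-(1-\varepsilon)R+\varepsilon]\,n_i$ for $j<i$ while $N_i\ge(1-\varepsilon)R\,n_i$ along a subsequence, whence $a/b\to 1/(1-R)\ge K/(K-1)$. It is this forced geometric separation of consecutive deep returns -- a consequence of condition (***) and the geometry near $p_x$, not of the combinatorics of $n(N)$ -- that produces the factor $K/(K-1)$, and any correct proof must invoke it.
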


\begin{proof}
We introduce the integers $N_i$ and $n_i=n(N_i)$ satisfying
for any $i$,
$$ N_i<N_{i+1},\;\; n_i<n_{i+1}, \text{ and }\,
\forall\, N_{i-1} < N \leq N_{i},\;\; n(N)=n_i.$$
We will prove that there are positive constants $b'<a'$ in $(K^{-1},1)$
and there is $n_i$ sufficiently large such that $N_i> a'.n_i$ and $N_j< b'.n_i$ for $0\leq j<i$. We  then choose any $b<a$ in $(b',a')$.
One can check easily that for these large $n=n_i$ the result holds:
\begin{itemize}
\item[--] We have $f^n(B_n(x))\cap B(x,\mu_c^{N_i})\neq\emptyset$ with $N_i>a'.n$, hence
$f^n(B_n(x))\cap B(x,\mu_c^{a'.n})$ is non-empty.
By lemma~\ref{l.localization3}, the diameter of $f^n(B_n(x))$ is bounded
by $\sqrt \lambda_s^{n-N_i}\mu_c^{N_i}$ which is much smaller than $\mu_c^{a'.n}$.
As a consequence $f^n(B_n(x))$ is contained in $B(x,\mu_c^{a.n})$.
\item[--] By definition of the sequence $(N_j)$, for any $m<n=n_i$, one has $f^m(B_m(x))\cap B(x,\mu_c^{N_{i-1}+1})=\emptyset$ and
$b.n>b'.n+1>N_{i-1}+1$, implying the second condition of the lemma.
\end{itemize}

\medskip

Let us now prove the existence of the constants $a'<b'$.
We denote by $m_i$ the smallest integer such that
$$\forall m_i\leq m < n_i,\;\; f^m(B_{n_i}(x))\subset W.$$
By lemma~\ref{l.localization2} if one chooses $\varepsilon>0$ small
and if $N_i$ is large enough, one has
$$(1+\varepsilon).(n_i-m_i)\geq N_i.$$
Let us define
$$R=\limsup_{j\to +\infty} \frac{N_j}{n_j}.$$
By~(\ref{no-opttr}), $R$ belongs to $[K^{-1},1]$.

For any $j$ larger than a constant $j_0$
we have $\frac{N_j}{n_j}< (1+\varepsilon)R.$
For some $i$ sufficiently large we also have $\frac{N_i}{n_i}> (1-\varepsilon)R$.
If $j<i$ we have $n_j\leq m_i\leq n_i-(1+\varepsilon)^{-1}N_i$ and so for $j_0<j<i$ we have
$$N_j \leq (1+\varepsilon)R n_j\leq (1+\varepsilon)R (n_i-(1+\varepsilon)^{-1}N_i)\leq R[1-(1-\varepsilon)R+\varepsilon] n_i.$$
Since $R$ belongs to $[K^{-1},1]$, then $[1-(1-\varepsilon)R+\varepsilon] <(1-\varepsilon)$ for $\varepsilon$ small and therefore taking $a'= (1-\varepsilon)R$ and $b'= R[1-(1-\varepsilon)R+\varepsilon] $ the result holds. To check that it also holds for $j<j_0$ it is enough to take $i$ sufficiently large.

Observe that the quantity $\frac a b$ is close to $\frac {1-\varepsilon}{1-(1-\varepsilon)R+\varepsilon}$. Since $R\geq K^{-1}$,
when $\varepsilon$ goes to $0$
the limit is larger or equal to $\frac{K}{K-1}$.

\end{proof}

\bigskip

We can now conclude the proof of lemma~\ref{nontransversal1}.
\begin{proof}[\it Proof of lemma~\ref{nontransversal1}]
We fix $a,b$ and a large integer $n$ as in lemma~\ref{l.ab}.
By assumption $\al<(K-1)^{-1}$ and $\frac a b$ can be chosen close
to $\frac K {K-1}$. One can thus ensure that $1+\al$ is smaller than $a/b$.

Let $D\subset W^{ss}_{loc}(x)$ be the smallest disc containing $y$.
By construction it is contained in the ball $B_n(x)$, hence its image
by $f^n$ is contained in $B(x,\mu_c^{an})$.
We consider a $C^{1+\al}$-diffeomorphism $\varphi$ supported in
$B(x,\mu_c^{bn})$ which sends $f^n(D)$ into $D$
and define $g=\varphi\circ f$.
By construction the support of the perturbation $g$
is disjoint from $D$ and its $n-1$ first iterates.

\begin{claim}

If $1+\al<\inf(\frac a b,\al'_s)$, by choosing $n$ large the
diffeomorphism $\varphi$ can be taken arbitrarily close to the identity
in $\diff^{1+\al}(M)$.
\end{claim}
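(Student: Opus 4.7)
The perturbation $\varphi$ will be produced by lemma~\ref{l.perturbation} applied to a carefully chosen displacement field. I work in a smooth chart around $x$ in which $E^{ss}_x$ is a coordinate subspace and $W^{ss}_{loc}(x)$ lies in the horizontal subspace, so that $D\subset W^{ss}_{loc}(x)$ sits in this subspace. By lemma~\ref{l.ab}(1), $f^n(D)\subset B(x,\mu_c^{an})$; since $f^n(D)\subset W^{ss}_{loc}(f^n(x))$ is tangent at each of its points to $E^{ss}$, it can be written as the graph of a $C^{1+\alpha}$ map $\phi\colon\Omega\to E^c_x\oplus E^u_x$ over an open set $\Omega\subset E^{ss}_x$ of diameter $\lesssim\mu_c^{an}$. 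I then define (after extending $\phi$ by zero outside a small neighborhood of $\Omega$) the vertical displacement $v_0(y^{ss},y^{cu}):=(0,-\phi(y^{ss}))$; this sends $f^n(D)$ into the horizontal slice, hence into $D$, because $D$ has radius bounded below while $v_0(f^n(D))$ has diameter of order $\mu_c^{an}$.

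\textbf{Size of $v_0$.} The $C^0$-size of $v_0$ equals $\|\phi\|_\infty$, bounded by the vertical extent of $f^n(D)$: $\sup\|v_0\|\leq C\,\mu_c^{an}$. The derivative $D\phi$ at $y^{ss}$ measures the angle between the tangent plane of $f^n(D)$ at $z=(y^{ss},\phi(y^{ss}))$ and the horizontal subspace $E^{ss}_x$; since this tangent plane is precisely $E^{ss}_z$, lemma~\ref{l.bundle} applied with exponent $\alpha<\alpha'_s$ yields
$$\|D\phi\|_{C^0}\;\leq\;C\,d(x,z)^\alpha\;\leq\;C\,\mu_c^{an\alpha}.$$
The same H\"older regularity of $z\mapsto E^{ss}_z$, at scale $\mu_c^{an}$, controls the $\alpha$-H\"older seminorm of $D\phi$. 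Hence $\|v_0\|_{C^{1+\alpha}}\to 0$ as $n\to\infty$.

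\textbf{Applying lemma~\ref{l.perturbation}.} Setting $\widehat D:=\mu_c^{bn}$, the $C^{1+\alpha}$-size of $\varphi-\id$ is controlled (via lemma~\ref{l.perturbation} and remark~\ref{r.perturbation}) by $\|v_0\|_{C^{1+\alpha}}$ and the ratio
$$\widehat D^{-(1+\alpha)}\sup_{B(0,\widehat D)}\|v_0\|\;\leq\;C\,\mu_c^{-bn(1+\alpha)}\cdot\mu_c^{an}\;=\;C\,\mu_c^{(a-b(1+\alpha))n}.$$
The hypothesis $1+\alpha<a/b$ makes the exponent $a-b(1+\alpha)$ strictly positive, so this ratio decays exponentially with $n$. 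Combined with the previous paragraph, both contributions vanish as $n\to\infty$, giving $\varphi\to\id$ in $\diff^{1+\alpha}(M)$.

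\textbf{Main obstacle.} The delicate step is the $C^{1+\alpha}$-estimate on $v_0$ itself, which hinges on the $\alpha$-H\"older regularity of the strong stable distribution, precisely the role of the hypothesis $\alpha<\alpha'_s$. The arithmetic inequality $1+\alpha<a/b$ is the quantitative form of the fast-return hypothesis~(\ref{no-opttr}): it guarantees that the displacement size $\mu_c^{an}$ required of the perturbation is small compared to the $(1+\alpha)$-th power of the support scale $\mu_c^{bn}$, which is exactly what lemma~\ref{l.perturbation} needs to produce a perturbation shrinking to the identity.
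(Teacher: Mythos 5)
Your strategy is exactly the paper's: work in a chart where $W^{ss}_{loc}(x)$ is the horizontal plane $E^{ss}_x$, take for $v_0$ the vertical displacement collapsing the graph of $W^{ss}_{loc}(f^n(x))$ onto that plane, bound $\|v_0\|$ by $\mu_c^{an}$ and $\|Dv_0\|$ by $C\mu_c^{\alpha an}$ via the H\"older regularity of $E^{ss}$ (lemma~\ref{l.bundle}, whence the hypothesis $\alpha<\alpha'_s$), and invoke lemma~\ref{l.perturbation} with cutoff radius $\widehat D=\mu_c^{bn}$, where $a>(1+\alpha)b$ makes the ratio $\widehat D^{-(1+\alpha)}\sup\|v_0\|$ decay. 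All of that is correct and is what the paper does.

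There is, however, one genuine gap: the parenthetical \emph{``after extending $\phi$ by zero outside a small neighborhood of $\Omega$''}. The set $\Omega$ has diameter $\lesssim\mu_c^{an}$ and $\phi$ has $C^0$-size $\lesssim\mu_c^{an}$ on it, so truncating $\phi$ to zero at a scale comparable to $\operatorname{diam}(\Omega)$ carries a $C^{1+\alpha}$-cost of order $\mu_c^{an}\cdot(\mu_c^{an})^{-(1+\alpha)}=\mu_c^{-\alpha an}$, which blows up; this cost is nowhere accounted for, yet your claim ``$\|v_0\|_{C^{1+\alpha}}\to0$'' and your bound $\sup_{B(0,\widehat D)}\|v_0\|\le C\mu_c^{an}$ both rely on this extension. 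The repair (which is what the paper does) is to not truncate at all before invoking lemma~\ref{l.perturbation}: define $v_0$ from the graph $\chi_0$ of the \emph{entire} local plaque $W^{ss}_{loc}(f^n(x))$, which lives over a uniform-size domain, and let lemma~\ref{l.perturbation} perform the single cutoff at scale $\widehat D=\mu_c^{bn}$. One must then replace your estimate for $\sup_{B(0,\widehat D)}\|v_0\|$ by
$$\sup_{B(0,\widehat D)}\|v_0\|\;\le\;\|v_0(0)\|+\widehat D\,\sup_{B(0,\widehat D)}\|Dv_0\|\;\le\;\mu_c^{an}+2\varepsilon\,\mu_c^{(1+\alpha)bn},$$
using $\|Dv_0(0)\|\le C\|v_0(0)\|^\alpha$ and the $\alpha$-H\"older seminorm of $Dv_0$ to control the drift over the large ball; after multiplication by $\widehat D^{-(1+\alpha)}$ this is still small precisely because $a>(1+\alpha)b$. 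So the conclusion survives, but the estimate as you wrote it is not justified for any legitimately constructed $v_0$.
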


\begin{proof}
Let us consider a $C^{1+\al}$-chart $U\to \RR^d$ of a neighborhood $U$ of $x$ such that
$x$ coincides with $0$ and $W^{ss}_{loc}(x)$ coincides with the plane
$\RR^k\times \{0\}$, where $k=\dim(E^{ss})$.
For $n$ large, the plaque $W^{ss}_{loc}(f^n(x))$ is close to $W^{ss}_{loc}(x)$
and coincides in the chart with the graph of a map $\chi_0\colon \RR^k\to \RR^{d-k}$.
We introduce the map
$$(z_1,z_2)\mapsto (0,-\chi_0(z_1))$$
which is close to $0$ in the $C^{1+\al}$ topology and satisfies $\|v_0(0)\|\leq e^{-an}$
by construction.

One can thus apply the lemma~\ref{l.perturbation} in order to build
a map $v\colon \RR^d\to \RR^{d-k}$ which
coincides with $v_0$ on the ball $B(0,e^{-an})$ and with $0$ outside $B(0,e^{-bn})$.
The map $\varphi\colon (z_1,z_2)\mapsto (z_1,z_2-v(z_1,z_2))$ is the announced diffeomorphism.
In order to prove that $\varphi$ is close to the identity in $\diff^{1+\al}(M)$,
one has to check that $e^{(1+\al)bn}\sup_{B(0,e^{-bn})}\|v_0\|$ is small.

Since $\al<\al'_s$, by lemma~\ref{l.bundle} we have
$$\|Dv_0(0)\|\leq C\|v_0(0)\|^\al\leq Ce^{-\al an}.$$
Since $v_0$ is close to the identity in $\diff^{1+\al}$,
there exists an arbitrarily small constant $\varepsilon>0$ such that
$$\sup_{B(0,e^{-bn})}\|Dv_0\|\leq \|Dv_0(0)\|+ \varepsilon e^{-\al bn}\leq 2\varepsilon e^{-\al bn}.$$
This gives
$$\sup_{B(0,e^{-bn})}\|v_0\|\leq \|v_0(0)\|+e^{-bn}\sup_{B(0,e^{-bn})}\|Dv_0\|\leq e^{-an}+2\varepsilon e^{-(1+\al)bn}.$$
Since $a>(1+\al)b$, this shows that $e^{(1+\al)bn}\sup_{B(0,e^{-bn})}\|v_0\|$ is small
when $n$ is large.

\end{proof}

\medskip

To continue with the proof of lemma~\ref{nontransversal1}, we note that
the map $\varphi\circ f^n$ is a contraction on $D$, hence
the diffeomorphism $g$ has a $n$-periodic point $q$
whose strong stable manifold contains $D$.
Since the backward orbit of $W^u_{loc}(p_y)$
is disjoint from the support of the perturbation,
the manifolds $W^{ss}(q)$ and $W^u_{loc}(p_{y,g})$
intersect.

In particular $W^{s}(q)$ and $W^u_{loc}(p_{y,g})$
have a transversal intersection.
On the other hand the orbit of $q$ has a point close to $p_x$,
hence $W^{s}(p_x)$ and $W^u(q)$ have a transversal intersection.
One deduces that $q$ is homoclinically related to the orbits
of $p_{x,g}$ and $p_{y,g}$.
This concludes the proof of lemma~\ref{nontransversal1}.
\end{proof}

\subsection{Slow returns: proof of lemma \ref{nontransversal2-bis}}\label{nont2-bis-proof}

Let us fix a center-unstable plaque $\cD$ at $x$ and for diffeomorphisms $g$ close to $f$
we consider the strong stable holonomy $\Pi^{ss}_g$ to $\cD$.
Since the map $f^\tau_x$ is linear in a neighborhood of $p_x$, one can choose $\cD$
in the linear plane corresponding to the sum of the central and unstable eigenspaces.
Observe that it contains the manifold $W^{u}_{loc}(p_x)$ for $f.$

Under condition~(\ref{opttr}), we are going to get a perturbation $g$ of $f$ such that
$\po_g(x_g)\neq \po_g(y_g)$, proving that $W^{ss}_{loc}(x_g)$ and $W^{ss}_{loc}(y_g)$ are disjoint.
Since $x_g,y_g$ belong to a same center-stable plaque $\cW^{cs}_{g,y_g}$, the projections
$\po_g(x_g),\po_g(y_g)$ are contained in a $C^1$-curve of $\cD$
that is tangent to a central cone field.
Moreover, one will be able to choose the perturbation to satisfy either $x_g\in\cW^{cs,+}_{g,y_g}$ or $x_g\in\cW^{cs,-}_{g,y_g}$.

\paragraph{Description of the perturbation.}

We recall that we have fixed a large integer $N\geq 1$ and that $V$ denotes the ball $B(x,\mu^N_c)$.

Let us fix two small constants $\widehat r=\frac 1 2 \mu_c^N$ and $r<\widehat r$ in $(0,1)$.
We perform the perturbation $g$ of $f$ in the ball $B(f^{-1}(x),\widehat r)$, in such a way that
$W^{u}_{loc}(p_x)$ is still contained in the coordinate subspace and the distance
between $x=f(f^{-1}(x))$ and $g(f^{-1}(x))$ is $r$ along the central coordinate.
This can be realized by a small perturbation of $f$ in $\diff^1(M)$ provided
$\widehat r$ and $r/\widehat r$ are large enough. Moreover one can require that the $C^0$ size of the perturbation
is equal to $r$.

Later, in item $7$, we explain how the perturbation can be adapted to be $C^{1+\al}$-small.
Note that the point $x$ can be pushed to $g(f^{-1}(x))$ along $E^c_x$ in any of the two
central directions at $x$.

To get the conclusion, we choose a small constant $\varepsilon>0$
(independent from $N$) and show that
the distances $d(\po_g(y_g), x)$ and $d(x_g, g(f^{-1}(x)))$ are smaller than $\varepsilon.r$,
which is much smaller than $d(x,g(f^{-1}(x)))$.

\bigskip

\noi {\bf 1- Estimating $d(y_g, y)$.}
Observe that $y_g$ does not necessarily coincide with $y$ since the forward orbit of $y$ may intersect the region of
perturbation. However by lemma~\ref{l.cont-unstable} the point $y_{g}$ belongs to the local unstable manifold of $p_{y,g}=p_{y,f}$
which coincides for $f$ and $g$.
We will consider the distance $dist$ along the unstable plaques (which is locally comparable in a uniform way to the distance in the ambient space). 
We also introduce a constant $C\gg\frac{1}{\la_u-1}$
independent from $N$.

\begin{lemma}\label{l.upper}

If for some positive integer $m$ the two points $f^m(y), g^m(y_g)$ belong to a same
unstable plaque, then their distance satisfies $dist(f^m(y), g^m(y_g))< C.r$.
\end{lemma}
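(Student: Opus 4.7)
The strategy is to compare the orbits $(f^k(y))_{0 \leq k \leq m}$ and $(g^k(y_g))_{0 \leq k \leq m}$ iteratively, exploiting the fact that $g$ coincides with $f$ outside the small ball $U := B(f^{-1}(x), \widehat{r})$ where the perturbation is supported. Let $t_1 < t_2 < \cdots < t_s < m$ denote the times $k$ at which $g^k(y_g) \in U$. Away from these visit times, both orbits advance under the same diffeomorphism, so discrepancies can only appear at visits, where $g(g^{t_i}(y_g))$ differs from $f(g^{t_i}(y_g))$ by an ambient displacement of norm at most $r$ (the $C^0$-size of the perturbation constructed in item 4).

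I would measure distances along the common local unstable leaf containing $f^m(y)$ and $g^m(y_g)$, and read the comparison \emph{backward} from time $m$. At each step from $k+1$ to $k$, the backward iterate contracts the unstable direction by at least $\lambda_u^{-1}$, and if $k$ is a visit time one picks up an additional additive error of size at most $r$. Iterating this down to time $0$ would yield the backward estimate
\[
\mathrm{dist}(y, y_g) \;\leq\; \lambda_u^{-m}\, \mathrm{dist}(f^m(y), g^m(y_g)) \;+\; r \sum_{i \geq 0} \lambda_u^{-i} \;\leq\; \lambda_u^{-m}\, \mathrm{dist}(f^m(y), g^m(y_g)) \;+\; \frac{\lambda_u\, r}{\lambda_u - 1}.
\]
Turning this relation around, and using that $y_g$ is the continuation of $y$ so that $\mathrm{dist}(y, y_g)$ is itself controlled by the perturbation size (via lemma~\ref{l.cont-unstable}), one obtains $\mathrm{dist}(f^m(y), g^m(y_g)) < C \cdot r$ for any $C > \frac{1}{\lambda_u - 1}$, matching the choice of $C$ fixed at the start of the subsection.

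The main obstacle is that, read naively in the forward direction, each visit contributes a $C^0$-error of size $r$ that is amplified by $\lambda_u^{m - t_i - 1}$ by the time we reach step $m$; summing these contributions gives an estimate much larger than $Cr$. The key point is that the hypothesis that $f^m(y)$ and $g^m(y_g)$ lie on a common unstable plaque forces the total accumulated drift to fit in a bounded neighborhood of the unstable leaf at time $m$, and it is precisely this two-sided constraint that converts the forward exponential growth into a convergent geometric series when the estimate is organized backward. A secondary subtlety to verify is that the initial discrepancy $\mathrm{dist}(y, y_g)$ is indeed of size $O(r)$ (again using continuity of the continuation map), so that the term $\lambda_u^{-m}\, \mathrm{dist}(f^m(y), g^m(y_g))$ contributes nothing problematic once the inequality is inverted.
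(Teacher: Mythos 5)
There is a genuine gap: the inequality you derive cannot be ``turned around'' in the direction you need. Your backward estimate has the form $d_0 \leq \lambda_u^{-m} d_m + \tfrac{\lambda_u r}{\lambda_u-1}$, where $d_k = \mathrm{dist}(f^k(y),g^k(y_g))$. This is an \emph{upper} bound on $d_0$ in terms of $d_m$; it is satisfied, for instance, with $d_0$ arbitrarily small and $d_m$ arbitrarily large, so knowing $d_0=O(r)$ gives no upper bound on $d_m$ at all. Inverting it literally yields only the lower bound $d_m \geq \lambda_u^m\bigl(d_0 - \tfrac{\lambda_u r}{\lambda_u-1}\bigr)$, which is vacuous when $d_0$ is small. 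Your closing remark that the ``two-sided constraint'' converts the forward exponential growth into a convergent series is exactly the point that is not established: all of your analysis takes place at times $0,\dots,m$, and no information at those times can rule out $d_m$ being huge, precisely because backward iteration contracts the unstable direction.

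The paper's proof uses the opposite mechanism: it iterates \emph{forward beyond time $m$} and argues by contradiction. If $d_m \geq C r$, then the expansion along the unstable plaque together with the $C^0$ bound $r$ on the perturbation gives $d_{m+1} \geq \lambda_u d_m - r \geq (\lambda_u - C^{-1}) d_m = \gamma d_m$ with $\gamma = \tfrac{\lambda_u C - 1}{C} > 1$ (this is where $C \gg \tfrac{1}{\lambda_u-1}$ enters), and inductively the two points remain on a common unstable plaque with $d_{m+k} \geq \gamma^k C r \to \infty$. This contradicts the fact that both forward orbits stay in a compact region, since $y_g$ is the continuation of $y$. So the instability of the forward dynamics is what forces $d_m < Cr$; no bookkeeping of the visits to the perturbation support between times $0$ and $m$ is needed, and none would suffice.
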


\begin{proof}

Let us assume by contradiction that the estimate does not hold.
Observe that the distance by the action of $f$ growth by a factor $\la_u$ and the $C^0$ distance between $f$ and $g$ is at most $r$,
which is much smaller than $C.r$. One deduces that the points $f^{m+1}(y), g^{m+1}(y_g)$ still belong to a same unstable plaque.
Denoting $\gamma=\frac{\lambda_uC-1}C>1$, their distance now satisfy
\begin{equation*}
\begin{split}
dist(f^{m+1}(y),g^{m+1}(y_g))&>  \la_u \; dist(f^{m}(y),g^{m}(y_g))-r\\
&>(\la_u-C^{-1}) \;dist(f^{m}(y),g^{m}(y_g)) =\ga.dist(f^{m}(y),g^{m}(y_g)).
\end{split}
\end{equation*}
Therefore after $k$ iterates the distance become larger than
$ \ga^k.C.r$ and so increasing to  infinity. This is a contradiction with the fact $y_g$ is a continuation of $y$.
\end{proof}

\begin{lemma}\label{l.compare1}
The $n(N)$ first iterates of $y$ and $y_{g}$ coincide for $f$ and for $g$.
\end{lemma}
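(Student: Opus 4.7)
The plan is to show directly that the forward orbit $y, f(y), \ldots, f^{n(N)-1}(y)$ never enters the perturbation support $B(f^{-1}(x), \widehat r)$; once this is established, the coincidence of the first $n(N)$ iterates under $f$ and $g$ is automatic, since by construction $g$ agrees with $f$ outside this ball. A brief additional argument will identify the continuation $y_g$ with $y$ itself.

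The first ingredient is the inclusion $y \in W^{ss}_{loc}(x) \subset B_n(x)$, valid for every $n$, which is exactly item~(ii) of section~\ref{ss.localization} (the stronger estimate~\eqref{i.2}). Since $B_n(x) \subset B_m(x)$ whenever $m \leq n$, this immediately gives $f^m(y) \in f^m(B_m(x))$ for every $m \geq 0$. The second ingredient is the very definition of $n = n(N)$: by minimality, $f^m(B_m(x)) \cap V = \emptyset$ whenever $m < n(N)$, where $V = B(f^{-1}(x), \mu_c^N)$. Combining these two observations with the inclusion $B(f^{-1}(x), \widehat r) \subset V$, which holds because $\widehat r = \tfrac{1}{2}\mu_c^N < \mu_c^N$, one concludes
$$ f^m(y) \notin B(f^{-1}(x), \widehat r) \quad \text{for every } 0 \leq m < n(N). $$
An immediate induction on $k$ then yields $g^k(y) = f^k(y)$ for every $0 \leq k \leq n(N)$.

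It remains to match $y_g$ with $y$. For $N$ large the orbit of $p_y$ is entirely disjoint from the tiny ball $B(f^{-1}(x), \widehat r)$, so $p_{y,g} = p_y$ with unchanged local invariant manifolds. Moreover the $f$-backward orbit of $y$, which converges to $p_y$, also misses the perturbation support for $N$ large, so $y$ sits on $W^u(p_{y,g})$ as well. Lemma~\ref{l.cont-unstable} then shows that the continuation $y_g$ of $y$ is unique and can be taken to be $y$ itself, which promotes the previous equality $g^k(y) = f^k(y)$ to the desired equality $g^k(y_g) = f^k(y)$ for $0 \leq k \leq n(N)$.

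There is no real obstacle here: the proof is essentially a direct unpacking of definitions, and the only point requiring attention is the bookkeeping of the inclusion $B(f^{-1}(x), \widehat r) \subset V$ coming from the constants chosen in the construction of the perturbation in section~\ref{nont2-bis-proof}.
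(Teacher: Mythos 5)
Your argument for the point $y$ itself is correct and is exactly the paper's first step: $y$ lies in every dynamical ball $B_m(x)$ (item (ii) of section~\ref{ss.localization}), so by minimality of $n(N)$ its first $n(N)$ iterates avoid $V\supset B(f^{-1}(x),\widehat r)$ and hence coincide for $f$ and $g$.

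The gap is in your treatment of $y_g$. You reduce it to the case of $y$ by asserting that $y_g=y$, but this is false in general, and the paper explicitly warns against it in the sentence immediately preceding the lemma: ``$y_g$ does not necessarily coincide with $y$ since the forward orbit of $y$ may intersect the region of perturbation.'' The continuation $y_g$ is not characterized by lying on $W^u(p_{y,g})$ together with having an unperturbed backward orbit; by definition~\ref{d.continuation} and lemma~\ref{l.cont-central} it is the point of $H(p_g)$ whose \emph{full} $g$-orbit $\tfrac{\varepsilon}{3}$-shadows the $f$-orbit of $y$, and it is obtained as a limit of the continuations $p_{n,g}$ of periodic points approaching $y$. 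Since the forward $f$-orbit of $y$ (and of these periodic points) may enter the perturbation support at times $\geq n(N)$, the point $y_g$ can genuinely move off $y$ along $W^u_{loc}(y)$, and nothing in lemma~\ref{l.cont-unstable} lets you ``take $y_g$ to be $y$''. Consequently the statement you actually need — that $g^m(y_g)=f^m(y_g)$ for $0\leq m\leq n(N)$ — is not addressed.

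The paper closes this gap by a contradiction argument you are missing: if $m<n(N)$ were the first time $g^m(y_g)=f^m(y_g)$ enters $B(f^{-1}(x),\widehat r)$, then $f^m(y_g)$ and $f^m(y)$ lie in a common unstable plaque and lemma~\ref{l.upper} bounds their distance by $C\,r<\widehat r=\tfrac12\mu_c^N$; combined with $d(f^m(y_g),f^{-1}(x))<\widehat r$ this forces $f^m(y)\in V$, hence $f^m(B_m(x))\cap V\neq\emptyset$, contradicting the minimality of $n(N)$. Some control of $d(f^m(y),f^m(y_g))$ of this kind is indispensable, and your proposal contains no substitute for it.
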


\begin{proof}
Since $y$ belongs to the dynamical balls $B_n(x)$,
the segment of orbit $(y,\dots,f^{n(N)}(y))$ is also a segment of orbit for $g$.
Let us consider the first integer $m\geq 1$ such that $g^m(y_g)=f^m(y_g)$ enters in the region of perturbation and let us assume by contradiction
that $m< n(N)$.

As for $y,y_g$, one knows that $f^m(y_g)$ and $f^m(y)$ belong to a same unstable plaque: by lemma~\ref{l.upper} they are at distance smaller than $C.r$.
If $r$ has been chosen small enough one has $C.r<\frac 1 2 \mu_c^N=\widehat r$.
By definition of $m$ one also has
$d(f^m(y_g),x)<\hat r=\frac 1 2 \mu_c^N$.
As a consequence $f^m(y)$ belongs to $V$, hence $m\geq n(N)$.
This contradicts our assumption.
This shows that the orbit $(y_g,\dots,g^{n(N)}(y_g))$
coincides for $f$ and for $g$.

\end{proof}

Since $y,y_g$ belong to an unstable plaque,
and since by lemma~\ref{l.compare1}
their $n(N)$ first iterates are the same by $f$ and by $g$,
the points $f^{n(N)}(y)$ and $g^{n(N)}(y_g)$ belong to
a same unstable plaque and by lemma~\ref{l.upper}, their distance
is smaller than $C.r$. For any $0\leq m\leq n(N)$ we obtain
\begin{equation}\label{l.dy}
d(g^m(y_g),f^m(y))<\la_u^{m-n(N)}C.r.
\end{equation}

\bigskip

\noi {\bf 2- Estimating $d(x_g, g(f^{-1}(x)))$.}
Arguing as in lemma~\ref{l.upper}, one shows that
for any positive integer $m$, if the two points $f^m(x), g^m(x_g)$ belong to a same
unstable plaque, then their distance satisfy $dist(f^m(x), g^m(x_g))<C r$.

Let us denote by $\lambda'>1$ a lower bound for the domination between the bundles $E^c$ and $E^u$
and consider two large constants $k\ll\ell$ (independent from $N$)
such that $\lambda_{u}^{\ell}.(\lambda')^{-k}>C$.
If $N$ has been chosen large, the $\ell$ first iterates of $x,x_g,g(f^{-1}(x))$ are the same by $f$ and by $g$.
Let us assume by contradiction that the distance $dist(g(f^{-1}(x)),x_{g})$ inside $W^u_{loc}(p_{x,g})$ is larger than
$(\lambda')^{-k}.r$. Since the distance between $x$ and $g(f^{-1}(x))$ in the central direction is
equal to $r$, one deduces that the distance from $f^{\ell}(g(f^{-1}(x)))$ to $f^{\ell}(x_{g})$ is much larger than its distance
to $f^{\ell}(x)$. In particular $f^{\ell}(x)$ and $f^{\ell}(x_{g})$ are contained in a same unstable plaque and by our choice of $k,\ell$, their distance
is larger than $C.r$, which is a contradiction. Consequently
$$dist(g(f^{-1}(x)),x_{g})<(\lambda')^{-k}.r.$$
Taking $k$ large enough, one has $d(g(f^{-1}(x),x_g)<\varepsilon. r$ as wanted.

\bigskip

\noi {\bf 3- Estimating $d(\widehat{\Pi^{ss}_f}(y_g), \Pi^{ss}_f(y))$.}
Since $y_{g}$ belongs to the unstable manifold $W^u_{loc}(y)$ for $f$, one can
introduce some fake holonomies $\widehat{\Pi^{ss}_f}(y_g), \widehat{\Pi^{ss}_f}(y)=\Pi^{ss}_f(y)$ for $f$.
By~(\ref{l.dy}) and lemma~\ref{l.holonomy}, one gets 
$$d(\widehat{\Pi^{ss}_f}(y_g), \Pi^{ss}_f(y))< d(y, y_{g})^{\al_s}<[\la_u^{-n(N)}C.r]^{\al_s}.$$
\bigskip

\noi {\bf 5- Estimating $d(\po_g(y_g), \widehat{\Pi^{ss}_f}(y_g))$.}
As before we first compare the iterates of $f$ and $g$.
\begin{lemma}\label{l.compare2}
The $\chi.n(N)$ first iterates of $y_{g}$, $\po_g(y_g)$ and $\widehat{\Pi^{ss}_f}(y_g)$ coincide for $f$ and for $g$, where $\chi=\frac{\log\lambda_u}{\log \lambda_u+\log\|Df_0^{-1}\|}.$
\end{lemma}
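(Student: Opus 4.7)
The plan is to show that, for each of the three points $y_g$, $\Pi^{ss}_g(y_g)$, $\widehat{\Pi^{ss}_f}(y_g)$, the first $\lfloor \chi\,n(N)\rfloor$ iterates stay outside the support $B(f^{-1}(x),\widehat r)$ of the perturbation, so that the orbits under $f$ and under $g$ coincide on this time interval.

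First I would reduce to estimating distances from the orbit of $y_g$. Lemma~\ref{l.compare1} already shows that $g^k(y_g)=f^k(y_g)=f^k(y)$ for $0\le k<n(N)$ and, by the definition of $n(N)$ together with item~(iii) of section~\ref{ss.localization}, that these points avoid $V=B(f^{-1}(x),\mu_c^N)$. Since the perturbation ball $B(f^{-1}(x),\widehat r)$ is a ball of radius $\widehat r=\tfrac12\mu_c^N<\mu_c^N$, it suffices to prove that, for $k\le \chi\,n(N)$,
\[
d\bigl(g^k(y_g),\,g^k(\Pi^{ss}_g(y_g))\bigr)\le \tfrac14\mu_c^N
\quad\text{and}\quad
d\bigl(f^k(y_g),\,f^k(\widehat{\Pi^{ss}_f}(y_g))\bigr)\le \tfrac14\mu_c^N.
\]

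The case of $\Pi^{ss}_g(y_g)$ is the easier one: this point lies on the $g$-strong-stable manifold of $y_g$, so $d(g^k(y_g),g^k(\Pi^{ss}_g(y_g)))\le C\lambda_s^k\,d(y_g,\Pi^{ss}_g(y_g))$, and the initial distance is bounded by a uniform constant (the plaque size). Thus this distance is already much smaller than $\tfrac14\mu_c^N$ after a uniformly bounded number of iterates, and remains so for all times up to $n(N)$. The real work is the case of the fake holonomy $\widehat{\Pi^{ss}_f}(y_g)$, which is not on the exact strong-stable leaf but only in a plaque tangent to the strong-stable cone $\cC^s$. Here the separation time $N_*=N(y_g,\widehat{\Pi^{ss}_f}(y_g))$ controls how long the point remains $\delta$-shadowed by the orbit of $y_g$; up to time $N_*$ the distance is bounded by $r_0$, and a standard estimate in the backward-invariant cone gives
\[
d\bigl(f^k(y_g),\,f^k(\widehat{\Pi^{ss}_f}(y_g))\bigr)\le \|Df^{-1}\|^{\,N_*-k}\,\delta.
\]
Combined with the lower bound $d(f^{N_*}(y_g),f^{N_*}(\widehat{\Pi^{ss}_f}(y_g)))\ge \delta$ and the fact that along the unstable direction the separation grows at rate at least $\lambda_u$ during those iterates during which both orbits remain close to the orbit of $x$, one obtains $N_*\ge n(N)\cdot\log\lambda_u/(\log\lambda_u+\log\|Df^{-1}\|)=\chi\,n(N)$ (after taking $N$ large enough so that the initial distance $d(y_g,\widehat{\Pi^{ss}_f}(y_g))$ is small, which is ensured by the bound~\eqref{l.dy} already established in step~1).

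Putting these bounds together, for $0\le k\le \chi\,n(N)$ each of the three orbits under $f$ stays within $\tfrac14\mu_c^N$ of $g^k(y_g)$, which itself is at distance $\ge \mu_c^N$ from $f^{-1}(x)$; hence every iterate remains outside $B(f^{-1}(x),\widehat r)$, where $f$ and $g$ coincide. The main obstacle is the precise balancing in the second estimate: one must leverage the backward invariance of $\cC^s$ to avoid losing a factor $\|Df\|^k$ under forward iteration, and simultaneously use the unstable expansion along $W^u_{loc}(y)$ to guarantee that the separation time $N_*$ is no shorter than the quantitative lower bound $\chi\,n(N)$.
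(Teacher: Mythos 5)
Your overall mechanism (certify that the three orbits avoid the support $B(f^{-1}(x),\widehat r)$ of the perturbation, so that the $f$- and $g$-orbits agree up to time $\chi\, n(N)$) is the same as the paper's, but your quantitative criterion for avoidance has a genuine gap at the initial iterates. You reduce everything to the bound $d(f^k(z),f^k(y_g))\le \tfrac14\mu_c^N$ for $z=\Pi^{ss}_g(y_g)$ or $\widehat{\Pi^{ss}_f}(y_g)$ and \emph{all} $0\le k\le\chi\, n(N)$. At $k=0$, however, $d(y_g,\Pi^{ss}_g(y_g))$ is essentially $d(y,x)$, a constant independent of $N$: the projection lands on the centre-unstable plaque $\cD$ through $x$, and $y\in W^{ss}_{loc}(x)\setminus\{x\}$ sits at a definite distance from $x$. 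The strong-stable contraction $\lambda_s^k$ brings this below $\tfrac14\mu_c^N$ only after $k\gtrsim N\,|\log\mu_c|/|\log\lambda_s|$ iterates --- linear in $N$, not ``a uniformly bounded number'' as you claim --- and during that initial window your argument gives no control, even though $f^k(\widehat{\Pi^{ss}_f}(y_g))$ shadows $f^k(x)$ and may a priori return near $B(f^{-1}(x),\widehat r)$; that recurrence is exactly what $n(N)$ is designed to measure. Your estimate $d(f^k(y_g),f^k(\widehat{\Pi^{ss}_f}(y_g)))\le\|Df^{-1}\|^{N_*-k}\delta$ is also oriented the wrong way: since $\|Df^{-1}\|>1$ it blows up for small $k$, whereas a displacement tangent to $\cC^s$ contracts \emph{forward}, giving $\lambda_s^k\, d(y_g,\widehat{\Pi^{ss}_f}(y_g))$ with the same constant-order initial term.

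The fix is the one the paper uses: measure proximity to the orbit of $x$, not of $y_g$, in the metric of the dynamical balls $B_m(x)$, whose radius $\rho^{m-k_0}\prod_{\ell=0}^{m-1}\|Df_{|E^s}(f^\ell(x))\|$ is of constant order at $m=0$ and shrinks at exactly the strong-stable contraction rate along the orbit (times the safety factor $\rho^m$, $\rho>1$). One first checks, via~(\ref{l.dy}) and the definition of $\chi$ --- this is where $\chi$ genuinely enters, as the largest time up to which $\lambda_u^{m-n(N)}Cr\le\|Df_0^{-1}\|^{-m}$ --- that $y_g\in B_{\chi n(N)}(x)$; one then shows by induction on $m$ that the two projections remain in $B_m(x)$, because their distance to $f^m(y_g)$ contracts by the factor $e^{\eta}\|Df_{|E^s}(f^{m-1}(x))\|$ at each step, i.e.\ at the same rate at which the ball shrinks. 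Membership in $B_m(x)$ for $m\le\chi\, n(N)<n(N)$ is what guarantees that the orbit avoids $V$ (hence the perturbation support), by the minimality in the definition of $n(N)$; your triangle inequality against the orbit of $y_g$ cannot substitute for this during the first $O(N)$ iterates.
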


\begin{proof}
By lemma~\ref{l.compare1} we already know that
the $n(N)$ first iterates of $y_g$ under $f$ and $g$ coincide.
Since $\chi\in (0,1)$ and from the estimate~(\ref{l.dy}),
the points $y$ and $y_g$ do not separate by $f$ during the time $\chi.n(N)$
and by definition of the fake holonomies, the $\chi.n(N)$ first iterates of the points $\widehat{\Pi^{ss}_f}(y_g)$ and $y_g$ remain in a same strong stable plaque.

From~(\ref{l.dy}) and the definition of $\chi$,
we also have that for $0\leq m\leq \chi.n(N)$,

\begin{equation}\label{e.sep}
d(f^{m}(y_g),f^{m}(y))<\lambda_u^{-n(N)+m}.C.r<\|Df_0^{-1}\|^{-m}<\frac 1 3 \rho^{m-k_0}.
\prod^{m-1}_{\ell=0}\|Df_{|E^s}(f^\ell(x))\|.
\end{equation}
With~(\ref{i.2}), this shows that $y_g$ belongs to the dynamical ball $B_{\chi.n(N)}(x)$.

We will prove by induction on $m\leq \chi.n(N)$ that
$\po_g(y_g)$ and $\widehat{\Pi^{ss}_f}(y_g)$ by $f$ also belong to the dynamical ball
$B_m(x)$. This will imply that their $m^\text{th}$ iterates by $f$ and $g$ coincide
and conclude the proof of the lemma.

Let us choose $\eta>0$ small and $m_0\geq 0$ large.
If $N$ has been chosen large enough, the point $y_g$ is close to $y$
and the points $\po_g(y_g)$ and $\widehat{\Pi^{ss}_f}(y_g)$ are close to $x$;
as a consequence, the three points belong to the dynamical balls $B_m(x)$
with $0\leq m\leq m_0$. When $m$ is larger than $m_0$, the diameter of
$f^{m-1}(B_{m-1}(x))$ is small, hence
$$d(f^{m}(\Pi^{ss}_g(y_g)),f^{m}(y_g))
\leq e^\eta.\|Df_{|E^s}(f^{m-1}(x))\|.d(f^{m-1}(\Pi^{ss}_g(y_g)),f^{m-1}(y_g)),$$
$$d(f^{m}(\widehat{\Pi^{ss}_f}(y_g)),f^{m}(y_g))
\leq e^\eta.\|Df_{|E^s}(f^{m-1}(x))\|. 
d(f^{m-1}(\widehat{\Pi^{ss}_f}(y_g)),f^{m-1}(y_g)).$$
With~(\ref{e.sep}), (\ref{i.2}), this gives the required estimate and gives the conclusion.
\end{proof}

Since the points $\po_g(y_g), \widehat{\Pi^{ss}_f}(y_g)$
belong to a same center-unstable plaque and since their $\chi.n(N)$ first iterates by $f$ remain close,
one deduces that for any $0\leq m\leq \chi.n(N)$, the points
$f^m(\po_g(y_g))$ and $f^m(\widehat{\Pi^{ss}_f}(y_g))$ are still contained in a center-unstable plaque,
whereas the pairs of point $f^m(\po_g(y_g))$, $f^m(y_g)$ and $f^m(\widehat{\Pi^{ss}_f}(y_g))$, $f^m(y_g)$
are contained in strong-stable plaques.
This shows that
$$d(\po_g(y_g), \widehat{\Pi^{ss}_f}(y_g))<\lambda^{-\chi.n(N)},$$
where $\lambda>1$ is the lower bound for the domination between the bundles $E^{s}$
and $E^c\oplus E^u$.

\bigskip

\noi {\bf 6-  Estimating $d(\po_g(y_g), x)$.}
From the estimates we obtained, we get
$$dist(\po_g(y_g), x)< d(\po_g(y_g), \widehat{\Pi^{ss}_f}(y_g)) + d(\widehat{\Pi^{ss}_f}(y_g), \Pi^{ss}_f(y))
< \lambda^{-\chi.n(N)}+[\lambda_u^{-n(N)}C.r]^{\alpha_s}.$$
In order to conclude, the perturbation should thus satisfies:
$$ \lambda^{-\chi.n(N)}+[\lambda_u^{-n(N)}C.r]^{\alpha_s}<\varepsilon.r.$$

Since $\chi,\alpha_s, C, \varepsilon$ are constants independent from $N$,
this inequality holds if $N$ large enough and the following are satisfied:
$$\alpha_s(n(N)\log \lambda_u+|\log r|)>|\log r|+c,$$
$$n(N)\log\lambda>|\log r|+c,$$
where $c>0$ is independent from $N$.

From the definition of $\widehat r$ and since $n(N)> K.N$, one gets the following condition
\begin{equation}\label{e.est1}
|\log r|< B.|\log \widehat r|-c,
\end{equation}
where
$$B= \inf\left(\chi\log\lambda,\frac{\alpha_s}{1-\alpha_s}\log\lambda_u\right)\frac{K}{|\log\mu_c|}.$$
Note that by our choice of $K$, the factor $B$
is larger than $1$.

\bigskip

\noi {\bf 7- Realization of the $C^{1+\al}$ perturbation.}
By lemma~\ref{l.perturbation},
in order to be able to realize a $C^{1+\alpha}$ perturbation
supported on a ball of radius $\widehat r$ such that $d(g(f^{-1}(x)),x)=r$,
one has to check that for some $A>\al$
one can choose $r,\widehat r$ arbitrarily small satisfying
\begin{equation}\label{e.perturb}
|\log r|>(1+A)|\log \widehat r|.
\end{equation}
Note that this also implies the estimate
$C.r<\widehat r=\frac 1 2 \mu_c^N$ that we used in paragraph 1.

By our choice of $K$, both conditions~(\ref{e.est1}) and~(\ref{e.perturb}) can be realized
simultaneously provided $1+\alpha$ is smaller than $B$.

\section{Structure in the center-stable leaves}\label{s.2D-central}

In this section we prove theorem~\ref{t.tot-discontinuity}
on the geometry of chain-hyperbolic classes.
It is used in the proof of theorems~\ref{t.extremal} and~\ref{t.2D-central}.
As a consequence (see proposition~\ref{p.box}),
for some chain-hyperbolic classes, one can
replace the plaques $\cW^{cs}_x$ by submanifolds $V_x$ whose
boundaries are disjoint from $H(p)$.

In the whole section, $H(p)$ is a chain-recurrence class with a dominated splitting $E^{cs}\oplus E^{cu}=(E^{s}\oplus E^{c}_1)\oplus E^c_2$ such that
$E^c_1,E^c_2$ are one-dimensional and $E^{cs},E^{cu}$ are thin-trapped.
We assume moreover that for each periodic point $q\in H(p)$, the set
$W^{ss}(q)\setminus \{q\}$ is disjoint from $H(p)$.

\subsection{Geometry of connected compact sets}

One can obtain connected compact sets as limit of $\varepsilon$-chains, i.e.
finite sets $\{x_0,\dots,x_m\}$ such that $d(x_i,x_{i+1})<\varepsilon$ for each $0\leq i<m$.
This idea is used to prove the following lemma.

\begin{lema}\label{l.connex}
For any $n\geq 1$, any distance on $\RR^n$ which induces the standard topology,
any closed connected set $K\subset \RR^n$, any point $x\in K$
and any $0\leq D\leq\mbox{Diam}(K)$, there exists a compact connected set $K(D)\subset K$ containing $x$ and whose diameter is equal to $D$.
\end{lema}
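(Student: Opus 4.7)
The plan is to construct $K(D)$ as a Hausdorff limit of finite $\varepsilon$-chains contained in $K$ and containing $x$, whose diameters tend to $D$, and then verify the limit has all the claimed properties. The main work is establishing connectedness of the limit.

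First I would record two standard facts. (a) \emph{In a connected metric space any two points can be joined by an $\varepsilon$-chain for every $\varepsilon>0$}: the relation of being joined by an $\varepsilon$-chain is an equivalence relation whose classes are open, so connectedness forces a single class. (b) \emph{If $(x_0,\dots,x_m)$ is an $\varepsilon$-chain and $S_j=\{x_0,\dots,x_j\}$, then $\mathrm{diam}(S_{j+1})\le \mathrm{diam}(S_j)+\varepsilon$}: this is immediate from the triangle inequality, since for $i\le j$ one has $d(x_{j+1},x_i)\le d(x_{j+1},x_j)+d(x_j,x_i)\le \varepsilon+\mathrm{diam}(S_j)$.

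Using these, for each small $\varepsilon>0$ I would pick $a,b\in K$ with $d(a,b)>D-\varepsilon$ (possible since $D\le\mathrm{diam}(K)$), concatenate $\varepsilon$-chains in $K$ from $x$ to $a$ and from $a$ to $b$ given by (a), and truncate the resulting chain at the first index where the diameter of the initial segment reaches or exceeds $D-\varepsilon$. By (b) this diameter then lies in $[D-\varepsilon,D]$, producing a finite $\varepsilon$-chain $S_\varepsilon\subset K$ containing $x$ with $\mathrm{diam}(S_\varepsilon)\in[D-\varepsilon,D]$.

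All the sets $S_\varepsilon$ lie in the compact ball $\overline{B(x,D)}$, so along a sequence $\varepsilon_n\to 0$ the compactness of the Hausdorff topology on closed subsets of this ball produces a limit $K(D):=L$. By Hausdorff convergence $L$ is closed, contains $x$, is contained in $K$ (which is closed), and satisfies $\mathrm{diam}(L)=D$. The remaining point—and the only real obstacle—is to show $L$ is connected. If not, write $L=A\sqcup B$ with $A,B$ nonempty closed and $\mathrm{dist}(A,B)=\delta>0$. For $n$ large so that $\varepsilon_n<\delta/3$ and the Hausdorff distance $d_H(S_{\varepsilon_n},L)<\delta/3$, the chain $S_{\varepsilon_n}$ must contain points within $\delta/3$ of both $A$ and $B$, yet consecutive chain points are at distance $<\varepsilon_n<\delta/3$; since the $\delta/3$-neighborhoods of $A$ and $B$ are at distance $\ge \delta/3$ apart, no consecutive pair of the chain can pass from one to the other, a contradiction. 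Hence $L$ is the desired $K(D)$.
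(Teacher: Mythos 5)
Your proof is correct and takes essentially the same route as the paper's: both produce finite $\varepsilon$-chains in $K$ through $x$ whose diameters are pinched into $[D-O(\varepsilon),D+O(\varepsilon)]$ and then extract a Hausdorff limit. The only difference is cosmetic --- the paper fattens each chain into its closed $\varepsilon$-neighborhood, a connected compact set, so that connectedness of the limit is automatic, whereas you take the limit of the bare chains and check connectedness directly by the separation argument; note that both versions tacitly use that closed balls of the chosen distance are compact when extracting the Hausdorff limit, which holds in all the settings where the lemma is applied.
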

\begin{proof}
For $\varepsilon>0$, one can choose a finite set
$X_\varepsilon=\{x_0,x_1,\dots,x_m\}\subset K$
such that
\begin{itemize}
\item[--] $x$ belongs to $X_\varepsilon$;
\item[--] for each $0\leq i< m$, the open balls $B(x_i,\varepsilon)$
and $B(x_{i+1},\varepsilon)$ intersect;
\item[--] the diameter of $X_\varepsilon$ belongs to $[D,D+2\varepsilon]$.
\end{itemize}
Let $K_\varepsilon$ be the closed $\varepsilon$-neighborhood of $X_\varepsilon$.
It is a connected compact set contained in the $\varepsilon$-neighborhood of $K$.
Up to considering an extracted sequence, $(K_\varepsilon)$
converges for the Hausdorff topology towards a compact set $K(D)$
which contains $x$, is connected and has diameter $D$ as required.
\end{proof}
\medskip

Recall that for $x\in H(p)$, the submanifold
$W^{ss}(x)$ is diffeomorphic to $\RR^{d-2}$, where $d=\dim(M)$.

\begin{lema}\label{l.converge}
Consider a sequence $(z_n)$ in $H(p)$ which converges to a point
$z$ and for each $n$ a compact connected set $C_n\subset W^{ss}(z_n) \cap H(p)$
which converges for the Hausdorff topology in $M$ towards a (compact connected) set
$C\subset W^{ss}(z) \cap H(p)$. Then the restriction of the
intrinsic distance of $W^{ss}(z_n)$ to the set
$C_n$ converges towards the intrinsic distance of $W^{ss}_z$ to $C$.
\end{lema}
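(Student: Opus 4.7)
The plan is to exploit the classical fact that the strong stable lamination over the maximal invariant set in a neighborhood of $H(p)$ is a continuous lamination whose leaves vary continuously in the $C^1$-topology on compact pieces (Hirsch–Pugh–Shub). Hence on any fixed compact piece of $W^{ss}(z)$, the nearby leaves $W^{ss}(z_n)$ admit $C^1$-graph parametrizations that converge to the identity, and the induced Riemannian metrics converge uniformly in the $C^0$-topology. Intrinsic distances along such pieces therefore converge. The only delicate issue is that the intrinsic distance in $W^{ss}(z_n)$ is a global object, while our control is only on a compact part of the leaf.

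\medskip

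\noindent\textbf{Step 1 (Setup).} Let $R$ be the intrinsic diameter of $C$ in $W^{ss}(z)$; it is finite since $C$ is compact and $W^{ss}(z)$ is a smooth complete submanifold. Fix $z^\star\in C$ and let $K\subset W^{ss}(z)$ be the closed intrinsic ball of radius $4R$ around $z^\star$; it is a compact $C^1$-submanifold with boundary of $W^{ss}(z)$. By continuity of the strong stable lamination, there exist a tubular neighborhood $U$ of $K$ in $M$ and $N\geq 1$ such that for every $n\geq N$, the connected component of $W^{ss}(z_n)\cap U$ containing the point of $W^{ss}(z_n)$ nearest to $z^\star$ is a $C^1$-graph $\Gamma_n$ over $K$, with the sequence of graphs tending to $K$ in the $C^1$-topology. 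Let $\pi_n\colon K\to \Gamma_n$ denote the corresponding graph diffeomorphism. The pullback by $\pi_n$ of the Riemannian metric of $\Gamma_n$ converges to the Riemannian metric of $K$ uniformly on $K$, so intrinsic distances within $\Gamma_n$ and within $K$ agree up to an error $\varepsilon_n\to 0$ (as a map on $K\times K$).

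\medskip

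\noindent\textbf{Step 2 (Upper bound).} Fix $x_n,y_n\in C_n$ converging, along a subsequence, to $x,y\in C$ (possible by Hausdorff convergence). Let $\gamma$ be a length-minimizing $C^1$-path in $W^{ss}(z)$ from $x$ to $y$; by the choice of $R$ it has length at most $R$ and is contained in the interior of $K$. Then $\pi_n\circ\gamma$ is a $C^1$-path in $\Gamma_n$ whose length tends to $\mathrm{Length}(\gamma)=d^{ss}_{W^{ss}(z)}(x,y)$, and whose endpoints $\pi_n(x),\pi_n(y)$ are at ambient (and hence intrinsic) distance $o(1)$ from $x_n,y_n$. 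Concatenating with short arcs connecting $x_n$ to $\pi_n(x)$ and $\pi_n(y)$ to $y_n$ inside $\Gamma_n$ gives
\[
\limsup_{n\to\infty} d^{ss}_{W^{ss}(z_n)}(x_n,y_n)\;\leq\; d^{ss}_{W^{ss}(z)}(x,y).
\]

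\medskip

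\noindent\textbf{Step 3 (Lower bound, the main point).} For $n$ large, $d^{ss}_{W^{ss}(z_n)}(x_n,y_n)\leq R+1$ by Step 2. I claim that a length-minimizing path in $W^{ss}(z_n)$ from $x_n$ to $y_n$ is entirely contained in $\Gamma_n$ once $n$ is large enough. Indeed, if such a path were to exit $\Gamma_n$ through $\partial U$, the $C^1$-closeness of $\Gamma_n$ to $K$ forces any such excursion to have length at least $4R-o(1)>R+1$, contradicting the upper bound. Hence the minimizing path is of the form $\pi_n\circ\sigma$ for some $C^1$-path $\sigma$ in $K$ joining $\pi_n^{-1}(x_n)$ to $\pi_n^{-1}(y_n)$. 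By the uniform metric convergence,
\[
d^{ss}_{W^{ss}(z_n)}(x_n,y_n)=\mathrm{Length}_{\Gamma_n}(\pi_n\circ\sigma)\;\geq\;\mathrm{Length}_K(\sigma)-\varepsilon_n\;\geq\; d^{ss}_{K}(\pi_n^{-1}(x_n),\pi_n^{-1}(y_n))-\varepsilon_n.
\]
Since $\pi_n^{-1}(x_n)\to x$ and $\pi_n^{-1}(y_n)\to y$, and since the intrinsic distance in $K$ coincides with that in $W^{ss}(z)$ for pairs in the interior (by the choice of radius $4R$), the right-hand side tends to $d^{ss}_{W^{ss}(z)}(x,y)$. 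This gives the matching lower bound.

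\medskip

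\noindent\textbf{Step 4 (Conclusion).} Combining Steps 2 and 3, every subsequential limit of $d^{ss}_{W^{ss}(z_n)}(x_n,y_n)$ equals $d^{ss}_{W^{ss}(z)}(x,y)$, which proves the convergence claimed by the lemma. The only serious obstacle is Step 3: the a priori possibility that minimizing geodesics in $W^{ss}(z_n)$ wander far outside the controlled region $\Gamma_n$. This is ruled out by the deliberately generous choice of the radius $4R$ for $K$, which makes such excursions strictly longer than the competitor path produced in Step 2.
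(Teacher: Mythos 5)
Your overall architecture is the right one and matches the paper's: use the $C^1$-continuity of the strong stable lamination to identify a large compact plaque of $W^{ss}(z_n)$ with one of $W^{ss}(z)$ by a map that is bi-Lipschitz with constant close to $1$, and then compare intrinsic distances there. Your Step~3, ruling out minimizers that shortcut outside the controlled plaque via the generous radius $4R$, is a correct and welcome explicit treatment of a point the paper leaves implicit.

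There is, however, a genuine gap at the start of Step~2, which propagates into Step~3: you assert that $\pi_n(x)$ and $x_n$ are ``at ambient (and hence intrinsic) distance $o(1)$'', and in Step~3 you take for granted that $x_n,y_n$ lie in $\Gamma_n$. Neither follows from Hausdorff convergence alone. The leaves $W^{ss}(z_n)$ are only injectively immersed and may accumulate on themselves (this is precisely the situation studied in section~\ref{s.2D-central}), so a point $x_n\in C_n$ that converges to $x$ in $M$ could a priori sit on a distant branch of $W^{ss}(z_n)$ passing near $x$, at enormous intrinsic distance from $\Gamma_n$; ambient proximity never implies intrinsic proximity here. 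This is exactly where the hypothesis that $C_n$ is \emph{connected} (and contains $z_n$, as in the paper's proof) must be used, and your proposal never invokes it. The fix is short: the compact annular region $K\setminus B^{ss}(z^\star,3R)$ is disjoint from $C$, hence at positive ambient distance $\delta$ from it; for $n$ large every point of $C_n$ is ambient-$\delta/3$-close to $C$ while $\pi_n$ moves points by less than $\delta/3$, so $C_n$ cannot meet $\pi_n\bigl(K\setminus B^{ss}(z^\star,3R)\bigr)$; since $C_n$ is connected in the leaf and contains $z_n\in\Gamma_n$, it cannot leave $\Gamma_n$ without crossing that annulus, whence $C_n\subset \pi_n\bigl(B^{ss}(z^\star,3R)\bigr)$. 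Only then does $\pi_n^{-1}(x_n)\to x$ inside the fixed compact $K$ (where ambient and intrinsic topologies do agree), and the rest of your argument goes through.
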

\begin{proof}
Let $U$ be a bounded neighborhood of $K$ inside $W^{ss}(z)$
which is diffeomorphic to $\RR^{d-2}$.

For $z_n$ close to $z$, there exists an open set $U_n\subset W^{ss}_{z_n}$,
containing $z_n$, diffeomorphic to $\RR^{d-2}$ and which is close to $U$ for the $C^ 1$-topology on immersions
of $\RR^{d-2}$. In particular, $U$ and $U_n$ are diffeomorphic by
a map whose Lipschitz constant is arbitrarily close to $1$.
Since $K_n$ is connected and contains $z_n$, it is included in $U_n$.
This gives the conclusion.
\end{proof}

\subsection{Structure in the strong stable leaves}
We are aimed first to  prove total discontinuity in the strong stable leaves.

\begin{prop}\label{ss-tot-discontinuity}
Let $f$ be a diffeomorphism and $H(p)$ be a chain-hyperbolic class 
satisfying the assumptions of theorem~\ref{t.tot-discontinuity}.
If for any periodic point $q\in H(p)$ the set $W^{ss}(q)\setminus \{q\}$ is disjoint from $H(p)$, then, for each $x\in H(p)$, the set $W^{ss}_{loc}(x)\cap H(p)$ is totally disconnected.
\end{prop}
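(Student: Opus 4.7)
I argue by contradiction: suppose there exists $x \in H(p)$ such that $W^{ss}_{loc}(x)\cap H(p)$ admits a connected component $K$ of positive intrinsic diameter. The goal is to produce a periodic point $q \in H(p)$ with $W^{ss}(q)\setminus\{q\}$ meeting $H(p)$, contradicting the standing hypothesis.

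The first step extracts, via backward recurrence, a point $z^* \in H(p)$ carrying a fixed-size connected subset of its local strong stable leaf intersected with $H(p)$. Since $E^s$ is uniformly contracted by $f$, the intrinsic diameter of $f^{-n}(K) \subset W^{ss}(f^{-n}(x))\cap H(p)$ grows geometrically. I fix a constant $D_0 > 0$ smaller than the uniform size of local strong stable manifolds; for large $n$, lemma~\ref{l.connex} applied in the intrinsic Riemannian metric of $W^{ss}(f^{-n}(x))$ yields a connected compact set $C_n \subset f^{-n}(K)$ containing $f^{-n}(x)$ of intrinsic diameter exactly $D_0$, so $C_n$ lies in $W^{ss}_{loc}(f^{-n}(x))$ with small ambient diameter. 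Passing to a subsequence using compactness of $H(p)$ and of the Hausdorff hyperspace of compact subsets of $M$, one obtains $f^{-n_k}(x) \to z^* \in H(p)$ and $C_{n_k} \to C^*$ in the Hausdorff metric. The limit $C^*$ is compact, connected (Hausdorff limit in a compact metric space of connected compacta sharing a common limit point), contained in $H(p)$ (closed), contained in $W^{ss}_{loc}(z^*)$ by $C^0$-continuity of the strong stable lamination, and by lemma~\ref{l.converge} has intrinsic diameter $D_0$.

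The second step packages this into a closed invariant structure. Define
\[
A_{D_0} := \{y \in H(p) :\ W^{ss}_{loc}(y)\cap H(p)\ \text{contains a connected compact set through}\ y\ \text{of intrinsic diameter}\geq D_0\}.
\]
The Hausdorff-limit argument above shows $A_{D_0}$ is closed in $H(p)$, and $z^* \in A_{D_0}$. Uniform contraction of $E^s$ combined with lemma~\ref{l.connex} gives the backward invariance $f^{-1}(A_{D_0})\subset A_{D_0}$, so $B := \bigcap_{n\geq 0} f^{-n}(A_{D_0})$ is a nonempty, closed, $f$-invariant subset of $A_{D_0}\subset H(p)$. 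The key observation is that once a periodic point $q \in \cP$ of period $\tau$ is located inside $A_{D_0}$, with witnessing connected subset $C_q \subset W^{ss}_{loc}(q)\cap H(p)$ through $q$, the iterate $f^{\tau}(C_q) \subset W^{ss}_{loc}(q)\cap H(p)$ is again a connected subset containing $q$, of diameter in $(0, \lambda_s^{\tau} D_0]$, producing a point of $H(p)\cap (W^{ss}(q)\setminus\{q\})$ and hence the sought contradiction.

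The main obstacle is therefore locating a periodic point of $\cP$ inside $B$. Lemma~\ref{l.contper} supplies a dense set $\cP$ of periodic points in $H(p)$, but density in $H(p)$ does not automatically force density in the closed subset $B$. My plan is to approximate a recurrent point $\zeta \in B$ by periodic orbits $q_n \in \cP$ and transfer the witnessing connected subset $C_\zeta \subset W^{ss}_{loc}(\zeta)\cap H(p)$ toward $W^{ss}_{loc}(q_n)\cap H(p)$ as follows: for each $y \in C_\zeta$, lemma~\ref{l.bracket0} applied to the transverse plaques $\cW^{cs}_{q_n}$ and $\cW^{cu}_y$ produces a point of $H(p)\cap \cW^{cs}_{q_n}$ varying continuously with $y$; the relative position of this continuous family with respect to the codimension-one strong stable leaf $W^{ss}_{loc}(q_n) \subset \cW^{cs}_{q_n}$ will be controlled by the orientation of $E^c_1$ and by the fact that for $q_n = \zeta$ the whole family coincides with $C_\zeta \subset W^{ss}_{loc}(\zeta)$. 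Passing to a limit as $q_n \to \zeta$ together with lemma~\ref{l.largemanifold} should force at least one such intersection onto $W^{ss}_{loc}(q_n) \setminus \{q_n\}$, giving the required strong homoclinic intersection.
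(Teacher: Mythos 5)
Your steps 1 and 2 are sound and reproduce the opening moves of the paper's own argument (the beginning of lemma~\ref{l.unbounded}): backward iteration together with lemmas~\ref{l.connex} and~\ref{l.converge} does produce points of $H(p)$ carrying compact connected subsets of definite intrinsic diameter $D_0$ inside their local strong stable leaves; your set $A_{D_0}$ is closed and negatively invariant; and a periodic point lying in $A_{D_0}$ would indeed contradict the hypothesis at once. The gap is entirely in step 3, and it is the heart of the matter. The center-unstable holonomy from $\cW^{cs}_\zeta$ to $\cW^{cs}_{q_n}$ does \emph{not} preserve the strong stable foliation: whether the image of a piece of $W^{ss}_{loc}(\zeta)$ stays inside a single strong stable plaque of $\cW^{cs}_{q_n}$ is precisely the ``joint integrability'' question that drives much of the paper (compare definition~\ref{definition-cases} and section~\ref{s.boundary}). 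In general the projected set is a connected compact subset of $\cW^{cs}_{q_n}\cap H(p)$ whose basepoint (the projection of $\zeta$) already lies off $W^{ss}_{loc}(q_n)$, and which may lie entirely in one component of $\cW^{cs}_{q_n}\setminus W^{ss}_{loc}(q_n)$; in that case it yields nothing. If it does meet $W^{ss}_{loc}(q_n)$, the standing hypothesis forces the intersection to be $\{q_n\}$, which is not a contradiction but merely the starting point of a further argument. Letting $q_n\to\zeta$ gives no leverage either: the holonomy tends to the identity, the projected set converges back to $C_\zeta\subset W^{ss}_{loc}(\zeta)$, and $\zeta$ is not periodic.

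There is a second, structural, reason why a compact connected set of diameter $D_0$ is not enough even if you could place it over a point $z_0$ in the stable manifold of a periodic point $q$: the forward iterates $f^n(C)$ have diameter at most $\lambda_s^n D_0\to 0$, so they shrink to a single point of the orbit of $q$ and never produce a nontrivial connected subset of $W^{ss}(q)\cap H(p)$. This is why the paper must first upgrade the compact connected sets to \emph{unbounded} closed connected subsets of strong stable leaves --- the content of the global holonomy machinery (lemmas~\ref{l.extension}, \ref{l.triple}, \ref{l.unbounded} and the delicate lemma~\ref{l.projection}, which exploits the one-dimensionality of $E^{c}_2$ through a monotone ordering argument) --- and only then land such an unbounded set on the unstable or stable manifold of a periodic point via the boundary-point analysis (lemma~\ref{p.boundary0}) and the addendum~\ref{a.extension}. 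None of this is replaced by the limiting argument you sketch, so the proposal as written does not prove the proposition.
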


At any points, one considers the plaques $\cW^{cu}_x\subset f(\cW^{cu}_{f^{-1}(x)})$.
We choose the plaques $\cW^{cs},\cW^{cu}$ with a diameter small enough so that
for each $x,y\in H(p)$ the intersection
$\cW^{cs}_x\cap f(\cW^{cu}_y)$ is transversal and contains at most one point
(which belongs to $H(p)$ by lemma~\ref{l.bracket0}).

For this proof we will endow $H(p)$ with the \emph{center-stable topology}:
two points $x,y\in H(p)$ are close if the distance $d(x,y)$ is small
and $x\in \cW^{cs}_y$ (or equivalently $y\in \cW^{cs}_x$ by lemma~\ref{l.uniqueness-coherence}).
The \emph{center-stable distance} on $H(p)$
is the distance between $x$ and $y$ inside $\cW^{cs}_x$.

Since $\cW^{cs}$ is trapped, $W^{ss}(x)\cap \cW^{cs}_x$ is contained inside $W_{loc}^{ss}(x)$ and the center-stable topology induces on $W^{ss}(x)\cap H(p)$ the intrinsic topology of $W^{ss}(x)$.

\paragraph{Local holonomy.} We fix $\rho>0$ and define the ball $B^{cs}(x)$
centered at $x\in H(p)$ of radius $\rho$ contained in $\cW^{cs}_x$.
If $\rho$ is small, for any points $x_0\in H(p)$ and $y_0,z_{0}\in \overline{\cW^{cu}_{x_0}}\cap H(p)$
the \emph{local holonomy} $\Pi^{cu}$ along the center-unstable plaques
$f(\cW^{cu}_{f^{-1}(x)})$, ${x\in \cW^{cs}(x_0)\cap H(p)}$,
is defined from $B^{cs}(z_0)\cap H(p)\subset \cW^{cs}_{z_0}$ to $\cW^{cs}_{y_0}$.

\paragraph{Global holonomy.}
We now try to extend globally the holonomy. A strong stable leaf may intersect a plaque of $\cW^{cu}$
in several points, hence the global holonomy may be multivalued.
A \emph{global holonomy along the plaques $\cW^{cu}$} is a closed connected set
$\Delta\subset H(p)\times H(p)$ (endowed with the product center-stable topologies)
such that for any $(x,y)\in \Delta$ one has
$y\in \overline{\cW^{cu}_{x}}$ and $x\in \overline{\cW^{cu}_{y}}$.
The sets $\pi_1(\Delta)$ and $\pi_2(\Delta)$ denote the projections on the first and the second factors.

\medskip

One can obtain global holonomies from connected sets contained in a strong stable leaf.
\begin{lema}\label{l.extension}
Let $\Delta_0$ be a global holonomy along the center-unstable plaques,
and $C\subset H(p)$ be a set which is closed and connected
for the center-stable topology and which contains $\pi_1(\Delta_0)$.

Then, there exists a global holonomy $\Delta$ along the center-unstable plaques
containing $\Delta_0$, such that $\pi_1(\Delta)\subset C$ and satisfying one of the following cases.
\begin{enumerate}
\item\label{i.extension1} $\pi_1(\Delta)=C$;
\item\label{i.extension2} $\Delta$ is non-compact;
\item\label{i.extension3} there exists $(x,y)\in \Delta$ such that $y\in \overline{\cW^{cu}_{x}}\setminus \cW^{cu}_x$ or
$x\in \overline{\cW^{cu}_{y}}\setminus \cW^{cu}_y$.
\end{enumerate}
\end{lema}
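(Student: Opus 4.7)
The proof proceeds by Zorn's lemma applied to the family $\cF$ of global holonomies $\Delta$ along the center-unstable plaques with $\Delta_0\subset\Delta$ and $\pi_1(\Delta)\subset C$, ordered by inclusion. For a chain $(\Delta_\alpha)$ in $\cF$, the closure $\Delta_\infty$ of $\bigcup_\alpha\Delta_\alpha$ in the product center-stable topology on $H(p)\times H(p)$ is closed and connected (closure of an increasing union of connected sets sharing $\Delta_0$), still projects into $C$ since $C$ is closed in the center-stable topology, and still satisfies $y\in\overline{\cW^{cu}_x}$ and $x\in\overline{\cW^{cu}_y}$ for all $(x,y)\in\Delta_\infty$ by continuity of the plaque family $\cW^{cu}$. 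Thus $\Delta_\infty\in\cF$ provides an upper bound, and Zorn yields a maximal element $\Delta\in\cF$.

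The rest of the argument is a trichotomy on this maximal $\Delta$. If $\Delta$ is non-compact, case (\ref{i.extension2}) holds; if some $(x,y)\in\Delta$ satisfies $y\in\overline{\cW^{cu}_x}\setminus\cW^{cu}_x$ or $x\in\overline{\cW^{cu}_y}\setminus\cW^{cu}_y$, case (\ref{i.extension3}) holds. Assume neither: then $\Delta$ is compact and every pair $(x,y)\in\Delta$ lies in the interiors $y\in\cW^{cu}_x$ and $x\in\cW^{cu}_y$. I claim $\pi_1(\Delta)=C$, which is case (\ref{i.extension1}). Since $\pi_1(\Delta)$ is nonempty (it contains $\pi_1(\Delta_0)$), compact (continuous image of a compact set) and hence closed in $C$, and $C$ is connected in the center-stable topology, it suffices to show $\pi_1(\Delta)$ is open in $C$.

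To prove openness, fix $(x,y)\in\Delta$. Because $y$ lies in the interior of $\cW^{cu}_x$, taking $x_0=z_0=x$ and $y_0=y$ in the local holonomy construction yields a continuous map $\Pi^{cu}\colon B^{cs}(x)\cap H(p)\to\cW^{cs}_y$ along the center-unstable plaques. For $B^{cs}(x)$ sufficiently small its graph $G=\{(x',\Pi^{cu}(x')):x'\in B^{cs}(x)\cap C\}$ is closed and connected, passes through $(x,y)$, and satisfies the global holonomy relation since each $\Pi^{cu}(x')$ stays in the interior of $\cW^{cu}_{x'}$ by continuity of the plaque family. Therefore $\Delta\cup G\in\cF$, and maximality of $\Delta$ forces $G\subset\Delta$, giving $B^{cs}(x)\cap C\subset\pi_1(\Delta)$. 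The delicate point I anticipate is the compatibility between the center-stable topology (governing closedness, connectedness, and the Zorn argument) and the ambient topology (implicit in the compactness dichotomy and the definition of the local holonomy); this compatibility is guaranteed by Lemma~\ref{l.uniqueness-coherence} together with the transversality of $\cW^{cs}_x$ and $f(\cW^{cu}_y)$ ensured at the start of the section.
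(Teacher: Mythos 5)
Your Zorn setup and the reduction to showing $\pi_1(\Delta)=C$ for a maximal $\Delta$ (when neither non-compactness nor a boundary intersection occurs) match the paper's proof exactly, and the use of the local holonomy $\Pi^{cu}$ to enlarge $\Delta$ is also the right mechanism. The gap is in the openness step. You need $\Delta\cup G$ to belong to the family $\cF$, but a global holonomy is by definition \emph{connected}, and $G$ is homeomorphic (via $\pi_1$) to $B^{cs}(x)\cap C$, which has no reason to be connected: $C$ is only assumed closed and connected for the center-stable topology, not locally connected, and in the applications (e.g.\ lemma~\ref{l.unbounded}, where $C$ is a closed connected subset of $H(p)$ inside a strong stable leaf obtained as a Hausdorff limit) local connectedness is genuinely unavailable. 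Maximality therefore only forces the connected component of $G$ through $(x,y)$ into $\Delta$, which is not a neighborhood of $x$ in $C$; think of $C$ a topologist's sine curve with $\pi_1(\Delta)$ equal to the limit segment. So the open--closed--connected argument does not close.

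The paper's proof avoids this by exploiting that a connected metric set is well-chained (the same idea as in lemma~\ref{l.connex}): if $\pi_1(\Delta)\neq C$, for every $\varepsilon>0$ one builds a finite $\varepsilon$-chain $(x_0,\dots,x_s)$ in $C$ starting in $\pi_1(\Delta_0)$ and ending at a point at a fixed distance $r_1$ from $\pi_1(\Delta)$; the uniform local extension property (your $\Pi^{cu}$, made uniform over the compact $\Delta$) lifts this chain step by step to pairs $(x_i,y_i)$ satisfying the holonomy relation, the lifted chain is truncated the first time it reaches a fixed distance $r$ from $\Delta$, and a Hausdorff limit as $\varepsilon\to 0$ produces a compact connected global holonomy strictly containing $\Delta$ with first projection still in $C$, contradicting maximality. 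Replacing your single-ball graph $G$ by this chain-and-limit construction is what is needed to make the argument work.
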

\begin{proof}
If $\{\Delta_n\}$ is a family of global holonomies along the center-unstable plaques
that is totally ordered by the inclusion, then the closure of the union $\overline{\cup_n \Delta_n}$
is also a global holonomy. By Zorn's lemma one deduces that there exists a global holonomy
$\Delta$ containing $\Delta_0$, satisfying $\pi_1(\Delta)\subset C$ and maximal with these properties for the inclusion. We prove by contradiction that $\Delta$ satisfies one of the properties above.
We fix a pair $(x_0,y_0)\in \Delta_0$.

If $\pi_1(\Delta)\neq C$, then there exists $r_1>0$ and for each $\varepsilon_1>0$
there exists a sequence $(x_0,\dots,x_s)$ in $C$ such that
\begin{itemize}
\item[--] for each $0 < i \leq s$, the points $x_{i-1},x_{i}$ are at distance less than $\varepsilon_1$
and $x_{i}\in B^{cs}(x_{i-1})$;
\item[--] the point $x_s$ and the set $\pi_1(\Delta)$ are at distance exactly $r_1$ inside $\cW^{cs}_{x_s}$.
\end{itemize}

If $\Delta$ does not satisfies the items~\ref{i.extension2}) or \ref{i.extension3}), then
for any $(x,y)\in H(p)\times H(p)$ close to $\Delta$
and any $x'\in H(p)$ close $x$  (for the center-stable topology),
$B^{cs}(y)$ meets $\cW^{cu}_{x'}$ at a point $y'\in H(p)$ which also satisfies
$x'\in \cW^{cu}_{y'}$.

This allows to build inductively a sequence $(y_0,\dots,y_{\ell})$ for some $0\leq \ell\leq s$
and associated to $(x_0,\dots,x_\ell)$ such that, for each $i$, the pair $(x_i,y_i)$ is at
a small distance from $(x_{i-1},y_{i-1})$ for the center-stable distance.

More precisely, there exists $r>0$ and for each $\varepsilon>0$ there exists
a sequence $(x_0,y_0),\dots,(x_\ell,y_\ell)$ such that for the product center-stable distance
on $H(p)\times H(p)$ the following holds:
\begin{itemize}
\item[--] for each $0 < i \leq \ell$, one has $x_i\in \cW^{cu}_{y_i}$ and
$y_i\in \cW^{cu}_{x_i}$;
\item[--] for each $0 < i \leq \ell$, the pairs $(x_{i-1},y_{i-1})$ and $(x_{i},y_i)$ are at distance less than $\varepsilon$;
\item[--] the pair $(x_\ell,y_\ell)$ and the set $\Delta$ are at distance exactly $r$.
\end{itemize}
When $\varepsilon$ goes to $0$ and up to consider a subsequence,
the set $\Delta\cup \{(x_0,y_0),\dots,(x_\ell,y_\ell)\}$ converges for the Hausdorff distance
towards a compact connected set $\Delta'$ which is a global holonomy, strictly contains $\Delta$
and satisfies $\pi_1(\Delta')\subset C$. This contradicts the maximality of $\Delta$ and proves the lemma.
\end{proof}
\medskip

The strong stable leaves are preserved under global holonomies along center-unstable plaques.

\begin{addendum}\label{a.extension}
In the case each set $C$ and $\pi_2(\Delta_0)$ is contained in a strong stable leaf,
one can ensure furthermore that $\pi_2(\Delta)$ is also contained in a strong stable leaf.
\end{addendum}
\begin{proof}

We repeat the proof of lemma~\ref{l.extension} requiring furthermore
that the projection $\pi_2(\Delta)$ of the global holonomies are contained in the strong stable leaf $W^{ss}(y_0)$.
Indeed if $\{\Delta_n\}$ is totally ordered family of such global holonomies, then
the closure of the union $\overline{\cup_n\Delta_n}$ projects in $W^{ss}(y_0)$ by $\pi_2$: this is due to the choice of
the center-stable topology.

Let us consider a maximal global holonomy $\Delta$ satisfying
$\pi_2(\Delta)\subset W^{ss}(y_0)$ and given by Zorn's lemma.
Assume by contradiction that $\Delta$ does not satisfies the three items of
lemma~\ref{l.extension}. In particular, it is compact and 
one may fix $(x,y)\in \Delta$ and an $n\geq 1$ such that
$f^n(\pi_2(\Delta))$ is contained in $\cW^{cs}_{f^n(y_0)}$.
One repeats the same construction as above and builds a global holonomy
$\Delta'$ that contains strictly $\Delta$. If $\pi_2(\Delta')$ is contained in $W^{ss}(y_0)$, one has contradicted the maximality of $\Delta$. One will thus assume that the set $f^n(\pi_2(\Delta'))\subset \cW^{cs}_{f^n(y_0)}$ is not contained in a strong stable leaf. Since it is connected, it contains a point $z$
such that both local components of $\cW^{cs}_z\setminus W^{ss}_{loc}(z)$ at $z$
meet $\Pi^{cu}(C)$. If one considers a hyperbolic periodic orbit $O$ homoclinically related to $p$ having a point $q_0$ close to $z$, the local holonomy $\Pi^{cu}$ along the plaques of $\cW^{cu}$ allows to project $f^n(\pi_2(\Delta'))$ on a connected compact subset of
$\cW^{cs}_{q_0}$ which meets $W^{ss}(q_0)$.
Since $W^{ss}(q_0)\setminus\{q_0\}$ is disjoint from $H(p)$,
one deduces that the projection contains $q_0$.
Consequently the unstable manifold of some point $q\in O$ meets $C$ at some point $x$.

By lemma~\ref{l.connex} and since $E^{ss}$ is uniformly contracting,
there exists $\varepsilon>0$ such that any backward iterate
$x_{-n}=f^{-n}(x)$ is contained in a connected compact set $C_{-n}\subset W^{ss}_{loc}(x_{-n})\cap H(p)$
which has a radius equal to $\varepsilon$. Since $x$ belongs to the unstable set of some point $f^k(q)$ in the orbit of $q$,
the backward iterates of $x$ and $q$ become arbitrarily close.
Let $\tau$ be the period of $q$.
One gets that the projection $\Pi^{cu}(C_{-n\tau})$ by holonomy on $\cW^{cs}_q$
converges to a compact connected set contained in $W^{ss}_{loc}(q)$
with diameter equal to $\varepsilon$.
This contradicts our assumption that $W^{ss}(q)\setminus\{q\}$ is disjoint from $H(p)$.
In all the cases we have found a contradiction and the lemma is proved.
\end{proof}

\paragraph{Triple holonomy.} The previous results on holonomies extend to
connected set of triples.

\begin{lema}\label{l.triple}
Let $\Delta$ be a global holonomy along the center-unstable plaques,
$(x_0,y_0)$ be a pair in $\Delta$ and $z_0\in H(p)$ be a point 
which belong to the connected component of $\overline{\cW^{cu}_{x_0}}\cap \overline{\cW^{cu}_{y_0}}$ bounded by $x_0$ and $y_0$.
Then there exists a set $X\subset H(p)\times H(p)\times H(p)$ containing $(x_0,y_0,z_0)$
such that
\begin{itemize}
\item[--] $X$ is closed and connected for the center-stable topology,
\item[--] for each triple $(x,y,z)\in X$ one has $(x,y)\in \Delta$ and
$z\in \overline{\cW^{cu}_{x_0}}\cap \overline{\cW^{cu}_{y_0}}$,
\item[--] one of the two following cases holds:
\begin{enumerate}
\item the set of pairs $(x,y)$ for $(x,y,z)\in X$ coincides with $\Delta$,
\item $X$ is non-compact.
\end{enumerate}
\end{itemize}
Moreover if $\pi_1(\Delta)$ and $\pi_2(\Delta)$ are contained in strong stable leaves, then the same holds for $\pi_3(X)$.
\end{lema}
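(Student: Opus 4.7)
The plan is to mimic the Zorn-type argument in the proof of Lemma~\ref{l.extension}, replacing pairs by triples and keeping track of an additional free coordinate $z$ in the arc between $x$ and $y$ on the (one-dimensional) center-unstable plaques.

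First I consider the family $\cF$ of subsets $X \subset H(p)\times H(p)\times H(p)$, endowed with the product of the center-stable topologies, which contain $(x_0,y_0,z_0)$, are closed and connected, and whose triples $(x,y,z)$ satisfy both $(x,y)\in \Delta$ and $z\in \overline{\cW^{cu}_{x}}\cap \overline{\cW^{cu}_{y}}$ in the connected component bounded by $x$ and $y$. Any totally ordered chain in $\cF$ has the closure of its union still in $\cF$ (the ``same component'' condition is preserved under Hausdorff limits), so Zorn's lemma yields a maximal element $X$.

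Second, I argue by contradiction, assuming neither case 1 nor case 2 of the conclusion holds: $X$ is compact and the projection $\pi_{12}(X):=\{(x,y):\exists z,\ (x,y,z)\in X\}$ is a closed strict subset of the connected set $\Delta$. By the same $\varepsilon$-chain construction as in Lemma~\ref{l.extension}, I can find a triple $(x_*,y_*,z_*)\in X$ with pairs of $\Delta$ arbitrarily close to $(x_*,y_*)$ and outside $\pi_{12}(X)$. For each such $(x',y')$ sufficiently close to $(x_*,y_*)$, continuity of the plaque family provides an arc of $\overline{\cW^{cu}_{x'}}\cap \overline{\cW^{cu}_{y'}}$ close to the arc at $(x_*,y_*)$, and I pick $z'$ on this new arc close to $z_*$ via the local center-unstable holonomy, using that $z_*$ lies strictly in the interior of the arc from $x_*$ to $y_*$ so the boundary case 3 of Lemma~\ref{l.extension} cannot obstruct the extension of the $z$-coordinate. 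Taking a Hausdorff limit as $\varepsilon\to 0$ of the finite sequences of triples produced in this way yields a connected compact set $X'\supsetneq X$ still in $\cF$, contradicting the maximality of $X$.

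Third, for the \emph{moreover} part, I restrict $\cF$ to those $X$ whose third projection $\pi_3(X)$ lies in the single strong stable leaf $W^{ss}(z_0)$ and repeat the argument. The passage to the closure of a totally ordered chain still produces a set in the restricted $\cF$ thanks to the choice of the center-stable topology, exactly as in Addendum~\ref{a.extension}. If the maximal such $X$ failed this leafwise property, I would follow the contradiction scheme of that addendum: iterate forward so that $f^n(\pi_3(X))$ lies in a single center-stable plaque, find a point of $f^n(\pi_3(X))$ whose two local components of $\cW^{cs}\setminus W^{ss}_{loc}$ are both met, and push this configuration by local center-unstable holonomy onto the plaque $\cW^{cs}_q$ of a nearby periodic point $q$ homoclinically related to $p$. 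A Hausdorff-limit argument (using Lemma~\ref{l.connex} to produce compact connected sets of bounded diameter inside the strong stable leaves of backward iterates) then yields a point of $H(p)$ on $W^{ss}(q)\setminus\{q\}$, contradicting the standing hypothesis of the section.

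The main obstacle will be showing the continuous variation of the arc $\overline{\cW^{cu}_{x}}\cap \overline{\cW^{cu}_{y}}$ between its endpoints $x$ and $y$ as $(x,y)$ moves in $\Delta$, in a sense strong enough to continue the third coordinate $z_*$. Because $\cW^{cu}$ is one-dimensional and trapped by $f^{-1}$, Lemma~\ref{l.uniqueness-coherence} gives local uniqueness and coherence of center-unstable plaques at a uniform scale; combined with the fact that $z_*$ sits strictly between its endpoints, this provides the needed continuation of $z$ for all nearby pairs of $\Delta$ and closes the argument.
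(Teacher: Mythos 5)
Your proposal is correct and follows essentially the same route as the paper, which proves Lemma~\ref{l.triple} by repeating the Zorn's-lemma/$\varepsilon$-chain scheme of Lemma~\ref{l.extension} and Addendum~\ref{a.extension}, observing that the boundary-escape alternative (case 3 of Lemma~\ref{l.extension}) is excluded because $z$ lies in the arc bounded by $x$ and $y$ and so its distance to the endpoints is controlled. You have merely written out in detail what the paper compresses into a one-sentence reference, and your key observations (preservation of the conditions under Hausdorff limits of chains, continuation of the $z$-coordinate by center-unstable holonomy using the trapped position of $z$, and the leafwise restriction for the \emph{moreover} part) match the intended argument.
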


\begin{proof}
The proof is the same as for lemma~\ref{l.extension} and addendum~\ref{a.extension} but the third case of lemma~\ref{l.extension} has not to be considered since
for all the triples $(x,y,z)\in X$, the point $z$ belongs to the connected component of $\overline{\cW^{cu}_{x}}\cap \overline{\cW^{cu}_{y}}$ bounded by $x$ and $y$
and its distance to $x$ and $z$ is thus controlled.
\end{proof}

\begin{remark}\label{r.triple}
If one projects the set $X$ obtained in lemma~\ref{l.triple} on any
pair of coordinates, for instance as $\pi_{1,3}(X)=\{(x,z),(x,y,z)\in X\}$,
one gets a set which is connected. Hence the closure of $\pi_{1,3}(X)$
for the center-stable topology is a global holonomy.
\end{remark}

\paragraph{Non compact holonomy.}
We now build non bounded holonomies.
\begin{lema}\label{l.unbounded}
If for some $x\in H(p)$ the set $W^{ss}(x)\cap H(p)$ is not totally disconnected,
then there exists a global holonomy $\Delta$ along the center-unstable plaques which is non-compact, non trivial (i.e. there exists $(x_0,y_0)\in \Delta$ such that $x_0\neq y_0$)
and such that both $\pi_1(\Delta)$ and $\pi_2(\Delta)$ are contained in strong stables leaves.
\end{lema}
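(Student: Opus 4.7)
By hypothesis there is a point $x_*\in H(p)$ such that the connected component $C_0$ of $W^{ss}(x_*)\cap H(p)$ containing $x_*$ has positive intrinsic diameter in $W^{ss}(x_*)$. Since $E^s$ is uniformly contracted by $f$, the map $f^{-1}$ expands the intrinsic metric of the strong-stable leaves by a uniform factor larger than $1$; hence $f^{-n}(C_0)\subset W^{ss}(f^{-n}(x_*))\cap H(p)$ is a connected compact set whose intrinsic diameter tends to infinity. Applying Lemma~\ref{l.connex} inside the leaf $W^{ss}(f^{-n}(x_*))$ (diffeomorphic to $\RR^{d-2}$), for any prescribed $R>0$ I can extract a compact connected subset $C_R\subset W^{ss}(\zeta_R)\cap H(p)$ of intrinsic diameter exactly~$R$. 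I will fix $R$ much larger than the uniform diameter of the plaques~$\cW^{cu}$.

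I then apply Lemma~\ref{l.extension} together with Addendum~\ref{a.extension}, starting from the trivial holonomy $\Delta_0=\{(a,a)\}$ with $a\in C_R$ and $C=C_R$: this produces a maximal global holonomy $\Delta\supset\Delta_0$ with $\pi_1(\Delta)\subset C_R$, $\pi_2(\Delta)$ contained in a single strong-stable leaf, and falling into one of the three alternatives of the lemma. The key observation driving the argument is that a diagonal holonomy $\{(c,c):c\in C'\}$ sitting in a single strong-stable leaf is automatically compact in the product center-stable topology, because on $W^{ss}\cap H(p)$ the center-stable topology coincides with the intrinsic topology of the leaf and $C'$ is ambient-compact. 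Consequently, whenever the lemma places us in alternative~(2), the resulting $\Delta$ is non-compact and therefore non-trivial, which is exactly what the lemma requires.

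It remains to rule out that, even after enlarging the construction, one always ends in alternative~(1) with $\Delta$ reduced to the diagonal. Alternative~(3) automatically provides a non-trivial pair $(x_0,y_0)\in\Delta$ with $y_0\in\partial\cW^{cu}_{x_0}$ (or the symmetric situation); Lemma~\ref{l.triple} combined with Remark~\ref{r.triple} and the thin-trapping property then allows a strict enlargement of $\Delta$ by extending into a neighbouring center-unstable plaque. Alternative~(1) with $\Delta$ equal to the diagonal is broken as follows: I pick a hyperbolic periodic point $q\in H(p)$ close to $C_R$ with $\cW^{cu}_q\subset W^u(q)$, supplied by Lemma~\ref{l.contper}, project $C_R$ by the local center-unstable holonomy onto a connected subset of $\cW^{cs}_q$ meeting $W^u(q)\subset H(p)$, and invoke Lemma~\ref{l.triple} to lift this projection into an off-diagonal extension of $\Delta$; the hypothesis that $W^{ss}(q)\setminus\{q\}$ is disjoint from $H(p)$ for every periodic $q$ precludes the degenerate situation in which this extension collapses back onto the diagonal. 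Letting $R$ grow and iterating these enlargements while $H(p)$ remains ambient-compact forces $\pi_1(\Delta)$ to have unbounded intrinsic diameter inside a fixed compact ambient set; therefore some sequence of pairs in $\Delta$ converges in the ambient topology but not in the product center-stable topology, producing the required non-compactness. The main obstacle is precisely this last step, namely to show that the trivial-diagonal configuration cannot persist through the enlargement procedure; this is where the periodic-point disjointness assumption plays its essential role.
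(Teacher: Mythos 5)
Your proposal has two genuine gaps, and the first one is fatal to the structure of the argument. The ``key observation'' that a diagonal holonomy $\{(c,c):c\in C'\}$ in a strong-stable leaf is automatically compact is false: the center-stable topology does coincide with the intrinsic topology of the leaf on $W^{ss}(x)\cap H(p)$, but ambient compactness of $C'$ does not give intrinsic compactness -- a closed connected subset of an injectively immersed leaf can stay in a compact part of $M$ while being unbounded for the leaf metric, and this is exactly the phenomenon the whole construction lives on (the paper's first step produces, at every point $x_0$ of the accumulation set of $f^{-n}(\Gamma)$, a closed connected subset of $W^{ss}(x_0)\cap H(p)$ that is \emph{non}-compact for the intrinsic topology). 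Consequently your implication ``alternative (2) $\Rightarrow$ non-trivial'' collapses, and more seriously, since you seed the extension with the diagonal pair $(a,a)$, connectedness traps you on the diagonal: the set of pairs $(x,y)\in\Delta$ with $x=y$ is both open and closed for the product center-stable topology, so a connected holonomy containing one diagonal pair is entirely diagonal. The missing idea is the production of an off-diagonal seed: the paper takes $x_0$ accumulated by periodic points $q$ of unbounded period, so that the plaques $\cW^{cs}_q$ are pairwise disjoint and some $\cW^{cs}_q$ meets $\cW^{cu}_{x_0}$ at a point $y_0\in H(p)\setminus\{x_0\}$ (lemma~\ref{l.bracket0}); the whole extension then starts from the non-trivial pair $(x_0,y_0)$ and stays off-diagonal forever. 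Your attempt to break the diagonal afterwards via lemma~\ref{l.triple} cannot work, since that lemma requires a pair $(x_0,y_0)$ with a third point in the component of $\overline{\cW^{cu}_{x_0}}\cap\overline{\cW^{cu}_{y_0}}$ bounded by $x_0$ and $y_0$, which is degenerate when $x_0=y_0$.

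The second gap is the mechanism for non-compactness. A maximal extension given by lemma~\ref{l.extension} inside a compact $C_R$ can legitimately terminate in alternative (1) or (3) with a compact $\Delta$, and ``extending into a neighbouring center-unstable plaque'' is not something lemma~\ref{l.triple} or remark~\ref{r.triple} provides: those results operate strictly inside the given plaques. The device you are missing is dynamical: since the sets $C_n$ are intrinsically non-compact, a compact maximal extension must fall into alternative (3) (a pair reaching the boundary $\overline{\cW^{cu}_x}\setminus\cW^{cu}_x$), and one then applies $f^{-1}$, using the strict trapping $f^{-1}(\overline{\cW^{cu}_x})\subset\cW^{cu}_{f^{-1}(x)}$ to re-open the plaques and the uniform expansion of $E^{ss}$ under $f^{-1}$ to force the intrinsic diameter of $\pi_1(\Delta_n)$ to grow exponentially. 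Finally, the passage to a single non-compact connected holonomy is not just ``a sequence converging in one topology but not the other'': one must extract a convergent subsequence of off-diagonal pairs $(x_n,y_n)\to(x,y)$ with $x\neq y$, cut out diameter-$D$ compact connected pieces $\Delta_n^D$ by lemma~\ref{l.connex}, pass to Hausdorff limits of diameter exactly $D$ by lemma~\ref{l.converge}, and take the closure of the union over all $D$. Your first step (backward iteration of a non-trivial component of $W^{ss}(x_*)\cap H(p)$ plus lemma~\ref{l.connex}) is the right start and matches the paper, but the core of the argument is not there.
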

\begin{proof}
One considers a non trivial compact connected set $\Gamma\subset H(p)$ contained in some strong stable leaf
and the accumulation set $\Lambda$ of the backward iterates $f^{-n}(\Gamma)$ (which is invariant by $f$).
The uniform expansion along $E^{ss}$ and the lemmas~\ref{l.connex} and~\ref{l.converge}
above imply that for any $x_0\in \Lambda$ the strong stable leaf $W^{ss}(x_0)$,
contains a closed connected set $C_0\subset \Lambda$ which is not compact and contains $x_0$.

There exist some points $y_0\in H(p)$ distinct from $x_0$ such that $x_0\in \cW^{cs}_{y_0}$
and $y_0\in \cW^{cs}_{x_0}$ hold.
Indeed, $x_0$ is accumulated by periodic points $q\in H(p)$ whose period is arbitrarily large.
Consequently the sets $\cW^{cs}_q$ are pairwise disjoint. Hence, there exists $q$ close to $x_0$
whose plaque $\cW^{cs}_q$ intersects $\cW^{cu}_{x_0}$ at a point $y_0$ which belongs to
$H(p)\setminus \{x_0\}$ from lemma~\ref{l.bracket0}.

Assuming that the conclusion of the lemma does not hold one builds a sequence of compact holonomies
$(\Delta_n)$ such that $\pi_1(\Delta_n)$ is contained in $\Lambda$, both $\pi_1(\Delta_n), \pi_2(\Delta_n)$ are contained in strong stable leaves, and the diameter of
$\pi_1(\Delta)$ in the strong stable leaf goes to infinity with $n$.
The holonomy $\Delta_0$ is just the initial pair $(x_0,y_0)$.
One constructs $\Delta_{n+1}$ from $\Delta_n$ in the following way.

In the strong stable leaf that contains $\pi_1(\Delta_n)$,
one considers a closed non-compact connected set $C_n\subset \Lambda$.
One then applies lemma~\ref{l.extension} and its addendum~\ref{a.extension}
and finds a global holonomy $\Delta_n'\supset \Delta_n$ such that again $\pi_1(\Delta_n')$ is contained in $C_n$ and both $\pi_1(\Delta_n'), \pi_2(\Delta_n')$ are contained in strong stable leaves. 
By assumption $\Delta_n'$ is compact and in particular $\pi_1(\Delta'_n)$ is strictly contained
inside $C_n$. As a consequence there exists $(x'_n,y'_n)\in \Delta_n'$ such that
$x'_n\in \overline{\cW^{cu}_{y'_n}}\setminus \cW^{cu}_{y'_n}$
or $y_n\in \overline{\cW^{cu}_{x'_n}}\setminus \cW^{cu}_{x'_n}$.
Using the fact that for each $x\in H(p)$ we have
$$f^{-1}(\overline{\cW^{cu}_x})\subset \cW_{f^{-1}(x)}^{cu},$$
the set of images $(f^{-1}(x),f^{-1}(y))$ for $(x,y)\in \Delta'_n$ is still a compact global holonomy:
this is $\Delta_{n+1}$. We also define $(x_{n+1},y_{n+1})=(f^{-1}(x_n),f^{-1}(y_n))$.

By construction $\pi_1(\Delta_1)$ is a non-trivial compact connected set.
Since $E^{ss}$ is uniformly contracted, the projection
$\pi_1(\Delta_n)$, which contains $f^{-n}(\pi_1(\Delta_1))$, has a diameter
(for the distance inside $W^{ss}(x_n)$) which increases exponentially.
This ends the construction of the sequence $(\Delta_n)$.

Up to considering a subsequence, one can assume that
the sequence $(x_n,y_n)$ converges towards a pair $(x,y)\in H(p)\times H(p)$
for the classical topology on $M$. By construction $x_n,y_n$ are at a bounded distance,
hence $x$ and $y$ are distinct.

For each $n$, one endows $W^{ss}(x_n)\times W^{ss}(y_n)$ with the supremum distance
between the intrinsic distances inside $W^{ss}(x_n)$ and $W^{ss}(y_n)$.
Let us fix $D>0$.
By lemma~\ref{l.connex}, for each $n$ large one can find a compact connected set
$\Delta_n^D$ contained in $\Delta_n$ of diameter $D$ and containing $(x_n,y_n)$.
One can assume that the sequence $(\Delta_n^D)$ converges for the Hausdorff topology
towards a compact connected set $\Delta^D\subset W^{ss}(x)\times W^{ss}(y)$.
By lemma~\ref{l.converge}, this set has diameter $D$.
Now the closure of the union of the $\Delta^D$ over $D$ is a global holonomy which is non-compact and
whose projections by $\pi_1,\pi_2$ are both contained in strong stable leaves.
\end{proof}

\paragraph{Unbounded projections of holonomies}
Non-compact holonomies allow to obtain non-compact connected sets inside strong stable leaves.

\begin{lema}\label{l.projection}
Let $\Delta$ be a non-compact holonomy such that $\pi_1(\Delta),\pi_2(\Delta)$ are contained in strong stable leaves. Then the closure of $\pi_1(\Delta)$ for the center-stable topology
is non-compact.
\end{lema}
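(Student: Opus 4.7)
\medskip

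The plan is to argue by contradiction: suppose $\overline{\pi_1(\Delta)}^{cs}$ is compact. Recall from the preamble to proposition~\ref{ss-tot-discontinuity} that the cs-topology on $W^{ss}(x_0)\cap H(p)$ coincides with the intrinsic topology of the leaf, so this assumption means that $\pi_1(\Delta)$ is intrinsically bounded inside $W^{ss}(x_0)$. Since every $(x,y)\in\Delta$ satisfies $y\in\overline{\cW^{cu}_x}$ and the cu-plaques have uniformly bounded diameter, $\pi_2(\Delta)$ is contained in a fixed ambient-compact neighborhood of $\overline{\pi_1(\Delta)}$.

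Next I would extract a sequence $(x_n,y_n)\in\Delta$ with no cs-convergent subsequence (which exists since $\Delta$ is cs-closed and cs-non-compact). Passing to a subsequence, $x_n\to x_\infty$ in cs by the standing assumption; then $(y_n)$ has no cs-accumulation. Because the cs-topology on $W^{ss}(y_0)\cap H(p)$ is again the intrinsic topology, $(y_n)$ must be intrinsically unbounded in $W^{ss}(y_0)$. Since $\pi_2(\Delta)$ is the image of the connected set $\Delta$ under a continuous projection, it is connected, and hence it has infinite intrinsic diameter inside $W^{ss}(y_0)$.

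Now fix a base point $y_\ast\in\pi_2(\Delta)$. Applying lemma~\ref{l.connex} in the intrinsic geometry of $W^{ss}(y_0)\cong\RR^{d-2}$, for each $D>0$ I would select a compact connected set $C_D\subset\pi_2(\Delta)$ containing $y_\ast$ with intrinsic diameter exactly $D$. All the $C_D$ lie in the same ambient compact region, so by Hausdorff compactness a subsequence $C_{D_k}\to C_\infty$ converges in $M$, with $C_\infty$ compact, connected, and containing $y_\ast$. The conclusion will follow from lemma~\ref{l.converge}: once we know $C_\infty\subset W^{ss}(y_\ast)\cap H(p)$, that lemma gives intrinsic diameter of $C_\infty$ equal to $\lim_k D_k=+\infty$, contradicting compactness of $C_\infty$.

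The hard part is the step $C_\infty\subset W^{ss}(y_\ast)\cap H(p)$: strong stable leaves are not closed in $M$, so \emph{a priori} the Hausdorff limit of pieces of the single leaf $W^{ss}(y_0)$ can spread across several strong stable leaves. To pin the limit onto a single leaf I would iterate forward: for each $k$, choose $n=n(k)$ so that the contracted pieces $f^{n}(C_{D_k})$ have intrinsic diameter $\le 1$ inside $W^{ss}(f^n(y_0))$; by the coherence of the center-stable plaques (lemma~\ref{l.uniqueness-coherence}) and the uniform contraction of $E^s$, the limits of such pieces are trapped in a single local strong-stable plaque around the limit base point, and pulling back by $f^{-n}$ gives the leaf containment for $C_\infty$. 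Here the hypothesis that $W^{ss}(q)\setminus\{q\}$ is disjoint from $H(p)$ for every periodic $q\in H(p)$ is what forbids the leaf of $y_\ast$ from piling up non-trivially on periodic orbits and thus forces the Hausdorff limit to respect the leaf structure.
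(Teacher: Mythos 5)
Your reduction to ``$\pi_2(\Delta)$ has infinite intrinsic diameter in $W^{ss}(y_0)$'' is correct, but the contradiction you then extract has a genuine gap at exactly the place you flag: there is no reason the Hausdorff limit $C_\infty$ of the compacta $C_{D_k}$ should be contained in $W^{ss}(y_\ast)\cap H(p)$. Strong stable leaves are injectively immersed but not properly embedded, so a sequence of compact connected sets $C_{D_k}\subset W^{ss}(y_0)\cap H(p)$ of intrinsic diameter $D_k\to\infty$ but bounded ambient size can Hausdorff-converge to a set that straddles several leaves --- this is precisely the possibility the lemma has to exclude, and lemma~\ref{l.converge} is silent unless the limit already lies in a single leaf. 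Your forward-iteration fix does not close the gap: to make $f^{n}(C_{D_k})$ have intrinsic diameter $\le 1$ you must take $n=n(k)\to\infty$, so the base points $f^{n(k)}(y_\ast)$ escape along the forward orbit, and pulling back by $f^{-n(k)}$ re-expands the sets along $E^{ss}$, reopening the question of which leaf the limit lands in. There is also a structural warning sign: your steps from ``fix a base point $y_\ast$'' onward never actually use the compactness hypothesis on $\overline{\pi_1(\Delta)}$. If they were valid they would show that \emph{every} connected subset of a strong stable leaf intersected with $H(p)$ is intrinsically bounded, and (applied to both projections) that every holonomy supported in strong stable leaves is compact; that would prove proposition~\ref{ss-tot-discontinuity} in half a page, bypassing lemma~\ref{l.unbounded}, the triple holonomy of lemma~\ref{l.triple}, and the whole ordering analysis --- so something must be off.

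The paper's argument is of a different nature and avoids the trap. It never takes an abstract Hausdorff limit of unbounded pieces and then tries to prove the limit sits in a leaf. Instead, assuming $\overline{\pi_1(\Delta)}$ cs-compact, it selects a compact sub-holonomy $D$ with an extremal avoidance property (claim~\ref{c.holonomy}), uses an orientation argument (claim~\ref{c.ordering}), and builds via the triple holonomy of lemma~\ref{l.triple} a \emph{monotone return map} $\varphi_i$ on a center-unstable interval. Each iterate $z_n=\varphi_i^n(y'_i)$ comes with a compact connected set $C_n$ that lies in a strong stable leaf \emph{by construction}; two marked points of $C_n$ live in disjoint cs-plaques (intrinsic distance bounded below) while their limits live in intersecting plaques. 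Only then is lemma~\ref{l.converge} invoked, in a situation where the hypothesis $C\subset W^{ss}(z)\cap H(p)$ is built in, and it delivers the contradiction. Repairing your proof would require an analogue of that mechanism for pinning the limit set to one leaf, and supplying such a mechanism is essentially the content of the paper's argument.
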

\begin{proof}
First notice that one can replace $\Delta$ by $f^{-1}(\Delta)$.
By the trapping of the center-unstable plaques this allows to have
$x\in \cW^{cu}_y$ and $y\in \cW^{cu}_x$ for each $(x,y)\in \Delta$
and to work with the plaques of the family $\cW^{cu}$.
The set of pairs $(x,y)\in \Delta$ such that $x=y$ is closed.
By the choice of the central-stable topology it is also open. Hence two cases occurs:
either $x=y$ for each $(x,y)\in \Delta$ and $\pi_1(\Delta)=\pi_2(\Delta)$ is non-compact;
or for each $(x,y)\in \Delta$ one has $x\neq y$ and this is the case one considers now.
For any pair $(x,y)\in \Delta$, we denote by $[x,y]$ the closed segment of $\cW^{cu}_{x}$ bounded by $x,y$.
\medskip

Let us assume by contradiction that the closure of $\pi_1(\Delta)$ is compact.
One can find a finite
collection of points $\{x_j\}\subset \pi_1(\Delta)$ which satisfies that for any $x\in \pi_1(\Delta)$ there exists $x_j$
such that
\begin{itemize}
\item[--] $x$ belongs to $B^{cs}(x_j)$;
\item[--] for any $y,z\in H(p)\cap \cW^{cu}_x$ such that $(x,y)\in\Delta$ and $z\in [x,y]$,
the plaque $\cW^{cu}_{x_j}$ intersects $B^{cs}(z)$.
\end{itemize}

In the following we will consider holonomies $D$ with $\pi_1(D)\subset \pi_1(\Delta)$
and we introduce the set of points $x_{j}$ that are ``avoided'' by $D$:
$$\cP(D)=\{x_j, \;  \forall (x,y)\in D,\; \forall z\in [x,y]\cap H(p), \;  x\notin\cW^{cs}_{x_j}
\text{ or } B^{cs}(z)\cap \cW^{cu}_{x_j}=\emptyset\}.$$

Since the closure of $\pi_1(\Delta)$ is compact and $\Delta$ is not,
one can find $x_i$ with the following property.
\begin{description}
\item[(****)] \it There exists $(x',y'),(x'',y'')\in\Delta$ with $x',x''\in\cW^{cs}_{x_i}$
such that
\begin{itemize}
\item[--] for each $z\in ([x',y']\cup [x'',y''])\cap H(p)$, the plaque $\cW^{cs}_z$ intersects
$\cW^{cu}_{x_i},\cW^{cu}_{x'},\cW^{cu}_{x''}$;
\item[--] $\cW^{cs}_{y'}$ and $\cW^{cs}_{y''}$ intersect $\cW^{cu}_{x_i}$ in two distinct points.
\end{itemize}
\end{description}
Note that in particular the plaques $\cW^{cs}_{y'}$ and $\cW^{cs}_{y''}$ are disjoint.
This allows us to build a compact holonomy $D\subset \Delta$ which ``almost fails'' to be a graph above its first projection. 

\begin{claim}\label{c.holonomy}
There exists a compact holonomy $D$ having the following properties:
\begin{enumerate}
\item\label{i.zero} $\pi_1(D)\subset \pi_1(\Delta)$;
$\pi_2(D)$ is contained in a strong stable leaf;
\item\label{i.one} $D$ is a continuous graph over its first factor;
\item\label{i.two} there is $x_i\in \cP(D)$ satisfying (****).
\end{enumerate}
\end{claim}
\begin{proof}
Let us first notice that since $\Delta$ is non-compact it contains compact holonomies
$\Delta'$ with arbitrarily large diameter by lemma~\ref{l.connex}.
One can thus assume that for such a compact holonomy $\Delta'$, there
exists $x_{i}$ and two pairs $(x',y'),(x'',y'')\in \Delta'$ satisfying (****).
Working with $\varepsilon$-chains as in the proof of lemma~\ref{l.connex}, one can build a compact connected set $D_0\subset \Delta'$ such that~\ref{i.two}) is satisfied for $x_i$. More precizely for any $\varepsilon>0$ one builds a finite set
$X_{\varepsilon}=\{(x(0),y(0)),\dots,(x(s),y(s))\}$ contained in $\Delta'$ such that
\begin{itemize}
\item[--] $(x(k),y(k))$ and $(x(k+1),y(k+1))$ are $\varepsilon$-close for each $0\leq k<s$;
\item[--] the pairs $(x',y')=(x(0),y(0))$ and $(x'',y'')=(x(s),y(s))$ and the point $x_i$
satisfy (****);
\item[--] for any pair $(x(k),y(k))$ with $x(k)\in \cW^{cs}_{x_i}$,
and for any point $z\in [x(k),y(k)]\cap H(p)$ the intersection $B^{cs}(z)\cap \cW^{cu}_{x_i}$ is empty.
\end{itemize}
The compact holonomy $D_0$ is obtained as limit of the sets $X_\varepsilon$.
Repeating the construction with the other points $x_j$, one gets a new compact global holonomy
$D\subset D_0$ such that~\ref{i.one}) is satisfied. Note that~\ref{i.two}) is still satisfied but for a new point $x_i$. Since $D\subset \Delta$, the condition~\ref{i.zero}) holds also.
\end{proof}
\medskip

We now fix a compact holonomy $D$ satisfying the properties~\ref{i.zero}), ~\ref{i.one}) and \ref{i.two}) above.
We do not assume that it is contained in $\Delta$. However we choose it so that the cardinal of $\cP(D)$ is maximal.
\medskip

Let us consider the points $x_i,x',x''$ in property~\ref{i.two}) and (****) and consider the plaques $\cW^{cs}_{x_i},\cW^{cs}_{x'},$ $\cW^{cs}_{x''}$
and the ordering of their intersection on $\cW^{cu}_{x_i}$.
Then $\cW^{cs}_{x_i}$ is not ``in the middle'' of $\cW^{cs}_{x'}$ and $\cW^{cs}_{x''}$.
\begin{claim}\label{c.ordering}
The point $x_i$ does not belong to the connected component
of $\cW^{cu}_{x_i}\setminus (\cW^{cs}_{x'}\cup \cW^{cs}_{x''})$
bounded by $\cW^{cs}_{x'}$ and $\cW^{cs}_{x''}$.
\end{claim}
\begin{proof}
Let us define the compact connected set $C:=\pi_1(D)$.
For each $x\in C$, there exists a unique pair $(x,y)\in D$; moreover
$x\neq y$. One can thus consider
the orientation on $E^{cu}_x$ determined by the component of $\cW^{cu}_x\setminus \{x\}$
which contains $y$. This defines a continuous orientation of the bundle 
$E^{cu}_{|C}$.

One can compare the orientations of $E^{cu}_{x'}$ and $E^{cu}_{x''}$
as transverse spaces to the one-codimensio\-nal plaque $\cW^{cs}_{x_i}$. By the trapping property, for any $k\geq 0$ the forward iterates $f^{k}(x')$
and $f^{k}(x'')$ still belong to the same plaque $\cW^{cs}_{f^k(x_i)}$,
hence the orientations comparison will be the same for $k=0$ or $k$ large.
Since $C$ is a compact subset of a strong stable leaf,
for $k\geq 1$ large $f^k(C)$ is contained in $\cW^{cs}_{f^k(x_i)}$; so for any continuous orientation of $E^{cu}_{|f^k(C)}$, the orientations on $E^{cu}_{f^k(x')}$ and $E^{cu}_{f^k(x'')}$ match.

One deduces that for the orientation on $E^{cu}_{|C}$ considered above,
the orientations on $E^{cu}_{x'}$ and $E^{cu}_{x''}$ match.
By definition of the orientation on $E^{cu}_{|C}$, this implies the claim.
\end{proof}
\medskip

Let $\gamma'=[x',y']$ and $\gamma''=[x'',y'']$.
One now defines a homeomorphism $\varphi\colon\gamma'\cap H(p)\to\gamma''\cap H(p)$.
For $z'\in \gamma'\cap H(p)$, one can use lemma~\ref{l.triple}
and find a closed connected set $X_{z'}\subset H(p)\times H(p)\times H(p)$
containing $(x',y',z')$ and such that for all $(x,y,z)\in X_{z'}$ one has
$z\in \cW^{cu}_x\cap \cW^{cu}_y$ and $(x,y)\in D$.

\begin{claim}
There exists a unique map $\chi\colon D\to H(p)$ which is continuous for the center-stable topology, sends $(x',y')$ on $z'$
and satisfies $\chi(x,y)\in [x,y]$ for each $(x,y)\in D$. Its graph coincides with $X_{z'}$,
which is thus compact.
\end{claim}
\begin{proof}
By remark~\ref{r.triple}, the closure $\widetilde \Delta$
of $\pi_{1,3}(X_{z'})$ is a gobal holonomy satisfying property 1).

Let us assume by contradiction that the projection map
$\pi_{1,2}\colon X_{z'}\to D$ is not injective: in particular
$\widetilde \Delta$ contains two different pairs $(x,z)$ and $(x,\zeta)$,
having the same projection by $\pi_1$. Let us choose $x_j$ such that $x\in B^{cs}(x_j)$
and $\cW^{cu}_{x_j}$ intersects both $B^{cs}(z)$ and $B^{cs}(\zeta)$.
Repeating the argument of the proof of claim~\ref{c.holonomy},
there exists a compact holonomy $\widetilde D\subset \widetilde \Delta$
satisfying the properties~\ref{i.zero}), \ref{i.one}), \ref{i.two}) above
such that $x_j$ belongs to $\cP(\widetilde D)$.
By construction  for each $(x,z)\in \widetilde \Delta$, there exists $(x,y)\in D$ such that
$z$ belongs to $[x,y]$. The definition of the set $\{x_{j}\}$ and the fact that for each $(x,z)\in \widetilde D$ there exists
$(x,y)\in D$ such that $z\in [x,y]$ imply that $\cP(D)\subset \cP(\widetilde D)$.
Since $x_{j}$ belongs to $\cP(\widetilde D)\setminus\cP(D)$, we have contradicted the maximality of $D$.
Hence the map $\pi_{1,2}\colon X_{z'}\to D$ is injective.

Since $D$ is compact, one deduces that $X_{z'}$ is also compact and the first case of lemma~\ref{l.triple} holds.
Consequently, the projection $\pi_{1,2}$ is also surjective $X_{z'}$.
This proves that $X_{z'}$ is the graph of a map $\chi\colon D\to H(p)$. Since $X_{z'}$ is compact, this map is continuous.
The connectedness of $D$ implies that the map $\chi$ is unique.
\end{proof}
\medskip

One deduces that $X_{z'}$ contains a unique triple of the form $(x'',y'',z'')$ and
one sets $\varphi(z')=z''$. The claim implies that $\varphi$ is monotonous
for the ordering on $\gamma',\gamma''$.
One can build similarly a map from $\gamma''$ to $\gamma'$, which is an inverse of $\varphi$.
Consequently $\varphi$ is a homeomorphism which is monotonous for the ordering on $\gamma',\gamma''$.
\medskip

Let $y'_i$ be the intersection between $\cW^{cs}_{y'}$ and $\cW^{cu}_{x_i}$
and $y''_i$ be the intersection between $\cW^{cs}_{y''}$ and $\cW^{cu}_{x_i}$.
Let $\gamma'_i,\gamma_{i}''$ be the segments contained in $\cW^{cu}_{x_i}$ and bounded by $\{x_i,y'_i\}$ and $\{x_i,y''_i\}$ respectively.
One defines two monotonous homeomorphisms
$\psi'\colon \gamma'\cap H(p)\to \gamma_{i}\cap H(p)$ and $\psi''\colon \gamma''\cap H(p)\to \gamma_{i}\cap H(p)$ which send $x'$ and $x''$ on $x_{i}$.
There are obtained by considering local projection throught the center-stable holonomy:
one has $\psi'(z')=z$ when $z\in \cW^{cs}_{z'}$ (and equivalently
when $z'\in \cW^{cs}_{z}$).
One thus obtains a monotonous homeomorphism $\varphi_{i}=\psi'\circ\varphi\circ{\psi''}^{-1}$
from $\gamma_i'\cap H(p)$ to $\gamma_i''\cap H(p)$.

From the claim~\ref{c.ordering} and exchanging $(x',y')$ and $(x'',y'')$ if necessary,
one can assume that $y''_i$ is between $x_i$ and $y'_i$ inside $\cW^{cu}_{x_i}$.
Consequently $\varphi_i$ maps monotonously
$H(p)\cap \gamma_i'$ into itself.
The sequence $z_n=\varphi^n_i(y'_i)$ thus converges to a point $z$
which is fixed by $\varphi_i$ but all the $z_n$ are distinct since by assumption
$z_0=y'_i$ and $z_1=y''_i$ are distinct.

By construction, for each $n$ one associates a compact connected set
$X_n=X_{{\psi'}^{-1}(z_n)}\subset H(p)\times H(p)\times H(p)$ which contains the
triples $(x',y',{\psi'}^{-1}(z_n))$ and $(x'',y'',{\psi''}^{-1}(z_n))$.
Its projection on its third factor is a compact connected set $C_n\subset H(p)$
containing ${\psi'}^{-1}(z_n)$ and ${\psi''}^{-1}(z_n)$ and contained
in a strong stable leaf.
Similarly, let $X=X_{{\psi'}^{-1}(z)}$ and $C$ be its projection on the third factor.
Then, $C_n$ converges towards $C$ for the Hausdorff topology on compact sets of $M$,
whereas the points ${\psi'}^{-1}(z_n), {\psi''}^{-1}(z_{n+1})\in C_n$
converge towards ${\psi'}^{-1}(z), {\psi''}^{-1}(z)\in C$.

Since $z$ is fixed by $\varphi_i$, the center-stable plaques
of the points ${\psi'}^{-1}(z), {\psi''}^{-1}(z)$ intersect,
whereas since $z_n,z_{n+1}$ are distinct, the center-stable plaques
of the points ${\psi'}^{-1}(z_n), {\psi''}^{-1}(z_{n+1})$ are disjoint.
Thus the intrinsic distances between ${\psi'}^{-1}(z), {\psi''}^{-1}(z)$
and ${\psi'}^{-1}(z_n), {\psi''}^{-1}(z_{n+1})$ are bounded away,
contradicting lemma~\ref{l.converge}.
The proof of lemma~\ref{l.projection} is now complete.
\end{proof}
\bigskip

We now finish the proof of the proposition.
\begin{proof}[\bf Proof of proposition~\ref{ss-tot-discontinuity}]
Let us assume by contradiction that for some point $x\in H(p)$
the set $H(p)\cap W^{ss}(x)$ is not totally disconnected.
We will build a periodic point $q\in H(p)$, a point $z_0\in W^s(q)\cap H(p)$
and a set $C\subset W^{ss}(z_0)$ which is closed connected and non-compact
for the intrinsic topology on $W^{ss}(z_0)$.
In the stable manifold of the orbit of $q$,
the iterates $f^n(C)$ accumulate a non-trivial subset of $W^{ss}(q)$,
contradicting the assumption that $W^{ss}(q)\cap H(p)= \{q\}$.

In order to build $q$ and $C$, we apply lemma~\ref{l.unbounded}
and consider a non-compact holonomy $\Delta$
and a pair $(x_0,y_0)\in \Delta$ such that $x_0\neq y_0$.
The sets $\pi_1(\Delta),\pi_2(\Delta)$ are contained in strong stable leaves
and by lemma~\ref{l.projection} their closures in the leaves are not compact.
Let us remind that $\cW^{cu}_{x_0}$ is a one-dimensional curve
and consider the open connected subset $U\subset\cW^{cu}_{x_0}$
bounded by $\{x_0,y_0\}$.
Two cases have to be studied.

If $H(p)$ does not meet the set $U$,
then $x_0$ is an unstable boundary point of the
chain-hyperbolic class $H(p)$ (see definition~\ref{d.boundary}).
By lemma~\ref{p.boundary0},
there exists a periodic point $q$ in $H(p)$
whose stable set contains $\pi_1(\Delta)$.
We define $z_0=x_0$ and the set $C$ as the closure of $\pi_1(\Delta)$ in $W^s(q)$,
finishing the proof in this case.

Let us assume now that there exists a point $\zeta\in U\cap H(p)$.
We introduce a hyperbolic periodic point $q$ homoclinically related to $p$
and close to $\zeta$ such that $\cW^{cs}_q\subset W^{s}(q)$ as given by lemma~\ref{l.contper}.
The plaques $\cW^{cs}_q$ and $\cW^{cu}_{x_0}$ intersect at a point $z_0\in U\cap H(p)$.
By lemma~\ref{l.triple}, there is a closed connected set $X\subset H(p)\times H(p)\times H(p)$
which contains $(x_0,y_0,z_0)$, such that for each $(x,y,z)\in X$ one has
$z\in\cW^{cu}_x\cap \cW^{cu}_y$ and $(x,y)\in \Delta$.
Moreover the projection $\pi_3(X)$ is contained in a strong stable leaf of $W^s(q)$
and $X$ is non-compact. We want to show that the closure of $\pi_3(X)$ in
$W^s(q)$ is non-compact.

We know that the closure of
one of the three projections $\pi_1(X),\pi_2(X),\pi_3(X)$ is non-compact.
If for instance this happens for $\pi_1(X)$, the closure of
$\pi_{1,3}(X)$ is a non-compact holonomy by remark~\ref{r.triple}.
Hence by lemma~\ref{l.projection}, the closure of $\pi_3(X)$ is non-compact also.
One concludes that in any case the closure $C$ of
$\pi_3(X)$ is non-compact: we have found a non-compact connected
set contained in $H(p)\cap W^{ss}(z_0)$ as claimed, concluding the proof of the proposition in the second case.
\end{proof}

\subsection{Structure in the center-stable leaves: proof of theorem~\ref{t.tot-discontinuity}}

By the trapping property, the iterates of each plaque $\cW^{cs}_x$, $x\in H(p)$,
remain in a small neighborhood of $H(p)$, hence is covered by a strong stable foliation.
We call \emph{strong stable plaques} the connected components of the strong stables leaves of
$\cW^{cs}_x$.

\begin{lema}\label{l.graph}
For any $x\in H(p)$,
let us consider a connected compact set $\Gamma\subset H(p)\cap \cW^{cs}_x$.
Then $\Gamma$ intersects each strong stable plaque of $\cW^{cs}_x$ in at most one point.
In particular this is a curve.
\end{lema}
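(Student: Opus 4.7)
The plan is to deduce the lemma from Proposition~\ref{ss-tot-discontinuity}, which says $W^{ss}_{loc}(y)\cap H(p)$ is totally disconnected (in the intrinsic topology of the strong stable leaf) for every $y\in H(p)$. Since $E^c_1$ is one-dimensional, the strong stable plaques of $\cW^{cs}_x$ form a codimension-one foliation; projecting along them yields a continuous surjection $\pi\colon \cW^{cs}_x\to I$ onto a one-dimensional arc, and $\pi(\Gamma)$ is a compact connected subarc $[a,b]\subset I$. For each $t\in[a,b]$, the fiber $\Gamma_t:=\Gamma\cap\pi^{-1}(t)$ is a compact subset of the strong stable plaque $P_t=\pi^{-1}(t)$; picking any $y\in\Gamma_t$ one has $P_t\subset W^{ss}(y)$, so $\Gamma_t\subset H(p)\cap W^{ss}(y)$ is totally disconnected by Proposition~\ref{ss-tot-discontinuity}.

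The heart of the argument is then to upgrade total disconnectedness of each fiber $\Gamma_t$ to $|\Gamma_t|\leq 1$. I would argue by contradiction: if some $\Gamma_{t_0}$ contained two distinct points $y_1\neq y_2$, total disconnectedness would allow a partition $\Gamma_{t_0}=C_1\sqcup C_2$ with $y_i\in C_i$ and $C_1,C_2$ closed, non-empty and separated inside $P_{t_0}$. Using a local product chart $\cW^{cs}_x\cong D^s\times I$ adapted to the strong stable foliation, I would choose a codimension-one submanifold $W\subset P_{t_0}$ separating $C_1$ from $C_2$ and disjoint from $\Gamma\cap P_{t_0}$, and thicken it to $\widehat W=W\times I\subset\cW^{cs}_x$. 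If this $\widehat W$ could be arranged to be disjoint from $\Gamma$, then $\widehat W$ would split $\cW^{cs}_x$ into two open half-disks, each meeting $\Gamma$ and separating it into two non-empty disjoint closed sets, contradicting connectedness. Once $\pi|_\Gamma$ is injective, since $\Gamma$ is compact Hausdorff and $[a,b]$ is Hausdorff, $\pi|_\Gamma$ is a homeomorphism and $\Gamma$ is a curve.

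The main obstacle is precisely arranging $\widehat W\cap\Gamma=\emptyset$: total disconnectedness controls $\Gamma\cap P_t$ only at the fiber $t=t_0$, not at the neighboring plaques $P_t$ through which the thickened wall $\widehat W$ passes. To overcome this I would exploit the thin-trapping of $E^{cs}$, shrinking the plaque family $\cW^{cs}$ so that $\widehat W$ projects to an arbitrarily small interval around $t_0$, combined with a forward-iteration argument: under $f^n$, uniform contraction of $E^s$ brings $f^n(y_1)$ and $f^n(y_2)$ exponentially close inside their common strong stable plaque of $\cW^{cs}_{f^n(x)}$, while $f^n(\Gamma)$ remains compact connected in $H(p)\cap \cW^{cs}_{f^n(x)}$. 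At a sufficiently large iterate one may then execute the local separation argument at a scale where $\Gamma\cap P_t$ is controlled for all nearby $t$ by Proposition~\ref{ss-tot-discontinuity}, and pull the contradiction back to the original $\Gamma$.
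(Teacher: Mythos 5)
There is a genuine gap at exactly the step you flag as the main obstacle, and the proposed fix (thin trapping plus forward iteration) cannot close it, because the obstacle is not the one you diagnose. Proposition~\ref{ss-tot-discontinuity} already gives total disconnectedness of $\Gamma\cap P_t$ for \emph{every} $t$, not just $t_0$; the real problem is that fiberwise total disconnectedness is simply too weak to force injectivity of $\pi|_\Gamma$ by any topological separation argument. In a product chart $D^s\times I$, the union of two arcs $\{(g_i(t),t):t\in[0,1]\}$, $i=1,2$, with $g_1(1)=g_2(1)$ and $g_1(t)\neq g_2(t)$ for $t<1$, is compact, connected, and meets every plaque $D^s\times\{t\}$ in at most two points; no wall $\widehat W=W\times I$ disjoint from it and separating the two points of a fiber exists, at any scale. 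Replacing $\Gamma$ by $f^n(\Gamma)$ produces again a compact connected subset of $H(p)\cap\cW^{cs}_{f^n(x)}$ with totally disconnected fibers, so the same configuration persists and nothing is gained by iterating. The tell is that your argument never uses the standing hypothesis of this section that $W^{ss}(q)\setminus\{q\}$ is disjoint from $H(p)$ for every periodic $q\in H(p)$; without that hypothesis the lemma is false, so it must enter somewhere.

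The paper's proof brings it in through the local product structure. Assuming $z\neq z'$ lie in $\Gamma$ on one strong stable plaque $L$, one takes the local connected components $\Gamma_z,\Gamma_{z'}$ of $\Gamma$ in small neighborhoods of $z,z'$; these are nontrivial connected sets meeting $L$ in totally disconnected sets (this is where Proposition~\ref{ss-tot-discontinuity} is used), so each must sweep all the plaques on one side of $L$, and after a reduction one may assume they sweep the same side and both meet a nearby plaque $\tilde L$. One then chooses a periodic point $q$ homoclinically related to $p$ close to a point of $\Gamma_{z'}\cap\tilde L$ (lemma~\ref{l.contper}) and projects $\Gamma_z$ by the center-unstable holonomy onto $\cW^{cs}_q$; the projection stays in $H(p)$ by lemma~\ref{l.bracket0}, is connected, and crosses $W^{ss}_{loc}(q)$, yielding a point of $H(p)\cap W^{ss}(q)\setminus\{q\}$ and the contradiction. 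Your first reduction (each fiber totally disconnected, and $\pi|_\Gamma$ injective implies $\Gamma$ is a curve) is fine, but the upgrade from total disconnectedness to injectivity must go through this holonomy-to-a-periodic-point argument rather than a separation-by-a-wall argument.
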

\begin{proof}
Let us assume by contradiction that $\Gamma$ intersects some strong stable leaf $L$ of
$\cW^{cs}_x$ in at least two distinct points $z,z'$.
Let us consider two small closed neighborhoods $U$ and $U'$ of $z,z'$ in $\cW^{cs}_x$,
such that that $U\setminus L$ and $U'\setminus L$ have two connected components.

We introduce the connected components $\Gamma_z,\Gamma_{z'}$ of $\Gamma\cap U$ and $\Gamma\cap U'$
containing $z$ and $z'$ respectively.
These two sets are not reduced to $z$ and $z'$ and, by proposition~\ref{ss-tot-discontinuity}
$\Gamma_{z}\cap L$ and $\Gamma_{z'}\cap L$ are totally disconnected.
In one of the connected components $V$ of $U\setminus L$, all the strong stable plaques
close to $z$ are met by $\Gamma_z$. The same holds for $\Gamma_{z'}$ and a component
$V'$ of $U'\setminus L$.

We claim that one can reduce to the case both components $V,V'$ are on the same side
of $L$. Indeed if this is not the case, the connected set $\Gamma$ intersects $L$
at another point $z''$. One can thus define three sets $V,V',V''$; among them, two
are on the same side of $L$.

Let $\tilde L$ be a strong stable plaque close to $z$ and $z'$ which intersects
$V$ and $V'$: all the plaques close to $\tilde L$ meet both $\Gamma_z$ and $\Gamma_{z'}$.

Let $q$ be a periodic point homoclinically related to $p$
and close to a point in $\Gamma_{z'}\cap \tilde L$.
The local strong stable manifold $W^{ss}_{loc}(q)$ is close to $\tilde L$
and the projection of $\Gamma_{z}$ by the center-unstable holonomy
on $\cW^{cs}_q$ is a connected compact set that intersects both sides
of $W^{ss}_{loc}(q)$. One deduces that this projection meets $\Gamma_{z}$
at a point $y\in H(p)\cap W^{ss}(q)$ which is distinct from $q$.
This contradicts our assumption.
\end{proof}
\medskip

Let us call~\emph{graph} of a plaque $\cW^{cs}_x$ a connected compact set of $\cW^{cs}$
which intersects each strong stable leaf of $\cW^{cs}_x$ in at most one point.

\begin{lema}\label{l.graph-uniform}
If for some point $x_0\in H(p)$,
the set $\cW^{cs}_{x_0}\cap H(p)$ is not totally disconnected,
then for each $x\in H(p)$, there exists a graph $\Gamma_x\subset \cW^{cs}_x\cap H(p)$
containing $x$ which meets all the strong stable plaques of $\cW^{cs}_x$ that intersect a small neighborhood of $x$.
\end{lema}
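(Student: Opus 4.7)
The plan is to define the set
\[
G = \{x\in H(p) : \exists\,\text{graph } \Gamma\subset \cW^{cs}_x\cap H(p) \text{ with } x\in\Gamma \text{ and } \Gamma \text{ meets every strong stable plaque in a nbhd of } x\}
\]
and to prove that $G=H(p)$. From the hypothesis I extract a non-trivial compact connected set $\Gamma_0\subset\cW^{cs}_{x_0}\cap H(p)$; by lemma~\ref{l.graph} it is a graph over the strong stable plaques of $\cW^{cs}_{x_0}$, and since $E^c_1$ is one-dimensional its projection on the center factor (via the strong stable holonomy inside $\cW^{cs}_{x_0}$) is a non-degenerate compact interval $I_0$. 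Any $z_0\in\Gamma_0$ projecting to the interior of $I_0$ belongs to $G$, so $G\neq\emptyset$.

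I would then establish two propagation properties of $G$. The first is \emph{forward $f$-invariance}: if $x\in G$ with witness $\Gamma_x$, then $f(\Gamma_x)\subset\cW^{cs}_{f(x)}\cap H(p)$ by the trapping of $\cW^{cs}$ and the $f$-invariance of $H(p)$; by lemma~\ref{l.graph} it is a graph, contains $f(x)$, and covers a neighborhood of $f(x)$ by continuity of $f$, so $f(x)\in G$. The second is \emph{center-unstable holonomy propagation}: if $x\in G$ and $y\in\cW^{cu}_x\cap H(p)$ is close enough to $x$, the local holonomy $\Pi^{cu}\colon\cW^{cs}_x\to\cW^{cs}_y$ is a homeomorphism near $x$; by lemma~\ref{l.bracket0} its image $\Pi^{cu}(\Gamma_x)$ lies in $H(p)$, contains $y=\Pi^{cu}(x)$, covers a neighborhood of $y$, and is a graph by lemma~\ref{l.graph}, so $y\in G$.

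To conclude $G=H(p)$, I would use the density of the set $\cP_0$ of periodic points provided by lemma~\ref{l.contper} together with the standing hypothesis that $W^{ss}(q)\setminus\{q\}$ is disjoint from $H(p)$ for every periodic $q\in H(p)$. A periodic $q_1\in\cP_0$ close to $z_0$ lies in $\cW^{cs}_{z_0}$ locally (lemma~\ref{l.uniqueness-coherence}); the unique intersection of $\Gamma_0$ with $W^{ss}_{loc}(q_1)$ belongs to $W^{ss}(q_1)\cap H(p)=\{q_1\}$ and is therefore $q_1$ itself, so $q_1\in\Gamma_0$ in its interior, hence $q_1\in G$. Forward invariance then puts the entire orbit of $q_1$ in $G$. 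For an arbitrary $x\in H(p)$, I pick a periodic $q\in\cP_0$ close to $x$; by lemma~\ref{l.linked}, $q_1$ and $q$ are homoclinically related, so there is a transverse heteroclinic point $\zeta\in W^u(q_1)\cap W^s(q)$. Backward iterates of $\zeta$ approach the orbit of $q_1$ along the $\cW^{cu}$-plaques, hence enter $G$ by holonomy propagation, and forward invariance gives $\zeta$ and all $f^n(\zeta)$ in $G$. As $n\to\infty$, $f^n(\zeta)$ accumulates the orbit of $q$; a final application of center-unstable holonomy from an appropriate $f^n(\zeta)$ to $x$ transfers the graph to $x$ itself.

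The main obstacle is the last holonomy transfer: ensuring that $x$ really lies on a center-unstable plaque of a point of $G$ arbitrarily close to $x$. I would address this by combining the continuous dependence of $\cW^{cu}$ on its base point, the one-dimensionality of $E^{cu}$, the freedom to iterate forward further, and, if needed, the freedom to choose among several nearby periodic points of $\cP_0$ whose center-unstable plaques collectively cover a neighborhood of $x$.
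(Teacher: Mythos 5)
Your construction of the initial graph, the use of lemma~\ref{l.graph}, and the periodic-point trick (the transported graph must pass through a periodic point $q$ because $W^{ss}_{loc}(q)\cap H(p)=\{q\}$) all match the paper. But there is a genuine gap at exactly the place you flag: the passage from periodic points to an arbitrary $x\in H(p)$. A center-unstable holonomy from a point $w\in G$ close to $x$ sends $\Gamma_w$ to a graph in $\cW^{cs}_x\cap H(p)$ containing $\Pi^{cu}(w)=\cW^{cu}_w\cap\cW^{cs}_x$, which is in general \emph{not} $x$; and even if $\cW^{cu}_x$ meets $\cW^{cs}_w$ at a point $z^*$ lying on a strong stable plaque that $\Gamma_w$ crosses, the unique point of $\Gamma_w$ on that plaque need not be $z^*$ (the fiber $H(p)\cap W^{ss}$ may contain many points — that is precisely what is at stake). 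For periodic targets this is rescued by $W^{ss}(q)\cap H(p)=\{q\}$; for non-periodic $x$ no such argument exists, and none of your proposed fixes (continuity of $\cW^{cu}$, further iteration, choosing other periodic points) supplies one. The paper instead takes periodic points $y_n\to x$, builds a crossing graph $\Gamma_{y_n}\ni y_n$ at each, and passes to a Hausdorff limit: the limit contains $x$ because $y_n\to x$, and it is a graph by lemma~\ref{l.graph}.

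That limiting step exposes a second gap: it needs a \emph{uniform} lower bound on the central width of the graphs $\Gamma_{y_n}$, otherwise the limit could degenerate to $\{x\}$. Your propagation runs in the wrong direction for this: forward iteration and chains of holonomies give no lower bound on how much of a neighborhood the transported graph covers (the plaques $\cW^{cs}$ are trapped, so $f^n(\Gamma)$ can shrink). The paper fixes a second, nested trapped plaque family $\cD$ of uniformly small diameter inside $\cW^{cs}$ and works with graphs whose endpoints lie on $\overline{\cD_y}\setminus\cD_y$, i.e.\ which cross $\cD_y$ completely; this crossing property is preserved under \emph{backward} iteration (by the trapping of $\cD$) and under the local $\cW^{cu}$-holonomies, giving the uniformity needed for the limit. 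A minor additional slip: a periodic point close to $z_0$ does not lie in $\cW^{cs}_{z_0}$ (that is not what lemma~\ref{l.uniqueness-coherence} says); one must first project $\Gamma_0$ onto $\cW^{cs}_{q_1}$ by the center-unstable holonomy before invoking $W^{ss}(q_1)\cap H(p)=\{q_1\}$.
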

\begin{proof}
Let us consider a non trivial connected compact set $\Gamma\subset \cW^{cs}_{x_0}$.
By lemma~\ref{l.graph} this is a graph.
Let us consider a point $z\in \Gamma$ which is not an endpoint.
One also chooses a trapped plaque family $\cD$ above $H(p)$ tangent to $E^{cs}$
whose plaques have a small diameter and are contained in the plaques of $\cW^{cs}$.
Consequently the connected component $\Gamma_z$
of $\Gamma\cap \cD_z$ contains $z$ and has its endpoints inside $\overline{\cD_z}\setminus\cD_z$. We are aimed to build at each point $x\in H(p)$ a similar graph $\Gamma_x\subset \cD_x$.
This will imply the conclusion of the lemma.
Let us first choose a periodic point $q$ homoclinically related to $p$ and close to $z$.
By projecting $\Gamma$ inside $\cW^{cs}_{q}$ along the center-unstable holonomy,
one deduces that $\cD_q$ contains a graph $\Gamma_q\subset H(p)$
whose endpoints are inside $\overline{\cD_q}\setminus \cD_q$.
It contains a point close to $q$. Since $W^{ss}_{loc}(q)\setminus \{q\}$
is disjoint from $q$, this proves that $\Gamma_q$ contains $q$.

By the trapping property, for each $n\geq 0$, the connected component $\Gamma_{f^{-n}(q)}$
of $f^{-n}(\Gamma_q)\cap \cD_{f^{-n}(q)}$ has also
its endpoints inside the boundary of $\cD_{f^{-n}(q)}$.
As a consequence $H(p)$ contains a dense set of periodic points $y$ and inside each plaque
$\cD_y$ there exists a graph $\Gamma_y$ containing $y$ whose endpoints belong to
$\overline{\cD_y}\setminus \cD_y$.

For any point $x\in H(p)$ there exists a sequence of periodic points $(y_n)$
converging towards $x$ such that the sequence of graphs $(\overline{\Gamma_{y_n}})$ converges
towards a connected compact set $\overline{\Gamma_x}$: by lemma~\ref{l.graph} this is a graph
and by construction its endpoints belong to $\overline{\cD_y}\setminus \cD_y$
as required.
\end{proof}
\bigskip

We are now able to finish the proof of the theorem.
\begin{proof}[\bf Proof of theorem~\ref{t.tot-discontinuity}]
We assume that the conclusion of the theorem does not holds: in particular, the lemma~\ref{l.graph-uniform} applies.
By theorem~\ref{t.whitney}, there exists two distinct points $x,y\in H(p)$ with $y\in W^{ss}(x)$.
By iterations one may assume that $y$ belongs to the strong stable plaque of $x$
in $\cW^{cs}_x$. By lemma~\ref{l.graph-uniform}, there exists a graph
$\Gamma_x\subset \cW^{cs}_x$ which contains $x$ and meets all the strong stable
plaques of points close to $x$ in $\cW^{cs}_x$.
One now argues as at the end of the proof of lemma~\ref{l.graph}:
if $q$ is a periodic point close to $y$, the projection of $\Gamma_x$
to $\cW^{cs}_q$ has to intersect $W^{ss}_{loc}(q)$ at a point close to $x$, hence different from $q$.
This contradicts the assumptions.
\end{proof}

\subsection{Construction of adapted plaques}

We now give a consequence of theorem~\ref{t.tot-discontinuity} giving plaques
adapted to the geometry of the classes along the center-stable plaques.

Let us consider an invariant compact set $K$ with a dominated splitting $E\oplus F$
and a trapped family tangent to $E$ such that the coherence holds for some constant $10\; \varepsilon>0$
(see lemma~\ref{l.uniqueness-coherence}).
Let $\widetilde \cW$ be another trapped family tangent to $E$ whose plaques have a small diameter
and such that for each $x\in K$ one has $\widetilde \cW_x\subset \cW_x$.
The coherence ensures that any plaque $\widetilde \cW_y$ that intersects the $5\varepsilon$-ball centered at $x$
inside $\cW_x$ is contained in $\cW_x$.

\begin{definition}
In this setting, a set $X\subset K$ that is contained in the $\varepsilon$-ball centered at a point $x\in K$ inside the plaque $\cW_x$
is said to be \emph{$\widetilde \cW$-connected} if the union of the plaques $\cW^{cs}_y$ for $y\in X$
is connected.
\end{definition}
\medskip

When the diameters of the plaques $\widetilde \cW^{cs}$ are small, the $\widetilde \cW^{cs}$-connected sets
have a small diameter.

\begin{prop}\label{p.box}
Let $f_0$ be a diffeomorphism, $H(p_{f_0})$ be a chain-recurrence class which is chain-hyperbolic
such that the bundles $E^{cs},E^{cu}$ are thin trapped and consider some neighborhoods $U$ of $H(p_{f_0})$, $\cU$ of $f_0$ in $\diff^1(M)$ and a plaque family $(\cW_{f,x}^{cs})_{f\in \cU,x\in K_f}$ as provided by lemma~\ref{l.robustness}.

If for each $x\in H(p_{f_0})$, the set $H(p_{f_0})\cap \cW^{cs}_{x}$ is totally disconnected, then
for any $\eta>0$ small,
there exist smaller neighborhoods $\widetilde U\subset U$ of $H(p_{f_0})$ and $\widetilde \cU$ of $f_0$
and there are other plaque families $(\widetilde \cW_{f,x}^{cs})_{f\in \widetilde \cU,x\in \widetilde K_f}$
defined on the maximal invariant sets $\widetilde K_f$ in the closure of $\widetilde U$ for $f$,
satisfying the following properties for each $f\in \widetilde \cU$ and $x\in \widetilde K_f$:
\begin{itemize}
\item[--] The plaque $\widetilde \cW^{cs}_{f,x}$ is contained in $\cW^{cs}_{f,x}$.
\item[--] Any $\widetilde \cW^{cs}_f$-connected set of $K_f\cap\cW^{cs}_{f,x}$ which contains $x$
has diameter smaller than $\eta$.
\end{itemize}
\end{prop}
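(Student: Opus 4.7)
The plan is to build the new plaques $\widetilde{\cW}^{cs}_{f,x}$ as small open ``bubbles'' inside the plaques $\cW^{cs}_{f,x}$ whose topological boundaries stay at a uniform positive distance from the maximal invariant set $K_f$. Once this uniform separation is secured, the diameter bound will follow from a short, purely topological connectedness argument.

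First I would use the hypothesis of total disconnectedness: for each $x_0 \in H(p_{f_0})$, the compact set $H(p_{f_0}) \cap \cW^{cs}_{f_0,x_0}$ is totally disconnected inside the manifold $\cW^{cs}_{f_0,x_0}$, and a standard fact about totally disconnected compact subsets of manifolds produces an open neighborhood $V_{x_0}$ of $x_0$ of diameter less than $\eta/3$ whose topological boundary inside $\cW^{cs}_{f_0,x_0}$ is disjoint from $H(p_{f_0})$. By compactness of $H(p_{f_0})$ one extracts a finite subcover $V_{x_1}, \ldots, V_{x_N}$ and obtains a uniform $\rho > 0$ with $d(\partial V_{x_i}, H(p_{f_0})) \geq 2\rho$ for each $i$.

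Next, using a partition of unity subordinate to this cover and transporting the model neighborhoods along the continuous family of plaques $\cW^{cs}_{f,x}$ from lemma~\ref{l.robustness} (in the same spirit as the barycentric construction used there for the maps $\varphi_x$), I would produce a continuous plaque family $\widetilde{\cW}^{cs}_{f,x}$ defined for $f$ in a $C^1$-neighborhood $\widetilde \cU$ of $f_0$ and for $x$ in the maximal invariant set $\widetilde K_f$ in a neighborhood $\widetilde U$ of $H(p_{f_0})$, with each $\widetilde{\cW}^{cs}_{f,x} \subset \cW^{cs}_{f,x}$ of diameter at most $\eta/2$. By continuity of the whole construction in $(f,x)$ and by shrinking $\widetilde \cU$ and $\widetilde U$ if necessary, one keeps a uniform separation $d(\partial \widetilde{\cW}^{cs}_{f,x}, \widetilde K_f) \geq \rho$.

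The final step is the chain-trapping argument. Shrinking the plaques further (e.g.\ precomposing each $\widetilde{\cW}^{cs}_{f,x}$ with a homothety of $E^{cs}_{f,x}$) so that every $\widetilde{\cW}^{cs}_{f,x}$ has diameter less than $\rho/2$, I claim that if $y \in K_f \cap \cW^{cs}_{f,x}$ satisfies $\widetilde{\cW}^{cs}_{f,y} \cap \widetilde{\cW}^{cs}_{f,x} \neq \emptyset$, then $y \in \widetilde{\cW}^{cs}_{f,x}$. Indeed, the coherence of plaque families (lemma~\ref{l.uniqueness-coherence}) gives $\widetilde{\cW}^{cs}_{f,y} \subset \cW^{cs}_{f,x}$, so $y$ lies at intrinsic distance less than $\rho/2$ from some $z \in \widetilde{\cW}^{cs}_{f,x}$; if $y$ were not in $\widetilde{\cW}^{cs}_{f,x}$, any path from $z$ to $y$ of length less than $\rho/2$ inside $\cW^{cs}_{f,x}$ would cross $\partial \widetilde{\cW}^{cs}_{f,x}$ at a point within distance $\rho/2$ of $y \in K_f$, contradicting the uniform separation. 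Iterating the claim along a finite chain of pairwise intersecting plaques that realizes the $\widetilde{\cW}^{cs}_f$-connectedness of $X$, one obtains $X \subset \widetilde{\cW}^{cs}_{f,x}$, hence $\operatorname{Diam}(X) < \eta$.

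The main obstacle is globalizing the local Step 1 to a continuous plaque family while preserving the disjointness of the boundary from $K_f$ uniformly in both $f$ and $x$; this is a routine but somewhat delicate partition-of-unity argument, modeled on the proof of lemma~\ref{l.robustness}. Once that is in place, the chain-trapping step is elementary.
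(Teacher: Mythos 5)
Your strategy (explicit ``bubbles'' whose boundaries are uniformly separated from the invariant set, followed by a connectedness argument) is genuinely different from the paper's proof, which takes the nested trapped plaque families of arbitrarily small diameter $\varepsilon$ provided by thin-trapping (remark~\ref{r.nested}) and argues by contradiction: if the $\widetilde\cW^{cs}_f$-connected components did not shrink as $\varepsilon\to 0$, with $U_\varepsilon$ and $\cU_\varepsilon$ shrinking to $H(p_{f_0})$ and $f_0$ simultaneously, a Hausdorff limit would produce a non-trivial connected subset of some $H(p_{f_0})\cap\cW^{cs}_{f_0,x}$. Your route has a genuine gap precisely at the separation step. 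First, the asserted bound $d(\partial V_{x_i},H(p_{f_0}))\geq 2\rho$ in the ambient metric is false in general: total disconnectedness only lets you arrange that $\partial V_{x_i}$ avoids the \emph{trace} $H(p_{f_0})\cap\cW^{cs}_{f_0,x_i}$, while points of the class outside that plaque (accumulating, e.g., along the center-unstable direction of a quasi-attractor) can come arbitrarily close to $\partial V_{x_i}$. Second, even after correcting to trace-separation, what your last step needs is that $\partial\widetilde\cW^{cs}_{f,x}$ stays $\rho$-away from $K_f\cap\cW^{cs}_{f,x}$ for \emph{all} $f\in\widetilde\cU$ and $x\in\widetilde K_f$. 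This is an upper-semicontinuity statement for the traces of the maximal invariant sets on the moving plaques: it requires both shrinking $\widetilde U$ (so that $\widetilde K_f$ converges to $H(p_{f_0})$, since the maximal invariant set in the fixed $U$ may be strictly larger) and a coherence argument (so that an ambient limit of points of $\widetilde K_f\cap\cW^{cs}_{f,x}$ lands in the plaque $\cW^{cs}_{f_0,x_i}$ and not merely somewhere in $H(p_{f_0})$). This is exactly where the content of the proposition lies and cannot be dismissed as a routine partition-of-unity step; it is what the paper's limit argument accomplishes.

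Two further points. The final iteration does not work as written: the first link gives $y_1\in\widetilde\cW^{cs}_{f,x}$, the second gives $y_2\in\widetilde\cW^{cs}_{f,y_1}$, and these containments do not propagate to $y_2\in\widetilde\cW^{cs}_{f,x}$; along a long chain the displacements of size $\rho/2$ accumulate. The correct conclusion is a separation argument on the connected union $Y=\bigcup_{y\in X}\widetilde\cW^{cs}_{f,y}$: once $Y\cap\partial\widetilde\cW^{cs}_{f,x}=\emptyset$ (which does follow from a boundary separation plus the small diameter of the plaques), connectedness of $Y$ forces $Y\subset\widetilde\cW^{cs}_{f,x}$. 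Finally, the new family must be a \emph{trapped} plaque family — this is built into the definition of $\widetilde\cW^{cs}$-connectedness and is used dynamically in section~\ref{s.2D-central2} (e.g.\ in the proof of claim~\ref{c.iterate}); your bubbles, defined via ambient neighborhoods and a partition of unity, have no reason to satisfy $f(\overline{\widetilde\cW^{cs}_{f,x}})\subset\widetilde\cW^{cs}_{f,f(x)}$, whereas the paper obtains trapping for free from remark~\ref{r.nested}.
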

\begin{proof}
One considers a constant $\varepsilon>0$, two neighborhoods $U_\varepsilon$ of $H(p_{f_0})$
and $\cU_\varepsilon$ of $f_0$ which decrease to $H(p_{f_0})$ and $f_{0}$ as $\varepsilon$ goes to zero,
and a continuous collection of plaque family
$(\cW^{cs}_{\varepsilon,f})_{f\in \cU_\varepsilon}$
defined on the maximal invariant set $K_{\varepsilon,f}$ in the closure of $U_\varepsilon$.
We assume that these families are trapped, that each plaque
$\cW^{cs}_{\varepsilon,f,x}$ has diameter smaller than $\varepsilon$
and that for each $x\in K_{\varepsilon,f}$, the plaque $\cW^{cs}_{\varepsilon,f,x}$ is contained
$\cW^{cs}_{\varepsilon,f}$. Such plaque families are given by remark~\ref{r.nested}.

For $f\in \cU_{\varepsilon}$,
one makes the union $\Delta_f$ of the sets $\overline{\cW^{cs}_{\varepsilon,f,x}}$.
We claim that when $\varepsilon$ goes to zero, the supremum of the diameter of the connected components
of $\Delta_f$ (with respect to the center-stable topology) goes to zero.
Indeed, if this is not the case, one finds as limit set a non-trivial connected component of $H(p_{f_0})$
for $f_0$ and the center-stable topology, which contradicts our assumption.
The plaque family $(\widetilde \cW^{cs}_{f})$ is thus chosen to be $(\cW^{cs}_{\varepsilon,f})$
for some $\varepsilon$ small enough.
\end{proof}


\section{Uniform hyperbolicity of the extremal bundles: proof of theorem~\ref{t.2D-central}}\label{s.2D-central2}
In this section we end the proof of theorem~\ref{t.2D-central}.
We consider:
\begin{enumerate}
\item\label{a1} a diffeomorphism $f_0$ and a chain-hyperbolic homoclinic class $H(p_{f_0})$ which is a
chain-recurrence class endowed with a dominated splitting
$E^{cs}\oplus E^{cu}$ such that:
\begin{itemize}
\item[1a.] $E^{cu}$ is one-dimensional and $E^{cs},E^{cu}$ are thin trapped by $f$ and $f^{-1}$ respectively.
\item[1b.] The intersection of $H(p)$ with the center-stable plaques is totally disconnected.
\end{itemize}
\item\label{a2} a $C^2$-diffeomorphism $f$ that is $C^1$-close to $f_0$,
\item\label{a3} a chain-recurrence class $K$ for $f$ contained in a small neighborhood of $H(p_{f_0})$ such that:
\begin{itemize}
\item[3a.] All the periodic points of $K$ are hyperbolic.
\item[3b.] $K$ does not contain a sink, nor a closed curve $\gamma$ tangent to $E^{cu}$,
invariant by some iterate $f^n$, $n\geq 1$,
such that $f^n_{|\gamma}$ is conjugated to an irrational rotation.
\end{itemize}
\item\label{a4} a transitive invariant compact set $\Lambda\subset K$ for $f$ such that
the bundle $E^{cu}$ is uniformly expanded on any proper invariant compact subset of $\Lambda$.
\end{enumerate}
We prove here the following proposition.

\begin{propo}\label{propoE^{cu}ishyp}
Let us consider some diffeomorphisms $f_0$, $f$, some chain-recurrence classes
$H(p_{f_0})$, $K$ and a subset $\Lambda\subset K$ satisfying the assumptions~\ref{a1})-\ref{a4}) above. Then the bundle $E^{cu}$ is uniformly expanded on any proper invariant compact subset of $\Lambda$.
\end{propo}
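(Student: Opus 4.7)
My plan begins by observing that the literal conclusion of the proposition, namely ``$E^{cu}$ is uniformly expanded on any proper invariant compact subset of $\Lambda$,'' coincides word-for-word with hypothesis~\ref{a4}). Strictly speaking the statement therefore contains no content to be proved beyond the tautological implication hypothesis~$\Rightarrow$~conclusion. Consequently the real substance of the section must lie elsewhere: either in the reduction that produces a set $\Lambda$ satisfying the setup~\ref{a1})--\ref{a4}) out of the raw hypotheses of theorem~\ref{t.2D-central}, or in the subsequent step (which the proposition prepares but does not itself carry out) showing that the expansion extends from proper subsets up to $\Lambda$ itself.

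The proof proposal I would write thus has two short movements. \emph{First}, I verify that the configuration~\ref{a1})--\ref{a4}) can always be reached from the premises of theorem~\ref{t.2D-central}: starting from the $C^{2}$ diffeomorphism $f$ and the chain-recurrence class $K$ in which all periodic orbits are hyperbolic and no sink or invariant irrational $E^{cu}$-circle occurs, apply Zorn's lemma to the family of $f$-invariant compact subsets of $K$ on which $E^{cu}$ fails to be uniformly expanded; a minimal such set $\Lambda$ exists. Any proper invariant compact subset of $\Lambda$ is, by minimality, \emph{not} in that family, i.e.\ satisfies the uniform expansion of $E^{cu}$. Transitivity of $\Lambda$ follows by taking the support of an ergodic measure whose $E^{cu}$-exponent is nonpositive (such a measure exists because $E^{cu}$ is not uniformly expanded on $\Lambda$). \emph{Second}, once this reduction is in place, the proposition itself is an immediate consequence: every proper invariant compact subset of $\Lambda$ carries uniform expansion of $E^{cu}$ by construction of $\Lambda$, and this is exactly what is asserted.

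The principal obstacle, which I would flag but not attempt to resolve inside the proof of this particular proposition, is the companion result---clearly the intended heart of the section---that $E^{cu}$ is uniformly expanded on \emph{all} of $\Lambda$. That step cannot be reduced to hypothesis~\ref{a4}) and is where the genuine work resides: it combines the total disconnectedness of $H(p_{f_0})$ along center-stable plaques (theorem~\ref{t.tot-discontinuity}) with the adapted-plaque construction of proposition~\ref{p.box} to produce a quasi-Markov box geometry, on which Pujals--Sambarino $C^{2}$-distortion estimates along the one-dimensional $E^{cu}$-bundle force uniform expansion, exactly as in the arguments of~\cite{PS1,PS4} but with the uniform contraction of $E^{cs}$ replaced by its thin-trapped topological analogue. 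If the statement of proposition~\ref{propoE^{cu}ishyp} is meant (as I suspect from context and from the preceding sentence ``In this section we end the proof of theorem~\ref{t.2D-central}'') to assert expansion on $\Lambda$ itself, then the plan above must be supplemented by that Markov-partition/distortion argument, and it is there---not in the tautological step---that the main difficulty lies.
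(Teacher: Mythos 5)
You have correctly diagnosed that the conclusion as printed duplicates hypothesis~\ref{a4}) verbatim and is therefore a typo: the intended conclusion, as announced at the start of the section (``we are in the setting of proposition~\ref{propoE^{cu}ishyp} and prove that $E^{cu}$ is uniformly expanded on $\Lambda$'') and as used in the deduction of theorem~\ref{t.2D-central} immediately afterwards, is that $E^{cu}$ is uniformly expanded on $\Lambda$ \emph{itself}. Your first movement (the Zorn's lemma extraction of a minimal non-expanded set $\Lambda$, its transitivity via an ergodic measure with nonpositive $E^{cu}$-exponent) is exactly what the paper does — but it does it in the short ``Proof of theorem~\ref{t.2D-central}'' that follows the proposition, as the reduction \emph{to} the proposition, not as the proof \emph{of} it.

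The genuine gap is that you explicitly decline to prove the substantive statement, and that is the entirety of section~9. The paper's argument is: reduce to producing a nonempty open set $B\subset\Lambda$ satisfying property (E) (returns to $B$ contract $E^{cu}$ under $f^{-1}$ by a factor $1/2$); construct adapted rectangles with a Markov-type return property (proposition~\ref{p.construction}), which requires the adapted plaques of proposition~\ref{p.box}, approximation of $\Lambda$ by periodic orbits whose center-stable plaques lie in their stable manifolds, and a careful choice of boundary plaques $\cW^{cs}_{p^\pm}$ so that iterates never cut the rectangle; establish the summability estimate $\sum_k \ell(f^{-k}(W^{cu}_S(x)))<K(S)$ (proposition~\ref{p.summability}), which combines bounded distortion along center-stable holonomies for $N$-contracting returns (using the H\"older regularity of $E^{ss}$ and the $C^2$ hypothesis) with Pliss-type hyperbolic times between contracting returns; and finally split into the two cases of proposition~\ref{p.construction} (first returns with unbounded return time, versus a hole with aperiodic boundary) and apply the $C^2$ distortion inequality (D) along the one-dimensional unstable curves to obtain (E). Naming the ingredients — total disconnectedness, proposition~\ref{p.box}, Pujals--Sambarino distortion — is not a substitute for these constructions; in particular the replacement of uniform contraction of $E^{cs}$ by the thin-trapped property is precisely where the new difficulty lies (the definition of $N$-contracting returns and the estimate of $\sum_i d(f^i(z),f^i(\Pi^n(z)))^\alpha$ in lemma~\ref{l.distortion-holo}), and your proposal does not engage with it.
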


Let us explain how to conclude the proof of the theorem~\ref{t.2D-central}.

\paragraph{Proof of theorem~\ref{t.2D-central}}
Under the hypothesis of the theorem, the assumptions~\ref{a1}) and~\ref{a2}) above are clearly satisfied.
Note that since $K$ is contained in a small neighborhood of $H(p_{f_0})$, the same holds for any
chain recurrence class $K'$ which meets $K$.
If for any such chain-recurrence class $K'$, the bundle $E^{cu}$ is uniformly expanded,
the same holds for $K$, hence the conclusion of the theorem holds.
Note that if $K'$ contains a curve $\gamma$ tangent to $E^{cu}$ such that
$f^n$ preserves $\gamma$ and is conjugated to an irrational rotation for some $n\geq 1$,
then from the domination $E^{cs}$ is uniformly contracted on the union $X$ of the iterates of $\gamma$
and consequently $X$ is an attractor. Since $K'$ is chain-transitive,
$K'$ coincides with $X$ and is contained in $K$; this gives theorem~\ref{t.2D-central} in this case.
The same holds if $K'$ contains a sink.
We will now assume by contradiction that the conclusion of the theorem does not hold and
hence that $K'$ satisfies~\ref{a3}) and that the bundle $E^{cu}$ is not uniformly expanded by $f$ on $K'$.

One can then consider an invariant compact set $\Lambda\subset K'$ whose bundle $E^{cu}$ is not uniformly expanded and that is minimal for this property. Such a set exists by Zorn's lemma
since if $\{\Lambda_\alpha\}_{\alpha\in \cA}$ is a family of invariant compact sets totally ordered by the inclusion and if $E^{cu}$ is uniformly expanded on the intersection $\cap_{\alpha\in \cA}\Lambda_\alpha$, then the same holds on the $\Lambda_{\alpha}$ for $\alpha$ large enough.
By minimality, for any proper invariant compact set of $\Lambda$, the bundle $E^{cu}$ is uniformly expanded.

Since $E^{cu}$ is one-dimensional and not uniformly expanded on $\Lambda$, there exists
an invariant measure $\mu$ supported on $\Lambda$ and whose Lyapunov exponent along $E^{cu}$
is non-positive. One can assume that $\mu$ is ergodic and by minimality of $\Lambda$
its support coincides with $\Lambda$. This implies that $\Lambda$ is transitive and
satisfies~\ref{a4}).

By applying proposition~\ref{propoE^{cu}ishyp} to $f,\Lambda,K'$, the bundle $E^{cu}$
is uniformly expanded on $\Lambda$ which is a contradiction.
Consequently the conclusion of theorem~\ref{t.2D-central} holds.
\qed

\bigskip

In the following we are in the setting of proposition~\ref{propoE^{cu}ishyp}
and prove that $E^{cu}$ is uniformly expanded on $\Lambda$. The proof follows the strategy
of~\cite{PS1} (see also~\cite{PS3,PS4,Pu1} for more general contexts). The new difficulty is to work with a
non-uniformly contracted bundle $E^{cs}$ having dimension larger than $1$;
the summability arguments and the construction of Markovian rectangles become more delicate.

\paragraph{Strategy.}
Our goal is to find a non-empty open set $B$ of $\Lambda$
which satisfy:
\begin{itemize}
\item[(E)] \emph{For any $x\in B$ and $n\geq 1$ such that $f^{-n}(x)\in B$
we have $\|Df^{-n}_{|E^{cu}}(x)\|<\frac 1 2$.}
\end{itemize}
This concludes the proof of the proposition~\ref{propoE^{cu}ishyp}.
Indeed if one considers any point $x\in \Lambda$, then:
\begin{itemize}
\item[--] either its backward orbit intersects $B$ and property (E) applies,
\item[--] or the $\alpha$-limit set of $x$ is a proper invariant compact subset of $\Lambda$
whose bundle $E^{cu}$ is uniformly contracted by $f^{-1}$.
\end{itemize}
In both cases, the point $x$ has a backward iterate $f^{-n}(x)$ such that
$\|Df^{-n}_{|E^{cu}}(x)\|<1$.
By compactness one deduces that there is some $k\geq 1$ such that
for any $x\in \Lambda$ the derivative $\|Df^{-k}_{|E^{cu}}(x)\|$ is smaller than $1/2$,
concluding the proof that $E^{cu}$ is uniformly expanded on $\Lambda$.
\medskip

\subsection{Topological hyperbolicity on $\Lambda$}\label{ss.topo}
We begin with preliminary constructions and recall some results from~\cite{PS1}
which only use the one-codimensional domination $E^{cs}\oplus E^{cu}$
and the fact that $f$ is $C^2$.
We introduce (in this order) the following objects satisfying several properties stated
in this section:
\begin{itemize}
\item[--] some constants $\kappa,\lambda,\mu,\chi$ related to the domination,
\item[--] two transverse cone fields $\cC^{cs},\cC^{cu}$ on a neighborhood of $H(p_{f_0})$: they are thin neighborhoods
of the bundles $E^{cs}$ and $E^{cu}$ over $H(p_{f_0})$ and they are invariant by $f^{-1}_0$ and $f_0$ respectively.
\item[--] two continuous trapped $C^1$-plaques families
$(\cW^{cs}_f)$, $(\cW^{cu}_f)$ provided by the lemma~\ref{l.robustness},
defined for diffeomorphisms $f$
that are $C^1$-close to $f_0$ and tangent to
the bundles $(E^{cs}_f)$ and $(E^{cu}_f)$ over the maximal invariant sets
in a small neighborhood of $H(p_{f_0})$: the plaques are small and
tangent to $\cC^{cs},\cC^{cu}$.
\item[--] some constants $\varepsilon,\widetilde \varepsilon$
which control the geometry of the center-stable plaques under backward iterations,
their coherence and their intersections,
\item[--] some small neighbourhood $U$ of $H(p_{f_0})$: 
for any diffeomorphism $f$ we then denotes $K_f$ the maximal invariant set of $f$
in $U$.
\item[--] a continuous family of trapped $C^1$-plaques $(\widehat \cW^{cs}_f)$
tangent to $E^{cs}$ over the maximal invariant set in a small neighborhood of $H(p_{f_0})$:
they have a small diameter so that $\widehat \cW^{cs}_x$ is contained in $U$
for each $x\in K$; moreover for each $x\in K$, the plaque
$\widehat \cW^{cs}_x$ is contained in $\cW^{cs}_x$.
This family is obtained by remark~\ref{r.nested}.
It will be used in order to define holes at section~\ref{ss.rectangle}.
\item[--] a scale $\rho$ smaller than the diameter of the plaques $\widehat \cW^{cs}$
and which control the size of Markovian rectangles,
\item[--] a $C^2$-diffeomorphism $f$, a chain-recurrence class $K$ and a chain-transitive set $\Lambda$
satisfying the conditions of the proposition~\ref{propoE^{cu}ishyp}:
the $C^1$-distance between $f$ and $f_0$
and the size of the neighborhood of $H(p_{f_0})$ containing $K$
are chosen small enough in order to satisfy further conditions
that will appear in section~\ref{ss.construction}.
\item[--] a scale $r>0$ which depends on the $C^2$-diffeomorphism $f$ and on the set $\Lambda$, 
where the plaques $\cW^{cu}$ are nicely controled.
\end{itemize}

Now we list a series of properties that are used (and refered to) in the proof of proposition \ref{propoE^{cu}ishyp}.

\paragraph{a) Dominated splitting.}
We first state some consequences of the domination $E^{cs}\oplus E^{cu}$.
To simplify the presentation, one can change the Riemannian metric (see~\cite{gourmelon}) and
find $\kappa\in (0,1)$ such that for each point $x\in H(p_{f_0})$,
and each unitary vectors $u\in E^{cs}_x$ and $v\in E^{cu}_x$, one has
$\|Df_0.u\|\leq \kappa \|Df_0.v\|$.
One then chooses some $\lambda,\mu\in (0,1)$ such that
$\lambda\mu>\kappa$. This implies that:
\begin{itemize}
\item[] \emph{For any $x\in K_f$ one has
$$\|D{f}_{|E^{cs}}(x)\|\geq \lambda \;\; \Rightarrow \;\; \|D{f}_{|E^{cu}}(x)\|> \mu^{-1}.$$}
\end{itemize}
\medskip

Since $E^{cu}$ is not uniformly expanded on $\Lambda$, there exists $\zeta\in \Lambda$
such that $\|Df^n_{|E^{cu}}(\zeta)\|\leq 1$ for all $n\geq 1$.
Note that since $E^{cu}$ is uniformly expanded on any invariant compact subset, the forward orbit of $\zeta$ is dense in $\Lambda$.
With the domination $E^{cs}\oplus E^{cu}$ one deduces:
\begin{itemize}
\item[(i)] \emph{There exists a point $\zeta$ with dense forward orbit in $\Lambda$
such that for each $n\geq 1$ one has
$$\prod_{i=0}^{n-1}\|Df_{|E^{cs}}(f^i(\zeta))\|\leq \kappa^n.$$}
\end{itemize}
\medskip

We fix some small constant $\chi>0$ such that $(1+\chi)\kappa<\lambda$.
Choosing $\cC^{cs}$ thin enough one gets:
\begin{itemize}
\item[(ii)] \emph{For any points $x,y$ that are close and contained in a small neighborhood of $H(p_{f_0})$ and for any unitary vector $u\in \cC^{cs}_x$, one has
$$\|Df_x.u\|\leq (1+\chi)\; \sup\left\{\|Df_y.v\|,\; v\in \cC^{cs}_y, \|v\|=1\right\}.$$}
\end{itemize}

\paragraph{b) Center stable and unstable plaques.}
Assuming that the plaques are small and the cones thin, one deduces from
our choice of $\lambda,\mu$:
\begin{itemize}
\item[(iii)] \emph{If for some point $x\in K_f$ and any $n\geq 0$ one has
$$\prod_{i=0}^{n-1}\|Df_{|E^{cs}}(f^{i}(x)\|\leq \lambda^n,$$
then $\cW^{cs}_x$ is contained in the stable set of $x$, i.e.
the diameter of $f^n(\cW^{cs}_x)$ goes to $0$ as $n\to +\infty$.}
\item[(iv)] \emph{If for some point $x\in K_f$ and some $n\geq 0$ one has
$$\prod_{i=0}^{n-1}\|Df_{|E^{cs}}(f^{i}(x)\|\geq \lambda^n,$$
then the norm of the derivative of $f^{-n}$ along
the plaque $\cW^{cu}_{f^n(x)}$ is smaller than $\mu^n$.}
\end{itemize}
\medskip

The center-stable discs do not degenerate under
backward iterations: let us fix $\varepsilon>0$ small; then
there is $\widetilde \varepsilon>0$ small such that
choosing $f$ close to $f_0$ and $U$ small the following holds.
\begin{itemize}
\item[(v)]\emph{Consider any segment of orbit $(z,\dots,f^{n}(z))$ in $U$ and any disc $D$ tangent to $\cC^{cs}$,
containing a ball centered at $f^n(z)$ of radius $\widetilde \varepsilon$.
Then the preimage $f^{-n}(D)$ contains a ball $B$ centered at $z$ and of radius $\varepsilon$,
whose iterates $f^i(B)$, $i\in \{0,\dots,n\}$, have radius bounded by $\widetilde \varepsilon$.}
\end{itemize}
Indeed each point $f_0^{i}(z)$ is close to a point $x_i\in H(p_{f_0})$.
Each disc $D$ in the plaque $\cW^{cs}_{f_0^n(z)}$ at $f_0^n(z)$ can be viewed as the graph of a Lipschitz map above a domain $\Delta_n$
of $\cW^{cs}_{x_n}$. The invariance of the cones $\cC^{cs},\cC^{cu}$
and the fact that the bundle $E^{cs}$ is thin trapped shows that
$f_0^{-k}(D)$, for $k\in \{0,\dots,n\}$ contains the graph of a Lipschitz map
above a domain $\Delta_{n-k}$ of $\cW^{cs}_{x_{n-k}}$ whose radius is uniformly bounded
from below. The property extends to any diffeomorphism $f$ that is $C^1$-close.
\medskip

The coherence of the plaques (lemma~\ref{l.uniqueness-coherence}) gives:
\begin{itemize}
\item[(vi)] \emph{For any points $x,y\in K_f$ that are $\varepsilon$-close,
if $\cW^{cs}_x\cap \cW^{cs}_y\neq \emptyset$ then $f(\cW^{cs}_{y})\subset \cW^{cs}_{f(x)}$.
If $\cW^{cu}_x\cap \cW^{cu}_y\neq \emptyset$ then $f^{-1}(\cW^{cu}_{y})\subset \cW^{cu}_{f^{-1}(x)}$.}
\end{itemize}
\medskip

The holonomy along the center-stable plaques can be chosen to ``preserve the order":
\begin{itemize}
\item[(vii)] \emph{For any points $x,y\in K_f$ that are $\varepsilon$-close, the plaques $\cW^{cs}_{x}$ and
$\cW^{cu}_{y}$ intersect in a unique point.}
\item[(viii)] \emph{For any points $x^-,x^+,y,z\in K_f$ that are $\varepsilon$-close,
assume that $y$ belongs to a subinterval of $\cW^{cu}_{y}$ bounded by $x^-,x^+$
and denote $\tilde x^-,\tilde x^+,\tilde y$ the intersections of
the plaques $\cW^{cs}_{x^-}, \cW^{cs}_{x^+}, \cW^{cs}_{y}$ with $\cW^{cu}_z$.
Then $\tilde y$ belongs to the subinterval of $\cW^{cu}_z$ bounded by $\tilde x^-,\tilde x^+$.}

(This is a consequence of the coherence of the $\cW^{cs}$-plaques given by the property (vi).)
\end{itemize}
\medskip

\paragraph{c) Smoothness and stability of the center-unstable plaques.} We now use the following result which is based on a Denjoy argument.
(The proof in~\cite{PS1} is written for surface diffeomorphisms but as it is noticed in~\cite{PS3}
this does not make any difference.)

\begin{lemma}[\cite{PS1}, lemma 3.3.2, item1)]\label{l.smoothness}
Let $f$ be a $C^2$-diffeomorphism and $K$ be an invariant compact set endowed with a dominated
splitting $E^{cs}\oplus E^{cu}$ such that $E^{cu}$ is one-dimensional, $K$ does not contain sinks and all its periodic points hyperbolic. Then, there exists
a locally invariant plaque family $\gamma$ tangent to $E^{cu}$ such that
\begin{itemize}
\item[--] the maps $\gamma_x\colon E^{cu}_x\to M$, $x\in K$,
define a continuous family of $C^2$-embeddings;
\item[--] for any $r_0>0$, there exists $r_1>0$ such that for any $x\in K$ and $n\geq 0$
the image of the curve $\gamma_{x,r_1}:=\gamma_x(B(0,r_1))$ by $f^{-n}$
is contained in $\gamma_{f^{-n},r_0}$.
\end{itemize}
\end{lemma}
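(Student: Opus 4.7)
The plan is to construct $\gamma$ in two independent stages: first produce plaques with the right internal regularity, then upgrade the local invariance coming from HPS to the quantitative backward-trapping estimate.

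First I would invoke the plaque family theorem of Hirsch--Pugh--Shub in its $C^1$ version to obtain a continuous family of $C^1$-embedded plaques tangent to $E^{cu}$, locally invariant under $f$. Because $E^{cu}$ is one-dimensional and $f$ is $C^2$, each individual plaque $\gamma_x$ may then be taken of class $C^2$: the graph transform producing $\gamma_x$ acts as a contraction in the $C^2$-norm \emph{along the curve direction}, so its unique fixed curve inherits $C^2$ regularity from $f$, even though the bundle $E^{cu}$ and the dependence $x\mapsto \gamma_x$ are only continuous across $K$. This gives the first bullet.

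The hard part is the second bullet. Fix $r_0>0$. I would argue by contradiction: if no $r_1$ works, there exist sequences $x_k\in K$, $r_k\to 0$, and first exit times $n_k\to\infty$ such that $f^{-i}(\gamma_{x_k,r_k})\subset \gamma_{f^{-i}(x_k),r_0}$ for every $i<n_k$, while the inclusion fails at $i=n_k$. Along these controlled backward iterates each plaque is a one-dimensional $C^2$ arc of length at most $r_0$, so the $C^2$-hypothesis on $f$ (via a Schwartz/Denjoy-type distortion estimate along the arc, as in \cite{PS1}) provides a universal constant $C$ with
\[
\frac{\sup_{y\in \gamma_{x_k,r_k}}\|Df^{-n_k}|_{E^{cu}}(y)\|}{\inf_{y\in \gamma_{x_k,r_k}}\|Df^{-n_k}|_{E^{cu}}(y)\|}\leq C.
\]
Combined with the fact that $|f^{-n_k}(\gamma_{x_k,r_k})|\geq r_0$, bounded distortion prevents the arcs from collapsing in the Hausdorff topology; after reparametrization and extraction, the sequence $f^{-n_k}(\gamma_{x_k,r_k})$ converges to a non-trivial $C^1$-arc $\sigma$ tangent to $E^{cu}$, all of whose forward iterates have length bounded by $Cr_0$.

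The final step is to derive a contradiction from the existence of $\sigma$. Taking the $\omega$-limit of the forward orbits of points in $\sigma$, one obtains an $f$-invariant compact set $\Lambda_\sigma\subset K$ foliated by arcs tangent to $E^{cu}$ whose lengths do not go to zero under iteration; in particular the forward dynamics on these arcs is a one-dimensional homeomorphism which is neither uniformly contracting nor uniformly expanding. A standard Ma\~n\'e--Pujals--Sambarino dichotomy then forces $\Lambda_\sigma$ to contain either (i)~a periodic arc on which $f^\tau$ is a contraction, producing a sink, or (ii)~a periodic point whose $E^{cu}$-exponent is zero, contradicting the hyperbolicity of periodic points of $K$. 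Either alternative violates the hypotheses of the lemma, giving the required $r_1$. The main obstacle throughout is verifying that the Schwartz distortion estimate genuinely applies along the \emph{a priori} only continuously varying plaques; this is precisely where the $C^2$ internal regularity established in the first stage is indispensable.
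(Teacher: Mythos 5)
The paper does not actually prove this statement: it is quoted from \cite{PS1} (Lemma 3.3.2), with the remark that the proof is ``based on a Denjoy argument'' and carries over from surfaces by \cite{PS3}. Your overall scheme --- argue by contradiction, extract a limit arc $\sigma$ tangent to $E^{cu}$ whose forward iterates have bounded length, then contradict ``no sinks / hyperbolic periodic points'' --- is indeed the Pujals--Sambarino scheme. But the step you make quantitative is wrong, and the step that carries the real weight is only asserted. The distortion claim is the concrete gap: the bounded-distortion inequality for $Df^{-n}|_{E^{cu}}$ along a $C^2$ arc $J$ has the form
$$\frac{\sup_{y\in J}\|Df^{-n}|_{E^{cu}}(y)\|}{\inf_{y\in J}\|Df^{-n}|_{E^{cu}}(y)\|}\;\le\;\exp\Bigl(C\sum_{i=0}^{n-1}\ell\bigl(f^{-i}(J)\bigr)\Bigr)$$
(this is exactly property (D) in section~\ref{s.2D-central2}), and knowing only that each $\ell(f^{-i}(\gamma_{x_k,r_k}))\le r_0$ for $i<n_k$ bounds the exponent by $Cn_kr_0\to\infty$. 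A universal distortion constant does not follow; obtaining summability of these lengths is precisely the delicate point (all of section~\ref{s.2D-central2} is devoted to it in a closely related situation) and it is not available here. Fortunately you do not need distortion where you invoke it: the lower bound $\ell(f^{-n_k}(\gamma_{x_k,r_k}))\ge r_0$ is immediate from minimality of the exit time $n_k$ (the arc is connected, contains $f^{-n_k}(x_k)$, and leaves the $r_0$-plaque), and $C^1$-precompactness of the arcs follows from their being uniformly Lipschitz graphs tangent to the $E^{cu}$-cone field.

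Where distortion control genuinely is needed is in your last step, and there it is missing. The ``standard Ma\~n\'e--Pujals--Sambarino dichotomy'' cannot be cited: this lemma is an ingredient of that theory, not a consequence of it, and your two alternatives are not exhaustive. An arc with $\sup_n\ell(f^n(\sigma))<\infty$ may also accumulate on a normally hyperbolic periodic circle supporting an irrational rotation, or on a non-periodic minimal set --- exactly the extra alternatives of Theorem~\ref{t.KS1} --- and these are compatible with the hypotheses of the lemma. Excluding them (or showing they do not obstruct the conclusion) requires the extra information from the construction, namely that $\ell(f^{n_k}(J_k))\to 0$ along the approximating arcs, combined with a Schwartz-type distortion estimate established at carefully selected (Pliss) times; this is the heart of \cite{PS1} and is absent from the proposal. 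Two smaller caveats: your case (i) produces a sink only after invoking domination ($E^{cu}$ contracted along a periodic orbit forces $E^{cs}$ to be contracted as well --- worth saying, since $E^{cs}$ is not assumed contracted); and the claim that the graph transform contracts in the $C^2$-norm requires a $2$-domination (bunching) inequality of the type $\|Df|_{E^{cs}}\|\cdot\|Df^{-1}|_{E^{cu}}\|^2<1$ which is not among the hypotheses, so the $C^2$-regularity of the plaques also needs the separate argument given in \cite{PS1,PS3} rather than the section theorem alone.
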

For the $C^2$-diffeomorphism $f$ and the chain-recurrence class $K$ one deduces that
the plaques $\cW^{cu}$ are $C^2$ in a neighborhood of the section $0\in E^{cu}$
which remains small by backward iterations. Indeed, the
coherence (lemma~\ref{l.uniqueness-coherence}), gives $r>0$ such that
$\cW^{cu}_x(B(0,r))$ is contained in $\gamma_{x}$ for any $x\in K$.

\paragraph{d) Topological expansion along the center-unstable plaques.}
The following result, whose proof is identical to the surface case~\cite{PS1}, asserts that the center-unstable curves
$\gamma$ in the center-unstable direction are unstable manifolds.

\begin{lemma}[\cite{PS1}, lemma 3.5.2]\label{l.top-expansion}
Under the setting of lemma~\ref{l.smoothness}, for any transitive invariant compact set
$\Lambda\subset K$ such that on any proper invariant compact sets the bundle $E^{cu}$ is uniformly expanded, there exists $r>0$ such that
\begin{itemize}
\item[] for any $x\in \Lambda$, the length of
$f^{-n}(\gamma_{x,r})$ decreases uniformly to $0$ as $n\to+\infty$.
\end{itemize}
\end{lemma}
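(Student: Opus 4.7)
\emph{Plan for the proof of lemma~\ref{l.top-expansion}.} The plan is to argue by contradiction. First fix $r_0>0$ small enough that the $C^2$-smoothness of the plaque family $\gamma$ given by lemma~\ref{l.smoothness} provides a uniform bounded-distortion estimate for the iterates of $f$ restricted to sub-arcs of length at most $r_0$; then set $r=r_1(r_0)$ as in lemma~\ref{l.smoothness}, so that $f^{-n}(\gamma_{x,r})\subset \gamma_{f^{-n}(x),r_0}$ for every $x\in K$ and every $n\geq 0$. Assuming the conclusion fails, there exist $\delta>0$, $x_n\in\Lambda$ and $k_n\to\infty$ with $|f^{-k_n}(\gamma_{x_n,r})|\geq \delta$. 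Using the uniform $C^2$-bounds on $(\gamma_w)_{w\in K}$ and extracting a subsequence so that $f^{-k_n}(x_n)\to y\in\Lambda$, the arcs $I_n:=f^{-k_n}(\gamma_{x_n,r})\subset \gamma_{f^{-k_n}(x_n),r_0}$ converge in $C^1$ to a non-trivial arc $\sigma\subset\gamma_y$ of length at least $\delta$.

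Since $f^{k_n}(I_n)=\gamma_{x_n,r}$, for every $m\geq 0$ and every $n$ large enough, $f^m(I_n)$ lies inside $\gamma_{f^{m-k_n}(x_n),r_0}$, hence has length bounded by $r_0$. Passing to the limit one gets $|f^m(\sigma)|\leq r_0$ for every $m\geq 0$. The Schwartz-type distortion estimate for the one-dimensional $C^2$-dynamics on sub-arcs of $\gamma$ of length at most $r_0$ (the very ingredient behind the proof of lemma~\ref{l.smoothness}) then yields a constant $C>0$ such that $\|Df^m_{|E^{cu}}(z)\|\leq C\,r_0/\delta$ for every $z\in\sigma$ and every $m\geq 0$.

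Let $L$ denote the closure of the forward orbit of $\sigma$; this is an $f$-invariant compact subset of $\Lambda$. If $L\neq\Lambda$, the hypothesis on proper invariant subsets yields an integer $N\geq 1$ with $\|Df^N_{|E^{cu}}\|>2$ on $L$. By continuity this estimate persists in a neighborhood of $L$, into which every forward iterate $f^m(z)$ with $z\in\sigma$ eventually enters; this contradicts the uniform bound on $\|Df^m_{|E^{cu}}(z)\|$ obtained above. Hence $L=\Lambda$, meaning that the forward orbit of $\sigma$ is dense in $\Lambda$.

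It remains to exclude this last case, which is the main obstacle. The iterates $f^m(\sigma)$ form a family of $C^2$-arcs tangent to $E^{cu}$, of length uniformly bounded by $r_0$ and with uniform bounded distortion. Following the Denjoy-Schwartz argument underlying the proof of lemma~\ref{l.smoothness} (cf.\ \cite[lemma~3.5.2]{PS1}), one extracts from Hausdorff limits of $(f^m(\sigma))$ a minimal invariant compact family of arcs; applying one-dimensional Denjoy theory to the induced return map to a transverse section then produces either a periodic point $p\in\Lambda$ of some period $\tau$ with $\|Df^{\tau}_{|E^{cu}}(p)\|\leq 1$, or a closed $C^1$-curve tangent to $E^{cu}$ invariant by some $f^n$ and on which $f^n$ is conjugate to an irrational rotation. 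The first alternative, combined with the hyperbolicity of the periodic orbits (assumption 3a) and the thin trapping of $\cW^{cs}$, would force $p$ to be a sink in $K$; the second is forbidden directly. Both are ruled out by assumption 3b, yielding the contradiction that closes the proof. The delicate step is producing the invariant one-dimensional object from the accumulation of the $f^m(\sigma)$, and this is where the $C^2$-regularity of $\gamma$ and the Denjoy-type obstruction to wandering intervals play the crucial role.
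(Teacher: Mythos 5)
The paper offers no proof of this lemma: it is imported from \cite{PS1} with the remark that the proof is identical to the surface case, so your attempt has to be judged against the Pujals--Sambarino argument itself. Your outline has the right shape --- argue by contradiction, extract a limit arc $\sigma\subset\gamma_y$ with $\ell(\sigma)\ge\delta$ and $\ell(f^m(\sigma))\le r_0$ for all $m\ge 0$, and feed the recurrent case into a Denjoy--Schwartz analysis whose outcomes (a non-expanding periodic point, hence a sink, or an invariant circle carrying an irrational rotation) are excluded by the standing assumptions --- and you correctly notice that the no-irrational-rotation hypothesis must be invoked even though it does not appear in the literal statement. But the two steps you actually carry out both fail as written. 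First, the bound $\|Df^m_{|E^{cu}}(z)\|\le C\,r_0/\delta$ for \emph{every} $z\in\sigma$ and \emph{every} $m$ is not a consequence of bounded distortion: the distortion of $f^m$ along $\sigma$ is controlled by $\exp\bigl(\Delta_2\sum_{k=0}^{m-1}\ell(f^k(\sigma))\bigr)$, and knowing only $\ell(f^k(\sigma))\le r_0$ this exponent may grow like $m\,r_0$. The mean value theorem gives, for each $m$, \emph{one} point of $\sigma$ with derivative at most $r_0/\delta$; promoting this to all of $\sigma$ uniformly in $m$ requires the summability $\sum_m\ell(f^m(\sigma))<\infty$, and establishing that summability (or analysing what happens when it fails) is precisely the hard part of the Schwartz argument, not a free estimate one can quote.

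Second, the set $L=\overline{\bigcup_{m\ge 0}f^m(\sigma)}$ is not an invariant compact subset of $\Lambda$: apart from its base point $y$, the arc $\sigma$ lies in the plaque $\gamma_y$ and its points need only belong to the maximal invariant set of a neighbourhood of the class, not to $\Lambda$. The hypothesis of the lemma concerns proper invariant compact subsets of $\Lambda$ only, so it cannot be applied to $L$ (nor to $\omega(\sigma)$). The invariant set the hypothesis does see is $\omega(y)\subset\Lambda$, and one must then transfer uniform expansion near $\omega(y)$ to the lengths $\ell(f^m(\sigma))$; this transfer is not automatic, since expansion in a neighbourhood of $\omega(y)$ only forces short subarcs of $f^m(\sigma)$ around $f^m(y)$ to grow to a definite size, which is still compatible with $\ell(f^m(\sigma))\le r_0$. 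Finally, the step you yourself flag as delicate --- producing the periodic point or the invariant circle from the accumulation of the arcs $f^m(\sigma)$ --- is the actual mathematical core of sections 3.3--3.4 of \cite{PS1} and is left entirely to a citation; so even granting the first two steps, the proposal remains an outline rather than a proof.
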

\medskip

In the following we fix $r>0$ small and depending on $\Lambda$,
as given by the previous lemma, and we denote by
$W^{cu}_{loc}(x)$ the $C^2$-curve $\gamma_{x,r}$ for $x\in K$.
By lemma~\ref{l.smoothness}, the family of unstable curves $(W^{cu}_{loc}(x))_{x\in K}$
is continous for the $C^2$ topology.
For points $x\in \Lambda$ we sometimes write $W^{u}_{loc}(x)=W^{cu}_{loc}(x)$.

\subsection{Adapted rectangles}\label{ss.rectangle}
\paragraph{a) Rectangles.}
The set $B$ in condition (E) will be obtained from a geometry adapted to the splitting $E^{cs}\oplus E^{cu}$.
A rectangle\footnote{The name refers to the rectangles of Markov partitions.
For general hyperbolic sets $K$ the rectangles are subsets of $K$ but on surfaces one can also consider
geometrical Markov partitions~\cite[Appendix 2]{PT} whose rectangles are subsets of the surface diffeomorphic to $[0,1]^2$.
In higher dimensions, when the unstable bundle is one-dimensional,
one can build rectangles that are laminated by curves as in definition~\ref{d.rectangle}.}
of $\Lambda$ will be a union of local unstable leaves of points of $K$.

\begin{defi}\label{d.rectangle} A \emph{rectangle} $R$ is a union $\bigcup_{x\in X}\gamma_x$ with $X\subset K$
such that for each $x\in X$ the set $\gamma_x$ is an open interval of $W^{cu}_{loc}(x)$ bounded by two distinct points
$x^-_R,x^+_R$ in $K_f$ and such that the following properties hold:
\begin{enumerate}
\item\label{rectangle1} $R$ has diameter smaller than $\rho$,
\item $R\cap \Lambda$ is open in $\Lambda$,
\item\label{rectangle3} for any $x,y\in X$, the point $y^-_R$ belongs to $\cW^{cs}_{x^-_R}$
and the point $y^+_R$ belongs to $\cW^{cs}_{x^+_R}$.
\end{enumerate}
The sets $\{x^-_R,\; x\in X\}$ and $\{x^+_R,\; x\in X\}$ are called the \emph{boundaries} of $R$.
\medskip

By item~\ref{rectangle3}) and the property (vi),
any two curves $\gamma_x,\gamma_{x'}$ with $x,x'\in X$ are either disjoint or coincide.
For any $z\in X$ or $z\in R\cap\Lambda$, one can thus denote by
$W^{cu}_R(z)$ the curve $\gamma_x$ containing $z$.
\end{defi}
\medskip

\begin{defi}
A rectangle $S$ is a \emph{subrectangle} of $R=\bigcup_{x\in X}\gamma_x$ if it is a union
$\bigcup_{x\in X}\gamma'_x$ over the same set $X$ as $R$ and if one has $\gamma_x'\subset \gamma_x$ for each $x\in X$.
\end{defi}

\begin{remark}\label{r.uniqueness}
Note that if $S,T$ are two subrectangles of $R$ and if
$x^-_S=x^-_T$ for some $x\in X$, then it holds for all $x$.
Indeed for any $y\in X$, the point $y^-_{T}$ is the intersection of
$\cW^{cs}_{x^{-}_T}=\cW^{cs}_{x^{-}_S}$ with $W^{cu}_{loc}(y)$.
In particular if $W^{cu}_{S}(x)=W^{cu}_{T}(x)$ for some $x\in X$, then $S=T$.
\end{remark}

\paragraph{b) Adapted rectangles.}
We introduce for rectangles a kind of Markov property.

\begin{defi}\label{d.adapted} A rectangle $R$ is \emph{adapted} if for any $x,y\in X$ and $n\geq 0$,
\begin{enumerate}
\item[--] the curve $W^{cu}_R(y)$ is either disjoint from or contained in $f^n(W^{cu}_R(x))$,
\item[--] in the case $W^{cu}_R(y)\subset f^n(W^{cu}_R(x))$ there exists a subrectangle $S$ of $R$
such that for each $z\in X$ the image $f^n(W^{cu}_S(z))$ is an unstable curve of $R$
and such that $f^n(S)$ contains $W^{cu}_R(y)$.
\end{enumerate}
This subrectangle $S$ is called a \emph{return} and $n$ is called a \emph{return time} of $R$.
In the case $f^k(S)$ is disjoint from $R$ for any $0<k<n$, one says that $S$ and $n$
are a \emph{first return} and a \emph{first return time} of $R$.
\end{defi}

The next lemma shows that returns of adapted rectangles are adapted (take $S=R$).
\begin{lemma}\label{l.adapted}
Let $R$ be an adapted rectangle and $S$ be a subrectangle of $R$.
Let also $R'$ be a return of $R$ with return time $n$.
Then $S'=R'\cap f^{-n}(S)$ is a subrectangle of $R'$. If $S$ is adapted, $S'$ is adapted.
\end{lemma}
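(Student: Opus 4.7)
The plan is to exploit the return map $\phi\colon X \to X$ associated with $R'$, defined by the property that $f^n$ sends $W^{cu}_{R'}(z)$ homeomorphically onto $W^{cu}_{R}(\phi(z))$. A direct computation gives
\[
S' \cap W^{cu}_{loc}(z) \;=\; W^{cu}_{R'}(z) \cap f^{-n}(S) \;=\; f^{-n}\bigl(W^{cu}_S(\phi(z))\bigr),
\]
so $S'$ writes as a union $\bigcup_{z \in X} \gamma''_z$ of open subintervals $\gamma''_z \subset W^{cu}_{R'}(z)$ with endpoints $f^{-n}(\phi(z)^\pm_S) \in K_f$. The diameter and openness-in-$\Lambda$ conditions are inherited from $R'$ and from continuity of $f^{-n}$.

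The delicate item is the $\cW^{cs}$-coherence of boundaries (condition~\ref{rectangle3}). For $z,w \in X$, since both $z^-_{S'}$ and $w^-_{S'}$ lie in $R'$ (diameter $< \rho < \varepsilon$), property~(vii) gives a unique intersection point $p \in \cW^{cs}_{w^-_{S'}} \cap \cW^{cu}_z$. Push $p$ forward by $f^n$: the trapping of $\cW^{cs}$ places $f^n(p) \in \cW^{cs}_{\phi(w)^-_S}$, while the return property places it in $W^{cu}_R(\phi(z))$. Since $S \subset R$ is a subrectangle, this intersection is the single point $\phi(z)^-_S$, forcing $f^n(p) = \phi(z)^-_S$ and hence $p = f^{-n}(\phi(z)^-_S) = z^-_{S'}$. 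The analogous argument handles the $+$-side.

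For the adaptedness half, suppose $S$ is adapted. For any $z,w \in X$ and $m \geq 0$ one has
\[
W^{cu}_{S'}(w) \subset f^m\bigl(W^{cu}_{S'}(z)\bigr) \;\Longleftrightarrow\; W^{cu}_{S}(\phi(w)) \subset f^m\bigl(W^{cu}_{S}(\phi(z))\bigr),
\]
so the dichotomy transfers. In the containment case, adaptedness of $S$ produces a return subrectangle $T_S \subset S$ at time $m$, and I would set $T := R' \cap f^{-n}(T_S)$; applying the already-proved first half of the lemma (with $T_S$ in place of $S$) shows $T$ is a subrectangle of $R'$, and since $T_S \subset S$ the inclusion $T \subset S'$ is inherited curve by curve. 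The containment $f^m(T) \supset W^{cu}_{S'}(w)$ is then obtained by pulling $f^m(T_S) \supset W^{cu}_S(\phi(w))$ back by $f^{-n}$.

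The step I expect will require the most care is checking that $T$ genuinely witnesses a return of $S'$ and not just of $R'$: one must verify that $f^m(W^{cu}_T(z')) = f^{-n}(f^m(W^{cu}_{T_S}(\phi(z'))))$ is an unstable curve of $S'$, i.e.\ that the label $\psi(\phi(z'))$ produced by the return of $T_S$ inside $S$ lies in $\phi(X)$. I plan to resolve this by combinatorial considerations on the labeling: $\phi$ is injective since $f^n$ is a diffeomorphism that maps the pairwise-disjoint curves $W^{cu}_{R'}(z)$ into the pairwise-disjoint curves $W^{cu}_R(\phi(z))$; and since $f^n(R')$ is the union of the curves $W^{cu}_R(\phi(z))$ indexed by $\phi(X)$, the forward-iteration structure of $S$ preserves the subset $\phi(X)$ (any curve of $S$ reached by $f^m$ from $\phi(X)$ is still of the form $W^{cu}_S(\phi(z''))$, otherwise $f^m$ would take one of the $W^{cu}_{S'}$-columns outside the geometry prescribed by $R'$, contradicting the rectangle structure). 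Once this label-preservation is established, the verification of the return conditions becomes formal.
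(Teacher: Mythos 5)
The main gap is in your adaptedness argument. Setting $T=R'\cap f^{-n}(T_S)$, the curves of $T$ are $f^{-n}(W^{cu}_{T_S}(\phi(z')))$, so $f^m(W^{cu}_T(z'))=f^{-n}(W^{cu}_S(\psi\phi(z')))$, where $\psi$ is the labeling of the return $T_S$ of $S$. For this to be an unstable curve of $S'=R'\cap f^{-n}(S)$ one needs the $R$-curve indexed by $\psi\phi(z')$ to lie in $f^n(R')$, i.e.\ $\psi\phi(z')\in\phi(X)$ up to curve identification --- and $\phi(X)$ is in general a proper subset of $X$ ($f^n(R')$ is a ``horizontal'' strip of $R$, full in the unstable direction but covering only some leaves). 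You correctly flag this, but your resolution --- that otherwise ``$f^m$ would take one of the columns outside the geometry prescribed by $R'$, contradicting the rectangle structure'' --- is not an argument: nothing in the definitions forbids $f^m$ from carrying a curve of $S'$ onto a curve of $S$ that is not a curve of $S'$; that is precisely what must be excluded. The anchor $\psi\phi(x')=\phi(y')$ settles the single column $x'$, and propagating it to all columns is a genuine nesting statement about the two strips $f^n(R')$ and $f^m(T_S)$ which remains unproved. The paper avoids the issue by a different decomposition: it takes the return $R''$ of $R$ at time $m$ with $f^m(W^{cu}_{R''}(x'))=W^{cu}_R(y')$ and sets $T'=R''\cap f^{-m}(S')$; since the curves of $T'$ are by construction $f^{-m}$-preimages of curves of $S'$, the condition ``$f^m(W^{cu}_{T'}(z))$ is a curve of $S'$'' is tautological, and only the inclusion $T'\subset S'$ (via property (viii)) needs checking. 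I recommend switching to that route.

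Your first half follows the paper's argument but contains one imprecision: you take $p\in \cW^{cs}_{w^-_{S'}}\cap\cW^{cu}_z$ and claim the return property puts $f^n(p)$ on $W^{cu}_R(\phi(z))$. The center-unstable plaques are trapped by $f^{-1}$, so forward iteration enlarges them ($f^n(\cW^{cu}_z)\supset\cW^{cu}_{f^n(z)}$); hence $f^n(p)$ need not lie on $W^{cu}_R(\phi(z))$, nor even on $\cW^{cu}_{f^n(z)}$, and the uniqueness of the intersection with $\cW^{cs}_{\phi(w)^-_S}$ cannot be invoked. The fix is what the paper does: intersect the center-stable plaque with the unstable \emph{curve} $\overline{W^{cu}_{R'}(z)}$, whose image under $f^n$ is by definition $\overline{W^{cu}_R(\phi(z))}$.
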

\begin{proof}
Note that $S'$ has diameter smaller than $\rho$ and $S'\cap \Lambda$ is open in $\Lambda$.
For $x'\in X$, we consider the point $x\in X$ such that $f^n(W^{cu}_{R'}(x'))=W^{cu}_{R}(x)$ and we define $\gamma_{x'}=f^{-n}(W^{cu}_{S}(x))$. By construction and since $R$ is adapted, $S'$ is the union
$\bigcup_{x'\in X}\gamma_{x'}$.
In order to prove that $S'$ is a rectangle it remains to check the item~\ref{rectangle3} of the definition.

For $x',y'\in X$ , we consider $x,y\in X$ such that $f^n(W^{cu}_{R'}(x'))=W^{cu}_{R}(x)$
and $f^n(W^{cu}_{R'}(y'))=W^{cu}_{R}(y)$.
We then denote $x_{S'}^-=f^{-n}(x_S^-)$ and $y_{S'}^-=f^{-n}(y_S^-)$.
We have to prove that $y^-_{S'}$ belongs to $\cW^{cs}_{x^-_{S'}}$.
Let $z$ by the intersection
between $\cW^{cs}_{x_{S'}^-}$ and $W^{cu}_{R'}(y_{S'}^-)$.
The image $f^{n}(z)$ is the intersection between $\cW^{cs}_{x_{S}^-}$ and $W^{cu}_{R}(y_{S}^-)$.
Since $S$ is a subrectangle of $R$, $f^{n}(z)$ and $y_{S}^-$ coincide, hence $z$ and $y_{S'}^-$ coincide, as required.
\medskip

We now assume that $S$ is adapted and prove that $S'$ is adapted too.
Let us suppose that $f^m(W^{cu}_{S'}(x'))$ intersects $W^{cu}_{S'}(y')$ for some $m\geq 0$.
Taking the image by $f^m$,
one deduces that $f^m(W^{cu}_{S}(x))$ intersects $W^{cu}_{S}(y)$.
Since $S$ is adapted, one has $W^{cu}_{S}(y)\subset f^m(W^{cu}_{S}(x))$.
This implies that $W^{cu}_{S'}(y')$ is contained in $f^m(W^{cu}_{S'}(x'))$,
proving the first item of definition~\ref{d.adapted}.

Since $R$ is adapted, there exists a subrectangle $R''$ of $R$
such that, for each $z'\in X$, the image
$f^m(W^{cu}_{R''}(z'))$ is an unstable curve of $R$ and such that
$f^m(W^{cu}_{R''}(x'))=W^{cu}_{R}(y')$. By the first part of the lemma, the intersection
$T'=R''\cap f^{-m}(S')$ is a subrectangle of $R''$.
Note that $W^{cu}_{T'}(x')$ is contained in $W^{cu}_{S'}(x')$. By property (viii)
this implies that for any $z\in X$ one has $W^{cu}_{T'}(x')\subset W^{cu}_{S'}(x')$
proving that $T'$ is a subrectangle of $S'$ such that
$W^{cu}_{T'}(x')$ is mapped on $W^{cu}_{S'}(y')$. Hence $S'$ is adapted.
\end{proof}
\bigskip

\paragraph{c) Holes.} In general, $\La\cap R$ is smaller than $R$ and one can introduce the notion of hole.
\begin{defi}
A \emph{hole} of a rectangle $R$ is a subrectangle that is disjoint from
$\Lambda$ and that is maximal for the inclusion and these properties.

A hole has \emph{aperiodic boundary} if
its boundary $\bigcup_{x\in X}\{x^-_S,x^+_S\}$ is disjoint from its forward iterates.
\end{defi}

\begin{lemma}\label{l.hole}
\emph{1.} If $S$ is a hole of $R$ then
either for any unstable curve $W^{cu}_R(x)$ of $R$ one has $x^-_S=x^-_R$
or there exists a sequence $(x_n)$ in $R\cap \Lambda$ such that
$d(x_n,x_{n,S}^-)$ goes to zero as $n\to +\infty$.
\smallskip

\noindent
\emph{2.} Holes of adapted rectangles are adapted.
\smallskip

\noindent
\emph{3.} For any adapted rectangle $R$, any hole $S$ with aperiodic boundary and any $\tau\geq 1$, there exists $N\geq 1$
such that for any $x\in \Lambda\cap R$ and any $n\geq N$ satisfying
$f^{-n}(W^{cu}_S(x))\subset S$, the iterates $f^{-n-k}(W^{cu}_S(x))$
for $k\in\{1,\dots,\tau\}$ are disjoint from $S$.
\end{lemma}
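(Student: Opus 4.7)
The plan is to handle the three claims separately, in order of increasing subtlety.

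For Part 1, I would use the maximality of the hole directly. By remark \ref{r.uniqueness}, the equality $x^-_S = x^-_R$ either holds for every $x \in X$ or for none, so it suffices to assume the second case and derive the alternative. Suppose for contradiction that no sequence $x_n \in R \cap \Lambda$ approaches the $-$-boundary of $S$ along its own unstable curve: by compactness of $\overline S$ and continuity of the plaque families (via properties (v) and (vii)), one obtains a uniform $\varepsilon > 0$ such that the $\varepsilon$-collar of the $-$-boundary of $S$ inside $R$ meets no point of $\Lambda$. Shifting the $-$-boundary plaque of $S$ by $\varepsilon$ toward $x^-_R$ along the center-unstable direction produces a strictly larger subrectangle $\tilde S \supsetneq S$ of $R$ still disjoint from $\Lambda$, contradicting the maximality of $S$.

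For Part 2, let $H$ be the hole and suppose $W^{cu}_H(y) \cap f^n(W^{cu}_H(x)) \neq \emptyset$ for some $x, y \in X$ and $n \geq 0$. Since $R$ is adapted there is a return $R'$ with $f^n(W^{cu}_{R'}(x)) = W^{cu}_R(y)$, and by lemma \ref{l.adapted} the intersection $T := R' \cap f^{-n}(H)$ is a subrectangle of $R$ disjoint from $\Lambda$ (because $f^n(T) \subset H$ and $\Lambda$ is invariant). The starting intersection hypothesis provides a point in $W^{cu}_T(x) \cap W^{cu}_H(x)$, and the order-preservation property (viii) propagates this overlap to every unstable curve of $X$, so that $T \cup H$ is a bona-fide subrectangle of $R$ (its $\pm$-boundary plaques are selected between those of $T$ and $H$ in an ordering that (viii) guarantees to be consistent along all of $X$). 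This enlarged subrectangle is still disjoint from $\Lambda$, so by maximality $T \cup H = H$, i.e.\ $T \subset H$; this immediately yields both $W^{cu}_H(y) \subset f^n(W^{cu}_H(x))$ and $T$ as the required witnessing subrectangle of $H$.

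For Part 3, I would argue by contradiction. If the conclusion fails, there exist points $x_j \in \Lambda \cap R$, integers $n_j \to \infty$ and $k_j \in \{1,\ldots,\tau\}$ with $f^{-n_j}(W^{cu}_S(x_j)) \subset S$ and $f^{-n_j-k_j}(W^{cu}_S(x_j)) \cap S \neq \emptyset$; extract a subsequence with $k_j = k$ constant. Since $S$ is adapted (by Part 2), the first inclusion yields a return $S_j$ of $S$ with $W^{cu}_{S_j}(z_j) = f^{-n_j}(W^{cu}_S(x_j))$ for some $z_j \in X$, and lemma \ref{l.top-expansion} forces the length of $W^{cu}_{S_j}(z_j)$ to zero as $n_j \to \infty$. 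Using compactness of $\overline S$, the endpoints $f^{-n_j}(x_{j,S}^\pm)$ collapse to a common limit $z_\infty \in \overline S$ lying on a boundary plaque of $S$; the second hypothesis then forces $f^{-k}(z_\infty) \in \overline S$ on a boundary plaque as well. Identifying which $\pm$-plaque each of $z_\infty$ and $f^{-k}(z_\infty)$ lies on, one obtains that some boundary point of $S$ is sent by $f^k$ (with $1 \le k \le \tau$) back onto a boundary point of $S$, contradicting the aperiodicity hypothesis on the boundary.

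The main obstacle is Part 2, specifically the verification that $T \cup H$ is a legitimate subrectangle of $R$: one must check that the extremal boundary points $z^\pm_T, z^\pm_H$ appear in a consistent order on every $W^{cu}_R(z)$, so that the union has well-defined $\pm$-boundary plaques. This is precisely what property (viii) delivers once the overlap $W^{cu}_T(x) \cap W^{cu}_H(x) \neq \emptyset$ is established on a single unstable curve, but the bookkeeping must be carried out carefully to ensure that the maximality of $H$ really applies to $T \cup H$ rather than to some auxiliary object.
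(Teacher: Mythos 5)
Your Parts 1 and 2 are essentially sound. Part 2 in fact takes a genuinely different and arguably cleaner route than the paper: the paper controls where the endpoints $f^n(x^\pm_S)$ of an iterated curve can land by invoking Part 1 (either they are endpoints of $R$ or they are accumulated by $\Lambda\cap R$), whereas you bypass Part 1 entirely by forming $T=R'\cap f^{-n}(H)$ via lemma~\ref{l.adapted}, observing $T\cap\Lambda=\emptyset$ by invariance, and letting the maximality of the hole force $T\subset H$; the bookkeeping with property (viii) that you flag does go through. In Part 1 your idea (enlarge the hole to contradict maximality) is the paper's, but ``shifting the boundary plaque by $\varepsilon$'' does not by itself produce a rectangle: by definition~\ref{d.rectangle} the new boundary must be carried by the center-stable plaque of an actual point of $K_f$, common to all unstable curves of $X$. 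The paper supplies this point as the extremal accumulation point $\bar x$ of $\Lambda\cap R$ on the relevant side of $x^-_S$ and uses $\widehat\cW^{cs}_{\bar x}$; some such construction is needed in your argument as well.

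Part 3 has a genuine gap. You assert that the common limit $z_\infty$ of the endpoints $f^{-n_j}(x^\pm_{j,S})$ ``lies on a boundary plaque of $S$,'' and likewise for $f^{-k}(z_\infty)$, but compactness of $\overline S$ only puts these limits in $\overline S$: the curves $f^{-n_j}(W^{cu}_S(x_j))$ are curves of returns of $S$ with large return time, and a priori they sit anywhere inside $S$ (indeed, by aperiodicity their endpoints are \emph{never} on $\partial S$), so without further input $z_\infty$ could lie deep in the interior of $S$ and no contradiction with aperiodicity arises. The missing idea, which is the heart of the paper's argument, is to bring the base point into play: $x_j\in\Lambda$ lies outside $S$ but on the curve $W^{cu}_R(x_j)$, and for $n\geq N$ the whole curve $f^{-n}(W^{cu}_R(x_j))$ has length less than $\delta$ and contains both $f^{-n}(x_j)\notin S$ and $f^{-n}(W^{cu}_S(x_j))\subset S$; since it must cross $\partial S$ in between, the return curve is squeezed within $\delta$ of a boundary point of $S$. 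One then plays this against the fact (coming from aperiodicity plus the finiteness of returns with return time at most $\tau$) that any return of $S$ with return time $k\leq\tau$ has its boundary at distance greater than $\delta$ from $\partial S$: if $f^{-n-k}(W^{cu}_S(x_j))$ met $S$, adaptedness would place it inside a curve of such a short-time return, hence $\delta$-deep in $S$, while the squeezing argument applied at time $n+k$ forces it within $\delta$ of $\partial S$ --- a contradiction. Your proposal never uses the uniform $\delta$-separation of short-time returns nor the position of $f^{-n}(x_j)$, and without one of these the aperiodicity hypothesis cannot be brought to bear.
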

\begin{proof}
Let $S$ be a hole of $R$ and $W^{cu}_R(x)$ be an unstable curve.
We suppose that $x^-_S\neq x^+_R$.
The points $y\in K_f\cap \overline{R}$ can be ordered by considering the projections
$\cW^{cs}_y\cap \overline{W^{cu}_R(x)}$ on $\overline{W^{cu}_R(x)}$ in such a way that $x^-_S<x^+_S$.
The union of the curves $\gamma'_y\subset W^{cu}_R(y)$ for $y\in X$,
bounded by $y^-_R$ and $y^+_S$, is a rectangle. Thus, since $S$ is a hole
and $x^-_R<x^-_S$, there exists points $y\in \Lambda\cap R$
such that $x^-_R< y \leq x^-_S$.

If there exists an increasing sequence $(x_n)\in \Lambda\cap R$
whose projections on $W^{cu}_R(x)$ converge towards $x^-_S$,
then the distance $d(x_n,x^-_{n,S})$ goes to zero and we are done.
So we assume by contradiction that this is not the case.
There exists a point $\bar x\in \Lambda\cap \overline R$
which is the limit of points $y\in \Lambda\cap R$ and such that
there is no point $y\in \Lambda\cap R$ satisfying $\bar x<y\leq x^-_S$.
Since $R$ has diameter smaller than $\rho$, which has been chosen smaller than
the size of the plaque $\widehat \cW^{cs}$,
the plaque $\widehat \cW^{cs}_{\bar x}$ intersects each curve
$W^{cu}_R(y)$ at a point $y^-_T$.
The union of the curves $\gamma'_y\subset W^{cu}_R(y)$ for $y\in X$,
bounded by $y^-_T$ and $y^+_R$ is a rectangle
whose intersection with $\Lambda$ is empty. This contradicts
the maximality of $S$. We have thus proved the first item of the lemma.
\medskip

Let us assume that $R$ is adapted and that $W^{cu}_S(y)$ intersects $f^n(W^{cu}_S(x))$
for some $n>0$ and some $x,y\in X$.
We have to show that $f^n(x^-_S)$ and $f^n(x^+_S)$ do not belong to the open curve
$W^{cu}_S(y)$.
Since $R$ is adapted, there exists a return $T$ of $R$
such that $f^n(W^{cu}_T(x))=W^{cu}_R(y)$.
By property (viii), the rectangle $T$ contains $S$.
In the case $z^-_S$ and $z^-_R$ coincide for $z\in X$, the point
$f^n(z^-_S)=f^n(z^-_T)$ does not belong to the interior of the curves of $R$, as required.
Otherwise, there exists by the first item a sequence $(x_k)$ in $\Lambda\cap R$
such that $d(x_k,x_{k,S^-})$ goes to $0$ as $k$ goes to $+\infty$.
Hence $f^n(x_k)$ is close to $f^n(x_{k,S}^-)$ and belongs to $R$.
We have thus proved that $\cW^{cs}_{f^n(x^-_S)}$ is accumulted by points of $\Lambda\cap R$.
As a consequence, $f^n(x^-_S)$ can not belong to the interior of an unstable curve of $S$.
This gives the second item of the lemma.
\medskip

Note that $S$ has only finitely many returns with return time smaller or equal to $\tau$.
If $S$ has aperiodic boundary, its boundary is disjoint from the boundary of each of its returns:
there exists $\delta>0$ such that for any return $T$ with return time smaller or equal to $\tau$,
one has $d(x^-_S,x^-_T)>\delta$ and $d(x^+_S, x^+_T)>\delta$.
For $n$ larger than some constant $N$, the unstable curves $f^{-n}(W^{u}_{loc}(x))$ of points $x\in \Lambda$
have a size smaller than $\delta$. If $x\in R$ and $f^{-n}(W^{cu}_S(x))\subset S$, then
the iterate $f^{-n}(x)\in \Lambda$ belongs to $R\setminus S$. One deduces that $f^{-n}(W^{cu}_S(x))$
belongs to a return of $S$ with return time larger than $N$.
This gives the third item of the lemma.
\end{proof}

\subsection{Construction of adapted rectangles}\label{ss.construction}
The assumptions~\ref{a1}) and~\ref{a2}) are now used for the construction of adapted rectangles.
The proof is strongly based on proposition~\ref{p.box}.
\begin{proposition}\label{p.construction}
There exists an adapted rectangle $R$ such that $R\cap \Lambda$ is non-empty.

Moreover one can choose $R$ in such a way that one of the following cases occur.
\begin{enumerate}
\item For any $\tau\geq 0$, there is a first return $S$ of $R$ with return time larger than $\tau$
such that $\Lambda\cap S\neq \emptyset$.
\item There exists a hole $S$ of $R$ with aperiodic boundary.
\end{enumerate}
\end{proposition}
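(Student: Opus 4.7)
The plan has three stages: build a preliminary rectangle around a well-chosen point of $\Lambda$, refine it by a Markov-type cutting process into an adapted rectangle, and then split into the two cases of the dichotomy.

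Stage 1 (preliminary rectangle). Apply Proposition~\ref{p.box} with $\eta\ll \rho$ to obtain an auxiliary trapped plaque family $\widetilde{\cW}^{cs}$ whose $\widetilde{\cW}^{cs}$-connected subsets of $K_f$ have diameter much smaller than $\rho$ (and than the $\widetilde\varepsilon$ of property~(v)). Pick a recurrent point $z_0 \in \Lambda$ whose forward orbit is dense in $\Lambda$, which exists thanks to property~(i) applied to $\zeta$. On the $C^2$-curve $W^{cu}_{loc}(z_0)$ pick $x^-_0, x^+_0 \in K_f$ bracketing $z_0$ at a scale much smaller than $r$. Let $X_0 \subset K_f$ be the open set of points $y$ with $\widetilde{\cW}^{cs}_y \cap \widetilde{\cW}^{cs}_{z_0} \neq \emptyset$; for every $y \in X_0$, property~(vii) produces unique transverse intersections $y^{\pm}_0 = \cW^{cs}_{x^\pm_0} \cap W^{cu}_{loc}(y)$, defining intervals $\gamma_y \subset W^{cu}_{loc}(y)$ and a set $R_0 = \bigcup_{y \in X_0} \gamma_y$. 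Property~(viii) gives item~\ref{rectangle3} of Definition~\ref{d.rectangle}, openness of $X_0$ in $\Lambda$ gives item~2, and the choice of $\widetilde{\cW}^{cs}$ together with Proposition~\ref{p.box} gives the diameter bound of item~\ref{rectangle1}.

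Stage 2 (Markov refinement). The rectangle $R_0$ is generally not adapted: there may exist $y \in X_0$ and $n > 0$ such that $f^n(W^{cu}_{R_0}(y))$ meets $W^{cu}_{R_0}(y')$ but does not contain it, i.e. one of the endpoints $f^n(y^\pm_0)$ lies in the interior of $W^{cu}_{R_0}(y')$. I refine $R_0$ by iteratively pulling the boundaries $x^\pm$ toward $z_0$ along $W^{cu}_{loc}(z_0)$: among all ``offending'' preimages of the current boundary that lie in $K_f$, choose those closest to $z_0$ on each side and take them as the new $x^\pm$. The order-preservation~(viii) together with coherence~(vi) propagates the cut consistently to every $y \in X_0$, and Lemma~\ref{l.top-expansion} guarantees that backward iterates of $W^{cu}_{loc}(\cdot)$ shrink uniformly, so the total length removed from each side is summable and the procedure converges to a nondegenerate adapted subrectangle $R \subset R_0$. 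A small enough initial choice ensures $z_0 \in R$, so $R \cap \Lambda \neq \emptyset$.

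Stage 3 (dichotomy). With $R$ adapted, suppose case~2 fails, i.e. every hole of $R$ has a boundary point whose forward orbit meets the boundary again; by Lemma~\ref{l.hole}(2) the holes are themselves adapted, so this forces every boundary point of every hole to be eventually periodic and its forward orbit to stay in the union of the two center-stable boundary leaves $\cW^{cs}_{x^\pm_R}$. Thus all ``short'' first returns of $R$ come from a finite list of periodic combinatorics. Meanwhile $R \cap \Lambda$ is a non-empty open subset of the chain-transitive set $\Lambda$ on which $E^{cu}$ is not uniformly expanded: applying property~(i) to the dense-orbit point $\zeta$ produces orbits in $\Lambda$ that visit $R$ infinitely often but spend arbitrarily long stretches outside $R$ between visits. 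These yield, for every $\tau$, a first return $S$ of $R$ with return time $>\tau$ and $S \cap \Lambda \neq \emptyset$, which is case~1.

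The main obstacle is the convergence of the Markov refinement in Stage~2 while keeping $z_0 \in R$ and without destroying item~\ref{rectangle3} of Definition~\ref{d.rectangle}: it is here that Lemma~\ref{l.top-expansion} (to obtain geometrically summable cuts) and the coherence properties~(vi)--(viii) (to keep the cuts synchronized across all unstable fibers of $R$) must be used in tandem. Once the adapted rectangle is in hand, the dichotomy follows cleanly from Lemma~\ref{l.hole} and the chain-transitivity of $\Lambda$.
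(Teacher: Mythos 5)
The central gap is in Stage~2. Being adapted is an infinite family of conditions (one for each $n\geq 0$ and each pair of fibers), and your iterative cutting procedure does not converge for the reason you yourself flag: each time you pull a boundary inward to the nearest ``offending'' point, the new boundary point acquires its own forward orbit, which can land in the interior of the (smaller) rectangle and force a further cut. Lemma~\ref{l.top-expansion} controls lengths of \emph{backward} iterates of unstable curves of points of $\Lambda$; it gives no control on how much length the successive cuts remove, since those cuts are dictated by where \emph{forward} images of the current boundary happen to fall, and these can accumulate on $z_0$. There is also no mechanism guaranteeing that the limiting boundary points lie in $K_f$ and that their center-stable plaques never re-enter the rectangle. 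The paper sidesteps this entirely with an idea absent from your proposal: the boundaries of $R$ are taken to be the plaques $\cW^{cs}_{p^-},\cW^{cs}_{p^+}$ of two iterates of a single periodic point $p_k$ shadowing $\Lambda$ (produced by a graph-transform fixed-point argument, Claim~\ref{l.shadowing}), chosen so that the interval $I$ they bound contains no other intersection with a plaque of an iterate of $p_k$ near $\zeta$. Claim~\ref{c.iterate} then shows that \emph{no} forward iterate of a boundary plaque can cut $I$, so images of boundary points can never land in the interior of an unstable curve of $R$, and adaptedness holds at once for all $n$ with no refinement needed.

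Stage~3 also does not go through as written. The paper's dichotomy is organized around whether some plaque $\cW^{cs}_x$ with $x\in\Lambda$ is mapped into itself by an iterate: if yes, the orbit of that plaque is disjoint from $\overline{\cW^{cs}_\zeta}$, the rectangle can be taken thin enough to avoid it, and transitivity of $\Lambda$ forces unbounded first return times (case~1); if no, one constructs a hole bounded by the plaque of a point of $\Lambda\cap\overline{X^-}$ and then passes to successively deeper returns of $R$ containing that boundary to make it aperiodic (case~2). Your argument negates case~2 and asserts that all short returns come from ``a finite list of periodic combinatorics'' and that chain-transitivity then yields arbitrarily long first returns meeting $\Lambda$; neither step is justified --- in particular the failure of case~2 (no hole with aperiodic boundary) does not by itself separate $\Lambda\cap R$ from the periodic structure in the way your conclusion requires, and you give no construction of a hole at all, so the dichotomy is not actually established.
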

\noindent
The section continues with the proof of this proposition.

\paragraph{a) The construction.}
We have to require further assumptions on $f$ and $\Lambda$
needed to perform the following construction.
We first choose $\eta>0$ small. In particular one has $\eta<\rho<\varepsilon$
and the $10\; \eta$-neighborhood of $H(p_{f_0})$ is contained in $U$.

Let us apply the proposition~\ref{p.box}: one gets a smaller open neighborhood
$\widetilde U$ of $H(p_{f_0})$ such that for any diffeomorphism $f$ that is close enough
to $f_0$ in $\diff^1(M)$, there exists a continuous family
of $C^1$-plaques $\widetilde \cW^{cs}$ tangent to $E^{cs}$
over the maximal invariant set $\widetilde K_f$ of $f$ in $\widetilde U$
which satisfies the following properties:
\begin{itemize}
\item[--] If two plaques $\cW^{cs}_x$ and $\widetilde \cW^{cs}_y$ have an intersection in the $\rho$-ball centered at $x$
then $\widetilde \cW^{cs}_y\subset \cW^{cs}_x$.
\item[--] Any $\widetilde\cW^{cs}$-connected set of $K\cap \cW^{cs}_x$ containing $x$ has radius smaller than $\eta$.
\end{itemize}

Since $f$ is close to $f_0$, if the chain-transitive set $\Lambda$ for $f$ is contained in a small neighborhood
of $H(p_{f_0})$, then the chain-recurrence class $K$ that contains $\Lambda$
is also contained in $\widetilde U$.
We thus have the inclusions $\Lambda\subset K\subset \widetilde K_f\subset K_f$.

\paragraph{\it Approximation by periodic orbits.}
We build a sequence of periodic points $(p_k)$ in $K$ such that
\begin{itemize}
\item the orbit of $p_k$ converges toward $\Lambda$ for the Hausdorff topology,
\item for each iterate $f^n(p_k)$, the plaque $\cW^{cs}_{f^n(p_k)}$ is contained in
the stable manifold of $f^n(p_k)$.
\end{itemize}
Let us fix $\zeta\in \Lambda$ satisfying the property (i).
With the property (iii), the plaque $\cW^{cs}_\zeta$ is contained in the stable set of $\zeta$.
Note that $\zeta$ is not periodic since otherwise $\Lambda$ would be a periodic orbit: by our assumptions, either it would be a sink or the bundle $E^{cu}$ would be uniformly expanded,
contradicting our assumptions.
This ensures that all the plaques $\cW^{cs}_{f^n(p_k)}$ and $\cW^{cs}_\zeta$ are disjoint.

\begin{claim}\label{l.shadowing}
For any $\alpha>0$, there exists $\delta>0$ with the following property.
For any forward return $y=f^n(\zeta)$ that is $\delta$-close to $\zeta$,
there exists $x\in W^{cu}_{loc}(\zeta)\cap K$ such that
$d(f^k(x),f^k(\zeta))$ is smaller than $\alpha$ for each $0\leq k\leq n$ and
the image $f^n(\cW^{cs}_{x})$ is contained in $\cW^{cs}_{x}$.

In particular for any $k\geq 0$ one has
$$\prod_{i=0}^{k-1}\|Df_{|E^{cs}}(f^i(x))\|\leq \lambda^k.$$
\end{claim}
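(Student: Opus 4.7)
The strategy is a classical contraction-mapping shadowing argument implemented on the local unstable curve at $\zeta$. Write $y=f^n(\zeta)$. Provided $y$ is $\delta$-close to $\zeta$, property (vii) ensures that for any $z\in K_f$ in a small neighborhood of $\zeta$ the plaque $\cW^{cs}_z$ meets $W^{cu}_{loc}(y)$ in a unique transverse point, so I may define on a small arc $V\subset W^{cu}_{loc}(\zeta)$ containing $\zeta$ the map
\[
G(z):=f^{-n}\bigl(\cW^{cs}_z\cap W^{cu}_{loc}(y)\bigr).
\]
A fixed point $x\in V$ of $G$ satisfies $f^n(x)\in\cW^{cs}_x$, and hence, by the coherence property (vi), automatically $f^n(\cW^{cs}_x)\subset\cW^{cs}_x$, as required.

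The key point is to check that $G$ is a uniform contraction on $V$ once $\delta$ is small (forcing $n$ to be large). By property (i) one has $\prod_{i=0}^{n-1}\|Df_{|E^{cs}}(f^i(\zeta))\|\le\kappa^n$, so the domination $E^{cs}\oplus E^{cu}$ together with property (iv) imply that $f^{-n}$ contracts along $W^{cu}_{loc}(y)$ by at most $\mu^n$, while the map $z\mapsto \cW^{cs}_z\cap W^{cu}_{loc}(y)$ has uniformly bounded Lipschitz constant by the continuity of the plaque families. Thus $\Lip(G)\le C\mu^n\ll 1$. Since $G(\zeta)\in\cW^{cs}_\zeta\cap W^{cu}_{loc}(y)$ is at distance $O(\delta)$ from $\zeta$, the contraction-mapping theorem produces a unique fixed point $x\in V$, and $x$ is automatically close to $\zeta$.

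For the shadowing estimate $d(f^k(x),f^k(\zeta))<\alpha$ with $0\le k\le n$, I use that $x$ and $\zeta$ lie on the same curve $W^{cu}_{loc}(\zeta)$ and that the map $f^{n-k}$ at $f^k(x)$ contracts backwards along $W^{cu}$ by at most $\mu^{n-k}$ (property (iv)); combining this with the bound $d(f^n(x),y)=O(\delta)$ yields $d(f^k(x),f^k(\zeta))\le C\mu^{n-k}\delta<\alpha$ once $\delta$ is sufficiently small. To see that $x\in K$: trapping of $f^n$ on $\cW^{cs}_x$ and property (iii) produce a unique fixed point $p\in\cW^{cs}_x$ of $f^n$, which is a hyperbolic periodic point of $f$ whose orbit shadows a segment of the orbit of $\zeta$; in particular $p$ and $\zeta$ are chain-equivalent, so $p\in K$. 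Since $x\in W^{cu}_{loc}(\zeta)\subset W^u(\zeta)$ and simultaneously $x\in W^s(\mathrm{orb}(p))$, the point $x$ is both chain-forward and chain-backward equivalent to $K$ and thus lies in $K$ itself.

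Finally, I obtain the bound $\prod_{i=0}^{k-1}\|Df_{|E^{cs}}(f^i(x))\|\le\lambda^k$ as follows. Choosing the diameter of the plaques $\cW^{cs}$ smaller than $\alpha$, the inclusion $f^n(\cW^{cs}_x)\subset\cW^{cs}_x$ and local invariance imply that each iterate $f^{jn+r}(x)$ with $j\ge 0$ and $0\le r<n$ lies in $\cW^{cs}_{f^r(x)}$, hence at distance at most $\alpha+d(f^r(x),f^r(\zeta))<2\alpha$ from $f^r(\zeta)$. Property (ii) then gives $\|Df_{|E^{cs}}(f^{jn+r}(x))\|\le(1+\chi)\|Df_{|E^{cs}}(f^r(\zeta))\|$, and multiplying these estimates for $i=0,\dots,k-1$ and using property (i) yields $\prod_{i=0}^{k-1}\|Df_{|E^{cs}}(f^i(x))\|\le((1+\chi)\kappa)^k\le\lambda^k$ by the choice $(1+\chi)\kappa<\lambda$. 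The main technical point is the uniform contraction of $G$ with constants independent of the particular return $n$, which is exactly what the uniform exponents in properties (i)--(iv) and the uniformly Lipschitz center-stable holonomies provide.
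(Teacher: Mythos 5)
Your proof is structured like the paper's — define a map $G$ on the local unstable arc by projecting along the $\cW^{cs}$-plaques onto $W^{cu}_{loc}(y)$ and then applying $f^{-n}$, and locate a fixed point — but the mechanism you use to produce the fixed point is wrong. You claim $\Lip(G)\le C\mu^n$ by invoking property (iv), whose hypothesis is $\prod_{i=0}^{n-1}\|Df_{|E^{cs}}(f^i(\cdot))\|\ge\lambda^n$ (slow center-stable contraction). At $\zeta$, property (i) gives the \emph{opposite} inequality $\prod\le\kappa^n<\lambda^n$, so (iv) simply does not apply. Worse, this cannot be patched: the point $\zeta$ was chosen precisely because $\|Df^n_{|E^{cu}}(\zeta)\|\le1$ for all $n\ge1$, so $\|Df^{-n}_{|E^{cu}}(f^n(\zeta))\|\ge1$, and $f^{-n}$ restricted to $W^{cu}_{loc}(y)$ near $y=f^n(\zeta)$ need not be a contraction at all — $\zeta$ is the witness of non-uniform expansion along $E^{cu}$. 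The same error underlies your shadowing estimate $d(f^k(x),f^k(\zeta))\le C\mu^{n-k}\delta$.

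The paper's argument avoids any rate: since $E^{cu}$ is one-dimensional, the map $x_i\mapsto x_{i+1}=f^{-n}(\cW^{cs}_{x_i}\cap W^{cu}_{loc}(y))$ is continuous and \emph{monotone} on the arc $W^{cu}_{loc}(\zeta)$, and lemma~\ref{l.top-expansion} supplies a purely \emph{topological} backward contraction (the curves $f^{-j}(W^{cu}_{loc}(y))$, $y\in\Lambda$, have length below $\delta_0$ once $j$ is large, with no exponential rate) to keep the orbit in a compact subarc; a monotone bounded sequence converges, and the limit is the required fixed point. The shadowing bound $d(f^k(x),f^k(\zeta))<\alpha$ likewise comes from this topological shortness of the backward iterates of the unstable arc, not from $\mu^{n-k}$. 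Your remaining steps — the inclusion $f^n(\cW^{cs}_x)\subset\cW^{cs}_x$ via coherence, placing $x$ in $K$ via (iii), and propagating the rate $\kappa$ to $(1+\chi)\kappa<\lambda$ via (ii) — are essentially the paper's, so it is only the fixed-point mechanism that needs to be replaced by the monotonicity argument.
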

\begin{proof}
From lemma~\ref{l.top-expansion}, there exists $r_0$ such that for any
point $z\in \Lambda$, the backward iterates of the ball centered at $z$ and of radius $r_0$ in
$W^{u}_{loc}(z)$ have a length smaller than $\alpha$.
For $\delta_0$ small enough and any point $y,x\in K$ that are $\delta_0$-close to $\zeta$,
the plaque $\cW^{cs}_{x}$ intersects $W^{cu}_{loc}(y)$ at a point $y'$ which belongs
to the ball centered at $y$ and of radius less than $r_0$ in $W^{cu}_{loc}(y)$.
For $n$ large enough, the length of any curve $f^{-n}(W^{cu}_{loc}(y))$ with $y\in \Lambda$
is smaller than $\delta_0$.
We choose $\delta\in (0,\delta_0)$ so that the returns $f^n(\zeta)$
that are $\delta$-close to $\zeta$ occur for $n$ large.

We define inductively a sequence of points $x_i\in K\cap W^{cu}_{loc}(\zeta)$
that are $\delta_0$-close to $\zeta$ and satisfy:
\begin{itemize}
\item[--] $d(f^{k}(x_i),f^k(\zeta))<\alpha$ for any $0\leq k\leq n$,
\item[--] $f^n(\cW^{cs}_{x_{i+1}})$ is contained in $\cW^{cs}_{x_{i}}$ and $x_0=\zeta$.
\end{itemize}
With properties (i) and (ii), this implies that
\begin{itemize}
\item[--] For any $k\geq 0$ one has $\prod_{j=0}^{k-1}\|Df_{|E^{cs}}(f^j(x_i))\|\leq \lambda^k$.
\end{itemize}
The construction is done in the following way.
Let us assume that $x_i$ has been defined.
Then the plaque $\cW^{cs}_{x_i}$ intersects $W^{u}_{loc}(y)$ in a point
$y_i$. By property (iii) the point $y_i$ belongs to the stable and the unstable set of $\Lambda$,
hence belongs to $K$. Moreover the distances $d(f^{-k}(y_i),y)$
are smaller than $\alpha$ for any $k\geq 0$.
We then define $x_{i+1}=f^{-n}(y_i)$ and by construction
this point is $\delta_0$-close to $\zeta$ and belongs to $W^{u}_{loc}(\zeta)$.

The map $x_i\mapsto x_{i+1}$ is continuous and monotonous, hence converges to
a fixed point $x\in W^{u}_{loc}(\zeta)\in K$. The construction and properties (i), (ii)
give the announced conclusions on $x$.
\end{proof}
\medskip

Since $\zeta$ is recurrent, the lemma~\ref{l.shadowing}
gives a sequence of points $(x_k)$ in $W^{u}_{loc}(\zeta)\cap K$
which converges toward $\zeta$ and such that each plaque $\cW^{cs}_{x_k}$
is mapped into itself by an iterate $f^{n_k}$.
The contraction along the bundle $E^{cs}$ at $x_k$ shows that
each plaque $\cW^{cs}_{x_k}$ is contained in the stable manifold of a periodic point
$p_k\in \cW^{cs}_{x_k}\cap K$.

By construction, the orbit $(x_k,f(x_k),\dots,f^{n_k-1}(x_k))$ is contained in an arbitrarily
small neighborhood of $\Lambda$. With the contraction along the bundle $E^{cs}$ at $x_k$
and the fact that $f^{n_k}(\cW^{cs}_{x_k})\subset \cW^{cs}_{x_k}$, one deduces that the whole
forward orbit of $x_k$ and the orbit of $p_k$ are close to $\Lambda$ for the Hausdorff
topology. Since the plaques $\cW^{cs}$ are trapped, each plaque $\cW^{cs}_{f^n(p_k)}$
is contained in the stable set of the orbits of $p_k$.

\paragraph{\it The boundary $\cW^{cs}_{p^-},\cW^{cs}_{p^+}$.}
We fix some periodic point $p_k$ for $k$ large and consider the set $P$ of all iterates
$p'$ of $p_k$ such that $d(p',x)<5\; \eta$.
\medskip

We choose $x_0\in \Lambda$ close to $\zeta$ and $p^-,p^+\in P$
so that the open interval $I\subset W^{u}_{loc}(\zeta)$ bounded by
$\cW^{cs}_{p^-}$ and $\cW^{cs}_{p^+}$ has the following properties:
\begin{itemize}
\item[--] for any point $p'\in P$ the intersection of
$\cW^{cs}_{p'}$ with $W^{u}_{loc}(\zeta)$ does not belong to $I$,
\item[--] $\cW^{cs}_{x_0}$ intersects $I$.
\end{itemize}

The plaques $\cW^{cs}_{f^n(p^\pm)}$ close to $\zeta$ are controled:
\begin{claim}\label{c.iterate}
For any $n\geq 0$, either the iterate $f^n(\cW^{cs}_{p^+})$ does not meet the ball centered at $\zeta$ of radius $2\eta$,
or $\cW^{cs}_{f^n(p^+)}$ does not intersect $I$. The same holds with the iterates of $\cW^{cs}_{p^-}$.
\end{claim}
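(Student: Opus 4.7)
The plan is to argue by contradiction using only the trapping property, the small diameter of the center-stable plaques, the definition of $P$, and the transversality between $\cW^{cs}$ and $W^{u}_{loc}(\zeta)$.

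Assume that for some $n\geq 0$ both $f^n(\cW^{cs}_{p^+})\cap B(\zeta,2\eta)\neq\emptyset$ and $\cW^{cs}_{f^n(p^+)}\cap I\neq\emptyset$. First, I invoke the trapping property of the plaque family $\cW^{cs}$ (definition~\ref{d.tt}) to get $f^n(\cW^{cs}_{p^+})\subset \cW^{cs}_{f^n(p^+)}$. Having fixed the plaques $\cW^{cs}$ with diameter less than $\eta$ at the beginning of the construction, the existence of an intersection point in $B(\zeta,2\eta)$ forces $f^n(p^+)\in B(\zeta,3\eta)\subset B(\zeta,5\eta)$. Since $f^n(p^+)$ is an iterate of $p_k$, by the very definition of $P$ it belongs to $P$.

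Now I use the defining property of $I$: for every $p'\in P$, the (unique, by property~(vii)) intersection $\cW^{cs}_{p'}\cap W^u_{loc}(\zeta)$ lies outside $I$. Applied to $p'=f^n(p^+)$, this says that $\cW^{cs}_{f^n(p^+)}\cap W^u_{loc}(\zeta)\not\subset I$. But $I\subset W^u_{loc}(\zeta)$, and the transversal intersection of the plaque with $W^u_{loc}(\zeta)$ consists of at most one point (property~(vii) again), so the assumption $\cW^{cs}_{f^n(p^+)}\cap I\neq\emptyset$ would force that unique intersection point to lie in $I$, contradicting the previous sentence. The argument for $p^-$ is identical.

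The only delicate point is matching scales: we must know that plaques really have diameter below $\eta$ (so that the $2\eta$-ball witnesses closeness of $f^n(p^+)$ to $\zeta$ within the $5\eta$ threshold defining $P$). This is built in at the start of section~\ref{ss.construction} through the choice of $\eta<\rho<\varepsilon$ and the possibility, granted by remark~\ref{r.nested} and lemma~\ref{l.robustness}, of shrinking the plaque families. No dynamical estimate beyond trapping and transversality is needed, so there is no real obstacle — the claim is essentially a bookkeeping consequence of the selection of $p^-,p^+,I$.
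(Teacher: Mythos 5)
Your reduction of the claim to the statement ``$f^n(p^+)$ belongs to $P$, hence by the choice of $I$ the plaque $\cW^{cs}_{f^n(p^+)}$ misses $I$'' is the right target, and the second half of your argument (uniqueness of the intersection point with $W^{u}_{loc}(\zeta)$ plus the defining property of $I$) is sound. The gap is exactly at the point you flag and then dismiss: in this construction the plaques $\cW^{cs}$ do \emph{not} have diameter smaller than $\eta$. The scales are chosen in the opposite order: $\eta<\rho$, and $\rho$ is itself smaller than the size of the plaques $\widehat\cW^{cs}\subset\cW^{cs}$. This ordering is forced and cannot be reversed by ``shrinking the plaque families'': the rectangles have diameter at most $\rho$ and must be crossed by single plaques $\widehat\cW^{cs}$ (see the proof of lemma~\ref{l.hole}). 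Consequently, for small $n$ the trapping inclusion $f^n(\cW^{cs}_{p^+})\subset\cW^{cs}_{f^n(p^+)}$ gives no control on $d(f^n(p^+),\zeta)$: the plaque $\cW^{cs}_{f^n(p^+)}$ may meet $B(\zeta,2\eta)$ while $f^n(p^+)$ sits at a distance comparable to the plaque size, far beyond the $5\eta$ threshold, so you cannot conclude $f^n(p^+)\in P$.

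The paper's proof splits into two regimes precisely to deal with this. For $n\geq N$ with $N$ large it recovers your argument: since $\cW^{cs}_{p^+}$ is contained in the stable set of the orbit of $p_k$, the iterate $f^N(\cW^{cs}_{p^+})$ has shrunk enough to be contained in a plaque of the auxiliary family $\widetilde\cW^{cs}$ produced by proposition~\ref{p.box}, and the trapping of $\widetilde\cW^{cs}$ then keeps every $f^n(\cW^{cs}_{p^+})$, $n\geq N$, of radius smaller than $\eta$; only then does meeting $B(\zeta,2\eta)$ force $d(f^n(p^+),\zeta)<3\eta$ and $f^n(p^+)\in P$. For $n\leq N$ a separate argument is required: since $\zeta$ is non-periodic and $\cW^{cs}_\zeta$ lies in its stable set, the iterates $f^n(\cW^{cs}_\zeta)$ are pairwise disjoint, and $\cW^{cs}_{p^\pm}$ and $I$ are chosen close enough to $\cW^{cs}_\zeta$ and $\zeta$ that $f^n(\cW^{cs}_{p^+})$ avoids $I$ for these finitely many $n$. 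Your proposal is missing both the shrinking mechanism through $\widetilde\cW^{cs}$ and the finite-time case, so as written it does not close.
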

\begin{proof} Let us fix a large integer $N$.
Since $\zeta$ is non-periodic and $\cW^{cs}_\zeta$ is contained in its
stable set, the iterates $f^{n}(\cW^{cs}_\zeta)$ are pairewise disjoint.
From the construction, having chosen $\cW^{cs}_{p^+}$ close to $\cW^{cs}_\zeta$
and $I$ close to $\zeta$, the plaques $f^n(\cW^{cs}_{p^+})$ do not meet $I$ for $n\leq N$.

When $n=N$, the radius of the plaque $f^n(\cW^{cs}_{p^+})$ is small, and the plaque is contained
in $\widetilde \cW^{cs}_{f^{n}(p^+)}$. By the trapping property, any iterate
$f^n(\cW^{cs}_{p^+})$ with $n\geq N$ is thus contained in $\widetilde \cW^{cs}_{f^{n}(p^+)}$
and has a radius smaller than $\eta$.
One deduces that if $f^n(\cW^{cs}_{p^+})$ meets the ball centered at $\zeta$ of radius $2\eta$,
then the distance between $f^n(p^+)$ and $\zeta$ is smaller than $3\eta$.
Consequently, $f^n(p^+)$ belongs to $P$ and $\cW^{cs}_{f^n(p^+)}$ does not meet $I$.
\end{proof}
\medskip

\paragraph{\it The rectangle $R$.}
Let us consider in the $2\eta$-neighborhood of $\zeta$ the set $X_0$ of points $z\in \cW^{cs}_\zeta$ that belong
to a forward iterate $f^j(W^{u}_{loc}(y))$ associated to a point $y\in \Lambda$. Then we define $X$ as
the largest $\widetilde \cW^{cs}$-connected subset of $X_0$ containing $\zeta$. By the choice of $\widetilde \cW^{cs}$,
the set $X$ has diameter bounded by $\eta$.
We define $R$ as the union of curves
$\gamma_z\subset W^{cu}_{loc}(z)$, $z\in X$, bounded by
the intersections $z^-,z^+$ between $W^{cu}_{loc}(z)$
and $\cW^{cs}_{p^-},\cW^{cs}_{p^+}$.

By the choice of $\eta$ and the construction, the points $z^-,z^+$ belong to $K_f$.
With property (vi), one deduces that the items~\ref{rectangle1})
and~\ref{rectangle3}) of the definition~\ref{d.rectangle} are satisfied.

Consider any close points $x,y\in \Lambda$ with $x\in R$.
The intersections of $W^{u}_{loc}(x)$ and $W^{u}_{loc}(y)$ with $\cW^{cs}_\zeta$
are close, hence belong to the same $\widetilde \cW^{cs}$-component of $X_0$.
As a consequence, $W^{u}_{loc}(y)\cap \cW^{cs}_\zeta$ belongs to $X$.
This shows that $y$ belongs to $R$. We have proved that $\Lambda\cap R$ is open in $\Lambda$ and that $R$ is a rectangle. By construction it contains the point $x_0$ and the intersection $R\cap \Lambda$
is non-empty.
\medskip

\paragraph{b) $R$ is adapted.}
Let us assume that for some $x,y\in X$,
a forward iterate $f^n(\gamma_{x})$ intersects $\gamma_y$.
Considering a large backward iterate, the two curves $f^{n-m}(\gamma_x)$
and $f^{-m}(\gamma_y)$ are small and contained in local unstable curves
$W^{u}_{loc}(x')$ and $W^{u}_{loc}(y')$ for some points $x',y'\in \Lambda$.
By property (vi), one deduces that $f^{n-m}(\gamma_x)$
and $f^{-m}(\gamma_y)$ are contained in a same unstable curve $W^{u}_{loc}(x')$.
In particular, if $f^n(\gamma_x)$ intersects $\gamma_y$ but does not contain $\gamma_x$,
then the image of an endpoint $f^n(x^-)$ (or $f^n(x^+)$) of $\gamma_x$
is contained inside $\gamma_y$. One deduces that $\cW^{cs}_{f^n(p^-)}$
intersects $I$. Since $f^n(x^-)$ is $2\eta$-close to $x$, this contradicts the claim~\ref{c.iterate} above.
We have thus proved the first item of definition~\ref{d.adapted}.
\medskip

Assume now that $f^n(\gamma_x)$ contains $\gamma_y$.
One can define the subrectangle $S$ of $R$ whose unstable curves are bounded by
$\cW^{cs}_{x^-_S}$ and $\cW^{cs}_{x^+_S}$, with $x^\pm_S=f^{-n}(y^\pm_R)$.
It remains to prove that $f^n(S)$ is contained in $R$.
Let us consider the set $X^+_S$ of points $z^+_S$ for $z\in X$, i.e. the intersection
of $\cW^{cs}_{x^+_S}$ with the unstable curves $W^{cu}_{loc}(z)$.
Since $z^+_S$ and $z$ are close, the set $X^+_S$ is connected for the larger plaque family
$f^{-1}(\widetilde \cW^{cs})$ containing the plaques $f^{-1}(\widetilde\cW^{cs}_{f(x)})$ for $x\in \widetilde K_f$.
Note that $n$ is larger than $2$.
As a consequence, the set $f^n(X^+_S)$ is $f(\widetilde \cW^{cs})$-connected.
One thus deduces that the set $X'$ of intersections of the curves $W^{cu}_{loc}(z)$, $z\in X^+_S$,
with $\cW^{cs}_\zeta$ is $\widetilde \cW^{cs}$-connected. Since it contains $y\in X$, this set is contained
in the $\widetilde \cW^{cs}$-component $X$. This proves the second item of definition~\ref{d.adapted}
and $R$ is adapted.

\paragraph{c) Periodic center-stable plaques.}
Let us assume that there exist $x\in \Lambda$ and $n\geq 1$ such that
the plaque $\cW^{cs}_x$ is mapped into itself by $f^n$.
The set $\Lambda$ is not contained in the orbit of the plaque $\cW^{cs}_x$:
otherwise the property (i) would imply that $\zeta$ is a sink of $\cW^{cs}_x$,
contradicting the fact that $\Lambda$ is non-periodic.
Since $\cW^{cs}_\zeta$ is contained in the stable manifold of $\zeta$, the closure of $\cW^{cs}_\zeta$
and of $\cW^{cs}_x$ are disjoint.

Note that the rectangle $R$ can have been constructed arbitrarily thin in the center unstable direction,
hence it is contained in an arbitrarily small
neighborhood of $\cW^{cs}_\zeta$. In particular, the closure of $R$
and the closure of the orbit of $x$ are disjoint.
Since $\Lambda$ is transitive the first return time on $\Lambda\cap R$ is unbounded,
giving the first case of proposition~\ref{p.construction}.

\paragraph{d) Non-periodic center-stable plaques.}
Let us assume the opposite case: there does not exist $x\in \Lambda$ and $n\geq 1$ such that
the plaque $\cW^{cs}_x$ is mapped into itself by $f^n$.
Let $R_0$ be a rectangle as obtained in paragraphs a) and b).
One can assume also that the first case of the proposition~\ref{p.construction} does not hold.
Since $R_0\cap \Lambda$ is non empty and since $\Lambda$ is the Hausdorff limit of periodic points,
there exists a periodic point $p\in K_f$ whose plaque $\cW^{cs}_p$ intersects
all the unstable leaves $W^{cu}_{R_0}(z)$ of $R_0$ at a point $z_p$ which is not in $\Lambda$.

As in the proof of lemma~\ref{l.hole}, the points $K_f\cap R_0$ are ordered by
their projection on an unstable curve of $R_0$.
There exist two points $x^-,x^+\in K_f\cap \overline R_0$
such that $x^-<z_p<x^+$, any point $y\in \Lambda\cap R_0$ satisfies $y\leq x^-$
or $y\geq x^+$ and such that there is no $\bar x^-<x^-$ or
$\bar x^+>x^+$ with the same properties.
One checks easily that the collection of curves
$\gamma'_z\subset W^{u}_{R_0}(z)$ bounded by points
in $\cW^{cs}_{x^-}$ and $\cW^{cs}_{x^+}$ is a rectangle and a hole $S_0$ of $R_0$.
\medskip

We then explain how to modify $R_0$ in order to obtain a hole with aperiodic boundary.
Since $R_0\cap \Lambda$ is non-empty, one can assume by lemma~\ref{l.hole} that
there exists a sequence $x_n\in \Lambda\cap R_0$ such that $d(x_n,x^-_{n,S_0})$ goes to zero
as $n$ goes to $+\infty$. Let us denote by $X^-=\{z^-_{S_0},\;z\in X\}$
one of the boundaries of $S_0$. By construction there exists $x^-\in\overline{X^-}\cap \Lambda$
such that the plaque $\cW^{cs}_{x^-}$ contains $X^-$.
One deduces that $X^-$ is disjoint from its forward iterates.

One can choose the points $x_n$ to have a dense forward orbit.
In particular they return to $R_0$. Since the return time is bounded,
one can also assume that they all have the same return time, hence belong
to a same return $T$ of $R_0$. The set $X^-$ belong to $T$: otherwise it would be
contained in the boundary of $T$ and mapped by a forward iterate into the boundary of $R_0$;
since the closure of $X^-$ meets $\Lambda$ and since the boundary of $R_0$ is contained
in the stable set of a periodic orbit, this would imply that the $\cW^{cs}$-plaque of a point of $\Lambda$
is mapped into itself, contradicting our assumption.

Note that the set $X^-$ is still one of the boundaries of a hole of $T$
and that the boundary of $T$ is still contained in the stable set of periodic orbits.
One can thus replace $R_0$ by $T$ and repeat the same argument.
Doing that several times, one gets a deeper return $R$ of $R_0$
which contains the set $X^-$.
The rectangle $R$ is arbitrarily thin in the unstable direction,
hence it contains a hole $S$ whose boundaries are $X^-$ and a boundary of $R$.
By construction the boundaries of $R$ are disjoint from their iterates.
This implies that $S$ has aperiodic boundaries.
\bigskip

The proof of the proposition~\ref{p.construction} is now complete.
\qed

\subsection{Summability}
For any point $x\in K$ we denote by
$\ell(J)$ the length of any interval $J$ contained in its local unstable manifold $W^{cu}_{loc}(x)$.
This section is devoted to the proof of the next proposition.
\begin{proposition}\label{p.summability}
For any adapted rectangles $S,R$, where $S$ is a subrectangle of $R$,
there exists $K(S)>0$ satisfying the following: for any $x\in \Lambda\cap R$,
and any $n\geq 0$ such that the curves $f^{-k}(W^{cu}_{S}(x))$, $0<k<n$,
are disjoint from $S$, we have
$$\sum_{k=0}^{n-1}\ell(f^{-k}(W^{cu}_{S}(x)))<K(S).$$
Moreover, there is $K_0>0$ which only depends on $R$ such that
$K(S)<K_0$ when $S$ is a return of $R$.
\end{proposition}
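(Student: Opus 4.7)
The plan is to combine a bounded-distortion estimate on backward iterates of center-unstable curves (available from the $C^2$ regularity of $f$ together with the $C^2$-continuity of the plaque family $\cW^{cu}$ provided by lemma~\ref{l.smoothness}) with the Markov-like combinatorics of the adapted rectangle, and then to convert the sum into a one-dimensional measure estimate on a fixed reference unstable curve inside $S$.

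First I would establish a uniform backward distortion constant $D>0$ such that for any subcurve $\gamma\subset W^{cu}_{loc}(y)$ with $y\in K$ and any $k\geq 0$ for which $f^{-i}(\gamma)\subset W^{cu}_{loc}(f^{-i}(y))$ for every $0\le i\le k$, one has
\[
\frac{\|Df^{-k}_{|E^{cu}}(z)\|}{\|Df^{-k}_{|E^{cu}}(z')\|}\le D\qquad \forall z,z'\in \gamma.
\]
This is the standard Denjoy estimate, using the Lipschitz regularity of $\log\|Df_{|E^{cu}}\|$ along the $C^2$-plaques and the fact that $r$ in lemma~\ref{l.top-expansion} is uniform, so $\sum_{i=0}^{k-1}\ell(f^{-i}(\gamma))$ controls the distortion of $f^{-k}_{|\gamma}$. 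A consequence is that for any two subintervals $\alpha\subset\gamma$, $\ell(f^{-k}(\alpha))/\ell(f^{-k}(\gamma))\le D\,\ell(\alpha)/\ell(\gamma)$.

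Next I would fix a point $x_0\in S\cap\Lambda$ and its unstable curve $\gamma_0=W^{cu}_S(x_0)$, and to each iterate $\gamma_k:=f^{-k}(W^{cu}_S(x))$, $0\le k\le n-1$, associate its projection $\Pi_k\subset\gamma_0$ through the center-stable holonomy along the plaques $\cW^{cs}$ (well-defined and order-preserving by properties (vii) and (viii) of section~\ref{ss.topo}, once everything is relocated by a bounded iterate into a neighborhood of $\gamma_0$). The key combinatorial claim is that the projections $(\Pi_k)_{0\le k\le n-1}$ have \emph{bounded multiplicity} in $\gamma_0$: there is $N=N(S)$ such that no point of $\gamma_0$ is covered by more than $N$ of the $\Pi_k$. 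I would prove this by contradiction: if multiplicities blew up, extracting a subsequence of iterates $\gamma_{k_j}$ all projecting onto a common point of $\gamma_0$ would produce, by compactness, an infinite $\cW^{cs}$-saturated family of center-unstable curves accumulating inside a proper invariant compact subset of $\Lambda$; on such a subset $E^{cu}$ is uniformly expanded by assumption on $\Lambda$, forcing the lengths $\ell(\gamma_{k_j})$ to decrease geometrically and, once combined with the adaptedness of $S$ (which by lemma~\ref{l.adapted} forces the projections to either avoid $S$ until return or be contained in an unstable leaf of $S$), to re-enter $S$ before time $n$, a contradiction. Combined with bounded distortion one concludes
\[
\sum_{k=0}^{n-1}\ell(\gamma_k)\;\le\; D\cdot N\cdot\ell(\gamma_0)\;=:\;K(S).
\]

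When $S$ is a first return of $R$ with return time $n_S$, the identification $f^{n_S}\colon S\to R$ of adapted rectangles (definition~\ref{d.adapted}) shows that $\gamma_0=W^{cu}_S(x_0)$ is the $f^{-n_S}$-image of an unstable curve of $R$, and bounded distortion on this single iterate together with the fact that the multiplicity constant $N$ depends only on the geometry of $R$ (the diameter $\rho$, the size of the plaques and the constant $r$ of lemma~\ref{l.top-expansion}) yields a uniform bound $K(S)\le K_0$ independent of the depth of the return. The main obstacle is the bounded-multiplicity claim: one has to argue carefully that the projections $\Pi_k$ truly separate the iterates $\gamma_k$ and extract a coherent limiting picture from an unbounded-multiplicity sequence, which is where the topological expansion given by lemma~\ref{l.top-expansion} and the minimality assumption on $\Lambda$ (assumption~\ref{a4}) enter in an essential way.
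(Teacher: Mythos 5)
There is a genuine gap, in fact two intertwined ones. First, your uniform distortion constant $D$ is circular: the Denjoy estimate along a $C^2$ unstable curve bounds the distortion of $f^{-k}$ by $\exp\bigl(\Delta\sum_{i=0}^{k-1}\ell(f^{-i}(\gamma))\bigr)$, so a \emph{uniform} $D$ presupposes a uniform bound on $\sum_i\ell(f^{-i}(\gamma))$ --- which is exactly the conclusion of the proposition. The paper states this distortion property in the conditional form (property (D)) and only exploits it \emph{after} the summability is established. Second, and more seriously, your projections $\Pi_k$ onto the reference curve $\gamma_0$ via the center-stable holonomy are simply not defined for most $0<k<n$: the hypothesis only says that $f^{-k}(W^{cu}_S(x))$ is disjoint from $S$, so these curves wander through the whole neighborhood $U$ and are in general nowhere near $\gamma_0$; the holonomy along the plaques $\cW^{cs}$ is a purely local object. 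Even at the times when the curve does return to $R$, comparing $\ell(\gamma_k)$ with the length of its holonomy image requires a bounded-distortion estimate for the \emph{center-stable} holonomy over an orbit segment of length $k$, and this is false in general here because $E^{cs}$ is neither uniformly contracted nor one-dimensional: one needs the sum $\sum_{i}d(f^i(z),f^i(\Pi(z)))^{\alpha}$ to converge, which only happens along orbit segments with a Pliss-type contraction along $E^{cs}$. This is precisely why the paper introduces the $N$-contracting returns (definition~\ref{d.contracting}) and proves lemma~\ref{l.distortion-holo}; the returns that are not $N$-contracting are handled by an entirely separate Pliss/Ma\~n\'e summability argument (lemma~\ref{l.deep}, combining the sets $\cR$ of return times and $\cP$ of hyperbolic times along $E^{cs}$). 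Your proposal has no analogue of this dichotomy, and the "bounded multiplicity'' claim --- which in the paper is actually a \emph{disjointness} statement for the subrectangles $S_i$ at contracting return times, obtained directly from the Markov property and the hypothesis that intermediate iterates avoid $S$ (lemma~\ref{l.sum4}) --- cannot substitute for it.

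In short: the skeleton "disjoint Markov returns $+$ distortion $\Rightarrow$ geometric summability'' is the right intuition and is indeed how the paper concludes, but the two analytic inputs you take for granted (a globally defined, bounded-distortion center-stable projection, and an a priori uniform unstable distortion constant) are exactly the points where the real work lies. To repair the argument you would need to (i) restrict attention to the return times that are $N$-contracting along $E^{cs}$, where the holonomy distortion is genuinely bounded and the associated subrectangles are pairwise disjoint, and (ii) prove separately that the lengths accumulated between two consecutive such times are controlled by the length at the first one, via a Pliss-time argument exploiting the domination $E^{cs}\oplus E^{cu}$.
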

\bigskip

\paragraph{a) Summability for first returns.}
The first case corresponds to~\cite[lemma 3.7.1]{PS1}.
\begin{lemma}\label{ll.sum}
For any adapted rectangle $R$ with $R\cap \Lambda\neq \emptyset$,
there are $C_1>0$, $\sigma_1\in (0,1)$ as follows.

For any unstable curve $W^{cu}_{R}(x)$ of $R$ with $W^{cu}_{loc}(x)\cap \Lambda\neq \emptyset$,
any interval $J\subset W^{cu}_R(x)$
and any $n\geq 0$ such that the iterates $f^{-j}(W^{cu}_{R}(x))$ with $0<j<n$
are disjoint from $R$ we have
$$\ell(f^{-n}(J))\leq C_1\; \sigma_1^n\; \ell(J).$$
\end{lemma}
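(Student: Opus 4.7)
The plan is to prove this Denjoy-type exponential decay estimate following the strategy of \cite[Lemma~3.7.1]{PS1}: combine a $C^2$ bounded distortion principle along one-dimensional unstable plaques with the uniform expansion of $E^{cu}$ on any proper invariant compact subset of $\Lambda$ (assumption~\ref{a4}). As a first step I would establish bounded distortion. By lemma~\ref{l.smoothness} the family $(W^{cu}_{loc}(z))_{z\in K}$ is $C^2$-continuous and $z \mapsto \log\|Df_{|E^{cu}}(z)\|$ is uniformly Lipschitz along these curves. A standard telescoping argument then yields a constant $C_0 > 0$ such that for any unstable curve $I \subset W^{cu}_{loc}(z)$, any subinterval $J \subset I$, and any $m \geq 1$ with cumulative backward length $\sum_{j=0}^{m-1} \ell(f^{-j}(I)) \leq L_0$ (an absolute small constant),
$$\frac{1}{C_0}\; \frac{\ell(J)}{\ell(I)} \;\leq\; \frac{\ell(f^{-m}(J))}{\ell(f^{-m}(I))} \;\leq\; C_0\; \frac{\ell(J)}{\ell(I)}.$$

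Next I would bound the total length of backward first-return iterates: one shows that $\sum_{j=0}^{n-1} \ell(f^{-j}(W^{cu}_R(x))) \leq L(R)$ uniformly in $x$ and $n$. If this failed, a compactness argument using the $C^2$-continuity of the unstable plaques and the fact that $R \cap \Lambda$ is open in $\Lambda$ would extract a limit unstable curve $\tilde I$ passing through some $\tilde x \in \Lambda$, all of whose backward iterates $f^{-j}(\tilde I)$ for $j \geq 1$ avoid $R \cap \Lambda$. The $\alpha$-limit set under $f^{-1}$ of $\tilde x$ would then be a compact invariant subset of $\Lambda$ disjoint from the non-empty open set $R \cap \Lambda$, hence a \emph{proper} subset of $\Lambda$; by assumption~\ref{a4} the bundle $E^{cu}$ is uniformly expanded there. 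Combined with lemma~\ref{l.top-expansion} this forces the backward lengths of $\tilde I$ to be summable, contradicting the choice of the limiting sequence.

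The exponential rate is then obtained via another contradiction. If no pair $(C_1,\sigma_1)$ worked, there would exist sequences $x_n \in R$ with $W^{cu}_{loc}(x_n)\cap \Lambda \neq \emptyset$, subintervals $J_n \subset W^{cu}_R(x_n)$, and first-return times $k_n \to \infty$ such that $k_n^{-1}\log(\ell(f^{-k_n}(J_n))/\ell(J_n)) \to 0$. Passing to a limit curve as in the previous step, the proper invariant $\alpha$-limit set enjoys uniform expansion of $E^{cu}$ in a neighborhood $V$; the backward orbit of the limit base point lies eventually in $V$, yielding exponential contraction of $\|Df^{-k_n}_{|E^{cu}}\|$ at the base point. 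By bounded distortion (now applicable thanks to the summability just established), this transfers to the length ratios $\ell(f^{-k_n}(J_n))/\ell(J_n)$, producing the required constants $C_1, \sigma_1$.

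The main obstacle is the compactness extraction underlying both contradictions: one must obtain a limit whose \emph{entire} backward orbit avoids $R \cap \Lambda$ — not just arbitrarily long finite prefixes — and whose $\alpha$-limit set is genuinely a proper subset of $\Lambda$. This is achieved by a Cantor-diagonal argument exploiting that $R \cap \Lambda$ is non-empty and open in $\Lambda$, together with the $C^2$-compactness of the plaque family from lemma~\ref{l.smoothness}. A secondary technical point is that the base point $x$ in the statement need not itself lie in $\Lambda$; one replaces it by a nearby point of $W^{cu}_{loc}(x) \cap \Lambda$, whose backward orbit stays in $\Lambda$ by invariance, and transfers the length estimates via bounded distortion on $W^{cu}_{loc}(x)$.
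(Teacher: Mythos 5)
Your overall mechanism is the right one --- the backward excursion avoids $R$, so the relevant invariant set is proper in $\Lambda$ and assumption~4) of section~\ref{s.2D-central2} gives uniform expansion of $E^{cu}$ there --- and your reduction to the case $x\in\Lambda$ and the final transfer from derivative estimates to length estimates both match the paper. But the way you try to extract \emph{uniform} constants $C_1,\sigma_1$ does not close. In both of your contradiction steps you pass to a limit curve $\tilde I$ through $\tilde x$ whose \emph{entire} backward orbit avoids $R\cap\Lambda$, and then derive summability, respectively exponential contraction of $\|Df^{-k}_{|E^{cu}}(\tilde x)\|$, for that limit object. The problem is transferring these conclusions back to the terms $x_n$ of the sequence at the times $k_n\to\infty$: convergence $x_n\to\tilde x$ only guarantees that the backward orbits of $x_n$ and $\tilde x$ stay close for a \emph{bounded} initial segment, so for $j$ comparable to $k_n$ the iterate $f^{-j}(x_n)$ is not controlled by the limit orbit at all. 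In particular, convergence of the partial sums $\sum_{j\le J}\ell(f^{-j}(W^{cu}_R(x_n)))$ to those of $\tilde I$ says nothing about the tails $J<j<n_k$ where the divergence could live, and the exponential estimate at $\tilde x$ does not yield one at $x_n$ in time $k_n$. As written, neither contradiction is actually reached.

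The fix (and the paper's route) is to prove the uniform statement directly rather than through limits of curves: fix $z\in\Lambda\cap R$ and a small neighborhood $V$ of $z$; the maximal invariant subset $\Lambda_1$ of $\Lambda\setminus V$ is compact and proper, hence $E^{cu}$ is uniformly expanded on it; choose a neighborhood $U_1$ of $\Lambda_1$ on which this expansion persists, and show by a compactness argument on orbit \emph{segments} (not on curves) that any segment of orbit of $\Lambda$ avoiding $V$ spends at most a uniformly bounded number of iterates outside $U_1$. This yields $\|Df^{-j}_{|E^{cu}}(x)\|<C\sigma^{j}$ for every admissible $x$ and every $0<j<n$ with constants independent of $x,n$; one then concludes because lemma~\ref{l.top-expansion} makes the curves $f^{-j}(W^{cu}_{loc}(x))$ uniformly small for $j$ large, so $\|Df_{|E^{cu}}\|$ at $f^{-j}(y)$, $y\in W^{cu}_R(x)$, is close to its value at $f^{-j}(x)$ (no $C^2$ distortion is needed at this stage, only continuity, at the cost of a slightly worse $\sigma_1>\sigma$). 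Note also that your intermediate summability bound is not an ingredient of this lemma in the paper --- it is essentially proposition~\ref{p.summability}, which is proved \emph{from} this lemma, so building the proof on it risks circularity.
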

\begin{proof}
Let us consider a point $z\in \Lambda\cap R$.
Since $\Lambda\cap R$ is open, one can choose a small open neighborhood $V$ of $z$.
The maximal invariant set
$$\La_1= \bigcap_{n\in \ZZ}{f^n(\La - V)}$$
in $\Lambda\cap (M\setminus V)$ is compact and proper in $\Lambda$.
By assumption $E^{cu}$ is expansive on $\La_1$.
It is thus possible to get a neighborhood of $\La_1$ such that while the iterates remain
in this neighborhood the subbundle $E^{cu}$ is uniformly expanded by $Df$.
Moreover, the number of iterates that an orbit of $\Lambda$
remains in the complement of the mentioned neighborhood of $\La_1$
and $V$ is uniformly bounded.

Since $W^{cu}_{loc}(x)\cap \Lambda\neq \emptyset$,
one can always assume that $x$ belongs to $\Lambda$.
By lemma~\ref{l.top-expansion}, choosing $n_0$ large enough (and independant from $x,J,j$),  the curves $f^{-j}(W^{u}_{loc}(x))$
for $j\geq n_0$ are small. If $j<n$, the segment $f^{-j}(J)$ is disjoint from $R$,
hence $f^{-j}(x)$ is disjoint from $V$. Moreover $x$ belongs to $\Lambda$.
One deduces that there exist uniform constants $\sigma\in (0,1)$ and $C>0$
such that $\|Df^{-j}_{|E^{cu}}(x)\|< C\sigma^{j}$ for all $0<j<n$.
Since for $n_0$ large enough the curves $f^{-j}(W^{u}_{loc}(x))$
are small, the derivatives $\|Df_{E^{cu}}(f^{-j}(x))\|$
and $\|Df_{E^{cu}}(f^{-j}(y))\|$ for $y\in W^{u}_{loc}(x)$ are close.

One deduces that for any $0<j<n$ and $y\in W^{u}_{loc}(x)$ one also has
$\|Df^{-n}_{|E^{cs}}(y)\|< C_1\sigma_1^{n}$ for other constants $C_1>0$, $\sigma_1\in (0,1)$.
The conclusion of the lemma follows.
\end{proof}
\medskip

\paragraph{b) Distortion along center-stable holonomies and contracting returns.}
We will need to compare the unstable curves.
\begin{defi}
A rectangle $R$ has \emph{distortion bounded by $\Delta>0$} if for any unstable
curves $W^{cu}_{R}(x)$, $W^{cu}_{R}(y)$ one has:
$$\frac 1 {\Delta}\leq \frac{\ell(W^{cu}_R(x))}{\ell(W^{cu}_R(y))}\leq {\Delta}.$$
\end{defi}
We will also need to consider returns that contract along the center-stable bundle.

\begin{defi}\label{d.contracting}
Let us fix $N\geq 0$. A point $z_0\in K_f$
is \emph{$N$-contracting} in time $n$ if there exists
$m\leq N$ in $\{0,\dots,n\}$ such that for each
$i\in \{m,\dots,n\}$ one has
$$\prod_{k=m}^{i}\|Df_{|E^{cs}}(f^{k}(z_0))\|\leq \lambda^{i-m}.$$
A return $S$ of a rectangle $R$ with returning time $n$
is \emph{$N$-contracting} if there $z_0\in K_f\cap \overline S$
which is $N$-contracting in time $n$.
\end{defi}

The following lemma is similar to~\cite[lemma 3.4.1]{PS1}.
\begin{lemma}\label{l.distortion-holo}
For any adapted rectangle $R$ and any $N\geq 0$,
there is $\Delta_1>0$ such that
any $N$-contracting return of $R$ has distortion bounded by $\Delta_1$.
\end{lemma}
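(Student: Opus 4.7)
My plan is to adapt the classical bounded distortion argument of \cite[Lemma 3.4.1]{PS1} to the present higher-dimensional setting. Let $S$ be an $N$-contracting return of $R$ with return time $n$, witness $z_0\in K_f\cap\overline S$ and initial time $m\le N$, and denote by $\sigma\colon X\to X$ the index map defined by $f^n(W^{cu}_S(x))=W^{cu}_R(\sigma(x))$. The change of variables $t=f^n(\cdot)$ rewrites
$$\ell(W^{cu}_S(x))=\int_{W^{cu}_R(\sigma(x))}\|Df^{-n}_{|E^{cu}}\|(t)\,dt,$$
so it suffices to bound the oscillation of $\|Df^{-n}_{|E^{cu}}\|$ on $R$ by a constant depending only on $R$ and $N$; combined with the a priori two-sided bound on the lengths of the unstable curves of the fixed rectangle $R$, this will yield the announced $\Delta_1$.

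Via property (iv) of section~\ref{ss.topo}, the $N$-contracting condition at $z_0$ yields $\|Df^{-(n-m)}_{|E^{cu}}\|(f^n(z_0))\le\mu^{n-m}$, and the first $m\le N$ iterates contribute only a factor bounded by $\|Df^{-1}\|_\infty^N$. Using the uniform $C^2$-size of the unstable plaques granted by lemma~\ref{l.smoothness} and the small diameter of $\overline S\subset R$, this estimate propagates to every point of every unstable curve $W^{cu}_R(\sigma(x))$, $x\in X$, up to a multiplicative constant depending only on $R$; in particular the lengths $\ell(f^{-k}(W^{cu}_R(\sigma(x))))$ decay geometrically for $m\le k\le n$. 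Bounded distortion along each individual unstable curve then follows in the standard way, since $f$ is $C^2$ and the unstable plaque family is $C^2$ so that $\log\|Df^{-1}_{|E^{cu}}\|$ is Lipschitz along unstable plaques; the telescoping
$$\Bigl|\log\tfrac{\|Df^{-n}_{|E^{cu}}\|(t_1)}{\|Df^{-n}_{|E^{cu}}\|(t_2)}\Bigr|\le L\sum_{k=0}^{n-1}\ell\bigl(f^{-k}(W^{cu}_R(\sigma(x)))\bigr)$$
is controlled by a convergent geometric series.

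The hard part will be the transverse comparison: for $s_1\in W^{cu}_R(\sigma(x))$ and $s_2\in W^{cu}_R(\sigma(y))$ related by the strong stable holonomy in $R$, one must estimate $\|Df^{-n}_{|E^{cu}}\|(s_1)/\|Df^{-n}_{|E^{cu}}\|(s_2)$ in the absence of uniform $E^{cs}$-contraction. Setting $t_i=f^{-n}(s_i)\in\overline S$, both $t_1,t_2$ lie in a common center-stable plaque, so by the trapping of $\cW^{cs}$ their forward iterates $f^k(t_1),f^k(t_2)$ remain in a common $\cW^{cs}$-plaque. I would exploit the smallness of $\mathrm{diam}(\overline S)$ (forced by the adapted-rectangle construction of section~\ref{ss.rectangle}) to run a bootstrap using property (ii) of section~\ref{ss.topo}, showing that the orbits of $t_1$ and the witness $z_0$ stay uniformly close from time $m$ to time $n$; the $N$-contracting estimate at $z_0$ then transfers to $t_1$ with a loss absorbed by the choice $(1+\chi)\lambda<1$ coming from property (i), yielding $d(f^k(t_1),f^k(t_2))\le C_N((1+\chi)\lambda)^{k-m}\mathrm{diam}(\overline S)$ for $m\le k\le n$. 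The H\"older regularity of $E^{cu}$ from lemma~\ref{l.bundle} together with the $C^2$-regularity of $f$ will then give the summable bound
$$\Bigl|\log\tfrac{\|Df^{-n}_{|E^{cu}}\|(s_1)}{\|Df^{-n}_{|E^{cu}}\|(s_2)}\Bigr|\le C\sum_{k=0}^{n-1}d\bigl(f^k(t_1),f^k(t_2)\bigr)^{\alpha'_s},$$
which combined with the per-curve estimate concludes the proof. The main obstacle is precisely this control of the drift between orbits starting in $\overline S$ in the absence of uniform $E^{cs}$-contraction, and it is where the specific form of the $N$-contracting definition (bounding the initial time $m$ by $N$) and the smallness of $\overline S$ both play a crucial role.
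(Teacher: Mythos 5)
The heart of your argument --- comparing $\|Df^{-n}_{|E^{cu}}\|$ at two points $t_1,t_2\in\overline S$ related by center-stable holonomy, with the drift estimate $d(f^k(t_1),f^k(t_2))\le C\,((1+\chi)\lambda)^{k-m}$ fed into the H\"older regularity of $E^{cu}$ --- is exactly the paper's argument, and the roles you assign to the bound $m\le N$ and to the choice $(1+\chi)\lambda<1$ are the correct ones. (One imprecision: $t_1,t_2$ need not belong to $K_f$, so ``common $\cW^{cs}$-plaque'' is not literally available; the paper makes this precise with an auxiliary $C^1$ foliation tangent to $\cC^{cs}$ and property (v) of section~\ref{ss.topo}, which places $t_2$ in a $\cC^{cs}$-tangent disc centered at $t_1$ whose forward iterates up to time $n$ stay of radius $\le\widetilde\varepsilon$. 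This is repairable.)

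The genuine gap is the per-curve step. You assert that the $N$-contracting condition ``via property (iv)'' yields $\|Df^{-(n-m)}_{|E^{cu}}\|(f^n(z_0))\le\mu^{n-m}$ and hence geometric decay of $\ell(f^{-k}(W^{cu}_R(\sigma(x))))$. Property (iv) has the \emph{opposite} hypothesis: it requires $\prod\|Df_{|E^{cs}}\|\ge\lambda^n$ (the center-stable direction is \emph{not} strongly contracted) in order to conclude, through the domination constant $\kappa<\lambda\mu$, that $E^{cu}$ is expanded. The $N$-contracting condition asserts $\prod_{k=m}^{i}\|Df_{|E^{cs}}(f^k(z_0))\|\le\lambda^{i-m}$, and domination alone is then compatible with $E^{cu}$ being contracted as well (only less than $E^{cs}$); no expansion along $E^{cu}$, hence no decay of the backward lengths, follows. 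Indeed the boundedness of $\sum_k\ell(f^{-k}(J))$ is precisely the content of proposition~\ref{p.summability}, whose proof \emph{uses} lemma~\ref{l.distortion-holo}, so invoking it here is circular. The repair is to abandon the reduction to the full oscillation of $\|Df^{-n}_{|E^{cu}}\|$ over $R$: writing $\ell(W^{cu}_S(x'))$ and $\ell(W^{cu}_S(y'))$ as integrals over the \emph{same} curve $W^{cu}_R(x)$ by means of the uniformly bi-Lipschitz center-stable holonomy of the fixed rectangle $R$, only the pointwise comparison of the integrands at holonomy-related points is required, and no control of the intermediate lengths $\ell(f^{-k}(W^{cu}_R(\sigma(x))))$ enters the proof at all. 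This is how the paper proceeds.
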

\begin{proof}
One chooses a $C^1$-foliation $\cF$ tangent to the cone field $\cC^{cs}$
and containing the plaques $\cW^{cs}_{x^-}$, $\cW^{cs}_{x^+}$ of $R$.
For any unstable curves of $R$ with basepoints $x,y\in X$,
one gets a diffeomorphism $\Pi_{x,y}\colon W^{cu}_{R}(x)\to W^{cu}_{R}(y)$,
whose derivative is bounded from above and below uniformly in $x,y\in X$.

Let $S$ be a $N$-contracting return of $R$.
For any unstable curves of $S$ with basepoints $x',y'$, their images by $f^n$
coincide with some curves $W^{cu}_{R}(x)$, $W^{cu}_{R}(y)$ of $R$.
This allows us to define a diffeomorphism $\Pi^n_{x',y'}\colon W^{cu}_{S}(x')\to W^{cu}_{S}(y')$ by
$$\Pi^n_{x',y'}=f^{-n}\circ\Pi_{x,y}\circ f^n.$$
The distortion of $S$ is thus controled by the following quantity, for any $z\in W^{cu}_{S}(x')$:
$$\|Df^n_{|TW^{cu}_{S}(z)}\|/\|Df^n_{|TW^{cu}_{S}(\Pi^n_{x',y'}(z))}\|.$$
Using that the forward iterates of any vector tangent to $\cC^{cu}$ in $U$
converge towards $E^{cu}$ (uniformly) exponentially fast and that the bundle $E^{cu}$
is H\"older (see~lemma~\ref{l.bundle}), one can argue as in~\cite[lemma 3.4.1]{PS1}
and show that there exist some uniform constants $C>0$ and $\alpha\in (0,1)$ such that
$$\frac{\|Df^n_{|TW^{cu}_{S}(z)}\|}{\|Df^n_{|TW^{cu}_{S}(\Pi^n_{x',y'}(z))}\|}
\leq \exp\left(C+C\sum_{i=0}^{n-1}d(f^i(z),f^i(\Pi^n_{x',y'}(z)))^\alpha\right).$$

It remains to estimate $d(f^i(z),f^i(\Pi^n_{x',y'}(z)))$
and it is clearly enough to consider the case $i\geq N$.
Using the property (v) stated in section~\ref{ss.topo}, there exists a disc $B$
centered at $z$ tangent to $\cC^{cs}$ of radius larger than $\varepsilon$,
whose iterates $f^{i}(B)$, $i\in \{0,\dots,n\}$ have a radius smaller than $\widetilde \varepsilon$
and such that $f^{n}(B)$ is contained in a leaf of the foliation $\cF$.
One deduces that $B$ contains the point $\Pi^n_{x',y'}(z)$.
From property (ii), the distance $d(f^i(z),f^i(\Pi^n_{x',y'}(z))$ is thus bounded
by
\begin{equation*}
\begin{split}
d(f^i(z),f^i(\Pi^n_{x',y'}(z)))&\leq
d(f^m(z),f^m(\Pi^n_{x',y'}(z)))\;
(1+\chi)^{i-m}\;\prod_{k=m}^{i}\|Df_{|E^{cs}}(f^k(z_0))\|\\
&\leq \widetilde\varepsilon\;(1+\chi)^{i-m}\;\lambda^{i-m},
\end{split}
\end{equation*}
where $z_0\in K_f\cap \overline S$ is a point which satisfies the definition~\ref{d.contracting}
for some interger $m\geq N$.
We have assumed that $(1+\chi)\lambda<1$ (recall section~\ref{ss.topo}),
hence the sum $\sum_{i=0}^{n-1}d(f^i(z),f^i(\Pi^n_{x',y'}(z)))^\alpha$
is bounded uniformly. This concludes the proof of the lemma.
\end{proof} 

With the same proof,
the lemma~\ref{l.distortion-holo} generalizes to the following setting.

\begin{lemma}\label{l.distortion-holo2}
For any adapted rectangles $S,R$ such that $S$ is a subrectangle of $R$ and for any $N\geq 0$,
there exists $\Delta_1(S)$ such that for any $N$-contracting return $R'$ of $R$ with return time $n$,
the subrectangle $S'=R'\cap f^{-n}(S)$ has distortion bounded by $\Delta_1(S)$.
\end{lemma}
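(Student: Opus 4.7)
The plan is to reproduce the proof of Lemma~\ref{l.distortion-holo} almost verbatim, adjusting only the center-stable foliation used to define the holonomies so that it is adapted to the subrectangle $S$ rather than to the ambient rectangle $R$. First I would fix a $C^1$-foliation $\cF_S$ tangent to the cone field $\cC^{cs}$ whose leaves contain the boundary plaques $\cW^{cs}_{x^-_S}$ and $\cW^{cs}_{x^+_S}$; for any pair of basepoints $x,y$ of $S$ this produces a holonomy diffeomorphism $\Pi_{x,y}\colon W^{cu}_S(x)\to W^{cu}_S(y)$ whose derivative is bounded from above and below by constants $C(S)$ depending only on $S$ (since $\cF_S$ and the family $\{W^{cu}_S(z)\}$ are fixed once $S$ is fixed).

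Given an $N$-contracting return $R'$ of $R$ with return time $n$, I would define on each unstable curve $W^{cu}_{S'}(x')$ of $S'=R'\cap f^{-n}(S)$ the map
$$\Pi^n_{x',y'}=f^{-n}\circ \Pi_{x,y}\circ f^n,$$
where $x,y\in X$ are the points of $S$ with $f^n(W^{cu}_{S'}(x'))=W^{cu}_S(x)$ and $f^n(W^{cu}_{S'}(y'))=W^{cu}_S(y)$; this is well defined because $S$ is adapted and $S'$ is a subrectangle of $R'$ by lemma~\ref{l.adapted}. The distortion of $S'$ is then controlled, as in lemma~\ref{l.distortion-holo}, by the product
$$\frac{\|Df^n_{|TW^{cu}_{S'}(z)}\|}{\|Df^n_{|TW^{cu}_{S'}(\Pi^n_{x',y'}(z))}\|}\leq \exp\left(C+C\sum_{i=0}^{n-1}d(f^i(z),f^i(\Pi^n_{x',y'}(z)))^\alpha\right),$$
thanks to the H\"older regularity of $E^{cu}$ (lemma~\ref{l.bundle}) and to the exponential convergence of forward iterates of vectors in $\cC^{cu}$ towards $E^{cu}$.

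The main step, identical to the original argument, is to bound the distances $d(f^i(z),f^i(\Pi^n_{x',y'}(z)))$. Using property (v), I would find a disc $B$ centered at $z$ tangent to $\cC^{cs}$ whose iterates $f^i(B)$, $0\le i\le n$, have radius bounded by a constant $\widetilde\varepsilon(S)$ depending on $S$, and whose final iterate $f^n(B)$ contains the leaf of $\cF_S$ through $f^n(z)$; this forces $\Pi^n_{x',y'}(z)\in B$. Applying property (ii) at the $N$-contracting point $z_0\in K_f\cap \overline{R'}\supset K_f\cap \overline{S'}$, which by hypothesis exists for $R'$, I obtain
$$d(f^i(z),f^i(\Pi^n_{x',y'}(z)))\leq \widetilde\varepsilon(S)\,(1+\chi)^{i-m}\,\lambda^{i-m}$$
for some $m\leq N$ and all $i\ge m$. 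Since $(1+\chi)\lambda<1$, the H\"older series is summable with a bound depending only on $\widetilde\varepsilon(S)$, $N$, $\chi$, $\lambda$ and $\alpha$; combining with the uniform control on $C(S)$ yields a distortion constant $\Delta_1(S)$ depending only on $S$ (and on the fixed data $R$, $N$).

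The only subtlety—and the mildly delicate point—is that the comparison via property (ii) must be applied to a point $z_0$ that may lie in $\overline{R'}\setminus \overline{S'}$; however, since both $z$ and $z_0$ belong to $R'$ and $R'$ has diameter less than $\rho<\varepsilon$, property (ii) still yields the desired geometric decay by comparing derivatives of $f$ along $\cC^{cs}$ at nearby points, exactly as in the proof of lemma~\ref{l.distortion-holo}. All other steps transfer without change, which is why the statement and its proof are labelled as a direct generalization.
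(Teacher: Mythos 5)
Your proposal is correct and follows exactly the route the paper intends: the paper gives no separate argument for this lemma, stating only that it follows ``with the same proof'' as Lemma~\ref{l.distortion-holo}, and your reconstruction (holonomy along a foliation adapted to the boundary plaques of $S$, conjugation by $f^n$, H\"older control of $E^{cu}$, and the geometric decay of $d(f^i(z),f^i(\Pi^n_{x',y'}(z)))$ furnished by the $N$-contracting point $z_0\in \overline{R'}$) is precisely that adaptation. The only blemish is a wording slip — $f^n(B)$ should be \emph{contained in} the leaf of $\cF_S$ through $f^n(z)$, not contain it — which does not affect the argument.
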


\paragraph{c) Summability between contracting returns}
One now obtains the summability for returns which do not satisfy lemma~\ref{l.distortion-holo}.

\begin{lemma}\label{l.deep}
For any adapted rectangle $R$ and any $N\geq 1$ large enough, there is $K_1>0$ as follows.

Consider $x\in \Lambda\cap R$ and $0\leq k<l$ such that:
\begin{itemize}
\item[--] $f^{-k}(W^{cu}_R(x))\subset R$ and $f^{-k}(x)$ is $N$-contracting in time $k$,
\item[--] for any $k<j<l$, either $f^{-j}(W^{cu}_R(x))\cap R=\emptyset$
or $f^{-j}(x)$ is not $N$-contracting in time $j$.
\end{itemize}
Then for any curve $J\subset W^{cu}_R(x)$ one has
$$\sum_{j=k}^{l}\ell(f^{-j}(J))\leq K_1\; \ell(f^{-k}(J)).$$
\end{lemma}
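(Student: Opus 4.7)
The plan is to combine bounded distortion at the $N$-contracting return at time $k$ (from lemma~\ref{l.distortion-holo}) with the exponential decay of lengths between consecutive returns to $R$ (from lemma~\ref{ll.sum}), and then to harness the hypothesis that the intermediate returns are non-$N$-contracting to force enough spacing between returns to make the resulting geometric series summable.

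First I would observe that since $f^{-k}(x)$ is $N$-contracting in time $k$ and $f^{-k}(W^{cu}_R(x))\subset R$, the return of $R$ that contains $f^{-k}(W^{cu}_R(x))$ is $N$-contracting (witnessed by $f^{-k}(x)$ itself), hence by lemma~\ref{l.distortion-holo} has distortion bounded by some constant $\Delta_1=\Delta_1(R,N)$. In particular, for any subcurve $J\subset W^{cu}_R(x)$, the length $\ell(f^{-k}(J))$ is comparable, up to the factor $\Delta_1$, to $\ell(f^{-k}(W^{cu}_R(x)))\cdot\ell(J)/\ell(W^{cu}_R(x))$; the same comparison will persist after further backward iterates that do not visit $R$.

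Next I would list the return times $k=t_0<t_1<\cdots<t_s\leq l$ at which $f^{-t_i}(W^{cu}_R(x))\subset R$ (by hypothesis each of $t_1,\dots,t_s$ is non-$N$-contracting). Between consecutive return times, and between $t_s$ and $l$, the iterates of $W^{cu}_R(x)$ are disjoint from $R$, so lemma~\ref{ll.sum} applied to $f^{-t_i}(J)$ viewed as a subinterval of the unstable curve of $R$ that contains it yields
\[
\sum_{j=t_i}^{t_{i+1}-1}\ell(f^{-j}(J))\leq \tfrac{C_1}{1-\sigma_1}\,\ell(f^{-t_i}(J)),\qquad
\ell(f^{-t_{i+1}}(J))\leq C_1\sigma_1^{t_{i+1}-t_i}\,\ell(f^{-t_i}(J)),
\]
and an analogous estimate for the tail from $t_s$ to $l$. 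Iterating the second bound gives $\ell(f^{-t_i}(J))\leq C_1^{i}\sigma_1^{t_i-k}\,\ell(f^{-k}(J))$, so the whole sum is controlled by $\frac{C_1}{1-\sigma_1}\sum_{i=0}^{s}\ell(f^{-t_i}(J))$.

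The main obstacle is to make the series $\sum_i C_1^{i}\sigma_1^{t_i-k}$ converge to a constant depending only on $R$ and $N$. The plan here is to exploit the non-$N$-contracting assumption at the intermediate returns: if $t_{i+1}$ were too close to $t_i$, then the $N$-contracting witness at time $k$ (together with the sub-exponential $E^{cs}$-behavior along the excursions between consecutive returns, controlled via properties~(ii)--(iv) and the domination $\lambda\mu>\kappa$) would concatenate to produce an $N$-contracting witness at $t_{i+1}$, contradicting the hypothesis. Thus for $N$ chosen large enough in terms of $\lambda,\mu,\kappa,\chi,C_1,\sigma_1$, one obtains a uniform lower bound on every gap $t_{i+1}-t_i$ forcing $C_1\sigma_1^{t_{i+1}-t_i}\leq 1/2$; the geometric series then telescopes and $K_1=K_1(R,N)$ is obtained by multiplying by the prefactor $C_1/(1-\sigma_1)$ from the per-excursion bound. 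Making this propagation argument fully rigorous — in particular tracking, across an excursion leaving and returning to $R$, which sub-interval of indices can ``inherit'' the $N$-contracting record of the previous return — is the technically delicate step, and will rely on carefully choosing the witness $m$ in the definition of $N$-contracting and on the uniform derivative bounds provided by property~(ii).
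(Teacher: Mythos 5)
Your reduction of the problem to controlling $\sum_i\ell(f^{-t_i}(J))$ over the successive return times $t_0=k<t_1<\dots<t_s$, using the exponential decay of lemma~\ref{ll.sum} on each excursion outside $R$, is correct and is exactly the paper's first step (Claim~\ref{l.sum1}). But the step you yourself flag as delicate is where the argument breaks: there is no uniform lower bound on the gaps $t_{i+1}-t_i$, and the non-$N$-contracting hypothesis cannot produce one. Only $f^{-k}(x)$ is assumed $N$-contracting; the intermediate returns $f^{-t_i}(x)$, $i\geq 1$, are assumed \emph{not} $N$-contracting, so for $i\geq 1$ there is no contracting record at $t_i$ to propagate to $t_{i+1}$, and two consecutive non-$N$-contracting returns can perfectly well occur at adjacent times. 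Even for the first gap, the witness $m$ in definition~\ref{d.contracting} at time $k$ may already equal $N$, leaving no room to extend it. Finally, propagating a Pliss-type record across an excursion would require $\prod\|Df_{|E^{cs}}\|\leq\lambda^{(\cdot)}$ along the excursion, which is not available: $E^{cs}$ is only thin trapped, and the domination $\lambda\mu>\kappa$ gives an implication in the wrong direction for this purpose (failure of $E^{cs}$-contraction implies strong $E^{cu}$-expansion, not conversely). Without a gap bound, the factors $C_1\sigma_1^{t_{i+1}-t_i}$ can exceed $1$ at every step and $\sum_i\ell(f^{-t_i}(J))$ is not controlled.

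The paper's proof uses a different decomposition. It introduces, besides the set $\cR$ of return times, the set $\cP$ of backward Pliss times for $E^{cs}$ along the orbit of $x$ (times $n$ such that $\prod_{j=i+1}^{n}\|Df_{|E^{cs}}(f^{-j}(x))\|\leq\lambda^{n-i}$ for all $i<n$), and builds an interleaved chain $p_s\leq r_s\leq p_{s-1}\leq\dots\leq p_0\leq r_0$ by alternately taking the largest return time below the current Pliss time and the largest Pliss time below the current return time. The hypothesis that $f^{-r_i}(x)$ is not $N$-contracting is used to show $r_i-p_i\geq N$, i.e.\ to separate each intermediate return from the \emph{preceding Pliss time}, not from the preceding return. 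On the stretch from $p_i$ to $r_i$ the Pliss condition fails at every time, which via the domination (property~(iv)) forces $\|Df^{p_i-r_i}_{|E^{cu}}\|\leq\mu^{r_i-p_i}\leq\mu^N$; choosing $N$ with $\mu^NK_2C_f<1/2$ then yields $\ell(f^{-p_{i-1}}(J))\leq\frac12\ell(f^{-p_i}(J))$ and the series telescopes. In short, the decay is extracted from the dominated splitting between Pliss times and returns, not from spacing between returns; your argument would need this extra set $\cP$ (or an equivalent device) to close.

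The appeal to lemma~\ref{l.distortion-holo} in your first paragraph is also misplaced: that lemma bounds the distortion of the center-stable holonomy between \emph{different} unstable curves of a rectangle, whereas here everything happens along the backward orbit of a single subcurve $J$, and no such comparison is needed for this lemma (it enters only later, in the proof of proposition~\ref{p.summability}).
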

\begin{proof}
We let $\cR\subset \{k,\dots,l\}$ be the set of integers $n$ such that $f^{-n}(W^{u}_R(x))\subset R$. Since $R$ is adapted the other integers satisfy $f^{-n}(W^{u}_R(x))\cap R=\emptyset$.
The lemma~\ref{ll.sum} can be restated as:

\begin{claim}\label{l.sum1} There exists $K_2>0$ satisfying the following.

For any integers $r<p$ in $\{k,\dots,l\}$ such that
$r\in \cR$ and $\{r+1,r+2,\dots p-1\}\cap \cR=\emptyset$, one has
$$\sum_{j=r}^{p}\ell(f^{-j}(J))<K_2\; \ell(f^{-r}(J)).$$
\end{claim}
\medskip

We also introduce the set $\cP\subset \{0,\dots,l\}$
of integers $n$ such that for each
$0\leq i < n$ one has
$$\prod_{j=i+1}^n\|Df_{|E^{cs}}(f^{-j}(x))\|\leq \lambda^{n-i}.$$
The summability between iterates in $\cP$ is ensured by the next classical argument.
\begin{claim}\label{l.sum2}
For any integers $p<r$ in $\{k,\dots,l\}$
such that $p\in \cP$ and $\{p+1,p+2,\dots,r-1\}\cap \cP=\emptyset$, one has
$$\ell(f^{-r+1}(J))<\mu^{r-p-1}\ell(f^{-p}(J)).$$
\end{claim}
\begin{proof}
Using that the integers $n\in \{p+1,\dots,r-1\}$ are not in $\cP$,
one proves inductively that
\begin{equation}\label{e.pliss}
\prod_{j=p+1}^n\|Df_{|E^{cs}}(f^{-j}(x))\|> \lambda^{n-p}.
\end{equation}
Indeed, if one has $\|Df_{|E^{cs}}(f^{-p-1}(x))\|\leq \lambda$,
then using that $p$ belongs to $\cP$, one deduces that $p+1$ also, which is a
contradiction. Moreover if the inequatlity~\eqref{e.pliss}
holds for all the integers $p+1,\dots,n-1$
and is not satisfied for $n$, then for all $i\in\{p,\dots,n-1\}$ one gets
$$\prod_{j=i+1}^{n}\|Df_{|E^{cs}}(f^{-j}(x))\|\leq \lambda^{n-i}.$$
Since $p$ belongs to $\cP$ this
implies that $n$ also which is a contradiction.
This proves that~\eqref{e.pliss} holds.

The property~\eqref{e.pliss} for $n=r-1$ together with (iv) in section~\ref{ss.topo}
imply that the norm of $Df^{p-r+1}_{|W^{u}_S(f^{-p}(x))}$
along the plaque $W^{cu}_S(f^{-p}(x))$ is smaller than $\mu^{r-p-1}$,
giving the required conclusion.
\end{proof}
\medskip

We can now prove the lemma.
Let $C_f>1$ be an upper bound of $\|Df\|$.
We choose $N$ large enough so that one has $\mu^N K_2 C_f<\frac 1 2$.

Let us consider $p_s<p_{s-1}<\dots<p_0$ in $\cP$
and $k\leq r_s<r_{s-1}<\dots<r_0\leq l$ in $\cR$
which satisfy:
\begin{enumerate}
\item[--] For each $i\in \{0,\dots,s\}$ one has $p_i\leq r_i$ and
for $i\in \{1,\dots,s\}$ one has $r_{i}\leq p_{i-1}$.
\item[--] There is no $r\in \cR$ such that $r_{i}<r<p_i$.
There is no $p\in \cP$ such that $p_i<p<r_{i-1}$.
\item[--] $p_s\leq k$ and when $s\geq 1$ one has $k<p_{s-1}$. There is no $r\in \cR$ such that $r_0<r\leq l$.
\end{enumerate}
These sequences are defined inductively:
$r_0$ is the largest integer in $\cR$ smaller or equal to $l$ and
$p_0$ is the largest integer in $\cP$ smaller or equal to $r_0$.
Assume that $p_i\leq r_i$ have been constructed.
If $p_i\leq k$ we set $s=i$ and the construction stops.
Otherwise we let $r_{i+1}$ be the largest integer in $\cR$ that is smaller or equal
to $p_i$ and smaller than $r_{i}$.
By assumption $p_{i}$ is larger than $n$, hence $r_{i+1}$ is larger or equal to $n$.
Then $p_{i+1}$ be the largest integer in $\cP$ smaller or equal to $r_{i+1}$ and smaller than $p_i$.
\medskip

Since $f^{-k}(x)$ is $N$-contracting in time $k$, we have $p_s\geq k-N$.
One deduces
$$\ell(f^{-p_s}(J))\leq C_f^{N}\; \ell(f^{-k}(J)).$$
Using claims~\ref{l.sum1} and~\ref{l.sum2}, for each $i\in\{1,\dots,s\}$ one has
$$\sum^{p_{i-1}}_{k=p_i} \ell(f^{-k}(J))\leq ((1-\mu)^{-1}+K_2C_f)\; \ell(f^{-p_i}(J)),$$
$$\sum_{k=p_0}^{l} \ell(f^{-k}(J))\leq ((1-\mu)^{-1}+K_2C_f)\; \ell(f^{-p_0}(J)),$$
$$\ell(f^{-p_{i-1}}(J))\leq \mu^{r_{i}-p_i}K_2C_f\; \ell(f^{-p_{i}}(J)).$$
By our assumptions, when $i$ satisfies $0<i<s$ the point $f^{-r_i}(x)\in R$
is not $N$-contracting. As a consequence $r_i-p_i\geq N$. This implies by our choice of $N$,
$$\ell(f^{-p_{i-1}}(J))\leq \mu^N K_2 C_f\; \ell(f^{-p_i}(J)) \leq \frac 1 2 \ell(f^{-p_i}(J)).$$
Putting all these estimates together one gets the conclusion:
$$\sum_{j=k}^{l}\ell(f^{-j}(J))<((1-\mu)^{-1}+K_2C_f)(1+2K_2C_f)C_f^N\; \ell(f^{-k}(J)).$$
\end{proof}

\bigskip

\begin{proof}[\bf d) Proof of the proposition~\ref{p.summability}]
Let us choose $N\geq 1$ large and consider the constant $K_1$ given by lemma~\ref{l.deep}.
The lemma~\ref{l.distortion-holo2} applied to the rectangle $S$ gives a bound $\Delta_1(S)$.
We fix an unstable curve $W^{cu}_R(x_0)$ of $R$.
We set $K(S)=2\Delta_1(S) K_1\ell(W^{cu}_R(x_0))$.
We also set $n_S=0$ (in the case $S$ is a return we will obtain a better result taking
$n_S$ equal to the return time).

Let $x\in \Lambda\cap R$ and $J=W^{u}_S(x)$.
We introduce the set $\cR_{\cP}\subset \{-n_S,\dots,n\}$
of integers $i$ such that
$f^{-i}(J)\subset R$ and $f^{-i}(x)$ is $N$-contracting in time $i+n_S$.
Since $R$ is adapted, the lemma~\ref{l.adapted} shows that for each $i\in \cR_{\cP}$, there exists a subrectangle $S_i$ of $R$ such that
\begin{itemize}
\item[--] $f^{-i}(J)$ is an unstable curve of $S_i$,
\item[--] for each unstable curve $W^{cu}_{S_i}(z)$ of $S_i$ the image
$f^{i}(W^{cu}_{S_i}(z))$ is an unstable curve of $S$.
\end{itemize}

\begin{lemma}\label{l.sum4}
For any $i'< i$ in $\cR_\cP\cap \{1,\dots,n\}$, the rectangles $S_i,S_{i'}$ are disjoint.
\end{lemma}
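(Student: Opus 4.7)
The plan is to argue by contradiction, exploiting the fact that the specific unstable curve $f^{-i}(J)$ is, by construction, an unstable curve of $S_i$, together with the hypothesis that the backward iterates $f^{-k}(J)$ avoid $S$ for $0<k<n$.

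First I would observe that $S_i$ and $S_{i'}$ are subrectangles of $R$ defined over the same set of basepoints $X$, so each has the form $S_i=\bigcup_{z\in X}\gamma''_{z,i}$ and $S_{i'}=\bigcup_{z\in X}\gamma''_{z,i'}$ with $\gamma''_{z,i},\gamma''_{z,i'}\subset\gamma_z$. By condition~\ref{rectangle3}) of Definition~\ref{d.rectangle}, their boundaries $\{x^\pm_{S_i}\}$ and $\{x^\pm_{S_{i'}}\}$ lie in the center-stable plaques $\cW^{cs}_{x^\pm_{S_i}}, \cW^{cs}_{x^\pm_{S_{i'}}}$, each of which crosses every unstable curve of $R$ in exactly one point (property~(vii)). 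Using property~(viii), the relative position of the endpoints on one unstable curve of $R$ is the same on all of them, so if $S_i\cap S_{i'}\neq\emptyset$ on some $\gamma_z$, the intersection is nonempty on every $\gamma_z$.

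Applying this with $z=w_i$, the basepoint of $f^{-i}(x)$ (so that by construction $\gamma''_{w_i,i}=f^{-i}(J)$), I would then obtain
$$f^{-i}(J)\;\cap\;\gamma''_{w_i,i'}\;\neq\;\emptyset.$$
Pushing forward by $f^{i'}$ gives
$$f^{-(i-i')}(J)\;\cap\;f^{i'}(\gamma''_{w_i,i'})\;\neq\;\emptyset.$$
Since $\gamma''_{w_i,i'}$ is an unstable curve of $S_{i'}$, the defining property of $S_{i'}$ from Lemma~\ref{l.adapted} asserts that $f^{i'}(\gamma''_{w_i,i'})$ is an unstable curve of $S$, hence is contained in $S$. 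Therefore $f^{-(i-i')}(J)\cap S\neq\emptyset$. Since $1\leq i'<i\leq n$, we have $0<i-i'<n$, which contradicts the standing hypothesis of Proposition~\ref{p.summability} that $f^{-k}(J)\cap S=\emptyset$ for all $0<k<n$.

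The only nontrivial step is the geometric observation that the two subrectangles $S_i,S_{i'}$ of $R$ must intersect along a whole family of unstable curves indexed by $X$ as soon as they intersect at one point; this is the place where the product structure of $R$ (basepoints in $X$, boundaries given by center-stable plaques, order-preserving holonomy) is genuinely used. Once that is in hand, the rest is simply the computation above, which produces a backward iterate of $J$ meeting $S$ at time $i-i'$, directly contradicting the hypothesis on $n$.
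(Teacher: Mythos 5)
Your proof is correct and arrives at exactly the same contradiction as the paper's: an intersection of $S_i$ and $S_{i'}$ forces $f^{-(i-i')}(J)$ to meet $S$ with $0<i-i'<n$, violating the standing hypothesis of Proposition~\ref{p.summability}. The only difference is the intermediate bookkeeping: the paper pushes the two intersecting curves forward by $f^{i'}$, invokes the adaptedness of $S$ to produce a return $T$ of $S$ with return time $i-i'$, and then uses Remark~\ref{r.uniqueness} and Lemma~\ref{l.adapted} to get $f^{i'}(S_i)\subset T\subset S$; you instead use the order-preserving center-stable holonomy (property (viii)) to transport the intersection to the $R$-fiber of $f^{-i}(x)$ before pushing forward, which lets you bypass the adaptedness of $S$ entirely. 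Your identification of the transfer-to-the-fiber-of-$x$ step as the crux is accurate, and your justification of it via the product structure of subrectangles over the common base $X$ is sound and at the same level of rigor as the paper's own use of property (viii).
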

\begin{proof}
Assume by contradiction that some unstable curves $f^{-i}(W^{cu}_{S}(y))$ and $f^{-i'}(W^{cu}_{S}(y'))$ of $S_i$ and $S_{i'}$ intersect.
Then $W^{cu}_{S}(y')$ intersects $f^{i'-i}(W^{cu}_{S}(y))$ and since $S$ is adapted,
there exists a return $T$ of $S$ with returning time $i-i'$ such that
$f^{i'-i}(W^{cu}_{S}(y))$ is an unstable curve of $T$.
One deduces from remark~\ref{r.uniqueness} and lemma~\ref{l.adapted}
that $f^{i'}(S_{i})$ is contained in $T$, hence in $S$.
This contradicts the assumption that $f^{i'-i}(W^{cu}_S(x))$ is disjoint from $S$.
\end{proof}

Let $i_0$ be the largest integer in $\cR_\cP$ which is smaller or equal to $n_S$.
(When $n_S=0$, one has $i_0=0$).
We now end the proof of the proposition~\ref{p.summability}.
The lemma~\ref{l.deep} implies that
\begin{equation*}\begin{split}
\sum_{k=0}^{n-1}\ell(f^{-k}(W^{cu}_{S}(x)))
&\;\leq\; \sum_{k=i_0-n_S}^{n-1}\ell(f^{-k}(W^{u}_{S}(x)))\\
&\;\leq\;K_1 \left(\ell(f^{-i_0}(J))+\sum_{i\in \cR_\cP,\; i> n_S}\ell(f^{-i}(J))\right).
\end{split}
\end{equation*}

Since $f^{-i}(x)$ is $N$-contracting in time $i+n_S$,
the lemma~\ref{l.distortion-holo} implies that for each $i\in \cR_\cP$
$$\ell(f^{-i}(J))\leq \Delta_1(S)\; \ell(W^{cu}_{S_i}(x_0)).$$
The lemma~\ref{l.sum4} implies that the intervals $W^{cu}_{S_i}(x_0)$ for $i\in \cR_\cP$
with $i> n_S$ are disjoint.
As a consequence
$$\sum_{i\in \cR_\cP,\; i> n_S}\ell(W^{cu}_{S_i}(x))\leq \ell(W^{cu}_R(x_0)).$$
Putting together these last three inequalities, one concludes the proof of the proposition~\ref{p.summability}
in the general case $S$ is an adapted subrectangle:
$$\sum_{k=0}^{n-1}\ell(f^{-k}(W^{cu}_{S}(x)))\leq 2\Delta_1(S) K_1\; \ell(W^{cu}_R(x_0))=K(S).$$
When $S$ is a return, we take $n_S$ equal to the return time so that $f^{n_S}(J)$
is an unstable curve of $R$.
The constant $\Delta_1$ is given by lemma~\ref{l.distortion-holo2} and as before we set
$K_0=2\Delta_1 K_1\ell(W^{cu}_R(x_0))$.
We repeat the same proof, noting that the subrectangles $S_i$ are returns of $R$, so that
for each $i\in \cR_\cP$ we have the better estimate
$$\ell(f^{-i}(J))\leq \Delta_1\; \ell(W^{cu}_{S_i}(x_0)).$$
The conclusion of the proposition~\ref{p.summability} thus holds with the uniform constant $K_0$.
\end{proof}

\subsection{Proof of the proposition~\ref{propoE^{cu}ishyp}}
In order to conclude the proof of proposition~\ref{propoE^{cu}ishyp} we
consider a rectangle $R$ as given by the section~\ref{ss.construction} and we
distinguish between two cases described by the proposition~\ref{p.construction}.

\paragraph{a) Distortion along unstable curves.}
Since by lemma~\ref{l.smoothness}, the unstable curves  the set $K$ are contained in a continuous $C^2$-plaque family,
the classical distortion estimates hold (see for instance~\cite[lemma 3.5.1]{PS1}).
\begin{itemize}
\item[(D)] \emph{There is $\Delta_2>0$ such that for any $z\in K$, any $x,y$ in an interval $J\subset W^{cu}_{loc}(z)$,
and any $n\geq 0$,
$$\frac{\|Df^{-n}_{|E^{cu}}(x)\|}{\|Df^{-n}_{|E^{cu}}(y)\|}\leq \exp\left(\Delta_2\sum_{k=0}^{n-1}\ell(f^{-k}(J))\right).$$
In particular,
\begin{equation}\label{e.disto}
\|Df^{-n}_{|E^{cu}}(x)\|\leq \frac{\ell(f^{-n}(J))}{\ell(J)}\; \exp\left(\Delta_2\sum_{k=0}^{n-1}\ell(f^{-k}(J))\right).
\end{equation}}
\end{itemize}

As a consequence we also get the following.

\begin{itemize}
\item[(D')] \emph{For any $C>0$ there is $\eta>0$ such that for any $z\in K$, for any
intervals $J\subset \widehat J\subset W^{cu}_{loc}(z)$ and for any $n\geq 0$ satisfying
$\ell(\widehat J)\leq (1+\eta)\; \ell(J)$ and $\sum_{k=0}^{n-1}\ell(f^{-k}(J))\leq K$, then one has
$$\sum_{k=0}^{n-1}\ell(f^{-k}(J))\leq 2\; C.$$
In particular for any $x\in \widehat J$ one has
\begin{equation*}
\|Df^{-n}_{|E^{cu}}(x)\|\leq \frac{\ell(f^{-n}(J))}{\ell(J)}\; \exp\left(2\; \Delta_2\sum_{k=0}^{n-1}\ell(f^{-k}(J))\right).
\end{equation*}}
\end{itemize}
\bigskip

\paragraph{b) Adapted rectangles with unbounded first returns.}
We conclude proposition~\ref{propoE^{cu}ishyp} in the first case
of the proposition~\ref{p.construction}.
(The end of the proof corresponds to~\cite[lemma 3.7.4]{PS1}.)
\begin{lemma}\label{l.unbounded2}
For any adapted rectangle $R$, there exists $\tau\geq 0$ as follows.

If there exists a first return $S_0$ of $R$ with return time larger than $\tau$
and such that $S_0\cap \Lambda\neq \emptyset$, then, there also exists a return $S$
of $R$ such that $S\cap \Lambda\neq \emptyset$ which has the following property:
for any $x\in S\cap \Lambda$ and $n\geq 1$ such that
$f^{-n}(x)\in S$ we have $\|Df^{-n}_{|E^{cu}}(x)\|<\frac 1 2$.
\end{lemma}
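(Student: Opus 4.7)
The plan is to fix constants depending only on $R$---the uniform summability constant $K_0$ from proposition~\ref{p.summability}, the distortion constant $\Delta_2$ from property~(D) of section~\ref{ss.topo}, and the bounded-distortion constant $\Delta_1$ from lemma~\ref{l.distortion-holo} for $N$-contracting returns---and then choose $\tau$ large. The return $S$ will be constructed as a deep subrectangle of $S_0$ via pigeonhole on the summability estimate.

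Given the first return $S_0$ with return time $n_0>\tau$ meeting $\Lambda$, I will apply proposition~\ref{p.summability} to $R$ as a subrectangle of itself at a point $y=f^{n_0}(x_*)\in R$ with $x_*\in S_0\cap\Lambda$: since $S_0$ is a first return, the backward iterates $f^{-k}(W^{cu}_R(y))$ for $0<k<n_0$ are disjoint from $R$, hence $\sum_{k=0}^{n_0-1}\ell(f^{-k}(W^{cu}_R(y)))\leq K_0$. Pigeonhole on $k\in\{n_0/2,\ldots,n_0-1\}$ produces some $k^*\geq n_0/2>\tau/2$ with $\ell(f^{-k^*}(W^{cu}_R(y)))\leq 2K_0/n_0<2K_0/\tau$. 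I will define $S$ as the subrectangle of $R$ whose unstable curves are $f^{-k^*}(W^{cu}_R(\tilde z))$ for $\tilde z$ in the appropriate adapted subset of $R$; this is a return of $R$ with return time $k^*$, meets $\Lambda$, and by lemma~\ref{l.distortion-holo} (since $k^*$ is large, $S$ is $N$-contracting for any fixed $N$) all its unstable curves have length within a factor $\Delta_1$ of each other and are bounded above by $2\Delta_1 K_0/\tau$.

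Next I will observe a structural constraint: for $x\in S\cap\Lambda$ and $n\geq 1$ with $f^{-n}(x)\in S$, we necessarily have $n\geq k^*>\tau/2$. Indeed $S\subset S_0$ and the first-return property of $S_0$ forces $f^j(x)\notin R$ for $0<j<n_0$, so the equation $f^{k^*-n}(x)=f^{k^*}(f^{-n}(x))\in R$ requires $n\geq k^*$ (the alternative $n\leq k^*-n_0\leq 0$ being ruled out since $k^*<n_0$). Combining this with distortion~(D) applied to $J=W^{cu}_S(x)$, adaptedness of $S$ (lemma~\ref{l.adapted}) giving $f^{-n}(J)=W^{cu}_S(f^{-n}(x))$, and summability (proposition~\ref{p.summability} applied recursively over the first-backward-return decomposition of the orbit) yields
$$\|Df^{-n}_{|E^{cu}}(x)\|\leq\frac{\ell(W^{cu}_S(f^{-n}(x)))}{\ell(W^{cu}_S(x))}\exp\Bigl(\Delta_2\sum_{k=0}^{n-1}\ell(f^{-k}(J))\Bigr).$$
The main contraction will come from the forward expansion: for any $z\in S$, $f^{k^*}(W^{cu}_S(z))=W^{cu}_R(f^{k^*}(z))$ is an $R$-curve of length $\geq\ell_{\min}(R)$, while $\ell(W^{cu}_S(z))\leq 2\Delta_1 K_0/\tau$, giving a forward expansion factor $\gtrsim \tau\ell_{\min}(R)/(2\Delta_1 K_0)$ per $k^*$-step. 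Inverting and combining with distortion and summability over the segments of $(x,f^{-1}(x),\ldots,f^{-n}(x))$ will yield $\|Df^{-n}_{|E^{cu}}(x)\|<1/2$ when $\tau$ is large.

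The main obstacle will be combining the per-segment forward expansion (clean and large) with the global backward estimate (uniform in $n$): a naive segment-by-segment application of distortion and summability gives a factor that grows with the number of segments rather than contracting, so one must carefully interleave the $R$-curve expansion (from the return structure of $S$) with the $S$-curve contraction (from pigeonhole) to extract a single clean contraction per segment. Verifying that $S$ is indeed a return of $R$ (not merely a subrectangle) and that $S\cap\Lambda\neq\emptyset$ are further technicalities handled via the adaptedness of $R$ and the chain-transitivity of $\Lambda$.
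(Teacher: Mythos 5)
Your construction of $S$ breaks down at the first step. Since $S_0$ is a \emph{first} return of $R$ with return time $n_0$, the iterates $f^{-k}(W^{cu}_R(y))$ for $y\in f^{n_0}(S_0)$ and $0<k<n_0$ are by definition disjoint from $R$; in particular the curve $f^{-k^*}(W^{cu}_R(y))$ selected by your pigeonhole argument (with $n_0/2\leq k^*<n_0$) does not lie in $R$, so the family of curves $f^{-k^*}(W^{cu}_R(\tilde z))$ is not a subrectangle of $R$ and a fortiori not a return of $R$ with return time $k^*$ — no such return exists. The subsequent claim $S\subset S_0$ is likewise unfounded. A second, independent gap: you assert that $S$ is $N$-contracting ``since $k^*$ is large,'' but $N$-contracting (definition~\ref{d.contracting}) is a Pliss-type condition on the products $\prod\|Df_{|E^{cs}}\|$ at some point of the rectangle, not a consequence of a large return time; without it you cannot invoke lemma~\ref{l.distortion-holo} to compare $\ell(W^{cu}_S(f^{-n}(x)))$ with $\ell(W^{cu}_S(x))$ across the rectangle, and this comparison is the crux of the estimate.

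The paper's proof is organized precisely around this second point, via a dichotomy you do not have. Either no backward iterate of a point of $S_0\cap\Lambda$ ever lands in an $N$-contracting return — then one takes $S=S_0$, lemma~\ref{l.deep} (applied with $k=0$, $l=n$) bounds the whole sum $\sum_{j\le n}\ell(f^{-j}(W^{cu}_R(x)))$ by $K_1\,l$, and the contraction follows from property (D) together with $\ell(f^{-n}(W^{cu}_R(x)))<\delta$ (lemma~\ref{l.top-expansion}, since $n\geq n_0\geq\tau$) divided by the lower bound $L$ on curve lengths. Or some backward iterate does land in an $N$-contracting return — then that return (with minimal such time $n_1$, so that lemma~\ref{l.deep} again controls the sum up to time $n_1$) is taken as $S$, lemma~\ref{l.distortion-holo} supplies the factor $\Delta_1$, and proposition~\ref{p.summability} supplies the bound $K_0$ in the distortion exponential. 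In both cases the contraction by $1/2$ is obtained in a \emph{single} application of (D), with $\delta$ chosen in advance to beat $L^{-1}\Delta_1\exp(\Delta_2(K_0+K_1 l))$; there is no iteration of a per-segment expansion factor, which — as you yourself observe — would accumulate distortion constants rather than contract. To repair your argument you would need both to abandon the pigeonhole construction of $S$ and to introduce the $N$-contracting dichotomy.
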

In particular the property (E) holds with $B=S\cap \Lambda$.
\begin{proof}
Let $K_0,K_1,N,\Delta_2$ be some constants associated to $R$ so that proposition~\ref{p.summability} and lemmas~\ref{l.distortion-holo} and~\ref{l.deep} hold.
Let $L$ be a lower bound for the length of unstable curves $W^{cu}_R(z)$ of $R$
and $l$ be an upper bound for all the backward iterates $f^{-j}(W^{cu}_R(z))$ with $j\geq 0$.
Recall that $\Delta_2>0$ is a constant such that~\eqref{e.disto} holds.
We also set
$$\delta=\frac L {\Delta_1}\exp(-\Delta_2\;(K_0+K_1\;l))/3$$
and choose $\tau\geq 1$ so that for any $z\in \Lambda$ the backward iterates $f^{-k}(W^{cu}_{loc}(z))$
with $k\geq \tau$ have a length smaller than $\delta$
(see lemma~\ref{l.top-expansion}).
We then consider a return $S_0$ of $R$ with return time $n_0$
larger than $\tau$ such that $S_0\cap \Lambda\neq \emptyset$. Two cases occur.

\paragraph{\it Case 1: no contracting backward iterate.}
We assume first that for any $x\in S_0\cap \Lambda$, there is no backward iterate $f^{-j}(x)$ with
$j\geq 0$ which belongs to a $N$-contracting return of $R$ with return time $j$.
In this case, we set $S=S_0$. For any point $x\in S\cap \Lambda$
and any $j\geq 1$ such that $f^{-j}(x)\in S$ one can apply
the lemma~\ref{l.deep} to $x$ and the integers $k=0$ and $l=j$.
One deduces that one has
$$\sum_{i=0}^j \ell(f^{-j}(W^{cu}_R(x)))\leq K_1\ell(W^{cu}_R(x))\leq K_1\;l.$$
Note that $j\geq n_0\geq \tau$. Since $z$ belongs to $\Lambda$, one deduces that
$f^{-j}(W^{cu}_R(z))$ is smaller than $\delta$. With property (D), one gets
\begin{equation*}
\begin{split}
\|Df^{-j}_{|E^{cs}}(x)\|&\leq \frac{\ell(f^{-j}(W^{cu}_R(z)))}{\ell(W^{cu}_R(z))}\exp(\Delta_2\;K_1\;l)\\
&\leq \frac{\delta}{L}\exp(\Delta_2\;K_1\;l)<1/2.
\end{split}
\end{equation*}
The lemma is thus proved in this case.
\medskip

\paragraph{\it Case 2: contracting backward iterates exist.}
We first build the return $S$.
\begin{claim}\label{c.select} There exists a $N$-contracting return $S$ of $R$ with return time $n_1\geq \tau$
such that $\Lambda\cap S\neq \emptyset$
and such that for each $z\in \Lambda\cap S$ one has
$$\sum_{j=0}^{n_1}\ell(f^j(W^{cu}_{S}(z)))<K_1\;\ell(W^{cu}_{R}(f^{n_1}(z))).$$
\end{claim}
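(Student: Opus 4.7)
The plan is to produce the required return $S$ by minimising the first ``$N$-contracting return time'' along the backward orbit of a carefully chosen point in $\Lambda$, and then to apply Lemma~\ref{l.deep} to transfer the summability bound to every orbit meeting $\Lambda \cap S$.

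First I would fix, by the hypothesis of the lemma (Case~2 of Proposition~\ref{p.construction}), a first return $S_0$ of $R$ with return time $n_0 \geq \tau$ and $\Lambda \cap S_0 \neq \emptyset$, and set $T_0 := f^{n_0}(\Lambda \cap S_0) \subset \Lambda \cap R$. Since $S_0$ is a \emph{first} return, for every $w \in T_0$ and every $0 < k < n_0$ the preimage $f^{-k}(w)$ lies in $f^{n_0-k}(S_0)$, which is disjoint from $R$. Hence any return of $R$ visited by the backward orbit of a point of $T_0$ occurs at time $\geq n_0 \geq \tau$. By the Case~2 hypothesis at least one such $w \in T_0$ eventually visits an $N$-contracting return of $R$, so the integer
$$n_1 := \min\bigl\{\,j \geq n_0 : \exists\, w \in T_0,\; f^{-j}(w)\in R''\ \text{for some $N$-contracting return $R''$ of $R$ with return time $j$}\,\bigr\}$$
is finite. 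Let $\zeta \in T_0$ be a minimiser and let $S$ denote the $N$-contracting return of $R$ with return time $n_1$ that contains $f^{-n_1}(\zeta)$. Then $n_1 \geq \tau$ and $f^{-n_1}(\zeta) \in \Lambda \cap S$.

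Next I would verify the hypotheses of Lemma~\ref{l.deep} applied with $x = \zeta$, $k = 0$, $l = n_1$. The conditions at $j = 0$ are immediate: $\zeta \in R$, and $\zeta$ is $N$-contracting in time~$0$ by the empty-product clause of Definition~\ref{d.contracting}. For $0 < j < n_0$ we have $f^{-j}(\zeta) \notin R$, and since $R$ is adapted this forces $f^{-j}(W^{cu}_R(\zeta)) \cap R = \emptyset$. For $n_0 \leq j < n_1$, the minimality of $n_1$ shows that whenever $f^{-j}(W^{cu}_R(\zeta))$ meets $R$ the corresponding return of $R$ with return time $j$ is \emph{not} $N$-contracting, and Definition~\ref{d.contracting} then forces $f^{-j}(\zeta)$ to fail to be $N$-contracting in time $j$. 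Lemma~\ref{l.deep} therefore yields
$$\sum_{k=0}^{n_1}\ell(f^{-k}(W^{cu}_R(\zeta)))\;\leq\; K_1\,\ell(W^{cu}_R(\zeta)),$$
which, after the substitution $j = n_1 - k$ (together with $f^{n_1}(W^{cu}_S(f^{-n_1}\zeta)) = W^{cu}_R(\zeta)$), is the desired inequality for the particular point $z = f^{-n_1}(\zeta)$.

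Finally I would propagate the bound to every $z \in \Lambda \cap S$. The crucial observation is that the minimum defining $n_1$ was taken over \emph{every} $w \in T_0$, so the verification of the Lemma~\ref{l.deep} hypothesis for $\zeta$ goes through word for word for any $w \in T_0 \cap f^{n_1}(S)$. To reach the remaining points $w = f^{n_1}(z) \in f^{n_1}(\Lambda \cap S)$ that a priori need not lie in $T_0$, I would refine $S$ by intersecting it with the finitely many preimages $f^{-(n_1 - j)}(R'')$ and their complements, for each return $R''$ of $R$ with return time $j\in\{1,\dots,n_1-1\}$ visited by the backward orbit of $\zeta$; this refinement is still a subrectangle (by Lemma~\ref{l.adapted}) and still meets $\Lambda$ because it contains $f^{-n_1}(\zeta)$, and the backward visit pattern of $w = f^{n_1}(z)$ to $R$ is now independent of $z \in \Lambda \cap S$. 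Lemma~\ref{l.deep} then applies uniformly, yielding the claim with the same constant $K_1$. I expect this last step — ensuring that the return $S$ can be shrunk so that the return pattern is uniform across $\Lambda\cap S$ without losing the $N$-contracting property or the non-emptiness of $\Lambda \cap S$ — to be the main obstacle; it relies on a careful use of the adapted property of $R$ (Definition~\ref{d.adapted}) and the coherence of the $\cW^{cs}$-plaques (Lemma~\ref{l.uniqueness-coherence}, property~(vi)) to show that the refining intersections give genuine subrectangles and preserve membership of $f^{-n_1}(\zeta)$.
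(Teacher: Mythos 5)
Your selection of $n_1$ by minimising the first $N$-contracting return time over backward orbits of points of $T_0=f^{n_0}(\Lambda\cap S_0)$, and your verification of the hypotheses of Lemma~\ref{l.deep} for the minimiser $\zeta$ with $k=0$, $l=n_1$, are exactly the paper's argument, and that part is correct. The difficulty you isolate at the end --- that a general $z\in\Lambda\cap S$ need not satisfy $f^{n_1}(z)\in T_0$, so the minimality does not transfer verbatim --- is the genuine delicate point. But your resolution does not deliver the claim. The refined set obtained by intersecting $S$ with preimages of returns and their complements is a union of \emph{some} of the unstable curves of $S$: it is not a subrectangle of $R$ in the sense of the paper (a subrectangle keeps every curve index of $X$ and only shortens the curves), hence a fortiori not a \emph{return} of $R$, and Lemma~\ref{l.adapted} does not apply to it. This is not cosmetic: the claim explicitly produces an $N$-contracting \emph{return}, and the remainder of the proof of Lemma~\ref{l.unbounded2} applies Proposition~\ref{p.summability} to $S$ as a return (to get the uniform constant $K_0$), applies Lemma~\ref{l.distortion-holo} to $S$ as an $N$-contracting return, and uses that $f^{n_1}(W^{cu}_S(\cdot))$ is a full unstable curve of $R$. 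With your shrunken object all of these would have to be re-proved outside the paper's definitions; you would also still need to check that the refinement meets $\Lambda$ in a relatively open set and that it is controlled by \emph{all} returns with return time $<n_1$, not only those visited by $\zeta$.

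The paper's route keeps $S$ intact and argues at the level of the whole rectangle. Since $S$ is a return, each $f^{n_1-j}(W^{cu}_S(z))=f^{-j}\bigl(W^{cu}_R(f^{n_1}(z))\bigr)$ is, by adaptedness, either disjoint from $R$ or a full curve of a return $T$ of $R$ with return time $j$; in the latter case, if $f^{n_1-j}(z)$ were $N$-contracting in time $j$ then $z$ itself would witness that $T$ is an $N$-contracting return. The paper then disposes of the two ranges of $j$ uniformly over the rectangle: for $1\le j<n_0$ the first-return property of $S_0$ gives $f^{i}(S)\cap R=\emptyset$ for $i\in\{n_1-n_0+1,\dots,n_1-1\}$ (this is where the inclusion $f^{n_1-n_0}(S)\subset S_0$, coming from the Markov structure of returns and Remark~\ref{r.uniqueness}, is used), and for $n_0\le j<n_1$ the minimality of $n_1$ forbids $f^{n_1-j}(S)$ from meeting any $N$-contracting return with return time $j$. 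This yields the hypotheses of Lemma~\ref{l.deep} for every $z\in\Lambda\cap S$ with the original return $S$, with no shrinking. If you replace your refinement by this rectangle-level argument, your proof becomes the paper's.
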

\begin{proof}
There exists a point $x\in f^{n_0}(S_0)\cap \Lambda$ and a backward iterate $f^{-n_1}(x)$ with $n_1> n_0$ which belongs to a $N$-contracting $S$ return of $R$ with return time $n_1$. One can assume that
$n_1$ is minimal: consequently for any $i\in \{1,\dots,n_1-n_2\}$ the iterate $f^{i}(S)$ does not intersect a
$N$-contracting return of $R$ with return time $n_1-i$.
Since $S_0$ is a first return, the iterates $f^i(S)$ for $i\in \{n_1-n_0+1,\dots,n_1-1\}$
do not intersect $R$. The lemma~\ref{l.deep} can thus be applied to the points $z\in\Lambda\cap f^{n_1}(S)$ and the integers
$k=0$ and $l=n_1$. In particular, for any $z\in\Lambda\cap S$ one gets the announced inequality.
\end{proof}

We now prove that the return $S$ given by the claim~\ref{c.select}
satisfies the conclusions of the lemma~\ref{l.unbounded2}.
It is enough to consider a point $x\in S\cap \Lambda$ and $n\geq 1$ such that
$f^{-n}(x)\in S$ and $f^{-k}(x)\notin S$ for $0<k<n$. By lemma~\ref{l.adapted}, the rectangle $S$ is adapted,
hence $f^{-k}(W^{cu}_S(x))$ is disjoint from $S$ for any $0<k<n$.
One deduces by proposition~\ref{p.summability} that
$$\sum_{k=0}^{n-1} \ell(f^{-k}(W^{cu}_{S}(x)))<K_0.$$
By our choice of $S$ one has
$$\sum_{j=0}^{n_1} \ell(f^{j}(W^{cu}_{S}(f^{-n}(x))))<K_1 \ell(f^{n_1}(W^{cu}_{S}(f^{-n}(x)))=
K_1\ell(W^{cu}_{R}(f^{n_1-n}(x))).$$
In particular, the property (D) gives
$$\|Df^{n_1-n}_{|E^{cu}}(x)\|\leq \frac{\ell(f^{n_1-n}(W^{cu}_S(x)))}{\ell(W^{cu}_S(x))}\; \exp\left(\Delta_2\;K_0\right).$$
$$\|Df^{-n_1}_{|E^{cu}}(f^{n_1-n}(x))\|\leq \frac{\ell(W^{cu}_S(f^{-n}(x)))}{\ell(W^{cu}_R(f^{n_1-n}(x)))}\; \exp\left(\Delta_2\;K_1\;l\right).$$
Since $S$ is an $N$-contracting return of $R$, the lemma~\ref{l.distortion-holo} gives
$$\frac{\ell(W^{cu}_S(f^{-n}(x)))}{\ell(W^{cu}_S(x))}\leq \Delta_1.$$
We also have
$$\ell(f^{n_1-n}(W^{cu}_S(x)))=\ell(f^{-n}(W^{cu}_R(f^{n_1}(x))))<\delta.$$
Combining these inequalities, one gets the required estimate:
$$\|Df^{-n}_{|E^{cu}}(f(x))\|\leq \frac{\delta}{L\; \Delta_1}\exp\left(\Delta_2\;(K_0+K_1\;l)\right)<1/2.$$
\end{proof}

\paragraph{c) Adapted rectangles with holes.} We obtain a stronger summability result for holes.
This is similar to~\cite[lemma 3.7.7]{PS1}.
\begin{lemma}\label{l.sum-hole}
Let $R$ be an adapted rectangle and $S$ be a hole of $R$ with aperiodic boundary.
Then, there exists $K_3>0$ such that for any $x\in R\cap \Lambda$, we have
$$\sum_{k\geq 0} \ell(f^{-k}(W^{cu}_{S}(x)))<K_3.$$
\end{lemma}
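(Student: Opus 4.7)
My plan is to deduce Lemma~\ref{l.sum-hole} by combining Proposition~\ref{p.summability} with the uniform separation of returns provided by Lemma~\ref{l.hole}.3), in the spirit of~\cite[Lemma 3.7.7]{PS1}. Write $J_k = f^{-k}(W^{cu}_S(x))$ and define the (possibly finite) sequence $n_0 < n_1 < n_2 < \dots$ of \emph{returns of the curve to $S$}: the integers $k \geq 0$ for which $J_k$ is contained in an unstable curve of $S$ (in particular, $f^{-n_i}(x) \in R \cap \Lambda$ projects onto the basepoint $y_i \in X$ of that curve). Between two consecutive return times, no intermediate $J_k$ meets $S$; applying Proposition~\ref{p.summability} to the subrectangle $S$ and the point $y_i$ together with the distortion estimate (D) yields a bound of the form
$$\sum_{k=n_i}^{n_{i+1}-1} \ell(J_k) \;\leq\; C \,\ell(J_{n_i})$$
for a constant $C$ depending only on $R$ and $S$ (the ratio $\ell(J_{n_i})/\ell(W^{cu}_S(y_i))$ is controlled by bounded distortion, since the forward iterate $f^{n_i}$ pushes one curve onto $W^{cu}_S(x)$ and the other onto $W^{cu}_S(y_i)$, and one can invoke Lemma~\ref{l.distortion-holo2} applied to a suitable $N$-contracting return that dominates the geometry).

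The second ingredient is to show that the lengths $\ell(J_{n_i})$ decay geometrically. Fix $\tau$ large; by Lemma~\ref{l.hole}.3, there is $N$ such that every return time $n_i \geq N$ satisfies $n_{i+1} \geq n_i + \tau$. By Lemma~\ref{l.top-expansion}, for any $\varepsilon > 0$ the backward iterates over $\tau$ steps shrink every local unstable curve at points of $\Lambda$ uniformly below $\varepsilon$, provided $\tau$ is large enough (depending on $\varepsilon$). Combining this uniform shrinking at $y_i \in \Lambda$ with the bounded distortion (D) applied to the curve $J_{n_i}$ (whose one-step summability up to $n_i + \tau$ is already controlled by the previous paragraph), one obtains a contraction estimate
$$\ell(J_{n_{i+1}}) \;\leq\; \ell(J_{n_i+\tau}) \;\leq\; \rho \,\ell(J_{n_i}),$$
valid for all sufficiently large $i$, with $\rho < 1$ depending on $\tau$.

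Putting the two steps together, the sequence $(\ell(J_{n_i}))_i$ is eventually dominated by a geometric sequence with ratio $\rho$, hence $\sum_i \ell(J_{n_i})$ converges; multiplied by the per-block bound $C$ this gives $\sum_{k \geq 0} \ell(J_k) < +\infty$. A finite number of initial returns (those with $n_i < N$) contributes only finitely much and is absorbed into the final constant $K_3$, which depends only on $R$, $S$, $\tau$, and the constants from Proposition~\ref{p.summability} and Lemmas~\ref{l.distortion-holo}--\ref{l.distortion-holo2}.

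The main technical obstacle is the geometric decay step. One cannot invoke uniform contraction of $Df^{-1}$ along $E^{cu}$, since this is precisely what Proposition~\ref{propoE^{cu}ishyp} aims to prove. Instead, the decay must be extracted from the topological expansion Lemma~\ref{l.top-expansion} at the basepoints $y_i \in \Lambda$ and then transferred to the nearby curve $J_{n_i}$ through the distortion control (D)--(D$'$); the role of the aperiodic boundary hypothesis is precisely to guarantee, via Lemma~\ref{l.hole}.3), a uniform time gap $\tau$ during which this topological contraction has room to act.
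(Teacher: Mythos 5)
Your argument is correct and follows essentially the same route as the paper's proof: decompose the sum into blocks between consecutive returns of the curve to $S$, bound each block by $C\,\ell(J_{n_i})$ via Proposition~\ref{p.summability} together with the distortion estimate (D), and extract geometric decay of the $\ell(J_{n_i})$ from the large return-time gaps given by the aperiodic boundary (lemma~\ref{l.hole}) combined with lemma~\ref{l.top-expansion} and distortion. The only cosmetic slip is the intermediate inequality $\ell(J_{n_{i+1}})\leq \ell(J_{n_i+\tau})$, which is neither needed nor justified (the lengths $\ell(J_k)$ need not be monotone in $k$); one should instead compare $\ell(J_{n_{i+1}})$ directly to $\ell(J_{n_i})$ over the whole gap $n_{i+1}-n_i\geq\tau$, exactly as your surrounding discussion and the paper both do.
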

\begin{proof}
By lemma~\ref{l.hole}, $S$ is an adapted rectangle.
Let $(n_i)$ be the sequence of returns of $W^{cu}_S(x)$ into $S$, that is the integers such that
$f^{-n_i}(W^{cu}_S(x))\subset S$. 
Let us consider two consecutive returns $n_i,n_{i+1}$. By the proposition~\ref{p.summability}, we have
$$\sum_{k=n_i}^{n_{i+1}}\ell((f^{-k}(W^{cu}_{S}(x)))<K(S).$$
It is enough to bound uniformly the sum $\sum_{i\geq 0} \ell(f^{-n_i}(W^{cu}_{S}(x)))$.

From (D) we have
$$\frac{\ell(f^{-n_{i+1}}(W^{cu}_{S}(x)))}{\ell(f^{-n_{i}}(W^{cu}_{S}(x)))}
\leq \frac{\ell(f^{n_i-n_{i+1}}(W^{cu}_{S}(f^{-n_i}(x))))}{\ell(W^{cu}_{S}(f^{-n_i}(x)))}
\exp(\Delta_2K(S)).$$
By lemma~\ref{l.hole},
there exists $N\geq 1$ such that for $n_i\geq N$ the difference $n_{i+1}-n_i$
is large and by lemma~\ref{l.top-expansion}, the length $\ell(f^{n_i-n_{i+1}}(W^{cu}_S(f^{-n_i}(x))))$ is smaller than $\ell(W^{cu}_S(f^{-n_i}(x)))\exp(-\Delta_2K(S))/2$.
In particular $\ell(f^{-n_{i+1}}(W^{cu}_{S}(x)))$
is smaller than $\ell(f^{-n_{i}}(W^{cu}_{S}(x)))/2$ for any $n_i\geq N$.
The corollary follows.
\end{proof}
\medskip

It remains to conclude proposition~\ref{propoE^{cu}ishyp} in the second case
of the proposition~\ref{p.construction}.
\begin{lemma}\label{l.hole-conclusion}
For any adapted rectangle $R$ having a hole $S$ with aperiodic boundary and such that
$R\cap \Lambda\neq \emptyset$, there exists a non-empty open subset
$B\subset R$ of $\Lambda$ such that property (E) holds.
\end{lemma}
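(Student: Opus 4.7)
The plan is to combine the summability of Lemma~\ref{l.sum-hole}, the distortion estimate (D), and the uniform backward contraction of center-unstable curves provided by Lemma~\ref{l.top-expansion} in order to produce $B$ inside $S\cap\Lambda$. From (D) together with Lemma~\ref{l.sum-hole} one obtains, for every $x\in R\cap\Lambda$ and every $n\geq 1$, the bound
$$\|Df^{-n}_{|E^{cu}}(x)\|\;\leq\;\frac{\ell(f^{-n}(W^{cu}_S(x)))}{\ell(W^{cu}_S(x))}\,\exp(\Delta_2\,K_3).$$
Since each curve $W^{cu}_S(x)$ is bounded by two distinct points of $K_f$ lying in the two fixed plaques $\cW^{cs}_{x^-_S}$ and $\cW^{cs}_{x^+_S}$, continuity of the plaque families $\cW^{cu},\cW^{cs}$ together with the transversality property~(vii) of section~\ref{ss.topo} yield a uniform lower bound $\ell(W^{cu}_S(x))\geq L_0>0$, valid for every $x\in S\cap K_f$. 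The problem is therefore reduced to forcing $\ell(f^{-n}(W^{cu}_S(x)))$ to be small whenever a point $x\in B$ returns to $B$.

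I would then apply Lemma~\ref{l.top-expansion} (used uniformly in $x\in\Lambda$) to pick $N_0\geq 1$ such that
$$\ell(f^{-n}(W^u_{loc}(x)))<\delta\qquad\text{for every $x\in\Lambda$ and every $n\geq N_0$,}$$
where $\delta>0$ is chosen small enough that $(\delta/L_0)\exp(\Delta_2 K_3)<1/2$. Since $W^{cu}_S(x)\subset W^u_{loc}(x)$, the displayed distortion inequality then automatically gives $\|Df^{-n}_{|E^{cu}}(x)\|<1/2$ for every $x\in R\cap\Lambda$ as soon as $n\geq N_0$, so the remaining task is to rule out the short-time returns $1\leq n<N_0$.

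To eliminate them, I would choose a non-periodic point $\zeta\in\Lambda\cap S$. Such a point exists because the point produced by property~(i) of section~\ref{ss.topo} has dense forward orbit in $\Lambda$, hence eventually enters the relatively open set $\Lambda\cap S$, and cannot be periodic: otherwise $\Lambda$ would reduce to a periodic orbit, which by assumption~\ref{a3}) would have to be hyperbolic with uniformly expanded $E^{cu}$, contradicting the hypothesis that $E^{cu}$ is not uniformly expanded on $\Lambda$. Since $\zeta$ is non-periodic, the $N_0+1$ points $\zeta,f^{-1}(\zeta),\dots,f^{-N_0}(\zeta)$ are pairwise distinct, so by continuity there is an open neighborhood $V$ of $\zeta$ in $M$, with $V\subset S$, such that $V,f^{-1}(V),\dots,f^{-N_0}(V)$ are pairwise disjoint. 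Set $B:=V\cap\Lambda$: this is a non-empty relatively open subset of $\Lambda$ contained in $R$, and by construction $f^{-n}(B)\cap B=\emptyset$ for every $1\leq n<N_0$. Property (E) then follows at once, since for $x\in B$ and $n\geq 1$ with $f^{-n}(x)\in B$ one necessarily has $n\geq N_0$, so the previous paragraph yields $\|Df^{-n}_{|E^{cu}}(x)\|<1/2$.

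The main obstacle I anticipate is the uniform lower bound $L_0$ on $\ell(W^{cu}_S(x))$: it has to hold for every $x\in S\cap K_f$ (not only on the orbit of $\zeta$), since otherwise the distortion estimate alone does not give (E). As indicated above, it should follow from the rectangular structure of $S$, the continuity of the plaque families, and the fact that the two boundary plaques $\cW^{cs}_{x^-_S}$ and $\cW^{cs}_{x^+_S}$ lie at positive distance from one another; verifying this carefully is the only nontrivial continuity argument in the proof, the rest being a routine combination of already established summability, topological contraction and distortion control.
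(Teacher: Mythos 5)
There is a genuine gap, and it stems from a misreading of the definition of a hole. By definition, a hole $S$ of $R$ is a subrectangle that is \emph{disjoint from $\Lambda$}; hence $\Lambda\cap S=\emptyset$, the point $\zeta\in\Lambda\cap S$ you try to produce does not exist, and your set $B=V\cap\Lambda$ with $V\subset S$ is empty. The density argument via property (i) cannot rescue this: the forward orbit of that point is dense in $\Lambda$, but $\Lambda\cap S$ is not a non-empty open subset of $\Lambda$ — it is empty. The correct location for $B$ is in $R\setminus S$, near the boundary of the hole, and the existence of suitable points of $\Lambda$ there is exactly what item 1 of lemma~\ref{l.hole} supplies (a sequence $x_n\in R\cap\Lambda$ with $d(x_n,x^-_{n,S})\to 0$); your proposal never invokes this.

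This misplacement also breaks your distortion step. For $x\in R\cap\Lambda$ the curve $W^{cu}_S(x)$ is the sub-interval of $W^{cu}_R(x)$ cut out by the hole, and since $S\cap\Lambda=\emptyset$ the point $x$ does \emph{not} belong to $J=W^{cu}_S(x)$. Property (D) only bounds $\|Df^{-n}_{|E^{cu}}(x)\|$ for $x$ \emph{inside} the interval whose backward lengths are summed, so the inequality
$\|Df^{-n}_{|E^{cu}}(x)\|\leq \frac{\ell(f^{-n}(W^{cu}_S(x)))}{\ell(W^{cu}_S(x))}\exp(\Delta_2 K_3)$
does not follow from (D) and lemma~\ref{l.sum-hole} alone. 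This is precisely why the paper introduces property (D'): one first chooses $\eta$ associated to $C=K_3$, then picks a non-periodic $x\in R\cap\Lambda$ with $d(x,x^-_S)<\eta\,\ell(W^{cu}_S(x))$ (possible by lemma~\ref{l.hole}), so that the enlarged interval $\widehat J\ni x$ satisfies $\ell(\widehat J)\leq(1+\eta)\ell(J)$ and the summability over $J$ transfers to $\widehat J$ at the cost of a factor $2$ in the exponent. Your remaining ingredients — the uniform lower bound $L_0$ on $\ell(W^{cu}_S(\cdot))$, the use of lemma~\ref{l.top-expansion} to shrink $f^{-n}(W^{cu}_S(x))$ below $\delta$, and the trick of making $B$ disjoint from its first $N_0$ iterates around a non-periodic point to kill short returns — all match the paper and are fine, but the proof does not close without relocating $B$ and replacing (D) by (D').
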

\begin{proof}
Let $K_3,\Delta_2$ be the constants given by lemma~\ref{l.sum-hole}
and the property (D)
and let $\eta$ be the constant given by the property (D') and associated to $C=K_3$.
Since $R\cap \Lambda$ is non-empty $S$ is proper in $R$.
Up to exchange the boundaries $x^-,x^+$ of $R$ and $S$, one deduces
by lemma~\ref{l.hole} that
there exists a sequence $(x_n)$ in $R\cap \Lambda$ such that
$d(x_n,x^-_{n,S})$ goes to zero as $n\to +\infty$.
Since $\Lambda$ is transitive and is not a single periodic orbit,
one can assume that the points $x_n$ are not periodic.
We fix such a point $x$ so that
$d(x,x^-_{S})<\eta\; \ell(W^{cu}_S(x))$.

Let $L$ be a lower bound for the length of the curves $W^{cu}_S(z)$ of $S$
and let
$\delta=L\exp(-2\;\Delta_2\;K_3)/3$. We choose $\tau$ large enough such that for any
$z\in \Lambda$ the curves $f^{-n}(W^{cu}_{loc}(z))$ for $n\geq \tau$
have a length smaller than $\delta$.
Since $x$ is not periodic, one can find a small neighborhood $B$
of $x$ in $\Lambda$ such that $B$ is disjoint from its first $\tau$ iterates
and for any $y\in B$ one has $d(y,y^-_{S})<\eta\; \ell(W^{cu}_S(y))$.

For any return $f^{-n}(y)$ in $B$ one has $n\geq \tau$.
Lemma~\ref{l.sum-hole} and property (D') thus give:
\begin{equation*}
\begin{split}
\|Df^{-n}_{|E^{cu}}(y)\|&\leq \frac{\ell(f^{-n}(W^{cu}_S(y)))}{\ell(W^{cu}_S(y))}\; \exp\left(2\; \Delta_2\sum_{k=0}^{n-1}\ell(f^{-k}(W^{cu}_S(y)))\right)\\
&\leq \frac{\delta}{L}\exp(2\;\Delta_2\; K_3)<1/2.
\end{split}
\end{equation*}

\end{proof}
\bigskip

The proof of the proposition~\ref{propoE^{cu}ishyp} is now complete.


\vskip 5pt

\noi Sylvain Crovisier\footnote{Partially supported by the ANR project \emph{DynNonHyp} BLAN08-2 313375. However S.C. does not support the fact that the short term projects funded by the ANR replace the long term research funded by the CNRS.}

\noi CNRS - Laboratoire Analyse, G\'eom\'etrie et Applications, UMR 7539,

\noi Institut Galil\'ee, Universit\'e Paris 13, 99 Avenue J.-B. Cl\'ement, 93430 Villetaneuse, France.

\noi crovisie@math.univ-paris13.fr

\vskip 5pt

\noi Enrique R. Pujals

\noi IMPA-OS

\noi Estrada Dona Castorina 110, 22460-320, Rio de Janeiro.

\noi enrique@impa.br

\end{document}